\setlist[enumerate]{itemindent=0ex,labelwidth=2ex,labelsep=1ex,%
leftmargin=6ex,itemsep=0.5ex}
\def\l@paragraph{\@tocline{4}{0pt}{1pc}{7pc}{}}
\def\l@subparagraph{\@tocline{5}{0pt}{1pc}{7pc}{}}
\numberwithin{equation}{section}
\numberwithin{subsection}{section}
\newtheorem*{namedtheorem}{\theoremname}
\newcommand{\theoremname}{testing}
\newenvironment{named}[1]{\renewcommand\theoremname{#1}
\begin{namedtheorem}}
{\end{namedtheorem}}
\theoremstyle{plain}
\newtheorem{theorem}{Theorem}[section]
\newtheorem{proposition}[theorem]{Proposition}
\newtheorem{proposition-definition}[theorem]{Proposition-Definition}
\newtheorem{lemma-definition}[theorem]{Lemma-Definition}
\newtheorem{corollary}[theorem]{Corollary}
\newtheorem{lemma}[theorem]{Lemma}
\theoremstyle{definition}
\newtheorem{definition}[theorem]{Definition}
\newtheorem{example}[theorem]{Example}
\newtheorem{examples}[theorem]{Examples}
\newtheorem{remark}[theorem]{Remark}
\newtheorem*{remark*}{Remark}
\theoremstyle{remark}
\newcommand\ul[1]{\underline{#1}}
\newcommand\fs{{\operatorname{fs}}}
\newcommand\fom{\mathfrak{m}} 
\newcommand\ocM{\overline{\mathcal{M}}}
\newcommand\ocK{\overline{\mathcal{K}}}
\newcommand\ocI{\overline{\mathcal{I}}}
\newcommand\ocN{\overline{\mathcal{N}}}
\newcommand\cl{\operatorname{cl}}
\newcommand\scrC{\mathscr{C}}
\newcommand\scrM{\mathscr{M}}
\newcommand\Int{\operatorname{Int}}
\newcommand{\ev}{\mathrm{ev}}
\newcommand{\spl}{\mathrm{spl}}
\newcommand{\pun}{\mathrm{p}}
\newcommand\basic{\mathrm{bas}}
\newcommand\gl{\mathrm{gl}}
\newcommand\cA{\mathcal{A}}
\newcommand\cE{\mathcal{E}}
\newcommand\cF{\mathcal{F}}
\newcommand\cI{\mathcal{I}}
\newcommand\cJ{\mathcal{J}}
\newcommand\cK{\mathcal{K}}
\newcommand\cL{\mathcal{L}}
\newcommand\cM{\mathcal{M}}
\newcommand\cN{\mathcal{N}}
\newcommand\cO{\mathcal{O}}
\newcommand\cP{\mathcal{P}}
\newcommand\cT{\mathcal{T}}
\newcommand\cV{\mathcal{V}}
\newcommand\cX{\mathcal{X}}
\newcommand\cZ{\mathcal{Z}}
\newcommand\uC{\underline{C}}
\newcommand\uY{\underline{Y}}
\renewcommand\AA{\mathbb{A}}
\newcommand\CC{\mathbb{C}}
\newcommand\EE{\mathbb{E}}
\newcommand\FF{\mathbb{F}}
\newcommand\GG{\mathbb{G}}
\newcommand\kk{\Bbbk}
\newcommand\LL{\mathbb{L}}
\newcommand\NN{\mathbb{N}}
\newcommand\PP{\mathbb{P}}
\newcommand\QQ{\mathbb{Q}}
\newcommand\RR{\mathbb{R}}
\newcommand\ZZ{\mathbb{Z}}
\newcommand\bA{\mathbf{A}}
\newcommand\bE{\mathbf{E}}
\newcommand\bL{\mathbf{L}}
\newcommand\bM{\mathbf{M}}
\newcommand\bS{\mathbf{S}}
\newcommand\bg{\mathbf{g}}
\newcommand\bp{\mathbf{p}}
\newcommand\bu{\mathbf{u}}
\newcommand\bsigma{{\boldsymbol{\sigma}}}
\newcommand\biota{{\boldsymbol{\iota}}}
\newcommand\btau{{\boldsymbol{\tau}}}
\newcommand\bfu{\mathbf{u}}
\newcommand\fC{\mathfrak{C}}
\newcommand\fM{\mathfrak{M}}
\newcommand\fN{\mathfrak{N}}
\newcommand\frf{\mathfrak{f}}
\newcommand\tfM{\widetilde{\mathfrak{M}}}
\newcommand\tfC{\widetilde{\mathfrak{C}}}
\newcommand{\rk}{\operatorname{rk}}
\newcommand{\Sets}{\mathbf{Sets}}
\newcommand{\Cat}{\mathbf{Cat}}
\newcommand{\Cones}{\mathbf{Cones}}
\newcommand{\Mon}{\mathbf{Mon}}
\newcommand{\Sch}{\mathbf{Sch}}
\newcommand\arr{\ifinner\to\else\longrightarrow\fi}
\newcommand\larr{\longrightarrow}
\newcommand\op{^{\mathrm{op}}}
\newcommand\im{\operatorname{im}}
\def\displaytimes_#1{\mathrel{\mathop{\times}\limits_{#1}}}
\def\displayotimes_#1{\mathrel{\mathop{\bigotimes}\limits_{#1}}}
\newcommand\ext{\operatorname{Ext}}
\newcommand\Aut{\operatorname{Aut}}
\newcommand\Map{\operatorname{Map}}
\newcommand\Func{\operatorname{Func}}
\newcommand\sing{\mathrm{sing}}
\newcommand\pic{\operatorname{Pic}}
\newcommand\spec{\operatorname{Spec}}
\newcommand\Spec{\operatorname{Spec}}
\newcommand\bSpec{\operatorname{\textbf{Spec}}}
\newcommand\Sh{\operatorname{\textbf{Sh}}}
\newcommand\rank{\operatorname{rank}}
\newcommand\virt{{\operatorname{virt}}}
\newcommand\id{\mathrm{id}}
\newcommand\pr{\operatorname{pr}}
\newcommand\indlim{\varinjlim}
\newcommand\colim{{\operatorname{colim}}}
\newcommand\tr{\operatorname{tr}}
\newcommand\Pt{\operatorname{\mathbf{Pt}}}
\newcommand\Strata{\operatorname{Strata}}
\newcommand\Stalks{\operatorname{Stalks}}
\newcommand\doublelong[2]{\mathbin{\xymatrix{{}\ar@<3pt>[r]^{#1}
\ar@<-3pt>[r]_{#2}&}}}
\newlength{\ignora}
\newcommand{\red}{{\mathrm{red}}}
\newcommand{\bmu}{{\boldsymbol{\mu}}}
\newcommand{\et}{{_{\text{\'et}}}}
\newcommand{\ol}{\overline}
\newcommand{\Log}{{\operatorname{Log}}}
\newcommand{\gp}{{\mathrm{gp}}}
\newcommand{\sat}{{\mathrm{sat}}}
\renewcommand{\setminus}{\smallsetminus}
\newcommand{\Hom}{{\operatorname{Hom}}}
\newcommand{\cHom}{{\mathcal{H}om}}
\begin{document}

\title{Punctured logarithmic maps}

\subjclass{14N35 (14D23, 14T90)}

\author{Dan Abramovich}
\address{\tiny Department of Mathematics, Brown University, Box 1917,
Providence, RI~02912, USA}
\email{dan\_abramovich@brown.edu}

\author{Qile Chen}
\address{\tiny Department of Mathematics, Boston College, Chestnut Hill,
MA~02467-3806, USA}
\email{qile.chen@bc.edu}

\author{Mark Gross}
\address{\tiny DPMMS, Centre for Mathematical Sciences, Wilberforce Road,
Cambridge, CB3 0WB, UK}
\email{mgross@dpmms.cam.ac.uk}

\author{Bernd Siebert}
\address{\tiny Department of Mathematics, The Univ.\ of Texas at Austin,
2515 Speedway, Austin, TX 78712, USA}
\email{siebert@math.utexas.edu}

\date{\today}
\begin{abstract}
We introduce a variant of stable logarithmic maps, which we call \emph{punctured
logarithmic maps}. They allow an extension of logarithmic Gromov-Witten theory
in which marked points have a negative order of tangency with boundary divisors.

As a main application we develop a gluing formalism which reconstructs stable
logarithmic maps and their virtual cycles without expansions of the target, with
tropical geometry providing the underlying combinatorics.

Punctured Gromov-Witten invariants also play a pivotal role in the intrinsic
construction of mirror partners by the last two authors, conjecturally relating
to symplectic cohomology, and in the logarithmic gauged linear sigma model in
work of Qile Chen, Felix Janda and Yongbin Ruan.
\end{abstract} 

\maketitle

\setcounter{tocdepth}{2}
\tableofcontents


\section{Introduction}

Logarithmic Gromov-Witten theory, developed by the authors in
\cite{Chen}, \cite{AC}, \cite{LogGW}, has proved a successful generalization
of the notion of relative Gromov-Witten invariants developed in
\cite{LR}, \cite{JunLi1}, \cite{JunLi2}. Relative Gromov-Witten invariants
are invariants of pairs $(X,D)$ where $X$ is a non-singular variety and $D$ is
a smooth divisor on $X$; these invariants count curves with imposed
contact orders with $D$ at marked points. Logarithmic Gromov-Witten theory
allows $D$ instead to be normal crossings, or more generally,
allows $(X,D)$ to be a toroidal crossings variety.

\addtocontents{toc}{\protect\setcounter{tocdepth}{1}}
\subsection{Scope and motivation}
The purpose of the present work is to extend logarithmic Gromov-Witten theory to
admit \emph{negative contact orders}. Working over a field $\kk$, an example for
how negative contact orders arise naturally is by restricting a normal crossings
degeneration, such as
\[
\pi:\AA^2=\Spec\kk[z,w]\arr\AA^1=\Spec\kk[t],\quad \pi^\sharp(t)=zw,
\]
to the irreducible component $C=V(w)$ of the central fiber $\pi^{-1}(0)$.
Viewing $\pi$ as a morphism of log spaces for the toric log structures on
$\AA^2$ and $\AA^1$, denote by $s_z,s_w,s_t$ the global sections of the log
structure $\cM_{\AA^2}$ induced by $z,w$ and $\pi^\sharp(t)$, respectively. The
induced log structure $\cM_C= \cM_{\AA^2}|_C$ on $C=\Spec\kk[z]$ is generated by
the restrictions of $s_z,s_w$, denoted by the same symbols. Note that the
structure morphism $\cM_C\arr \cO_C$ maps $s_z$ to $z$, which has a first order
zero at the origin $0\in\AA^1$ as given by a marked point, while $s_w$ and $s_t$
map to $0$. The point is that viewed as a log space over $\AA^1$, the equation
$s_z s_w=s_t$ implies that away from $0\in\AA^1$, {we have} 
\[
s_w= z^{-1}\cdot s_t.
\]
Such sections do not exist on log smooth curves over the standard log point. The
power of $z$ occuring in this equation reflects the negative contact order.
Since $z^{-1}$ is defined on the punctured curve $\AA^1\setminus\{0\}$, we call
the resulting extension of log smooth curves, stable logarithmic maps and
logarithmic Gromov-Witten theory \emph{punctured curves}, \emph{punctured
(logarithmic) maps} and \emph{punctured Gromov-Witten theory}.

Our motivation for studying punctured Gromov-Witten theory comes from three
sources. First, as illustrated in the example, negative contact orders arise
naturally when gluing a logarithmic stable map from its restrictions to closed
subcurves, as desired in degeneration situations \cite{decomposition}.
{Note that in transverse situations, as achieved by the expanded
degeneration technique in \cite{JunLi2}, negative contact orders can be avoided
by turning a punctured map over a standard log point into a stable logarithmic
map to an irreducible component of the target over the trivial log point; see
\cite[\S\S6,7]{G22} for details. This simplification is not possible when an
irreducible component of the curve maps into a deeper stratum. See
\cite[\S5.2.4]{decomposition} for an example where no decomposition of the
target splits any of the nodes into a pair of marked points with non-negative
contact order.} A treatment of gluing situations based on punctured maps is
contained in \S\ref{sec:split-glue}.

The second motivation comes from mirror constructions and their link to
symplectic cohomology, relating to the program on mirror symmetry of Gross and Siebert via toric degenerations. It turns out that the algorithmic
construction of mirrors via wall structures in \cite{GSAnnals} admits a vast,
intrinsic generalization by using punctured invariants \cite{GSAssoc}. Punctured
invariants are used in this context to define the structure coefficients of the
coordinate ring of the mirror degeneration, with the space of non-negative
contact orders representing generators. The structure coefficients require
punctured invariants with two positive and one negative contact order. The
gluing techniques developed in \S\ref{sec:split-glue} are the crucial
ingredient in proving associativity of the resulting multiplicative structure.
In \cite{GSWallStructures}, the gluing techniques for
punctured invariants are also crucial in constructing a consistent wall
structure in the intrinsic mirror symmetry setup, thus linking the mirror
constructions in \cite{GSAnnals} and \cite{GSAssoc} via \cite{GStheta}. Further,
building on \cite{GSWallStructures}, \cite{AG} gives an algorithmic method of
calculating certain one-pointed punctured invariants on blow-ups of toric
varieties.

Another interesting related fact is the interpretation of punctured invariants
as structure coefficients in some versions of symplectic cohomology. Thus
punctured invariants provide an algebraic-geometric path to computing otherwise
hard to compute symplectic invariants. See \cite{Seidel, Pascaleff, Arguez,
GanatraPomerleano1, GanatraPomerleano2} for some steps in this direction.

The third motivation is from work of the second author on the logarithmic gauged
linear sigma model. In the papers \cite{CJR20P, CJR21P}, punctured maps
are shown to be a key for computing the invariants of the logarithmic
gauged linear sigma model of \cite{CJR19P}.\footnote{In \cite{CJR20P},
punctured maps to a smooth boundary divisor with extra structure called R-maps,
are studied. The moduli of punctured maps provide different virtually birational
models over which effective formulas for computing higher genus Gromov-Witten
invariants hold \cite{CJR21P}. These crucial virtually birational models do not
exist as moduli of rubber maps with expansions \cite{JunLi2, GV05}.} This
provides the geometric foundation for calculating higher genus invariants of
quintic $3$-folds \cite{GJR17P, GJR18P}, and for proving Conjecture A.1 of
\cite{PPZ16P} on the cycle of holomorphic differentials \cite{CJRS20P}.

\subsection{Main features of punctured Gromov-Witten theory}

Several aspects of the theory of punctured Gromov-Witten invariants appear to be
straightforward generalizations from ordinary logarithmic Gromov-Witten theory.
The formal similarity can, however, be quite misleading. In fact, finding
the right setup and point of view took a very long time, and was only made
possible by developing the theory simultaneously with the mentioned
applications.

One major difference is the more singular and more interesting nature of the
base space for moduli spaces of punctured maps. In ordinary Gromov-Witten
theory, the natural base space is the Artin stack $\bM$ of nodal curves. While
non-separated, $\bM$ is smooth, hence is locally pure dimensional. The relative
obstruction theory of the moduli space of stable maps over $\bM$ thus produces a
virtual fundamental cycle by virtual pull-back of the fundamental class $[\bM]$.
The picture in logarithmic Gromov-Witten theory is much the same, with $\bM$ now
replaced by the stack $\fM=\Log_\bM$ of log smooth curves of the given genus and
numbers of marked points over fine saturated (fs) log schemes. This stack is log
smooth over the base field, hence is also locally pure-dimensional.

For punctured invariants, the analogue of $\fM$ is the stack $\breve\fM$ of
logarithmic curves with punctures. One crucial feature of the deformation theory
of punctured curves is that $\breve\fM$ is typically not pure-dimensional. In
fact, the map $\breve\fM \arr \bM$ forgetting the log structure turns out to be
only idealized logarithmically \'etale (Proposition~\ref{prop:puncurve-moduli}).
This means that locally in the smooth topology $\breve\fM \arr \bM$ is
isomorphic to the composition of a closed embedding defined by a monomial ideal
followed by a toric morphism of affine toric varieties with associated lattice
homomorphism an isomorphism over $\QQ$.

The induced stratified structure of punctured maps turns out to be captured by
tropical geometry. The second main feature of punctured Gromov-Witten theory is
thus the central role of tropical geometry, exceeding by far its increasingly
recognised role in logarithmic Gromov-Witten theory. Working over a base space
$B$, we first factor the log smooth target $X\arr B$ over the \emph{relative
Artin fan} {$\cX\arr B$ from \cite[Cor.\,3.3.5]{ACMW}},
an algebraic stack glued from
quotients of toric charts for $X\arr B$ by the fiberwise acting torus, see
\cite[\S2.2]{decomposition}. Working with $\cX$ as a target amounts to working
with nodal curves and compatible families of tropical maps, thus making the
theory of such punctured maps a combinatorially enriched version of the theory
of stable curves. A better base space than $\breve\fM$ to work with is then the
algebraic stack $\fM(\cX/B)$ of punctured maps to $\cX/B$. Indeed, the forgetful
map
\[
\fM(\cX/B)\arr \bM\times B
\]
is also idealized logarithmically \'etale (Theorem~\ref{thm:idealized-etale}),
but now with idealized structure easy to extract from the tropical geometry of
the situation. In particular, Remark~\ref{Rem: stratified structure of
fM(cX,tau)} gives a complete characterization of the strata of $\fM(\cX/B)$ in
terms of types of tropical maps. Due to its fundamental nature for punctured
Gromov-Witten theory, we emphasize the role of tropical geometry throughout,
including adapted presentations of material from \cite{LogGW} in
\S\ref{sec:punctured-map}, \S\ref{sec:stack} {and Appendix~\ref{App: Functorial tropicalization}.}

A third feature of punctured Gromov-Witten theory developed here, but already
relevant to ordinary logarithmic Gromov-Witten theory, is the introduction of
\emph{evaluation stacks} for imposing point conditions compatible with the
virtual formalism. Since $X\arr\cX$ is smooth in the ordinary sense, we can
choose a lift to $X$ of the image of each marked point in $\cX$ to arrive at an
algebraic stack $\fM^\ev(\cX/B)$ smooth over $\fM(\cX/B)$ and such that the
relative obstruction theory over $\fM(\cX/B)$ arises from a relative obstruction
theory over $\fM^\ev(\cX/B)$. It is this \emph{evaluation stack}
$\fM^\ev(\cX/B)$ that one needs to work with to impose conditions on the
evaluations at the marked points rather than the product
$X\times_B\ldots\times_B X$ in ordinary Gromov-Witten theory. Evaluation stacks
also play a crucial role in our gluing formalism, see \S\ref{ss:gluing at
virtual level}.

\subsection{Statements of main results}

For simplicity of the presentation of the main results we now assume $X\arr B$
to be a projective log smooth morphism of log schemes
and $B$ {the standard log point $\Spec(\NN\to\kk)$ or $B$} log smooth
over the trivial log point $\Spec\kk$, where $\kk$ is a field of
characteristic~$0$. For more detailed statements we refer to the main body of
the text.

Similar to logarithmic Gromov-Witten theory, as presented in
\cite[\S2.5]{decomposition}, we introduce (decorated) types of punctured maps
and of (families of) tropical maps. Types restrict the combinatorics of
punctured maps over geometric points as seen by their tropicalizations, such as
the dual intersection graph, the contact orders of punctured and nodal points
and the genera and the curve classes of its irreducible components
(Definition~\ref{Def: type}). An appropriate global version of contact orders
developed in \S\ref{subsec: global contact} leads to the notion of global
decorated type $\btau$ that can be used to define moduli spaces of punctured
maps \emph{marked by $\btau$} (Definition~\ref{Def: stacks of decorated puncted
maps}). Theorem~\ref{Thm: scrM and fM are algebraic}, Corollary~\ref{Cor: scrM/B
is proper} and Theorem~\ref{thm:idealized-etale} establish the basic properties
of these moduli spaces, which can be summarized as follows.

\begin{named}{Theorem~A}
Let $\btau$ be a decorated global type. Then the stacks $\fM(\cX/B,\btau)$ and
$\scrM(X/B,\btau)$ of $\btau$-decorated basic stable punctured maps
(Definition~\ref{Def: stacks of decorated puncted maps}) to $\cX\to B$ and to
$X\to B$, respectively, are logarithmic algebraic stacks. Moreover,
{$\scrM(X/B,\btau)$ is Deligne-Mumford and proper over
$B$.\footnote{{We prove properness under the technical assumption that
the log structure on $X$ is Zariski and $\ocM_X^\gp$ is globally generated. The
latter assumption has meanwhile been removed in \cite{Johnston}.}} If in
addition $X$ is simple \cite[Def.\,2.1]{decomposition},} $\fM(\cX/B,\btau)$ is
idealized smooth over $B$.
\end{named}

For the definition of punctured Gromov-Witten invariants we work over the
evaluation stacks $\fM^\ev(\cX/B,\tau)$, which lift the evaluations at a set
$\bS$ of punctured and nodal points from $\cX$ to $X$ (Definition~\ref{Def:
evaluation stack}). We suppress $\bS$ in the notation. The following theorem
summarizes Proposition~\ref{Prop: Obstruction theory for morphisms} and
Proposition~\ref{Prop: Point condition virtual bundle}.

\begin{named}{Theorem~B}
For $\btau$ a decorated global type let $\pi: \scrC(X/B,\btau)\arr
\scrM(X/B,\btau)$ and $f:\scrC(X/B,\btau)\arr X$ be the universal curve and
universal punctured map over the moduli space $\scrM(X/B,\btau)$ of
$\btau$-marked basic punctured maps to $X\arr B$. Let $\fM^\ev(\cX/B,\btau)$
be the corresponding evaluation stack for a subset $\bS$ of
punctured sections, and $Z\subset \ul \scrC(X/B,\btau)$ the closed
substack defined by the union of the images of these sections. Then there is a
canonical perfect relative obstruction theory
\[
\GG\simeq \big(R\pi_* f^*\Theta_{X/B}(-Z)\big)^\vee\arr \LL_{\scrM(X/B,\btau)/ \fM^\ev(\cX/B,\btau)}
\]
for the natural morphism $\varepsilon:\scrM(X/B,\btau)\arr
\fM^\ev(\cX/B,\btau)$. {Here $\Theta_{X/B}$ denotes the
logarithmic tangent bundle of $X$ over $B$.}
\end{named}

A similar statement holds if $\bS$ also contains nodal sections, see
Proposition~\ref{Prop: Point condition virtual bundle}. Virtual pull-back
\cite{Mano} now provides punctured Gromov-Witten invariants, with the basic
correspondence the homomorphism
\[
\textstyle
(\ul\ev\times p)_*\varepsilon_\GG^!: A_*\big( \fM^\ev(\cX/B,\btau)\big)
\arr A_{*+d(g,k,A,n)}\big(\prod_L \ul Z_L\times \scrM_{g,k}\big)
\]
on rational Chow groups defined in Definition~\ref{Def: Punctured GW
correspondence}. Here each $\ul Z_L$ is an evaluation stratum, the closed
stratum of $\ul X$ that evaluation at a punctured section maps to by
the choice of decorated global type $\btau$, and $\scrM_{g,k}$ is the
Deligne-Mumford stack of $k$-marked stable curves of genus $g$. A formula for
the relative virtual dimension $d(g,k,A,n)$ is stated in \eqref{Eqn: relative
virtual dimension}.

The most challenging part of this paper was an efficient and practically useable
treatment of gluing. While the final results may look straightforward, they rely
on a number of careful choices and subtle points which became clear to us only
after a long series of futile attempts.\footnote{For some time our formalism
only worked for {gluing problems appearing in} certain mirror
constructions. We emphasize that the final results below are general and
practical, as demonstrated in \cite{Yixian}.} Here we only summarize the results
and refer to Remark~\ref{Rem: History of gluing} for some further discussion.

The formal setup for gluing takes a decorated global type $\btau$ and splits the
graph underlying $\btau$ at a subset of edges, leading to a set
$\{\btau_1,\ldots,\btau_r$\} of decorated global types, with each split edge now
producing a pair of legs in the graphs for the $\btau_i$. Our first result on
gluing reduces all gluing questions to the unobstructed evaluation stacks, as
proved in Proposition~\ref{Prop: Gluing via evaluation spaces} and
Theorem~\ref{Thm: Virtual gluing via evaluation spaces2}.

\begin{named}{Theorem~C}
There is a cartesian diagram 
\[
\vcenter{\xymatrix@C=30pt
{
\scrM(X/B,\btau) \ar[r]^(.45){\delta}\ar[d]_{\varepsilon} &
\prod_{i=1}^r \scrM(X/B,\btau_i)\ar[d]^{\hat\varepsilon=\prod_i\varepsilon_i}\\
\fM^\ev(\cX/B,\tau) \ar[r]^(.45){\delta^\ev} &
\prod_{i=1}^r \fM^\ev(\cX/B,\tau_i)
}}
\]
with horizontal arrows the splitting maps from
Proposition~\ref{prop:splitting-map}, finite and representable by
Corollary~\ref{Cor: Gluing theorem for evaluation stacks}, and the vertical
arrows the canonical strict morphisms.\footnote{There is an entirely equivalent
formalism allowing for disconnected punctured curves and disconnected types, in
which case the products on the right form a single moduli stack corresponding to
a disconnected decorated global type $\btau$.}

Moreover, if $\hat\varepsilon^!$ and $\varepsilon^!$ denote Manolache's virtual
pull-back defined using the two given obstruction theories for the vertical
arrows, then for $\alpha\in A_*\left(\fM^\ev(\cX/B,\tau)\right)$, we have
the identity
\[
\hat\varepsilon^!\delta^\ev_*(\alpha)=
\delta_*\varepsilon^!(\alpha)\,.
\]
\end{named}

A numerical gluing formula follows from Theorem~C for those Chow classes
$\alpha$ on $\fM^\ev(\cX/B,\btau)$ such that $\delta^\ev_*$ can be written as a
sum of products, see Corollary~\ref{Cor: decomposition into
product}.\footnote{Conversely, if there is no such decomposition, a numerical
gluing formula cannot be achieved within Chow theory --- a phenomenon already
present in the classical case of a smooth gluing locus. A generally applicable
gluing formula should therefore require working with a homology theory with a
K\"unneth decomposition. It is possible that the formalism of
virtual pull-back in Borel-Moore homology developed in \cite{KV} may
be useful.} A straightforward consequence of Theorem~C is a gluing
formula for the degeneration situation from \cite{decomposition}, see
Corollary~\ref{Cor: gluing formula for degenerations}.

Our second result on gluing provides a description of the splitting map
$\delta^\ev$ in terms of an fs-fiber diagram, which apart from proving the
properties stated in Theorem~C, provides a route to using Theorem~C in explicit
computations. For the following statement the log stacks $\fM^\ev(\cX/B,\btau)$,
$\fM^\ev(\cX/B,\btau_i)$ have to be replaced by closely related log stacks
$\tfM'^\ev(\cX/B,\btau)$ and $\tfM'^\ev(\cX/B,\btau_i)$, which however have the
same underlying reduced stacks, hence have identical Chow theories
(Proposition~\ref{Prop: tilde log structures don't change
reductions}).\footnote{These stacks are closely related to Parker's moduli of
cut curves introduced in \cite{Parker: log}.} These stacks come with logarithmic
evaluation morphisms such as $\ev_\bE: \tfM'^\ev(\cX/B,\btau)\arr
\prod_{E\in\bE} X$, where $\bE$ is the set of nodal sections to split. The
following is Corollary~\ref{Cor: Gluing theorem for evaluation stacks} to which
we refer for more details.

\begin{named}{Theorem~D}
The splitting morphism $\delta^\ev: \tfM'^\ev(\cX/B,\btau)\to
\prod_i\tfM'^\ev(\cX/B,\btau_i)$ fits into the cartesian diagram
\[
\xymatrix{
\tfM'^\ev(\cX/B,\btau) \ar[r]^(.45){\delta^\ev}\ar[d]_{\ev_\bE} &
\prod_{i=1}^r \tfM'^\ev(\cX/B,\btau_i)\ar[d]^{\ev_\bL}\\
\prod_{E\in \bE} X\ar[r]^(.45)\Delta& \prod_{E\in \bE} X\times_B X 
}
\]
of fs log stacks. Here $\Delta$ restricts to the diagonal morphism $X\arr
X\times_B X$ on each factor.
\end{named}

We emphasize that the diagram in Theorem~D is typically not cartesian on the
level of underlying stacks due to the more subtle nature of fs fiber products.
Theorem~D is nevertheless a powerful tool for explicit computations. For
example, under the assumption of toric gluing strata, Yixian Wu in \cite{Yixian}
derives from Theorem~D an explicit decomposition of the term
$\delta^\ev_*[\fM^\ev(\cX/B,\btau)]$ appearing in Thereom~C in terms of the
strata in $\prod_i \fM(\cX/B,\btau_i)$.

\subsection{Organization of the paper}

\S\ref{sec:punctured-map} contains the basic definitions and related concepts
concerning punctured curves and punctured maps, with \S\ref{Sect: Punctured maps
definitions} introducing pre-stable and stable punctured maps, and
\S\ref{sec:tropical interpretation} along with Appendix~\ref{App: Functorial
tropicalization} the tropical language, including the definition of types and
the modified balancing condition. The subject of \S\ref{subsec: global contact}
is the discussion of contact orders in a simplified version sufficient for most
applications, {and the associated notion of global types}. The more
involved general picture {concerning contact orders} is discussed in
Appendix~\ref{sec:contact}. \S\ref{ss: Basicness} presents the concept of
basicness for punctured maps, which while largely the same as for logarithmic
stable maps, emphasizes the tropical point of view, and hence might be of some
independent interest. \S\ref{ss:log-ideal} introduces the new phenomenon of
puncturing log ideals that each family of punctured curves or punctured maps
comes with. {\S\ref{ss: targets with monodromy} discusses the
generalization to targets with monodromy.}

\S\ref{sec:stack} deals with the moduli theory of punctured maps, proving
Theorem~A among other things. \S\ref{ss: stacks of punctured curves}
introduces stacks of punctured curves, with the main result the idealized
smoothness statement in Proposition~\ref{prop:puncurve-moduli}, followed in
\S\ref{ss: stacks marked by types} by definitions of various stacks of
punctured maps marked by types. \S\ref{ss:boundedness} and \S\ref{ss:stable
reduction} establish properness of the moduli spaces to projective targets by
adapting the boundedness and stable reduction theorems from \cite{LogGW}. The
topic of \S\ref{ss:idealized smoothness} is the idealized smoothness of the
spaces $\fM(\cX/B,\btau)$ and the induced stratified structure, all
characterized in terms of tropical geometry.

\S\ref{sec:obstruction} deals with obstruction theories, using the
approach of \cite{Behrend-Fantechi}. \S\ref{ss:Obstruction theories for pairs}
gives a careful treatment of functoriality as well as of compatibility of
obstruction theories for maps of pairs. \S\ref{ss: Obstruction theories with
point conditions} applies this discussion to construct the desired relative
obstruction theory for $\scrM(X/B,\btau)\arr \fM^\ev(\cX/B,\btau)$, in particular
proving Theorem~B. Another application of this discussion is to the virtual
treatment of gluing, presented in \S\ref{ss:gluing at virtual level}. The
section ends with the definition of punctured Gromov-Witten invariants in
\S\ref{ss: punctured GW invts}.

The last section \S\ref{sec:split-glue} contains our results on gluing.
\S\ref{ss:splitting-general} introduces the splitting morphism, while
\S\ref{ss:Gluing punctured maps to cX} treats the converse operation of
gluing, essentially proving Theorem~D, maybe the hardest single result in the
paper with a very long genesis. The virtual treatment of gluing, proving
Theorem~C, is the objective of \S\ref{ss:gluing at virtual level}. The last
section \S\ref{sec:degeneration gluing} applies our results to the
degeneration situation of \cite{decomposition}.

\subsection{Relation to other work}

We end this introduction by discussing related work. First, our approach owes a
great deal to Brett Parker's program of exploded manifolds, \cite{Parker,
Parker: reg, Parker: cmp, Parker: Kuranishi, Parker: vfc, Parker: gluing}. We
have often found ourselves trying to translate Parker's results in the category
of exploded manifolds into the category of log schemes. Indeed, some of the
original versions of the definition of punctured invariants, as well as the
approach to gluing, arose after discussions with Parker.

A gluing formula for logarithmic Gromov-Witten invariants without expansions in
the case of a degeneration $X_0$ with smooth singular locus is due to Kim, Lho
and Ruddat \cite{KimLhoRuddat}. This case does not require punctured invariants
or evaluation spaces, but is otherwise close in spirit to the present treatment.
A gluing formula in a special case for certain rigid analytic Gromov-Witten
invariants has been proved by Tony Yu \cite{Yu}.

After the earlier manuscript version of this paper was distributed, Mohammed
Tehrani \cite{Tehrani}, in developing a symplectic analogue of stable
logarithmic maps, found that punctures were naturally described in the theory.
Even more recently, \cite{FWY1,FWY2} defined negative contact order
Gromov-Witten invariants by a limiting version of orbifold Gromov-Witten
invariants. However, as observed by Dhruv Ranganathan, the invariants
as currently defined cannot coincide with logarithmic invariants as
they do not satisfy the correct invariance properties under log \'etale
modifications.
{Work of Battistella, Nabijou and Ranganathan
\cite{BNR1} takes this into account and shows how genus zero logarithmic
invariants can be recovered from the orbifold invariants after sufficient
blowing up. Further work in preparation \cite{BNR2} considers the case
of negative contact orders in the orbifold theory, and makes a somewhat
more subtle comparison which involves the puncturing ideal defined in
\S\ref{ss:log-ideal}. We send the
reader to those papers for more details.}

Besides the immediate applications of punctures already mentioned above,
punctures also have been used by H\"ulya Arg\"uz in \cite{Arguez} to build a
logarithmic analogue of certain tropical objects in the Tate elliptic curve
related to Floer theory.

Finally, we also mention recent work of Dhruv Ranganathan \cite{Ranganathan}
taking a different point of view on gluing in log Gromov-Witten theory using an
approach closer in spirit to the expanded degeneration picture of Jun Li.

\subsection{Acknowledgements}
Research by D.A. was supported in part by NSF grants DMS-1162367, DMS-1500525,
DMS-1759514, and DMS-2100548.

Research by Q.C. was supported in part by the Simons Foundation, NSF grant DMS-1403271, DMS-1560830,
DMS-1700682, and DMS-2001089.

M.G. was supported by NSF grant DMS-1262531, EPSRC grant EP/N03189X/1, a Royal
Society Wolfson Research Merit Award, and ERC Advanced Grant MSAG.

Research by B.S. was partially supported by NSF grant DMS-1903437.

We would like to thank Dhruv Ranganathan and Brett Parker for many useful
conversations, Barbara Fantechi for discussions on obstruction
theories with point conditions, {and Jonathan Wise for providing Example~\ref{Expl: Jonathan's example}.

We thank one of the anonymous referees for their detailed and insightful
comments on a previous version of this paper.}

\subsection{Conventions}
\label{Sec:convention}
All schemes and stacks are defined over an algebraically closed field $\kk$ of
characteristic~$0$. {By a logarithmic algebraic stack, we mean an
algebraic stack equipped with a log structure.} We follow the convention that if
$X$ is a log scheme or stack, then $\ul{X}$ is the underlying scheme or stack.
To unclutter notation, we nevertheless often write $\cO_X$ instead of $\cO_{\ul
X}$, and $f^*$ instead of $\ul f^*$ for the pull-back by the schematic morphism
underlying a log morphism $f:X\arr Y$. Unless stated otherwise, $\cM_X$ denotes
the sheaf of monoids on $X$, and $\alpha_X:\cM_X \rightarrow\cO_X$ the structure
map. The affine log scheme with a global chart defined by a homomorphism $Q\arr
R$ from a monoid $Q$ to a ring $R$ is denoted $\Spec(Q\arr R)$. We use the
notations $X\times_Z Y$, $X\times^{\mathrm{f}}_Z Y$, $X\times^\fs_Z Y$ to
distinguish the fiber products in the category of all log schemes, and in the
fine or the fine and saturated categories, respectively.

{Throughout $B$ denotes either a log point $\Spec (Q\arr \kk)$ with $Q$
a toric monoid with $Q^\times=0$, or an fs log scheme log smooth over
$\Spec\kk$.}\footnote{{We only use these assumptions in the proof of
Theorem~\ref{thm:idealized-etale} to assure that the reduced logarithmic strata
are defined by logarithmic ideals. This theorem is at the heart of everything we
do involving moduli spaces of punctured maps marked by a type.}}

A \emph{nodal curve} over a scheme $\ul W$ is a proper flat morphism $\ul C\arr
\ul W$ with all geometric fibers reduced of dimension one and with at worst
nodes as singularities. A \emph{pre-stable curve} is a nodal curve with all
geometric fibers connected.

The category of rational polyhedral cones from \cite[\S2.1]{decomposition} is
denoted $\Cones$. An object $\sigma$ of $\Cones$ comes with a lattice $N_\sigma$
with $\sigma\subseteq (N_\sigma)_\RR= N_\sigma\otimes_\ZZ \RR$, and we denote by
$\sigma_\ZZ = \sigma\cap N_\sigma$ the submonoid of integral points of $\sigma$.
If $P$ is a monoid, we write $P^{\vee}:=\Hom(P,\NN)$, $P^*=\Hom(P,\ZZ)$,
and $P_\RR^\vee=\Hom(P,\RR_{\ge0})$. {We write $P^{\times}$ for
the group of invertible elements of $P$. We write $\kk[P]$ for the
monoid ring of $P$ with coefficients in the field $\kk$, with $\kk$-basis
consisting of symbols $z^p$ for $p\in P$.}


\addtocontents{toc}{\protect\setcounter{tocdepth}{2}}

\section{Punctured maps}
\label{sec:punctured-map}

\subsection{Definitions}
\label{Sect: Punctured maps definitions}

\subsubsection{Puncturing}\label{sss:puncturing}

\begin{definition}
\label{def:puncturing}
Let $Y = (\ul{Y}, \cM_Y)$ be a fine and saturated logarithmic scheme with a
decomposition $\cM_Y= \cM\oplus_{\cO^{\times}}\cP$. A \emph{puncturing} of $Y$
along $\cP \subset \cM_Y$ is a fine sub-sheaf of monoids
\[
\cM_{Y^{\circ}} \subset \cM\oplus_{\cO^{\times}}\cP^{\gp}
\]
containing $\cM_Y$ 
with a structure map $\alpha_{{Y^\circ}}:\cM_{Y^\circ}
\rightarrow \cO_Y$ such that
\begin{enumerate}
\item
The inclusion $\pun^{\flat}: \cM_Y \to \cM_{Y^\circ}$ is a morphism of
logarithmic structures on $\ul{Y}$.
\item
For any geometric point $\ol x$ of $\ul{Y}$ let $s_{\ol x}\in
\cM_{Y^\circ,\ol x}$ be such that $s_{\ol x}\not \in
\cM_{\ol{x}}\oplus_{\cO^{\times}}\cP_{\ol{x}}$. Representing $s_{\ol
x}=(m_{\ol x},p_{\ol x})\in \cM_{\ol x}\oplus_{\cO^{\times}} \cP_{\ol
x}^{\gp}$, we have $\alpha_{{Y^\circ}}(s_{\ol x})=\alpha_{\cM}(m_{\ol x})=0$
in $\cO_{Y,\ol x}$.
\end{enumerate}
Denote by $Y^{\circ} = (\ul{Y}, \cM_{Y^\circ})$. We will also call the induced
morphism of logarithmic schemes $\pun: Y^{\circ} \to Y$ a \emph{puncturing} of
$Y$ along $\cP$, or call $Y^{\circ}$ a \emph{puncturing} of $Y$ along $\cP$.

We say the puncturing is \emph{trivial} if $\pun$ is an isomorphism.

\begin{figure}[htb]
\input{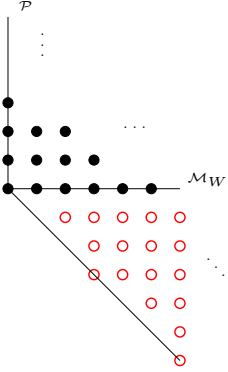}
\caption{A puncturing $Y^\circ$ of a monoid $\cM=\cM_W$. Note that the part with
negative projection in $\cP^{\gp}$ (open circles) is not necessarily
saturated.}
\label{fig:puncturing}
\end{figure}

\end{definition}

\begin{remark}
\label{rem:DF1}
In all examples in this paper, $\cP$ is a $DF(1)$ log structure, that is, there
is a surjective sheaf homomorphism $\ul{\NN}\rightarrow \overline\cP$. In this
case the condition $\alpha_{\cM}(m_{\ol x})=0$ is redundant. Indeed, for
$s_{\ol x}=(m_{\ol x}, p_{\ol x}) \not\in \cM_{\ol x}\oplus_{\cO^{\times}}
\cP$, suppose $\alpha_{{Y^\circ}}(s_{\ol x})=0$. Note that the $DF(1)$
assumption implies that $p_{\ol x}^{-1}\in
\cP_{\ol x}$, so $\alpha_{\cM}(m_{\ol x})=\alpha_Y(m_{\ol x},1)=
\alpha_{{Y^\circ}}\hspace{-1pt}\big(s_{\ol x}\cdot p_{\ol x}^{-1}\big)=0$.
More generally, the same argument works if $\cP$ is valuative.

{For more general puncturings, {the second vanishing
$\alpha_\cM(m_{\ol x})=0$ in Definition~\ref{def:puncturing},(2)} is not
automatic, but is needed to obtain good behavior under base-change
(Proposition~\ref{prop:pullback-puncturing}). Our log stacks
$\widetilde\fM'(\cX/B,\tau)$ in \S\ref{Sec:stacks of maps with sections}
naturally carry such a more general puncturing. {While} these more
general log structures have no further use in this paper, {they} may be
of use elsewhere.}

{Note also that if $\cP$ is a $DF(1)$ log structure and $\ol y$ is a geometric
point of $\ul Y$, then
\begin{equation}
\label{Eqn: puncturedmonoids}
\ocM_{Y,\ol y}\oplus\NN\subseteq \ocM_{Y^\circ,\ol y}\subset
\ocM_{Y,\ol y}\oplus\ZZ,\quad
\ocM_{Y^\circ,\ol y}\cap\big(\{0\}\times\ZZ_{<0}\big)=\emptyset.
\end{equation}}%
{We will see in Lemma~\ref{Lem: length function versus cone} how such
monoids can easily be encoded in the dual tropical picture.}
\end{remark}

\begin{remark}
\label{rem:final puncturing}
Puncturings $\cM^\circ$ of $\cM\oplus_{\cO^\times} \cP$ are not unique. In a
widely distributed early version of this manuscript as well as in \cite{Utah}, 
we found it instructive to work with a uniquely defined object $\cM^\cP$
we call here the \emph{final puncturing}. It may be defined as the direct limit
\[
\cM^{\cP} := \indlim_{\cM^{\circ} \in \Lambda} \cM^{\circ},
\]
over the collection $\Lambda$ of all puncturings of $\cM\oplus_{\cO^\times}
\cP$. This exists in the category of quasi-coherent, not necessarily coherent,
logarithmic structures. It has the advantage of being independent of any choice.
Its disadvantage, apart from not being finitely generated, is in that its
behavior under base change is subtle.

We emphasize that 
\begin{enumerate} 
\item
all puncturings used in this paper, with the exception of the remark above, are
fine, and in particular they are finitely generated. 
\item
On the other hand, the puncturings we use are rarely saturated, even though the
logarithmic structure they puncture are themselves saturated. The
reason is that base change of a saturated puncturing can lead to a non-saturated
puncturing. Imposing a saturation condition would therefore lead to a subtle
fiberwise saturation procedure. Instead we find that the notion of pre-stability
of {Definition~\ref{def:pre-stability}} below suffices to control these logarithmic structures and their moduli.
\end{enumerate} 
\end{remark}

{
\begin{remark}
In the introduction, we motivated punctures as arising from 
restrictions of log structures on log smooth curves to irreducible components.
Indeed, this is one way of producing punctures: see 
Proposition~\ref{prop:splitting-curve} for details. However, since we
allow fine rather than fine saturated log structures for the puncturing,
it is clear that not all the punctures we consider are of this form.
{See also Lemma~\ref{Lem: length function versus cone} for a description
of the submonoids of $\ocM_{Y,\ol y}\oplus\ZZ$ that can arise.}

It is worth making a historical remark here. When we began this project,
we first considered what we called ``pre-nodal" log structures in which
we allowed precisely those log structures coming via restriction from a
log smooth curve. However, we found the moduli space of pre-nodal
log maps was very poorly behaved, almost never Deligne-Mumford. The notion
of punctured points along with the notion of pre-stability of
Definition~\ref{def:pre-stability} resolved these technical difficulties,
and made gluing possible.
\end{remark}
}
\subsubsection{Pre-stable punctured log structures}
In case a puncturing is equipped with a morphism to another fine log structure
there is a canonical choice of puncturing. The following proposition
follows immediately from the definitions.

\begin{proposition} 
\label{prop:pre-stable}
Let $X$ be a fine log scheme, and $Y$ as in Definition~\ref{def:puncturing},
with a choice of puncturing $Y^{\circ}$ and a morphism $f:Y^{\circ}
\rightarrow X$. Let $\widetilde Y^{\circ}$ denote the puncturing of $Y$
given by the subsheaf of $\cM_{Y^{\circ}}$ generated by
$\cM_Y$ and $f^{\flat}(f^*\cM_X)$. Then 

\begin{enumerate} 
\item We have $\cM_{\widetilde Y^{\circ}}$ is a sub-logarithmic structure of
$\cM_{Y^{\circ}}$.
\item There is a factorization
\[
\xymatrix{
Y^{\circ} \ar[rr]^{f} \ar[rd] && X. \\
&\widetilde{Y}^{\circ} \ar[ur]_{\tilde{f}}&
}
\]
\item
Given $Y^\circ_1 \to Y^\circ_2 \to Y$ with both $Y^\circ_1, Y^\circ_2$
puncturings of $Y$, and compatible morphisms $f_i: Y_i^\circ \to X$, then
$\widetilde Y^\circ_1 = \widetilde Y^\circ_2$.
\end{enumerate}

\end{proposition}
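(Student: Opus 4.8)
\begin{pf}
All three parts should follow by unwinding Definition~\ref{def:puncturing}; I do not expect a genuine difficulty, only some bookkeeping about the direction of morphisms.

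For part~(1), the plan is to verify that the submonoid sheaf $\cM_{\widetilde Y^\circ}\subseteq\cM_{Y^\circ}\subseteq\cM\oplus_{\cO^\times}\cP^\gp$ generated by $\cM_Y$ and $f^\flat(f^*\cM_X)$ satisfies each clause of Definition~\ref{def:puncturing}. It contains $\cM_Y$ and lies in $\cM\oplus_{\cO^\times}\cP^\gp$ by construction; it is integral, being a submonoid of the integral sheaf $\cM\oplus_{\cO^\times}\cP^\gp$, and \'etale-locally finitely generated, being generated by the fine sheaf $\cM_Y$ together with the image of $f^*\cM_X$, which is fine because $X$ is; so it is a fine sheaf of monoids. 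Containing $\cO^\times$ and being a submonoid sheaf of the log structure $\cM_{Y^\circ}$, it is itself a log structure with structure map the restriction $\alpha_{\widetilde Y^\circ}$ of $\alpha_{Y^\circ}$, which at once makes $\pun^\flat\colon\cM_Y\to\cM_{\widetilde Y^\circ}$ a morphism of log structures (clause~(1)). Finally, clause~(2) is inherited: if $s_{\bar x}\in\cM_{\widetilde Y^\circ,\bar x}$ lies outside $\cM_{\bar x}\oplus_{\cO^\times}\cP_{\bar x}$, then it lies in $\cM_{Y^\circ,\bar x}$ outside the same submonoid, so clause~(2) for $Y^\circ$ yields $\alpha_{\widetilde Y^\circ}(s_{\bar x})=\alpha_\cM(m_{\bar x})=0$. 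Hence $\widetilde Y^\circ$ is a puncturing of $Y$ along $\cP$ and, in particular, $\cM_{\widetilde Y^\circ}$ is a sub-log structure of $\cM_{Y^\circ}$.

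For part~(2): by construction $f^\flat(f^*\cM_X)\subseteq\cM_{\widetilde Y^\circ}$, so $f^\flat$ factors as $f^*\cM_X\xrightarrow{\tilde f^\flat}\cM_{\widetilde Y^\circ}\hookrightarrow\cM_{Y^\circ}$, with $\tilde f^\flat$ automatically compatible with structure maps since all maps involved are restrictions of $\alpha_{Y^\circ}$. This yields the log morphism $\tilde f\colon\widetilde Y^\circ\to X$ (with underlying scheme morphism that of $f$), together with the morphism $Y^\circ\to\widetilde Y^\circ$ attached to the inclusion $\cM_{\widetilde Y^\circ}\hookrightarrow\cM_{Y^\circ}$; commutativity of the triangle is exactly the displayed factorization of $f^\flat$. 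For part~(3), write $g\colon Y_1^\circ\to Y_2^\circ$ for the given morphism over $Y$, so $f_1=f_2\circ g$ and $g^\flat$ is the inclusion $\cM_{Y_2^\circ}\hookrightarrow\cM_{Y_1^\circ}$. Since $g$ is the identity on underlying schemes, $f_1^*\cM_X=f_2^*\cM_X$ and $f_1^\flat=g^\flat\circ f_2^\flat$, so $f_1^\flat(f_1^*\cM_X)=f_2^\flat(f_2^*\cM_X)$ as subsheaves of $\cM\oplus_{\cO^\times}\cP^\gp$ (both being the image of $f_2^\flat$, which already lies in $\cM_{Y_2^\circ}$). Thus $\cM_{\widetilde Y_1^\circ}$ and $\cM_{\widetilde Y_2^\circ}$ are both the submonoid sheaf of $\cM\oplus_{\cO^\times}\cP^\gp$ generated by $\cM_Y$ and this common image, with the same (restricted) structure map, whence $\widetilde Y_1^\circ=\widetilde Y_2^\circ$.

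The only points needing care --- and the closest thing to an obstacle --- are the recurring direction convention (a morphism of log schemes between puncturings of $Y$ is a reverse inclusion of the corresponding sub-log structures of $\cM\oplus_{\cO^\times}\cP^\gp$, consistently with the direct-limit description of the final puncturing in Remark~\ref{rem:final puncturing}), and the observation that the generated subsheaf is still fine, which is exactly where fineness of $\cM_X$ is used. No saturation is asserted, again in line with Remark~\ref{rem:final puncturing}.
\end{pf}
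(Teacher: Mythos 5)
Your proof is correct and matches the paper's approach: the paper gives no proof at all, stating only that the proposition "follows immediately from the definitions," and your argument is exactly the careful unwinding of Definition~\ref{def:puncturing} that this remark presupposes (fineness of the generated subsheaf, inheritance of the vanishing condition, and the fact that $\widetilde Y^\circ$ depends only on the common image $f_1^\flat(f_1^*\cM_X)=f_2^\flat(f_2^*\cM_X)$).
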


\begin{definition}
\label{def:pre-stability}
A morphism $f:Y^{\circ}\rightarrow X$ from a puncturing of a log scheme
$Y$ is said to be \emph{pre-stable} if the induced morphism
$Y^{\circ}\rightarrow \widetilde{Y}^{\circ}$ in the above proposition
is the identity. In particular, one has $f=\tilde f$. 

\begin{figure}[htb]
\input{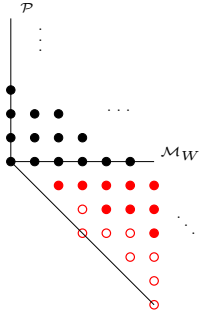}
\caption{A morphism of the previous puncturing $Y^\circ$ which is not
pre-stable, with $f^\flat \cM_X$ generated by $(2,-1)$. The submonoid generated
by $\cM_Y$ and $f^\flat \cM_X$, shown in solid dots, is a different
puncturing $\widetilde Y^{\circ}$ which is pre-stable.}
\label{fig:non-pre-stable}
\end{figure}

\end{definition}

Proposition~\ref{prop:pre-stable} yields the following criterion for
pre-stability of a morphism from a punctured log scheme. 

\begin{corollary}
\label{cor:pre-stable}
A morphism $f:Y^{\circ}\rightarrow X$ is pre-stable if and only if
the induced morphism of sheaves of monoids $f^*\ocM_X\oplus\ocM_Y
\rightarrow \ocM_{Y^{\circ}}$ is surjective.
\end{corollary}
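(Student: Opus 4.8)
The plan is to unwind the definition of pre-stability and reduce the asserted equivalence, which is a statement about sub\emph{sheaves} of $\cM\oplus_{\cO^\times}\cP^\gp$, to a stalkwise statement which in turn is transferred to the ghost sheaves. Recall from Definition~\ref{def:pre-stability} that $f$ is pre-stable precisely when the morphism $Y^\circ\to\widetilde Y^\circ$ of Proposition~\ref{prop:pre-stable} is the identity; since this morphism corresponds to the inclusion $\cM_{\widetilde Y^\circ}\subseteq\cM_{Y^\circ}$ of sub-log-structures provided by Proposition~\ref{prop:pre-stable}(1), pre-stability is equivalent to the equality $\cM_{\widetilde Y^\circ}=\cM_{Y^\circ}$, where by construction $\cM_{\widetilde Y^\circ}$ is the subsheaf of monoids generated by $\cM_Y$ and $f^\flat(f^*\cM_X)$. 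As this is an equality of sheaves, it can be checked on stalks.

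The key elementary point I would isolate is that a submonoid $N$ of a stalk $\cM_{Y^\circ,\bar x}$ which contains $\cO^\times_{Y,\bar x}$ is the full preimage under $\cM_{Y^\circ,\bar x}\to\ocM_{Y^\circ,\bar x}$ of its image $\bar N$: if $m\in\cM_{Y^\circ,\bar x}$ maps to some $\bar n$ with $n\in N$, then $m=un$ for a unit $u$, so $m\in N$. Consequently $N=\cM_{Y^\circ,\bar x}$ if and only if $\bar N=\ocM_{Y^\circ,\bar x}$. Both $\cM_{\widetilde Y^\circ,\bar x}$ (it contains $\cM_{Y,\bar x}\supseteq\cO^\times_{Y,\bar x}$) and $\cM_{Y^\circ,\bar x}$ contain the units, so $f$ is pre-stable if and only if $\ocM_{\widetilde Y^\circ}=\ocM_{Y^\circ}$.

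Finally I would identify $\ocM_{\widetilde Y^\circ}$ explicitly. The image under the quotient $\cM_{Y^\circ}\to\ocM_{Y^\circ}$ of the submonoid generated by a family of local sections is the submonoid generated by the images; and because $f^\flat$ carries $\cO^\times$ to $\cO^\times$, the image of $f^\flat(f^*\cM_X)$ in $\ocM_{Y^\circ}$ agrees with the image of $f^*\ocM_X$ under the induced ghost map. Hence $\ocM_{\widetilde Y^\circ}$ is the submonoid of $\ocM_{Y^\circ}$ generated by (the images of) $\ocM_Y$ and $f^*\ocM_X$, which is exactly the image of the morphism $f^*\ocM_X\oplus\ocM_Y\to\ocM_{Y^\circ}$. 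Therefore $\ocM_{\widetilde Y^\circ}=\ocM_{Y^\circ}$ if and only if this morphism is surjective, giving the corollary. There is no deep obstacle here: the only points demanding care are that the generated subsheaf has the expected stalks and the units-lifting argument of the second step, which is what makes the passage to ghost sheaves lossless.
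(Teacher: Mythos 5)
Your proof is correct and follows the same route the paper intends: the corollary is stated as an immediate consequence of Proposition~\ref{prop:pre-stable} and Definition~\ref{def:pre-stability}, and your writeup simply makes explicit the routine reduction — pre-stability means $\cM_{\widetilde Y^\circ}=\cM_{Y^\circ}$, both subsheaves contain the units so equality may be tested on ghost sheaves, and the ghost sheaf of the generated subsheaf is exactly the image of $f^*\ocM_X\oplus\ocM_Y\to\ocM_{Y^\circ}$. No gaps.
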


\subsubsection{Pull-backs of puncturings}

\begin{proposition}
\label{prop:pullback-puncturing}
Let $Z$ and $Y$ be fs log schemes with log structures $\cM_Z$ and $\cM_Y$,
and suppose given a morphism $g:Z\rightarrow Y$. Suppose also given
an fs log structure $\cP_Y$ on $\ul{Y}$ and an induced log structure
$\cP_Z:=g^*\cP_Y$ on $\ul{Z}$. Set
\[
Z'=(\ul{Z}, \cM_Z\oplus_{\cO_Z^{\times}} \cP_Z),\quad
Y'=(\ul{Y}, \cM_Y\oplus_{\cO_Y^{\times}} \cP_Y)\,.
\]
Further,
let $Y^{\circ}$ be a puncturing of $Y'$ along $\cP_Y$. Then there
is a diagram
\[
\xymatrix@C=30pt
{
Z^{\circ}\ar[r]^{g^{\circ}}\ar[d]&Y^{\circ}\ar[d]\\
Z'\ar[r]^{g'}\ar[d]&Y'\ar[d]\\
Z\ar[r]_g&Y
}
\]
with all squares Cartesian in the category of underlying schemes, the lower
square Cartesian in the category of fs log schemes, and the top square Cartesian
in the category of fine log schemes. Furthermore, $Z^{\circ}$ is a puncturing of
$Z'$ along $\cP_Z$, and $g^{\circ}$ is pre-stable.
\end{proposition}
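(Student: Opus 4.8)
The plan is to \emph{define} $Z^{\circ}$ to be the fiber product $Z'\times^{\rm f}_{Y'} Y^{\circ}$ in the category of fine logarithmic schemes; this formally produces the morphism $g^{\circ}$ together with the structure map $\alpha_{Z^{\circ}}$, and makes the top square Cartesian in fine log schemes by construction. The remaining task is to identify $\cM_{Z^{\circ}}$ concretely, to check that $Z^{\circ}$ is a puncturing of $Z'$ along $\cP_Z$, that $g^{\circ}$ is pre-stable, and that the lower square is Cartesian even in the fs category. The key algebraic input is that a puncturing leaves the groupified log structure unchanged: from $\cM_{Y'}\subseteq\cM_{Y^{\circ}}\subseteq\cM_Y\oplus_{\cO_Y^{\times}}\cP_Y^{\gp}$ we get $\cM_{Y^{\circ}}^{\gp}=\cM_{Y'}^{\gp}=\cM_Y^{\gp}\oplus_{\cO_Y^{\times}}\cP_Y^{\gp}$. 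Since groupification commutes with colimits, and since the underlying sheaf of monoids of a fine fiber product is obtained from the coherent pushout of log structures by passing to its image in the groupification, the log structure of $Z'\times^{\rm f}_{Y'}Y^{\circ}$ is the image of $\cM_{Z'}\oplus_{g^{-1}\cM_{Y'}}g^{-1}\cM_{Y^{\circ}}$ inside $\cM_{Z'}^{\gp}\oplus_{g^{-1}\cM_{Y'}^{\gp}}g^{-1}\cM_{Y^{\circ}}^{\gp}=\cM_{Z'}^{\gp}$, the last equality because $g^{-1}\cM_{Y'}^{\gp}\to g^{-1}\cM_{Y^{\circ}}^{\gp}$ is an isomorphism. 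Tracing the maps, the image of $g^{-1}\cM_{Y^{\circ}}$ in $\cM_{Z'}^{\gp}=\cM_Z^{\gp}\oplus_{\cO_Z^{\times}}\cP_Z^{\gp}$ lands, via the log structure map of $g$ and $\cP_Z=g^{*}\cP_Y$, inside $\cM_Z\oplus_{\cO_Z^{\times}}\cP_Z^{\gp}$. Hence $\cM_{Z^{\circ}}$ is identified with the subsheaf of monoids of $\cM_Z\oplus_{\cO_Z^{\times}}\cP_Z^{\gp}$ generated by $\cM_{Z'}=\cM_Z\oplus_{\cO_Z^{\times}}\cP_Z$ and the image of $g^{-1}\cM_{Y^{\circ}}$.

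From this description the easy assertions follow at once. Both squares are Cartesian on underlying schemes, since every horizontal map has underlying morphism $g$ while every vertical map is an identity, so each underlying square reads $\ul Z=\ul Z\times_{\ul Y}\ul Y$. For the lower square, the fine fiber product $Z\times^{\rm f}_Y Y'$ has log structure $\cM_Z\oplus_{\cO_Z^{\times}}\cP_Z=\cM_{Z'}$ (the relevant pushout of log structures); since $\overline\cM_{Z'}=\overline\cM_Z\oplus\overline\cP_Z$ is a direct sum of fs monoids, hence saturated, no further saturation occurs and $Z'=Z\times^{\fs}_Y Y'$. That $\cM_{Z^{\circ}}$ is fine is clear, because locally $\cM_{Y^{\circ}}$ is generated over $\cM_{Y'}$ by finitely many sections, so $\cM_{Z^{\circ}}$ is generated over the fine sheaf $\cM_{Z'}$ by finitely many sections inside the integral sheaf $\cM_Z\oplus_{\cO_Z^{\times}}\cP_Z^{\gp}$. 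It contains $\cM_{Z'}$ by construction, and $g^{\circ}$ is pre-stable by Corollary~\ref{cor:pre-stable}: the description of $\cM_{Z^{\circ}}$ asserts precisely that $\cM_{Z'}$ together with the image of $(g^{\circ})^{*}\cM_{Y^{\circ}}$ generate $\cM_{Z^{\circ}}$, giving the required surjectivity onto $\overline\cM_{Z^{\circ}}$.

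It remains to verify the two conditions of Definition~\ref{def:puncturing} for $Z^{\circ}$. Condition (1) holds because $\pun^{\flat}:\cM_{Z'}\to\cM_{Z^{\circ}}$ is the canonical morphism produced by the fiber product, in particular a morphism of log structures on $\ul Z$; the structure map $\alpha_{Z^{\circ}}$ restricts to $\alpha_{Z'}$ on $\cM_{Z'}$ and to $g^{\sharp}\circ\alpha_{Y^{\circ}}$ on the image of $g^{-1}\cM_{Y^{\circ}}$, its well-definedness being automatic as the structure map of the fine fiber product. For condition (2), take a geometric point $\bar z$ with $\bar y=g(\bar z)$ and a section $s=(m,p)\in\cM_{Z^{\circ},\bar z}$ with $p\notin\cP_{Z,\bar z}$, and write $s=a\cdot\prod_j g^{\flat}(s_j)$ with $a\in\cM_{Z',\bar z}$ and $s_j=(m_j,p_j)\in\cM_{Y^{\circ},\bar y}$. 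Since the $\cP_Z^{\gp}$-component of $s$ lies outside $\cP_{Z,\bar z}$ while the contribution of $a$ lies in $\cP_{Z,\bar z}$, some $p_{j_0}$ lies outside $\cP_{Y,\bar y}$; condition (2) for $Y^{\circ}$ then gives $\alpha_{Y^{\circ}}(s_{j_0})=\alpha_{\cM_Y}(m_{j_0})=0$, whence $\alpha_{Z^{\circ}}(g^{\flat}s_{j_0})=g^{\sharp}(\alpha_{Y^{\circ}}(s_{j_0}))=0$ and $\alpha_{\cM_Z}(g^{\flat}m_{j_0})=g^{\sharp}(\alpha_{\cM_Y}(m_{j_0}))=0$, forcing $\alpha_{Z^{\circ}}(s)=\alpha_{\cM_Z}(m)=0$.

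I expect the genuinely delicate point to be the concrete identification of $\cM_{Z^{\circ}}$ in the first paragraph: one has to be sure that the integralization built into the fine fiber product collapses the pushout exactly onto the subsheaf of $\cM_Z\oplus_{\cO_Z^{\times}}\cP_Z^{\gp}$ described above, which rests on the fact that puncturing leaves the groupified log structure unchanged. Everything after that — the two fiber-product statements, fineness, pre-stability, and the two puncturing axioms — is local bookkeeping with charts, in the spirit of the proof of Proposition~\ref{prop:pre-stable}.
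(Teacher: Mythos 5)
Your setup---defining $Z^{\circ}$ as the fine fiber product $Z'\times^{\rm f}_{Y'}Y^{\circ}$ and identifying $\cM_{Z^{\circ}}$ with the subsheaf of $\cM_Z\oplus_{\cO_Z^{\times}}\cP_Z^{\gp}$ generated by $\cM_{Z'}$ and the image of $g^*\cM_{Y^{\circ}}$---is exactly the paper's, and your treatment of the lower square, of pre-stability via Corollary~\ref{cor:pre-stable}, and of condition (2) of Definition~\ref{def:puncturing} is sound. The gap is where you declare the structure map ``automatic as the structure map of the fine fiber product.'' The fine fiber product exists formally, but its construction integralizes the pushout $\cM_{Z'}\oplus_{g^*\cM_{Y'}}g^*\cM_{Y^{\circ}}$, and integralization of a coherent log scheme is in general only a \emph{closed immersion} on underlying schemes: the structure map descends to the image of the pushout in its groupification only after imposing the relations $\alpha(s)=\alpha(s')$ for pairs $s,s'$ with the same image. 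So the underlying scheme of $Z'\times^{\rm f}_{Y'}Y^{\circ}$ is a priori a proper closed subscheme of $\ul{Z}$, and your argument that the top square is Cartesian on underlying schemes (``every vertical map is an identity'') assumes exactly what must be proved.

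The bulk of the paper's proof is the verification that no such collapse occurs. The crucial case is an element with two representations $s=s_1\cdot(g')^{\flat}(s_2)=s_1'\cdot(g')^{\flat}(s_2')$ where $s_2\in g^*\cM_{Y'}$ but $s_2'\notin g^*\cM_{Y'}$: one formula gives $\alpha_{Z'}(s)$, the other gives $0$, and one shows they agree by writing $s_2'=(m_2',p_2')$, invoking condition (2) for $Y^{\circ}$ to get $\alpha_Y(m_2')=0$, hence $\alpha_Z\bigl(m_1'g^{\flat}(m_2')\bigr)=\alpha_Z\bigl(m_1g^{\flat}(m_2)\bigr)=0$ and therefore $\alpha_{Z'}(s)=0$. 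Note that your condition-(2) computation does not cover this case: there the element $s$ has $\cP_Z^{\gp}$-component outside $\cP_Z$, whereas here $s$ lies in $\cM_{Z'}$ but also factors through $\cM_{Y^{\circ}}\setminus\cM_{Y'}$. Your computation contains the right ingredients; you need to run it for this mixed case to get well-definedness of $\alpha$ on $\cO_Z$ itself, after which the rest of your write-up goes through.
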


\begin{proof}
We define $Z^{\circ}$ to be the fiber product $Z'\times^{\mathrm{f}}_{Y'}
Y^{\circ}$ in the fine log category. The bottom square is Cartesian in all
categories as $\cP_Y$ is assumed saturated. Thus it is sufficient to show (1)
the upper square is Cartesian in the ordinary category, that is, the underlying
map of $Z^{\circ}\rightarrow Z'$ is the identity and (2) $Z^{\circ}$ is a
puncturing of $Z'$. 

Note that the fiber product $Z'\times_{Y'} Y^{\circ}$ in the category
of log schemes is defined as $\big(\ul{Z},\cM:=\cM_{Z'}\oplus_{g^*\cM_{Y'}}
g^*\cM_{Y^\circ}\big)$. This push-out need not, in general, be integral,
so we must integralize. Note there is a canonical isomorphism
\[
\cM^{\gp}
=
\cM_{Z'}^{\gp}\oplus_{g^*\cM_{Y'}^{\gp}}
g^*\cM_{Y^\circ}^{\gp}
\cong \cM_{Z'}^{\gp}
\]
given by $(s_1,s_2)\mapsto s_1\cdot (g')^{\flat}(s_2)$, where
$(g')^{\flat}:g^*\cM_{Y'}^{\gp}\rightarrow \cM_{Z'}^{\gp}$ is induced by $g'$.
The integralization $\cM^{\mathrm{int}}$ of $\cM$ is then the image of $\cM$ in
$\cM^{\gp}$, which thus can be described as the subsheaf of $\cM_{Z'}^{\gp}$
generated by $\cM_{Z'}$ and $(g')^{\flat}(g^*\cM_{Y^{\circ}})$. Note $\cM_{Z'}$
and $(g')^{\flat}(g^*\cM_{Y^{\circ}})$ both lie in $\cM_Z\oplus_{\cO^{\times}_Z}
\cP_Z^{\gp}$, and hence we can replace $\cM^{\gp}$ with this subsheaf of
$\cM^{\gp}$ in describing $\cM^{\mathrm{int}}$.  

It is now sufficient to show that we can define a structure map
$\alpha:\cM^{\mathrm{int}} \rightarrow \cO_Z$ compatible with the structure maps
$\alpha_{Z'}:\cM_{Z'}\rightarrow\cO_Z$ and
$\alpha_{Y^{\circ}}:g^*\cM_{Y^{\circ}}\rightarrow \cO_Z$. If
$s\in\cM^{\mathrm{int}}$ is of the form $s_1\cdot (g')^{\flat}(s_2)$ for $s_1\in
\cM_{Z'}$ and $s_2\in g^*\cM_{Y^{\circ}}$, then we define
$\alpha(s)=\alpha_{Z'}(s_1)\cdot \alpha_{Y^{\circ}}(s_2)$. We need to show this
is well-defined. If $s_2\in g^*\cM_{Y'}$, then $(g')^{\flat}(s_2)\in \cM_{Z'}$,
and thus as $g'$ is a log morphism,
\[
\alpha(s)=\alpha_{Z'}(s_1)\cdot\alpha_{Y^{\circ}}(s_2)=
\alpha_{Z'}(s_1)\cdot \alpha_{Z'}((g')^{\flat}(s_2))=\alpha_{Z'}(s)\,.
\]
In particular, $\alpha(s)$ only depends on $s$, and not on the particular 
representation of $s$ as a product, provided that $s_2\in g^*\cM_{Y'}$.

On the other hand, if $s_2\in (g^*\cM_{Y^{\circ}})\setminus (g^*\cM_{Y'})$, then
$\alpha_{Y^{\circ}}(s_2)=0$ by definition of a puncturing. So in this case
$\alpha(s)=0$. Hence to check that $\alpha$ is well-defined, it is enough to
show that if $s=s_1\cdot (g')^{\flat}(s_2)=s_1'\cdot (g')^{\flat}(s_2')$ with
$s_2\in g^*\cM_{Y'}$ but $s_2'\not\in g^*\cM_{Y'}$, then $\alpha_{Z'}(s_1)\cdot
\alpha_{Y^{\circ}}(s_2)= \alpha_{Z'}\big(s_1\cdot (g')^\flat(s_2)\big)=0$.
Writing $s_i=(m_i,p_i), s_i'=(m_i',p_i')$ using the descriptions
$\cM_{Z'}=\cM_Z\oplus_{\cO^{\times}_Z} \cP_Z$ and $g^*\cM_{Y^{\circ}}\subset
g^*\cM_Y\oplus_{\cO_Z^{\times}} \cP_Z^{\gp}$, we note that we must have
$m_1g^{\flat}(m_2)=m_1'g^{\flat}(m_2')$. As $s_2'\not\in g^*\cM_{Y'}$, by
Condition~(2) of Definition~\ref{def:puncturing} we necessarily have
$\alpha_Y(m_2')=0$. Hence $\alpha_Z(m_1'g^{\flat}(m_2'))=0$, so
$\alpha_Z(m_1g^{\flat}(m_2))=0$. We deduce that
$\alpha_{Z'}(s_1(g')^{\flat}(s_2))=0$, as desired. This shows $\alpha$ is
well-defined.

Finally, it is clear from the above description that $Z^{\circ}$ is a
puncturing. By Corollary~\ref{cor:pre-stable}, the pre-stability of $g^{\circ}$
follows from the surjectivity of 
\[
g^{-1}(\ocM_{Y^{\circ}})\oplus \ocM_Z \to
g^{-1}(\ocM_{Y^{\circ}}){\oplus_{g^{-1}(\ocM_Y)}^{\mathrm{f}}} \ocM_Z=
\ocM_{Z^{\circ}},
\]
where {$\oplus^{\mathrm{f}}$} denotes the fibered coproduct in the category of
fine monoids.
\end{proof}

\begin{definition}
\label{def:pullback-puncturing}
In the situation of Proposition~\ref{prop:pullback-puncturing}, we say
that $Z^{\circ}$ is the \emph{pull-back of the puncturing} $Y^{\circ}$.
\end{definition}

\begin{corollary} 
\label{Cor: pre-stability-pullback}
Consider the situation of Proposition~\ref{prop:pullback-puncturing},
and suppose in addition given a pre-stable morphism $f:Y^{\circ} 
\rightarrow X$. Then the composition $f\circ g^{\circ}:Z^{\circ}
\rightarrow X$ is also pre-stable.
\end{corollary}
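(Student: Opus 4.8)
The plan is to reduce the claim to the pre-stability criterion of Corollary~\ref{cor:pre-stable}, which characterizes pre-stability of a morphism $h:W^\circ\to X$ by surjectivity of $h^*\ocM_X\oplus\ocM_{W}\to\ocM_{W^\circ}$ (where here $W$ is the log scheme of which $W^\circ$ is a puncturing, so in our case $W=Z'$ and $W^\circ=Z^\circ$). Thus we must show that the composite
\[
(f\circ g^\circ)^*\ocM_X\oplus \ocM_{Z'}\longrightarrow \ocM_{Z^\circ}
\]
is surjective on stalks. Everything takes place inside $\ocM_{Z'}^{\gp}=\ocM_Z^{\gp}\oplus \ocP_Z^{\gp}$, so this is a statement about generation of a submonoid.

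First I would record the generating set for $\ocM_{Z^\circ}$ already extracted in the proof of Proposition~\ref{prop:pullback-puncturing}: since $Z^\circ$ is the pullback puncturing, $g^\circ$ is pre-stable, so by Corollary~\ref{cor:pre-stable} applied to $g^\circ:Z^\circ\to Y^\circ$ (viewing $Z^\circ$ as a puncturing of $Z'$) the map $(g^\circ)^*\ocM_{Y^\circ}\oplus\ocM_{Z'}\to\ocM_{Z^\circ}$ is surjective. Concretely, $\ocM_{Z^\circ}$ is generated by $\ocM_{Z'}$ together with the image of $g^{-1}\ocM_{Y^\circ}$ under $(g')^\flat$. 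Next I would invoke pre-stability of $f:Y^\circ\to X$, which by Corollary~\ref{cor:pre-stable} says $f^*\ocM_X\oplus\ocM_Y\to\ocM_{Y^\circ}$ is surjective; pulling this surjectivity back along $g$ (a right-exact operation on sheaves of monoids, and stalkwise just applying $g^{-1}$ followed by a base-point change of homomorphisms, which preserves surjectivity), $g^{-1}\ocM_{Y^\circ}$ is generated by the images of $g^{-1}f^*\ocM_X=(f\circ g^\circ)^*\ocM_X$ and $g^{-1}\ocM_Y$.

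Then I would chain the two generation statements: $\ocM_{Z^\circ}$ is generated by $\ocM_{Z'}$ and $(g')^\flat(g^{-1}\ocM_{Y^\circ})$, and the latter is generated by $(g')^\flat$ of the images of $(f\circ g^\circ)^*\ocM_X$ and of $g^{-1}\ocM_Y$; but $(g')^\flat(g^{-1}\ocM_Y)$ already factors through $\ocM_{Z'}$ (because $g'$ is a strict-enough map—$\cM_{Z'}=\cM_Z\oplus_{\cO^\times}\cP_Z$ contains $(g')^\flat(g^*\cM_{Y'})$, and $\cM_Y\hookrightarrow\cM_{Y'}$). Hence $\ocM_{Z^\circ}$ is generated by $\ocM_{Z'}$ together with the image of $(f\circ g^\circ)^*\ocM_X$, which is exactly the surjectivity needed. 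Therefore $f\circ g^\circ$ is pre-stable by Corollary~\ref{cor:pre-stable}.

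The only mildly delicate point—and the one I would state carefully rather than wave at—is the compatibility of the various $\flat$-maps: one must check that the generator of $\ocM_{Z^\circ}$ coming from $x\in f^*\ocM_X$ is literally $(f\circ g^\circ)^\flat$ applied to the corresponding section of $(f\circ g^\circ)^*\ocM_X$, i.e.\ that $(g')^\flat\circ g^{-1}(f^\flat)$ agrees with $(g^\circ)^\flat\circ (f\circ g^\circ)^\flat$ under the identifications, which is just functoriality of pullback of log structures together with the construction of $g^\circ$ as a fine fiber product in Proposition~\ref{prop:pullback-puncturing}. Since pre-stability is a condition on a morphism of log structures and all of this is local, it suffices to verify everything on stalks at a geometric point of $\ul Z$, where it becomes an elementary diagram chase of finitely generated monoids; I would not grind through it but simply point to the explicit descriptions already given in the proofs of Corollary~\ref{cor:pre-stable} and Proposition~\ref{prop:pullback-puncturing}.
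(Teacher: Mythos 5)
Your proof is correct and is essentially the argument the paper intends: the paper's own proof is a one-line appeal to "the definition of pre-stability and the construction of $Z^{\circ}$ in the proof of Proposition~\ref{prop:pullback-puncturing}", and your chain of generation statements (via Corollary~\ref{cor:pre-stable}) is exactly the expansion of that remark. No gaps.
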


\begin{proof}
This follows immediately from the definition of pre-stability and
the construction of $Z^{\circ}$ in the proof of Proposition
\ref{prop:pullback-puncturing}.
\end{proof}

\subsubsection{Punctured curves}
\label{sss: punctured curves}

Throughout the paper, we will essentially only be interested in puncturing along
logarithmic structures from designated marked points of logarithmic curves. Let
$\pi: C \to W$ be a logarithmic curve in the sense of \cite{FKato}:
\begin{enumerate}
\item
The underlying morphism $\ul{\pi}$ is a family of ordinary pre-stable
curves with {pairwise} disjoint sections $p_1,\ldots,p_n$ of $\ul{\pi}$
{disjoint from the critical locus of $\ul{\pi}$}.
\item
$\pi$ is a proper logarithmically smooth and integral morphism of fine and
saturated logarithmic schemes.
\item
If $\ul{U} \subset \ul{C}$ is the non-critical locus of $\ul{\pi}$ then
$\ocM_{C}|_{\ul{U}} \cong
\ul{\pi}^*\ocM_{W}\oplus\bigoplus_{i=1}^{n}p_{i*}\NN_W$.
\end{enumerate}
{Note that by (3), all marked points receive a nontrivial logarithmic
structure.} We write $\alpha_C:\cM_C\rightarrow\cO_C$ for the structure map of
the logarithmic structure on $C$. We call a geometric point of {$\ul C$}
\emph{special} if it is either a marked or a nodal point.

\begin{definition}\label{def:punctured-curves}
A \emph{punctured curve} over a fine and saturated logarithmic scheme $W$ is
given by the following data:
\begin{equation}\label{punctured-curve}
\big( C^{\circ} \stackrel{\pun}{\longrightarrow} C
\stackrel{\pi}{\longrightarrow} W, \bp = (p_1, \ldots, p_n) \big)
\end{equation}
where
\begin{enumerate}
\item
$C \to W$ is a logarithmic curve 
{in the sense of \cite{FKato} with its collection
of pairwise disjoint sections  $p_1, \ldots, p_n$ {of the underlying curve} as above}.
\item
$C^{\circ} \to C$ is a puncturing of $C$ along $\cP$, where $\cP$ is the
divisorial logarithmic structure on $\ul{C}$ induced by the divisor
$\bigcup_{i=1}^n p_i(\ul{W})$.
\end{enumerate}
When there is no danger of confusion, we may call $C^{\circ} \to W$ a punctured
curve. Sections in $\bp$ are called \emph{punctured sections}, or
simply \emph{punctures}. If $\ul W=\Spec\kappa$ with $\kappa$ a field,
we also speak of a \emph{punctured point}. We also say $C^{\circ}$ is a
puncturing of $C$ along the punctured sections $\bp$.

If locally around a punctured point $p_i$ the puncturing is trivial,
we say that the punctured point is a \emph{marked point}. In this case,
the theory will agree with the treatment of marked points in 
\cite{Chen},\cite{AC},\cite{LogGW}.
\end{definition}

\begin{examples}
\label{puncturedcurvesexamples}
(1) Let $W=\Spec\kk$ be the point with the trivial logarithmic structure, and 
$C$ be a non-singular curve over
$W$. Choose a closed point $p\in C$ and a puncturing
$\cM_{C^{\circ}}$ of $C$ at $p$. Then $\cM_{C^{\circ}}=\cP$, as 
$\cM_{C^{\circ}}\subset \cP^{\gp}$ can have no sections
$s$ with $\alpha_{C^{\circ}}(s)=0$. Thus, in this case the only puncturing 
$C^{\circ} \to C$ is the trivial one.
\smallskip

\noindent
(2) Let $W=\Spec(\NN\rightarrow\kk)$ be the standard logarithmic point, and $C$
be a non-singular curve over $W$, so that $\cM_C=\cO_C^{\times} \oplus
\ul{\NN}$, where $\ul{\NN}$ denotes the constant sheaf on $C$ with stalk $\NN$.
Again choose a {closed point} $p\in C$ with defining ideal $(x)$. Let $\cM_{C^{\circ}}
\subset \pi^*\cM_W\oplus_{\cO_C^{\times}}\cP^{\gp}$ be a puncturing. Let $s$ be
a local section of $\cM_{C^{\circ}}$ near $p$. Write $s= \big((\varphi, n),
x^m\big)$ with $\varphi\in\cO_{C,p}^{\times}$, $n\in \NN$. If $m<0$, then
Condition (2) of Definition~\ref{def:puncturing} implies that
$\alpha_{\pi^*(\cM_W)}(\varphi,n)=0$, so we must have $n>0$. Thus we see that 
\[
\overline{\cM}_{C^{\circ},p}\subset\{(n,m)\in \NN\oplus\ZZ\,|\, 
\hbox{$m\ge 0$ if $n=0$}\}\,.
\]
Conversely, any fine
submonoid of the right-hand-side of the above inclusion
which contains $\NN\oplus\NN$ can be realised as the stalk of the ghost sheaf at
$p$ for a puncturing. Note the monoid on the right-hand side is not finitely
generated, and is the stalk of the ghost sheaf of the final puncturing, see
Remark~\ref{rem:final puncturing}.
\smallskip

\noindent
(3) Let $\ul{W}=\Spec \kk[\epsilon]/(\epsilon^{k+1})$, and let $W$ be given by
the chart $\NN\rightarrow \kk[\epsilon]/(\epsilon^{k+1})$, $1\mapsto \epsilon$.
Let $C_0$ be a non-singular curve over $\Spec \kk$ with the trivial logarithmic
structure, and let $C=W\times C_0$. Choose a section $p:W\rightarrow C$, with
image locally defined by an equation $x=0$. Condition (2) of Definition
\ref{def:puncturing} now implies that a section $s$ of a puncturing
$\cM_{C^{\circ}}$ near $p$ takes the form $\big((\varphi, n),x^m\big)$ where
$\varphi\in \cO_{C,p}^{\times}$, and $0\le n\le k$ implies $m\ge 0$. In
particular, 
\[
\overline{\cM}_{C^{\circ},p}\subset\{(n,m)\in \NN\oplus\ZZ\,|\, 
\hbox{$m\ge 0$ if $n \leq k$}\},
\]
and any fine submonoid of the right-hand side containing $\NN\oplus\NN$ can be
realised as the stalk of the ghost sheaf at $p$ of a puncturing.

\begin{figure}[htb]
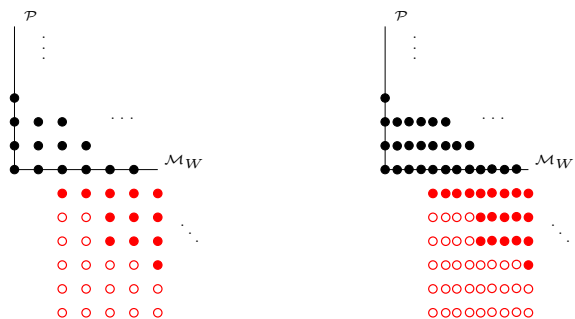

\input{deformation-pullback.pspdftex}\hspace*{1in}
\input{deformation-pullback2.pspdftex}
\caption{The solid puncturing on the left extends to
$\kk[\epsilon]/(\epsilon^2)$ but no further --- the circled elements are the ones
allowed for $k=1$. Its pull-back (see below) via $\cE^2 = \epsilon$ is pictured
on the right --- it is defined on $\kk[\cE]/(\cE^4)$ but does not extend further.}
\label{fig:deformation-pullback}
\end{figure}

\end{examples}

\subsubsection{Pull-backs of punctured curves}
Consider a punctured curve $(C^{\circ} \to C \to W, \bp)$ and a morphism of fine
and saturated logarithmic schemes $h: T \to W$. Denote by $(C_T \to T, \bp_T)$
the pull-back of the log curve $C \to W$ via $T \to W$. By
Proposition~\ref{prop:pullback-puncturing}, we obtain a commutative diagram
\begin{equation}
\label{Diagram: pull-back of punctured curves}
\vcenter{\xymatrix{
C^{\circ}_T \ar[r] \ar[d]_{\pun_T} & C^{\circ} \ar[d]^{\pun} \\
C_T \ar[r] \ar[d]_{\pi_T} & C \ar[d]^{\pi} \\
T \ar[r] ^{h}& W
}}
\end{equation}
where the bottom square is cartesian in the fine and saturated category, and the
square on the top is cartesian in the fine category, and such that $C_T^{\circ}$
is a puncturing of the curve $C_T$ along $\pun_T$. 

\begin{definition}
\label{Def: pull-back of punctured curves}
We call $C^{\circ}_T \to T$ the \emph{pull-back} of the punctured curve
$C^{\circ} \to W$ along $T \to W$.
\end{definition}

\subsubsection{Punctured maps}
\label{sss:punctured-maps}
We now fix a morphism of fine and saturated logarithmic schemes $X \to B$.

\begin{definition}
A \emph{punctured map to $X\to B$} over a fine and saturated logarithmic scheme
$W$ over $B$ consists of a punctured curve $(C^{\circ} \to C \to W,
\bp)$ and a morphism $f$ fitting into a commutative diagram
\[
\xymatrix{
C^{\circ} \ar[r]^{f} \ar[d]_\pi & X \ar[d] \\
W \ar[r] & B
}
\]
Such a punctured map is denoted by $(\pi: C^{\circ} \to W, \bp, f)$ or
$(C^\circ/W,\bp,f)$. 

The \emph{pull-back} of a punctured map $(C^{\circ}/ W, \bp, f)$ along a
morphism of fine and saturated logarithmic schemes $T \to W$ is the punctured
map $(C_T^{\circ} / T, \bp_T, f_T)$ consisting of the pull-back $C_T^{\circ} \to
T$ of the punctured curve $C \to W$ and the pull-back $f_T$ of $f$.
\end{definition}

\begin{definition}
\label{def:stability}
A punctured map $(C^{\circ} \to W, \bp, f)$ is called \emph{pre-stable} if 
$f:C^{\circ}\rightarrow X$ is pre-stable in the sense of Definition
\ref{def:pre-stability}.

A pre-stable punctured map is called \emph{stable} if its underlying map, marked
by the punctured sections, is stable in the usual sense. 
\end{definition}

\begin{proposition}\label{prop:pre-stable-open}
Let $(C^{\circ} / W, \bp, f)$ be a punctured map over $W$. 
\begin{enumerate}
\item
The locus of points of $W$ with pre-stable fibers forms an open sub-scheme of
$\ul{W}$. 
\item
If $f: C^{\circ} \to X$ is pre-stable, then the pull-back $f_T: C_T^{\circ} \to
X$ along any morphism of fine and saturated logarithmic schemes $T \to W$ is
also pre-stable.
\end{enumerate}
\end{proposition}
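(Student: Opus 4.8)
\emph{Proof proposal.} The plan is to handle the two statements separately: part~(2) will drop out of the pull-back constructions already established, while part~(1) will be reduced to the fact that the locus where a morphism of coherent sheaves of monoids is surjective is open. For part~(2), note that by Definition~\ref{Def: pull-back of punctured curves} the punctured curve $C_T^\circ$ is precisely the one produced by Proposition~\ref{prop:pullback-puncturing} out of the projection $C_T\to C$, and that by construction $f_T$ is the composition $C_T^\circ\to C^\circ\xrightarrow{\,f\,}X$. Putting $Y^\circ=C^\circ$, $Z^\circ=C_T^\circ$ and $g^\circ=(C_T^\circ\to C^\circ)$ in the notation of Proposition~\ref{prop:pullback-puncturing}, the claim is exactly the assertion of Corollary~\ref{Cor: pre-stability-pullback}, so nothing new is needed.

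For part~(1), I would begin from Corollary~\ref{cor:pre-stable}: a punctured map is pre-stable if and only if the induced morphism of ghost sheaves is surjective. For $f$ over $W$ this is surjectivity of
\[
\phi\colon f^*\ocM_X\oplus\ocM_C\longrightarrow\ocM_{C^\circ}
\]
on $\ul C$, and $\ocM_{C^\circ}$ is coherent since a puncturing is by definition a fine sheaf of monoids. Away from the union $\bigsqcup_i p_i(\ul W)$ of the (smooth, disjoint) punctured sections the puncturing is trivial, so $\ocM_C\xrightarrow{\sim}\ocM_{C^\circ}$ and $\phi$ is surjective there; hence the non-surjectivity locus $Z_0\subseteq\ul C$ of $\phi$ is contained in the closed set $\bigsqcup_i p_i(\ul W)$. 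By the standard coherence argument --- generators of $\ocM_{C^\circ,\bar x}$ lift through $\phi_{\bar x}$ and extend to an \'etale neighbourhood of $\bar x$, on which they continue to generate $\ocM_{C^\circ}$ --- the surjectivity locus of $\phi$ is open, so $Z_0$ is closed; and since each $p_i$ is a closed immersion, $\ul\pi(Z_0)=\bigcup_i p_i^{-1}\big(Z_0\cap p_i(\ul W)\big)$ is a finite union of closed subsets of $\ul W$, hence closed.

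It then remains to identify $\ul\pi(Z_0)$ with the set of points of $\ul W$ whose fiber is \emph{not} pre-stable. Fix a geometric point $\bar w$ of $\ul W$. Applying Corollary~\ref{cor:pre-stable} to the fiber $f_{\bar w}\colon C^\circ_{\bar w}\to X$, pre-stability of the fiber is surjectivity of $\phi_{\bar w}\colon f_{\bar w}^*\ocM_X\oplus\ocM_{C_{\bar w}}\to\ocM_{C^\circ_{\bar w}}$. At a punctured point $\bar x$ over $\bar w$, Proposition~\ref{prop:pullback-puncturing} presents $\ocM_{C^\circ_{\bar w},\bar x}$ as the fine push-out of $\ocM_{C^\circ,\bar x}$ along $\ocM_{C,\bar x}\to\ocM_{C_{\bar w},\bar x}$, and this last morphism is an isomorphism at the smooth point $\bar x$ because $C\to W$ is a log curve; hence the push-out equals $\ocM_{C^\circ,\bar x}$, and likewise $\ocM_{C_{\bar w},\bar x}=\ocM_{C,\bar x}$ and $f_{\bar w}^*\ocM_{X,\bar x}=f^*\ocM_{X,\bar x}$. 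Away from the punctured points nothing changes and $\phi$ is already surjective. Therefore $\phi_{\bar w}$ is just the restriction of $\phi$ over $\bar w$, so $f_{\bar w}$ is pre-stable if and only if no point over $\bar w$ lies in $Z_0$, i.e.\ $\bar w\notin\ul\pi(Z_0)$. Together with the previous paragraph this shows the pre-stable locus is the open complement of $\ul\pi(Z_0)$.

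The step I expect to be the main obstacle is this last identification of $\ocM_{C^\circ_{\bar w}}$ with the restriction of $\ocM_{C^\circ}$ to the fiber: Examples~\ref{puncturedcurvesexamples}(3) and Figure~\ref{fig:deformation-pullback} show that a general base change can genuinely change a puncturing, and the reason this cannot happen for the restriction to a fiber is exactly that log smoothness of $C\to W$ makes $\ocM_{C,\bar x}\to\ocM_{C_{\bar w},\bar x}$ an isomorphism at the special points, so the fine push-out describing the pulled-back puncturing is trivial. If one prefers, the same observation shows that the pre-stabilization $\widetilde{C^\circ}$ of Proposition~\ref{prop:pre-stable} commutes with passage to fibers, which yields the identification directly. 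Everything else --- openness of the surjectivity locus for coherent sheaves of monoids, and that the punctured sections are closed immersions --- is routine.
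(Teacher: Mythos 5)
Your proof is correct and takes essentially the same route as the paper: part~(2) is exactly Corollary~\ref{Cor: pre-stability-pullback}, and part~(1) reduces via Corollary~\ref{cor:pre-stable} to openness of the surjectivity locus of the ghost-sheaf map near the punctures, which is precisely the paper's Lemma~\ref{lem:logsurjctivityopen} (note that your ``standard coherence argument'' is that lemma, and its actual proof requires the generization/chart argument under the torsion-freeness hypothesis rather than just lifting and spreading out generators). The additional care you take in identifying the fiberwise puncturing with the restriction of $\ocM_{C^\circ}$ --- valid because the geometric point is strict and the fine push-out is trivial at the marked points --- fills in a detail the paper leaves implicit.
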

\begin{proof}
The map $f: C^{\circ} \to X$ induces a morphism of fine logarithmic structures 
\[
f^{\flat}\oplus \pun^{\flat}: f^*\cM_X\oplus_{\cO_{C}^{\times}}\cM_{C}
\to \cM_{C^{\circ}}\,.
\]
The pre-stability of $f$ is equivalent to the condition that $f^{\flat}\oplus
\pun^{\flat}$ is surjective by Corollary~\ref{cor:pre-stable}. Statement (1)
can be proved by applying Lemma~\ref{lem:logsurjctivityopen} below to the
neighborhood of each puncture. Statement (2) follows immediately from
Corollary~\ref{Cor: pre-stability-pullback}.
\end{proof}

\begin{lemma}\label{lem:logsurjctivityopen}
Let $\uY$ be a scheme, and $\psi^{\flat}: \cM \to \cN$ be a morphism of fine log
structures on $\uY$. Then the locus $\uY' \subset \uY$ over which $\psi^{\flat}$ is
surjective forms an open subscheme of $\uY$. 
\end{lemma}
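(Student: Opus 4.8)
The plan is to reduce the statement to a purely local, combinatorial computation on ghost monoids, exploiting the fact that surjectivity of a map of sheaves can be checked on stalks, and that the stalks of $\ocM$ and $\ocN$ are finitely generated. First I would note that the question of surjectivity of $\psi^\flat$ at a geometric point $\bar y$ depends only on the induced map $\overline{\psi}^\flat_{\bar y}: \ocM_{\bar y} \to \ocN_{\bar y}$ on ghost stalks; indeed, since $\cM_{\bar y} = \ocM_{\bar y}\oplus \cO^\times_{\bar y}$ (non-canonically, but the $\cO^\times$ factor is handled automatically), $\psi^\flat$ is surjective at $\bar y$ iff $\overline{\psi}^\flat_{\bar y}$ is surjective. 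So set-theoretically, $\uY'$ is the set of points where $\overline{\psi}^\flat$ is surjective on stalks. The torsion-freeness of $\ocM^\gp$ and $\ocN^\gp$ enters because it lets us choose, \'etale-locally, charts so that the ghost sheaves become constant on strata, and more importantly it ensures that cokernels behave well under the specialization maps.

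Next I would argue that $\uY'$ is stable under generization, i.e. if $\overline{\psi}^\flat$ is surjective at $\bar y$ then it is surjective at every generization $\bar y'$ of $\bar y$. For this, recall that for a fine log structure the generization map $\ocM_{\bar y} \to \ocM_{\bar y'}$ is surjective (it is a localization/face-quotient of the monoid), and these maps commute with $\overline{\psi}^\flat$. Given a generator of $\ocN_{\bar y'}$, lift it to $\ocN_{\bar y}$, write it as the image of some $m \in \ocM_{\bar y}$ by surjectivity at $\bar y$, then push $m$ forward to $\ocM_{\bar y'}$; this shows surjectivity at $\bar y'$. Hence $\uY'$ is generization-stable, so to prove it is open it remains to show it is \emph{constructible}, and by Noetherian induction it suffices to show that on each irreducible locally closed stratum where $\ocM$ and $\ocN$ are locally constant, the locus $\uY'$ is open. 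Actually the cleanest route: pass to an \'etale cover trivializing the combinatorial type, so $\ocM$, $\ocN$ and $\overline{\psi}^\flat$ are pulled back from a constant situation on a stratification; then on the open dense stratum $\overline{\psi}^\flat$ is constant with some cokernel $Q$, and surjectivity there is a single numerical condition, while on deeper strata the ghost monoids only grow, so — combined with generization-stability — $\uY'$ is the union of those strata on which the (constant) cokernel vanishes, which is open.

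Concretely, I would phrase the induction as follows: choose a chart $P \to \ocM$ near $\bar y$ and $S \to \ocN$ near $\bar y$ with $P = \ocM_{\bar y}$, $S = \ocN_{\bar y}$, inducing $\overline{\psi}^\flat_{\bar y}: P \to S$. The stratification of a neighborhood of $\bar y$ by the sharp monoid $\ocM_{\bar x}$ (resp. $\ocN_{\bar x}$) is by the faces of $P$ (resp. $S$): a point $\bar x$ specializing from the stratum of a face $F \subseteq P$ has $\ocM_{\bar x} = P/F$ and similarly $\ocN_{\bar x} = S/G$ for the corresponding face $G \subseteq S$, with $\overline{\psi}^\flat_{\bar x}$ the induced map $P/F \to S/G$. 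Then $\bar x \in \uY'$ iff $P/F \to S/G$ is surjective. Since the closure relation on strata matches the face relation reversed, and since a quotient of a surjection is a surjection, once $P \to S$ itself is surjective (i.e. $\bar y \in \uY'$) all of the neighborhood lies in $\uY'$, giving openness at such points directly; and if $P \to S$ is not surjective, openness of $\uY'$ near $\bar y$ follows because $\uY' \cap (\text{neighborhood})$ is the union of the — finitely many, hence constructible — strata on which the induced face-quotient map is onto, and generization-stability upgrades constructible to open.

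The main obstacle I anticipate is the bookkeeping in matching strata on $\uY$ to faces of $P$ and $S$ compatibly with $\overline{\psi}^\flat$ — in particular checking that the cokernel of $P/F \to S/G$ can only "improve" (become zero) as one passes to more special points is false in general, so the correct statement is the generization-stability in the other direction, and one must be careful that it is \emph{surjectivity} (a closed condition on the cokernel being zero) that propagates to generizations, not to specializations. The torsion-freeness hypothesis is what guarantees the charts and face structure are well-behaved (no unexpected torsion phenomena in $P^\gp$, $S^\gp$ obstructing the local description of the stratification), and I would invoke it precisely at the point of choosing the charts $P \to \ocM$, $S \to \ocN$ realizing the stalks.
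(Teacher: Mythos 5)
Your proposal is correct and follows essentially the same route as the paper: reduce to ghost sheaves, use torsion-freeness of $\ocM^{\gp}$, $\ocN^{\gp}$ to obtain compatible charts $P=\ocM_{\bar y}$, $S=\ocN_{\bar y}$ near a point of $\uY'$, and observe that the stalks at every point of the chart neighbourhood are (sharpened) face-localizations of $P$ and $S$, so surjectivity of $P\to S$ propagates to the whole neighbourhood. Note that this last observation already finishes the proof of openness, so your additional detour through constructibility, generization-stability, and the analysis of points where $P\to S$ is \emph{not} surjective is superfluous (one never needs to control neighbourhoods of points outside $\uY'$).
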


{
\begin{proof}
We thank the anonymous referee for suggesting the following simplified proof.
Since both $\cM$ and $\cN$ are $\cO^{\times}_{\uY}$-torsors over $\ocM$ and
$\ocN$ respectively, the surjectivity of $\psi^{\flat}$ is equivalent to the
surjectivity of the induced morphism $\ocM \to \ocN$ of ghost sheaves. Since the
statement is local on $\uY$, we may assume that $\ocN$ is globally generated.  

Suppose $y \in \uY$ is a geometric point over which $\ocM_{y} \to \ocN_y$ is
surjective. Then each global section of $\ocN$ lifts to a section of $\ocM$ in
an \'etale neighborhood of $y$. Since $\Gamma(\uY,\ocN)$ is finitely generated,
there is a common \'etale neighborhood of $y$ over which all the global sections
of $\ocN$ lift to $\ocM$. This finishes the proof. 
\end{proof}
}

The most interesting aspect of punctured curves is the appearance of negative
contact orders, defined as follows.

\begin{definition}
\label{Def: contact order}
The \emph{contact order} of a punctured map $(C^\circ/W,\bp,f)$ to $X\to B$ over
a log point $W=\Spec(Q\arr \kappa)$ at $p\in\bp$ is the composition
\begin{equation}
\label{Eqn: contact order}
u_p: \ocM_{X,\ul f(p)}\stackrel{f^\flat}{\longrightarrow}
\ocM_{C,p}\arr Q\oplus\ZZ \stackrel{\pr_2}{\longrightarrow}\ZZ
\end{equation}
with the second map the canonical inclusion. We say that $u_p$ is
\emph{negative} if $u_p(\ocM_{X,\ul f(p)})\not\subseteq\NN$.
\end{definition}

The difference with the case of logarithmic stable maps \cite[Def.\,1.8]{LogGW}
is the appearance of $\ZZ$ instead of $\NN$. {The tropical
interpretation of this condition will be discussed in \S\ref{sec:tropical
interpretation} below.} Note that if $(C^\circ/W,\bp,f)$ is pre-stable, the
contact order at $p\in\bp$ is negative if and only if $p$ is not a marked point.

\begin{example}
\label{ex:intuition}
Here is a simple example featuring a negative contact order. Let $\ul{X}$ be a
smooth surface, $\ul{D}\subseteq \ul{X}$ a non-singular rational curve with
self-intersection $-1$ inducing the divisorial log structure $X$ on $\ul{X}$.
Let $C\rightarrow W$ be the punctured curve of
Examples~\ref{puncturedcurvesexamples},(2), with $C\cong \PP^1$. Let
$\ul{f}:\ul{C}\rightarrow \ul{X}$ be an isomorphism of $\ul{C}$ with $\ul{D}$.
This can be enhanced to a punctured map $C^{\circ}\rightarrow X$ as follows. 

We first define $\bar f^{\flat}:\ul{f}^*\overline{\cM}_X=\ul{\NN}\rightarrow
\overline{\cM}_{C^{\circ}}\subseteq \overline{\cE}=\ul{\NN}\oplus \ZZ_p$ by
$1\mapsto (1,-1)$, where $\ZZ_p$ denotes the sky-scraper sheaf at $p$ with stalk
$\ZZ$. Note that {the inverse image of $1\in\Gamma(X,\overline{\cM}_X)$
under the projection map $\cM_X\rightarrow\overline{\cM}_X$ is the}
$\cO_X^{\times}$-torsor contained in $\cM_X$ corresponding to the line bundle
$\cO_X(-D)$, and thus $1\in\Gamma(C,\ul{f}^*\overline{\cM}_X)$ 
{similarly} yields the
$\cO^{\times}_C$-torsor corresponding to $\cO_C(1)$, using $-D^2=1$. On the
other hand, the torsor contained in $\cM_{C^{\circ}}$ corresponding to
$(1,0)$ is the torsor of $\cO_C$, and the torsor corresponding to $(0,1)$ is the
torsor of the ideal $\cO_C(-p)$. Hence $(1,-1)\in
\Gamma(C,\overline{\cM}_{C^{\circ}})$ corresponds to $\cO_C(1)$. Choosing an
isomorphism of torsors then lifts the map $\bar f^{\flat}$ to a map
$f^{\flat}:\ul{f}^*\cM_X\rightarrow \cM_{C^{\circ}}$ inducing a morphism
$f:C^{\circ}\rightarrow X$ (Figure~\ref{fig:-1-curve}).

Note this morphism does not lift to $C'\rightarrow W'=\Spec(\kk[\epsilon]/
(\epsilon^2))$ as in Examples~\ref{puncturedcurvesexamples},(3), since we can't
even lift $\bar f^{\flat}$ at the level of ghost sheaves. Indeed, $(1,-1)$ is
not a section of the ghost sheaf of $(C')^{\circ}$.

\begin{figure}[htb]
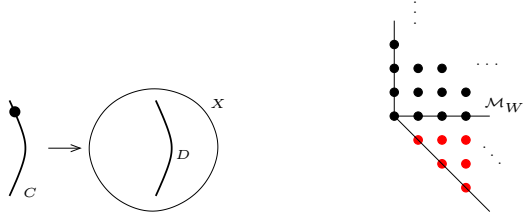

\input{minus-1-curve.pspdftex}
\hspace*{2cm}
\input{minus-1-curve-monoid.pspdftex}
\caption{The $(-1)$-curve and its monoid.}
\label{fig:-1-curve}
\end{figure}

\end{example}

\begin{remark}[Geometric implication of negative contact orders]
\label{rem:contained-in-strata}
Let $f:C^{\circ}/W\rightarrow X$ be a punctured map with $W=\Spec(Q\rightarrow
\kk)$. Suppose $p\in C$ is a punctured point which is not a marked point, and
let $C'$ be the irreducible component containing $p$, with generic point $\eta$.
Then, intuitively, $C'$ has negative order of tangency with certain strata in
$X$, and this forces $C'$ to be contained in those strata. 

Explicitly, let $P_p=\ocM_{X,\ul f(p)}$ and let $u_p:P_p\arr \ZZ$ be as in Definition~\ref{sec:tropical interpretation}. Then if $\delta\in P_p$ with $u_p(\delta)<0$, we must
have $\pr_1\circ\bar f_p(\delta)\not=0$, as there is no element of
$\overline{\cM}_{C^{\circ},p}\subset Q\oplus\ZZ$ of the form $(0,n)$ with $n<0$.
Thus if $\chi:P_p\rightarrow \ocM_{X,\ul f(\eta)}$ denotes the generization map,
we must have $u_p^{-1}(\ZZ_{<0})\cap \chi^{-1}(0) =\emptyset$. This restricts
the strata in which $f(C')$ can lie.

For example, if $X=(\ul{X},D)$ for a simple normal crossings divisor $D$ with
irreducible components $D_1,\ldots,D_n$, then $P_p=\bigoplus_{i: f(p)\in D_i}
\NN$. The value $u_p$ on the generator of $P_p$ corresponding to $D_i$ is the
contact order with $D_i$. Then $f(C')$ must lie in the intersection of 
those $D_i$ that have negative contact order at $p$.

A critical aspect of this phenomenon is discussed in \S\ref{ss:log-ideal}, see
especially Proposition~\ref{prop:puncturing-ideal-vanish} and Example~\ref{Expl:
Puncturing log ideal AA^2}.
\end{remark}


\subsection{The tropical interpretation}
\label{sec:tropical interpretation}
We now introduce the tropical picture, which gives the underlying
organizing language for punctured Gromov-Witten theory.
We assume familiarity with the discussion in ordinary logarithmic Gromov-Witten
theory as presented in \cite[\S2]{decomposition}. We review in \S\ref{sss:
tropical punctured maps} the notations and basic concepts briefly while
discussing the modifications needed for including non-trivial punctures.

\subsubsection{Tropical punctured maps}
\label{sss: tropical punctured maps}

{
In Appendix~\ref{App: Functorial tropicalization} we define tropicalization as a
functor associating to a fine log algebraic stack a generalized cone complex
$\Sigma(X)$. There is one stratum of $|\Sigma(X)|$ for each logarithmic stratum
of $X$, the latter defined as a maximal connected locally closed subset
$Z\subseteq|\ul X|$ with $\ocM_X|_Z$ locally constant.} {For each
logarithmic stratum $Z$ we choose, once and for all, a geometric point $\ol x_Z$
with image in $Z$. Then $\Sigma(X)$ is defined as the diagram with only one cone
\begin{equation}
\label{Eqn: sigma_Z}
\sigma_Z= \Hom(\ocM_{X,\ol x_Z},\RR_{\ge0})
\end{equation}
for each logarithmic stratum $Z$, along with all its faces, and arrows induced
by all sequences of generization morphisms and all face inclusions, including
inverses of those that are isomorphisms. Note that due to monodromy, $\Sigma(X)$
may contain nontrivial arrows $\sigma\to\sigma$. {The group
\[
\Aut_{\Sigma(X)}(\sigma)= \big\{\sigma\to\sigma\text{ arrow in }\Sigma(X)\big\}
\]
is a subgroup of the permutation group of the set of rays of
$\sigma$, hence is always finite. Note that the map
\[
\sigma_Z/\Aut_{\Sigma(X)}(\sigma_Z)\arr|\Sigma(X)|
\]
induced from $\sigma_Z\arr|\Sigma(X)|$ may still not be injective due to arrows
from strata of $X$ whose closure intersect the closure of
$Z$ and that are} not induced by monodromy on $Z$. Accordingly, the image of
$\sigma$ in $|\Sigma(X)|$ may be a finite quotient even on its interior.

By abuse of notation, $\Sigma(X)$ denotes both the distinguished presentation or
the equivalence class as a generalized cone complex. When writing
$\sigma\in\Sigma(X)$ we refer to the chosen presentation, so there is a unique
logarithmic stratum $Z\subseteq X$ with $\sigma=\sigma_Z$. For any geometric
point $\ol x$ with image in $Z$ we have the cone
\[
\sigma_{\ol x}= \Hom(\ocM_{X,\ol x},\RR_{\ge0})
\]
together with an isomorphism
\begin{eqnarray}
\label{Eqn: sigma_x -> sigma_Z}
\sigma_Z\arr \sigma_{\ol x},
\end{eqnarray}
but this isomorphism is only unique up to pre-composition with arrows
$\sigma_Z\to\sigma_Z$ in $\Sigma(X)$. In other words, the isomorphism
$\sigma_Z\arr\sigma_{\ol x}$ is unique up to the action of the monodromy group
$\Aut_{\Sigma(X)}(\sigma_Z)$ of the logarithmic stratum $Z$.

For $\sigma\in\Sigma(X)$ we denote by 
\begin{equation}
\label{Eqn: strata of X}
X_\sigma= \big\{ x\in \ul X\,\big|\, \text{there exists an arrow}\ \sigma\to
\sigma_{\ol x}\ \text{in $\Sigma(X)$}\big\}\subseteq \ul X
\end{equation}
the closed set of points $x\in \ul X$ with {$\sigma$ connected to
$\sigma_{\ol x}=\Hom(\ocM_{X,\ol x},\RR_{\ge0})$} by a sequence of generizations
and inverses of invertible generizations of the stalks of $\ocM_X$. We endow
$X_\sigma$ with the reduced induced scheme structure. {In practice, say
when $X$ is log smooth over a log point, $X_\sigma$} is the closure of the
logarithmic stratum given by $\sigma\in\Sigma(X)$.} {For brevity} we
refer to the $X_\sigma$ as \emph{strata} of $X$, but note that from the point of
view of stratified spaces, {and differing from the use in
Appendix~\ref{App: Functorial tropicalization}}, these are {at best}
closures of strata. Note also that for $\sigma=\{0\}$ we obtain $X_{\{0\}}=\ul
X$ {assuming $\Sigma(X)$ connected}, even if {there is no
geometric point $\ol x$ of $X$} with $\ocM_{X,\ol x}=0$.

A stable logarithmic map $(C/W, \bp, f)$ gives rise via functoriality
{of the tropicalization functor} $\Sigma$ to the diagram
\begin{equation}
\label{eqn: tropical curve}
\vcenter{\xymatrix@C=30pt
{
\Sigma(C)\ar[r]^{\Sigma(f)}\ar[d]_{\Sigma(\pi)}&\Sigma(X)\ar[d]\\
\Sigma(W)\ar[r]&\Sigma(B)
}}
\end{equation}
We will almost exclusively consider such diagrams {in which} $W$ is
covered by a single chart and $\Sigma(W)$ {has a single maximal cone
$\omega=(\cM_{W,\ol w}^\vee)_\RR$}
{for $\ol w$ some geometric point of $\ul W$}. Then it is shown in
\cite[Prop.\,2.25]{decomposition} that $\Sigma(\pi)$ along with the genera of
the irreducible components of the geometric fiber $C_{\ol w}$ is a (family of)
abstract tropical curves over $\omega$, also written $(G,\bg,\ell)$. Here $G$ is
the dual intersection graph of $C_{\ol w}$ with sets $V(G), E(G), L(G)$ of
vertices, edges and legs, and the maps
\[
\bg: V(G)\arr \NN,\qquad
\ell: E(G)\arr \Hom(\omega_\ZZ,\NN)\setminus\{0\},
\]
record the genera of the irreducible components of $C_{\ol w}$ and the lengths
of edges as functions on $\omega$, respectively, see
\cite[Def.\,2.19]{decomposition}. {If $G$ arises from the
tropicalization of a log curve over a geometric logarithmic point, we denote
elements of $V(G),E(G),L(G)$ both by their graph-theoretic notations as vertices
$v$, edges $E$, and legs $L$, or the corresponding algebraic geometric notations
as generic points $\eta$, nodes $q$, and marked points $p$.} By abuse of
notation, we view homomorphisms $\omega_\ZZ\arr\NN$ also as homomorphisms
$\omega\arr \RR_{\ge0}$ respecting the integral structure. Conversely, from
$(G,\bg,\ell)$, the cone complex
\[
\Gamma=\Gamma(G,\ell)
\]
recovering $\Sigma(C)$ has one copy of $\omega$ for each $v\in V(G)$, a cone
\begin{equation}
\label{Eqn: omega_E}
\omega_E=\big\{(s,\lambda)\in\omega\times\RR_{\ge0}\,\big|\,
\lambda\le \ell(E)(s)\big\}
\end{equation}
for each edge $E\in E(G)$, and a copy of $\omega\times\RR_{\ge0}$ for each leg.
Note that legs have infinite lengths for any parameter $s\in\omega_\RR$ when
viewing $\Gamma$ as a family of metric graphs.

The only change in the punctured setup is that a leg may now have finite length.
Indeed, if $L\in L(G)$ corresponds to a non-trivial puncture with
puncturing submonoid $Q^\circ\subset Q\oplus\ZZ$, then
$(Q^\circ)^\vee_\RR=\omega_L$ with
\begin{equation}
\label{Eqn: omega_L}
\omega_L=\big\{(s,\lambda)\in\omega\times\RR_{\ge0}\,\big|\,
\lambda\le \ell(L)(s)\big\}
\end{equation}
defined in analogy with \eqref{Eqn: omega_E} by a length function
$\ell(L):{\omega\arr \RR_{\ge 0}}$ with $\ell(L)\neq 0$. {Note,
however, that $\ell(L)$ is now only piecewise linear as illustrated in
Figure~\ref{fig: PL length}. Here a continuous function
$\ell:\omega\to\RR_{\ge0}$ on $\omega\in\Cones$ is \emph{piecewise linear} if there
exists a fan subdivision of $\omega$ such that $\ell$ is the restriction of a
linear form on each cone of the fan. For the following relation to monoids recall \eqref{Eqn: puncturedmonoids} from Remark~\ref{rem:DF1}.

{
\begin{figure}[htb]
\input{PL-length-fct.pspdftex}
\caption{The length of a bounded leg varies piecewise linearly under linear
variations of the adjacent vertex. The figure shows the intersection of the
situation with an affine hyperplane.}
\label{fig: PL length}
\end{figure}}

\begin{lemma}
\label{Lem: length function versus cone}
Let $Q$ be a sharp toric monoid and $\omega= Q_\RR^\vee$. Assume further that
$Q^\circ\subseteq Q\oplus \ZZ$ is a finitely generated submonoid with
$Q\oplus\NN\subsetneq Q^\circ$, $Q^\circ\cap\big(\{0\}\times
\ZZ_{<0}\big)=\emptyset$. Then there exists a nonzero, concave,
piecewise linear function
\[
\ell: \omega\arr \RR_{\ge0}
\]
with rational slopes such that
\begin{equation}
\label{Eqn: (Q^circ)^vee_RR from ell}
\big(Q^\circ)^\vee_\RR= \big\{ (s,\lambda)\in\omega\times\RR_{\ge 0}\,\big|\,
0\le\lambda\le\ell(s)\big\}.
\end{equation}

Each such $\ell:\omega\to \RR_{\ge0}$ arises in this fashion, and two submonoids
$Q_1^\circ, Q_2^\circ\subseteq Q\oplus\ZZ$ with $Q_i\oplus\NN\subsetneq
Q_i^\circ$, $Q_i^\circ\cap\big(\{0\}\times \ZZ_{<0}\big)=\emptyset$, $i=1,2$,
lead to the same $\ell$ {if and} only if $(Q_1^\circ)^\sat=
(Q_2^\circ)^\sat$.
\end{lemma}

\begin{proof}
Let $(s,\lambda)\in (Q^\circ)^\vee_\RR$. Then since $Q\oplus\NN\subseteq Q^\circ$, necessarily $s\in\omega=Q_\RR^\vee$ and $\lambda\ge 0$. Conversely, $(s,0)\in (Q^\circ)^\vee_\RR$ for all $s\in \omega$, and in fact,
\[
\omega\times\{0\} \subseteq (Q^\circ)^\vee_\RR
\]
is a facet. Since $Q^\circ\neq Q\oplus\NN$ no ray of $(Q^\circ)_\RR^\vee$ is
vertical, that is, agrees with $\RR_{\ge0} \cdot(0,1)$. Thus the union of the
maximal cells of $\partial (Q^\circ)^\vee_\RR$ neither contained in
$\omega\times\{0\}$ nor in $\partial\omega\times\RR$ form the graph of a
piecewise linear function $\ell:\omega\arr\RR_{\ge 0}$ as in the statement of
the lemma. Convexity of $(Q^\circ)^\vee_\RR$ implies that $\ell$ is concave.
Finally, $\ell\neq0$ for otherwise $(0,-1)\in Q^\circ_\RR$, contradicting
$Q^\circ\cap\big(\{0\}\times \ZZ_{<0}\big)=\emptyset$.

Conversely, given a nonzero, concave, piecewise linear
$\ell:\omega\to\RR_{\ge0}$ with rational slopes, the cone $\sigma$ on the
right-hand side of \eqref{Eqn: (Q^circ)^vee_RR from ell} contains
$\omega\times\{0\}$ as a facet. Hence $\sigma^\vee\subseteq \omega^\vee\times\RR
= Q_\RR\times\RR$ and $Q_\RR\times\RR_{\ge 0}\subseteq \sigma^\vee$. The case
$Q_\RR\times\RR_{\ge 0}=\sigma^\vee$ does not occur since $\sigma\neq
\omega\times\RR_{\ge0}$ by finiteness of the values of $\ell$. Moreover,
$\ell\neq 0$ implies $\sigma$ is a full-dimensional cone, and hence
{$(0,-1)\not\in \sigma^\vee$,} or
$\sigma^\vee\cap\big(\{0\}\times \ZZ_{<0}\big)=\emptyset$. This shows that
knowing $\ell$ retrieves the convex hull of $Q^\circ$ in $Q_\RR\times\RR$, hence
the set of integral points {of its saturation} $(Q^\circ)^\sat$.
\end{proof}
}

\begin{definition}
\label{Def: Punctured tropical curve}
(1)~A (family of) \emph{punctured tropical curves} over a cone $\omega\in
\Cones$ is a graph $G$ together with {two maps
\[
\bg: V(G)\arr\NN,\quad
\ell: E(G)\cup L^\circ(G) \arr \Map(\omega,\RR_{\ge0})
\]
for some subset $L^\circ(G)\subseteq L(G)$, with
$\ell(E)\in\Hom(\omega_\ZZ,\NN)\setminus\{0\}$ for $E\in E(G)$ and
$\ell(L):\omega\arr\RR_{\ge 0}$ for $L\in L^\circ(G)$ nonzero, concave,
piecewise linear, with rational slopes}. We refer to elements of $L^\circ(G)$ as
\emph{finite or punctured legs}, all other legs as \emph{infinite or
marked}.\\[1ex]
(2)~A (family of) \emph{punctured tropical maps} over $\omega\in\Cones$ is a map
of {generalized} cone complexes $h: \Gamma\arr\Sigma(X)$ for $\Gamma=\Gamma(G,\ell)$ the cone
complex defined by a punctured tropical curve $(G,\bg,\ell)$ over $\omega$.
\end{definition}

{For readability and as in \cite{decomposition} throughout, we assume
for the rest of this subsection that $\Sigma(X)$ is simple
\cite[Def.\,2.1]{decomposition}. This means that for each $\sigma\in \Sigma(X)$
the map $\sigma\arr|\Sigma(X)|$ is injective. We will treat the general case in
\S\ref{ss: targets with monodromy}.} As in \cite[Prop.\,2.26]{decomposition}, it
{then} follows readily from the definitions
that the tropicalization of a punctured map to $X$ over a logarithmic point
$\Spec(Q\arr \kappa)$ {with $\kappa$ algebraically closed} is a punctured
tropical map over $Q^\vee_\RR$.

Given a punctured tropical map, one extracts associated discrete data as in
\cite[Rem.\,2.22]{decomposition}. These are an \emph{image cone} map
\begin{equation}
\label{Eqn: image cone map}
\bsigma: V(G)\cup E(G)\cup L(G)\arr \Sigma(X)
\end{equation}
associating to each object of $G$ the {(distinguished representative of
the)} minimal cone of $\Sigma(X)$ it maps to, and, {referring again to the
notation in Section \ref{Sec:convention},}
\emph{contact orders}
\begin{equation}
\label{Eqn: contact orders}
u_q=u_E\in N_{\bsigma(E)},\qquad u_p=u_L\in N_{\bsigma(L)}
\end{equation}
for edges $E=E_q\in E(G)$ and for legs $L=L_p\in L(G)$, respectively.

Contact orders are defined by the image of the tangent vector $(0,1)$
{in the tangent space $N_{\omega}\times\ZZ$ of} $\omega_E$ or $\omega_L$
under $h$. {The contact order for an edge $E$ depends, up to sign, on a
choice of orientation on $E$}{, which we suppress in the notation.} For
legs, this definition is consistent with the definition of contact orders of
punctured maps in Definition~\ref{Def: contact order}.

Note that the contact order $u_p\in N_{\bsigma(L_p)}$ of a marked point $p \in
C_{\ol w}$ lies in $\bsigma(L_p)$. Conversely, a non-trivial puncture is forced
by a leg $L=L_p$ {if} for any parameter value $s\in\omega$, the line
segment $h\big(\{s\}\times[0,\ell(L)(s)]\big)$ inside the image cone
$\bsigma(L)\in \Sigma(X)$ does not extend to a half-line.

There is a simple tropical interpretation of pre-stability saying that images of
legs extend as far as possible inside their image cones. See
Figure~\ref{fig:tropical-curve} for an illustration. We call such tropical
punctured maps \emph{pre-stable}.

\begin{proposition}
\label{Prop: pre-stable tropical punctured maps}
{Let $(C^\circ/W,\bp,f)$ be a pre-stable punctured map over a log point
$W=\Spec(Q\arr \kappa)$ and $h=\Sigma(f):\Gamma(G,\ell)\arr \Sigma(X)$ its
tropicalization. For each {finite leg} $L\in L^\circ(G)$, we write 
$\omega_L\subseteq \omega\times \RR_{\ge 0}$ as in \eqref{Eqn: omega_L}.
Then for all
$s\in\omega$, we have}
\[
h(s,\ell(L)(s)){=h(s,0)+\ell(L)(s)\cdot u_L}\in \partial \bsigma(L),
\]
while $h(s,\ell(L)(s))+\varepsilon u_L\not\in \bsigma(L)$ for
all $\varepsilon>0$.
\end{proposition}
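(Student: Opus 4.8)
The plan is to reduce the statement to a purely monoid-theoretic assertion about the stalk $\ocM_{C^\circ,p}$ at a punctured point $p$, and then read off the tropical conclusion by dualizing. First I would work at a single punctured leg $L=L_p\in L^\circ(G)$ and note that, since $W=\Spec(Q\arr\kappa)$ is a log point, the stalk $\ocM_{C^\circ,p}$ is a fine submonoid $Q^\circ$ of $Q\oplus\ZZ$ containing $Q\oplus\NN$, and $\omega_L=(Q^\circ)^\vee_\RR$ by \eqref{Eqn: omega_L}. Dually, the image cone $\bsigma(L)=\sigma_{\bar\eta}=\Hom(\ocM_{X,\ol f(\eta)},\RR_{\ge0})$ for $\eta$ the generic point of the component carrying $p$, and the map $h|_{\omega_L}$ is dual to the generization-composed pullback $\chi\circ\bar f^\flat_p:\ocM_{X,\ol f(p)}\arr \ocM_{X,\ol f(\eta)}$ together with the leg direction recorded by $u_L$. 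The point $(s,\ell(L)(s))$ is the image under $h$ of the extreme ray endpoint $(s,\ell(L)(s))\in\omega_L$, i.e.\ the vertex of $\omega_L$ at maximal $\lambda$.

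The first key step is the identity $h(s,\ell(L)(s))\in\partial\bsigma(L)$. I would argue this by showing that the corresponding point of $\bsigma(L)$ lies on a face: concretely, a point of $\sigma_{\bar\eta}$ is interior iff it pairs strictly positively with every nonzero element of $\ocM_{X,\ol f(\eta)}$, equivalently with every generator. By Corollary~\ref{cor:pre-stable}, pre-stability says $f^*\ocM_X\oplus\ocM_C\arr\ocM_{C^\circ}$ is surjective, so every section of $\ocM_{C^\circ,p}$ lying in the ``negative cone'' $\{(q,n): n<0\}$—which must exist since $L$ is a punctured (finite) leg, so $Q^\circ\neq Q\oplus\NN$—is hit, forcing (via Condition~(2) of Definition~\ref{def:puncturing} and the computation in Remark~\ref{rem:contained-in-strata}) that the $Q$-component is nonzero. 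This is exactly the statement that $\chi\circ\bar f^\flat_p$ kills some generator of $\ocM_{X,\ol f(p)}$, equivalently that the dual endpoint lands on the proper face of $\bsigma(L)$ cut out by that generator. So $h(s,\ell(L)(s))\in\partial\bsigma(L)$.

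The second step, $h(s,\ell(L)(s))+\varepsilon u_L\notin\bsigma(L)$ for all $\varepsilon>0$, is the maximality half and I expect it to be the main obstacle. Here the content is that the leg has been extended ``as far as possible'': if one could move a further $\varepsilon$ in the direction $u_L$ and stay inside $\bsigma(L)$, then the length function $\ell(L)$ could be increased, contradicting that $Q^\circ$ is precisely the puncturing determined by $f$ under pre-stability (Proposition~\ref{prop:pre-stable}, part on $\widetilde Y^\circ$). I would spell this out by observing that $u_L=h(0,1)\in N_{\bsigma(L)}$ is the primitive generator of the leg ray, that $h(s,\ell(L)(s)) = h(s,0)+\ell(L)(s)\cdot u_L$, and that staying in $\bsigma(L)$ for a slightly larger multiple would mean the pairing of $h(s,\ell(L)(s))+\varepsilon u_L$ with the offending generator $\delta\in\ocM_{X,\ol f(p)}$ identified in Step~1 is still $\ge 0$; but that pairing equals $\langle h(s,0),\delta\rangle + (\ell(L)(s)+\varepsilon)\,u_L(\delta)$ with $u_L(\delta)<0$ (negativity of the contact order, since $p$ is not a marked point, by the remark after Definition~\ref{Def: contact order}), and by Step~1 it is already zero at $\varepsilon=0$; hence it becomes strictly negative, giving the claim. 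The delicate point to get right is that $\delta$ can be chosen with $\chi(\delta)=0$ — i.e.\ that the face on which the endpoint lies is in the direction \emph{opposite} to $u_L$ — which is precisely the geometric input of Remark~\ref{rem:contained-in-strata} ($u_p^{-1}(\ZZ_{<0})\cap\chi^{-1}(0)$ governs the containment), so I would isolate that as the crux and cite the structure of $\ocM_{C^\circ,p}\subset Q\oplus\ZZ$ (no elements $(0,n)$ with $n<0$) to pin it down. Finally I would remark that the argument is uniform in $s\in\omega$ since everything is piecewise-linear in $s$ and the combinatorial data ($\bsigma(L)$, $u_L$, the generator $\delta$) is constant, so no separate argument for varying $s$ is needed.
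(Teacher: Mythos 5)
Your overall strategy (dualize at the punctured point and exploit pre-stability) is the right one, but Step~1 has a genuine gap. You claim that pre-stability, together with the observation that any $(q,n)\in\ocM_{C^\circ,\bar p}$ with $n<0$ must have $q\neq 0$, produces a generator $\delta$ of $P=\ocM_{X,\ul f(\bar p)}$ whose pairing with $h(s,\ell(L)(s))$ vanishes. It does not: writing $\bar f^\flat(\delta)=(q_\delta,n_\delta)\in Q\oplus\ZZ$, that pairing equals $\langle s,q_\delta\rangle+\ell(L)(s)\,n_\delta$, and knowing $q_\delta\neq 0$ whenever $n_\delta<0$ says nothing about this number being zero --- it is the input for Remark~\ref{rem:contained-in-strata} (containment of the component in a stratum), which is a different statement. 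Since your Step~2 then relies on exactly such a $\delta$ (with $u_L(\delta)<0$ and pairing already zero at $\varepsilon=0$), the gap propagates. Your worry about choosing $\delta$ with $\chi(\delta)=0$ is a red herring for this proposition, and note also that $\bsigma(L)$ is dual to the stalk $\ocM_{X,\ul f(\bar p)}$ at the punctured point, not to the stalk at the generic point $\eta$.

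What is missing is the dual form of pre-stability, which is what the paper's proof uses and which makes both claims immediate. Pre-stability says $Q^\circ=\ocM_{C^\circ,\bar p}$ is generated by $Q\oplus\NN$ together with $\im(\bar f^\flat_{\bar p})$; dualizing gives
\[
\omega_L=(Q^\circ)^\vee_\RR=(\omega\times\RR_{\ge0})\cap h_L^{-1}(\sigma),\qquad \sigma=\bsigma(L)=P^\vee_\RR .
\]
Since $\omega_L=\{(s,\lambda):\lambda\le\ell(L)(s)\}$, for $\varepsilon>0$ the point $(s,\ell(L)(s)+\varepsilon)$ lies in $\omega\times\RR_{\ge0}$ but not in $\omega_L$, hence not in $h_L^{-1}(\sigma)$; this is precisely the second claim, and the first follows because $h_L(s,\ell(L)(s))\in\sigma$ is then a limit of points outside $\sigma$. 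Equivalently, the displayed identity is what guarantees that $\ell(L)(s)=\min\{\langle s,q_\delta\rangle/|n_\delta| : \delta\in P,\ n_\delta<0\}$ with the minimum attained, which is exactly the $\delta$ your Step~2 needs.
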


\begin{figure}[htb]
\input{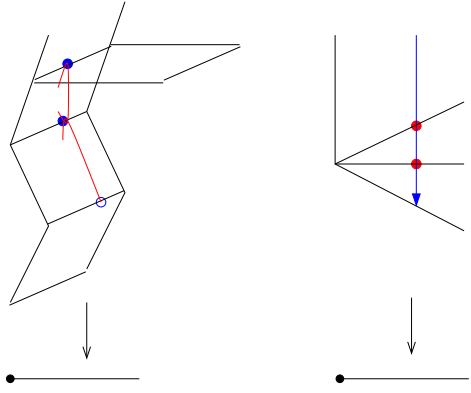}
\caption{A curve in {the fiber of a one-parameter family of surfaces (a threefold)} and its tropicalization. There are two
components, represented by vertices; one node represented by an edge; one
regular marked point represented by an infinite leg and one puncture represented
by a finite leg, which, by pre-stability, extends exactly as far as the cone
allows.}
\label{fig:tropical-curve}
\end{figure}

\begin{proof}
Let $\ol p\arr C$ be the punctured point defined by $L$, and write
$\omega=Q^\vee_\RR$, $\sigma=P^\vee_\RR$ for $P=\ocM_{X,\ul f(\ol p)}$. The map
$h_L:\omega_L\arr \sigma$ defined by $h$ is dual to
\[
\bar f^\flat_{\ul f(\ol p)}: P \arr
\ocM_{C,\ol p}= Q^\circ\subset Q\oplus\ZZ\,.
\]
By pre-stability, $Q^\circ$ is generated by $Q\oplus\NN$ and by the image of
$\bar f^\flat_{\ul f(\ol p)}$. Dually, we obtain
\[
\omega_L=(Q^\circ)^\vee_\RR= (\omega\times\RR_{\ge 0})\cap h_L^{-1}(\sigma)\,.
\]
Now $\omega_L$ is the convex hull of $\omega\times\{0\}$ and of
$\big\{(s,\ell(L)(s))\in\omega\times \RR_{\ge0}\big\}$, the graph of $\ell(L)$
as a map $\omega\arr\RR_{\ge0}$. This shows that no point
$\big(s,\ell(L)(s)\big)$ maps to an interior point of $\sigma$, {and the
line segment in $\sigma$ connecting $h(s,0)$ {with} $h(s,\ell(L)(s))$
can not be extended,} as claimed.
\end{proof}

Note that while $\omega_L^\vee\cap (N_\omega\times\ZZ)^*$ only computes the
saturation of $Q^\circ$, the tropical picture also contains the map $P\arr
Q\oplus\ZZ$. {{In the pre-stable case, $Q^\circ$ is then the
submonoid generated by the image of this map and by $Q\oplus \NN$, so can be
fully computed tropically.}}

\subsubsection{Types of punctured maps}
\label{sss: types of  punctured maps}
As in \cite[Def.\,2.23]{decomposition} for stable logarithmic maps, we now
capture the combinatorics underlying punctured maps and their tropicalization by
the notion of \emph{type}.

\begin{definition}
\label{Def: type}
(1)~The \emph{type of a family of tropical punctured maps}
$h:\Gamma=\Gamma(G,\ell)\arr \Sigma(X)$ over $\omega\in\Cones$ is the tuple
\[
\tau=(G,\bg,\bsigma,\bu)
\]
consisting of the associated genus-decorated connected graph $(G,\bg)$, the
image cone map $\bsigma$ from \eqref{Eqn: image cone map} and the collection
$\bu=\{u_q,u_p\}_{p,q} =\{u_E,u_L\}_{E,L}$ of contact orders from \eqref{Eqn:
contact orders}. In particular, for $x\in E(G)\cup L(G)$ we require $u_x\in
N_{\bsigma(x)}$. {We also sometimes write $\bu(x)$ instead of $u_x$ when
referring to a contact order given by a type rather than by a punctured
map.}\\[1ex]
(2)~The \emph{type of a punctured map $(C/W,\bp,f)$ to $X$ at a geometric point
{$\ol w$ of $\ul W$}} is the type of the associated tropical map
$\Gamma\arr\Sigma(X)$ over ${\omega=}(\ocM_{W,\ol w}^\vee)_\RR$.
\end{definition} 

Thus the type records the combinatorial data associated to
$h:\Gamma\arr\Sigma(X)$, but forgets the length function $\ell:E(G)\cup
L^\circ(G)\arr {\Map(\omega,\RR_{\ge0})}$.

For a punctured map over a logarithmic point, one sometimes also wants to keep
the curve classes $\bA(v)=\ul f_*\big([\ul C(v)]\big)$ for $\ul C(v)\subset \ul
C$ the irreducible component of $\ul C$ given by $v \in V(G)$. Here $\bA(v)$ is
a class of curves in singular homology of the corresponding stratum
$X_{\bsigma(v)}$, or in some other appropriate monoid of curve classes, written
$H_2^+(X_\sigma)$ for $\sigma\in\Sigma(X)$ in any case.\footnote{The notation
{allows to} \emph{define} $H_2^+(X_\sigma):=H_2^+(X)$ for all
$\sigma\in\Sigma(X)$, by interpreting classes of curves in a stratum $X_\sigma$
as classes of curves in $X$.} We refer to \cite[Basic Setup~1.6]{GSAssoc} for a
listing of the properties of $H_2^+$ assumed throughout. Adding this
information, one arrives at the notion of
\emph{decorated type}
\begin{equation}
\label{Eqn: decorated type}
\btau=(\tau,\bA)= (G,\bg,\bsigma,\bu,\bA).
\end{equation}

Finally, just as in logarithmic Gromov-Witten theory, generization of punctured
maps gives rise to contraction morphisms of graphs: Let $(C^\circ/W,\bp,f)$ be a
punctured map to $X$ and let {$\ol w'\arr\ol w$ be a specialization
arrow of geometric points of $W$. Denote by $h:\Gamma=\Gamma(G,\ell)\arr
\Sigma(X)$ and $h':\Gamma'=\Gamma(G',\ell')\arr \Sigma(X)$ the tropicalizations
of the strict restrictions of $(C^\circ/W,\bp,f)$ to $\ol w,\ol w'$.} Then as in
\cite[(2.15)]{decomposition}, generization defines a contraction morphism of the
associated decorated graphs
\[
\phi:(G,\bg)\arr (G',\bg'),
\]
{given} by contracting those edges $E=E_q\in E(G)$ with corresponding
node $\ol q\arr \ul C_{\ol w}$ not contained in the closure of the nodal locus
of $\ul C_{\ol w'}$. By abuse of notation we write $\phi$ also for the maps
\[
V(G)\arr V(G'),\quad L(G)\arr L(G'),\quad
E(G)\setminus E_\phi\stackrel{\mathrm{bij}}{\longrightarrow} E(G')  
\]
defining $\phi$. Here $E_\phi\subseteq E(G)$ is the subset of contracted edges.
Analogous to \cite[Def.\,2.24]{decomposition} there is a corresponding natural
notion of contraction morphism of (decorated) types of tropical
punctured maps
\begin{equation}
\label{Eqn: contraction morphism tropical type}
\begin{array}{lcl}
\tau=(G,\bg,\bsigma,\bu)&\longrightarrow& \tau'=(G',\bg',\bsigma',\bu')\\
\btau=(G,\bg,\bsigma,\bu,\bA)&\longrightarrow& \btau'=(G',\bg',\bsigma',\bu',\bA')
\end{array}
\end{equation}
Under such contraction morphisms, legs never get contracted. Moreover,
identifying $L(G)=L(G')$, the contact order $\bu(L)\in N_{\bsigma(L)}$ of a leg
of $G$ is the image of {$\bu'(L)\in N_{\bsigma'(L)}$} under the
inclusion of lattices $N_{\bsigma'(L)}\arr N_{\bsigma(L)}$ induced by the face
map $\bsigma'(L)\arr \bsigma(L)$. {An analogous statement applies to
contact orders of non-contracted edges.}

\begin{proposition}
\label{Prop: Generization leads to contraction morphisms}
Let {$(C^\circ/W,\bp,f)$} be a stable {punctured} map to $X$
over some logarithmic scheme $W$ and $(\tau_{\ol w},\bA_{\ol w})$ with
$\tau_{\ol w}= (G_{\ol w},\bg_{\ol w},\bsigma_{\ol w},\bu_{\ol w})$ its
decorated type at the geometric point $\ol w\to W$ according to
Definition~\ref{Def: type} and \eqref{Eqn: decorated type}.

Then if {$\ol w'\arr \ol w$ is a specialization arrow of geometric
points of $W$}, the map
\[
(\tau_{\ol w},\bA_{\ol w})\arr (\tau_{\ol w'},\bA_{\ol w'})
\]
induced by generization is a contraction morphism.
\end{proposition}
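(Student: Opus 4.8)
The plan is to follow the blueprint of the analogous statement for stable logarithmic maps in \cite[Def.~2.24]{decomposition}, reducing everything to the functorial behaviour of the data entering $\btau$ under generization. First I would localize: since the decorated type $\tau_{\bar w}$ depends only on $\ocM_W$ in an \'etale neighbourhood of $\bar w$ (and similarly at $\bar w'$), and since $\bar w'\in\cl(\bar w)$, I would base change the stable logarithmic map along a trait $T=\Spec R$ with $R$ a discrete valuation ring, mapping to $W$ with closed point over $\bar w'$ and generic point over a generization of $\bar w$. Writing $\omega=(\ocM_{W,\bar w}^\vee)_\RR$ and $\omega'=(\ocM_{W,\bar w'}^\vee)_\RR$, the surjective generization map $\ocM_{W,\bar w'}\arr\ocM_{W,\bar w}$ exhibits $\omega$ as a face of $\omega'$, and all of $(G,\bg,\bsigma,\bu,\bA)$ and $(G',\bg',\bsigma',\bu',\bA')$ are extracted functorially from the tropicalization diagram \eqref{eqn: tropical curve} and the dual graphs of the two fibres as in \cite[Rem.~2.22]{decomposition}. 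So it suffices to treat, in turn, the contraction $\phi$ of graphs, the image-cone map $\bsigma$, the contact orders $\bu$, and the curve classes $\bA$.

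For the graph, the contraction $\phi\colon(G,\bg)\arr(G',\bg')$ obtained by smoothing the nodes of $C_{\bar w'}$ not lying in the closure of the nodal locus of $C_{\bar w}$, together with the genus identity, is exactly \cite[(2.15)]{decomposition}; legs are not contracted since marked sections persist in the family. For the cone map I would argue that each $x\in V(G)\cup E(G)\cup L(G)$ corresponds to a point $\zeta_x\in\ul C$ (a generic point of a component, a node, or a marked section in the special fibre) which generizes to the point $\zeta_{\phi(x)}$ attached to $\phi(x)$ in the generic fibre, with $\ocM_{C,\zeta_x}\arr\ocM_{C,\zeta_{\phi(x)}}$ surjective; pushing through $\ul f$ and using that $f^\flat$ commutes with generization gives a surjection $\ocM_{X,\ul f(\zeta_x)}\arr\ocM_{X,\ul f(\zeta_{\phi(x)})}$, hence $\bsigma'(\phi(x))$ is a face of $\bsigma(x)$, compatibly with the face maps already present in $\Sigma(X)$. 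The contact order $u_E$ (resp.\ $u_L$) is the image under $h$ of the primitive generator of the edge (resp.\ leg) direction, so for $E\notin E_\phi$ and for all legs this slope is unchanged, and $u_{\phi(E)}$ (resp.\ $u_{\phi(L)}$) is identified with $u_E$ (resp.\ $u_L$) under the lattice inclusion $N_{\bsigma'(\phi(x))}\into N_{\bsigma(x)}$ — precisely the compatibility recorded just before the statement. The conditions imposed on the contracted edges $E\in E_\phi$ I would check verbatim as in \cite[Def.~2.24]{decomposition}.

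The remaining point is the curve classes, and this is the step I expect to be the main obstacle, since it is the one genuine geometric input rather than bookkeeping. Over the trait the family $\ul C_T\arr T$ is flat, and for each $v'\in V(G')$ the subcurve $\bigcup_{v\in\phi^{-1}(v')}\ul C_{\bar w'}(v)\subseteq\ul C_{\bar w'}$ is the special fibre of a flat subfamily whose generic fibre is $\ul C_{\bar w}(v')$; I would then use invariance of the class in $H_2^+$ under flat deformation — as encoded in the properties listed in \cite[Basic Setup~1.6]{GSAssoc} — to conclude $\ul f_*[\ul C_{\bar w}(v')]=\sum_{v\in\phi^{-1}(v')}\ul f_*[\ul C_{\bar w'}(v)]$, where on the right the classes are pushed forward along the strata inclusions $X_{\bsigma(v)}\into X_{\bsigma'(v')}$ dual to the face maps of the previous paragraph. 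This is exactly the identity $\bA'(v')=\sum_{v\in\phi^{-1}(v')}\bA(v)$ demanded of a contraction morphism of decorated types, and combining it with the previous paragraph completes the verification that generization induces a contraction morphism $(\tau_{\bar w'},\bA_{\bar w'})\arr(\tau_{\bar w},\bA_{\bar w})$.
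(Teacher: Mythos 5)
Your proposal is correct and follows essentially the same route as the paper: the paper's entire proof is a citation to the analogous generization statement for stable logarithmic maps in \cite[Lem.~2.30]{decomposition} together with the remark that \cite[Lem.~1.11]{LogGW} extends to contact orders at punctures, and your argument is precisely an unpacking of what those references contain (graph contraction, face maps of image cones, invariance of slopes, and flat‐family invariance of curve classes). The only slip worth noting is that the generic point of your trait should map to $\bar w$ itself rather than to a generization of it, so that the generic fibre actually computes $\tau_{\bar w}$; also, the leg/edge compatibility of contact orders recorded before the statement is part of the \emph{definition} of a contraction morphism, so it is the thing to be verified (via the unchanged slope under restriction to the face, as you indicate) rather than something already available.
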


\begin{proof}
The proof is essentially identical to \cite[Lem.\,2.30]{decomposition}, noting
that the proof of \cite[Lem.\,1.11]{LogGW} also works for contact orders at
punctures.
\end{proof}

\subsubsection{The balancing condition} 
\label{sss: balancing condition}

The above discussion fits well with the tropical balancing condition at vertices
of the dual graph of $C^\circ$. In fact, the statement \cite[Prop.\,1.15]{LogGW}
holds unchanged as there is no balancing condition at the endpoint of a leg
$L\in L(G)$. As we will need the balancing condition to prove boundedness, we
review this statement here. We note that the balancing conditions 
discussed here are heavily used in applications such as \cite{GSAssoc},
\cite[\S4]{GHKS-cubic} or \cite{GSWallStructures}, {as balancing severely limits the possible combinatorial types.}

Suppose given a stable punctured map $(C^\circ/W, \bp,f)$ with
$W=\Spec(\NN\rightarrow\kappa)$ the standard log point over an
{algebraically} closed field, and denote by $(G,\bg,\bsigma,\bu)$ its
type. Let $g:\tilde D\rightarrow C$ be the normalization of an irreducible
component $D$ with generic point $\eta$ of $C$. One then obtains, with
$\overline{\cM}=\ul{f}^*\overline{\cM}_X$, composed maps
\begin{align*}
&\tau_{\eta}^X: \Gamma(\tilde D,g^*\overline{\cM})\longrightarrow \pic \tilde
D \stackrel{\deg}{\longrightarrow}\ZZ\\
&\tau_{\eta}^C: \Gamma(\tilde D,g^*\overline{\cM}_{C^{\circ}})
\longrightarrow \pic \tilde
D \stackrel{\deg}{\longrightarrow}\ZZ
\end{align*}
with the first map on each line given by taking a section of the ghost sheaf
to the corresponding $\cO_{\tilde D}^{\times}$-torsor, {the inverse
image of this section in $g^*\cM$ or $g^*\cM_{C^{\circ}}$.}
These are compatible: the pull-back of $f^{\flat}$
to $\tilde D$, $\varphi:g^*\cM\rightarrow g^*\cM_{C^{\circ}}$, induces 
$\bar\varphi:g^*\overline{\cM}\rightarrow g^*\overline{\cM}_{C^{\circ}}$
and a commutative diagram 
\[
\xymatrix@C=30pt
{\Gamma(\tilde D, g^*\overline{\cM})\ar[r]^{\bar\varphi}\ar[dr]_{\tau^X_{\eta}}&
\Gamma(\tilde D, g^*\overline{\cM}_{C^{\circ}})\ar[d]^{\tau_{\eta}^C}\\
&\ZZ
}
\]
The map $\tau_{\eta}^X$ is given by $\ul{f}$ and $\cM$, so depends on
the logarithmic geometry of $f: C^\circ \to X$; however if $\ul{f}$ contracts
$D$, then $\tau_{\eta}^X=0$. On the other hand, $\tau_{\eta}^C$ is determined
completely by the geometry of $D\subseteq C$ and $g^*\ocM_{C^\circ}$ as follows.
We use the notation in \cite[\S1.4]{LogGW}. For each point $q\in D$ over a
node of $\uC$ we have $\overline{\cM}_{C^{\circ},\ol q}=S_{e_q}$, the submonoid
of $\NN^2$ generated by $(0,e_q)$, $(e_q,0)$ and $(1,1)$. The generization map
$\chi_q:\overline{\cM}_{C^{\circ},\ol q}\rightarrow \overline{\cM}_{C^{\circ},
\ol \eta}=\NN$ is given by projection to the second coordinate: $\chi_q(a,b)=b$.
{In what follows, we use $q$ always to denote points over nodes and
$p$ to denote punctured points.} We then have
\[
\Gamma(\tilde D, g^*\overline{\cM}_{C^{\circ}})
\subseteq \ \ \left\{(n_q)_{q\in \tilde D}\,\bigg|\,
\begin{array}{c} \hbox{$n_q\in S_{e_q}$ and $\chi_q(n_q)=\chi_{q'}(n_{q'})$} \\
\hbox{ for $q,q'\in \tilde D$}\end{array}\right\} \ \oplus\ 
\bigoplus_{p\in\tilde D}\ZZ.
\]
This inclusion induces an equality at the level of groups. The
equation $\chi_q(n_q)=\chi_{q'}(n_{q'})$ allows us to write $b=b_q=\chi_q(n_q)$
independent of $q$. We then obtain, with proof identical to that of
\cite[Lem.\,1.14]{LogGW}:

\begin{lemma}
\label{lem:tauetaC}
$\tau_{\eta}^C\big(((a_q,b)_{q\in\tilde D},(n_p)_{p\in\tilde D})\big)
=-\sum_{p\in\tilde D}n_p  +\sum_{q\in\tilde D} \frac{b-a_q}{e_q}$,
\end{lemma}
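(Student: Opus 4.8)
The plan is to reduce Lemma~\ref{lem:tauetaC} to the known computation in \cite[Lem.~1.14]{LogGW} for logarithmic curves, by isolating the contribution of the punctured legs. First I would recall that the map $\tau_\eta^C$ is defined by sending a section of $g^*\ocM_{C^\circ}$ on $\tilde D$ to the corresponding $\cO_{\tilde D}^\times$-torsor and then taking its degree. By the description of $\Gamma(\tilde D, g^*\ocM_{C^\circ})$ given just above the statement, any such section decomposes as a pair $\big((a_q,b)_{q\in\tilde D},(n_p)_{p\in\tilde D}\big)$, where the first component lies in the subsheaf built from the node monoids $S_{e_q}$ with matched generizations, and the second records the integers attached to the punctured (and marked) points $p\in\tilde D$. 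Since $\tau_\eta^C$ is a homomorphism and the decomposition is a direct sum at the level of groups, it suffices to compute its value separately on the two pieces.

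The first piece lies in the image of $\Gamma(\tilde D, g^*\ocM_{C})$ — or rather, in the part of $g^*\ocM_{C^\circ}$ that already comes from $g^*\ocM_C$ — so the computation $\tau_\eta^C\big((a_q,b)_q\big) = \sum_{q\in\tilde D}\frac{b-a_q}{e_q}$ is precisely what is proved in \cite[Lem.~1.14]{LogGW}; I would simply invoke that lemma, noting that the monoid $S_{e_q}$ at a node and its generization map $\chi_q$ are identical to those in the logarithmic (non-punctured) setting, so nothing in that argument changes. For the second piece, I need to identify, for a punctured or marked point $p\in\tilde D$ with generator of $\ZZ_p$ corresponding to the ideal $\cO_{\tilde D}(-p)$, the $\cO_{\tilde D}^\times$-torsor attached to the integer $n_p$. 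By the conventions set up in Example~\ref{ex:intuition} and in the definition of the divisorial log structure $\cP$, the element $n_p \in \ZZ$ corresponds to the line bundle $\cO_{\tilde D}(-n_p\, p)$, whose degree is $-n_p$. Summing over $p\in\tilde D$ gives $-\sum_{p\in\tilde D} n_p$, and adding the two contributions yields the claimed formula.

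The one genuinely new point compared with \cite[Lem.~1.14]{LogGW} is that the coordinate $m$ at a puncture is now allowed to be \emph{negative}, so the corresponding torsor is $\cO_{\tilde D}(n_p\, p)$ with $n_p<0$ possible. I would check that this causes no difficulty: the assignment ``ghost section $\mapsto$ torsor'' extends to the groupification $\ocM_{C^\circ,p}^\gp = \NN\oplus\ZZ$ exactly because $\pic\tilde D$ is a group, and the degree map $\pic\tilde D\to\ZZ$ is a group homomorphism, so the identity $\deg\cO_{\tilde D}(n_p\,p) = -n_p$ holds for all $n_p\in\ZZ$. Thus the presence of negative contact orders enters only through allowing $n_p<0$ in the final sum, and the formula is formally unchanged.

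I do not expect any serious obstacle here; the lemma is deliberately set up as a direct transcription of \cite[Lem.~1.14]{LogGW}, and the text preceding the statement has already done the work of describing $\Gamma(\tilde D,g^*\ocM_{C^\circ})$ and the generization maps $\chi_q$. The only thing requiring a line of care is the sign convention on the torsor attached to a puncture (so that a positive $n_p$ contributes $-n_p$ to the degree), which is fixed by the normalization that the generator $(0,1)$ of the puncture monoid corresponds to the ideal sheaf $\cO_{\tilde D}(-p)$ rather than $\cO_{\tilde D}(p)$.
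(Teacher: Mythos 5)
Your proposal is correct and follows the same route the paper intends: the paper gives no separate argument but states the proof is identical to that of \cite[Lem.~1.14]{LogGW}, which is exactly the reduction you carry out. Your extra care about the sign convention at punctures (the generator $(0,1)$ corresponding to the torsor of $\cO_{\tilde D}(-p)$) and the extension to negative $n_p$ via groupification is precisely the point that makes the transcription to the punctured setting work.
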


The equation $\tau_{\eta}^X=\tau_{\eta}^C\circ \varphi$ is a formula in
$N_D:=\Gamma(\tilde D, g^*\overline{\cM}^{\gp})^*$, which is described in
\cite[Eqns.~(1.12), (1.13)]{LogGW} as follows. Let $\Sigma\subset\tilde D$ be the
set of points $x$ in $\tilde D$ mapping to a special point of $C$. Thus $\Sigma$
can be identified with the subset of $E(G)\cup L(G)$ of edges or legs adjacent
to the vertex $v$ corresponding to $\eta$. For any point $x\in\tilde D$,
we write $P_x:=\overline{\cM}_{X,g(x)}$. Then
\[
N_D=\lim_{\substack{\longrightarrow\\ x\in\tilde D}} P_x^*
=\Big(\bigoplus_{x\in \Sigma} P_x^*\Big)\Big/ \sim
\]
where for any $a\in P_{\eta}^*$ and any $x,x'\in\Sigma$,
\[
(0,\ldots,0,\iota_{x,\eta}(a),0,\ldots,0) \sim (0,\ldots,0,\iota_{x',\eta}(a),0,
\ldots,0)\,.
\]
Here $\iota_{x,\eta}:P_{\eta}^*\rightarrow P_x^*$ is the dual of generization,
and the non-zero entries lie in the position indexed by $x$ and $x'$
respectively. Thus an element of $N_D$ is represented by a choice of
tangent vector $n_x\in N_{\bsigma(x)}=P_x^*$, one for each preimage $x\in\tilde
D$ of a special point of $C$; and two such choices are identified if they can be
related by repeatedly subtracting a tangent vector in $N_{\bsigma(v)}= P_\eta^*$
from one of the $n_x$ and adding it to another.

We then have, exactly as in \cite[Prop.\,1.15]{LogGW}, the balancing condition:

\begin{proposition}
\label{prop:balancing}
Suppose $(C^\circ/W,\bp,f)$ is a stable punctured map to $X/B$
with $W=\Spec (\NN\rightarrow\kappa)$ a standard log point. Let $D\subseteq
\ul{C}$ be an irreducible component with generic point $\eta$ and $\Sigma
\subset\tilde D$ the preimage of the set of special points. If $\tau^X_{\eta}
\in \Gamma(\tilde D,g^*\overline{\cM}^{\gp})^*$ is represented by 
$(\tau_x)_{x\in\Sigma}$, then
\[
(u_x)_{x\in\Sigma}+(\tau_x)_{x\in\Sigma}=0
\]
in $N_D=\Gamma(\tilde D,g^*\overline{\cM}^{\gp})^*$.
\end{proposition}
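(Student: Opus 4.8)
The plan is to reduce Proposition~\ref{prop:balancing} to the balancing condition for stable logarithmic maps, namely \cite[Prop.~1.15]{LogGW}, by observing that the additional structure at punctured legs contributes nothing to the relation in $N_D$. Concretely, I would argue that the identity $\tau^X_\eta = \tau^C_\eta \circ \bar\varphi$ in $\Gamma(\tilde D, g^*\ocM^{\gp})^*$ is exactly the balancing condition once both sides are unwound, and that this identity is insensitive to whether the special points adjacent to $v$ are nodes, marked legs, or punctured legs.

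First I would recall, following the setup in the paragraphs preceding the statement, that the map $\tau^C_\eta$ is computed in Lemma~\ref{lem:tauetaC} by the explicit formula $\tau^C_\eta\big(((a_q,b)_q,(n_p)_p)\big) = -\sum_{p\in\tilde D} n_p + \sum_{q\in\tilde D}\tfrac{b-a_q}{e_q}$, where the sum over $p$ now ranges over \emph{all} legs (marked and punctured), and the only change relative to \cite[Lem.~1.14]{LogGW} is that the integers $n_p$ attached to punctured legs may be negative. This is harmless: the formula is still a well-defined group homomorphism $\Gamma(\tilde D, g^*\ocM_{C^\circ}^{\gp})^* \to \ZZ$, and it factors through the generization/restriction data exactly as before. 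Second, I would identify $N_D = \Gamma(\tilde D, g^*\ocM^{\gp})^*$ with $\big(\bigoplus_{x\in\Sigma} P_x^*\big)/\!\sim$ as in the displayed description, where $\Sigma$ is the set of preimages of special points adjacent to $v$; this identification is purely a statement about the log structure $\ocM_{C^\circ}$ along $\tilde D$ and the contraction/generization maps, and it holds verbatim since the stalk of $\ocM_{C^\circ}$ at a node is still $S_{e_q}$ and at a leg is still (a submonoid of) $\NN^{\gp}$ paired against the ambient contact data.

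Third, and this is the crux, I would dualize the pre-stability-independent identity $\tau^X_\eta = \tau^C_\eta\circ\bar\varphi$. Evaluating $\tau^X_\eta$ against the basis vectors coming from each $x\in\Sigma$ reproduces the element $(\tau_x)_{x\in\Sigma}$ by definition; evaluating $\tau^C_\eta\circ\bar\varphi$ against the same vectors reproduces $-(u_x)_{x\in\Sigma}$, using Lemma~\ref{lem:tauetaC} together with the definition of $u_x$ as the image of $(0,1)$ under $h$ (equivalently, $u_p(\delta) = \pr_2\circ\bar f^\flat_p(\delta)$ for legs, and the analogous node formula). The node contributions $\tfrac{b-a_q}{e_q}$ are handled exactly as in \cite{LogGW}; the leg contributions $-n_p$ match $u_p$ by Definition~\ref{Def: contact order}, with \emph{no} positivity needed because $N_D$ and the target $\ZZ$ are groups. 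Summing, $(u_x)_{x\in\Sigma} + (\tau_x)_{x\in\Sigma} = 0$ in $N_D$.

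The main obstacle, such as it is, is purely bookkeeping: one must check that the identification of $N_D$ and the computation of $\tau^C_\eta$ genuinely go through when some $n_p \in \ZZ_{<0}$, i.e.\ that nothing in the proof of \cite[Lem.~1.14, Prop.~1.15]{LogGW} secretly used $n_p\ge 0$. Since those arguments take place after passing to groups (ghost sheaves tensored up, $\pic\tilde D$, $\deg$ to $\ZZ$), the sign of $n_p$ never enters, so the transcription is routine. I would therefore simply state that the proof of \cite[Prop.~1.15]{LogGW} applies mutatis mutandis, citing Lemma~\ref{lem:tauetaC} for the one formula that changes, and noting explicitly that there is no balancing constraint at the finite endpoint of a punctured leg — the endpoint is not a vertex of $G$, so it imposes no relation.
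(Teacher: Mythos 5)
Your proposal is correct and follows essentially the same route as the paper, which states the result "exactly as in \cite[Prop.~1.15]{LogGW}" after establishing Lemma~\ref{lem:tauetaC} and the description of $N_D$, the only modification being that the integers $n_p$ at punctured legs may be negative — harmless, as you note, since the whole computation takes place after passing to groups. Your explicit remark that no balancing constraint arises at the finite endpoint of a punctured leg is precisely the observation the paper makes at the start of \S\,\ref{sss: balancing condition}.
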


\begin{remark}
\label{Rem: tropical interpretation of balancing}
With regards to the above interpretation of elements of $N_D$ in terms
of the type of $(C^\circ/W,\bp,f)$, Proposition~\ref{prop:balancing} says the
following. The degree data of the $\cO_C^\times$-torsors contained in
$g^*\cM$ defines a tuple of tangent vectors $\tau_x\in
N_{\bsigma(x)}$, one for each edge or leg $x\in E(G)\cup L(G)$ adjacent to the
vertex $v$ corresponding to $\eta$, well-defined up to trading elements of
$N_{\bsigma(v)}$ via the embedding $N_{\bsigma(v)}\hookrightarrow
N_{\bsigma(x)}$ defined by the face morphism $\bsigma(v)\arr \bsigma(x)$. Then
(1)~$\tau_x+ u_x$ lies in the image of $P_\eta^*\to P_x^*$, and (2)~the
traditional tropical balancing condition holds in $P_\eta^*$ for $\tau_x+u_x$,
$x$ running over the set of special points.

Traditional tropical geometry arises for the case that $X$ is a toric variety
with its toric log structure. Then $\cM_X^\gp$ is the sheaf of rational
functions that are invertible on the big torus. Monomial functions define
trivial $\cO_X^\times$-subtorsors of $\cM_X^\gp$. Denoting by $N$ the
cocharacter lattice of the torus, we thus have a canonical monomorphism
\[
N^*\arr \Gamma(X,\cM_X^\gp)\arr \Gamma(X,\ocM_X^\gp)
\]
with composition with $\tau_\eta^X$ identically zero. Composing the equation in
Proposition~\ref{prop:balancing} with the induced map $N_D\arr N$ then
yields the traditional balancing condition $\sum_x \ol u_x=0$ for $\ol u_x$ the
image of $u_x$ under the embedding $N_{\sigma(x)}\arr N$.
\end{remark}

The following is an encapsulation of balancing which gives easy to use
restrictions on curve classes realized by punctured maps with given contact
orders. For the statement we denote by $\cL_s^{\times}$ the torsor corresponding
to $s\in \Gamma(X,\overline{\cM}_X^{\gp})$, that is, the inverse image of $s$
under the homomorphism $\cM_X^{\gp}\rightarrow \overline{\cM}_X^{\gp}$, and
write $\cL_s$ for the corresponding line bundle. Furthermore, the {germ} 
of $s$
at $f(p_i)$ lies in $P_{p_i}^{\gp}{=\ocM_{X,f(p_i)}^\gp}$ and hence defines a homomorphism
$P_{p_i}^*\rightarrow\ZZ$, which we write as $\langle \cdot, s\rangle$.

{
\begin{proposition}
\label{prop:intersectionnumbers}
Suppose given a punctured map $(C^\circ/W,\bp,f)$ for $W$ a log point.
Let $(G,\mathbf{g},\bsigma,\mathbf{u})$ be the type of this map,
and let $\ul{D}\subseteq \ul{C}$ be an irreducible component of the domain,
corresponding to $v\in V(G)$.
Let $p_1,\ldots,p_n\in \bp$ be the punctured points of $C^{\circ}$ 
contained in $\ul{D}$, and let $q_1,\ldots,q_m$ be the nodes of $\ul{C}$
contained in $\ul{D}$ but which are not nodes of $\ul{D}$. 
This gives rise to contact orders
$u_{p_i}, u_{q_j}$, noting that for the contact orders of the nodes,
we orient the corresponding edge away from $v$. Then we have
\[
\deg \left(\ul{f}^*\cL_s\right)|_{\ul{D}}=
-\sum_{i=1}^n \langle u_{p_i},s\rangle
-\sum_{i=1}^m \langle u_{q_i},s\rangle\,.
\]
\end{proposition}

\begin{proof}
First, by making a base-change, we can assume $W$ is the standard log point.
Note $\ul{f}^*\cL_{s}$ must be isomorphic to the line bundle
$\cL_{\bar f^{\flat}(s)}$ associated to the torsor corresponding to $\bar
f^{\flat}(s)$.

Now the total degree of $\cL_{\bar f^{\flat}(s)}$ can be calculated using
Lemma~\ref{lem:tauetaC} and details of the proof of \cite[Prop.\,1.15]{LogGW}.
Let $g:\tilde{\ul{D}}\rightarrow \ul{C}$ be the normalization of 
$\ul{D}$, and let $\eta$ be the generic point of $\ul{D}$. Then
\begin{align*}
\deg (f\circ g)^* \cL_s = {} &  \deg g^*\cL_{\bar f^{\flat}(s)}
\ =\  \tau_{\eta}^C(\varphi(s))\\
= {} & \sum_{q\in \tilde D} \frac{1}{e_q} \left(\langle V_{\eta},s\rangle
-\langle V_{\eta_q},s\rangle\right) -\sum_{{p_i}\in \tilde D}
\langle u_{{p_i}},s\rangle,
\end{align*}
in the notation of \cite[{Lem.\,1.14, Prop.\,}1.15]{LogGW}, and the
last equality coming from the proof of \cite[Prop.\,1.15]{LogGW}.
Here $V_{\eta}:P_{\eta}\rightarrow\NN$ is the map 
$\bar f^{\flat}:\overline{\cM}_{X,f(\eta)}\rightarrow \overline{\cM}_{C,\eta}$,
and similarly $V_{\eta_q}$, where $\eta_q$ is the generic point
of the other branch of $C$ at the node $q$. By \cite[(1.9)]{LogGW},
$\frac{1}{e_q}(V_{\eta}-V_{\eta_q})=-u_q$, where $u_q$ is the contact
order of the node $q$ with corresponding edge oriented away from $v$. 
Note that self-nodes of $\ul{D}$ appear twice in this sum, with
opposite sign, and hence cancel. This then yields the desired formula.
\end{proof}

\begin{corollary}
\label{intersectionnumbers}
Suppose given a punctured curve $(C^\circ/W,\bp,f)$ with $W$ a log point,
$\bp=\{p_1,\ldots,p_n\}$. Then we have
\[
\deg \ul{f}^*\cL_s=-\sum_{i=1}^n \langle u_{p_i},s\rangle\,.
\]
\end{corollary}

\begin{proof}
This is obtained from the previous proposition by summing over all
irreducible components of $\ul{C}$.
\end{proof}
}

\subsection{Basicness}
\label{ss: Basicness}

A key concept in logarithmic moduli problems is the existence of \emph{basic} or
\emph{minimal} logarithmic structures. The existence of such distinguished
logarithmic structures on the base space of families is a necessary condition
for a logarithmic moduli problem to be represented by a logarithmic algebraic
stack. A good notion of basicness should be an open property, and hence is
typically defined by a condition at geometric points.

The definition of basic stable logarithmic maps from \cite[\S1.5]{LogGW} is
based on universality of the associated family of tropical maps. The original
definition in \cite[Def.\,1.20]{LogGW} phrases this property in terms of the dual
monoids and only indicates the tropical interpretation in
\cite[Rem.\,1.18]{LogGW}. A proof of the equivalence of the definitions in the
present notation is given in \cite[Prop.\,2.28]{decomposition}. This equivalence
of descriptions really only reflects the anti-equivalence between the categories
of fs monoids and of rational polyhedral cones. In the following, we freely use
this equivalence of categories when referring to material from \cite{LogGW}.

The definition of basicness in the punctured case is formally the same
as for stable logarithmic maps. Here we take the concrete, tropical view. {For readability we again assume that $X$ is simple, deferring the general discussion to \S\ref{ss: targets with monodromy}.}

\begin{definition}
\label{Def: Basicness}
A pre-stable punctured map $(C/W,\bp,f)$ is \emph{basic at a geometric point
{$\ol w$ of $\ul W$}} if the associated family of tropical maps
\[
h:\Gamma=\Gamma(G,\ell)\arr \Sigma(X)
\]
over $(\ocM_{W,\ol w})^\vee_\RR$ is universal among tropical maps of the same
type $(G,\bg,\bsigma,\bu)$. {This means that each family of stable
tropical maps of type $(G,\bg,\bsigma,\bu)$ {over some cone $\omega$}
arises by pull-back from $h:\Gamma\arr\Sigma(X)$ via a unique map $\omega\to
(\ocM_{W,\ol w})^\vee_\RR$ in $\Cones$.} Basicness without specifying $\ol w$
refers to basicness at all geometric points.
\end{definition}

The monoids $\ocM_{W,\ol w}$ obtained from basic punctured maps also
{formally have} the same description as for stable logarithmic maps
described in \cite[Prop.\,1.19]{LogGW} and \cite[Prop.\,2.28]{decomposition}. We
provide a full proof of this description emphasizing the tropical perspective.

\begin{proposition}
\label{Prop: Basic monoid}
Let $({C^\circ/ W},\bp,f)$ be a basic, pre-stable punctured map over a
logarithmic point $\Spec(Q\arr \kappa)$ with $\kappa$ an algebraically closed
field, and let $(G,\bg,\bsigma,\bu)$ be its type. For each generic point
{$\eta\in \ul C$} with $v=v_\eta\in V(G)$ the associated vertex write
\[
P_\eta=\ocM_{X,\ul f(\eta)} = \big(\bsigma(v)_\ZZ\big)^\vee.
\]
Then the map
\begin{equation}
\label{Eqn: basic monoid}
\textstyle
Q^\vee\arr \big\{\big((V_\eta)_\eta,(\ell_q)_q\big)\in\prod_\eta P_\eta^\vee\times
\prod_q\NN\,\big|\, V_\eta- V_{\eta'}= \ell_q\cdot \bu(q) \big\}
\end{equation}
given by the duals of ${\big(\ol{\pi^\flat_\eta}\big)^{-1}}\circ\bar
f^\flat_\eta: P_\eta\arr Q$ and of the classifiying map $\prod_q \NN\arr Q$ of
the log smooth curve $C/W$, is an isomorphism. Here $q$ runs over the set of
nodes of $\ul C$ and, in the equation, $\eta,\eta'$ are the generic points of
the adjacent branches, with the order chosen as in the definition of $\bu$.
\end{proposition}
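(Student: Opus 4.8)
The plan is to reformulate the statement as a representability assertion for tropical maps and then dualise. Write $M_\tau$ for the fs monoid on the right-hand side of \eqref{Eqn: basic monoid}, i.e.\ $M_\tau=\{(V_\eta,\ell_q)\in\prod_\eta P_\eta^\vee\times\prod_q\NN \mid V_\eta-V_{\eta'}=\ell_q\cdot\bu(E_q)\}$; it is saturated, being the intersection of the saturated monoid $\prod_\eta P_\eta^\vee\times\prod_q\NN$ with a subgroup of its groupification, so it is the monoid of integral points of a well-defined cone $\varrho_\tau\in\Cones$ with $(\varrho_\tau)_\ZZ=M_\tau$. The first and main step is to show that $\varrho_\tau$ represents the functor on $\Cones$ sending a cone $\omega'$ to the set of families of pre-stable tropical punctured maps of type $\tau=(G,\bg,\bsigma,\bu)$ over $\omega'$, the genus decoration $\bg$ playing no role and being simply carried along.

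For this I would unwind Definition~\ref{Def: Punctured tropical curve} together with the description \eqref{Eqn: omega_E}--\eqref{Eqn: omega_L} of $\Gamma(G,\ell')$: a cone-complex morphism $h'\colon\Gamma(G,\ell')\arr\Sigma(X)$ of type $\tau$ is determined by its restrictions $V_v\colon\omega'\arr\bsigma(v)$ to the vertex cones; compatibility of $h'$ on an edge cone $\omega'_{E_q}$ with the two vertex cones forming its faces forces $h'$ there to be the affine interpolation in direction $\bu(E_q)$ from $V_v$ to $V_{v'}$, and matching at the second face yields exactly the relation $V_{v'}-V_v=\ell'_q\cdot\bu(E_q)$, while convexity of $\bsigma(E_q)$ shows the interpolating segment stays in $\bsigma(E_q)$. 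The only genuine departure from \cite[Prop.~2.28]{decomposition} is the treatment of finite legs: by pre-stability, Proposition~\ref{Prop: pre-stable tropical punctured maps} shows that along a punctured leg $L$ at a vertex $v$ the map $h'$ is the segment issuing from $V_v$ in direction $\bu(L)$ truncated at the maximal length keeping it inside $\bsigma(L)$, a datum determined by $V_v$, $\bu(L)$ and $\bsigma(L)$ alone; marked legs are infinite and contribute nothing, as in ordinary logarithmic Gromov--Witten theory. Conversely any $\psi\in\Hom_{\Cones}(\omega',\varrho_\tau)$ yields such data $(V_v)_v,(\ell'_q)_q$ and hence such an $h'$, naturally in $\omega'$; some care is needed on the faces of $\varrho_\tau$ where an $\ell'_q$ vanishes, which one interprets as degenerations of the corresponding edge exactly as in \cite[\S\,2.5]{decomposition}. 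This is the step I expect to be the main obstacle, precisely because of the bookkeeping around the truncated legs and the degenerate faces.

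Granting representability, the proposition follows. By Definition~\ref{Def: Basicness}, basicness of $(C^\circ/W,\bp,f)$ says exactly that its tropicalisation $h=\Sigma(f)\colon\Gamma(G,\ell)\arr\Sigma(X)$, which is a family of tropical maps of type $\tau$ over $\omega=Q^\vee_\RR$, is universal, i.e.\ that the classifying morphism $c\colon Q^\vee_\RR\arr\varrho_\tau$ is an isomorphism in $\Cones$, in particular an isomorphism of integral structures. By the dictionary of the previous step, the component of $c$ at a vertex $\eta$ is the cone morphism dual to $(\bar\pi_\eta^\flat)^{-1}\circ\bar f^\flat_\eta\colon P_\eta\arr Q$, and the component at a node $q$ is dual to the classifying map $\NN\arr Q$ of the log-smooth curve $C/W$ at $q$; that the resulting tuple actually lies in $\varrho_\tau$, and not merely in the ambient product, is precisely the assertion that $h$ is a morphism of cone complexes, i.e.\ the functoriality \eqref{eqn: tropical curve} of $\Sigma$ (compare \cite[Prop.~2.26]{decomposition}). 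Taking integral points of the isomorphism $c$ and using $(Q^\vee_\RR)_\ZZ=\Hom(Q,\NN)=Q^\vee$ and $(\varrho_\tau)_\ZZ=M_\tau$ then gives the isomorphism $Q^\vee\arr M_\tau$ of the statement, and it is by construction the map described there. The remaining dualities are formal, as in \cite[Prop.~1.19]{LogGW}.
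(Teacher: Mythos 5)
Your proposal is correct and follows essentially the same route as the paper: both construct the universal pre-stable tropical punctured map of type $\tau$ over the cone dual to the monoid on the right-hand side of \eqref{Eqn: basic monoid} (handling edges by affine interpolation and finite legs via pre-stability through Proposition~\ref{Prop: pre-stable tropical punctured maps}), verify the universal property, and conclude via Definition~\ref{Def: Basicness}. The "main obstacle" you flag is handled in the paper exactly as you anticipate, and the degenerate-face issue you raise is not needed for the proposition, since only the universal property in the direction "every family of type $\tau$ factors uniquely through the constructed one" is used.
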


\begin{proof}
Denote by $\omega\in\Cones$ the cone defined by the right-hand side of
\eqref{Eqn: basic monoid}. We first construct a tropical punctured map
\[
h_0:\Gamma=\Gamma(G,\ell_0)\arr \Sigma(X)
\]
over $\omega$ as follows. Define
\begin{equation}
\label{Eqn: ell_0, h_0}
\ell_0(E):\omega_\ZZ\arr \NN,\qquad
h_0\big(v):\omega_\ZZ\arr P_\eta^\vee
\end{equation}
for $E=E_q\in E(G)$ and $v=v_\eta\in V(G)$ as the projections to the $q$-th
factor in $\prod_q \NN$ and to $P_\eta^\vee= \bsigma(v)_\ZZ$, respectively. For
an edge $E=E_q$ with adjacent vertices $v$, $v'$ and associated cone $\omega_E$
from \eqref{Eqn: omega_E}, the map $h_0$ is defined by
\begin{align*}
{(h_0)_E: \omega_E}&\longrightarrow
(P_q^\vee)_\RR= \bsigma(E),\\
(s,\lambda)&\longmapsto h_0(v)(s)+\lambda \cdot \bu(E)
= h_0(v')(s)+(\ell_0(E)-\lambda)(-\bu(E)),
\end{align*}
with the sign of $\bu(E)$ chosen according to the orientation of $E$. In this
definition, we view $h_0(v(s))$, $h_0(v'(s))$ as elements of $(P_q^\vee)_\RR$ via
the face inclusions $P_\eta^\vee, P_{\eta'}^\vee\arr P_q^\vee$. The
equality holds by the relation in the definition of $\omega$ {by the
right-hand side of \eqref{Eqn: basic monoid}}. In particular,
$(h_0)_E$ restricts to $h_0(v)$, $h_0(v')$ on its two faces defined by $v,v'$.

Finally, for a leg $L=L_p\in L(G)$ with adjacent vertex $v\in V(G)$, the length
function $\ell_0(L)$ and the map $(h_0)_L$ defined on $(\omega_L)_\ZZ$ is
uniquely determined by $h_0(v)$ and by the contact order $\bu(L)$ via
pre-stability (Proposition~\ref{Prop: pre-stable tropical punctured maps}). This
finishes our construction of a pre-stable tropical punctured map $h_0$ over
$\omega$.

Conversely, if $h:\Gamma=\Gamma(G,\ell)\arr \Sigma(X)$ is a tropical punctured
map of type $(G,\bg,\bsigma,\bu)$ over some cone $\omega'\in\Cones$, the map
\[
\omega'\arr \omega,\quad s\longmapsto \big(h(v_\eta(s)),\ell(E_q)\big)_{\eta,q},
\]
with $v_\eta:\omega'\arr \Gamma$ the section of $\Gamma\arr\omega'$
defined by $v_\eta\in V(G')$, is readily seen to be the unique morphism in
$\Cones$ producing $h$ by pull-back from $h_0$.
\end{proof}

{}

\begin{definition}
\label{Def: basic monoid}
The fs monoid $Q$ defined by \eqref{Eqn: basic monoid} is called the \emph{basic
monoid associated to the type $\tau=(G,\bg,\bsigma,\bu)$}, while its dual
$Q^\vee\in \Cones$ (or $Q_\RR^\vee$ with the integral structure understood) is
called the \emph{associated basic cone}.
\end{definition}

{
Note that while the definition of the basic monoid makes sense for all types,
the length function $\ell_0(E)$ {constructed in \eqref{Eqn: ell_0, h_0}
in the proof of Proposition~\ref{Prop: Basic monoid}} may be zero for some edge
$E$. In this case, the universal tropical domain $\Gamma(G,\ell_0)$ in the proof
of Proposition~\ref{Prop: Basic monoid} is not the domain of a tropical
punctured map according to Definition~\ref{Def: Punctured tropical curve}. The
basic monoid is therefore only meaningful if there exists at least one tropical
punctured map of the given type.}\footnote{The analogue of this statement in
\cite{LogGW} is the condition $\ul{\text{GS}}(\ocM)\neq\emptyset$.} Observe also
that just as marked points do not enter in the definition of basicness, there is
no role for punctures in the statement of Proposition~\ref{Prop: Basic monoid}.

\begin{proposition}\label{basic-open}
Let $(C^\circ/W, \bp,f)$ be a pre-stable punctured map. Then 
\[
\Omega:=\{\ol w\in |\ul{W}|\,\big|\,
\hbox{$\{\ol w\}\times_{\ul W}(C^\circ/W,\bp,f)$ is basic}\}
\]
is an open subset of $|\ul{W}|$.
\end{proposition}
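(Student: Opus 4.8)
The plan is to reduce the statement to a local question, encode basicness by an isomorphism condition on the classifying map out of the basic monoid, and then prove openness by a combination of constructibility and a generization argument; throughout I use the anti-equivalence between fs monoids and rational polyhedral cones, as elsewhere in the paper.

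First I would make the standard reductions. The assertion is étale-local on $\underline W$, so I may assume $\underline W$ is locally Noetherian and carries a global chart for $\cM_W$; by Proposition~\ref{prop:pre-stable-open}(1) pre-stability is an open condition, so I may also shrink $\underline W$ and assume $(C^\circ/W,\bp,f)$ is pre-stable everywhere. After this only finitely many types $\tau$ occur as $\tau_{\bar w}$, and by Proposition~\ref{Prop: Generization leads to contraction morphisms} any specialization of geometric points induces a contraction morphism of the associated types; consequently the loci $W_\tau=\{\bar w\in|\underline W|\mid\tau_{\bar w}=\tau\}$ are locally closed and stratify $|\underline W|$.

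Next, for each occurring type $\tau$ let $Q_\tau$ be the basic monoid of Definition~\ref{Def: basic monoid}. Since $(C^\circ/W,\bp,f)$ has type $\tau$ at every $\bar w\in W_\tau$, the universal property established in Proposition~\ref{Prop: Basic monoid} produces a classifying homomorphism $\chi_{\bar w}\colon Q_\tau\to\overline\cM_{W,\bar w}$, and by the very definition of basicness (Definition~\ref{Def: Basicness}, in the form of Proposition~\ref{Prop: Basic monoid}) one has $\bar w\in\Omega\cap W_\tau$ if and only if $\chi_{\bar w}$ is an isomorphism. These maps are functorial under generization inside $W_\tau$, and moreover a contraction morphism $\tau\to\tau'$ contracting a set $E_\phi$ of edges induces, via the explicit description \eqref{Eqn: basic monoid} (set $\ell_q=0$ for $q\in E_\phi$, which merges the adjacent vertices and exhibits $Q_{\tau'}^\vee$ as the face $\{\ell_q=0:q\in E_\phi\}$ of $Q_\tau^\vee$), a localization $Q_\tau\to Q_{\tau'}$ at the face $F^{\min}_\phi\subseteq Q_\tau$ generated by the lengths of the contracted edges, compatible with the $\chi_{\bar w}$ and with the generization maps of $\overline\cM_W$. (One may, if desired, glue the constant sheaves $\underline{Q_\tau}$ on the $W_\tau$ along these localizations to a coherent sheaf $\overline\cM^{\min}_W$ with $\overline\cM^{\min}_{W,\bar w}\cong Q_{\tau_{\bar w}}$ and a morphism $\psi\colon\overline\cM^{\min}_W\to\overline\cM_W$ with stalks $\chi_{\bar w}$, so that $\Omega=\{\psi\text{ is an isomorphism}\}$.)

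It remains to show $\Omega$ is open. First, the locus where $\chi$ (equivalently $\psi$) is surjective on ghost sheaves is open: the non-surjective locus is constructible and, since generization of fs log structures is a quotient by a face and hence surjectivity is preserved under generization, it is stable under specialization, hence closed. As $\Omega$ is contained in and constructible inside this open surjective locus, and a constructible, generization-stable subset of a locally Noetherian scheme is open, the whole statement reduces to: $\Omega$ is stable under generization. So let $\bar w_0\in\Omega$ and let $\bar w$ be a generization, with associated contraction $\phi\colon\tau_{\bar w_0}\to\tau_{\bar w}$ contracting the edge set $E_\phi$; the generization map $\overline\cM_{W,\bar w_0}\to\overline\cM_{W,\bar w}$ is localization at a face $F$, and identifying $\overline\cM_{W,\bar w_0}=Q_{\tau_{\bar w_0}}$ via the isomorphism $\chi_{\bar w_0}$, the inclusion $F^{\min}_\phi\subseteq F$ is clear because the contracted nodes are exactly those whose thickness becomes $0$ at $\bar w$. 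The heart of the matter is the reverse inclusion $F\subseteq F^{\min}_\phi$: the tropicalization of $(C^\circ/W,\bp,f)$ over $\bar w$ is the restriction of the universal tropical map of type $\tau_{\bar w_0}$ to the face of $Q_{\tau_{\bar w_0}}^\vee$ dual to $F$, and since the nodes smoothed at $\bar w$ are precisely those in $E_\phi$, this face lies in the coordinate face $\{\ell_q=0:q\in E_\phi\}=Q_{\tau_{\bar w}}^\vee$; the universal property of the basic type-$\tau_{\bar w}$ tropical map then forces the two to coincide, i.e.\ $F=F^{\min}_\phi$. Hence the induced map $Q_{\tau_{\bar w}}=Q_{\tau_{\bar w_0}}/F^{\min}_\phi\to\overline\cM_{W,\bar w_0}/F=\overline\cM_{W,\bar w}$ is an isomorphism and $\bar w\in\Omega$.

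The only genuinely substantial point is this last equality $F=F^{\min}_\phi$, i.e.\ that basicness propagates under generization; everything preceding it is formal. I expect to carry it out exactly as in the proof of the corresponding openness statement in \cite{LogGW} (the punctured case being, as noted before Definition~\ref{Def: Basicness}, formally identical), using the explicit form of the basic monoid in Proposition~\ref{Prop: Basic monoid} together with the description of contraction morphisms recalled before Proposition~\ref{Prop: Generization leads to contraction morphisms}.
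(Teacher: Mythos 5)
Your argument is correct and takes essentially the same route as the paper: the paper's entire proof is the one-line citation ``This is identical to [LogGW, Prop.~1.22]'', and your write-up --- stratifying by type, encoding basicness as the classifying map $Q_{\tau_{\bar w}}\to\overline{\cM}_{W,\bar w}$ being an isomorphism, and reducing openness to the propagation of basicness under generization via the identification $F=F^{\min}_\phi$ of the face of invertible elements with the face dual to the basic cone of the contracted type --- is exactly a reconstruction of that cited argument adapted to punctures. No gap; the only tersely justified step (that the face dual to $F$ cannot be a \emph{proper} face of $Q_{\tau_{\bar w}}^\vee$) does follow as you indicate, since the actual type at $\bar w$ forces all edge-length functions and vertex positions to be nondegenerate on $F^\perp\cap Q^\vee$.
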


\begin{proof}
This is identical to \cite[Prop.\,1.22]{LogGW}.
\end{proof}

\begin{proposition}
\label{universalproperty}
Any pre-stable punctured map to $X\to B$ arises as the pull-back from a
basic pre-stable punctured map to $X\to B$ with the same underlying ordinary
pre-stable map. Both the basic pre-stable punctured map and the morphism are
unique up to a unique isomorphism.
\end{proposition}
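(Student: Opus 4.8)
The plan is to reduce the statement to a pointwise, geometric-point-by-geometric-point version using Proposition~\ref{Prop: Basic monoid}, exactly mirroring the corresponding result for stable logarithmic maps in \cite[Thm.~1.21]{LogGW} and its tropical reformulation in \cite[Thm.~2.34]{decomposition}. First I would note that the underlying ordinary pre-stable map, the punctured sections $\bp$, and the type $\btau$ are unaffected by the construction, so the content is entirely about the logarithmic structures on the base $W$ and on the curve $C^\circ$. Given a pre-stable punctured map $(C^\circ/W,\bp,f)$, at each geometric point $\bar w\to\ul W$ I would form the basic monoid $Q_{\bar w}$ associated to the type $\tau_{\bar w}$ via \eqref{Eqn: basic monoid}, together with its canonical map to $\ocM_{W,\bar w}^\vee$ arising from the universal property in Proposition~\ref{Prop: Basic monoid}; dualizing produces a map $\ocM_{W,\bar w}\to Q_{\bar w}^{\gp}$, and one checks it lands in $Q_{\bar w}^{\vee}$, giving a morphism of monoid sheaves. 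The main task is to globalize: show these pointwise basic monoids glue to a fine saturated sheaf $\ocM_{W^{\basic}}$ on $\ul W$, lift this to an honest log structure $\cM_{W^{\basic}}$ with a morphism $W\to W^{\basic}$ refining the log structure, and then pull the log curve and map back. The generization compatibility needed for the gluing is precisely Proposition~\ref{Prop: Generization leads to contraction morphisms} together with the functoriality of \eqref{Eqn: basic monoid} under contraction morphisms of types, which I would spell out: a contraction morphism $\tau'\to\tau$ induces a map of basic monoids $Q_\tau\to Q_{\tau'}$ compatible with the maps to $\ocM_W$.

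Concretely, I would carry out the following steps. Step 1: recall/record that for a fixed type $\tau$ the basic monoid $Q_\tau$ of Definition~\ref{Def: basic monoid} is fs (it is the integral points of the rational polyhedral cone $\omega$ appearing in the proof of Proposition~\ref{Prop: Basic monoid}), and that $\Cones$-functoriality makes $\tau\mapsto \omega_\tau=Q_\tau^\vee$ a functor on contraction morphisms. Step 2: for the given family, use Proposition~\ref{Prop: Generization leads to contraction morphisms} to see that the assignment $\bar w\mapsto \tau_{\bar w}$ is functorial for generization, hence $\bar w\mapsto Q_{\bar w}^\vee$ defines (by the same argument as in \cite[\S\,1.5]{LogGW}) a constructible, generization-compatible system; this is exactly the data of a fine saturated sheaf of monoids $\cP$ on $\ul W$ together with a map $\cP\to\ocM_W$. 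Step 3: following \cite[Lem.~2.35, Cor.~2.36]{decomposition} (or \cite[Appendix]{LogGW}), lift $\cP$ to a fine saturated log structure $\cM_{W^{\basic}}$ by pulling back the $\cO_W^\times$-torsors from $\cM_W$ along $\cP^{\gp}\to\ocM_W^{\gp}$, obtaining a strict-away-from-nothing morphism $c:W\to W^{\basic}=(\ul W,\cM_{W^{\basic}})$ with $c^\flat$ inducing the given $\cP\to\ocM_W$ on ghost sheaves. Step 4: pull back the logarithmic curve $C\to W$ and the puncturing along $c$ using Definition~\ref{Def: pull-back of punctured curves} and Proposition~\ref{prop:pullback-puncturing}; by Corollary~\ref{Cor: pre-stability-pullback} (and Proposition~\ref{prop:pre-stable-open}) the pulled-back map stays pre-stable, and the universal property of Proposition~\ref{Prop: Basic monoid} at each geometric point forces it to be basic — this gives the desired basic pre-stable punctured map $(C^{\circ,\basic}/W^{\basic},\bp,f^{\basic})$ and exhibits $(C^\circ/W,\bp,f)$ as its pull-back along $c$.

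For uniqueness, suppose $(C_1^{\circ}/W_1,\bp,f_1)$ is another basic pre-stable punctured map with the same underlying ordinary pre-stable map, together with $g:W\to W_1$ realizing the original family as a pull-back. At each geometric point the universal property in Proposition~\ref{Prop: Basic monoid} identifies $\ocM_{W_1,g(\bar w)}$ with the basic monoid of $\tau_{\bar w}$, and these identifications are compatible with generization (again by Proposition~\ref{Prop: Generization leads to contraction morphisms} and the $\Cones$-functoriality of Step~1), so $g^\flat$ on ghost sheaves agrees with $c^\flat$ on ghost sheaves; since the $\cO^\times$-torsor data is pinned down by compatibility with $\cM_W$ in both cases, $g$ and $c$ agree up to a unique isomorphism $W_1\cong W^{\basic}$, and this isomorphism lifts uniquely to the curves and maps by the Cartesian property of the pull-back constructions. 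The hard part will be Step~3 — producing the \emph{global} basic log structure from the pointwise basic monoids, i.e.\ checking that the sheaf $\cP$ is genuinely fine saturated in a neighborhood of each point and that the torsor-lifting is coherent; but this is essentially identical to the logarithmic Gromov-Witten case, since, as emphasized after Definition~\ref{Def: basic monoid}, punctures play no role in basicness, so the arguments of \cite[Thm.~1.21, Prop.~1.22]{LogGW} and \cite[Thm.~2.34]{decomposition} apply verbatim once Proposition~\ref{Prop: Basic monoid} and Proposition~\ref{Prop: Generization leads to contraction morphisms} are in place.
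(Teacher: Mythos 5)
Your overall strategy is the same as the paper's (which follows Prop.~1.24 of \cite{LogGW}): build the basic ghost sheaf $\ocM_W^\basic$ pointwise from the basic monoids of the types at geometric points, glue using generization compatibility (Proposition~\ref{Prop: Generization leads to contraction morphisms}), lift to a log structure via the fiber product $\cM_W\times_{\ocM_W}\ocM_W^\basic$, and invoke the universal property of Proposition~\ref{Prop: Basic monoid} for uniqueness. Your Steps~1--3 and the uniqueness paragraph are essentially the paper's argument. (A small direction slip: the canonical map is $Q_{\bar w}\to\ocM_{W,\bar w}$, dual to $(\ocM_{W,\bar w})^\vee_\RR\to Q^\vee_{\bar w,\RR}$, not a map $\ocM_{W,\bar w}\to Q_{\bar w}^{\gp}$ as in your opening paragraph; your Steps~2--3 do use the correct direction.)

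The genuine gap is in Step~4. You propose to obtain the basic family by ``pulling back the logarithmic curve $C\to W$ and the puncturing along $c$,'' but $c$ goes from $W$ to $W^\basic$, so pull-back along $c$ produces objects over $W$ from objects over $W^\basic$; it cannot manufacture the family over $W^\basic$ in the first place. What is missing is the construction of the basic log structure on the curve and of the basic morphism to $X$: one must build the sheaf $\ocM_{C^\circ}^\basic$ on $\ul C$ (from the cartesian family of universal tropical maps over the basic cones), check that $\bar f^\flat$ and $\bar\pi^\flat$ factor through it, set $\cM_{C^\circ}^\basic=\cM_{C^\circ}\times_{\ocM_{C^\circ}}\ocM_{C^\circ}^\basic$ together with the analogous fiber product on the base, and define $\pi_\basic$ and $f_\basic$ from these data --- only then can one verify that the original family is recovered by pull-back along $c$. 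This is precisely the content of the paper's proof that your proposal elides; likewise, pre-stability of the basic family is checked there from the fiber-product description, not from Corollary~\ref{Cor: pre-stability-pullback}, which again applies to pull-backs of an already-constructed pre-stable family, i.e., in the direction opposite to the one you need.
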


\begin{proof}
The proof is almost identical to \cite[Prop.\,1.24]{LogGW}. Let $(\pi:C\arr W,
\bp,f)$ be a pre-stable punctured map over $B$. For each geometric point $\ol
w\arr \ul W$ one obtains a tropical punctured map
\[
h_{\ol w}: \Gamma_{\ol w}\arr \Sigma(X)
\]
over $\omega_{\ol w}=(\ocM_{W,\ol w})^\vee_\RR$, of some type $(G_{\ol w},
\bg_{\ol w},\bsigma_{\ol w}, \bu_{\ol w})$. By {Proposition~\ref{Prop:
Generization leads to contraction morphisms}}, generization $\ol w\in \cl(\ol
w')$ {(i.e.\ existence of a specialization arrow $\ol{w}'\to{\ol w}$ as
in Appendix~\ref{App: Functorial tropicalization})} leads to a contraction
morphism \eqref{Eqn: contraction morphism tropical type}
\[
(G_{\ol w}, \bg_{\ol w},\bsigma_{\ol w}, \bu_{\ol w}) \arr
(G_{\ol w'}, \bg_{\ol w'},\bsigma_{\ol w'}, \bu_{\ol w'}).
\]
This contraction morphism induces an embedding of $\Gamma_{\ol w'}$ as a
subcomplex of $\Gamma_{\ol w}$ such that $h_{\ol w'}$ becomes the restriction
of $h_{\ol w}$. These maps are compatible with the classifying maps to the dual
of the respective basic monoids in Proposition~\ref{Prop: Basic monoid},
producing a cartesian diagram of pre-stable tropical punctured maps.

As in the proof of \cite[Prop.\,1.24]{LogGW}, this situation produces monoid
sheaves $\ocM_{C^\circ}^\basic$, $\ocM_W^\basic$ on $\ul C$ and $\ul W$,
respectively, and a commutative diagram
\begin{equation}
\label{Eqn: universal diagram for basic sheaf}
\vcenter{\xymatrix{
\ul f^*\ocM_X\ar[r]& \ocM_{C^\circ}^\basic\ar[r]&\ocM_{C^\circ}\\
& \ul\pi^*\ocM_W^\basic\ar[r]\ar[u]&\ul\pi^*\ocM_W\ar[u]
}}
\end{equation}
In case $B$ has a non-trivial log structure, all morphisms are compatible with
morphisms from the pull-back of $\ocM_B$. Continuing as in
\cite[Prop.\,1.24]{LogGW}, we can now define the desired basic log structures by
fiber product:
\[
\cM_{W}^\basic=\cM_W\times_{\overline{\cM}_W} \overline{\cM}_{W}^\basic,\quad
\cM_{C^{\circ}}^\basic=\cM_{C^{\circ}}\times_{\overline{\cM}_{C^{\circ}}} 
\overline{\cM}_{C^{\circ}}^\basic.
\]
Each of these defines a log structure with the structure map being the
composition of the projection to the first factor followed by the structure map
for that log structure. The pair of induced morphisms
\[
\pi_\basic:C^\circ_\basic=(\ul C,\cM_{C^\circ}^\basic)\arr W_\basic
=(\ul W,\cM_W^\basic),\quad f_\basic: C^\circ_\basic\arr X
\]
have tropicalizations at any geometric point {$\ol w$ of $\ul W$} given
by the universal pre-stable tropical punctured map to $\Sigma(X)$ over
$\Sigma(B)$ of type $(G_{\ol w}, \bg_{\ol w},\bsigma_{\ol w}, \bu_{\ol w})$.
Thus $(C^\circ_\basic/W_\basic,\bp,f)$ is a basic punctured map to $X$. By the
construction by fiber products of monoid sheaves, it follows that $f_\basic$
commutes with the morphisms to $B$, and that $(C^\circ/W,\bp,f)$ is the
pull-back of $(C^\circ_\basic/W_\basic,\bp,f)$ by $W\arr W_\basic$. The
constructed basic punctured map is also pre-stable since $(C^\circ/W,\bp,f)$ is
and by the definition of $\cM_{C^\circ}^\basic$ as a fiber product. Finally, the
universal property of the basic monoid with regards to pre-stable tropical
punctured maps in Proposition~\ref{Prop: Basic monoid} implies uniqueness.
\end{proof}

{
\begin{remark}
\label{Rem: Gap in GS2}
Following \cite{LogGW}, our construction of the basic pre-stable punctured map
in the proof of Proposition~\ref{universalproperty} argues pointwise and uses
compatibility with generizations to obtain the universal diagram of ghost
sheaves. However, the existence of an \'etale sheaf with the stated stalks and
generization maps is never checked, notably in the proof of
\cite[Lem.\,1.23]{LogGW}. We use this occasion to close this gap.

The basic monoids and generization homomorphisms define a
contravariant functor
\begin{equation}
\label{Eqn: functor of stalks}
\Pt(W)\arr \Mon,\quad \ol w\longmapsto Q_{\ol w}
\end{equation}
from the category of geometric points $\Pt(W)$ with specialization arrows,
recalled at the beginning of Appendix~\ref{App: Functorial tropicalization}, to
the category of monoids. A specialization arrow $\ol w\arr\ol w'$ maps to an
{epimorphism} of monoids $Q_{\ol w'}\arr Q_{\ol w}$ given by localization at
a face and subsequently dividing out the subgroup of invertible elements. In any
case, from a functor as in \eqref{Eqn: functor of stalks} one can define an
\'etale sheaf $\ocM^\basic$ by associating to an \'etale map $h:U\arr X$ the
monoid
\[
\ocM^\basic(U)=\colim_{\ol w\arr h} Q_{\ol w},
\]
together with the obvious restriction maps. Here the colimit is taken over all
factorizations of $\ol w:\Spec \kappa(\ol w)\to X$ over $h$. The gap in
\cite{LogGW} concerns the implicit claim that for a geometric point $\ol w$ of
$X$ the natural map
\[
Q_{\ol w}\arr \ocM^\basic_{\ol w}
\]
is an isomorphism.

This claim is \'etale local in $\ul W$. Hence we can assume that the given
(non-basic) log structure $\cM_W$ on $W$ is a Zariski log structure with a
global chart that is neat at some geometric point $\ol w$. We may also assume
that the logarithmic stratum containing $\ol w$ lies in the closure of all
other strata, and that the restriction map
\[
\Gamma(W,\ocM_W)\arr \ocM_{W,\ol w}
\]
is an isomorphism. By \cite[Prop.\,II.2.1.2]{Ogus} we obtain a continuous map
\[
g:|\ul W|\arr S=\Spec \ocM_{W,\ol w}
\]
from the topological space underlying $\ul W$ to the monoidal scheme of prime
ideals of $\ocM_{W,\ol w}$, a finite topological space, together with an
isomorphism
\[
g^{-1}\ocM_S\arr \ocM_W.
\]
Here $\ocM_S$ is the structure sheaf of $\Spec \ocM_{W,\ol w}$, a sheaf of sharp
monoids.\footnote{We have reinterpreted the statement in \cite{Ogus} as a
statement for Kato fans to avoid dealing with invertible elements, which are
irrelevant for our discussion.}

Note that a finite topological space is an Alexandrov space. Thus a subset is
closed iff it is closed under specialization, and sheaves (of sets, say) are
indeed given by contravariant functors from the set of points to $\Sets$, see
e.g.\ \cite[\S2]{Ladkani}.

The universal property of basic monoids provides a monoid homomorphism
\[
Q_{\ol w}\arr \ocM_{W,\ol w},
\]
hence a morphism of monoid spectra
\[
k:S=\Spec \ocM_{W,\ol w}\arr S_\basic=\Spec Q_{\ol w}.
\]
Compatibility of the basic monoids and their universal property with
generization now shows first that $(k\circ g)^{-1}\ocM_{S_\basic}$ is a sheaf
of monoids with stalks equal to the basic monoids on $W$ and having the expected
generization homomorphisms, hence defines $\ocM_W^\basic$, and second that the
composition
\[
\ocM_W^\basic=(k\circ g)^{-1}\ocM_{S_\basic}\arr g^{-1}\ocM_S\arr \ocM_W
\]
stalkwise restricts to the classifying homomorphisms for $\ocM_W$.

A similar argument on $C$ provides the remaining parts of Diagram~\eqref{Eqn: universal
diagram for basic sheaf}.
\end{remark}
}

\begin{proposition}
\label{noautomorphisms}
An automorphism $\varphi:C^{\circ}/W\rightarrow C^{\circ}/W$ of a basic
pre-stable punctured map $(C^{\circ}/W, \bp, f)$ with
$\ul{\varphi}=\id_{\ul{C}^{\circ}}$ is trivial.
\end{proposition}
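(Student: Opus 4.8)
The plan is to reduce the statement to the corresponding fact for ordinary (non-punctured) log curves, exactly as in the analogue for stable log maps. The point is that an automorphism $\varphi$ with $\underline\varphi = \id$ is the data of an automorphism $\varphi^\flat$ of the sheaf of monoids $\cM_{C^\circ}$ over $\cM_W$ compatible with $f^\flat$ and $\pun^\flat$, inducing the identity on ghost sheaves. First I would observe that $\varphi$ restricts to an automorphism of the unpunctured log curve $C/W$: indeed $\varphi^\flat$ preserves the sub-log-structure $\cM_C \subset \cM_{C^\circ}$ since it is generated by $\pun^\flat(\cM_W)$ and the divisorial log structure $\cP$ at the punctures, both of which are canonically defined and hence preserved. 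Since $(C/W,\bp)$ is a basic stable log map (or rather, its underlying log curve is a log smooth curve in the sense of F.\ Kato), the corresponding statement there — an automorphism inducing the identity on the underlying scheme and on ghost sheaves is trivial — follows from the standard argument: away from special points $\cM_C$ is $\pi^*\cM_W \oplus \bigoplus p_{i*}\NN$, and at a node $\cM_C$ is a fibered sum $\cM_W \oplus_\NN S_{e}$, so any ghost-sheaf-identity automorphism over $\cM_W$ must be the identity (there are no nontrivial $\cO^\times$-automorphisms since those would change the smoothing parameters, i.e.\ act nontrivially on $\cM_W$). Thus $\varphi^\flat|_{\cM_C} = \id$.

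Next I would treat the extra generators of $\cM_{C^\circ}$ not already in $\cM_C$. By pre-stability and Corollary~\ref{cor:pre-stable}, $\ocM_{C^\circ}$ is generated by $f^*\ocM_X$ and $\ocM_C$; lifting to the level of log structures, $\cM_{C^\circ}$ is generated as a sheaf of monoids by $\pun^\flat(\cM_C)$ and $f^\flat(f^*\cM_X)$. Since $\varphi^\flat$ is the identity on $\cM_C$ and commutes with $f^\flat$ (compatibility of $\varphi$ with $f$), it is the identity on a generating sheaf of submonoids, hence $\varphi^\flat = \id$ on all of $\cM_{C^\circ}$. Therefore $\varphi = \id$.

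The main subtlety — and the one step I would write out carefully — is the compatibility argument: the definition of a morphism of punctured maps requires $f \circ \varphi = f$, which at the level of log structures reads $\varphi^\flat \circ f^\flat = f^\flat$, and this is exactly what lets us conclude $\varphi^\flat$ fixes the image of $f^\flat$. One must check that the generation statement "$\cM_{C^\circ}$ is generated by $\pun^\flat(\cM_C)$ and $f^\flat(f^*\cM_X)$" holds as sheaves of monoids (not merely on ghost sheaves): this is immediate from pre-stability in the form of Definition~\ref{def:pre-stability}, since $\cM_{C^\circ} = \cM_{\widetilde C^\circ}$ and $\cM_{\widetilde C^\circ}$ is \emph{by definition} the subsheaf of $\cM_{C^\circ}$ generated by $\cM_C$ and $f^\flat(f^*\cM_X)$. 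So there is no real gap here; the proof is genuinely the same as \cite[Rem.~1.21]{LogGW} once one notes that pre-stability is precisely the hypothesis making the argument go through in the punctured setting. (Without pre-stability the conclusion can fail, since $\cM_{C^\circ}$ could contain extra sections supported at a puncture that $f^\flat$ does not see.)
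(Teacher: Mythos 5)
Your overall route coincides with the paper's, whose proof is a one-line reference to \cite[Prop.~1.25]{LogGW}: the genuinely new point in the punctured setting is exactly the one you isolate, namely that by pre-stability (Corollary~\ref{cor:pre-stable}) the sheaf $\cM_{C^\circ}$ is generated by $\cM_C$ together with $f^\flat(f^*\cM_X)$, and compatibility with $f$ pins down $\varphi^\flat$ on the second set of generators once it is known to be the identity on $\cM_C$. That bootstrap is correct and is what makes the unpunctured argument transplant.

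The shaky step is the first one. An automorphism of the punctured map over the identity of the underlying data includes a possibly nontrivial automorphism $\theta$ of the log structure $\cM_W$ itself (this is the case needed for representability of $\scrM(X/B,\btau)\to\scrM(\ul X/\ul B)$ in Theorem~\ref{Thm: scrM and fM are algebraic}), and a log smooth curve $C/W$ \emph{by itself} has many such automorphisms: twist $\cM_W$ by any homomorphism $\ocM_W\to\cO_W^\times$ and twist $\cM_C$ correspondingly. Your parenthetical justification --- that a nontrivial $\cO^\times$-automorphism ``would act nontrivially on $\cM_W$'' and is therefore excluded --- has the logic backwards: acting nontrivially on $\cM_W$ is precisely what must be ruled out, and this is where basicness enters, which your argument otherwise never uses (and without which the statement is false). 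The missing step is: $\bar\theta\colon Q\to Q$ commutes with the maps $\bar f^\flat_\eta\colon P_\eta\to Q$ (read off at generic points of components, where $\ocM_{C^\circ}=\ocM_C=\bar\pi^*\ocM_W$) and fixes the nodal elements $\rho_q$; by the description of the basic monoid in Proposition~\ref{Prop: Basic monoid}, $Q^{\gp}\otimes\QQ$ is generated by the images of the $P_\eta^{\gp}$ and the $\rho_q$, whence $\bar\theta=\id$; only then is the residual torsor part $\ocM\to\cO^\times$ killed using compatibility with $f^\flat$, with $\alpha$ at nodes and markings, and injectivity of $\cO_W\to\pi_*\cO_C$. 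Note also that one cannot literally quote the unpunctured proposition for ``the basic stable log map $(C/W,\bp,f)$'', since $f$ is defined only on $C^\circ$ and does not factor through $C$; what carries over is the argument, not the statement.
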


\begin{proof}
This is identical to \cite[Prop.\,1.25]{LogGW}.
\end{proof}

\subsection{Global contact orders {and global types}}\mbox{}
\label{subsec: global contact}
A fundamental ingredient in the definition of logarithmic
Gromov-Witten invariants is the global specification of contact orders at the
marked points. The local behaviour of contact orders in families of stable
logarithmic maps is captured by the notion of morphism of types \eqref{Eqn:
contraction morphism tropical type}, implying that generization leads to the
possible propagation of contact orders via face inclusions in $\Sigma(X)$. The
global definition can be subtle in the presence of monodromy, as the following
examples show.

\begin{example}
\label{ex:mobius}
This example is modelled on the well-known toric construction of the
Tate curve. Let $Y$ be the three-dimensional toric variety (not of finite type)
defined by the fan consisting of the collection of three-dimensional cones
\begin{align*}
\Sigma^{[3]}={} & \big\{\RR_{\ge 0}(n,0,1)+\RR_{\ge 0}(n+1,0,1)+\RR_{\ge 0}
(n,1,1)+\RR_{\ge 0}(n+1,1,1)\,\big|\,n\in\ZZ\big\}
\end{align*}
and their faces. Projection onto the third coordinate yields a
toric morphism $Y\rightarrow \AA^1$. After a base-change $\widehat{Y}
=Y\times_{\AA^1}\Spec \kk[[t]]\rightarrow \Spec\kk[[t]]$, one may divide
out $\widehat{Y}$ by the action of $\ZZ$ defined as follows.
This action is generated by an automorphism of $\widehat{Y}$ induced
by an automorphism of $Y$ defined over $\AA^1$. This automorphism is
given torically via the linear transformation $\ZZ^3\rightarrow\ZZ^3$
given by the matrix 
\[
\begin{pmatrix} 
1&0&\ell\\
0&-1&1\\
0&0&1
\end{pmatrix}
\]
where $\ell$ is a fixed positive integer.We then define
$X=\widehat{Y}/\ZZ$, with log structure induced by the toric log
structure on $Y$ (Figure~\ref{fig:monodromy}). 

\begin{figure}[htb]
\input{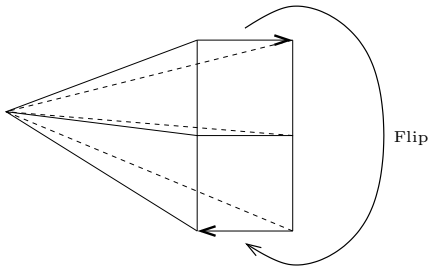}
\caption{Tropicalization of a Zariski logarithmic scheme with contact
order monodromy: $\ell=2$.}
\label{fig:monodromy}
\end{figure}

Then $X\rightarrow \Spec\kk[[t]]$ is a degeneration of the total space of a
$\GG_m$-torsor over an elliptic curve, the torsor corresponding to a $2$-torsion
element of the Picard group of the elliptic curve. As long as $\ell\ge 2$, $X$
has a Zariski log structure. Further, $\Sigma(X)$ is a cone over a M\"obius
strip composed of $\ell$ squares. If one takes $u=(0,1,0)\in\sigma^{\gp}$ for
any three-dimensional cone in $\Sigma(X)$, then propagating $u$ via chains of
face inclusions identifies $u$ with $-u$ due to the twist in the M\"obius strip.
\end{example}

{
\begin{example}
\label{Expl: Jonathan's example}
A variant of the previous example {that we learnt from Jonathan Wise}
also produces monodromy of infinite order. \label{Expl: Infinite monodromy}

Let $\sigma\subset  \RR^4$ be the cone generated by the following column vectors:
\[
\begin{aligned}
v_1 &= (0,0,0,1)^t,\
v_2 &= (0,1,0,1)^t,\
v_3 &= (0,0,1,1)^t,\
v_4 &= (0,1,1,1)^t,\\
v_5 &= (1,0,1,1)^t,\
v_6 &= (1,1,1,1)^t,\
v_7 &= (2,1,0,1)^t,\
v_8 &= (2,2,0,1)^t.
\end{aligned}
\]
The linear transformation of $\RR^4$ with matrix
\[
A=\left(\begin{matrix}
1& 0& -1& 2\\
0& 1& -1& 1\\
0& 0& 1& 0\\
0& 0& 0& 1\\
\end{matrix}\right)
\]
fulfills
\[
Av_1=v_7,\quad Av_2= v_8,\quad Av_3=v_5,\quad Av_4=v_6.
\]
Thus $A\tau_1=\tau_2$ for the two facets 
\[
\tau_1 = \langle v_1, v_2, v_3, v_4\rangle,\quad
\tau_2 = \langle v_5,v_6,v_7,v_8\rangle
\]
of $\sigma$.

Now $\tau_1^\gp\cap \tau_2^\gp$ is the lattice spanned by
\[
x =  2 v_3 - v_1 = 2 v_6 - v_8 =(0,0,2,1) ,\quad 
y = v_2 - v_1 = v_6 - v_5 = (0,1,0,0).
\]
The restriction of $A$ to this sublattice is a shear transformation, hence is of
infinite order:
\[
Ax= x-y, \quad  Ay=y.
\]

It is not hard to define a log structure on the nodal cubic curve $\ul X$ with
$\ocM_{X,q}^\vee\simeq \sigma\cap \ZZ^4$ at the node $q$, and the
generization maps to the two branches of $C$ at $q$ dual to the inclusions
$\tau_1,\tau_2\hookrightarrow \sigma$. Then $X=(\ul X,\cM_X)$ has infinite
monodromy.

By pulling back $\cM_X$ to a two-nodal curve of arithmetic genus $1$, with the
map to $X$ contracting a $\PP^1$, produces an example with Zariski log structure
and infinite monodromy.

Note that the feature of infinite monodromy can not be seen from the underlying
topological space of the tropicalization $\Sigma(X)$. In fact, as a topological
space, $\sigma$ is the cone over a polyhedron $\Xi\subset\RR^3$ that is the
convex hull of two disjoint facets with four vertices each, the intersections of
$\tau_1,\tau_2$ with the affine hyperplane $x_4=1$ for $x_1,\ldots,x_4$ the
coordinates on $\RR^4$. Thus $|\Sigma(X)|$ is the cone over the cell complex
obtained from $\Xi$ by identifying these two facets. But replacing $v_7,v_8$
with $(2,0,0,1)^t$, $(2,1,0,1)^t$ {and adapting $A$ accordingly}
produces an example with homeomorphic $|\Sigma(X)|$ and without monodromy.
\end{example}}

In the presence of monodromy as in Examples~\ref{ex:mobius} {and
\ref{Expl: Infinite monodromy}}, the na\"ive definition of global contact orders
by a reduced subscheme $\ul Z\subset \ul X$ and a section $s\in \Gamma(\ul Z,
({\ocM_X}|_{\ul Z})^*)$ not extending to any larger subscheme from
\cite[Def.\,3.1]{LogGW} does not work. We provide here an alternative
{treatment based on a notion of tangent vectors for the generalized cell
complex $\Sigma(X)$} that suffices for the definition of finite type moduli
spaces and of certain punctured Gromov-Witten invariants also in cases with
monodromy. Some applications such as gluing (Theorem~\ref{Thm: Gluing theorem})
in rare cases may require the more refined definition presented in
Appendix~\ref{sec:contact}. For the sake of simplicity of presentation, we
merely indicate what has to be modified to treat such rare cases.

\subsubsection{Global contact orders}
\label{sss: global contact orders}
For $\sigma\in\Sigma(X)$ denote by $\Sigma_\sigma(X)$ the star of $\sigma$,
considered as the category $\Sigma(X)$ under $\sigma$, i.e., the category with
objects face embeddings $\sigma\to\sigma'$ in $\Sigma(X)$ and {arrows}
given by morphisms $\sigma'\rightarrow\sigma''$ commuting with the given
morphisms from $\sigma$. Thus the star $\Sigma_\sigma(X)$ is formed by all cones
${\sigma_{\ol x}}=\Hom(\ocM_{X,\ol x},\RR_{\ge 0})$ with $\ol x$ running
over the {geometric points of} $X_\sigma$. Associating to
$(\sigma\to\sigma')\in \Sigma_\sigma(X)$ the free abelian group $N_{\sigma'}$,
viewed as a set, gives a diagram in the category of sets indexed by
$\Sigma_\sigma(X)$. {This diagram can be viewed as the diagram of
integral tangent vectors of $\Sigma_\sigma(X)$. Taking the colimit in the
category of sets provides a set of homomorphisms $\ocM_{X,\ol x}\arr\ZZ$ for
geometric points $\ol x$ of $X_\sigma$ compatible with all generization
homomorphisms. Elements of this colimit therefore provide a way to specify
{compatible sets of} contact orders along the stratum $X_\sigma$
independently of monodromy.}

\begin{definition}
\label{Def: global contact order}
Let $\sigma\in\Sigma(X)$ and $\fN_\sigma: \Sigma_\sigma(X)\arr \Sets$ be the
diagram in the category of sets mapping $\sigma\to\sigma'$ to $N_{\sigma'}$. A
\emph{{global} contact order for $\sigma\in\Sigma(X)$}, or for the
corresponding stratum $X_\sigma\subseteq \ul X$, is an element $\ol u$ of
\[
\fC_\sigma(X):= \colim^\Sets\, \fN_\sigma
= \colim^\Sets_{\sigma\to\sigma'} N_{\sigma'},
\]
the \emph{set of contact orders for $\sigma$}. For $\sigma'\in\Sigma_\sigma(X)$,
or for {a geometric point $\ol x$ of} $X_\sigma$, we denote by
\[
\iota_{\sigma\sigma'}: N_{\sigma'}\arr \fC_\sigma(X),\quad
{\iota_{\sigma \ol x}:N_{\sigma_{\ol x}}\arr \fC_\sigma(X)}
\]
the canonical maps.

A \emph{global contact order} is a contact order for {some}
$\sigma\in\Sigma(X)$. The set of global contact orders is denoted
$\fC(X):=\coprod_{\sigma\in\Sigma(X)} \fC_\sigma(X)$. 

We say a contact order $\ol u$ for $\sigma\in\Sigma(X)$ has \emph{finite
monodromy} if for {all} ${(\sigma\rightarrow}\sigma')
\in\Sigma_\sigma(X)$ the set $\iota_{\sigma\sigma'}^{-1}(\ol u)\subseteq
N_{\sigma'}$ is finite.

A global contact order $\ol u\in \fC_\sigma(X)$ is \emph{monodromy-free} if for
{all} $(\sigma\arr\sigma')\in \Sigma_\sigma(X)$ there exists at most one
$u\in N_{\sigma'}$ with $\ol u=\iota_{\sigma\sigma'}(u)$.
\end{definition}

{To be explicit, we spell out the definition of $\iota_{\sigma\ol
x}$ for $\ol x$ a geometric point of $X_\sigma$. Let $Z\subseteq X$ be the
smallest logarithmic stratum containing the image of $\ol x$. Then since $Z\cap
X_\sigma\neq\emptyset$, the definition of $\Sigma(X)$ provides an isomorphism
\[
\sigma_{\ol x}=\Hom(\ocM_{X,\ol x},\RR_{\ge0})
\stackrel{\simeq}{\longrightarrow}
\Hom(\ocM_{X,\ol x_Z},\RR_{\ge0})=\sigma_Z
\]
together with a face map $\sigma\arr\sigma_Z$, unique up to arrows
$\sigma\arr\sigma$ and $\sigma_Z\arr\sigma_Z$ in $\Sigma(X)$. Then
$\iota_{\sigma\ol x}$ is defined by composing the induced isomorphism of
lattices $N_{\sigma_{\ol x}}\simeq N_{\sigma_Z}$ with $\iota_{\sigma\sigma_Z}$.
The definition of $\fC_\sigma(X)$ is designed to make all maps $\iota_{\sigma\ol
x}$ independent of choices. In particular, a contact order as in \eqref{Eqn:
contact order} and \eqref{Eqn: contact orders} has an associated global contact
order.}

{Note that if $\ocM_X$ has monodromy along $X_\sigma$, there is a
non-trivial group $G$ of arrows $\sigma\to\sigma$ in $\Sigma(X)$. In this case,
the map $\iota_{\sigma\sigma}: N_\sigma\to \fC_\sigma(X)$ factors over the
quotient $N_\sigma\arr N_\sigma/G$ of the induced linear action of $G$ on
$N_\sigma$. In particular, two tangent vectors $u,u'\in N_\sigma$ define the
same global contact order $\ol u=\iota_{\sigma\sigma}(u)=
\iota_{\sigma\sigma}(u')$ if they are related by monodromy {along
$X_\sigma$}.}

Given a punctured map $(C^\circ/W,\bp,f)$ to $X$ and $s:\ul W\arr \ul C$ a
punctured or nodal section, each geometric point {$\ol w$ of $\ul W$}
has an associated contact order $u_{s(\ol w)}$ at $s(\ol w)$, {giving the
contact orders $u_p,u_q$ of} \eqref{Eqn: contact orders} of the associated
tropicalization:
\[
u_{s(\ol w)}: \ocM_{X,\ul f(s(\ol w))}\arr \ZZ.
\]
{Recall also that the contact order for a node, defined in
\cite[(1.8)]{LogGW}, depends on the choice of an ordering of the two branches of
$\ul{C}_{\ol w}$ through the node {$q$}, just as $u_E{=u_q}$
depends on the choice of orientation of the edge $E$.} {Now} for any
$\sigma\in\Sigma(X)$ with $\im(\ul f\circ s)\subseteq X_\sigma$ and any $\ol
w\arr \ul W$, {we obtain the induced global contact order}
\begin{equation}
\label{Eqn: global contact order at geometric point}
u_s^\sigma(\ol w)= \iota_{\sigma \ul f(s(\ol w))}(u_{s(\ol w)})
\end{equation}
{
The following lemma shows that fixing global contact orders in families of
punctured maps is both an open and closed condition. In particular, prescribing
global contact orders for strata, formalized in the notion of marking below
(Definition~\ref{Def: marking by type}), works well in moduli problems.}

\begin{lemma}
\label{Lem: contact orders are locally constant}
Let $(C^\circ/W,\bp,f)$ be a punctured map, $s:\ul W\arr \ul C$ a punctured or
nodal section, and $\sigma\in\Sigma(X)$ with $\im(\ul f\circ s)\subseteq
X_\sigma$. Then the function $\ol w\mapsto u_s^\sigma(\ol w)$ from \eqref{Eqn:
global contact order at geometric point}, associating to a geometric point
{$\ol w$ of $\ul W$} the {global} contact order of
$(C^\circ_{\ol w}/\ol w,\bp_{\ol w}, f_{\ol w})$ for $\sigma$, is locally
constant.
\end{lemma}

\begin{proof}
{
The existence of neat charts for the morphism $f:C^\circ\arr X$
\cite[Thm.\,III.1.2.7]{Ogus} shows that the composition
\[
s^{-1}f^{-1}\ocM_X\arr s^{-1}\ocM_{C^\circ}\arr \ul\ZZ,
\]
is a morphism of constructible sheaves of sets. See also
\cite[Thm.\,II.2.5.4]{Ogus}. This composition defines the contact order as a
function on $\ul W$. Hence the subset of $\ul W$ with $f$ of a given contact
order is a constructible set. It remains
}
to show that contact orders are compatible with generization. Consider a
specialization $\ol w'$ of $\ol w$, with $\ul f\circ s (\ol w') = \ol x'$ a
specialization of $\ul f\circ s (\ol w) = \ol x$. By Proposition~\ref{Prop:
Generization leads to contraction morphisms} the face embedding $N_{\sigma_{\ol
x}} \to N_{\sigma_{\ol x'}}$ dual to generization, which is an arrow in
$\fN_\sigma$, maps {the contact order} $u_{\ol x}\in N_{\sigma_{\ol x}}$
to $u_{\ol x'}\in N_{\sigma_{\ol x'}}$. Hence $\iota_{\sigma \ol x}(u_{\ol x}) =
\iota_{\sigma \ol x'}(u_{\ol x'})$, as needed.
\end{proof}

\begin{definition}
\label{Def: global contact order along section}
Let $(C^\circ/W,\bp,f)$ be a punctured map, and $s:\ul W\arr \ul C$ a punctured
or nodal section with $\im(\ul f\circ s)\subseteq X_\sigma$ for some $\sigma\in
\Sigma(X)$. Then $(C^\circ/W,\bp,f)$ is said to have \emph{global contact order
$\ol u\in \fC_\sigma(X)$ for $\sigma$ along $s$} if for each geometric point
{$\ol w$ of $\ul W$} the function in \eqref{Eqn: global contact order at
geometric point} fulfills $u_s^\sigma(\ol w)=\ol u$.
\end{definition}

{
\begin{remark}
A previous version of this paper contained a notion of evaluation stratum for a
global contact order. This was meant as the analogue of the pull-back via
$X\arr\cA_X$ of the image of $\cZ_\sigma\to\cA_X$ in the notion of contact
orders based on the Artin fan of $X$ developed in Appendix~\ref{sec:contact
zariski}. We decided to remove this part for several reasons.

First, the given treatment was ad hoc since unlike in the notion based on Artin
fans, there is no good functorial characterization of schematic evaluation
strata based on families of punctured curves. This lack of a universal property
is due to possible obstructions to deformations of punctured maps not coming
from obstructions to deformations of the evaluation point.

Second, contact orders are naturally selected after fixing a reference stratum,
see \S\ref{ss: stacks marked by types} below. In the most important case of
realizable types of punctured maps (Definition~\ref{Def: global type},(2)
below), the reference stratum already defines a reduced closed subscheme of the
evaluation stratum for the given contact order. Thus defining a non-reduced
evaluation stratum is pointless in this case. Indeed, so far there has not been
any use of non-reduced evaluation strata in practice, and notably not in the
applications mentioned in the introduction.

Third, should there ever be a need to define a non-reduced evaluation stratum,
it can easily be defined via the theory of contact orders developed in
Appendix~\ref{sec:contact}.
\end{remark}
}

\subsubsection{Global types}
\label{sss: global types}
As emphasized throughout the paper, a central aspect of the theory of
punctured maps involves the underlying combinatorics in terms of tropical
geometry. On the level of moduli spaces, this aspect is captured by the notion
of marking by tropical types.

{For this purpose,} we need a global version of the type of punctured maps (Definition~\ref{Def:
type}). {Crucially we replace contact orders by the
global contact orders from Definition~\ref{Def: global contact order}.}
{For readability we again restrict to the case of simple $X$ first. The discussion of the additional data needed for the general case is contained in
\S\ref{ss: targets with monodromy}.}

\begin{definition}
\label{Def: global type}
(1)~A \emph{global type (of a family of tropical punctured maps {to $\Sigma(X)$})} is a tuple
\[
\tau=(G,\bg,\bsigma,\bar\bu)
\]
consisting of a genus-decorated connected graph $(G,\bg)$ and two maps
\[
\textstyle
\bsigma: V(G)\cup E(G)\cup L(G)\arr \Sigma(X),\quad
\bar\bu: E(G)\cup L(G)\arr \fC(X)
\]
with $\bar\bu(x)\in \fC_{\bsigma(x)}$ for each $x\in E(G)\cup L(G)$. A (type of)
punctured maps has an \emph{associated global type} by replacing the contact
orders by the associated global contact orders. \emph{Morphisms of global types}
are defined analogously to morphisms of types of tropical punctured maps in
\eqref{Eqn: contraction morphism tropical type}.

{If the composition of $\bar\bu$ with the natural map $\fC(X)\arr\fC(B)$ equals $0$, we say $\tau$ is a global type for $X/B$ or relative $B$.}
\\[1ex]
(2)~A global type $\tau$ is \emph{realizable}\footnote{The term signifies that
the combinatorial data underlies a tropical object. It should not be confused
with realizability in tropical algebraic geometry, which signifies that a
tropical object is the tropicalization of an algebraic object.} if there exists
a tropical map to $\Sigma(X)$ with associated global type $\tau$.\\[1ex]
(3)~A \emph{decorated global type} $\btau=(\tau,\bA)$ of tropical punctured maps
is obtained by adding a curve class $\bA$ as in \eqref{Eqn: decorated
type}.\\[1ex]
(4)~A \emph{class of tropical punctured maps} {for a connected $X$} is a
decorated global type with a graph $G$ with only one vertex $v$, no edges, and
all strata $\bsigma(x)=\{0\}$. We write a class of tropical punctured maps as
$\beta=(g,\bar \bu,A)$ with $g\in\NN$, $A\in H_2^+(X)$ and $\bar\bu:
L(G)\arr\fC_{\{0\}}(X)$. The \emph{class of a decorated global type} is the
class of tropical punctured maps obtained by contracting all edges and keeping
the set of legs, but with associated strata $0\in\Sigma(X)$ and each global
contact order the image under the canonical map
\[
\fC_{\bsigma(L)}(X)\arr \fC_{\{0\}}(X).
\]
For a class $\beta$ of a global type we write $\ul\beta= (g,k,A)$ with
$k=|L(G)|$ for the class of the underlying ordinary stable map.

{If $X$ is disconnected, one takes one class of tropical punctured map
for each connected component of $X$.}
\end{definition}

We will often drop the adjective \emph{``tropical''} and refer to a global type,
decorated global type, or class of punctured maps.

{The following lemma will only be used in the proof of
Proposition~\ref{prop:I when tau is realizable}, which in turn is only used in
the dimension formulas of Proposition~\ref{Prop: pure-dimensional fM(cX,tau)}.}

\begin{lemma}
\label{Lem: unique type for global type}
{Let $(G,\bg,\bsigma,\bar\bu)$ be a realizable global type, and assume
all logarithmic strata $Z_\sigma\subseteq X$ for $\sigma\in\im(\bsigma)$ are
monodromy-free.} Then there is a unique type $\tau=(G,\bg,\bsigma,\bu)$ of
punctured maps with associated global type $(G,\bg,\bsigma,\bar\bu)$.
\end{lemma}
\begin{proof} 
Indeed, realizability implies in particular that for each $x\in E(G)\cup L(G)$,
the contact order $u_x\in \fC_{\bsigma(x)}(X)$ lies in the image of the natural
map $N_{\bsigma(x)} \rightarrow \fC_{\bsigma(x)}(X)$. 
However, it follows
immediately from the definition of $\fC_{\sigma}(X)$ that the map
$N_\sigma\rightarrow \fC_{\sigma}(X)$ is injective for each
$\sigma\in\Sigma(X)$.
\end{proof}

{A sufficient condition for the absence of monodromy in Lemma~\ref{Lem:
unique type for global type} is of course that $X$ is simple.}

\begin{remark}[Relation with types]
There are two differences of the notion of global type to the notion of type in
Definition~\ref{Def: type}. First, contact orders are replaced by
global contact orders. Second, the requirement
$\bar\bu(x)\in\fC_{\bsigma(x)}(X)$ for $x\in E(G)\cup L(G)$ does not imply
$u_x\in N_{\bsigma(x)}$. The lack of the latter condition for edges makes it
impossible to define a basic monoid just depending on a global type.

However, some useful discrete data remain. {For simplicity we assume
$X$ is simple again, deferring the discussion of the general case to \S\ref{sss:
basic monoid with monodromy}.} Consider a tropical punctured map $\Gamma \to
\Sigma(X)$, with associated type $\tau' = (G',\bg',\bsigma',\bu')$, basic monoid
$Q_{\tau'}$ as in \eqref{Eqn: basic monoid}, and dual monoid $Q_{\tau'}^\vee$
underlying the corresponding moduli of tropical maps. We have an associated
\emph{global} type $\bar\tau'=(G',\bg',\bsigma',\bar\bu')$ as in
Definition~\ref{Def: global type},(1) obtained by replacing the contact orders
${\bf u}'(x)$ with their images in $\fC_{\bsigma(x)}(X)$.

Now fix a contraction morphism $\phi:\bar\tau'\arr \tau$ to a global type
$\tau=(G,\bg,\bsigma,\bar\bu)${, with set of contracted edges $E_\phi$}.
We claim that there is a well-defined face $Q_{\tau\tau'}^\vee$ of
$Q_{\tau'}^\vee$, see \eqref{Def: Q_{tau tau'}^vee}, with dual localization
\eqref{Eqn: localization of basic monoids}, not requiring a morphism of types
lifting $\bar\tau' \to \tau$. Fix a point of $Q_{\tau'}^\vee$ given as a tuple
$(V_v,\ell_E)_{v\in V(G), E\in E(G)}$. Then $(V_v,\ell_E)_{v,E} \in
Q_{\tau\tau'}^\vee$ if and only if
\begin{enumerate} 
\item the
position $V_v$ of any vertex $v$ maps to the cell $\bsigma(\phi(v))$
associated to $\phi(v)\in V(G)$ by $\tau$, and
\item if $E\in V(G')$ is an edge
contracted by $\phi$ then $\ell_E=0$.
\end{enumerate}
Here we replaced generic points $\eta$ and nodal points $q$ in \eqref{Eqn: basic
monoid} by vertices $v\in V(G')$ and edges $E\in E(G')$. It is critical that
$\bsigma(\phi(v))$ is a well-defined face of $\bsigma(v)$. {This is
where we use the simplicity assumption.} Define $Q_{\tau\tau'}$ as the dual of
this face, given precisely as:
\begin{equation}
\label{Def: Q_{tau tau'}^vee}
Q_{\tau\tau'}^\vee= \bigg\{ (V_v,\ell_E)\in Q_{\tau'}^\vee\,\bigg|\,
\begin{array}{c}\forall v\in V(G'):\, V_v\in \bsigma(\phi(v))\\
\forall E\in  E_\phi: \ell_E=0 \end{array}\bigg\}.
\end{equation}
We then obtain a localization morphism
\begin{equation}
\label{Eqn: localization of basic monoids}
\chi_{\tau\tau'}: Q_{\tau'}\arr Q_{\tau\tau'},
\end{equation}
just as for basic monoids associated to types of tropical punctured maps
(\cite[Definition~2.31(3)]{decomposition}). The difference is that now both
$Q_{\tau\tau'}$ and $\chi_{\tau\tau'}$ depend not only on the morphism
$\phi:\bar\tau'\arr \tau$ of global types, but also on the lift of $\bar\tau'$
to a type $\tau'$ of tropical punctured maps.
\end{remark}

\subsection{Puncturing log-ideals}
\label{ss:log-ideal} 

The punctured points which are not marked points impose extra important
constraints on the possible deformations of a punctured curve, hence of
punctured stable maps, captured by an ideal in the base monoid. This is a key
new feature of the theory which we now describe.

{\subsubsection{Review of idealized log schemes}

We review here the notion of idealized log schemes from \cite{Ogus}, as
this notion is considerably less common in the literature.

Given a sheaf of monoids $\cM$ on a scheme $X$, we use the term \emph{log-ideal}
for a sheaf of monoid ideals $\cK\subseteq\cM$. 
The sheaf of monoid ideals $\cK$ is said to be \emph{coherent} (see 
\cite[Prop.\,II.2.6.1]{Ogus}) if locally on $X$, $\cK$ is generated
by a finite set of sections.

An \emph{idealized log
scheme} is data $(X,\cM_X,\alpha_X,\cK_X)$ where $(X,\cM_X,\alpha_X)$ is
an ordinary log scheme, with $\alpha_X:\cM_X\rightarrow\cO_X$ the structure
map, and $\cK_X\subseteq\cM_X$ a log-ideal such that $\cK_X
\subseteq \alpha^{-1}_X(0)$.
A \emph{morphism of idealized log schemes}
$f:(X,\cK_X)\rightarrow (Y,\cK_Y)$ is a morphism $f:X\rightarrow
Y$ of log schemes such that $f^{\flat}(f^{-1}(\cK_Y))\subseteq \cK_X$.
See \cite[Def.\,III.1.3.1]{Ogus}.

If $f:X\rightarrow Y$ is a morphism of log schemes and $\cK_Y\subseteq
\cM_Y$ is a log-ideal, we adopt the notation of \cite{Ogus}
by writing $f^{\bullet}(\cK_Y)\subseteq \cM_X$ as the log-ideal
generated by $f^{\flat}(f^{-1}(\cK_Y))$.
We say a morphism $f:X\rightarrow Y$ of idealized log
schemes is {\emph{idealized strict} (\cite[Def.\,III.1.3.2]{Ogus})} if $\cK_X=f^{\bullet}\cK_Y$.

If $W$ is a fine log scheme and $\cK\subseteq \cM_W$ is a log-ideal,
then $\cK$ is invariant under the multiplicative action of $\cO_W^\times$,
and the quotient $\ocK=\cK/\cO_W^\times$ is a log-ideal in $\ocM_W$. 
As the stalks of $\ocM_W$ are finitely generated monoids, the stalks of
$\ocK$ are then finitely generated ideals.

\begin{lemma}
\label{lem:coherent ideal}
Let $(W,\cM_W)$ be a fine log scheme and $\cK\subseteq \cM_W$ a log-ideal.
Then the following are equivalent:
\begin{enumerate}
\item
$\cK$ is a coherent sheaf of ideals;
\item
for any geometric points $\ol x, \ol y$ {of} $W$ with {$\ol y
\arr\ol x$ a specialization arrow}, the stalk $\cK_{\ol{y}}$ is
generated by the image of the generization map $\cK_{\ol{x}} \to \cK_{\ol{y}}$.
\end{enumerate} 
\end{lemma}

\begin{proof}
$(1)\Rightarrow (2)$:
Suppose $\cK$ is a coherent sheaf of ideals. Then given geometric points as in
the statement of the lemma, there is an open neighborhood $U$ of $\ol x$ and a
finite set of sections $S\subseteq \Gamma(U,\cM_W)$ generating $\cK|_U$. In
particular, $\ol y$ lifts to a geometric point of $U$ and hence $\cK_{\ol x}$
and $\cK_{\ol y}$ are both generated by $S$. In particular, the generization map
$\cK_{\ol x}\rightarrow \cK_{\ol y}$ is surjective.

$(2)\Rightarrow (1)$:
Suppose {the generatedness statement} always holds. Since $\cM_W$ is
fine, for any geometric point $\ol x$ of $W$, one may find an \'etale
neighborhood $U$ with a chart $\phi:Q\rightarrow \cM_W|_U$ inducing an
isomorphism $Q\rightarrow \ocM_{W,\ol x}$. Let $K\subseteq Q$ be the inverse
image of $\ocK_{\ol x}$ under this isomorphism, and let $S\subseteq K$ be a
finite generating set. Then $\phi(S)$ provides a subset of $\Gamma(U,\cM_W)$,
necessarily generating an ideal subsheaf $\cK'$ of $\cK$. However, because of
the assumed surjectivity, it follows immediately that $\cK'=\cK$.
\end{proof}}

{
Many notions in log geometry have idealized versions. In particular, there are
notions of idealized log \'etale and idealized log smooth morphisms, defined
using idealized versions of formal lifting. We send the reader to
\cite[IV.3.1]{Ogus} for details. Morally, an idealized log smooth morphism is
one modeled on a morphism between torus invariant subschemes of toric varieties;
alternatively {it is} a morphism $X\rightarrow Y$ such that there is a
closed substack $\cZ_{X/Y}$ of {a relative Artin fan} $\cA_{X/Y}$
\cite[Cor.\,3.3.5]{ACMW} defined by a monomial ideal such that the induced
morphism $X\rightarrow \cA_{X/Y}$ factors through a smooth morphism
$X\rightarrow\cZ_{X/Y}$. See Proposition~\ref{prop:charts} for precise
statements as needed in this paper.}

{
\begin{proposition}
\label{Prop: idealized log smooth}
If $X\rightarrow B$ is log smooth, and $B$ is log smooth over $\kk$ or is a log
point, then every stratum $X_{\sigma}$ of $X$ is idealized log smooth over $B$,
where $\sigma\in \Sigma(X)$. Here we endow $X_\sigma$ with its reduced induced
subscheme structure, and with the log structure induced by the closed embedding
$X_{\sigma}\hookrightarrow X$.
\end{proposition}}

{
\begin{proof}
Since the statement is \'etale local in $B$, we may assume there exists a global
chart $B\arr A_Q=\Spec\kk[Q]$. Note also that by Proposition~\ref{Prop: Log
strata in log smooth case}, $X_\sigma$ is irreducible, hence is
set-theoretically the closure of a geometric generic point $\ol\eta$ of
$X_\sigma$.

Define the log ideal $\cK\subseteq \cM_{X_{\sigma}}$ on $X_\sigma$ by 
\[
\cK(U):=\{s\in \cM_{X_{\sigma}}(U)\,|\,\alpha_{X_{\sigma}}(s)=0\}.
\]
To check that $(X_\sigma,\cK)\arr (B,\emptyset)$ is idealized log smooth near a
point $x\in X_\sigma$, we consider a chart for $X\arr B$ as in
Proposition~\ref{prop:etale charts}, an \'etale neighborhood $h:U\arr X$ of $x$
fitting into a commutative diagram
\[
\xymatrix{
U\ar[r]^(.3)g\ar[rd]& B\times_{\cA_Q} \cA_P\ar[d]\ar[r]&
\cA_P\ar[d]\\
&B\ar[r]&\cA_Q,
}
\]
with all horizontal arrows strict, $g:U\arr B\times_{\cA_Q} \cA_P$ smooth,
$P^\times=\{0\}$, and a lift $\tilde x$ of $x$ to $U$ mapping to the closed
(deepest) stratum of $\cA_P$. Then we obtain an isomorphism $\psi:P\arr
\ocM_{X,\ol x}= (\sigma^\vee_{\ol x})_\ZZ$. Each specialization arrow
$\ol\eta\arr \ol x$ defines a face inclusion $\sigma\arr \sigma_{\ol x}$, hence
a closed reduced substack $\cZ\subset \cA_P$ with $h(g^{-1}(\cZ))\subseteq
\cZ_\sigma$, where $\cZ_\sigma$ is the logarithmic stratum of $X$ with closure
$X_\sigma$. Thus if $F_i\subseteq P$ denotes the dual faces of $P$ defined by
such specializations, then by the definitions of $\cK$ and $X_\sigma$,
\begin{equation}
\label{Eqn: ocK stalkwise}
\psi\big(P\setminus\textstyle\bigcup_i F_i\big)
= \ocK_{\ol x}\subseteq\ocM_{X,\ol x}.
\end{equation}
Note this gives an alternative, stalkwise definition of the log ideal $\cK$,
using the reasoning in Remark~\ref{Rem: Gap in GS2}.

To show the claim on idealized smoothness, it thus remains to show that the
preimage in $U$ of the closed reduced substacks of $\cA_P$ are reduced for then
the subscheme of $U$ defined by $P\setminus\bigcup_i F_i$ agrees with
$h^{-1}(X_\sigma)$.

Now a closed reduced substack $\cZ\subseteq \cA_P$ maps onto a closed reduced
substack $\cT$ of $\cA_Q$, which by our assumptions on $B$ pulls back to a
reduced subscheme $S\subseteq B$. Therefore {$B\times_{\cA_Q} \cZ=
S\times_\cT \cZ$ is reduced since $S\arr \cT$ is smooth,} and so is its preimage
in $U$.
\end{proof}
}

\subsubsection{Log-ideals of punctured curves}
\label{sss: log ideals punctured curves}

Let $(\pi: C^\circ\arr W,\bp)$ be a punctured curve. For each of the punctures
$p:\ul W\to \ul C$ consider the composition
\begin{equation}
\label{Eqn: u_p}
{v_{p}}:p^*\cM_{C^\circ}\arr {\ocM_W}\oplus\ul\ZZ \arr \ul \ZZ
\end{equation}
of fine monoid sheaves, with the first map {induced by the canonical
inclusion $p^*\ocM_{C^\circ}\arr \ocM_W\oplus\ul\ZZ$} and the second map the
projection. Denote by $\cI_p\subseteq p^*\cM_{C^\circ}$ the sheaf of ideals
generated by $(v_{p})^{-1}(\ul \ZZ_{< 0})$.

\begin{definition}
\label{Def: Puncturing log ideal}
The \emph{puncturing log-ideal} $\cK_W\subseteq \cM_W$ of the punctured curve
$(\pi: C^\circ\arr W,\bp)$ is the ideal sheaf 
\[
\bigcup_p (\pi^\flat)^{-1}(\cI_p)\subseteq \cM_W,
\]
with $p$ running over all punctures.
\end{definition}

In the context of the definition we abuse notation when writing $\pi^\flat$ for
the composition
\[
\cM_W\stackrel{\pi^\flat}{\longrightarrow} \pi_*\cM_{C^\circ}
\arr \pi_* p_*p^*\cM_{C^\circ}= p^*\cM_{C^\circ},
\]
where as usual $p^*\cM_{C^\circ}$ denotes the pull-back log structure,
while the right arrow is induced by the adjunction unit morphism $1\to p_*
p^{-1}$ of the associated abelian sheaves.

We sometimes also refer to the quotient $\ocK_W$ of $\cK_W$ by $\cO_W^\times$ as
the puncturing log-ideal, but will then write $\ocK_W\subseteq \ocM_W$ for
clarity.

An illustration for the definition is contained in Figure~\ref{fig:idealized}.

\begin{figure}[htb]
\input{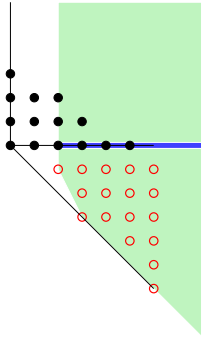}
\caption{An idealized punctured point (ideal lightly shaded) and the resulting
log ideal (the horizontal shaded ray). If there are several punctures, one takes
the ideal generated by these horizontal regions.}
\label{fig:idealized}
\end{figure}

{This picture indicates an equivalent way to identify $\ocK_W$.
{For the stalkwise characterization we may do a strict base change to a
geometric point of $\ul W$ and hence assume $W$ is a log point.} For a
marking $p$ on a component of $C^\circ$ with generic point $\eta$, consider the
generization map $\phi_{p,\eta}:\ocM_{C^\circ,p} \to \ocM_{C^\circ,\eta}\simeq
\ocM_W$. Identify $\ocM_W$ as a submonoid of $\ocM_{C^\circ,p}$ via $\pi^\flat$,
making $\phi_{p,\eta}$ an idempotent homomorphism on $\ocM_{C^\circ,p}$ with
image $\ocM_W$. An element $m \in \ocM_W$ is in $\ocK_W$ if and only if there is
a marking $p$ and an element $n\in (u_p^{\circ})^{-1} (\ZZ_{<0})$ such that
$\phi_{p,\eta}(n) = m$. Indeed if there is $n\in (v_{p})^{-1}(\ul \ZZ_{<
0}) $ and $n' \in \ocM_{C^\circ,p}$ with $\pi^\flat (m)=n+n'$ then, writing $n''
= n+ \phi_{p\eta}( n')$ we have $m = \phi_{p\eta}(n'')$; conversely, if $m =
\phi_{p\eta}(n'')$ with $v_{p}(n'')= -b<0$ then, using the notation of
\eqref{Eqn: u_p}, we have $\pi^\flat(m)= n'' + b\cdot (0,1)$.
}

\begin{lemma}
\label{curve-ideal-characteristic}
The puncturing log-ideal $\cK_W$ of a punctured curve $(\pi:C^\circ\arr W,\bp)$
is coherent.
\end{lemma}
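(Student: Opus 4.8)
The plan is to check coherence on the level of ghost sheaves, as permitted by the remarks preceding the lemma, and to apply the generization criterion \cite[Prop.~II.2.6.1]{Ogus}: a sheaf of ideals in the coherent monoid sheaf $\ocM_W$ is coherent if and only if for all geometric points $\bar x\in\cl(\bar y)$ of $\ul W$ the stalk at $\bar y$ is generated by the image of the generization map from the stalk at $\bar x$. Finite generation of the individual stalks is automatic, since each $\ocM_{W,\bar w}$ is a fine, hence Noetherian, monoid. Moreover, writing $\cJ_i:=(\bar\pi^\flat)^{-1}(\ocI_{p_i})\subseteq\ocM_W$ for the sheaf of ideals attached to the $i$-th puncture, the stalks of $\ocK_W$ are the unions $\bigcup_{i=1}^n(\cJ_i)_{\bar w}$ (a union of monoid ideals being an ideal), and a finite union of coherent log-ideals is coherent; so it suffices to show each $\cJ_i$ is coherent.

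Next I would record the local structure at a puncture. The section $p_i(\ul W)$ lies in the non-critical locus of $\ul\pi$, so property~(3) of a logarithmic curve gives $\ocM_{C,p_i(\bar w)}\cong\ocM_{W,\bar w}\oplus\NN$, the $\NN$-summand being $\ocP_{p_i(\bar w)}$; hence $\ocM_{C^\circ,p_i(\bar w)}\subseteq\ocM_{W,\bar w}\oplus\ZZ$, with $u^\circ_{p_i}$ the second projection and $\bar\pi^\flat$ the inclusion $q\mapsto(q,0)$. The crucial observation is that the puncture divisor $p_i(\ul W)$ passes through both $p_i(\bar x)$ and $p_i(\bar y)$, so the generization map $\psi\colon\ocM_{C^\circ,p_i(\bar x)}\to\ocM_{C^\circ,p_i(\bar y)}$ is the restriction of $\chi\oplus\id_\ZZ$, where $\chi\colon\ocM_{W,\bar x}\to\ocM_{W,\bar y}$ is the generization map for $W$. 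In particular $\psi$ preserves the $\ZZ$-component, and both $\psi$ and $\chi$ are surjective: for a fine log structure any generization map of ghost sheaves is surjective, the target stalk being the localization of the source stalk along a face divided by its group of units.

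The heart of the argument is then a splitting lift. Let $q'\in(\cJ_i)_{\bar y}$, so $(q',0)=\bar\pi^\flat(q')\in(\ocI_{p_i})_{\bar y}$; write $(q',0)=s'+t'$ with $t'\in\ocM_{C^\circ,p_i(\bar y)}$ and $s'$ a generator of $(\ocI_{p_i})_{\bar y}$, i.e.\ with negative $\ZZ$-component. Thus $s'=(a',-k')$ and $t'=(b',k')$ with $k'>0$ and $a'+b'=q'$. By surjectivity of $\psi$, lift $s'$ and $t'$ to elements $(a'',-k'),(b'',k')\in\ocM_{C^\circ,p_i(\bar x)}$ — the $\ZZ$-components are forced, as $\psi$ preserves them — and set $q_0:=a''+b''$. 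Then $\bar\pi^\flat(q_0)=(q_0,0)=(a'',-k')+(b'',k')\in(\ocI_{p_i})_{\bar x}$, so $q_0\in(\cJ_i)_{\bar x}$, while $\chi(q_0)=\chi(a'')+\chi(b'')=a'+b'=q'$. Together with the inclusion $\chi\big((\cJ_i)_{\bar x}\big)\subseteq(\cJ_i)_{\bar y}$ — which holds since $\psi,\chi$ are compatible with $\bar\pi^\flat$ and $\psi$ carries $\ZZ$-negative elements to $\ZZ$-negative ones — this gives $(\cJ_i)_{\bar y}=\chi\big((\cJ_i)_{\bar x}\big)$, the required generization property, and hence the coherence of $\cJ_i$ and of $\ocK_W$.

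I expect this last step to be the main obstacle. The naive approach — lifting $q'$ itself and arguing its image stays in $(\ocI_{p_i})_{\bar x}$ — fails, because a large positive $\ZZ$-contribution from $t'$ can bring the $\ZZ$-component of the image back up to $0$; one is forced to lift the decomposition witnessing membership piece by piece, which works only because of the rigidity of the $\ZZ$-summand, coming from the puncture divisor, along the section $p_i(\ul W)$. The remaining ingredients — passage to ghost sheaves, reduction to a single puncture, the local structure at $p_i$, and Noetherianity of fine monoids — are routine.
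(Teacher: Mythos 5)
Your proof is correct, but it takes a more hands-on route than the paper's. The paper argues structurally in three short steps: $(u^{\circ}_{p})^{-1}(\ul\ZZ_{\le 0})$ is a fine subsheaf of monoids of $p^*\cM_{C^\circ}$; the preimage of the coherent ideal $\ul\ZZ_{<0}\subset\ul\ZZ_{\le0}$ in it is coherent, hence so is the generated ideal sheaf $\cI_p$; and finally so is $(\pi^\flat)^{-1}(\cI_p)$. You instead verify Ogus's generization criterion directly on stalks, and the ``splitting lift'' you perform — lifting the decomposition $(q',0)=s'+t'$ term by term rather than lifting $q'$ itself — is precisely the content hidden in the paper's assertions that these preimages are coherent. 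This is worth making explicit: the set-theoretic preimage of a coherent ideal under a morphism of coherent sheaves of monoids is \emph{not} coherent in general (a generator of the ideal can become a unit after generization on the target while nothing happens on the source), and both of the paper's preimage steps are rescued only by the feature you isolate, namely that the $\ZZ$-component of $\ocM_{C^\circ,p_i(\bar w)}\subseteq\ocM_{W,\bar w}\oplus\ZZ$ is preserved by generization along the section $p_i(\ul W)$, the target of $u^\circ_{p_i}$ being a constant sheaf. So the two arguments rest on the same rigidity, but yours buys transparency about why the formal steps go through, at the cost of length; the paper's buys brevity by packaging the stalk computation into general-sounding coherence statements. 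All the surrounding ingredients you use — surjectivity of generization maps for fine log structures, finite generation of ideals in fine monoids, closure of coherence under finite unions — are correctly invoked.
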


\begin{proof}
{We verify the characterization of Lemma~\ref{lem:coherent ideal}. Let
$\ol x, \ol y \to W$ with $\ol x \in \cl(\ol y)$. Fix a generization map
$\chi_{\ol x\hspace{.7pt}\ol y}:\ocK_{\ol x} \to \ocK_{\ol y}$ and let $m_{\ol
y}\in \ocK_{\ol y}$. We wish to construct $m_{\ol x} \in \ocK_{\ol x}$ with
$\chi_{\ol x\hspace{.7pt}\ol y}(m_{\ol x}) = m_{\ol y}$. 

We refer to the following commutative diagram of generizations and contact orders:

\[
\xymatrix{
\ocM_{W,\ol x}=\ocM_{C^\circ, \eta_{\ol x}} \ar[rrr]^{\chi_{\ol
x\hspace{.7pt}\ol y} = \chi_{\eta_{\ol x}\eta_{\ol y}}} &&& \ocM_{C^\circ,
\eta_{\ol y}} = \ocM_{W,\ol y}\\
\ocM_{C^\circ, p_{\ol x}}\ar[u]^{\phi_{p_{\ol
x}\eta_{\ol x}}} \ar[rrr]_{\chi_{p_{\ol x}p_{\ol y}}}\ar[d]_{u^\circ_{p_{\ol
x}}}&&&\ocM_{C^\circ, p_{\ol y}} \ar[u]_{\phi_{p_{\ol y}\eta_{\ol
y}}}\ar[d]_{u^\circ_{p_{\ol y}}} \\
\ZZ \ar@{=}[rrr] &&& \ZZ
}
\]

Note that $m_{\ol y}\in \ocK_{\ol y}$ means that there is a puncture $p_{\ol y}$
lying on a component with generic point $\eta_{\ol y}$ of $C_{\ol y}$ and an
element $m_{p_{\ol y}}\in (v_{p_{\ol y}})^{-1}(\ul \ZZ_{< 0})$ whose generization
is $\phi_{p_{\ol y}\eta_{\ol y}}(m_{p_{\ol y}}) = m_{\ol y}$. 

Since $\cM_{C^\circ}$ is coherent, there is an element $m_{p_{\ol x}}\in
\ocM_{C^\circ,\ol x}$ such that $\chi_{p_{\ol x}p_{\ol y}}(m_{p_{\ol x}}) =
m_{p_{\ol y}}$. 

Note that $v_{p_{\ol y}}\circ\chi_{p_{\ol x}p_{\ol y}} = v_{p_{\ol x}}$, see
Lemma~\ref{Lem: contact orders are locally constant}. This implies that
$m_{p_{\ol x}}\in (v_{p_{\ol x}})^{-1}(\ul \ZZ_{< 0})$. Write $m_{\ol x} :=
\phi_{p_{\ol x}\eta_{\ol x}} ({m_{p_{\ol x}}})$. By definition $m_{\ol x}\in \ocK_{\ol
x}$.s

We obtain that $m_{\ol y}=\phi_{p_{\ol y}\eta_{\ol y}} \circ \chi_{p_{\ol
x}p_{\ol y}}(m_{p_{\ol x}}) = \chi_{\eta_{\ol x}\eta_{\ol y}} \phi_{p_{\ol
x}\eta_{\ol x}}(m_{p_{\ol x}}) = \chi_{\eta_{\ol x}\eta_{\ol y}}(m_{\ol x}) =
\chi_{\ol x\hspace{.7pt}\ol y}(m_{\ol x}),$ as needed.
}
\end{proof}

Puncturing log-ideals behave well under pull-backs.

\begin{proposition}
\label{prop:pull-back-ideal}
Let $(\pi:C^{\circ}\arr W, \bp)$ be a punctured curve, $(\pi_T: C^{\circ}_T\arr
T, \bp_T)$ its pull-back via $h: T \to W$ and $\cK_W$, $\cK_T$ the respective
puncturing log-ideals. Then {$\cK_T=h^\bullet\cK_{W}$}.
\end{proposition}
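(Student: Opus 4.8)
The plan is to establish this by a stalkwise computation with characteristic monoids. Write $\langle h^{-1}\cK_W\rangle$ for the log-ideal on $\ul T$ generated by $h^{-1}\cK_W$. It is coherent, being the pull-back of the coherent log-ideal $\cK_W$ (Lemma~\ref{curve-ideal-characteristic}), and $\cK_T$ is coherent as well, so it suffices to compare the stalks of the two $\cO_T^\times$-quotients at each geometric point. Fix $\bar t\arr\ul T$ over $\bar w\arr\ul W$ and set $Q=\ocM_{W,\bar w}$, $Q_T=\ocM_{T,\bar t}$, $\theta=h^\flat\colon Q\arr Q_T$. Since forming the ideal generated by a union of log-ideals commutes with pull-back, it is enough to fix a single puncture $p$, with associated points $\bar p$ of $C_{\bar w}$ and $\bar p_T$ of $C_{T,\bar t}$ (the latter lying over the former), and to compare the contributions of $(\pi^\flat)^{-1}(\cI_p)$ and $(\pi_T^\flat)^{-1}(\cI_{p_T})$.

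The key local input is the description of $\ocM_{C^\circ_T,\bar p_T}$. By the top (fine-cartesian) square of \eqref{Diagram: pull-back of punctured curves} together with the explicit description of fine fibre products of log structures from the proof of Proposition~\ref{prop:pullback-puncturing}, near $\bar p_T$ the monoid $\ocM_{C^\circ_T,\bar p_T}$ is the submonoid of $Q_T\oplus\ZZ$ generated by $Q_T\oplus\NN$ and by the image of $\ocM_{C^\circ,\bar p}\subseteq Q\oplus\ZZ$ under $\theta\oplus\id_\ZZ$; moreover the map $u^\circ_{p_T}$ from \eqref{Eqn: u_p} is again the projection to $\ZZ$, compatibly with $u^\circ_p$. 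Since this projection is non-negative on the summand $Q_T\oplus\NN$, every element of $\ocM_{C^\circ_T,\bar p_T}$ with negative $\ZZ$-coordinate is a multiple, in $\ocM_{C^\circ_T,\bar p_T}$, of the image of an element of $\cI_{p,\bar p}$; thus, stalkwise, $\cI_{p_T}$ is the ideal generated by the image of $\cI_p$. Pulling back under the map $Q_T\arr\ocM_{C^\circ_T,\bar p_T}$, $a\mapsto(a,0)$, induced by $\pi_T$, and using that $\ocM_{C^\circ,\bar p}$ projects onto $Q$, one then identifies the stalk of $(\pi_T^\flat)^{-1}(\cI_{p_T})$ with the $Q_T$-ideal generated by $\theta$ applied to the corresponding stalk of $(\pi^\flat)^{-1}(\cI_p)$.

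Summing over punctures, the stalk at $\bar t$ of the $\cO_T^\times$-quotient of $\cK_T$ becomes the $Q_T$-ideal generated by $\theta(\ocK_{W,\bar w})$, which is exactly the stalk there of the $\cO_T^\times$-quotient of $\langle h^{-1}\cK_W\rangle$; hence the two log-ideals coincide. I expect the delicate point to be the inclusion $\cK_T\subseteq\langle h^{-1}\cK_W\rangle$: one must know that a generator of $\cI_{p_T}$ — an element of $\cM_{C^\circ_T}$ with negative contact component near $p_T$ — is always a multiple of the image of a generator of $\cI_p$, so that no genuinely new negative-contact elements are created by base change. This is precisely what the concrete description of the fine push-out in the proof of Proposition~\ref{prop:pullback-puncturing} delivers, and it also uses that $\theta$, being a monoid homomorphism, is order-preserving, so that generators of $\cK_W$ are carried into the pulled-back ideal. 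The remaining checks — compatibility of the $u^\circ$ maps under base change, and that $(\pi^\flat)^{-1}$ of an ideal is again a log-ideal — are routine.
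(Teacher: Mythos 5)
Your proposal is correct and follows essentially the same route as the paper's proof: reduce to stalks via coherence, use the cocartesian (fine push-out) description of $\ocM_{C^\circ_T,\bar p_T}$ coming from Proposition~\ref{prop:pullback-puncturing} to see that $\cI_{p_T}$ is generated by the image of $\cI_p$, then pull back along $\pi_T^\flat$ and sum over punctures. The paper phrases the key input as cocartesianness of the two left squares in the evident diagram of characteristic monoids rather than spelling out the submonoid of $Q_T\oplus\ZZ$, but this is the same argument.
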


\begin{proof}
Denote by $g:C^\circ_T\arr C^\circ$ the pull-back of $h$ to the curves. By
coherence of $\cK_W$ and $\cK_T$ it suffices to check that for each geometric
point $\ol t\arr T$, the image of $\ocK_{W,\ul h(\ol t)}$ under $\bar
h_{\ol t}^\flat$ generates $\ocK_{T,\ol t}$. Denote by $\ol w=h(\ol
t)$. For a puncture $p$ of $C^\circ$ consider the commutative diagram
\[
\xymatrix{
\ocM_{W,\ol w}\ar[r]^{\bar\pi^\flat}\ar[d]^{\bar h^\flat}
& \ocM_{C^\circ,p( \ol w)}\ar[r]\ar[d]_{\bar g^\flat}
&\ocM_{W,\ol w}\oplus\ZZ\ar[r]\ar[d]&\ZZ\ar[d]^{=}\\
\ocM_{T,\ol t}\ar[r]^{\bar\pi_T^\flat}
& \ocM_{C_T^\circ,p_T(\ol t)}\ar[r]
&\ocM_{T,\ol t}\oplus\ZZ\ar[r]&\ZZ
}
\]
The two left squares are cocartesian in the category of fine monoids by the
definition of pull-back of punctured curves. This shows first that
$\bar g^\flat(\ocI_{p,\ol w})$ generates $\ocI_{p_T,\ol t}$, and in
turn that $\bar h^\flat\big((\bar\pi^\flat)^{-1}(\ocI_{p,\ol
w})\big)$ generates $(\bar\pi_T^\flat)^{-1}(\ocI_{p_T,\ol t})$. Taking the sum
over all punctures finishes the proof.
\end{proof}

Here comes the crucial vanishing property putting restrictions on deformations
of punctured curves.

\begin{proposition}
\label{prop:puncturing-ideal-vanish}
Let $(C^{\circ}/W, \bp)$ be a punctured curve and $\cK_W\subseteq \cM_W$ its
puncturing log-ideal. Then it holds
\[
\alpha_{W}(\cK_W) = 0.
\]
\end{proposition}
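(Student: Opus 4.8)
The plan is to reduce the statement to a local check at a geometric point $\bar w \to \ul W$, and then to a computation in the stalk $\cM_{C^\circ, \bar p}$ at each puncture $p$ lying over $\bar w$. Since $\cK_W$ is generated by $\bigcup_p (\pi^\flat)^{-1}(\cI_p)$, and $\alpha_W$ is compatible with generization, it suffices to show that for every puncture $p:\ul W \to \ul C$ and every section $m \in \cM_{W,\bar w}$ with $\pi^\flat(m) \in \cI_{p,\bar w}$, we have $\alpha_W(m) = 0$ in $\cO_{W,\bar w}$.

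First I would unwind the definition of $\cI_p$: it is the ideal of $p^*\cM_{C^\circ}$ generated by $(u^\circ_p)^{-1}(\ul\ZZ_{<0})$, where $u^\circ_p$ is the projection $p^*\cM_{C^\circ} \to \cM_W \oplus \ul\ZZ \to \ul\ZZ$. So if $\pi^\flat(m)$ lies in $\cI_{p,\bar w}$, then $\pi^\flat(m) = s \cdot t$ for some $s \in p^*\cM_{C^\circ,\bar w}$ with $u^\circ_p(s) < 0$ and some $t \in p^*\cM_{C^\circ,\bar w}$. Now apply the structure map $\alpha_{C^\circ}$ (pulled back along $p$): we get $\alpha_W(\pi^\sharp(m)) = p^*\alpha_{C^\circ}(\pi^\flat(m)) = p^*\alpha_{C^\circ}(s)\cdot p^*\alpha_{C^\circ}(t)$. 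The key point is that $\alpha_{C^\circ}(s) = 0$: writing $s$ via the inclusion $\cM_{C^\circ} \subseteq \cM_C \oplus_{\cO^\times} \cP^\gp$ (where $\cP$ is the divisorial log structure at $\bp$, so this is the setup of Definition~\ref{def:puncturing} with $\cM = \cM_C$), the condition $u^\circ_p(s) < 0$ means precisely that the $\cP^\gp$-component of $s$ has a strictly negative power of the local defining equation of the puncture, so $s \notin \cM_C \oplus_{\cO^\times} \cP$. By Condition~(2) of Definition~\ref{def:puncturing} (or the redundancy observation in Remark~\ref{rem:DF1}, since $\cP$ here is $DF(1)$), this forces $\alpha_{C^\circ}(s) = 0$. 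Hence $\alpha_W(\pi^\sharp(m)) = 0$, and since $\alpha_W \circ \pi^\sharp$ factors as $\pi^\sharp \circ \alpha_C$ wait — more directly, $\pi$ is a morphism of log schemes, so $\alpha_W(m)$ pulls back to $p^* \alpha_{C^\circ}(\pi^\flat(m))$ up to the unit identification coming from $\pi^\sharp$ being the identity on the underlying ring map at the level considered; in any case $\alpha_W(m)$ and $p^*\alpha_{C^\circ}(\pi^\flat m)$ agree as elements of $\cO_{W,\bar w}$ because $p$ and $\pi$ are log morphisms over the same base scheme. So $\alpha_W(m) = 0$.

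The main obstacle I expect is bookkeeping rather than conceptual: carefully tracking the identifications between $\alpha_W$, the pullback $p^*\alpha_{C^\circ}$, and $\alpha_{C^\circ} \circ \pi^\flat$, and making sure that an element generating $\cI_p$ really does have a genuinely negative $\ZZ$-component (not just that some representative does) so that Condition~(2) of Definition~\ref{def:puncturing} applies. One should also note that multiplying $s$ by $t$ or by a unit does not affect the vanishing: ideals are absorbing, so $\alpha_{C^\circ}(s \cdot t) = \alpha_{C^\circ}(s)\alpha_{C^\circ}(t) = 0$ regardless of $t$, and passing to $\pi^\flat$-preimages and then summing over all punctures only enlarges the set of sections on which $\alpha_W$ vanishes, never shrinks it. Finally, coherence of $\cK_W$ (Lemma~\ref{curve-ideal-characteristic}) guarantees that checking the vanishing on stalks at all geometric points is equivalent to the global statement $\alpha_W(\cK_W) = 0$.
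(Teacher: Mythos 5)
Your proposal is correct and follows essentially the same route as the paper's own proof: elements of $\cI_p$ are multiples of sections with negative $\ZZ$-component, which are killed by the structure map by Definition~\ref{def:puncturing},(2), and compatibility of structure maps under $\pi^\flat$ transfers this vanishing to $\alpha_W$ on $(\pi^\flat)^{-1}(\cI_p)$, whence to $\cK_W$ by summing over punctures. The paper's argument is just a more compressed statement of these same steps, so apart from the minor notational slip around $\alpha_W(\pi^\sharp(m))$ (which you correct yourself), there is nothing to change.
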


\begin{proof}
Let $\cI_p={v_p^{-1}}(\ZZ_{<0}) \subseteq p^*\cM_{C^{\circ}}$ be the
ideal sheaf defined after \eqref{Eqn: u_p}. Definition~\ref{def:puncturing},(2)
implies $(p^*\alpha_{C^\circ})(\cI_p) = 0$. Pulling back via $\pi^\flat:
\cM_W\arr p^*\cM_{C^\circ}$ thus yields
\[
\alpha_{W}\big( (\pi^\flat)^{-1}(\cI_p)\big)= (p^*\alpha_{C^\circ})(\cI_p)=0.
\]
The claimed vanishing follows by summing over the punctures $p$.
\end{proof}

Proposition~\ref{prop:puncturing-ideal-vanish} demonstrates {the
announced statement} that the base of a family of punctured curves is naturally
an idealized log scheme (or stack).

\begin{corollary}
\label{Cor: bases of punctured curves are idealized}
For a punctured curve $(C^{\circ}/ W, \bp)$ with $\cK_W$ its puncturing
log-ideal, the triple $(W,\cM_W,\cK_W)$ is a coherent idealized log scheme.
\end{corollary}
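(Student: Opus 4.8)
The plan is simply to verify, one by one, the two defining conditions in Definition~\ref{Def: idealized log scheme} together with the two coherence requirements, all of which have already been prepared by the preceding results. First I would observe that by Definition~\ref{Def: Puncturing log ideal} the puncturing log-ideal $\cK_W$ is, by its very construction as the ideal sheaf generated by the $\bigcup_p(\pi^\flat)^{-1}(\cI_p)$, a sheaf of ideals of $\cM_W$; so $(\cM_W,\cK_W)$ is a candidate idealized structure on $\ul W$. The containment $\cK_W\subseteq \alpha_W^{-1}(0)$ demanded in Definition~\ref{Def: idealized log scheme} is then exactly the vanishing statement $\alpha_W(\cK_W)=0$ established in Proposition~\ref{prop:puncturing-ideal-vanish}. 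Together these show that $(W,\cM_W,\cK_W)$ is an idealized log scheme.

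It remains to address coherence. Since $W$ is a fine (and saturated) logarithmic scheme by the standing hypothesis on punctured curves, $\cM_W$ is coherent; and Lemma~\ref{curve-ideal-characteristic} asserts precisely that the puncturing log-ideal $\cK_W$ is coherent. Hence $(W,\cM_W,\cK_W)$ is a coherent idealized log scheme, as claimed. There is no genuine obstacle here: the corollary is a bookkeeping statement that packages Definition~\ref{Def: Puncturing log ideal}, Lemma~\ref{curve-ideal-characteristic} and Proposition~\ref{prop:puncturing-ideal-vanish} into the language of Definition~\ref{Def: idealized log scheme}. The only minor point worth recording in the write-up is that coherence of $\cK_W$ may equivalently be tested on its $\cO^\times_W$-quotient $\ocK_W\subseteq\ocM_W$, which is exactly the form in which Lemma~\ref{curve-ideal-characteristic} is proved.
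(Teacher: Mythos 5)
Your proof is correct and is exactly the argument the paper intends: the corollary is stated without proof precisely because it is the combination of Definition~\ref{Def: Puncturing log ideal}, Lemma~\ref{curve-ideal-characteristic}, and Proposition~\ref{prop:puncturing-ideal-vanish} that you spell out. Nothing is missing.
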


\begin{example}
\label{Expl: Puncturing log ideal AA^2}
Let $(C^\circ/W,\bp)$ be a punctured curve over the logarithmic point
$W=\Spec(Q\arr\kk)$, with $Q=\NN^2$, $\ul C$ a smooth and connected curve and
with only one punctured point $p$ with
\[
\ocM_{C^\circ,p}= (Q\oplus\NN)+\NN\cdot (a,0,-1)
+\NN\cdot(0,b,-1)\subset Q\oplus\ZZ,
\]
for some $a,b\in\NN\setminus\{0\}$. Then the puncturing log-ideal $\ocK_W$ is
generated by $(a,0),(0,b)$. This implies that if we view $W$ as the strict
closed subspace of $\AA^2=\Spec \kk[t_1,t_2]$ with its toric log structure, then
the maximal subscheme of $\AA^2$ to which $(C/W,\bp)$ extends is given by the
ideal $(t_1^a,t_2^b)\subset \kk[t_1,t_2]$.
\end{example}

\subsubsection{Log-ideals of punctured maps}

We define puncturing log-ideals only for pre-stable punctured maps.\footnote{{If
$(C^\circ/W,\bp,f)$ has associated pre-stable map $(\widetilde
C^\circ/W,\bp,\widetilde f)$ (Proposition~\ref{prop:pre-stable}), the ideal
$\cK_{W} $ of Definition~\ref{Def: Puncturing log ideal} associated to
$C^\circ/W$ may strictly include the corresponding ideal associated to
$\widetilde C^{\circ}/W$.}}

\begin{definition}
\label{Def: puncturing log ideal punctured map}\sloppy
The \emph{puncturing log ideal $\cK_W$} of a pre-stable punctured map
$(C^\circ/W,\bp,f)$ is the puncturing log-ideal of the punctured domain curve
$(C^\circ/W,\bp)$, as defined in Definition~\ref{Def: Puncturing log ideal}.
\end{definition}
\fussy

It is clear from the definition and Proposition~\ref{prop:pull-back-ideal} that
puncturing log ideals of punctured maps are stable under base change, and they
also enjoy the vanishing property $\alpha_W(\cK_W)=0$ from
Proposition~\ref{prop:puncturing-ideal-vanish}. 

We finish this subsection by giving a tropical interpretation {in the
spirit of Proposition~\ref{Prop: pre-stable tropical punctured maps}} of the
radical of the puncturing log-ideal $\cK_W$ of a pre-stable punctured map, see
Proposition~\ref{Prop: puncturing tropical interpretation}. This interpretation
is based on the following technical result concerning monoid ideals.

\begin{lemma}
\label{TQcomponents}
Suppose given a sharp toric monoid $Q$, and a collection of sharp toric 
monoids $P_{p_1},
\ldots,P_{p_r}$ along with monoid homomorphisms
$\varphi_{p_i}:P_{p_i}\rightarrow Q\oplus\ZZ$ with $u_{p_i}:=\pr_2\circ
\varphi_{p_i}$. Let $\ev_i:=(\pr_1\circ \varphi_{p_i})^t: Q^{\vee}_{\RR}
\rightarrow (P_{p_i})^{\vee}_{\RR}$. Let the ideal $I\subset Q$ 
be the monoid ideal 
\[
I=\bigcup_{i=1}^r \big\langle \pr_1\circ \varphi_{p_i}(m)\,\big|\,
\hbox{$m\in P_{p_i}$ and $u_{p_i}(m)<0$ }\big\rangle.
\]
For $\sigma$ a face of the cone $Q^{\vee}_{\RR}$,
let $A_\sigma=\Spec \kk[\sigma^{\perp}\cap Q]$ be the closed toric
stratum of $\Spec\kk[Q]$ corresponding to $\sigma$. 
Then there is a decomposition 
\[
\Spec\kk[Q]/\sqrt{I}=\bigcup_{\sigma} A_\sigma
\]
where the union is over all faces $\sigma$ of $Q^{\vee}_{\RR}$ such that if
$x\in \Int(\sigma)$, then $\ev_i(x)+\epsilon u_{p_i}\in 
(P_{p_i})^{\vee}_{\RR}$ for $\epsilon>0$ sufficiently small and $1\le i\le r$.\footnote{See Example \ref{ex:non equi dim} below.}
\end{lemma}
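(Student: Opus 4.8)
The plan is to reduce everything to a statement about a single monoid ideal in $Q$ and then match up the faces appearing in the toric stratification of $\Spec\kk[Q]/\sqrt I$ with the tropical condition on $\sigma$. First I would recall the standard fact that for a sharp toric monoid $Q$ and any monoid ideal $I\subseteq Q$, the reduced closed subscheme $\Spec\kk[Q]/\sqrt I$ is a union of toric strata $Z_\sigma=\Spec\kk[\sigma^\perp\cap Q]$, where $Z_\sigma$ is contained in $V(\sqrt I)$ if and only if the prime monoid ideal $Q\setminus(\sigma^\perp\cap Q)$ contains $I$, equivalently $I\cap(\sigma^\perp\cap Q)=\emptyset$, equivalently no generator of $I$ lies on the face $F_\sigma:=\sigma^\perp\cap Q$. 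So the entire content is the translation of the last clause into the geometric statement "$\ev_i(x)+\epsilon u_{p_i}\in(P_{p_i})^\vee_\RR$ for small $\epsilon>0$ and all $i$, whenever $x\in\Int(\sigma)$".

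Next I would fix a face $\sigma$ of $Q^\vee_\RR$ and let $F=F_\sigma=\sigma^\perp\cap Q$ be the corresponding face of $Q$; recall that $x\in\Int(\sigma)$ means exactly that $\langle q,x\rangle=0$ for $q\in F$ and $\langle q,x\rangle>0$ for $q\in Q\setminus F$. The key computation is then: a generator $\pr_1\circ\varphi_{p_i}(m)$ of $I$ (with $m\in P_{p_i}$, $u_{p_i}(m)<0$) lies on the face $F$ if and only if $\langle\pr_1\circ\varphi_{p_i}(m),x\rangle=0$, i.e. $\langle m,\ev_i(x)\rangle=0$. Now $\langle m,\ev_i(x)+\epsilon u_{p_i}\rangle=\langle m,\ev_i(x)\rangle+\epsilon\,u_{p_i}(m)$, and since $\langle m,\ev_i(x)\rangle\ge 0$ always (as $\ev_i(x)\in Q^\vee_\RR$ pulls back to an element of $(P_{p_i})^\vee_\RR\cdots$ — more precisely $\ev_i(x)\in(P_{p_i})^\vee_\RR$ directly since $\pr_1\circ\varphi_{p_i}$ maps $P_{p_i}$ into $Q$) while $u_{p_i}(m)<0$, the sum is strictly negative for small $\epsilon>0$ precisely when $\langle m,\ev_i(x)\rangle=0$. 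Hence: $\ev_i(x)+\epsilon u_{p_i}$ fails to lie in $(P_{p_i})^\vee_\RR$ for all small $\epsilon>0$ $\iff$ there exists $m\in P_{p_i}$ with $u_{p_i}(m)<0$ and $\langle m,\ev_i(x)\rangle=0$ $\iff$ some generator of $I$ coming from the $i$-th puncture lies on $F$. Taking the union over $i$, the tropical condition "$\ev_i(x)+\epsilon u_{p_i}\in(P_{p_i})^\vee_\RR$ for all small $\epsilon>0$ and all $i$" holds for $x\in\Int(\sigma)$ exactly when no generator of $I$ lies on $F_\sigma$, which by the first paragraph is exactly the condition $Z_\sigma\subseteq\Spec\kk[Q]/\sqrt I$.

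Finally I would assemble these equivalences: $\Spec\kk[Q]/\sqrt I=\bigcup_{F}Z_{\sigma(F)}$ over faces $F$ of $Q$ not containing any generator of $I$, and this index set is precisely the set of $\sigma$ satisfying the stated tropical condition. One small point to handle carefully is the well-definedness of the clause "if $x\in\Int(\sigma)$, then\ldots": I should note that because $\ev_i$ is linear and $(P_{p_i})^\vee_\RR$ and the condition are invariant under positive scaling and the relevant inequalities are open, whether the condition holds is independent of the choice of $x\in\Int(\sigma)$, so it suffices to check it for one $x$; this matches the fact that "$\langle m,\ev_i(x)\rangle=0$ for some $x\in\Int(\sigma)$" is equivalent to "for all $x\in\Int(\sigma)$" since $F_\sigma$ does not depend on $x$. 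The main obstacle, such as it is, is not conceptual difficulty but bookkeeping: keeping straight the duality between faces of $Q$ and faces of $Q^\vee_\RR$, and verifying that the generators of $\sqrt I$ (equivalently, which toric strata it contains) are governed exactly by which generators of $I$ lie on a given face — the rest is the short sign computation above.
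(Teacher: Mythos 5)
Your proposal is correct and follows essentially the same route as the paper's proof: translate membership of $Z_\sigma$ in $V(\sqrt I)$ into the condition that no generator $\pr_1\circ\varphi_{p_i}(m)$ (with $u_{p_i}(m)<0$) lies on the face $\sigma^\perp\cap Q$, and then convert that via $\langle m,\ev_i(x)+\epsilon u_{p_i}\rangle=\langle m,\ev_i(x)\rangle+\epsilon\,u_{p_i}(m)$ with $\langle m,\ev_i(x)\rangle\ge 0$ into the stated tropical condition. The only point worth being explicit about (which the paper also elides) is that in the direction where $\ev_i(x)+\epsilon u_{p_i}$ fails to lie in $(P_{p_i})^\vee_\RR$ for all small $\epsilon>0$, one should take the witnessing $m$ from the finite generating set of $P_{p_i}$ to get a single $m$ working as $\epsilon\to 0$.
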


\begin{proof}
Let $I_{p_i}\subset Q$ be the monoid ideal
\[
I_{p_i}= \big\langle \pr_1\circ \varphi_{p_i}(m)\,\big|\,
\hbox{$m\in P_{p_i}$ satisfies $u_{p_i}(m)<0$}\big\rangle.
\]
Of course $V(I)=\bigcap_i V(I_{p_i})$. We first show that if $\sigma$ satisfies
the given condition, then $A_\sigma\subseteq V(I_{p_i})$ for each $i$. The
monomial ideal defining $A_\sigma$ is $Q\setminus (\sigma^{\perp}\cap Q)$, so
it is enough to show that $\sigma^{\perp}\cap I_{p_i}=\emptyset$. Choose an
$x\in\Int(\sigma)$. Let $q\in I_{p_i}$ be a generator of $I_{p_i}$, that is,
there exists an $m\in P_{p_i}$ such that $q=\pr_1(\varphi_{p_i}(m))$ and
$u_{p_i}(m)<0$. Since $m\in P_{p_i}$ and $\ev_i(x)+\epsilon u_{p_i}\in
(P_{p_i})^{\vee}_{\RR}$ for some $\epsilon>0$, we have
\[
0 \le \langle \ev_i(x)+\epsilon u_{p_i}, m\rangle.
\]
Thus $\langle u_{p_i}, m\rangle <0$ implies
$\langle \ev_i(x),m\rangle >0$, or $\langle x,\pr_1(\varphi_{p_i}(m))\rangle
=\langle x, q\rangle >0$, as desired.

Conversely, suppose that $A_\sigma\subseteq V(I)$ for some face
$\sigma$ of $Q^{\vee}_{\RR}$, but there exists an $i$ and some
$x\in\Int(\sigma)$
such that $\ev_i(x)+\epsilon u_{p_i}\not \in (P_{p_i})^{\vee}_{\RR}$
for any $\epsilon>0$. Then there exists an $m\in P_{p_i}$ such that
$\langle \ev_i(x)+\epsilon u_{p_i},m\rangle <0$ for all $\epsilon>0$. 
Since $\langle \ev_i(x),m \rangle \ge 0$, we must have 
$\langle \ev_i(x),m\rangle=0$
and $u_{p_i}(m)<0$. Thus $q=\pr_1(\varphi_{p_i}(m))$ lies in $I_{p_i}$.
We have $\langle x, q\rangle =\langle \ev_i(x), m\rangle=0$, so $q\in
\sigma^{\perp}$. In particular, $z^q$ does not vanish on $A_\sigma$,
contradicting $A_\sigma\subseteq V(I)$. 
\end{proof}

\begin{proposition}
\label{Prop: puncturing tropical interpretation}
Let $(C^\circ/W,\bp,f)$ be a punctured map to $X$ over the logarithmic point
$W=\Spec(Q\arr \kappa)$,
\[
h:\Gamma=\Gamma(G,\ell)\arr\Sigma(X)
\]
the associated tropical curve over $\omega= Q^\vee_\RR$, and
$(G,\bg,\bsigma,\bu)$ its type. Denote by $\sqrt{\ocK_W}\subset Q$ the
radical of the puncturing log-ideal of $(C^\circ/W,\bp,f)$.

Then a face $Q'\subseteq Q$ lies in $Q\setminus \sqrt{\ocK_W}$ if and
only if for any punctured leg $L\in L(G)$ it holds\footnote{Again, see
Example \ref{ex:non equi dim} below.}
\[
\ell(L)\big((Q')^\perp\cap\omega_\ZZ\big)\neq 0.
\]
In other words, $Q'$ determines a face $(Q')^{\perp}\cap \omega_{\ZZ}$
of $\omega_{\ZZ}$, and each point of this face corresponds to a tropical
map. Thus we require that the length function $\ell(L)$ of each punctured
leg be non-vanishing on this face of $\omega_{\ZZ}$.
\end{proposition}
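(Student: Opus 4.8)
The plan is to reduce the statement to the monoid-combinatorial assertion of Lemma~\ref{TQcomponents} and then translate it back into the tropical language. First I would set up the local data: at the logarithmic point $W=\Spec(Q\arr\kappa)$, for each punctured leg $L=L_p\in L^\circ(G)$ with associated point $x=\ul f(p)$ put $P_{p}=\ocM_{X,x}$ and let $\varphi_p=\bar f^\flat_p: P_p\arr \ocM_{C^\circ,p}\subset Q\oplus\ZZ$ be the (ghost) pullback. Then $u_p=\pr_2\circ\varphi_p$ is exactly the contact order of Definition~\ref{Def: contact order}, and $\ev_p=(\pr_1\circ\varphi_p)^t: \omega=Q^\vee_\RR\arr (P_p)^\vee_\RR$ is the ``evaluation'' map recording where the vertex adjacent to $L$ goes. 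By Definition~\ref{Def: Puncturing log ideal} (and the discussion following it), $\ocK_W\subseteq Q$ is precisely the monoid ideal $I$ appearing in Lemma~\ref{TQcomponents}, generated over all punctured points $p$ by $\{\pr_1\circ\varphi_p(m)\mid m\in P_p,\ u_p(m)<0\}$ — here only punctured (non-marked) legs contribute, since by the remark after Definition~\ref{Def: contact order} marked points have $u_p(P_p)\subseteq\NN$ and hence contribute the empty generating set. (I would also note that nodal points play no role in $\ocK_W$.)

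Next I would invoke Lemma~\ref{TQcomponents} directly: it gives $\Spec\kk[Q]/\sqrt{I}=\bigcup_\sigma Z_\sigma$ over those faces $\sigma$ of $\omega=Q^\vee_\RR$ with the property that for $x\in\Int(\sigma)$ one has $\ev_p(x)+\epsilon u_p\in (P_p)^\vee_\RR$ for all small $\epsilon>0$ and all punctured $p$. Now I translate both sides of the claimed equivalence through the standard order-reversing bijection between faces $Q'\subseteq Q$ and faces $\sigma\subseteq \omega=Q^\vee_\RR$, under which $Q'$ corresponds to $\sigma=(Q')^\perp\cap\omega$ and conversely $Q'=\sigma^\perp\cap Q$; under this bijection $Q'\in Q\setminus\sqrt{\ocK_W}$ (i.e.\ $z^{Q'}$ does not vanish identically on $\Spec\kk[Q]/\sqrt{I}$, equivalently $Z_\sigma\not\subseteq V(\sqrt I)$, equivalently $Z_\sigma$ is one of the components listed) is equivalent to $\sigma$ satisfying the Lemma's condition. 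So it remains to check that the Lemma's condition on $\sigma$ is equivalent to the tropical condition $\ell(L)\big((Q')^\perp\cap\omega_\ZZ\big)\neq 0$ for every punctured leg $L$.

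For that last equivalence I would argue pointwise on $\Int(\sigma)$, where $\sigma=(Q')^\perp\cap\omega$. Fix a punctured leg $L=L_p$ with adjacent vertex $v$, so that $\ev_p(x)=h(v)(x)\in (P_p)^\vee_\RR$ for all $x\in\omega$, and $u_p=\bu(L)\in N_{\bsigma(L)}$. By pre-stability (Proposition~\ref{Prop: pre-stable tropical punctured maps}), the length $\ell(L)(x)$ is exactly the largest $\lambda\ge 0$ with $\ev_p(x)+\lambda u_p\in (P_p)^\vee_\RR$; in particular $\ell(L)(x)>0$ iff $\ev_p(x)+\epsilon u_p\in (P_p)^\vee_\RR$ for all small $\epsilon>0$, while $\ell(L)(x)=0$ iff $\ev_p(x)$ already lies on a wall of $(P_p)^\vee_\RR$ against which $u_p$ points outward. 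Since $\ell(L)$ is piecewise linear and nonnegative on $\omega$, its vanishing locus is a union of faces, so $\ell(L)$ vanishes on the face $\sigma$ iff it vanishes at a point of $\Int(\sigma)$ iff it vanishes at every point of $\Int(\sigma)$ — this is the content linking the ``for $x\in\Int(\sigma)$'' condition of Lemma~\ref{TQcomponents} to the ``$\ell(L)((Q')^\perp\cap\omega_\ZZ)\neq 0$'' condition of the Proposition, after remarking that $(Q')^\perp\cap\omega$ and $(Q')^\perp\cap\omega_\ZZ$ span the same real cone so non-vanishing on one is non-vanishing on the other. Conjoining over all punctured legs $L$ gives exactly the asserted characterization, and ``each point of this face corresponds to a tropical map'' is then just the observation that for $x$ in that face, $(\ell(L)(x))_L$ together with $h$ defines a tropical punctured map over the ray through $x$, of the given type. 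The main obstacle I anticipate is bookkeeping the identification $\ocK_W = I$ cleanly — in particular making sure the generization/adjunction gymnastics in the definition of $\cK_W$ really produce, after passing to ghost sheaves at the closed point, the monoid ideal of Lemma~\ref{TQcomponents} with the correct indexing set of punctured points — and then being careful that ``face of $Q$'' versus ``face of $\omega$'' and integral versus real points are matched up consistently throughout.
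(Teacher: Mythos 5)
Your proposal is correct and follows the paper's proof exactly: identify $\ocK_W$ with the ideal $I$ of Lemma~\ref{TQcomponents} using pre-stability, apply that lemma, and translate its conclusion back into tropical terms via Proposition~\ref{Prop: pre-stable tropical punctured maps}. (One slip in a parenthetical: $Q'\cap\sqrt{\ocK_W}=\emptyset$ corresponds to $Z_\sigma\subseteq V(\sqrt{I})$, i.e.\ $Z_\sigma$ appearing in the decomposition, not $Z_\sigma\not\subseteq V(\sqrt{I})$ — your subsequent reasoning in fact uses the correct direction.)
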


\begin{proof}
By pre-stability, $\ocK_W$ is generated by those $q\in Q$ such that there exists
a puncture $p_i\arr \ul C$ of $C^\circ$ and $m\in \ocM_{X,\ul f(p_i)}$
with $\bar f^\flat(m)= (q, a)$ and $a=u_{p_i}(m)<0$. Thus $\ocK_W=I$
in Lemma~\ref{TQcomponents} applied with $P_{p_i}= \ocM_{X,\ul f(p_i)}$. 
Using the characterization of punctured legs in the
pre-stable case in Proposition~\ref{Prop: pre-stable tropical punctured maps},
the statement to be proved is then a reformulation of the conclusion of 
Lemma~\ref{TQcomponents} in terms of tropical maps.
\end{proof}

Phrased more geometrically, the conclusion of Proposition~\ref{Prop: puncturing
tropical interpretation} says that exactly those faces of the basic cone of a
tropical punctured map (Definition~\ref{Def: basic monoid}) can possibly arise
from a generization of punctured maps if the puncturing legs remain of positive
length.
\medskip

We end this section with an example highlighting the fact that the natural base
spaces in punctured Gromov-Witten theory are possibly reducible spaces due to
the puncturing ideals. See Theorem~\ref{thm:idealized-etale} and
Remark~\ref{Rem: local structure of fM(cX/B,tau)} for the general picture
underlying this phenomenon.

\begin{example}
\label{ex:non equi dim}
\textsc{Algebraic setup.}
Take $B=\Spec\kk$, and consider $X$ a smooth surface with log structure coming
from a smooth rational curve $D\subseteq X$ with $D^2=2$. Consider a type of
punctured maps of genus $0$, underlying curve class $[D]$, and four punctures,
$p_1,\ldots,p_4$, with contact orders $-1,-1,2$ and $2$ respectively. Consider a
punctured curve $f:C^{\circ}\rightarrow X$ where $C=C_1\cup C_2\cup C_3$ has
three irreducible components and two nodes $q_1=C_1\cap C_2$, $q_2=C_1\cap C_3$.
We assume $p_1,p_3\in C_2$, $p_2,p_4\in C_3$. Finally, $\ul{f}$ identifies $C_1$
with $D$ and contracts $C_2$ and $C_3$. Orienting the node $q_i$ from $C_1$ to
$C_i$, it is not difficult to check such a curve exists with $u_{q_1}=u_{q_2}=1$
(Figure \ref{fig:idealized-curve}).

\begin{figure}[htb]
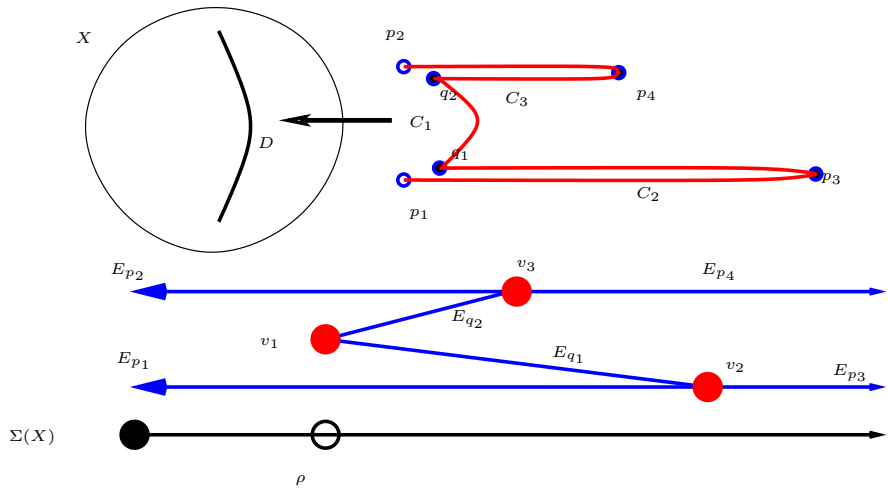

\input{idealized-punctured-curve-noexpand.pspdftex}
\input{idealized-punctured-tropical-only.pspdftex}
\caption{The algebraic map and its tropical counterpart. Here $\rho=1,\ell_1=2,$
and $\ell_2=1$.}
\label{fig:idealized-curve}
\end{figure}

\textsc{The tropical curve.}
The corresponding tropical curve $\Gamma$ has three vertices, $v_1,v_2,v_3$,
edges $E_{q_1}, E_{q_2}$, and legs $E_{p_1},\ldots,E_{p_4}$. The
moduli space of tropical curves of this type is
$\RR_{\ge 0}^3$, with coordinates $\rho,\ell_1,\ell_2$, where $\rho$ gives
the distance of the image of $v_1$ from the origin of $\Sigma(X)=\RR_{\ge 0}$,
and $\ell_1,\ell_2$ give the lengths of the edges $E_{q_1}, E_{q_2}$. In
particular, the basic monoid for this punctured log curve is 
$Q=\NN^3$, generated by $\rho,\ell_1,\ell_2$.

\textsc{The punctured ideal.}
In this case we may easily calculate the puncturing ideal (Definition~\ref{Def:
puncturing log ideal punctured map}). We have contributions from each of the two
punctures. Using the definition, we note that at the puncture $p_i$, $i=1$ or
$2$, the map $\varphi_{\ol\eta}\circ\chi_{\eta,p_i}: P_{p_i}=\NN\rightarrow Q$
is dual to $\ev_i:Q^{\vee}_{\RR}\rightarrow (P_{p_i})^\vee =\RR_{\ge 0}$
evaluating the tropical curve parameterized by a point at $Q^{\vee}_{\RR}$ at
$v_2$ or $v_3$, see Lemma~\ref{TQcomponents}. Thus for $m\in
Q^{\vee}_{\RR}$, $\ev_i(m) =\rho(m)+\ell_i(m)$. Dually
$\varphi_{\ol\eta}\circ\chi_{\eta,p_i}:P\rightarrow Q$ is given by $1\mapsto
\rho+\ell_i$. As $u_{p_i}(1)=-1$, $i=1,2$, we see the puncturing ideal $K$ is
generated by $\rho+\ell_1,\rho+\ell_2$. Writing $\kk[Q]=\kk[x,y,z]$, with the
three variables corresponding to $\rho,\ell_1,\ell_2$ respectively, we see
$\Spec \kk[Q]/K= \Spec \kk[x,y,z]/(xy,xz)$, which has two irreducible components
of differing dimension.

\textsc{The participating and excluded cones}.\ \sloppy
The decomposition
$\bigcup_{\sigma} A_\sigma$ of Lemma~\ref{TQcomponents} translates to the
statement that the cones \emph{excluded} in this decomposition are the origin,
the $\ell_1$-axis, and the $\ell_2$-axis. Indeed these are the cones where at
least one puncture is positioned with its tail at the origin, hence forced to
have length 0, {which is} excluded by Proposition~\ref{Prop: puncturing
tropical interpretation}.

\fussy
\textsc{The components of the algebraic moduli space.}
Note that deformation theory provides two deformation classes of the punctured
map. The first smooths one or both of the nodes, resulting in a punctured map
with at least one pair $p_1,p_3$ or $p_2,p_4$ now being distinct points on the
component of the domain mapping surjectively to $D$. Since this component
contains a negative contact order point, its image cannot be deformed away from
$D$ by Remark~\ref{rem:contained-in-strata}.

The second deformation class keeps the domain of $f$ fixed, but deforms the
image of $C_1$ away from $D$, so that it meets $D$ transversally in two points.
The remaining components $C_2$ and $C_3$ are then contracted to the points of
intersection of $f(C_1)$ with $D$. It is then no longer possible to smooth the
nodes.

\textsc{The data captured by the ideal.}
This local reducibility of moduli space happens despite the obstruction group
$H^1(C,f^*\Theta_X)$ for deformations with fixed domain (see
\S\ref{sec:obstruction}) being zero. The point of the puncturing ideal is that
it captures these intrinsic singularities of the moduli space. These
obstructions really come from obstructions to deforming the punctured
domain curve.

The general picture explaining this phenomenon is developed in
\S\ref{ss:idealized smoothness}. In particular, Example~\ref{Expl: non equi
dim revisited} revisits the present example from the general perspective.
\end{example}

\subsection{{Targets with monodromy}}
\label{ss: targets with monodromy}
We now drop the assumption that $X$ is simple and discuss what is needed to
treat the general case.

\subsubsection{Tropicalization of punctured maps with non-simple targets.}
\label{sss: refined tropicalization over geometric point}
Let $(C^\circ/W,\bp,f)$ be a punctured map over a logarithmic point
$\Spec(Q\arr\kappa)$ with $\kappa$ algebraically closed. Then the inclusion of a
nodal point $q$ or punctured point $p$ into $C^\circ$ is a geometric point of
$\ul C$ that we denote by $\ol q$ and $\ol p$, respectively. For a node $q$ of
$\ul C$, the generic points $\eta,\eta'\in \Spec\cO_{\ul C,\ol q}$ of the two
branches of $\ul C$ at $\ol q$ provide two specialization arrows of geometric
points (see Appendix~\ref{App: Functorial tropicalization})
\[
\ol\eta\arr \ol q,\quad \ol\eta'\arr\ol q,
\]
unique up to order and precomposition with an isomorphism in the category of
geometric points in $\Spec\cO_{\ul C,\ol q}$. The node $q$ is a
self-intersection point of $\ul C$ iff $\ol\eta,\ol\eta'$ have the same image in
$\ul C$, that is, iff they are isomorphic as geometric points of $\ul C$. In any
case, denoting by $G$ the dual intersection graph of $C^\circ$, each
specialization arrow $\ol\eta\arr \ol x$ with $x\in E(G)\cup L(G)$ gives rise to
a face inclusion
\begin{equation}
\label{Eqn: Face inclusions tropical domain}
Q^\vee= \ocM_{C,\ol\eta}^\vee\arr \ocM_{C,\ol x}^\vee.
\end{equation}
The equality on the left-hand side is the canonical isomorphism obtained since $C^\circ$ is a log smooth curve over $\Spec(Q\arr \kk)$.

Applying $f$ yields a specialization arrow $f(\ol \eta)\arr f(\ol x)$ and a corresponding face embedding
\begin{equation}
\label{Eqn: Face embedding from image generization}
\ocM^\vee_{X,f(\ol\eta)} \arr \ocM^\vee_{X,f(\ol x)}
\end{equation}
Our tropicalization procedure for $f:C^\circ\arr W$ requires us to choose, for
each $x\in V(G)\cup E(G)\cup L(G)$ with associated geometric point $\ol x$ of
$\ul C$, an isomorphism
\begin{equation}
\label{Eqn: Choice of isos with bsigma(x)}
\Hom\big(\ocM_{X,\ul f(\ol x)},\RR_{\ge 0}\big)\arr \bsigma(x)
\end{equation}
in $\Sigma(X)$. Composing these isomorphisms or their inverses with the arrow in
\eqref{Eqn: Face embedding from image generization} defines an arrow
\[
\iota_{x\eta}: \bsigma(\eta)\arr \bsigma(x)
\]
in $\Sigma(X)$. If $\Sigma(X)$ is simple there is only one arrow
$\bsigma(\eta)\arr \bsigma(x)$ in $\Sigma(X)$. In the general case, the
$\iota_{x\eta}$ are part of the data defining the tropicalization, up to the
simultaneous action of
\begin{equation}
\label{Eqn: GG}
\GG=\prod_{x\in V(G)\cup E(G)\cup L(G)} \Aut_{\Sigma(X)} (\bsigma(x))
\end{equation}
on the choices of isomorphisms \eqref{Eqn: Choice of isos with bsigma(x)}. Note
that $\GG$ may not act transitively on the set of arrows $\bsigma(\eta)\arr
\bsigma(x)$, and then the specialization morphism $\ol\eta\arr\ol x$ in $\ul C$
at a node or marked point distinguishes a $\GG$-orbit of such arrows.

We emphasize that if $x=q$ is a node there are two such arrows, regardless if
$q$ is self-intersecting or not, one for each branch of $\ul C$ at $q$. Thus the
proper labelling would not be by pairs $(\eta,q)$ but by half-edges of the dual
intersection graph $G$ of $C^\circ$. By abuse of notation we nevertheless denote
these two half-edges by $(q,\eta)$ and $(q,\eta')$.

Given a node $q$ with adjacent geometric generic point $\ol\eta$, we can compose
$f^\flat_{\ol\eta}: \ocM_{X,\ul f(\ol\eta)}\arr \ocM_{C,\ol\eta}$ with the
identification $\ocM_{C,\ol \eta}=Q$ and the isomorphisms \eqref{Eqn: Choice of isos
with bsigma(x)}, and dualize to obtain the map of cones
\[
V_\eta:Q^\vee\arr \bsigma(\eta).
\]
The defining equation~\cite[(2.22)]{decomposition} of the contact order $u_q\in
\bsigma(q)$ at $q$ now takes the form
\begin{equation}
\label{Eqn: edge equation with monodromy}
\iota_{q\eta'}\circ V_{\eta'}-\iota_{q\eta}\circ V_\eta= \ell(E_q)\cdot u_q,
\end{equation}
an equality in $\Hom(Q^\vee,\bsigma(q))$. Here $\eta'$ is the other geometric
generic point of $\Spec\cO_{C,\ol q}$ as above.

The pair $(V_\eta,V_{\eta'})$, or equivalently $(V_\eta,\ell(E_q),u_q)$,
determines the tropicalization of $(C^\circ/W,\bp,f)$ at $q$. At a marked point
$p$, the tropicalization is similarly defined by $V_\eta$ and the contact order
$u_p$.

\sloppy
Taken together, we obtain the following description of the tropicalization of
$(C^\circ/W,\bp,f)$.

\fussy
\begin{proposition}
\label{Prop: General tropicalization punctured map}
The tropicalization of a punctured map $(C^\circ/W,\bp,f)$ to $X$ with
$W=\Spec(Q\arr \kk)$ an algebraically closed logarithmic point is given by the
abstract tropical curve $(G,\bg,\ell)$, i.e.\ the tropicalization of
$C^\circ/W$, and the tuple
\[
(V_\eta,u_x,\iota_{x\eta})_{\eta,x},
\]
as discussed. Here $\eta\in V(G)$, $x\in E(G)\cup L(G)$, with $\eta$ adjacent to
$x$ for $\iota_{x\eta}$, and the data is subject to \eqref{Eqn: edge equation
with monodromy}. A self-intersecting node $q$ produces two arrows
$\iota_{x\eta}$, as commented on above. The tuple
$(V_\eta,u_x,\iota_{x\eta})_{\eta,x}$ is unique up to the obvious action of
$\GG$ from \eqref{Eqn: GG} on the set of tuples.
\end{proposition}

Conversely, a tropical punctured map over $\omega\in\Cones$ consists of two maps
$\Gamma\arr\omega$ and $\Gamma\arr \Sigma(X)$ of generalized cone complexes.
Lifting both maps locally near the strata of $|\Gamma|$ labelled by vertices,
edges and legs to maps of cone complexes provides a tuple
$(V_\eta,u_x,\iota_{x\eta})_{\eta,x}$ that is again unique up to the action of
$\GG$. Thus we have a one-to-one correspondence between tropical punctured maps
and $\GG$-orbits of tuples $(V_\eta,u_x,\iota_{\eta x})_{\eta,x}$. Note in
particular that each individual contact order $u_x\in\bsigma(x)$, $x\in E(G)\cup
L(G)$, is only defined up to the action of $\Aut_{\Sigma(X)} (\bsigma(x))$, but
more information is retained when considering contact orders simultaneously and
together with the set of face inclusions $\iota_{x\eta}$.
Here is a simple example illustrating the effect of monodromy on the procedure.

\begin{example}
This is a modification of the Whitney umbrella example in \cite[\S5.4.1]{ACMUW}.
Let $C$ be the nodal cubic with its log smooth structure over the standard log
point $\Spec(\NN\to\kk)$. Define $X$ as the quotient of
$(\AA^1\setminus\{0\})\times C$ by the $\ZZ/2$-action that swaps the two
branches of $C$ at the node and acts by multiplication by $-1$ on
$\AA^1\setminus\{0\}$. We can view $X$ as a non-trivial, log smooth fibration
over $(\AA^1\setminus\{0\})\times \Spec(\NN\to\kk)$ with all fibers $X_s$
isomorphic to the nodal cubic $C$. Thus $X$ is irreducible with two logarithmic
strata with closures $\ul X$ and $\ul X_\sing$, respectively. Denoting by
$\ol\eta_0$, $\ol\eta_1$ geometric generic points for these strata, we have
$\ocM_{X,\ol\eta_0}=\NN$, $\ocM_{X,\ol\eta_1}=\NN^2$. The tropicalization
$\Sigma(X)$ has a presentation with two non-zero cones $\sigma_0=\RR_{\ge0}$,
$\sigma_1=\RR_{\ge0}^2$, and non-trivial arrows the two face inclusions
$\sigma_0\arr \sigma_1$.

The inclusion $C\arr
X$ of a closed fiber defines a stable log map with unique generic point $\eta$,
one node $q$, and no marked points. We have $\bsigma(\eta)=\sigma_0$,
$\bsigma(q)=\sigma_1$, and a unique arrow \eqref{Eqn: Choice of isos with
bsigma(x)} in $\Sigma(X)$ for $x=\eta$, hence a unique map of cones $V_\eta: Q^\vee=\RR_{\ge 0}\arr \sigma_0$. There are, however, two choices of
isomorphisms
\[
\Hom(\ocM_{X,\ul f(\ol q)},\RR_{\ge0}) \arr\bsigma(q)=\sigma_1.
\]
Each such choice gives two arrows $\iota_{q\eta},\iota_{q\eta'}: \sigma_0\arr\sigma_1$ and a contact order $u_q$. If one choice gives
\[
\big(V_\eta, u_q,\iota_{q\eta},\iota_{q\eta'}\big)
\]
for the tuple in Proposition~\ref{Prop: General tropicalization punctured map},
the other choice swaps $\iota_{q\eta},\iota_{q\eta'}$ and replaces $u_q$ by
$-u_q$. This is indeed the action of $\GG=\ZZ/2$ on the set of tuples as stated
in the same proposition.

The relation to the Whitney umbrella $Y=V(x^2z-y^2)\subseteq \AA^3$ is as
follows. Endow $Y$ with the restriction of the divisorial log structure on
$\AA^3$ defined by $Y$. We view $Y\setminus V(z)$ as a fibration over
$\AA^1\setminus\{0\}$ by one-nodal rational curves via projection to the
$z$-coordinate. Then there is an \'etale map $Y\setminus V(z)\arr X$ of degree
two of fiber spaces over $\AA^1\setminus\{0\}$ that separates the branches of
the fibers of $X\to \AA^1\setminus\{0\}$.
\end{example}

\subsubsection{Types of punctured maps with non-simple targets}
\label{sss: types with non-simple targets}
One way to define the type of a punctured map in general is as an equivalence
class of tropicalizations which identifies two tropical punctured maps whenever
they fit into one family. The action of the automorphism group $\GG$ on
a face map $\iota_{x\eta}$ in Proposition~\ref{Prop: General tropicalization
punctured map} is induced by propagation along appropriate families. Thus in
the general case, the type of a punctured map at a geometric point, or of a
tropical punctured map, in addition to $(G,\bg,\bsigma,\bu)$ needs to specify
these face maps $\iota_{x\eta}$, at least up to the overall action by
$\GG$. This leads to the following modification of Definition~\ref{Def: type}.

\begin{definition}
\label{Def: Types with monodromy}
(1)~A \emph{framed type (of a family of tropical punctured maps)} is a tuple
$(G,\bg,\bsigma,\bu)$ with $\bu(x)\in N_{\bsigma(x)}$ for all $x\in E(G)\cup
L(G)$ as in Definition~\ref{Def: type}, together with arrows\footnote{In the
case of a self-intersecting node $x=q$ there are two such arrows, which as
before we do not distinguish by the notation.} in $\Sigma(X)$,
\[
\iota_{xv}: \bsigma(v)\arr \bsigma(x),
\]
for all $x\in E(G)\cup L(G)$ and $v\in V(G)$ an adjacent vertex.\\[1ex]
(2)~The \emph{type (of a family of tropical punctured maps)} is an equivalence
class of framed types under the obvious action of $\GG$ on the set of framed
types, as obtained from Proposition~\ref{Prop: General tropicalization punctured
map}. The notation for a framed type is $(G,\bg,\bsigma,\bu,\biota)$ with $\biota=(\iota_{xv})_{x,v}$.

The \emph{type of a punctured map $(C^\circ/W,\bp,f)$ to $X$ at a
geometric point $\ol w$ of $W$} is the type of the associated tropical map
$\Gamma\arr\Sigma(X)$ over $\omega=(\ocM_{W,\ol w}^\vee)_\RR$.
\end{definition}

Note that $\GG$ acts trivially on the domain data $(G,\bg)$, the strata map
$\bsigma$ and on global contact orders. So for framed types the action is on the
tuple $(\bu(x),\iota_{xv})$ with $x$ running through $E(G)\cup V(G)$ and $v$
through vertices adjacent to $x$. In particular, since the group $\GG$ acts
also trivially on the space $\fC_\sigma$ of global contact orders for
$\sigma\in\Sigma(X)$, the definition of global type in Definition~\ref{Def:
global type} remains unchanged.

We skip the obvious decorated versions of the notions of types in the general
case. These just add the data of curve classes to vertices.

\subsubsection{Contraction morphisms of types for non-simple targets}

The definition of contraction morphism of types
\[
\phi:\tau= (G,\bg,\bsigma,\bu)\arr \tau'=(G',\bg',\bsigma',\bu')
\]
from \cite[Def.\,2.24]{decomposition} imposes the condition that
$\bsigma'(\phi(x))$ is a face of $\bsigma(x)$ for all $x\in V(G)\cup
\big(E(G)\setminus E_\phi\big)\cup L(G)$. In the general case, this condition
has to be replaced by the choice of an arrow
\[
\bsigma'(\phi(x))\arr \bsigma(x)
\]
in $\Sigma(X)$ as part of the data defining $\phi$. We obtain the following definition.

\begin{definition}
\label{Def: Contraction morphism}
1)\ Let $\tau= (G,\bg,\bsigma,\bu,\biota)$, $\tau'=
(G',\bg',\bsigma',\bu',\biota')$ be two framed types. A \emph{contraction
morphism of framed types} $\tau\arr\tau'$ is a contraction morphism
$\phi:(G,\bg)\arr (G',\bg')$ of genus-decorated graphs together with arrows
\[
\iota_x: \bsigma'(\phi(x))\arr \bsigma(x)
\]
in $\Sigma(X)$ for all $x\in V(G)\cup \big(E(G)\setminus E_\phi\big)\cup L(G)$.
We require that the $\iota_x$ are compatible with $\biota,\biota'$, that is,
the diagrams
\begin{equation}
\label{Diag: contraction framed types}
\vcenter{\xymatrix{
\bsigma'(\phi(v))\ar[r]^{\iota_v}\ar[d]^{\iota'_{\phi(x)\phi(v)}}&
\bsigma(v)\ar[d]^{\iota_{xv}}\\
\bsigma'(\phi(x))\ar[r]^{\iota_x}&\bsigma(x)
}}
\end{equation}
commute, for all $x\in \big(E(G)\setminus E_\phi\big)\cup L(G)$ and all $v\in
V(G)$ an adjacent vertex.\footnote{Note that $\iota'_{\phi(x)\phi(v)}$ is uniquely determined by the diagram from $\iota_v,\iota_{xv}, \iota_x$.}\\[1ex]
2)\ An equivalence class for the obvious action of the group $\GG$ from
\eqref{Eqn: GG} acting on the set of contraction morphisms with domain framed
types with given $(G,\bg,\bsigma)$ defines the notion of \emph{contraction
morphism of types}.
\end{definition}

There is again no change in the definition of contraction morphism of global types compared to the case with simple $X$.

As in the discussion of types in the preceding \S\ref{sss: types with non-simple
targets}, we have again skipped spelling out the trivial generalization to the
decorated versions.

Contraction morphisms arise from specializations in families of punctured maps,
as proved in the case of simple $X$ in Proposition~\ref{Prop: Generization leads
to contraction morphisms}. Here is the version for the general case.

\begin{proposition}
\label{Prop: Contraction morphisms from specialization in general}
Let $(C^\circ/W,\bp,f)$ be a stable punctured map to $X$ over some logarithmic
scheme $W$, and let $\ol w'\arr \ol w$ be a specialization arrow of geometric
points of $W$. Let $(\tau,\bA)$ with $\tau= (G,\bg,\bsigma,\bu,\biota)$ be the
decorated framed type of $(C/W,\bp,f)$ at the geometric point $\ol w$
of $\ul W$ according to Definition~\ref{Def: Types with monodromy},(2) by a
choice of arrows \eqref{Eqn: Choice of isos with bsigma(x)}. Let similarly
$(\tau',\bA')$ with $\tau'= (G',\bg',\bsigma',\bu')$ be the decorated framed
type of $(C^\circ/W,\bp,f)$ at $\ol w'$, for the induced choice of arrows
\eqref{Eqn: Choice of isos with bsigma(x)}.

Then the map
\[
(\tau,\bA)\arr (\tau',\bA')
\]
induced by generization is a contraction morphism.
\end{proposition}

\begin{proof}
The proof is again identical to the proof of \cite[Lem.\,2.30]{decomposition}
save keeping track of the choices of arrows in $\Sigma(X)$.
\end{proof}

\subsubsection{The basic monoid and tropical moduli in general}
\label{sss: basic monoid with monodromy}
The definition of basicness (Definition~\ref{Def: Basicness}) makes sense in
complete generality by replacing ``type'' by ``a framed type representing the
type of $(C^\circ/W,\bp,f)$ at the geometric point $\ol w$''. Indeed, given a
framed type, the space of tropical curves of the given framed type is a subspace
of the set of tuples $(V_\eta,\ell_q)$ with entries taking values in strongly
convex rational polyhedral cones and subject to some integral equalities, hence
is parametrized by a strongly convex rational polyhedral cone itself. This
cone has been made explicit in Proposition~\ref{Prop: Basic monoid} in the case
of simple $X$. Here is the restatement of this proposition with reference to a
framed type.

\begin{proposition}
\label{Prop: Basic monoid for framed type}
Let $(\pi:C^\circ/W,\bp,f)$ be a basic, pre-stable punctured map over a
logarithmic point $\Spec(Q\arr \kappa)$ with $\kappa$ an algebraically
closed field. Denote by $G$ the dual intersection graph of $C^\circ$. For each
$x\in V(G)\cup E(G)\cup L(G)$ with associated geometric point $\ol x$ of $\ul
C^\circ$ and smallest stratum $\bsigma(x)\in\Sigma(X)$ containing $f(\ol x)$
choose an isomorphism
\[
\mu_x: \ocM_{X,f(\ol x)}\arr \big(\bsigma(x)_\ZZ\big)^\vee,
\]
dual to an arrow in $\Sigma(X)$ as in \eqref{Eqn: Choice of isos with
bsigma(x)}. Denote by $(G,\bg,\bsigma,\bu,\biota)$ the framed type of
$(\pi:C^\circ/ W,\bp,f)$ defined by this choice according to the discussion
leading to Proposition~\ref{Prop: General tropicalization punctured map}. Then
the map
\begin{equation}
\label{Eqn: basic monoid in general}
\textstyle
Q^\vee\arr \big\{\big((V_\eta)_\eta,(\ell_q)_q\big)\in\prod_\eta
\bsigma(\eta)_\ZZ\times \prod_q\NN\,\big|\, \iota_{q\eta}\circ V_\eta-
\iota_{q\eta'}\circ V_{\eta'}= \ell_q\cdot \bu(q) \big\}
\end{equation}
with $V_\eta$-entry the dual of ${\big(
\ol{\pi^\flat_\eta}\big)^{-1}} \circ\bar f^\flat_{\ol\eta}\circ \mu_\eta^{-1}:
\bsigma(\eta)_\ZZ^\vee\arr Q$ and $\ell_q$-entries given by the dual of
the classifiying map $\prod_q \NN\arr Q$ of the log smooth curve $C/W$, is an
isomorphism. Here $\eta$ and $q$ run over the set of generic points and nodes of
$\ul C$, respectively. The equation in the bracket holds in
$N_{\bsigma(q)}$ for all nodal points $q$ with adjacent generic points
$\eta,\eta'$ ordered according to the orientation of $E_q$ (with the usual
ambiguity of notation concerning self-intersecting nodes).
\end{proposition}

\begin{proof}
The proof is identical to the proof of Proposition~\ref{Prop: Basic monoid} once
the refined tropicalization procedure of \S\ref{sss: refined tropicalization
over geometric point} is taken into account.
\end{proof}

With this description of the basic monoid in the general case the proof of
Proposition~\ref{basic-open}, which proves that basicness is an open
condition, generalizes without problems.

The final point we want to discuss concerns the monoid quotient
\begin{equation}
\label{Eqn: chi_tautau'}
\chi_{\tau\tau'}: Q_{\tau'}\arr Q_{\tau\tau'},
\end{equation}
of basic monoids from \eqref{Eqn: localization of basic monoids} obtained from a
framed type $\tau'$ and contraction morphism $\ol \tau'\arr\tau$ of the
associated global type. The basic monoid $Q_{\tau'}$ depends only on the framed
type, as spelled out in \eqref{Eqn: basic monoid in general}. But note that the
group $\GG$ from \eqref{Eqn: GG} generally acts non-trivially on the right-hand
side of \eqref{Eqn: basic monoid in general}, so the basic monoid is \emph{not}
intrinsic to the type.

Similarly, the description of $Q_{\tau\tau'}$ in \eqref{Def: Q_{tau tau'}^vee}
requires the knowledge of the image of the arrows $\iota_v:
\bsigma(\phi(v))\arr \bsigma'(v)$, hence works only for a contraction morphism
of framed types as follows. Let $(C^\circ/W,\bp,f)$ be a basic punctured map and
$\ol w$ a geometric point of $\ul W$. Then a choice of isomorphisms in \eqref{Eqn:
Choice of isos with bsigma(x)}, or equivalently of $\bmu=(\mu_x)$ in
Proposition~\ref{Prop: Basic monoid for framed type}, provides a framed type
$\tau'=(G',\bg',\bsigma',\bu',\biota')$ and an isomorphism of $\ocM_{W,\ol
w}^\vee$ with the submonoid $Q_{\tau'}^\vee\subseteq \prod_\eta
\bsigma'(\eta)^\vee_\ZZ\times\prod_q \NN$ on the right-hand side of \eqref{Eqn:
basic monoid in general}. Let $\phi:\ol\tau'\arr\tau$ be a contraction
morphism of the global type $\ol\tau'$ associated to $\tau'$ to some other
global type $\tau =(G,\bg,\bsigma,\ol\bu)$. Then each choice $\biota_\bullet$ of
arrows
\[
\iota_v: \bsigma(\phi(v))\arr \bsigma'(v),\quad v\in V(G)
\]
in $\Sigma(X)$ provides a face $Q_{\tau\tau'}^\vee(\biota_\bullet)\subseteq
Q_{\tau'}$ as in \eqref{Def: Q_{tau tau'}^vee}, hence a dual localization
morphism
\[
\chi_{\tau\tau'}(\bmu,\biota_\bullet): \ocM_{W,\ol w}
\stackrel{\simeq}{\longrightarrow}
Q_{\tau'}\arr Q_{\tau\tau'}(\biota_\bullet)
\]
as in \eqref{Eqn: chi_tautau'}. Thus this quotient of $\ocM_{W,\ol w}$ depends
on both the choices of $\bmu$ and $\biota_\bullet$. Note that
$Q_{\tau\tau'}\neq 0$ only if there exists a degeneration of tropical punctured
maps of framed type $\tau$ compatible with the restriction on the images of
vertices given by $\biota_\bullet$.

The schematic restriction to punctured maps of global type $\tau$ is then
locally reflected in the monoid ideal
\begin{equation}
\label{Eqn: I_tautau'}
I_{\tau\tau'}=\bigcap_\biota \big(\chi_{\tau\tau'}(\bmu,\biota_\bullet)\big)^{-1}
(Q_{\tau\tau'}(\biota)\setminus \{0\}) \subseteq \ocM_{W,\ol w}.
\end{equation}
Note that unlike in the simple case, $\Spec\kk[Q_{\tau'}]/I_{\tau\tau'}$ may now
be a reducible scheme. See Definition~\ref{Def: marking by type},(3) for the use
of this ideal in a moduli context.


\section{The stack of punctured maps}
\label{sec:stack}

Throughout this section we fix as the target a morphism $X \to B$ locally of
finite type between separated, locally noetherian fs logarithmic schemes over
$\kk$. We assume further that $X$ is connected {and that $X\arr B$
fits into a commmutative diagram
\[
\xymatrix{
X\ar[r]\ar[d]&\cA_X\ar[d]\\
B\ar[r]&\cA_B
}
\]
with strict horizontal arrows, $\cA_B$ the Artin fan of $B$, and $\cA_X$ an
Artin fan representable over $\Log$ or over $\Log^1$. If $X$ has a Zariski log
structure and $X\arr B$ is log smooth then \cite[Prop.\,2.8]{decomposition}
shows that we can take the Artin fan of $X$ for $\cA_X$, which is representable
over $\Log$ by definition. In general, \cite[Cor.\,3.3.5]{ACMW} provides the
desired diagram with $\cA_X$ representable over $\Log^1$.\footnote{{The
representability assumption is used in the proof of Lemma~\ref{Lem: technical
properties of cX->B}.}} We define
\[
\cX= B\times_{\cA_B} \cA_X,
\]
which by abuse of notation we refer to as \emph{the relative Artin fan} of
$X\arr B$.}

\subsection{Stacks of punctured curves}
\label{ss: stacks of punctured curves}

The purpose of this section is the introduction of stacks of punctured curves as
domains for punctured maps.

\subsubsection{Stacks of marked pre-stable curves}
For a genus-decorated graph $(G,\bg)$
recall from \cite[\S2.4]{decomposition} the logarithmic stacks $\bM(G,\bg)$ of
$(G,\bg)$-marked pre-stable curves over the ground field $\kk$ with its basic
log structure as a nodal curve, and $\fM_B(G,\bg) = \Log_{\bM(G,\bg)\times B}$
of $(G,\bg)$-marked log smooth curves over $B$ with arbitrary fs log structures
on the base. For a leg $L\in L(G)$ denote by $p_L$ the associated marked
section. 

\subsubsection{The nodal log-ideal on $\bM(G,\bg)$} 
Since the basic monoid of an $r$-nodal curve is $\NN^r$, each $(G,\bg)$-marked
nodal curve $C\arr W$ comes with a homomorphism $\NN^r\arr \ocM_W$ with
$r=|E(G)|$. The image of $\NN^r\setminus\{0\}$ generates a coherent sheaf of
ideals $\ocI\subset \ocM_W$ with preimage $\cI\subset\cM_W$ mapping to $0$ under
the structure homomorphism $\cM_W\arr \cO_W$. Thus $\cI$ endows $\bM(G,\bg)$
with the structure of an idealized log stack.

\begin{definition}
\label{Def: nodal ideal sheaf}
We refer to $\cI$ and to any pull-back of $\cI$ to a stack over $\bM(G,\bg)$
such as $\fM(G,\bg)$ (and $\breve\fM(G,\bg)$ below) as the \emph{nodal
log-ideal}.
\end{definition}

The local structure of moduli spaces of nodal curves implies that $\bM(G,\bg)$
with the nodal log-ideal is idealized logarithmically smooth over the trivial
log point $\Spec\kk$. If $(C/W,\bp)$ is a $(G,\bg)$-marked curve, the log ideal
is generated at a geometric point $\ol w$ {of} $\ul W$ by those standard basis
vectors of $\ocM_{W,\ol w}\simeq\NN^r$ mapping to the smoothing parameters of
the nodes labelled by $E(G)$.

\subsubsection{Enter stacks of punctured curves}
We now define a stack $\breve\fM_B(G,\bg)$ of punctured curves by
admitting arbitrary puncturings at these marked sections.

\begin{definition}
\label{Def: stack of punctured curves}
Let $(G,\bg)$ be a genus-decorated graph. A \emph{$(G,\bg)$-marking} of a
punctured curve $(C^\circ/W,\bp)$ is a $(G,\bg)$-marking of the underlying
marked curve $(\ul C/\ul W,\bp)$.
The stack $\breve\fM_B(G,\bg)$ is the fibered
category over $(\Sch/\ul B)$ with objects $(G,\bg)$-marked punctured curves
$(C^\circ/W,\bp)$ over $B$. Morphisms are given by \emph{strict} fiber diagrams
of punctured curves respecting the markings by $(G,\bg)$.
\end{definition}

Note that the morphisms in $\breve\fM_B(G,\bg)$ are pull-backs of punctured
curves as defined in Definition~\ref{Def: pull-back of punctured curves}.

The maps associating to a $(G,\bg)$-marked punctured curve the underlying
$(G,\bg)$-marked nodal curve with its \emph{basic} log structure defines a
morphism of logarithmic stacks
\begin{equation}
\label{Eqn: fM(G)->bM(G)}
\breve\fM_B(G,\bg)\arr \bM(G,\bg).
\end{equation}

\subsubsection{The stacks of punctured curves are algebraic}

\begin{proposition}
\label{prop:puncurve-moduli}
(1)~The stack $\breve\fM_B(G,\bg)$ is a logarithmic algebraic stack.\\[1ex]
(2)~Endowing $\breve\fM_B(G,\bg)$ with the idealized log structure defined by the
union of its puncturing log-ideal (Definition~\ref{Def: Puncturing log ideal})
and its nodal log-ideal (Definition~\ref{Def: nodal ideal sheaf}) and
$\fM_B(G,\bg)$ with its nodal log-ideal, the {strict} morphism
\[
\breve\fM_B(G,\bg)\arr \fM_B(G,\bg)
\]
forgetting the puncturing, is {locally of finite type, quasi-separated, representable,
unramified, and idealized logarithmically \'etale.}
\end{proposition}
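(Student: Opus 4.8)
The plan is to work Zariski-locally on the base and reduce the statement to an explicit toric description, which is exactly where the idealized log structures and the monoidal ideals become transparent. First I would observe that the algebraicity in part (1) follows once we know the morphism $\breve\fM_B(G,\bg)\arr \fM_B(G,\bg)$ is representable (indeed a locally closed immersion of schemes on charts), since $\fM_B(G,\bg)=\Log_{\bM(G,\bg)\times B}$ is already known to be a logarithmic algebraic stack by Olsson's theorem; so the heart of the matter is part (2). For this I would reduce to a geometric point $\bar w\arr \ul W$ of an object $(C^\circ/W,\bp)$ and use that, near a puncture $p_L$, the log structure $\cM_C$ has ghost stalk of the form $\ocM_{W,\bar w}\oplus\NN$ (condition (3) in the definition of a log curve), so a puncturing $\cM_{C^\circ}$ at $p_L$ amounts to the choice of a fine submonoid $Q^\circ$ with $Q\oplus\NN\subseteq Q^\circ\subseteq Q\oplus\ZZ$ satisfying the vanishing condition of Definition~\ref{def:puncturing}(2). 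The key point is that Proposition~\ref{prop:puncturing-ideal-vanish} forces $\alpha_W(\cK_W)=0$, so any puncturing over $W$ already factors through the closed subscheme of $\ul W$ cut out by the (pullback to $\cO_W$ of the) puncturing log ideal, which is where the "closed embedding defined by a monomial ideal" phenomenon comes in.

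Concretely, I would choose a chart $Q\arr \Gamma(U,\cM_{\fM_B(G,\bg)})$ near the image point, and show that the substack of $\fM_B(G,\bg)$ parametrizing punctured curves with a \emph{fixed} combinatorial puncturing type (i.e.\ fixing the submonoids $Q^\circ_L\subset Q\oplus\ZZ$ at each finite leg $L$) is the locally closed subscheme obtained by: (i) passing to the closed subscheme $V(\cK)$ defined by the image in $\cO$ of the puncturing monoid ideal $\cK\subset Q$, which is monomial, hence a closed embedding defined by a monomial ideal; (ii) then adjoining the new monoid generators $Q^\circ_L$ to the log structure — this adjunction is a strict étale map on the level of underlying schemes because $(Q^\circ_L)^\gp = Q^\gp\oplus\ZZ = (Q\oplus\NN)^\gp$, so the induced lattice map is an isomorphism and no new underlying-scheme structure is created (this is the "toric morphism with associated lattice homomorphism an isomorphism over $\QQ$" from the introduction, here literally an isomorphism of lattices). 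Running over the finitely many puncturing types compatible with a bounded piece of the moduli gives $\breve\fM_B(G,\bg)$ locally as a disjoint union of such locally closed subschemes, giving the strict locally closed embedding, and the idealized log étaleness follows since after the monomial closed embedding the residual map is strict with an isomorphism on ghost groups — the defining property (see \cite[\S IV.3.1]{Ogus}) of an idealized log étale morphism once the idealized structures on source and target are as specified. I would also need to check that pullback of punctured curves (Definition~\ref{Def: pull-back of punctured curves}, realized via Proposition~\ref{prop:pullback-puncturing}) is compatible with this local description, so that $\breve\fM_B(G,\bg)$ really is a stack and the morphism to $\fM_B(G,\bg)$ is well-defined — but this is essentially Proposition~\ref{prop:pull-back-ideal} together with Proposition~\ref{prop:pullback-puncturing}.

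The main obstacle I expect is the finiteness/local-closedness bookkeeping: a priori, at a given geometric point there are infinitely many puncturings (cf.\ Examples~\ref{puncturedcurvesexamples}, and the "final puncturing" is not even coherent), so the decomposition of $\breve\fM_B(G,\bg)$ into locally closed pieces indexed by puncturing types is not a locally finite stratification globally. The resolution is that $\breve\fM_B(G,\bg)$ is not of finite type, but \emph{is} covered by the locally closed substacks indexed by (coherent, fine) puncturing types, and each such substack maps to $\fM_B(G,\bg)$ as an honest locally closed immersion; the assertion "idealized log étale strict locally closed embedding" is to be read as: each point of $\breve\fM_B(G,\bg)$ has a neighborhood on which the morphism has this form. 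So the careful part is to set up the indexing so that (a) morphisms in $\breve\fM_B(G,\bg)$ (being strict pullbacks) preserve the puncturing type up to the generization behavior already recorded in Proposition~\ref{prop:pull-back-ideal}, and (b) the resulting fibered category genuinely glues to an algebraic stack. Once the local toric model is in hand, verifying "idealized log étale" and "strict" is a direct check against the definitions in \cite{Ogus}, so I would not expect difficulty there.
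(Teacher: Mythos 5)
Your proposal is correct and follows essentially the same route as the paper: work over a chart $Q$, extend the puncturing ideal by generization to a coherent monomial ideal on $W$, take $V$ to be its vanishing locus (the strict closed embedding), construct the universal punctured curve over $V$ by generizing the ghost monoids at the punctures and sending new sections to $0$, and verify universality and compatibility with pullback via Propositions~\ref{prop:pullback-puncturing} and~\ref{prop:pull-back-ideal}. Your repackaging of the argument as a local decomposition of $\breve\fM_B(G,\bg)$ into pieces indexed by puncturing types is only a cosmetic variant of the paper's "for each object, build its universal $V$" formulation, and your handling of the finiteness caveat matches the intended (local) reading of the statement.
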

\begin{proof}
{We argue by showing that the morphism $\breve\fM_B(G,\bg)\arr
\fM_B(G,\bg)$ is representable by algebraic spaces, satisfying the adjectives
spelled out in (2).\footnote{A simple reduction to known stacks would be
welcome.} This is sufficient as $\fM_B(G,\bg)$ is a logarithmic algebraic stack.

The stack $\fM_B(G,\bg)$ is locally noetherian, so it has a covering $\sqcup
W_\alpha \to \fM_B(G,\bg)$ in the strict smooth topology, where $W_\alpha$ are
noetherian logarithmic schemes. Letting $W$ be one of these, 
define
\[
{\breve W} = W\times_{\fM_B(G,\bg)}\breve\fM_B(G,\bg),
\]
viewed as a category fibered in groupoids over $\ul W$, or, equivalently, over
the category of strict morphisms $T\arr W$. It suffices to prove that ${\breve
W}$ is an algebraic space satisfying the conditions of (2). 

We show this directly by exhibiting ${\breve W}$ as a \emph{sheaf of sets}, with
representable diagonal, having an \'etale covering by a scheme, and satisfying
the above conditions.

The morphism $W \to \fM_B(G,\bg)$ corresponds to a $(G,\bg)$-marked logarithmic
curve $\pi: C \to W$. Spelled out, the formation of ${\breve W}$} means that for
any strict morphism $T \to W$, the objects in ${\breve W}(T)$ are punctured
curves $(C^{\circ}_T\arr C_T\arr T,\bp_T)$ with punctures {at} the
markings of $C_T$. Here $C_T = C\times_W T \to T$ is the pull-back of the
logarithmic curve $C \to W$. Pull-backs in ${\breve W}$ are defined as
pull-backs of punctured curves along strict morphisms over $W$. The markings by
$(G,\bg)$ are inherited from $C/W$ and do not play any further role.

{First, we note that ${\breve W}$ is a sheaf of sets over $W$.
{We have to show that} any automorphism of the log curve parameterized
by $W$ induces {at most one} automorphism of any corresponding punctured
curve above it. Indeed, an
isomorphism of punctured curves over the identity of a given logarithmic curve
is a pullback diagram as in Diagram \eqref{Diagram: pull-back of punctured
curves}, with $h: T=W \to W$ and $C_T{=C} \to C$ the identity. Such an
isomorphism is an equality of the submonoids of
$\cM\oplus_{\cO^{\times}}\cP^{\gp}$ in the notation of
Definition~\ref{def:puncturing}. In particular, such an isomorphism is unique
when it exists.

{Second, $\operatorname{Isom}$ functors are representable, in fact by open
subschemes of the base $T$. Indeed, the locus on $C_T$ where two logarithmic
structures inside $\cM_{C_T}^\gp$ coincide is open in $C_T$ (as can be deduced
from Lemma~\ref{lem:logsurjctivityopen}), and its complement is a closed
subscheme of the markings of $C_T$, whose image in $T$ is closed. The complement
is the desired open subscheme of $T$. {In particular, $\breve{W} \to
\breve{W}\times_W\breve{W}$ is an open embedding; {once we prove
$\breve{W}$ is locally of finite type over $W$,} we will know the diagonal
$\breve{W} \to \breve{W}\times_W\breve{W}$ is quasi-compact. This will prove the
quasi-separatedness in (2).}

Third, it now remains to construct an \'etale atlas by a scheme, and verify the
various adjectives in (2).}

We note that the statements of the proposition are both local on $W$.
Further shrinking $W$, we may assume that the Artin fan $\cA_W$ {equals}
$\cA_Q$ for an fs and sharp monoid $Q$.}

To prove both statements {of the proposition}, it suffices to proceed as
follows: For any object $C_T^{\circ} \to C_T \to T$ in $\breve W(T)$, 
\begin{enumerate}
\item
we will construct {a locally of finite type, unramified, idealized logarithmically \'etale, and strict morphism  $V \to W$,} {for $V$ some log scheme,}
\item
show that $T \to W$ factors through $V$,
\item
construct a punctured curve $C^{\circ}_V \to C_V \to V$, and
\item
show that $C^{\circ}_T \to C_T \to T$ is the pull-back of $C^{\circ}_V \to C_V
\to V$. 
\item {Finally, we will show that the tautological morphism $V \to
{\breve W}$ defined by the family $C^{\circ}_V \to C_V \to V$ is \'etale.} 
\end{enumerate}
{In particular, we obtain an \'etale cover $\sqcup V \to \breve{W}$ of
the sheaf ${\breve W}$ {by ordinary schemes, or {equivalently}, by
strict \'etale morphisms of log schemes}.}

{Since the statements above are \'etale local on $T$, we may assume the
Artin fan $\cA_{T}$ {equals} $\cA_{Q'}$ for some fs sharp monoid $Q'$.
Since {the puncturing ideal $\cK_T$ of $(C_T^\circ\to T,\bp_T)$} is
coherent, further shrinking $T$ we may assume that there is a monoid ideal $K
\subset Q'$ such that the corresponding log ideal $\cK$ on $\cA_{Q'}$ pulls-back
to {$\cK_T$}.

The strict morphism $T \to W$ induces a strict open embedding $\cA_{Q'} \to \cA_{Q}$. Replacing $W$ by its strict open subscheme $W\times_{\cA_{Q}}\cA_{Q'}$, we may assume that $Q = Q'$.}
\smallskip

\noindent
\textsc{Step~1. Construction of $V\to W$.}
{Fix any point $t \in T$ over the unique closed point of $\cA_{Q}$. Consider the
monoid ideal $K = \ocK_T|_{t} \subset Q$. Let $\cV \to \cA_{Q}$ be the strict
closed embedding defined by the ideal $K$, and $\cK_{\cV}$ be the corresponding
log ideal over $\cV$. Then $\cV \to \cA_{Q}$ is finite type, strict, and
idealized logarithmically \'etale. Thus the projection $V := \cV\times_{\cA_Q}W
\to W$ with the log ideal $\cK_V := \cK_{\cV}|_{V}$ is a finite type, strict
closed embedding and idealized logarithmically \'etale.}

\smallskip

\noindent
\textsc{Step~2. $T \to W$ factors through $V$.}
{Recall that $\cK_T$ is the pull-back of $\cK$. By
Proposition~\ref{prop:puncturing-ideal-vanish} {applied to $C^\circ_T /T$}
the image {$\alpha_{T}(\cK_T)= \left(\alpha_{\cA_Q}(\cK)\right)_T$} is the
zero ideal. Hence the morphism $T \to \cA_{Q}$ factors through $\cV$.
Consequently, $T \to W$ factors through $V$, as claimed. 

For the point  $t$ as in Step~1, we denote its image in $V$ by $w$. }

\smallskip

\noindent
\textsc{Step~3. Construction of the punctured curves $C^{\circ}_V \to
C_{V} \to V$.}
To construct the sheaf of monoids $\ocM_{C^{\circ}_V}$, first notice that the
inclusion $\ocM_{C_V} \subseteq \ocM_{C^{\circ}_V}$ is an isomorphism away from
the points of $\bp$. For each puncture $p_w\in \bp_w$ over $w$, we define
$\ocM_{C^{\circ}_V, p_w} := \ocM_{C^{\circ}_T, p_t}$ using the fiber over $t$.
{Let $p_T,p_V$ be the punctured sections
corresponding to $p_w$ of $\ul{C}_T/\ul{T}$, $\ul{C}_V/\ul{V}$ respectively.}
{Note that we have 
\[
\ocM_{C^{\circ}_T, p_t} \stackrel{\cong}{\arr} \Gamma(\ul T,p_T^*\ocM_{C_T^{\circ}}),\qquad
\ocM_{C_T, p_t}= \ocM_{C_V,p_w}=Q\oplus\NN  \stackrel{\cong}{\arr} \Gamma(\ul V, p_V^* \ocM_{C_V}).
\]
Define $\ocM_{C_V^\circ}\subset \ocM_{C_V}^\gp$ as the subsheaf of fine monoids
generated by $\ocM_{C_V^\circ,p_w}\subset \ocM_{C_V,p_w}^\gp$.}

Consider $\cM_{C^{\circ}_V} := \cM_{C_V}^{\gp} \times_{\ocM_{C_V}^{\gp}}
\ocM_{C_V^{\circ}}$. Observe that $\cM_{C_V} \subseteq \cM_{C^{\circ}_V}$. We
define the structure morphism $\alpha_{{C^{\circ}_V}}: \cM_{C^{\circ}_V} \to
\cO_{C_V}$ as follows. First, we require $\alpha_{{C^{\circ}_V}}|_{{\cM_{C_V}}} =
\alpha_{{C_V}}$. Second, for a local section $\delta$ of $\cM_{C^{\circ}_V}$ not
contained in $\cM_{C_V}$, we define $\alpha_{{C^{\circ}_V}}(\delta) = 0$. 

This defines a monoid homomorphism. Indeed, using the decomposition
$\cM_{C^{\circ}_V}\subseteq \cM\oplus_{\cO^{\times}_{C_v}}\!\cP^{\gp}$
{as in Definition~\ref{def:puncturing},} write
$\delta={(\delta', \delta'')}$ with $\delta'$ the pull-back of a section of
$\cM_V$. It is sufficient to check that when $\delta \notin \cM_{C_V}$ we have
$\alpha_V(\delta')=0$.

In the notation of \S\ref{sss: log ideals punctured curves} the assumption
$\delta \notin \cM_{C_V}$ implies $\delta \in \cI_p$. Hence according to
Definition~\ref{Def: Puncturing log ideal} we have $\delta' \in \cK_V$. As $V$
is defined by $\alpha_V(\cK_V) = 0$, we have $\alpha_V(\delta')=0$ as needed.

This defines a logarithmic structure $\cM_{C^{\circ}_V}$ over $\ul{C}_V$. The
inclusion of logarithmic structures $\cM_{C_V} \subseteq \cM_{C^{\circ}_V}$ is a
puncturing, hence defines {a} punctured curve $C^{\circ}_V \to C_V \to V$.
\smallskip

\noindent
\textsc{Step~4. $C^{\circ}_T \to C_{T} \to T$ is the pull-back of $C^{\circ}_V
\to C_{V} \to V$ via $i: T \to V$.} Denote by $\ul{j}: \ul{C}_T\arr \ul{C}_V$
the pull-back of $\ul{i}$. Since $C_T \to T$ is given by base change from $C_V
\to V$, it suffices to show that $\ul{j}^{*}\cM_{C^{\circ}_V} =
\cM_{C^{\circ}_T}$ as sub-sheaves of monoids in $\cM^{\gp}_{C_T}$. Away from the
punctures, the equality clearly holds. Along each puncture $p \in \bp_T$, we
have the equality $j^{*}\ocM_{C^{\circ}_V, p_w} = \ocM_{C^{\circ}_T, p_t}$ at
$p_t$ {by the construction in Step~3}, which extends along the marking
$p$ by generization. This proves the desired equality.
\smallskip

\noindent
\textsc{Step~5. {\'Etale covering}.}
Consider a strict, square-zero extension $T \to T'$ over $W$ and a
family of punctured curves $C^{\circ}_{T'} \to C_{T'} \to T'$ such that $C_{T'}
= C\times_W T'$, and $C^{\circ}_T \to C_T \to T$ is the pull-back of
$C^{\circ}_{T'} \to C_{T'} \to T'$. Since the strict morphism $T' \to \cA_Q$
again factors through $\cA_{Q'}$, we may continue to assume $Q = Q'$. Applying
Step~2 again, we see that $T' \to W$ factors through $V$ uniquely.

Denote by $t' \in T'$ the image of $t$ via $T \to T'$. The family
$C^{\circ}_V \to C_{V} \to V$ is constructed using the same geometric fiber over
$t$. Applying Step~4 again, we see that $C^{\circ}_{T'} \to C_{T'} \to T'$ can
be obtained via pulling back $C^{\circ}_V \to C_{V} \to V$. 

This shows that $V \to \breve{W}$ is formally \'etale, and we claim it is
actually \'etale, in other words, for any scheme $T''$ and morphism $T'' \to
\breve{W}$, we need to show that $T''\times_{\breve{W}}V \to T''$ is locally of
finite presentation. The question being local, we may assume $T''\to \breve{W}$
factors through some $V'' \to \breve{W}$ in our covering, and may as well
replace $T''$ by $V''$. In this case $V \times_{\breve{W}} V'' \to V
\times_WV''$, the pull-back of the diagonal $\breve{W} \to
\breve{W}\times_W\breve{W}$ along $V \times_WV'' \to
\breve{W}\times_W\breve{W}$, is an open embedding. As $V, V''$ and $W$ are
noetherian, the map $V \times_{\breve{W}} V'' \to V''$ is of finite
presentation.

Moreover, since $\sqcup V \to W$ is locally of finite presentation and $\sqcup V
\to \breve{W}$ is \'etale and surjective, we have that $\breve{W} \to W$ is
locally of finite presentation, see {\cite[Sect.\,06Q1]{stacks-project}.} As
indicated earlier, this implies that the diagonal is quasiseparated, completing
the proof.
\end{proof}

\subsection{Stacks of punctured maps marked by tropical types}
\label{ss: stacks marked by types}

\subsubsection{Weak markings and markings}
In analogy with \cite[Definition~2.31]{decomposition} we define:
\begin{definition}
\label{Def: marking by type}
Let $\tau=(G,\bg,\bsigma,\bar\bu)$ be a global type of punctured maps
(Definitions~\ref{Def: global type}. A \emph{weak marking by $\tau$} of a
{basic} punctured map $(C^\circ/W,\bp,f)$ to $X$ is a $(G,\bg)$-marking
of the domain curve $(C^\circ/W,\bp)$ {(Definition~\ref{Def: stack of
punctured curves})} with the following properties:
\begin{enumerate}
\item
The restriction of $\ul f$ to the closed subscheme $Z\subseteq \ul C$ (a
subcurve or punctured or nodal section of $C$) defined by $x\in V(G)\cup
E(G)\cup L(G)$ factors through the closed stratum $X_{\bsigma(x)}\subseteq \ul
X$ (\S\ref{sss: tropical punctured maps}).
\item
For each geometric point {$\ol w$ of $\ul W$} with $\tau_{\ol w}=(G_{\ol
w},\bg_{\ol w},\bsigma_{\ol w},\bu_{\ol w})$ the associated type of
$(C^\circ/W,\bp,f)$ at $\ol w$ (Definition~\ref{Def: type}), the
{contraction} morphism $(G_{\ol w},\bg_{\ol w})\arr (G,\bg)$ of
decorated graphs given by the marking defines a contraction morphism of the
associated global types
\begin{equation}
\label{Eqn: marking contraction morphism}
(G_{\ol w},\bg_{\ol w},\bsigma_{\ol w},\bar \bu_{\ol w})\arr
\tau= (G,\bg,\bsigma,\bar\bu).
\end{equation}
\end{enumerate}

A weak marking of $(C^\circ/W,\bp,f)$ by $\tau$ is a \emph{marking} if in
addition the following condition holds.
\begin{enumerate}
\item[(3)]
{
For all geometric points {$\ol w$ of $\ul W$}, the ideal in $\cM_{W,\ol
w}$ defined by the monoid ideal $I_{\tau\tau_{\ol w}}\subseteq \ocM_{W,\ol w}$
in \eqref{Eqn: I_tautau'} maps to $0$ under the structure morphism $\cM_{W,\ol
w}\arr \cO_{W,\ol w}$.}
\end{enumerate}
A \emph{marking of $(C^\circ/W,\bp,f)$ by a decorated global type
$\btau=(\tau,\bA)$} is defined analogously, with the associated types replaced
by associated decorated types introduced in \eqref{Eqn: decorated type}.
\end{definition}

{In the definition, basicness is not necessary for (1) and (2), but is needed when referring to \eqref{Eqn: I_tautau'} in (3).}

{Note that a marking of a punctured map by a global type $\tau$ does not
mean that $\tau$ is realizable. It just means that there is a contraction
morphism $\ol\tau'\to\tau$ from {the global type $\ol\tau'$ associated
to} a realizable type $\tau'$, the type of the {given}
punctured map.}

{
\begin{remark}
\label{rem:weak vs strong}
The difference between weak markings and markings is fairly subtle
and is related to saturation in the definition of the basic monoid.
Recall first the construction of the basic monoid from
\cite[Const.\,1.16]{LogGW}. Let $f:C/W\rightarrow X$ be a punctured
map defined over a log point.
The basic monoid $Q$ associated to this log map was constructed
as the saturation of a quotient of the monoid $\prod_{\eta\in\ul{C}}
P_{\eta} \times \prod_{q\in \ul{C}}\NN$. Here $\eta$ runs over
generic points of $\ul{C}$ and $q$ runs over the nodes of $\ul{C}$.
Denote by $Q^{\mathrm{fine}}$ this quotient before saturating,
so that $Q$ is the
saturation of $Q^{\mathrm{fine}}$, as in \cite[(1.14)]{LogGW}.

Now suppose that $f:C/W \rightarrow X$ is a weakly $\tau$-marked log map with
$W$ an arbitrary fs log scheme, but suppose in addition that for every geometric
point $\ol w$ of $W$, $C_{\ol w}\rightarrow X$ is of type $\tau$. Thus
$\overline\cM_W$ is locally constant with stalk $Q$. The proof of Lemma
\ref{Prop: log ideal for type} {below} implies in particular that if $s$
is any section of $\cM_W$ whose image $\ol s$ in $\overline\cM_W$ has stalk
lying in $Q^{\mathrm{fine}}\setminus\{0\}$ at each geometric point, then
$\alpha_W(s)=0$. However, the condition for being marked requires this vanishing
even when $\ol s$ lies in $Q\setminus\{0\}$.

For an explicit example where $Q^{\mathrm{fine}}$ is not saturated,
see \cite[Ex.\,1.17,(3)]{LogGW}. There, $Q^{\mathrm{fine}}$ is the submonoid
of $\ZZ^2$ generated by $(1,-6), (0,2)$ and $(0,3)$.
In such a situation, it is not difficult
to construct an example of a weakly $\tau$-marked but not $\tau$-marked
curve, as follows.

Start with a basic $\tau$-marked log map $f:C/W\rightarrow X$
with $W$ a log point, and assume that $Q\not=Q^{\mathrm{fine}}$.
Let $W^{\mathrm{fine}} = 
\Spec(Q^{\mathrm{fine}}\rightarrow \kk)$. Since all nodal generators
$\rho_E\in Q$ already lie in $Q^{\mathrm{fine}}$ by construction, we may find a
sub-log structure
$\cM_{C^{\mathrm{fine}}}\subseteq \cM_C$ so that $C^{\mathrm{fine}}\rightarrow
 W^{\mathrm{fine}}$
is a log smooth curve (in the category of fine log schemes) and
$f$ induces a morphism $C^{\mathrm{fine}}\rightarrow X$.
Saturating $W^{\mathrm{fine}}$ may yield a non-reduced scheme
$W^{\sat}$ with reduction $W$.
The composition
\[
C^{\sat}:= C^{\mathrm{fine}}\times_{W^{\mathrm{fine}}} W^{\sat} \rightarrow 
C^{\mathrm{fine}} \rightarrow X
\]
yields a stable log map in the category of fs log schemes which is
weakly marked, but not marked, by $\tau$.

In the cited example \cite[Ex.\,1.17,(3)]{LogGW}, $Q$ is the submonoid
of $\ZZ^2$ generated by $(1,-6)$ and $(0,1)$, and one checks that
$\ul{W}^{\sat}\cong \Spec \kk[Q]/\langle z^{(1,-6)},
z^{(0,2)}\rangle$, which is a scheme of length two.
\end{remark}
}

{
Under the presence of monodromy, the following more refined version of marked
punctured maps using framed types rather than global types is sometimes more
appropriate, notably in gluing. Note however that framed types work with contact
orders living on a single stratum $X_\sigma$. Hence this refined notion is
inappropriate when studying punctured maps with a contact order propagating into
several $X_\sigma$ not contained in a single stratum.

\begin{definition}
\label{Def: marking by framed type}
Let $\tau=(G,\bg,\bsigma,\bu,\biota)$ be a framed type of a family of tropical
punctured maps (Definition~\ref{Def: type}). A \emph{weak marking by $\tau$} of
a basic punctured map $(C^\circ/W,\bp,f)$ to $X$ is a weak marking by the global
type $(G,\bg,\bsigma,\ol\bu)$ associated to $\tau$, along with, for each $x\in
E(G)\cup L(G)$ with associated nodal or punctured locus $Z_x\subseteq \ul C$, a
homomorphism of sheaves of monoids
\[
\mu(x): (\ul f|_{Z_x})^{-1}\ocM_X\arr \ul{\bsigma(x)_\ZZ^\vee},
\]
whose stalkwise duals at all geometric points $\ol w$ of $W$ are arrows in
$\Sigma(X)$, and which lift the contraction morphism of global types \eqref{Eqn:
marking contraction morphism} to a contraction morphism of framed types
(Definition~\ref{Def: Contraction morphism}). Here $\ul{\bsigma(x)_\ZZ^\vee}$ is
the constant sheaf with stalks the dual of the set of integral points of
$\bsigma(x)$.

A \emph{marking by a framed type} is then defined by replacing $I_{\tau\tau'}$
in Definition~\ref{Eqn: marking contraction morphism},(3) by
$\chi_{\tau\tau'}^{-1}(Q_{\tau\tau'}\setminus\{0\})$, noting that $\mu(x)$ makes
it possible to define $Q_{\tau\tau'}$ and $\chi_{\tau\tau'}$ unambiguously and
consistently.
\end{definition}

\begin{remark}
We expect that all results that we formulate for (weak) markings by global types
hold for (weak) markings by framed types. Since the framed notions have only
been included in a late revision of the paper, we nevertheless decided to leave
the full development of this modified theory to other occasions. We emphasize
that in most applications one is either interested in simple $X$ from the outset
or one can reduce to this situation, and in this case the framed perspective
does not provide any additional information.
\end{remark}
}

\subsubsection{Enter stacks of punctured maps}

{We continue to assume that $X\arr B$ is a morphism of fs log algebraic
schemes fulfilling the assumptions stated at the beginning of
\S\ref{sec:stack}.}

\begin{definition}
\label{Def: stacks of decorated puncted maps}
Let $\btau=(G,\bg,\bsigma,\bar \bu,\bA)=(\tau, \bA)$ be a decorated
global type (Definition~\ref{Def: global type}). Then
\[
\scrM(X/B,\btau)\quad\text{and}\quad \scrM(X/B,\tau) 
\]
are defined as the stacks over $(\Sch/\ul B)$ with objects basic stable
punctured maps to $X$ over $B$ (Definition~\ref{def:stability}) marked by
$\btau$ and by $\tau$, respectively (Definition~\ref{Def: marking by type}).

Weakening stability to pre-stability, the analogous stacks to the relative Artin
fan $\cX$ of $X$ over $B$, as defined at the beginning of \S\ref{sec:stack}, are
denoted\footnote{Stability being a concept for graphs decorated by genera and
curve classes, there does exist a stable version of $\fM(\cX/B,\btau)$. We omit
this variant.}
\[
\fM(\cX/B,\btau)\quad\text{and}\quad \fM(\cX/B,\tau).
\]
The corresponding stacks with markings replaced by weak markings are denoted by
the same symbols adorned with primes:
\[
\scrM'(X/B,\btau),\quad \scrM'(X/B,\tau) ,\quad \fM'(\cX/B,\btau),
\quad \fM'(\cX/B,\tau).
\]
\end{definition}

An important special case is that $\btau$ is the class $\beta=(g,\bar\bu,A)$ of
a punctured map (Definition~\ref{Def: global type}). Then $G$ is the graph with
only one vertex $v$ of some genus $g$, stratum $\bsigma(v)=
{0}\in\Sigma(X)$, and curve class $A$, no edges, and any number of legs.
Recalling from \S\ref{sss: tropical punctured maps} that the stratum of $X$
associated to the origin $0\in\Sigma(X)$ equals $\ul X$, the resulting stacks
\begin{equation}
\label{Eqn: fM(X,beta)}
\scrM'(X/B,\beta)=\scrM(X/B,\beta),\qquad \fM'(\cX/B,\beta)=\fM(\cX/B,\beta)
\end{equation}
restrict only the total genus and total curve class, as well
as the number of punctures and their global contact orders.

\begin{remark}
\label{Rem: remark on condition (4)}
We will see in Proposition~\ref{Prop: pure-dimensional fM(cX,tau)} that for a
realizable global type $\tau$ the moduli spaces $\fM(\cX/B,\tau)$ of
$\tau$-marked punctured maps to $\cX/B$ are reduced and pure-dimensional, 
{at least for simple $X$}. For a
general global type the reduction of $\fM(\cX/B,\tau)$ is stratified by the
images of the morphisms $\fM(\cX/B,\tau')\arr \fM(\cX/B,\tau)$ for realizable
types $\tau'$ dominating $\tau$, see Remark~\ref{Rem: stratified structure of
fM(cX,tau)} below. Thus from the stratified point of view, markings as in
{Definition~}\ref{Def: marking by type},3 are the correct notion. This
feature explains their appearance in \cite[Def.\,2.31]{decomposition}.

However, the notion of weak marking, as in {Definition~}\ref{Def:
marking by type},{(1)--(2)}, appears naturally in gluing situations.
Notably the commutative square in Theorem~\ref{Thm: Gluing theorem} is only
cartesian with weak markings. For applications in Gromov-Witten theory, one
works with cycles in the moduli spaces of punctured maps appearing in this
diagram and the difference between markings and weak markings disappears,
possibly up to computable multiplicities. See for example \cite{Yixian} where
this approach is taken.
\end{remark}

\subsubsection{The stacks are algebraic}

\begin{theorem}
\label{Thm: scrM and fM are algebraic}
{Let $X \to B$ be a morphism of fs logarithmic
schemes fulfilling the assumptions stated at the beginning of
\S\ref{sec:stack}, and let}
$\btau=(G,\bg,\bsigma,\bar \bu,\bA)=(\tau, \bA)$ be a decorated
global type of punctured maps {to $X$}. Then the stacks
\[
\scrM(X/B,\btau),\ \scrM(X/B,\tau),\ \fM(\cX/B,\btau),\ \fM(\cX/B,\tau)
\]
are logarithmic algebraic stacks locally of finite type over $B$.
Moreover, $\scrM(X/B,\btau)$ and $\scrM(X/B,\tau)$ are Deligne-Mumford,
and the forgetful morphisms to the stack $\scrM(\ul X/\ul B)$ of ordinary stable
maps are representable.

Analogous results hold for the weakly marked versions $\scrM'(X/B,\btau)$,
$\scrM'(\cX/B,\tau)$, $\fM'(\cX/B,\btau)$, $\fM'(\cX/B,\tau)$.
\end{theorem}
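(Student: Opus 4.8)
The plan is to follow the proof of the corresponding algebraicity statements in ordinary logarithmic Gromov--Witten theory, namely \cite[\S\,3]{LogGW} and \cite[\S\,2.5]{decomposition}, making two substitutions: the stack $\breve\fM_B(G,\bg)$ of punctured curves, shown to be a logarithmic algebraic stack locally of finite type over $B$ in Proposition~\ref{prop:puncurve-moduli}, takes the place of the stack of log curves, and the puncturing log-ideal enters only through marking condition \ref{Def: marking by type}(3). Since a decorated global type $\btau=(G,\bg,\bsigma,\bar\bu,\bA)$ fixes the graph $(G,\bg)$, every stack in the statement carries a forgetful morphism to $\breve\fM_B(G,\bg)$, so it suffices to represent each one by a logarithmic algebraic stack, locally of finite type, over $\breve\fM_B(G,\bg)$. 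Let $\mathscr C^\circ\to\mathscr C\to\breve\fM_B(G,\bg)$ denote the universal punctured curve.

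First I would treat $\fM'(\cX/B,\tau)$. Forgetting all log structures, a weakly $\tau$-marked punctured map to $\cX/B$ gives the underlying morphism $\ul f\colon\ul{\mathscr C}\to\ul\cX$ over $\ul B$; since $\ul\cX$ is an algebraic stack locally of finite type over $\ul B$ with affine diagonal and the underlying family $\ul{\mathscr C}$ is proper, flat and finitely presented over $\breve\fM_B(G,\bg)$, the relevant Hom-stack is algebraic and locally of finite type over $\breve\fM_B(G,\bg)$. On it, condition \ref{Def: marking by type}(1), factorization of $\ul f$ through the strata $X_{\bsigma(x)}$, is a closed condition, and condition \ref{Def: marking by type}(2) on the type at each geometric point is a locally closed condition by Proposition~\ref{Prop: Generization leads to contraction morphisms} together with constructibility of the locus of a fixed type; this cuts out a locally closed substack over which one must parametrize the logarithmic enhancement, i.e.\ the morphisms $f^\flat\colon\ul f^*\cM_\cX\to\cM_{\mathscr C^\circ}$ over $B$. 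Here one uses the explicit local structure of the relative Artin fan recalled in \cite[\S\,2.2]{decomposition}, $\cX$ being Zariski-locally of the form \eqref{Eqn: cX locally} over an Artin cone of $B$: given the strata-factorization, the induced map of ghost sheaves along each vertex, edge and leg is pinned down up to the discrete data of the unique type $\tau=(G,\bg,\bsigma,\bu)$ lifting $\bar\bu$ (Lemma~\ref{Lem: unique type for global type}), while lifting a map of ghost sheaves to a map of logarithmic structures is a choice of $\cO^\times$-torsor isomorphisms, an algebraic datum. Proposition~\ref{Prop: Basic monoid} produces the basic log structure on the base from the universal family of tropical punctured maps and Proposition~\ref{universalproperty} gives its universal property; assembling these exactly as in \cite[\S\,3]{LogGW}, \cite[\S\,2.5]{decomposition} shows that $\fM'(\cX/B,\tau)$ is a logarithmic algebraic stack locally of finite type over $\breve\fM_B(G,\bg)$, hence over $B$. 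Fixing the curve classes $\bA$, an open-and-closed condition on geometric fibres, gives $\fM'(\cX/B,\btau)$; imposing marking condition \ref{Def: marking by type}(3) — the vanishing under the structure morphism of the monomial ideal generated by the pullback of $Q_{\tau\tau_{\bar w}}\setminus\{0\}$, a closed condition — realizes $\fM(\cX/B,\tau)$ and $\fM(\cX/B,\btau)$ as strict closed substacks, the locus in which the idealized structure of Theorem~\ref{thm:idealized-etale} becomes visible. Since $X\to\cX$ is strict and $\ul X\to\ul\cX$ is representable, a punctured map to $X/B$ is a punctured map to $\cX/B$ together with a lift of $\ul f$ to $\ul X$, so $\scrM(X/B,\btau)\to\fM(\cX/B,\btau)$ and the primed and undecorated analogues are representable; pre-stability is open by Proposition~\ref{prop:pre-stable-open} and stability of the underlying ordinary map is open, which yields the remaining stacks. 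Finally, the forgetful morphism $\scrM(X/B,\btau)\to\scrM(\ul X/\ul B)$ to the Deligne--Mumford stack of ordinary stable maps is representable by Proposition~\ref{noautomorphisms} (a basic pre-stable punctured map has no nontrivial automorphism over its underlying curve), whence $\scrM(X/B,\btau)$ and $\scrM(X/B,\tau)$ are Deligne--Mumford.

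I expect the logarithmic enhancement step to be the main obstacle: one must show that, over the locally closed locus in the Hom-stack cut out by the strata and type conditions, the stack of logarithmic lifts $f^\flat$ of the prescribed type is algebraic and locally of finite type, and crucially that this is compatible with the basic log structure furnished by Proposition~\ref{Prop: Basic monoid}. The subtlety specific to the punctured setting is that $\cM_{\mathscr C^\circ}$ is a \emph{non-saturated} subsheaf of $\cM_{\mathscr C}\oplus_{\cO^\times}\cP^{\gp}$, so this lifting problem is not naively governed by a morphism to Olsson's stack $\Log$; instead one works Zariski-locally on $\breve\fM_B(G,\bg)$ with the charts provided by Proposition~\ref{prop:puncurve-moduli} and the toric charts of $\cX$, matching the discrete tropical data against the torsor data by hand, as in the original arguments. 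A secondary point needing care is the verification that conditions \ref{Def: marking by type}(1)--(2) genuinely define a \emph{locally closed} substack rather than merely a constructible one, which again rests on the behaviour of types under generization (Proposition~\ref{Prop: Generization leads to contraction morphisms}), exactly as in \cite[\S\,2.5]{decomposition}. The weakly marked variants $\scrM'$, $\fM'$ are obtained throughout by simply omitting condition \ref{Def: marking by type}(3).
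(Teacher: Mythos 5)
Your overall architecture matches the paper's: work over $\breve\fM_B(G,\bg)$, cut out the marking conditions (closed for \ref{Def: marking by type}(1), open for (2), closed via the log-ideal for (3)), use openness of basicness and stability, and deduce the Deligne--Mumford property from Proposition~\ref{noautomorphisms}. The essential divergence is in how the logarithmic Hom stack is shown to be algebraic. You propose the ``old-style'' route of \cite{Chen}, \cite{LogGW}: first form the underlying Hom-stack of schematic maps $\ul{\mathscr C}\to\ul\cX$, then parametrize logarithmic enhancements by hand via toric charts, ghost-sheaf data and torsor isomorphisms, assembling the basic log structure from Propositions~\ref{Prop: Basic monoid} and~\ref{universalproperty}. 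The paper instead applies Wise's representability theorem \cite[Cor.~1.1.1]{Wise-minimality} directly to the universal \emph{punctured} curve $\breve\fC(G,\bg)\to\breve\fM(G,\bg)$ --- which is proper, flat, integral, finitely presented with geometrically reduced fibers --- with target $\breve\fM\times^{\rm f}_B X$ (resp.\ $\cX$, using that $\cX\to B$ is locally of finite type, quasi-separated with affine stabilizers). This yields algebraicity of $\Hom_{\breve\fM}(\breve\fC,\breve\fM\times^{\rm f}_B X)$, minimal objects included, in one stroke. The step you flag as the ``main obstacle'' --- that $\cM_{\mathscr C^\circ}$ is a non-saturated fine subsheaf of $\cM_{\mathscr C}\oplus_{\cO^\times}\cP^{\gp}$, so the lifting problem is not governed by Olsson's $\Log$ --- is precisely what the appeal to Wise disposes of, since his theorem is formulated for fine (not necessarily saturated) log structures on the source and uses categorical minimality rather than $\Log$. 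Your route is in principle completable but would amount to reproving a punctured version of that theorem; the paper's buys the hard step for free once $\breve\fM_B(G,\bg)$ is known to be algebraic (Proposition~\ref{prop:puncurve-moduli}). One small discrepancy: you describe condition \ref{Def: marking by type}(2) as cutting out a locally closed substack, whereas the paper observes it is open (with (1) supplying the closed part); this does not affect the conclusion.
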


\begin{proof}
We first restrict to $\scrM'(X/B,\btau)$ and then comment on the minor changes
for the other cases. 
\smallskip

\noindent
\textsc{Step~1: An algebraic stack of prestable maps.}
Denote by
\[
\breve \fC=\breve\fC(G,\bg) \arr \breve\fM=\breve\fM(G,\bg)
\]
the universal curve over the logarithmic algebraic stack $\breve\fM(G,\bg)$ of
$(G,\bg)$-marked punctured curves from Definition~\ref{Def: stack of punctured
curves} and Proposition~\ref{prop:puncurve-moduli}. This morphism is proper,
flat, integral, of finite type and has geometrically reduced fibers. Hence
\cite[Cor.\,1.1.1]{Wise-minimality} applies to show that
\[
\Hom_{\breve\fM}(\breve\fC,\breve\fM\times^{\mathrm{f}}_B X)
\]
is representable by a logarithmic algebraic stack, locally of finite
type.\footnote{This last property is not explicitly stated in
\cite{Wise-minimality}, but follows by inspection of the proof.} 

The rest of the proof is analogous to \cite[Prop.\,2.34]{decomposition}.
\smallskip

\noindent
\textsc{Step~2: Carving out weakly marked basic stable maps.} Condition~(1) in
Definition~\ref{Def: marking by type} of marking by $\btau$ defines a closed
substack of $\Hom_{\breve\fM}(\breve\fC,\breve\fM\times^{\mathrm{f}}_B X)$,
while all the remaining conditions in Definition~\ref{Def: marking by type},(2)
are open, {see Proposition~\ref{Prop: Contraction morphisms from
specialization in general}}. Note here we are using that curve classes are
locally constant in flat families. The condition on a map being basic is open by
Proposition~\ref{basic-open}; stability is open since it is open on {the
underlying stable} maps. Thus the morphism
\[
\scrM'(X/B,\btau)\arr \Hom_{\breve\fM}(\breve\fC,\breve\fM\times^{\mathrm{f}}_B X)
\]
forgetting all parts of the marking except the $(G,\bg)$-marking of the domain
curve, identifies $\scrM'(X/B,\btau)$ with an open substack of a strict closed
substack of $\Hom_{\breve\fM}(\breve\fC,\breve\fM\times^{\mathrm{f}}_B X)$. 
\smallskip

\noindent
\sloppy
\textsc{Step~3: Verifying properties.}
By Proposition~\ref{noautomorphisms}, logarithmic automorphisms of basic stable
maps acting trivially on underlying maps are trivial. {Hence}
$\scrM'(X/B,\btau) \to \scrM(\ul X/\ul B)$ is representable. Since $\scrM(\ul
X/\ul B)$ is a Deligne--Mumford stack, so is $\scrM'(X/B,\btau)$. Ignoring curve
classes yields the statement for $\scrM'(X/B,\tau)$.
\smallskip

\fussy
\noindent
\textsc{Step~4: Weakly marked maps to $\cX$.}
{The morphism $\cX\arr B$ from the relative Artin
fan is well behaved:

\begin{lemma}
\label{Lem: technical properties of cX->B}
The morphism $\cX\arr B$ is quasiseparated,
locally of finite type, and has affine stabilizers.
\end{lemma}

\begin{proof}
It suffices to verify these properties for the morphism $\cA_X \to \cA_B$. This
is shown in \cite[Lem.\,~2.5.5]{AW} in case $X\to B$ is logarithmically smooth,
and we indicate here why the argument applies here. Since the properties claimed
are local in $B$ (or $\cA_B$), we may assume $\cA_B$ is an Artin cone
$\cA_\tau$. Since $\cA_X$ has a cover by \'etale maps from Artin cones
$\cA_{\sigma \to \tau}$, we have that $\cA_X$ is locally of finite type. 
 
Quasiseparation follows as in \cite[Lem.\,~2.3.8(ii)]{AW}, applied to $\cA_X \to
\cA_X \times_{\cA_B} \cA_X$ instead of $\cA_X \to \cA_X \times \cA_X$ and using
representability over $\Log^1$ instead of $\Log$: one needs to show, for two
charts $\cA_{\sigma_1 \to \tau}$ and $\cA_{\sigma_2 \to \tau}$ of $\cA_X$, that
$\cA_{\sigma_1 \to \tau} \times_{\cA_X}\cA_{\sigma_2 \to \tau}$ is quasicompact.
By \cite[Lem.\,~2.3.8(i)]{AW} and representability it suffices to show that the
stack $\cA_{\sigma_1 \to \tau} \times_{\Log^1}\cA_{\sigma_2 \to \tau}$ has
finitely many points. The argument of \cite[Lem.\,~2.3.8(ii)]{AW} then applies
as stated.
 
The claim about stabilizers follows as in \cite[Lem.\,~2.5.5]{AW}.
\end{proof}

It follows that \cite[Cor.\,1.1.2]{Wise-minimality} still applies.} The rest of
the proof for $\fM'(\cX/B,\btau)$ and $\fM'(\cX/B,\tau)$ is the same,
except we can not conclude the Deligne-Mumford property due to the absence of
stability.
\smallskip

\noindent
\textsc{Step~5: Marked maps.} 
Stacks of marked maps are closed substacks of stacks of weakly marked maps,
locally defined by the {log-ideal $I_{\tau\tau_{\ol w}}$} in
Definition~\ref{Def: marking by type},(3).\footnote{For a much more detailed
discussion of this point, in terms of the {idealized} structure defined by
markings, see \S\ref{ss:idealized smoothness} below, and notably
Theorem~\ref{thm:idealized-etale}.} Hence the result also holds for these cases.
\end{proof}

\subsection{Boundedness}
\label{ss:boundedness}
For ordinary stable logarithmic maps, boundedness of $\scrM(X/B,\beta)$ is
established in \cite{AC,LogGW} for projective $X\arr B$ under the technical
assumption that $\ocM_X$ is globally generated. \cite{ACMW} removed the
technical assumption by showing that there is a logarithmic blowing up $Y\arr X$
with $\ocM_Y$ globally generated and then using birational invariance of the
moduli spaces $\fM(\cX/B,\beta)$ under this process. Since this birational
invariance seems to be rather more subtle in the punctured case, we content
ourselves with a statement assuming global generatedness, which suffices for
most practical applications. {Throughout this and the next subsections
we assume that the log structure on $X$ is Zariski as in \cite{LogGW}, which we
follow. We believe this assumption could be removed by minor adaptations of the
proof.}

\begin{theorem}
\label{thm:boundedness}
Suppose the underlying family $X \to B$ is projective, and the
sheaf $\ocM_{X}^\gp\otimes_\ZZ\QQ$ is generated by its global
sections.\footnote{{Samuel Johnston in \cite{Johnston} has meanwhile
removed the global generatedness assumptions along the same
line as \cite{ACMW}.}} Then the projection $\scrM(X/B,\beta) \to B$ is of
finite type.
\end{theorem}

\begin{proof}
We split the proof into several steps. The theorem follows from
Propositions~\ref{prop:refined-stack-bounded}
and~\ref{prop:refinement-combinatorially-finite} below.
\end{proof}

Global generatedness of $\ocM_X^\gp\otimes_\ZZ\QQ$ can be easily read off from
the cone complex $\Sigma(X)$ as follows.

\begin{proposition}
\label{Eqn: Sigma -> RR^r}
The sheaf $\ocM_X^\gp\otimes_\ZZ\QQ$ is generated by global sections if and only
if there exists a continuous map
\[
\big|\Sigma(X)\big|\arr \RR^r
\]
with restriction to each $\sigma\in\Sigma(X)$ {an injective homomorphism
of additive monoids}.
\end{proposition}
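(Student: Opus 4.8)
The plan is to interpret global generation of $\ocM_X^\gp\otimes_\ZZ\QQ$ directly in terms of maps out of the cone complex, using the fact (valid since $\cM_X$ is Zariski) that $\Sigma(X)$ is a colimit of the cones $\sigma_\eta=\Hom(\ocM_{X,\bar\eta},\RR_{\ge0})$ and that for each stratum with generic point $\eta$ we have a canonical identification $N_{\sigma_\eta}=\ocM_{X,\bar\eta}^*$, so that $N_{\sigma_\eta}\otimes_\ZZ\RR$ is the linear span of $\sigma_\eta$. The key observation is that a global section of $\ocM_X^\gp$ is, by definition of the sheaf on the Zariski site, a compatible system of elements $s_\eta\in\ocM_{X,\bar\eta}^\gp$ over all strata, compatible under generization; dualizing, $\ocM_{X,\bar\eta}^\gp=M_{\sigma_\eta}$ is exactly the lattice of integral linear functionals on $\sigma_\eta$, and compatibility under generization is precisely compatibility of these linear functionals under the face maps $\sigma_{\eta'}\hookrightarrow\sigma_\eta$ defining $\Sigma(X)$. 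Hence a global section of $\ocM_X^\gp$ is the same datum as a piecewise-linear function on $|\Sigma(X)|$ which is integral linear on each cone, and a global section of $\ocM_X^\gp\otimes_\ZZ\QQ$ is the same as such a function which is \emph{rational} linear on each cone. A finite collection $s_1,\dots,s_r$ of such sections assembles into a single continuous map $\lambda=(s_1,\dots,s_r):|\Sigma(X)|\arr\RR^r$, linear on each cone.

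First I would make this dictionary precise: that the presheaf $U\mapsto\Gamma(U,\ocM_X^\gp)$ on the Zariski site, pushed through the equivalence between Zariski fs log structures and the cone-complex data, sends $X$ to the $\QQ$-vector space (resp.\ abelian group, before $\otimes\QQ$) of continuous functions $|\Sigma(X)|\arr\RR$ restricting to a $\QQ$-linear (resp.\ $\ZZ$-linear) functional on each cone $\sigma\in\Sigma(X)$. This is a routine unwinding of the definitions recalled in \S\,\ref{sss: tropical punctured maps}, together with the sheaf condition over the Zariski topology. Then global generation at a point $x$, with $P=\ocM_{X,x}$, says exactly that the images of the global sections in $P^\gp\otimes\QQ=\Hom(N_{\sigma_x},\QQ)$ span, i.e.\ generate the dual of $(N_{\sigma_x})_\QQ$; equivalently, the linear functionals obtained by restricting global sections to the cone $\sigma_x$ span $(\sigma_x)_\QQ^*$.

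Now the equivalence to be proved becomes purely linear-algebraic. ($\Rightarrow$) If $\ocM_X^\gp\otimes_\ZZ\QQ$ is globally generated, choose finitely many global sections $s_1,\dots,s_r$ whose restrictions to every cone span the dual space of that cone — one can do this with finitely many because each cone $\sigma$ is finitely generated and $\Sigma(X)$, while possibly infinite, has the property that global sections restricting to a spanning set on one cone restrict to a spanning set on all its faces; a compactness/colimit argument (or simply taking the $s_i$ to be a chosen finite generating set of the $\QQ$-vector space, using that global generation is being \emph{assumed}, hence a finite generating set over $\QQ$ exists locally and, by quasi-compactness arguments on the relevant stratification, globally) produces the finite list. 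Then $\lambda=(s_1,\dots,s_r)$ is linear on each cone and injective on each cone, because a linear map on a cone that is injective on its linear span is injective, and the span of $\sigma$ has basis dual to a spanning subset of the $s_i|_\sigma$. ($\Leftarrow$) Conversely, given $\lambda=(\lambda_1,\dots,\lambda_r):|\Sigma(X)|\arr\RR^r$ continuous and injective on each $\sigma$: after scaling we may assume each $\lambda_i$ is $\QQ$-linear on each cone (injectivity on each cone of the original PL map forces each $\sigma$ to embed linearly into $\RR^r$, hence the coordinate functions restrict to genuine linear functionals on $\sigma$, which we rationalize); then each $\lambda_i$ is a global section of $\ocM_X^\gp\otimes_\ZZ\QQ$ by the dictionary, and injectivity of $\lambda|_{\sigma_x}$ means $\lambda_1|_{\sigma_x},\dots,\lambda_r|_{\sigma_x}$ span $(\sigma_x)_\QQ^*=\Hom(N_{\sigma_x},\QQ)=P^\gp\otimes\QQ$, i.e.\ the global sections generate the stalk at $x$, for every $x$.

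\textbf{Main obstacle.} The one genuinely delicate point is the finiteness in the forward direction: $\Sigma(X)$ can have infinitely many cones (e.g.\ the M\"obius-type examples of \S\,\ref{subsec: global contact}), so ``globally generated'' must be used in the strong sense that a \emph{finite} set of global sections generates everywhere, and one must check this finite set can be chosen so that its restriction to \emph{every} cone is spanning — this is where the hypothesis ``$X\arr B$ projective, $\cA_X$ Zariski'' implicitly enters to guarantee the relevant stratification is well-behaved, and where one should cite the analogous argument in \cite{LogGW}/\cite{ACMW}. The reverse direction is essentially formal once the PL-function dictionary is in place; the only care needed there is the passage from a merely continuous, piecewise-linear $\lambda$ to one that is $\QQ$-linear on cones, which follows from injectivity on each cone.
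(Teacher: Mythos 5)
Your proof is correct and takes essentially the same route as the paper's: the paper likewise dualizes a map $|\Sigma(X)|\arr\RR^r$ injective on each cone into a compatible system of surjections $\RR^r\arr\Hom(\sigma,\RR)$, i.e.\ into $r$ global sections of $\ocM_X^\gp\otimes_\ZZ\RR$ whose images generate every stalk. The finiteness issue you flag as the main obstacle does not really arise here, since under the standing hypotheses $\ul X$ is noetherian with finitely many strata, so $\Sigma(X)$ has only finitely many cones and a finite generating set of sections exists whenever the stalks are all generated.
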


\begin{proof}
A map $|\Sigma(X)|\arr \RR^r$ which is injective when restricted to any
$\sigma\in\Sigma(X)$ is dual to a system of surjective homomorphisms
\[
\varphi_\sigma: \RR^r \arr \Hom(\sigma,\RR),
\]
compatible with the dual of the face maps defining $\Sigma(X)$. But such
a compatible system $(\varphi_\sigma)_{\sigma\in\Sigma(X)}$ of surjections is
equivalent to a linear map
\[
\RR^r\arr \Gamma(\ul X,\ocM_X^\gp\otimes_\ZZ\RR)=
\Gamma(\ul X,\ocM_X^\gp\otimes_\ZZ\QQ)\otimes_\QQ\RR
\]
with composition to $\ocM_{X,x}^\gp\otimes_\ZZ\RR$ surjective for each
$x\in X$. The claim follows.
\end{proof}

\begin{remark}
We remark that if $\ocM_X^{\gp}\otimes_{\ZZ}\QQ$ is generated by global
sections, then all global contact orders of $X$ are monodromy free, which we see
as follows. The map of Proposition~\ref{Eqn: Sigma -> RR^r} gives a well-defined
map $\fC_{\sigma}(X)\rightarrow \ZZ^r\subseteq \RR^r$. Indeed, if
$\sigma'\in\Sigma_{\sigma}(X)$ and $u\in N_{\sigma'}$, we may view $u$ as
{an integral tangent vector (i.e., an element of $N_{\sigma'}$)} to
$\sigma'\in \Sigma(X)$ and take its image under the map $|\Sigma(X)|\rightarrow
\RR^r$. Since $u$ is compatible with inclusion of faces, this provides a point
$v$ of $\ZZ^r\subseteq \RR^r$ only depending on $\iota_{\sigma\sigma'}(u)$ (see
Definition~\ref{Def: global contact order} for notation). Since
$|\Sigma(X)|\rightarrow \RR^r$ is injective on cones, $v$ arises, for each
$\sigma'$, as the image of at most one $u\in N_{\sigma'}$. Hence all global
contact orders are monodromy free.
\end{remark}

\subsubsection{Boundedness of $\scrM(X/B, \beta)$}
\begin{definition}
A class $\beta$ of a punctured map (Definition~\ref{Def: global type}) is
called \emph{combinatorially finite} if the set of types (Definition~\ref{Def:
type}) of stable punctured maps with associated class $\beta$ is finite.
\end{definition}

\begin{proposition}\label{prop:refined-stack-bounded}
Suppose $\beta$ is combinatorially finite. Then the forgetful
map
\begin{equation}
\label{equ:remove-log}
\scrM(X/B, \beta) \to \scrM(\ul{X}/\ul{B},\ul{\beta})
\end{equation}
is of finite type.
\end{proposition}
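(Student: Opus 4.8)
The goal is to show that the forgetful map $\scrM(X/B,\beta)\to \scrM(\ul X/\ul B,\ul\beta)$ is of finite type, under the standing hypotheses that $X\to B$ is projective, $\cA_X$ is Zariski, and $\ocM_X^\gp\otimes_\ZZ\QQ$ is globally generated. Since by Theorem~\ref{Thm: scrM and fM are algebraic} the source is a logarithmic algebraic stack locally of finite type over $B$, and by combinatorial finiteness of $\beta$ it is the union of finitely many locally closed substacks $\scrM(X/B,\btau)$ where $\btau$ ranges over decorated types with class $\beta$, it suffices to show that each map $\scrM(X/B,\btau)\to \scrM(\ul X/\ul B,\ul\beta)$ is of finite type, i.e.\ quasi-compact (local finite-typeness being already known). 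So the plan is to reduce, via combinatorial finiteness, to a single type $\btau$ and then bound the base monoids and the log-structure data of $\btau$-marked basic punctured maps.

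\textbf{The plan.} First I would observe that for a fixed decorated global type $\btau$, the underlying ordinary stable map is constrained to finitely many topological types (the graph $G$, the genera $\bg$, and the curve classes $\bA$ are fixed), so $\scrM(\ul X/\ul B,\ul\beta)$ receives only finitely many strata and it is enough to work over one. Over such a stratum, a $\btau$-marked basic punctured map is determined by its underlying ordinary stable map together with the extra logarithmic data: the log structure $\cM_W$ on the base, the map $f^\flat$, and the puncturing $\cM_{C^\circ}$. The key point is that \emph{basicness} pins down $\cM_W$ on geometric points: by Proposition~\ref{Prop: Basic monoid} the basic monoid $Q$ is the monoid dual to the cone of tropical maps of type $\tau$, which depends only on $\tau$ and the combinatorics of $\Sigma(X)$ near the finitely many cones $\bsigma(x)$. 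Thus the ghost sheaf $\ocM_W$ is locally constant with stalks among a finite list of monoids determined by $\btau$. The puncturing $\ocM_{C^\circ}$ at each puncture is likewise forced: by pre-stability (Proposition~\ref{Prop: pre-stable tropical punctured maps}) $Q^\circ$ is generated by $Q\oplus\NN$ and the image of $\bar f^\flat$, so it is determined by the contact order $u_p$, which is part of $\btau$. Hence the only remaining continuous moduli are: (i) the actual log structure $\cM_W$ lifting the fixed ghost sheaf — but the stack of such lifts over a fixed scheme with fixed ghost sheaf is of finite type (this is the content of the fibers of $\fM_B(G,\bg)\to\bM(G,\bg)$ being of finite type, or more precisely the argument in \cite[Prop.~2.34]{decomposition}); and (ii) the choice of $f^\flat$ lifting the fixed $\bar f^\flat$, which is a torsor under $\Hom(\ocM_{X},\cO^\times)$-data on the curve, again of finite type over the base.

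\textbf{The main obstacle.} The delicate step is showing that $\bar f^\flat$ itself — equivalently the tropical type $\tau_{\bar w}$ at each geometric point, before marking — ranges over only finitely many possibilities compatible with the fixed $\btau$; a priori a $\btau$-marked map at a geometric point $\bar w$ has type $\tau_{\bar w}$ contracting to $\tau$, and one must bound the ``amount of degeneration'' $\tau_{\bar w}$. This is exactly where global generatedness of $\ocM_X^\gp\otimes_\ZZ\QQ$ enters: by Proposition~\ref{Eqn: Sigma -> RR^r} it provides a piecewise-linear map $|\Sigma(X)|\to\RR^r$ injective on each cone, which (as in the balancing-based boundedness arguments of \cite{AC,LogGW}) turns the balancing condition of Proposition~\ref{prop:balancing} together with the intersection-number formula of Proposition~\ref{intersectionnumbers} into an a priori bound on the contact orders and hence on the discrete data of $\tau_{\bar w}$. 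I would carry this out by noting that $\deg\ul f^*\cL_s$ is determined by $\ul\beta$ for each of the finitely many generators $s$ of $\ocM_X^\gp\otimes\QQ$, and that the formula $\deg\ul f^*\cL_s=-\sum_i\langle u_{p_i},s\rangle$ combined with positivity forces the $u_{p_i}$, and then balancing forces the edge contact orders $u_E$ and the image cones $\bsigma(v)$, into a finite set. Having bounded the combinatorics, all remaining data (base log structure lifting a fixed ghost sheaf, $f^\flat$ lifting a fixed $\bar f^\flat$, puncturing data) live in finite-type stacks over the fixed stratum of $\scrM(\ul X/\ul B,\ul\beta)$, so $\scrM(X/B,\btau)\to\scrM(\ul X/\ul B,\ul\beta)$ is of finite type; taking the finite union over $\btau$ of class $\beta$ (using combinatorial finiteness) gives the claim. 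I expect the proof in the paper proceeds in exactly this order, with the translation of global generatedness into a numerical bound on types being deferred to Proposition~\ref{prop:refinement-combinatorially-finite}, so that here one may simply invoke combinatorial finiteness and concentrate on the reduction to a single type and the finite-typeness of the log-structure data.
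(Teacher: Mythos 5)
Your overall strategy—use combinatorial finiteness to cut the moduli space into finitely many pieces indexed by types, then argue that over a fixed underlying stable map the remaining logarithmic data is of finite type—is the same as the paper's, which works over a combinatorially constant family $\frf$ of ordinary stable maps and decomposes the fiber of \eqref{equ:remove-log} over it into finitely many open substacks $\scrM(X,\frf,\mathbf{u})$ indexed by types $\mathbf{u}$. But your emphasis is inverted in a way worth flagging. The paragraph you call ``the main obstacle''—bounding the possible types $\tau_{\bar w}$ via balancing, Proposition~\ref{intersectionnumbers}, and global generation of $\ocM_X^{\gp}\otimes_\ZZ\QQ$—is not part of this proposition at all: finiteness of the set of types is literally the hypothesis (``combinatorially finite''), and the balancing argument is the content of Proposition~\ref{prop:refinement-combinatorially-finite}. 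Neither projectivity nor global generation is assumed or used here. You do correct yourself in the last sentence, but most of your second paragraph is spent on the wrong proposition.

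Conversely, the step you compress into ``all remaining data live in finite-type stacks'' is where the actual work of this proof lies. Following \cite[Prop.~3.17]{LogGW}, one must produce a quasi-compact stack surjecting onto each $\scrM(X,\frf,\mathbf{u})$; the paper builds it explicitly as $Z_1=\cA_{Q^\vee}\times_{\cA_{Q_1^\vee}}W$, with $Q$ the (constant) basic monoid and $Q_1=\NN^k$ the nodal monoid, passes to its reduction, and then constructs a punctured curve and chart for the map over $Z_1$. The genuinely new punctured-theoretic point, absent from both \cite{LogGW} and your sketch, is the verification that the puncturing log-ideal $\cK$ prescribed by $\mathbf{u}$ satisfies $\alpha_{Z_1}(\cK)=0$ (using that $Z_1$ is reduced with locally constant ghost sheaf, and discarding $\mathbf{u}$ outright if $0\in\ocK$), without which the sheaf $\cM_{C_1^\circ}$ and hence the family of punctured maps over $Z_1$ cannot be defined. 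Your assertions at this step are true, but as written they are assertions rather than proof, and this is the one place where the punctured theory requires something beyond the argument for ordinary stable logarithmic maps.
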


\begin{proof}
The strategy of the proof is similar to those in \cite[\S3.2]{LogGW} and
\cite[\S5.4]{Chen} by showing that each stratum with constant combinatorial
structure is bounded. The proof is largely the same, with extra care needed only
in the proof of \cite[Prop.\,3.17]{LogGW}. 

{By Theorem~\ref{Thm: scrM and fM are algebraic}, $\scrM(X/B,\beta)
\rightarrow B$ is locally of finite type, and hence so is the morphism
$\scrM(X/B, \beta) \to \scrM(\ul{X}/\ul{B},\ul{\beta})$. Thus it is sufficient
to prove the latter morphism is quasi-compact. We thus need to show that
$\ul{W}\times_{\scrM(\ul{X}/\ul{B},\ul{\beta})} \scrM(X/B,\beta)$ is
quasi-compact for any quasi-compact scheme $\ul{W}$ and morphism
$\ul{W}\rightarrow \scrM(\ul{X}/\ul{B},\ul{\beta})$. Using
\cite[Lem.\,3.14]{LogGW}, it is enough to find a weak cover {in the
sense of \cite[Def.\,3.13]{LogGW}} of $\ul{W}\rightarrow
\scrM(\ul{X}/\ul{B},\ul{\beta})$ by finitely many quasi-compact subsets. We may
weakly cover $\ul{W}$ by a finite number of locally closed strata on which the
corresponding ordinary stable map is combinatorially constant (in the sense of
\cite[Def.\,3.15]{LogGW}), and replace $\ul{W}$ with one of these locally closed
strata. Thus we may assume given $\frf=(\ul{C}/\ul{W},\bp, \ul{f})$ a
combinatorially constant} ordinary stable map over an integral, quasi-compact
scheme $\ul{W}$. Then $\ul{W}\times_{\scrM({\ul X}/\ul{B},\ul{\beta})}
\scrM(X/B,\beta)$ classifies punctured enhancements of the ordinary stable maps
parameterized by $\ul{W}$, {and we need to show this fibre product is
quasi-compact}. 

As
the combinatorial type of a log curve with constant dual intersection graph is
locally constant, we have a decomposition
\[
\ul{W}\times_{\scrM({\ul X}/\ul{B},\ul{\beta})} \scrM(X/B,\beta)
=\coprod_{\bf u} \scrM(X,\frf,{\bf u})
\]
into disjoint open substacks according to the type ${\bf u}$. As $\beta$
is assumed combinatorially finite, this is a finite union. Hence it is
sufficient to show that each $\scrM(X,\frf,{\bf u})$ is quasi-compact.
 As in the proof of
\cite[Prop.\,3.17]{LogGW}, it is sufficient to construct a quasi-compact
stack $Z$ with a morphism $Z\rightarrow \scrM(X,\frf,{\bf u})$
which is surjective on geometric points.

To do so, set $Q_1 := \NN^{k}$, where $k$ is the number of nodes of any fiber of
$\ul{C}\rightarrow\ul{W}$. By Proposition~\ref{Prop: Basic monoid} and the fact
we have fixed the type ${\bf u}$, the basic monoid $Q$ is constant on
$\scrM(X,\frf,{\bf u})$, and there is a canonical morphism $Q_1 \to Q$. The
latter induces a morphism of Artin cones $\cA_{Q^{\vee}} \to \cA_{Q^{\vee}_1}$.
We equip $\ul{W}$ with the canonical log structure coming from the family of
pre-stable curves $\ul{C}\rightarrow\ul{W}$, and consider $Z_1 =
\cA_{Q^{\vee}} \times_{\cA_{Q^{\vee}_1}}W$.
Pulling back the universal family from $W$, we
obtain a family of log curves $C_1 \to Z_1$ and an ordinary stable
map $\ul{f}: \ul{C}_1 \to \ul{X}/\ul{B}$. Observe that there is a global
chart $Q \to \ocM_{Z_1}$. To check $Z_1$ is quasi-compact we can, and do,
replace $Z_1$ with its underlying reduced substack.

The type ${\bf u}$ prescribes, for each marked section $p \in \bp$, an
ideal sheaf
\[
\ol\cI_p \subseteq \ocM_W\oplus\ZZ\subseteq p^*\ocM_C^\gp
\]
generated by $u_p^{-1}(\ZZ_{<0})$, which, we note, is constant along $Z_1$.
These ideals produce an ideal $\ocK \subseteq Q$ as in Definition~\ref{Def:
Puncturing log ideal} by taking into account all punctures in $\bp$. Denote by
$\cK = \ocK\times_{\ocM_{Z_1}}\cM_{Z_1}$ the resulting log ideal, where the
arrow on the left is given by the composition $\ocK \to Q \to \ocM_{Z_1}$ with
the last arrow the global chart. 
 
To obtain a family of punctured stable maps of type ${\bf u}$ over $Z_1$ then
requires that $\alpha_{Z_1}(\cK)=0$ by
Proposition~\ref{prop:puncturing-ideal-vanish}. Thus in particular if $0\in
\ocK$, then there are no punctured maps of type ${\bf u}$ and we can ignore such
a ${\bf u}$; otherwise, as $Z_1$ is reduced and $\overline{\cM}_{Z_1}$ is
locally constant with stalk $Q$, necessarily $\alpha_{Z_1}(\cK)=0$. Indeed, any
local section $m$ of $\cK$ maps to a nowhere zero section of $\ocM_{Z_1}$, and
hence $\alpha_{Z_1}(m)$ is nowhere invertible, thus zero, since $Z_1$ is
reduced.

We now construct a punctured family of curves $C_1^{\circ}\rightarrow Z_1$.
First, the ghost sheaf $\ocM_{C^{\circ}_1}$ is identical to $\ocM_{C_1}$ away
from the punctures. Along each puncture $p\in \bp$, we take $\ocM_{C^{\circ}_1,
p} \subset \ocM^{\gp}_{C_1, p}$ to be the smallest fine submonoid generated by
$\ocM_{C_1, p}$ and the image of $f^{-1} \ocM_X \to \ocM_{C_1,p}^{\gp}$
determined by the type ${\bf u}$. As all the ghost sheaves and
morphisms between them are constant along $Z_1$, this yields a
well-defined sheaf of monoids $\ocM_{C^{\circ}_1}$, hence $\cM_{C^{\circ}_1} :=
\ocM_{C^{\circ}_1}\times_{\ocM^{\gp}_{C_1}}\cM^{\gp}_{C_1}$ over $\ul{C}_1$.

{
We define the structure homomorphism $\alpha_{{C^{\circ}_1}}: \cM_{C^{\circ}_1}
\to \cO_{C_1}$ by $\alpha_{{C^{\circ}_1}}|_{\cM_{C_1}} = \alpha_{{C_1}}$ and
$\alpha_{C^\circ_1}|_{\cM_{C^\circ_1}\setminus\cM_{C_1}}=0$. The same argument
as in the proof of Proposition~\ref{prop:puncurve-moduli}, Step~3, shows that
this defines a logarithmic structure $\cM_{C^{\circ}_1}$, hence the desired
punctured curve $C^{\circ}_1 \to Z_1$.}

The remainder of the proof is now identical to that of \cite[Prop.\,3.17]{LogGW}.
\end{proof}

\subsubsection{Finiteness of the combinatorial data}
\sloppy
In order to complete the proof that $\scrM(X/B,\beta)$ is finite type, it
remains to bound the combinatorial data. 

\fussy
\begin{proposition}
\label{prop:refinement-combinatorially-finite}
Suppose $\ocM_X^\gp\otimes_\ZZ\QQ$ is generated by its global sections. Then any
class {of punctured map} $\beta$ is combinatorially finite.
\end{proposition}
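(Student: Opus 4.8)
The plan is to reduce the finiteness of the set of types to three separate finiteness statements --- for the underlying genus-decorated graph, for the image-cone data $\bsigma$, and for the contact orders $\bu$ --- and to deduce the last of these from the hypothesis. The argument runs parallel to the boundedness of combinatorial data for ordinary stable logarithmic maps in \cite{AC,LogGW}; the one new feature is that punctured legs carry \emph{negative} contact orders, but these form part of the data of $\beta$ and so create no difficulty.

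First I would fix the underlying class $\ul\beta=(g,k,A)$. A type $\tau=(G,\bg,\bsigma,\bu)$ with associated class $\beta$ comes from an actual stable punctured map, hence is equipped with curve classes $\bA(v)=\ul f_*[\ul C_v]$; it suffices to bound the set of the resulting decorated types $\btau=(G,\bg,\bsigma,\bu,\bA)$. The graph $G$ has exactly $k$ legs, and $\sum_v\bg(v)+b_1(G)=g$ bounds $b_1(G)$ and every $\bg(v)$ by $g$. The underlying ordinary stable maps of class $\ul\beta$ form a bounded family, the target $X\arr B$ being projective, and the dual graph is locally constant on their moduli; hence only finitely many $(G,\bg)$ occur, and $|E(G)|=|V(G)|-1+b_1(G)$ is bounded as well. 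As $\Sigma(X)$ has only finitely many cones (again since $X\arr B$ is projective, cf.\ Theorem~\ref{thm:boundedness}), there are finitely many possibilities for $\bsigma$, and for each of these only finitely many $\bA$ with $\bA(v)\in H_2^+(X_{\bsigma(v)})$ and $\sum_v\bA(v)=A$. It therefore remains to show that, with $(G,\bg,\bsigma,\bA)$ held fixed, the contact orders $\bu=\{u_E,u_L\}$ range over a finite set.

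This is where I would use the hypothesis, through Proposition~\ref{Eqn: Sigma -> RR^r}: choose global sections $s_1,\dots,s_r$ of $\ocM_X^\gp$ whose classes generate $\ocM_{X,x}^\gp\otimes_\ZZ\QQ$ at every point $x$, equivalently so that $\mu=(\langle\,\cdot\,,s_j\rangle)_j\colon|\Sigma(X)|\arr\RR^r$ is injective on each cone $\sigma\in\Sigma(X)$, hence on each lattice $N_\sigma$. For a leg $L$, the image of the prescribed global contact order $\bar\bu(L)$ in $\RR^r$ equals $\mu(u_L)$ and is determined by $\beta$, so injectivity of $\mu$ on $N_{\bsigma(L)}$ determines $u_L$. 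For an edge $E=E_q$ I would argue component by component: for a vertex $v$ with component $C_v$, generic point $\eta$ and set $\Sigma_v$ of special points, pairing the balancing condition (Proposition~\ref{prop:balancing}) with $s_j$ yields
\[
\sum_{x\in\Sigma_v}\langle u_x,s_j\rangle=-\deg\bigl(\ul f|_{C_v}\bigr)^{*}\cL_{s_j}=-\langle\bA(v),c_1(\cL_{s_j})\rangle,
\]
a fixed integer, the right-hand degree being computed as in Proposition~\ref{intersectionnumbers}. When $C_v$ is not contained in $\operatorname{Supp}\cL_{s_j}$ each summand $\langle u_x,s_j\rangle$ is nonnegative (here one uses that the $s_j$ may be taken effective, as in the ordinary case), hence bounded; and injectivity of $\mu$ on \emph{every} cone of $\Sigma(X)$ ensures that, for each special point $x$, enough of the $s_j$ detect the cone $\bsigma(x)$. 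Thus every pairing $\langle u_x,s_j\rangle$ is bounded, and so --- by injectivity of $\mu$ on the finitely many lattices $N_{\bsigma(x)}$ --- is every $u_x$. Combined with the previous paragraph this shows there are only finitely many decorated types $\btau$, a fortiori finitely many types $\tau$, with associated class $\beta$.

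The step I expect to be the main obstacle is this last one: the balancing relations by themselves leave the contact orders at nodes lying on cycles of $G$ undetermined (there is an $r\cdot b_1(G)$-dimensional ambiguity), so the bound genuinely needs global generation --- concretely, the nonnegativity of the component-wise contact orders away from the divisors $\operatorname{Supp}\cL_{s_j}$, together with the fact that $\mu$ separates every cone of $\Sigma(X)$, which prevents a component from being hidden inside all of the $\operatorname{Supp}\cL_{s_j}$ simultaneously unless its image stratum is already the smallest possible. Making these points precise --- choosing the $s_j$ effectively and keeping track of the auxiliary vertex positions that appear in the component-wise degree formula --- is routine but must be carried out with care, exactly as in the proof of the corresponding statement for ordinary logarithmic stable maps in \cite{LogGW}.
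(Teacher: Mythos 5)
Your overall strategy is the paper's: reduce to a combinatorially constant family of underlying stable maps (finitely many $(G,\bg,\bsigma,\bA)$ by projectivity and finiteness of $\Sigma(X)$), observe that the contact order $u_L$ at each puncture is pinned down uniquely because $\beta$ fixes its global contact order and the map of Proposition~\ref{Eqn: Sigma -> RR^r} is injective on every cone, and handle the remaining data as in the unpunctured theory. That is exactly the paper's proof, which delegates everything except the punctures to \cite[Thm.~3.9]{LogGW}. Where you diverge is in trying to reconstruct that cited argument for the nodal contact orders, and there your key step does not hold as stated: the hypothesis is only that $\ocM_X^{\gp}\otimes_\ZZ\QQ$ is globally generated, so the $s_j$ are sections of the groupified sheaf and cannot be ``taken effective''; moreover a nodal contact order $u_q$ is an arbitrary integral tangent vector, with the two branches of the node contributing $u_q$ and $-u_q$, so the summands $\langle u_x,s_j\rangle$ are certainly not all nonnegative. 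Your displayed per-component identity is also missing the vertex-position terms $\langle V_\eta,s_j\rangle-\langle V_{\eta_q},s_j\rangle$ of Lemma~\ref{lem:tauetaC} (they only cancel after summing over all components, as in Proposition~\ref{intersectionnumbers}), which is precisely the $b_1(G)$-dimensional ambiguity you flag. The honest conclusion is that the node-bounding step should simply be quoted from \cite[Thm.~3.9]{LogGW} rather than re-derived via nonnegativity; since you correctly isolated and resolved the one genuinely new ingredient --- the punctures --- the proposal is the paper's proof with a flawed expansion of the part the paper leaves to the reference.
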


\begin{proof}
{Arguing stratawise as in \cite[\S3.2]{LogGW},} it is sufficient to show that for any combinatorially constant family of
ordinary stable maps $(\ul{C}/\ul{W},\bp,\ul{f})$ in the sense of
\cite[Def.\,3.15]{LogGW}, there are only finitely many combinatorial types of
liftings of such a family to a punctured log curve of type $\beta$. {Since types are constant along a combinatorially constant family, we may further assume that $\ul W$ is the spectrum of a field. Finiteness of the number of types of a logarithmic stable map with a given underlying stable map over a field with fixed contact orders $u_p$ is proved in \cite[Thm.\,3.9]{LogGW}. 

One small difference in our setup concerns the definition of contact orders. In
\cite{LogGW} these were given by a sheaf homomorphism $\ocM_Z\to\NN$, hence were
fixed at $\ul f(p)$ by the underlying ordinary stable map and the contact orders
$u_p$. In contrast, a global contact order may give an infinite set of maps
$\ocM_{X,\ul f(p)}\arr\ZZ$. The argument is saved under the assumption that
$\ocM_X^\gp\otimes_\ZZ\QQ$ is generated by its global sections:} The injectivity
statement in Proposition~\ref{Eqn: Sigma -> RR^r} implies that there is at most
one local representative of $u_p$.
\end{proof}

\subsection{Valuative criterion}
\label{ss:stable reduction}
We now show stable reduction for basic stable punctured maps, which allows us to
conclude properness of the moduli spaces of such maps. Recall that for a given
class $\beta = (g, \bar\bu, A)$ of stable punctured maps to $X \to B$,
we have the class $\ul{\beta} = (g, k, A)$ for ordinary stable maps to $\ul{X}
\to \ul{B}$ by removing contact orders. We will show that

\begin{theorem}
\label{stablereduction}
{Assume that the log structure on $X$ is defined in the Zariski
topology. Then} the tautological morphism removing all logarithmic
structures
\[
\scrM(X/B, \beta) \to \scrM(\ul{X}/\ul{B}, \ul{\beta})
\]
satisfies the valuative criterion for properness.
\end{theorem}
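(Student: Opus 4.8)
The strategy is to reduce the weak valuative criterion for $\scrM(X/B,\beta)$ to the already-known properness of ordinary stable maps together with the valuative criterion established for stable \emph{logarithmic} maps in \cite[\S\,4]{LogGW}, handling the new issue of puncturings by a careful analysis at the punctured sections. So let $R$ be a valuation ring with fraction field $K$, residue field $\kappa$, and suppose we are given a basic stable punctured map $(C_K^\circ/W_K,\bp,f_K)$ over a log point $W_K=\Spec(Q_K\to K)$ (after the standard reductions one may assume the base is a log point, enlarging $R$ by a finite extension), together with a compatible map $\Spec R\to \ul B$. First I would forget the log structure and the puncturing to obtain an ordinary stable map over $K$; by properness of $\scrM(\ul X/\ul B,\ul\beta)$ this extends, after a further finite base change of $R$, to an ordinary stable map $(\ul C/\ul{\Spec R},\bp,\ul f)$, uniquely. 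Next I would endow $\ul{\Spec R}$ with the divisorial log structure of the closed point together with whatever log structure comes from $\ul B$; this makes $\ul{\Spec R}$ into a log scheme $\Spec R$ whose generic fiber is $W_K$ possibly after adjusting the chart $Q_K$, and the central fiber is a log point $\Spec(Q\to\kappa)$.

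The second, main step is to produce a punctured log structure on this family extending the given one on the generic fiber. Here I would invoke the valuative criterion for \emph{stable logarithmic maps} \cite[\S\,4]{LogGW}: applied to the associated pre-stable (hence ordinary-log) map $(\widetilde C_K^\circ/W_K,\bp,\widetilde f_K)$ obtained from $(C_K^\circ/W_K,\bp,f_K)$ via Proposition~\ref{prop:pre-stable} — note $\widetilde C_K^\circ$ is an honest log scheme in cases where the contact orders at the punctures are non-negative, but in general one needs the punctured version — this gives, after possibly yet another finite base change, a log smooth curve over $\Spec R$ and a log map to $X$ extending the data, together with the full tropical picture over the basic cone $Q^\vee_\RR$. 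The key point is that the tropicalization of the extension is governed by the family of tropical maps, and the type $\tau$ of the central fiber is a contraction of the type over the generic point by Proposition~\ref{Prop: Generization leads to contraction morphisms}; in particular the contact orders $u_p$ at the punctures are preserved (legs never get contracted). One then puts on the extended curve the puncturing $\cM_{C^\circ}$ generated, at each puncture $p$, by $\cM_C$ and $f^\flat(f^*\cM_X)$ — the pre-stable puncturing of Proposition~\ref{prop:pre-stable} — which by Corollary~\ref{cor:pre-stable} is automatically pre-stable, and whose restriction to the generic fiber recovers $\cM_{C_K^\circ}$ since the latter was already pre-stable. One must check that this is a legitimate puncturing, i.e.\ that condition (2) of Definition~\ref{def:puncturing} holds: for a section $(m,p_{\bar x})$ with negative $\cP^\gp$-component, $\alpha(m)=0$ must hold on all of $\Spec R$. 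On the generic fiber this holds by hypothesis; on the central fiber it follows from Proposition~\ref{prop:puncturing-ideal-vanish} (the puncturing log ideal vanishes under the structure map) together with the observation that the puncturing log ideal over $\Spec R$ is generated by its stalk at the closed point and pulls back compatibly (Proposition~\ref{prop:pull-back-ideal}); and since $R$ is reduced (a valuation ring is a domain), vanishing at the generic and closed points forces vanishing everywhere. Finally one replaces this map by its basic reduction using Proposition~\ref{universalproperty}, and stability of the extension follows from stability of the underlying ordinary stable map.

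For uniqueness in the weak valuative criterion, I would argue that two such extensions agree on the generic fiber, hence have the same underlying ordinary stable map over $\Spec R$ by separatedness of $\scrM(\ul X/\ul B,\ul\beta)$; they then have the same log smooth curve over $\Spec R$ and the same log map to $X$ by uniqueness in the valuative criterion for stable log maps; and the puncturings then agree because each is forced to be the pre-stable puncturing generated by $\cM_C$ and $f^\flat f^*\cM_X$ (Proposition~\ref{prop:pre-stable}(3)), which is intrinsic; finally basicness is unique up to unique isomorphism by Proposition~\ref{universalproperty}.

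\textbf{Expected main obstacle.} The delicate point is the extension of the \emph{puncturing} across the family in Step 2 — specifically, verifying that the naturally-defined puncturing on the extended curve satisfies Definition~\ref{def:puncturing}(2) along the whole of $\Spec R$ and not merely fiberwise, and that the puncturing over the generic fiber is genuinely recovered rather than possibly enlarged. This is where the puncturing log ideal formalism of \S\,\ref{ss:log-ideal} does the real work: Proposition~\ref{prop:puncturing-ideal-vanish} and Proposition~\ref{prop:pull-back-ideal}, combined with reducedness of $R$, are exactly what forces the structure map to kill the negative-exponent sections consistently over the base. A secondary technical nuisance is that one is repeatedly required to pass to finite extensions of $R$ (for the ordinary stable map, for the log map, and possibly to trivialize monodromy of contact orders), which is permitted in the \emph{weak} valuative criterion; one should also be careful that the extension produced at the log-smooth-curve level by \cite[\S\,4]{LogGW} is compatible with the given map $\Spec R\to \ul B$, which follows by running the argument relative to $B$ throughout.
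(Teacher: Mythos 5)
Your overall strategy coincides with the paper's: extend the underlying data across the closed point, determine the contact orders at the punctures of the central fibre by generization, impose the pre-stable puncturing generated by $\cM_C$ and $f^\flat(f^*\cM_X)$, and verify condition~(2) of Definition~\ref{def:puncturing} over the whole of $\Spec R$ by a density/reducedness argument. Two steps, however, do not work as written. The first is the log structure you place on $\Spec R$. The generic fibre of the sought extension must be $W_K=\Spec(Q_K\to K)$ with $Q_K$ the basic monoid of the given punctured map, which is nontrivial whenever the domain over $K$ is nodal or the tropicalization is nontrivial; the divisorial log structure of the closed point restricts to the trivial log structure at the generic point, so it cannot have $W_K$ as generic fibre, and ``adjusting the chart'' does not repair this. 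The correct base log structure is the one produced by the analysis of \cite[\S\,4.2]{LogGW}: its stalk at the closed point is the basic monoid of the central fibre, whose type is determined from the generic fibre by the contraction-morphism analysis, and punctures play no role in that step.

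The second, more serious, issue is that you cannot invoke the valuative criterion of \cite{LogGW} as a black box: when some contact order is genuinely negative, the pre-stabilization $\widetilde C_K^\circ$ is still only a punctured log scheme and not a stable logarithmic map, so the cited theorem does not apply to it --- this is exactly the situation the present theorem exists to handle, and your parenthetical ``in general one needs the punctured version'' is circular. The paper's proof instead reruns the argument of \cite{LogGW} with local modifications: the ghost-sheaf extension of \cite[\S\,4.1]{LogGW} goes through with $u_p$ at a puncture forced to be the composition of the generization map with $u_{p_K}$, and with $\ocM_{C^\circ}$ at punctures defined by pre-stability; the sheaf $\cM_{C^\circ}$ is then built as the fibre product $\ocM_{C^\circ}\times_{(\ocM_C')^{\gp}}(\cM_C')^{\gp}$ over the unpunctured log smooth curve, with the structure map declared zero on the new sections --- the density argument you correctly identify is what makes this a monoid homomorphism --- and finally $f^\flat$ is extended as in \cite[\S\,4.3]{LogGW}. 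A last small point: since this is a relative valuative criterion, the extension of the underlying ordinary stable map over $\Spec R$ is part of the given data (the bottom arrow of the square), so it should not be re-derived from properness of $\scrM(\ul X/\ul B,\ul\beta)$; the lift is required to be compatible with the given arrow.
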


\begin{proof}
In what follows, we assume given $R$ a discrete valuation ring over $\ul{B}$
with maximal ideal $\fom$, residue field $\kappa=R/\fom$, and fraction field
$K$. Suppose we have a commutative square of solid arrows of the underlying
stacks:
\[
\xymatrix{
\spec K \ar[r] \ar[d] & \scrM(X/B, \beta) \ar[d] \\
\spec R \ar[r] \ar@{-->}[ru]& \scrM(\ul{X}/\ul{B}, \ul{\beta}).
}
\]
We want to show that
there is a dashed arrow making the above diagram commutative, which is unique up
to a unique isomorphism.

The top arrow of the above diagram yields a stable punctured map
\[
(\pi_K:C_K^\circ\rightarrow \Spec (Q_K\rightarrow K),  \bp_K, f_K)
\]
over the logarithmic point $\Spec (Q_K\rightarrow K)$. The bottom arrow of the
above diagram yields an ordinary stable map $(\ul{C}/\Spec R, \bp,\ul{f})$ with
its generic fiber given by the underlying stable map of $f_K$. To construct the
dashed arrow, it suffices to extend the stable punctured map $f_K$ across the
closed point $0 \in \spec R$ with the given underlying stable map $\ul{f}$. The
task is to then extend the logarithmic structures and morphisms thereof. The
proof is almost identical to that of \cite[Thm.\,4.1]{LogGW}. Since that proof
is quite long, we only note the salient differences.
\smallskip

Section 4.1 of \cite{LogGW} accomplishes this extension at the level of ghost
sheaves; in particular, \cite[Prop.\,4.3]{LogGW}, which states that the type of
the central fiber is uniquely determined by {the {stable log} map
on the generic fiber}, carries through with $u_p$ for a puncture $p$ determined
as for marked points. Indeed, if $p$ is a punctured point on $\ul{C}_0$ in the
closure of the punctured point $p_K$ on $\ul{C}_K$, then $u_p$ must be the
composition 
\begin{equation}
\label{upinduced}
P_p \longrightarrow P_{p_K} \stackrel{u_{p_K}}{\longrightarrow}\ZZ,
\end{equation}
where the first map is the generization map $(\ul{f}^*\ocM_X)_p \rightarrow
(\ul{f}^*\ocM_X)_{p_K}$. In particular, the contact orders $u_p$ and $u_{p_K}$
both have global contact order as specified in $\beta$.

By Proposition~\ref{Prop: Basic monoid}, the type of the central fiber then
determines the extension $\overline{\cM}_{C^{\circ}}$ of
$\overline{\cM}_{C_K^{\circ}}$ and a map $\bar f^{\flat}:
\ul{f}^*\overline{\cM}_X\rightarrow \overline{\cM}_{C^{\circ}}$ extending the
corresponding map on the generic fiber. Here $\overline{\cM}_{C^{\circ}}$ is
defined at punctures by pre-stability via Corollary~\ref{cor:pre-stable}.
\smallskip

Next, \cite[\S4.2]{LogGW}\footnote{{We take the opportunity to correct
an error, pointed out by the referee of the current paper, in the first
paragraph of \cite[\S4.2]{LogGW}. Two descriptions of a set
$U(\eta)$ are given. The first description, as the set of generizations
of points in $A$, the set of non-special points in $\ul{C}_0$, is not
correct (it is not necessarily an open set). Thus the reader should
rely only on the second description of the set $U(\eta)$.}} shows that the logarithmic structure on the base
$\Spec R$ is uniquely defined. In this argument, marked points play no role, and
the argument remains unchanged in the punctured case. In particular, this
produces a unique choice of logarithmic structure $\cM_R$ on $\Spec R$, which in
addition comes with a morphism of logarithmic structures $\cM_R^0\rightarrow
\cM_R$ where $\cM_R^0$ is the basic logarithmic structure (pulled back from the
moduli space of pre-stable curves $\bM$ with its basic logarithmic
structure, see \cite[App.\,A]{LogGW}) associated to the family $\ul{C}\rightarrow
\Spec R$. In particular, one obtains a logarithmic structure $(\ul{C},\cM_C')=
(\Spec R, \cM_R)\times_{(\Spec R,\cM_R^0)} (\ul{C}, \cM_C^0)$, where $\cM_C^0$
is the logarithmic structure pulled back from the basic logarithmic structure of
the universal curve over $\scrM(\ul{X}/\ul{B}, \ul{\beta})$. The logarithmic
structure $\cM_C'$ then has logarithmic marked points along the punctures $p$,
but there is a sub-logarithmic structure $\cM_C\subset \cM_C'$ which only
differs in that {we remove the marked points, that is, we make $(\ul
C,\cM_C)\arr (\Spec R,\cM_R)$ strict away from the nodes.}

By Corollary~\ref{cor:pre-stable}, there is a natural inclusion
$\ocM_{C^{\circ}} \subset (\ocM_C')^{\gp}$. We form $\cM_{C^{\circ}}:=
\ocM_{C^{\circ}} \times_{(\ocM_C')^{\gp}} (\cM_C')^{\gp}$ and define a structure
homomorphism $\alpha_{C^{\circ}}:\cM_{C^{\circ}} \rightarrow \cO_C$ by
$\alpha_{C^{\circ}}|_{{\cM_{C'}}}=\alpha_{C'}$ and $\alpha_{C^{\circ}}
(\cM_{C^{\circ}}\setminus\cM_C')=0$, as in Proposition
\ref{prop:puncurve-moduli}, Step~3. To show that this is a homomorphism, it is
enough to show that if $s\in\cM_{C^{\circ},p} \setminus\cM_{C,p}'$, writing
$s=(s_1,s_2)$ as a stalk of $\cM_C\oplus_{\cO_C^{\times}} \cP^{\gp}$, then
$\alpha_C(s_1)=0$. But necessarily 
{$(\ol s_1,\ol s_2)=\bar f^{\flat}(m)+
(\ol s_1',\ol s_2')$ for
some $m\in P_p$ with $u_p(m)<0$ and $(\ol s_1',\ol s_2')\in\ol\cM_{C,p}\oplus
\NN$.}
Write for points $x,x'\in \ul{C}$ with $x$ in
the closure of $x'$ the generization map $\chi_{x',x}:P_x\rightarrow P_{x'}$.
Then $u_{p_K}(\chi_{p_K,p}(m))=u_p(m)$ by \eqref{upinduced}. Thus
$u_{p_K}(\chi_{p_K,p}(m))<0$ and necessarily $\alpha_{C_K}(s_1|_{C_K})=0$. But
since $C$ is reduced and $C_K$ is dense in $C$, this implies $\alpha_C(s_1)=0$,
as desired. Thus we have a punctured log scheme $C^{\circ}$.

We can now extend $f^\flat_K:f_K^*\cM_X\rightarrow \cM_{C^{\circ}_K}$ to
$f^{\flat}:f^*\cM_X\rightarrow\cM_{C^{\circ}}$ as in \cite[\S4.3]{LogGW}.
\end{proof}

\begin{corollary}
\label{Cor: scrM/B is proper}
Let $\btau=(G,\bg,\bsigma,\bar\bu,\bA)$ be a decorated global type of punctured
maps (Definition~\ref{Def: global type}) and assume $X\arr B$ is projective,
{the log structure on $X$ is Zariski,} and $\ocM_X^\gp\otimes_\ZZ\QQ$ is
globally generated.\footnote{{{Again, the latter assumption
has been removed by \cite{Johnston}.}}} Then $\scrM(X/B,\btau)\arr B$ is
proper. In particular, $\scrM(X/B,\beta)$ is proper for any $\beta=(g,\bar\bu,
A)$.
\end{corollary}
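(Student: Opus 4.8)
The plan is to combine the three main ingredients already established: boundedness (Theorem~\ref{thm:boundedness}), the valuative criterion (Theorem~\ref{stablereduction}), and algebraicity/finite-typeness (Theorem~\ref{Thm: scrM and fM are algebraic}). First I would reduce the statement for $\scrM(X/B,\btau)$ to the statement for $\scrM(X/B,\beta)$, where $\beta$ is the class of the decorated global type $\btau$ (Definition~\ref{Def: global types}(4)): indeed marking by $\btau$ imposes a locally closed condition refining membership in $\scrM(X/B,\beta)$, and since both stacks are separated over $B$ it suffices to observe that $\scrM(X/B,\btau)\arr\scrM(X/B,\beta)$ is proper — in fact it is finite onto a union of connected components, because the decorated global type of a basic stable punctured map over a geometric point is locally constant in families by Lemma~\ref{Lem: contact orders are locally constant} and Proposition~\ref{Prop: Generization leads to contraction morphisms}, and the type only refines $\beta$ by contracting edges and remembering strata and curve classes of components. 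So $\scrM(X/B,\btau)$ is an open-and-closed (after passing to the appropriate union of types dominating $\btau$, this needs the marking condition \ref{Def: marking by type}(3), which cuts out a closed substack) substack of $\scrM(X/B,\beta)$, hence proper over $B$ if the latter is.

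For $\scrM(X/B,\beta)$ itself: by Theorem~\ref{Thm: scrM and fM are algebraic} it is a Deligne--Mumford stack locally of finite type over $B$, and the forgetful morphism $\scrM(X/B,\beta)\arr\scrM(\ul X/\ul B,\ul\beta)$ is representable. By Theorem~\ref{thm:boundedness}, using the hypotheses that $X\arr B$ is projective and $\ocM_X^\gp\otimes_\ZZ\QQ$ is globally generated (the latter forces $\cA_X$ Zariski and all global contact orders monodromy-free, as noted in the Remark after Proposition~\ref{Eqn: Sigma -> RR^r}), the projection $\scrM(X/B,\beta)\arr B$ is of finite type; in particular $\scrM(X/B,\beta)$ is quasi-compact over $B$ and of finite type. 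It remains to check separatedness and universal closedness.

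For separatedness: $\scrM(\ul X/\ul B,\ul\beta)$ is separated over $\ul B$ (classical properness of Kontsevich's stack of stable maps to a projective target), and the morphism $\scrM(X/B,\beta)\arr\scrM(\ul X/\ul B,\ul\beta)$ is representable; so it is enough to show this forgetful morphism is separated. This follows from the uniqueness part of the valuative criterion in Theorem~\ref{stablereduction} — the logarithmic enhancement (basic log structure on the base, extension of $\cM_{C^\circ}$ and $f^\flat$) is unique up to unique isomorphism given the underlying stable map, which is exactly the statement that the relative diagonal is a monomorphism that is proper, i.e.\ a closed immersion. For universal closedness: combine finite-typeness over $B$ with the existence half of the weak valuative criterion from Theorem~\ref{stablereduction}; a finite-type, quasi-separated Deligne--Mumford stack over a base satisfying the existence part of the valuative criterion for discrete valuation rings (after finite base extension) is universally closed by the stacky version of the standard argument (e.g.\ \cite[Tag~0CLY]{stacks-project} or the analogue for algebraic stacks). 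Putting separated $+$ finite type $+$ universally closed together gives that $\scrM(X/B,\beta)\arr B$ is proper. The last sentence of the Corollary is then the special case $\btau=\beta$.

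I expect the only genuinely delicate point to be bookkeeping in the reduction from $\btau$ to $\beta$: one must be careful that the marking condition \ref{Def: marking by type}(3) is a \emph{closed} condition (it is — it is defined by the vanishing of a monomial ideal, cf.\ Step~4 of the proof of Theorem~\ref{Thm: scrM and fM are algebraic}) and that conditions (1)--(2) carve out an open-and-closed piece, so that $\scrM(X/B,\btau)$ is a closed substack of an open-and-closed substack of $\scrM(X/B,\beta)$; a closed substack of a proper stack is proper, which finishes the argument. Everything else is a formal assembly of results proved earlier in the paper, so the write-up should be short.
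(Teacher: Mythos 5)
Your proposal is correct and follows essentially the same route as the paper: boundedness (Theorem~\ref{thm:boundedness}) gives finite type, the valuative criterion (Theorem~\ref{stablereduction}) gives properness of $\scrM(X/B,\beta)\arr B$, and the general case reduces to $\beta$. The one imprecision is your claim that $\scrM(X/B,\btau)$ sits inside $\scrM(X/B,\beta)$ as a closed substack of an open-and-closed piece: because a $(G,\bg)$-marking involves a labeling of components and nodes, the forgetful map is finite but in general not a locally closed immersion (a single unmarked curve can admit several markings). The correct statement — which you also give, and which suffices — is finiteness; the paper makes this precise by exhibiting $\scrM(X/B,\btau)$ as a closed substack of the base change of $\scrM(X/B,\beta)$ along the finite map $\bM(G,\bg)\arr\bM$.
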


\begin{proof}
Theorem~\ref{thm:boundedness} shows that $\scrM(X/B,\beta)\arr B$ is of finite
type. Properness for $\btau=\beta$ now follows from the valuative criterion
verified in Theorem~\ref{stablereduction}.

For general $\btau$, the proof of \cite[Prop.\,2.34]{decomposition} generalizes
to the present punctured setup to exhibit $\scrM(X/B,\btau)$ as a
closed substack of the base change of $\scrM(X/B,\beta)$ by the finite map
$\bM(G,\bg)\arr\bM$.
\end{proof}


\subsection{Idealized smoothness of \texorpdfstring{$\fM(\cX/B,\tau)\rightarrow
B$}{M(X/B,tau)->B}}
\label{ss:idealized smoothness}
{For simplicity of presentation, we restrict to $X$ simple throughout
this section. Thus for any $\sigma,\tau\in\Sigma(X)$ there is at most one arrow
$\sigma\arr\tau$ in $\Sigma(X)$.}

\subsubsection{Marking log-ideals}
Let $\tau=(G,\bg,\bsigma,\bar\bu)$ be a global type of
punctured maps. Recall from the discussion after Definition~\ref{Def: nodal
ideal sheaf} that the moduli stack $\bM(G,\bg)$ of $(G,\bg)$-marked pre-stable
curves with its nodal log ideal sheaf is idealized logarithmically smooth over
the trivial log point $\Spec\kk$. A similar result holds for our moduli spaces
$\fM(\cX/B,\tau)$. To introduce the idealized structure let $(\pi:C^\circ\to
W,\bp,f)$ be a $\tau$-marked basic punctured map and let {$\ol w$ of
$\ul W$} be a geometric point. Let $\tau_{\ol w}=(G_{\ol w},{\bf g}_{\ol w},
\bsigma_{\ol w}, \bu_{\ol w})$ be the type of the punctured map over $\ol w$,
equipped with its marking contraction morphism $\phi:\tau_{\ol
w}\rightarrow\tau$ (Definitions~\ref{Def: type} and \ref{Def: marking by
type},(2)), with set of contracted edges $E_\phi$. For the sake of
Definition~\ref{Def: Itau} below, we introduce the following notation. For $x\in
V(G_{\ol w})\cup L(G_{\ol w})\cup \big(E(G_{\ol w})\setminus E_\phi\big)$ the
face inclusion $\bsigma(\phi(x))\arr \bsigma_{\ol w}(x)$ is dual to a
localization map
\[
\chi_x: P_x\arr P_{\phi(x)}
\]
of stalks of $\ocM_X$. We also have homomorphisms
\[
\varphi_x: P_x\arr \ocM_{C^\circ,x},\quad u_x:P_x\arr \ZZ
\]
defined by $\bar f^\flat_{\ol w}$ and by the contact order $\bu_{\ol w}$.
{For uniformity of notation} we define $u_x=0$ for $x\in V(G_{\ol w})$.
Moreover, by Definition~\ref{Def: contact order} of contact order,
$\varphi_x(u_x^{-1}(0))\subseteq \ocM_{C^\circ,x}$ is contained in the image of
$\bar\pi^\flat_x: \ocM_{W,\ol w}\arr \ocM_{C^\circ,x}$. For the following
definition recall also the homomorphism $\chi_{\tau\tau_{\ol w}}: Q_{\tau_{\ol
w}}\arr Q_{\tau\tau_{\ol w}}$ from \eqref{Eqn: localization of basic monoids}.

\begin{definition}
\label{Def: Itau}
The \emph{$\tau$-marking ideal {$\ocI^\tau_W$}} of the $\tau$-marked
basic punctured map $(\pi:C\to W,\bp,f)$ is the sheaf of ideals in $\ocM_W$ with
stalk at the geometric point {$\ol w$ of $\ul W$} generated by the following
subsets:
\begin{enumerate}
\item[(i)] (Target stratum generators)
the preimage under $\bar\pi^\flat_x$ of $\varphi_x\big(P_x\setminus
\chi_x^{-1}(0)\big)$ for $x\in V(G_{\ol w})\cup L(G_{\ol w})\cup
\big(E(G_{\ol w})\setminus E_\phi\big)$;
\item[(ii)] (Nodal generators)
the nodal generators $\rho_E\in \ocM_{W,\ol w}=Q_{\tau_{\ol w}}$ for $E\in
E(G_{\ol w})\setminus E_\phi$;
\item[(iii)] (Basic monoid generators)
$\chi_{\tau \tau_{\ol w}}^{-1}(Q_{\tau\tau_{\ol w}}\setminus\{0\})$.
\end{enumerate}
\end{definition}

The collection of stalks $\ocI_{W,\ol w}^\tau\subset \ocM_{W,\ol w}$ in
Definition~\ref{Def: Itau} form a coherent ideal $\ocI_{W}^\tau\subset
\ocM_{W}$. {Indeed, we obtain a sheaf by the method of Remark~\ref{Rem:
Gap in GS2}, and, as $W$ is fine and saturated, we may apply Lemma
\ref{lem:coherent ideal}, noting that all generating sets are compatible with
generization.} As usual, we also refer to the preimage $\cI_W\subset \cM_W$ of
$\ocI_W$ under $\cM_W\arr \ocM_W$ as the \emph{$\tau$-marking ideal}. Without
the generators specified in (iii) we speak of the \emph{weak $\tau$-marking
ideal}.

\subsubsection{The base of a punctured map is idealized by the marking log-ideal}
The $\tau$-marking ideal defines an idealized log structure on base spaces of
$\tau$-marked punctured maps as follows.

\begin{lemma}
\label{Prop: log ideal for type}
Let $({C^\circ}/W,\bp,f)$ be a $\tau$-marked basic punctured map. Then the
$\tau$-marking ideal $\cI_W\subset\cM_W$ maps to $0$ under the structure
homomorphism $\cM_W\arr \cO_W$.
\end{lemma}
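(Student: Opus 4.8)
\emph{Reduction and the easy generators.} The assertion concerns a sheaf of ideals in $\cO_W$, so the plan is to check $\alpha_W(\cI_W^\tau)=0$ stalkwise over geometric points $\bar w\to\ul W$; since $\alpha_W$ is multiplicative and $\cI_{W,\bar w}^\tau$ is the monoid ideal generated by the three families (i)--(iii) of Definition~\ref{Def: Itau}, it is enough to send each generator to $0$ in $\cO_{W,\bar w}$. For the basic monoid generators (iii) there is nothing to do: the map is \emph{marked} by $\tau$, so Definition~\ref{Def: marking by type}(3) is exactly the statement that the preimage of $Q_{\tau\tau_{\bar w}}\setminus\{0\}$ under $\cM_{W,\bar w}\to\ocM_{W,\bar w}=Q_{\tau_{\bar w}}\to Q_{\tau\tau_{\bar w}}$ is killed by $\alpha_W$. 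For the nodal generators (ii): for $E\in E(G_{\bar w})\setminus E_\phi$ the element $\rho_E\in\ocM_{W,\bar w}$ is the smoothing parameter of the node of the underlying $(G,\bg)$-marked curve labelled by $\phi(E)\in E(G)$, hence lies in the pull-back along $\breve\fM_B(G,\bg)\to\bM(G,\bg)$ of \eqref{Eqn: fM(G)->bM(G)} of the nodal log-ideal of $\bM(G,\bg)$; that ideal lies in the kernel of the structure map (Definition~\ref{Def: nodal ideal sheaf} and the discussion before it), a property preserved by pull-back, so $\alpha_W(\rho_E)=0$.

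\emph{The target stratum generators: setup.} Fix $x\in V(G_{\bar w})\cup L(G_{\bar w})\cup\big(E(G_{\bar w})\setminus E_\phi\big)$ and a generator $q$ in family (i), so $\bar\pi_x^\flat(q)=\varphi_x(m)$ for some $m\in P_x\setminus\chi_x^{-1}(0)$ with $P_x=\ocM_{X,\ul f(x)}$; choose lifts $\tilde m\in\cM_{X,\ul f(x)}$ of $m$ and $\tilde q\in\cM_{W,\bar w}$ of $q$. The key identification is that, since $\chi_x\colon P_x\to P_{\phi(x)}$ is the generization (quotient) map associated to the face $\bsigma(\phi(x))$ of $\bsigma_{\bar w}(x)=P_x^\vee$, its zero fibre $\chi_x^{-1}(0)$ is the corresponding face of $P_x$, and therefore $P_x\setminus\chi_x^{-1}(0)$ is precisely the monoid ideal of $P_x$ defining the closed stratum $X_{\bsigma(\phi(x))}$ in a neighbourhood of $\ul f(x)$ --- this is the computation of Construction~\ref{Constr: evaluation stratum} and Lemma~\ref{lem:contact-strata}. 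Consequently $\alpha_X(\tilde m)$ lies in the ideal cutting out $X_{\bsigma(\phi(x))}$. By Definition~\ref{Def: marking by type}(1), $\ul f$ restricted to the closed subscheme $Z\subseteq\ul C$ defined by $\phi(x)$ --- a subcurve if $\phi(x)$ is a vertex, a nodal or punctured section if $\phi(x)$ is an edge or a leg --- factors through $X_{\bsigma(\phi(x))}$; hence, as $x\in Z$, the function $\alpha_{C^\circ}\big(f^\flat(\tilde m)\big)=f^\sharp\big(\alpha_X(\tilde m)\big)$ lies in the ideal of $Z$ inside $\cO_{C,x}$.

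\emph{Conclusion for family (i).} Now $\pi_x^\flat(\tilde q)$ and $f^\flat(\tilde m)$ are two lifts to $\cM_{C^\circ,x}$ of the same ghost section $\bar\pi_x^\flat(q)=\varphi_x(m)$, so they differ by a unit: $\pi^\sharp\big(\alpha_W(\tilde q)\big)=\alpha_{C^\circ,x}\big(\pi_x^\flat(\tilde q)\big)=u\cdot\alpha_{C^\circ,x}\big(f^\flat(\tilde m)\big)$ for some $u\in\cO_{C,x}^\times$, using that $\tilde q$ comes from $W$; hence $\pi^\sharp\big(\alpha_W(\tilde q)\big)$ also lies in the ideal of $Z$ in $\cO_{C,x}$. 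If $\phi(x)$ is an edge or a leg, then $Z\to W$ is locally the section $s$ through $x$; applying $s^\sharp$, which retracts $\pi^\sharp$ and annihilates the ideal of $Z$, gives $\alpha_W(\tilde q)=0$. If $\phi(x)$ is a vertex, then $x$ is the generic point of a vertical component of $C$ over $\bar w$, along which $Z$ and $C$ coincide, so the ideal of $Z$ at $x$ is $0$ and $\pi^\sharp\big(\alpha_W(\tilde q)\big)=0$ in $\cO_{C,x}$; since $\cO_{W,\bar w}\to\cO_{C,x}$ is a flat local homomorphism, hence faithfully flat and in particular injective, again $\alpha_W(\tilde q)=0$. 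Finally any lift of $q$ differs from $\tilde q$ by a unit, so the entire preimage of $q$ in $\cM_{W,\bar w}$ is killed by $\alpha_W$. Having treated all three families, this proves the lemma.

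\emph{Where the difficulty lies.} No deep input is needed: families (ii) and (iii) are immediate from the definitions of marking and of the nodal log-ideal, and the identification of $P_x\setminus\chi_x^{-1}(0)$ with the stratum ideal is routine given Construction~\ref{Constr: evaluation stratum}. The one place that needs care is the final paragraph --- keeping track of which lifts are being compared, and, in the vertex case, upgrading ``the function vanishes at the generic point of one component of the fibre'' to ``it vanishes on $W$'' via injectivity of the flat local map $\cO_{W,\bar w}\to\cO_{C,x}$. I would regard that bookkeeping, rather than any conceptual issue, as the main (minor) obstacle.
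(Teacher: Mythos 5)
Your proof is correct and follows essentially the same route as the paper's: generators of type (iii) are handled directly by Definition~\ref{Def: marking by type}(3), nodal generators (ii) by pulling back the nodal log-ideal of $\bM(G,\bg)$, and the target-stratum generators (i) by identifying $\alpha_X(P_x\setminus\chi_x^{-1}(0))$ with local generators of the ideal of $X_{\bsigma(\phi(x))}$ and invoking Definition~\ref{Def: marking by type}(1). The paper compresses the final bookkeeping (comparing the two lifts up to a unit, and descending from $\cO_{C,x}$ to $\cO_{W,\bar w}$ via the section or via injectivity of the flat local map) into "implies the desired vanishing," which you have simply written out in full.
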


\begin{proof}
It is enough to show that any lift $s\in \cM_{W,\ol w}$ of an element of
one of the generating sets satisfies $\alpha_W(s)=0$. This holds for elements
described in (iii) of Definition~\ref{Def: Itau} by Definition~\ref{Def: marking
by type},(3). 

Similarly, Definition~\ref{Def: marking by type},(1) guarantees the required
vanishing for elements described in (i) of Definition~\ref{Def: Itau}. Indeed,
consider first the case of $x=v\in V(G_{\ol w})$, where we defined $u_v=0$.
Then $u_v^{-1}(0) \setminus\chi_v^{-1}(0) = P_v\setminus\chi_v^{-1}(0)$, and
$\alpha_X(P_v\setminus\chi_v^{-1}(0))\subset \cO_X$ locally generates the ideal
$\cI_{X_{\bsigma(\phi(v))}} \subset \cO_X$ of the stratum $X_{\bsigma(\phi(v))}$
in $X$. Thus, the condition that the restriction of $\ul{f}$ to the closed
subscheme of $\ul{C}$ corresponding to $\phi(v)$ factors through
$\ul{X}_{\bsigma(\phi(v))}$ implies the desired vanishing in this case. A
similar argument works for legs and edges.

Finally, the lift to $\cM_{W,\ol w}$ of a nodal generator $\rho_E\in
\ocM_{W,\ol w}$ lies in the nodal log-ideal (Definition~\ref{Def: nodal ideal
sheaf}) of the $(G,{\bf g})$-marked curve $C/W$, which maps to zero in $\cO_W$
by Proposition~\ref{prop:puncurve-moduli},(2).
\end{proof}

\begin{remark}
\label{Rem: on marking ideals}
Omitting the last set (iii) of generators in Definition~\ref{Def: Itau}
leads to the idealized structure for moduli spaces of \emph{weakly} marked
punctured maps (Definition~\ref{Def: marking by type}).
\end{remark}

As shown in Proposition~\ref{prop:puncturing-ideal-vanish}, the base $W$ is also
idealized by the puncturing log ideal $\cK$. It is therefore natural to combine
the two.

\begin{definition}
We call the union $\cI^\tau\cup\cK$ of the $\tau$-marking and the puncturing log
ideals the \emph{canonical idealized structure} on our $\tau$-marked moduli
spaces such as $\fM(\cX/B,\tau)$.
\end{definition}

\subsubsection{The realizable case}
While the definition of the $\tau$-marking ideal may seem complicated, in fact
in the case we most frequently need it, namely the realizable case, the
canonical idealized structure has a simpler description: By
Lemma~\ref{Lem: unique type for global type} there is a unique lift to a type,
and the associated basic monoid already knows about marked strata,
non-deforming nodes and punctures.

\begin{proposition} 
\label{prop:I when tau is realizable}
If $\tau$ is a realizable global type, then $\ocI^{\tau}_{W,\ol w}
+\ocK_{W,\ol w}$ with $\ocK_W$ the puncturing log ideal
(Definition~\ref{Def: puncturing log ideal punctured map}) is {given} 
by the
set $(iii)$ in Definition~\ref{Def: Itau}.
\end{proposition}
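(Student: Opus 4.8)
The plan is to show that when $\tau$ is realizable, the generators of types (i) and (ii) in Definition~\ref{Def: Itau}, as well as the puncturing log ideal $\ocK_W$, are all contained in the monoid ideal generated by the type~(iii) generators $\chi_{\tau\tau_{\bar w}}^{-1}(Q_{\tau\tau_{\bar w}}\setminus\{0\})$. Since the reverse containment is automatic (the (iii) generators are among the generators of $\ocI^\tau_W$, and $\ocK_W$ only enlarges), this yields the claimed equality. The key input is Lemma~\ref{Lem: unique type for global type}: realizability forces a unique underlying type $\tau=(G,\bg,\bsigma,\bu)$, so at each geometric point $\bar w$ the marking contraction $\phi:\tau_{\bar w}\to\tau$ lifts to a genuine contraction morphism of types of tropical punctured maps, and one has the localization $\chi_{\tau\tau_{\bar w}}: Q_{\tau_{\bar w}}\to Q_{\tau\tau_{\bar w}}$ where $Q_{\tau\tau_{\bar w}}^\vee$ is the face of $Q_{\tau_{\bar w}}^\vee$ cut out by the conditions ``$V_v\in\bsigma(\phi(v))$ for all $v$'' and ``$\ell_E=0$ for $E\in E_\phi$'' (equation~\eqref{Def: Q_{tau tau'}^vee}), exactly as for ordinary logarithmic maps in \cite[Def.~2.31]{decomposition}.

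\textbf{Key steps.} First I would unwind what $\chi_{\tau\tau_{\bar w}}^{-1}(Q_{\tau\tau_{\bar w}}\setminus\{0\})$ means dually: an element $q\in Q_{\tau_{\bar w}}$ lies in this set iff $q$ does not vanish identically on the face $Q_{\tau\tau_{\bar w}}^\vee\subseteq Q_{\tau_{\bar w}}^\vee$. Next, handle the nodal generators (ii): for $E\in E(G_{\bar w})\setminus E_\phi$, the coordinate function $\ell_E$ on $Q_{\tau_{\bar w}}^\vee$ is not among the $\ell_E$ forced to vanish on $Q_{\tau\tau_{\bar w}}^\vee$ (those are the contracted ones), and since $Q_{\tau\tau_{\bar w}}^\vee$ contains points where $\ell_E>0$, the generator $\rho_E$ restricts nontrivially, hence $\rho_E\in\chi_{\tau\tau_{\bar w}}^{-1}(Q_{\tau\tau_{\bar w}}\setminus\{0\})$. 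Then handle the target-stratum generators (i): for $x\in V(G_{\bar w})\cup L(G_{\bar w})\cup(E(G_{\bar w})\setminus E_\phi)$ and $p\in P_x\setminus\chi_x^{-1}(0)$, the element $q=(\bar\pi^\flat_x)^{-1}(\varphi_x(p))\in Q_{\tau_{\bar w}}$ pairs with a point of $Q_{\tau_{\bar w}}^\vee$, recorded via the tropical map, as $\langle V_{v},p\rangle$ where $v$ is (the vertex adjacent to) $x$; because $p\notin\chi_x^{-1}(0)$, i.e.\ $p$ does not descend to $P_{\phi(x)}$, the pairing $\langle\cdot,p\rangle$ is nonzero on the subspace $\{V_v\in\bsigma(\phi(v))\}$, so $q$ restricts nontrivially to $Q_{\tau\tau_{\bar w}}^\vee$, giving $q\in\chi_{\tau\tau_{\bar w}}^{-1}(Q_{\tau\tau_{\bar w}}\setminus\{0\})$. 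Finally, for $\ocK_W$: by pre-stability and Proposition~\ref{Prop: puncturing tropical interpretation}, a face $Q'\subseteq Q_{\tau_{\bar w}}$ lies in $Q_{\tau_{\bar w}}\setminus\sqrt{\ocK_W}$ iff each punctured leg length $\ell(L)$ is nonvanishing on $(Q')^\perp\cap\omega_\ZZ$; one checks the face $Q_{\tau\tau_{\bar w}}^\vee$ satisfies this (it lies in the open locus where all punctures keep positive length, since contracting to $\tau$ does not shrink legs — legs are never contracted), so $\sqrt{\ocK_W}\subseteq \chi_{\tau\tau_{\bar w}}^{-1}(Q_{\tau\tau_{\bar w}}\setminus\{0\})$ and in particular $\ocK_W$ is absorbed.

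\textbf{Main obstacle.} The delicate point is the last step comparing $\ocK_W$ with the type~(iii) generators: one must verify that the face $Q_{\tau\tau_{\bar w}}^\vee$ of the basic cone $Q_{\tau_{\bar w}}^\vee$ — which by definition is where the vertices sit in the prescribed strata $\bsigma(\phi(v))$ and the contracted edges have length zero — lies inside the sublocus where every punctured leg retains positive length, so that none of the puncturing-ideal generators vanish identically on it. This is where realizability is genuinely used: the unique lift $\tau$ of the global type, together with the fact that in a contraction morphism legs are never contracted (the remark following \eqref{Eqn: contraction morphism tropical type}), guarantees the image cone $\bsigma(\phi(L))$ of each punctured leg under $\tau$ is still a cone for which the pre-stable tropical leg has full admissible length, so the length function cannot be forced to zero by the stratum constraints alone. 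I would make this precise by combining the explicit description \eqref{Def: Q_{tau tau'}^vee} of $Q_{\tau\tau_{\bar w}}^\vee$ with the pre-stability characterization in Proposition~\ref{Prop: pre-stable tropical punctured maps} and the radical computation in Proposition~\ref{Prop: puncturing tropical interpretation}; once that inclusion of cones is established, everything else is the routine dualization of ``restricts nontrivially to a face'' sketched above.
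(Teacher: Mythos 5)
Your proposal is correct and follows essentially the same route as the paper's proof: both reduce everything to showing the (i) and (ii) generators and $\ocK_{W,\bar w}$ land in $\chi_{\tau\tau_{\bar w}}^{-1}(Q_{\tau}\setminus\{0\})$, with realizability supplying exactly the three facts you use — the evaluation maps $Q_\tau^\vee\to\bsigma(\phi(v))^{}$ meet the relative interiors (dually, $\varphi_{\phi(v)}$ is local), the uncontracted edge lengths $\chi(\rho_E)$ are nonzero, and the punctured leg lengths are nonzero on $Q_\tau^\vee$. The only divergence is cosmetic: you absorb $\ocK_W$ via the radical computation of Proposition~\ref{Prop: puncturing tropical interpretation}, whereas the paper argues directly that each generator $\pr_1(\varphi_L(p))$, $u_L(p)<0$, maps under $\chi$ to a generator of the (proper, by realizability) puncturing ideal of $\tau$; both hinge on the same input.
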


\begin{proof}
Denote by $\chi: Q_{\tau_{\ol w}}\arr Q_{\tau\tau_{\ol w}}$ the localization
homomorphism from \eqref{Eqn: localization of basic monoids} defined by the
$\tau$-marking of $(C^\circ/W,\bp,f)$. By Lemma~\ref{Lem: unique type for
global type} there is a unique type of punctured map with associated global type
$\tau$.
Hence in particular $Q_{\tau\tau_{\ol w}}$ agrees with the basic monoid
for a tropical punctured map of this type and does not depend on $\ol w$. We
write this basic monoid as $Q_{\tau}$. Denote by $R\subset Q_{\tau_{\ol w}}$
the ideal $\chi^{-1}(Q_\tau\setminus\{0\})$.

We need to show that $R$ contains the elements listed in (i) and (ii) of
Definition~\ref{Def: Itau} as well as generators of the puncturing log ideal
stated in Definition~\ref{Def: Puncturing log ideal}. Adopting the notation
given {in Definition~\ref{Def: Itau}}, for $v\in V(G_{\ol w})$ we have a
commutative diagram
\[
\xymatrix@C=30pt
{
P_v\ar[d]_{\chi_v} \ar[r]^{\varphi_v} & Q_{\tau_{\ol w}}\ar[d]^{\chi}\\
P_{\phi(v)}\ar[r]_{\varphi_{\phi(v)}} & Q_{\tau}
}
\]
The fact that $\tau$ is realizable implies that $\varphi_{\phi(v)}$ is a local
homomorphism, i.e., $\varphi_{\phi(v)}^{-1}(0)=\{0\}$. Indeed, dually, the map
$Q_{\tau}^{\vee} \rightarrow P_{\phi(v)}^{\vee}$ is given by evaluation of the
tropical map at the vertex $v$, and realizability implies the image of this map
intersects the interior of $P_{\phi(v)}^{\vee}$. This is equivalent to the local
homomorphism statement. But this implies that $\varphi_v(P_v\setminus
\chi_v^{-1}(0)) \subseteq \chi^{-1}(Q_{\tau}\setminus \{0\})=R$.

In the case of a leg $L$, we similarly have a diagram
\[
\xymatrix@C=30pt
{
P_L\ar[d]_{\chi_L} \ar[r]^>>>>>{\varphi_L} & Q_{\tau_{\ol w}}\oplus\ZZ
\ar[d]^{\chi\oplus\id}
\ar[r]^>>>>>{\pr_1}&Q_{\tau_{\ol w}}\ar[d]^{\chi}\\
P_{\phi(L)}\ar[r]_{\varphi_{\phi(L)}} & Q_{\tau}\oplus\ZZ\ar[r]_>>>>>{\pr_1}&
Q_{\tau}
}
\]
Again, $\varphi_{\phi(L)}$ is necessarily local by realizability. Note that,
with $\iota:{Q_{\tau_{\ol w}}}\rightarrow {Q_{\tau_{\ol
w}}}\oplus \ZZ$ given by $m\mapsto (m,0)$,
\[
\iota^{-1} 
\left(\varphi_L\big(P_L\setminus \chi_L^{-1}(0)\big)\right) = 
\iota^{-1} \left(\varphi_L\big(u_L^{-1}(0)\setminus 
\chi_L^{-1}(0)\big)\right)=
\pr_1\circ\varphi_L\big(u_L^{-1}(0)\setminus \chi_L^{-1}(0)\big).
\]
Thus $\pr_1\circ\varphi_L\big(u_L^{-1}(0)\setminus \chi_L^{-1}(0)\big) \subseteq
\chi^{-1}(Q_{\tau}\setminus \{0\})=R$, as desired. In fact we obtain more from
this. If instead $p\in P_L$ with $u_L(p)<0$, then $\pr_1(\varphi_L(p))$ is a
generator of $\ocK_{W,\ol w}$, and $\chi(\pr_1(\varphi_L(p)))$ is a generator
of the puncturing ideal for the type $\tau$. But as the type is realizable, this
ideal does not contain $0$. Thus $\pr_1(\varphi_L(p))\in R$, so $\ocK_{W,\ol w}
\subseteq R$.

For an edge $E\in V(G)$, the argument that $\phi_E\big(u_E^{-1}(0)\setminus
\chi_E^{-1}(0)\big) \subseteq R$ is similar and we leave the details to the
reader. Finally, for the corresponding nodal generator $\rho_E\in Q_{\tau_{\ol
w}}$ from Definition~\ref{Def: Itau},(ii), observe that $\chi(\rho_E)$ is the
edge length function of the edge $E$. Again, since $\tau$ is realizable,
$\chi(\rho_E)\not=0$ and $\rho_E\in R$.
\end{proof}

\subsubsection{The stacks are idealized log smooth}

\begin{theorem}
\label{thm:idealized-etale}
{Assume that $X$ is simple. Then} the forgetful morphisms
\[
\fM(\cX/B,\tau) \arr \bM(G,\bg)\times B
\]
remembering only the domain curve as a family of marked curves over $B$, is
idealized logarithmically \'etale for the canonical idealized structures. An
analogous result holds for $\tau$ replaced by a decorated global type
$\btau=(\tau,\bA)$ of a punctured map, and for weak markings.
\end{theorem}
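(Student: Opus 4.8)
The plan is to reduce the statement, via the factorization of $X\to B$ through the relative Artin fan $\cX$ and the combinatorial description of $\cX$ in \cite[\S\,2.2]{decomposition}, to a purely local toric computation. First I would work \'etale-locally on $\bM(G,\bg)\times B$ and on $\fM(\cX/B,\tau)$: since the morphism in question is strict over the forgetful map to $\breve\fM_B(G,\bg)$ which by Proposition~\ref{prop:puncurve-moduli} is an idealized logarithmically \'etale locally closed embedding into $\fM_B(G,\bg)=\Log_{\bM(G,\bg)\times B}$, it suffices to analyze the fibers of $\fM(\cX/B,\tau)\arr \breve\fM_B(G,\bg)\times_B B$, i.e.\ the stack of $\tau$-marked punctured maps to $\cX/B$ with fixed domain punctured curve. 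Here the key point is that $\cX$ is glued from Artin cones $[\Spec\kk[P]/\Spec\kk[P^\gp]]$ with $P=\ocM_{X,x}$, so a punctured map to $\cX$ with fixed domain over a chart $Q\arr\ocM_W$ is the same as a compatible collection of monoid homomorphisms $P_x\arr \ocM_{C^\circ}$; the moduli of such, after imposing the marking by $\tau$ and basicness, is the Artin cone $\cA_{Q_\tau^\vee}$ (with $Q_\tau$ the basic monoid of the unique type lifting $\tau$ in the realizable case, or more generally a stratified union indexed by realizable $\btau'$ dominating $\tau$) mapping to $\cA_{Q_1^\vee}$ with $Q_1=\NN^{|E(G)|}$ the nodal monoid, exactly as in the boundedness argument (proof of Proposition~\ref{prop:refined-stack-bounded}, construction of $Z_1$).

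Next I would identify the idealized structures on both sides under this local model. On $\bM(G,\bg)\times B$ the relevant ideal is the nodal log-ideal of Definition~\ref{Def: nodal ideal sheaf} (pulled back along the strict projection to $B$ trivially), corresponding to the monomial ideal $(\NN^{|E(G)|}\setminus\{0\})\subset\NN^{|E(G)|}$. On $\fM(\cX/B,\tau)$ the canonical idealized structure $\cI^\tau\cup\cK$ is, by Definition~\ref{Def: Itau}, generated by the target-stratum generators (i), the nodal generators (ii), and the basic-monoid generators (iii), together with the puncturing log-ideal $\cK$ of Definition~\ref{Def: puncturing log ideal punctured map}. The claim is then the statement that the induced morphism of Artin cones $\cA_{Q_\tau^\vee}\arr \cA_{Q_1^\vee}$, equipped with these idealized structures, is idealized logarithmically \'etale — equivalently, that the homomorphism $Q_1\arr Q_\tau$ together with the ideal $J_\tau\subseteq Q_\tau$ generated by (i)--(iii) and $\cK$, satisfies: $J_\tau$ equals the ideal generated by the image of the nodal ideal of $Q_1$, and $Q_1\arr Q_\tau$ induces an isomorphism on the quotients of the associated groups (equivalently an isomorphism over $\QQ$, since lattice indices are absorbed by idealized \'etaleness as in the criterion of \cite[\S\,IV.3.1]{Ogus}).

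The verification of these two facts is where the real content lies, and it splits by cases. In the realizable case, Proposition~\ref{prop:I when tau is realizable} already tells us $\ocI^\tau_{W,\bar w}+\ocK_{W,\bar w}$ is generated purely by set (iii), i.e.\ by $\chi_{\tau\tau_{\bar w}}^{-1}(Q_\tau\setminus\{0\})$; and the localization $\chi_{\tau\tau_{\bar w}}:Q_{\tau_{\bar w}}\arr Q_\tau$ composed with the nodal chart $Q_1\arr Q_{\tau_{\bar w}}$ realizes $Q_\tau$ as the quotient/localization of $Q_1\oplus(\text{positive directions})$ exactly cutting out the stratum, so the nodal ideal of $Q_1$ generates (iii) and the group map is an isomorphism over $\QQ$ — this is the toric translation of the fact that the tropical moduli space is the basic cone with the excluded faces of Proposition~\ref{Prop: puncturing tropical interpretation} removed. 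For a general (non-realizable) global type $\tau$, one stratifies $\fM(\cX/B,\tau)$ by the images of $\fM(\cX/B,\btau')$ for realizable $\btau'$ dominating $\tau$ (as announced in Remark~\ref{Rem: remark on condition (4)} and Remark~\ref{Rem: stratified structure of fM(cX,tau)}) and checks idealized \'etaleness stratum by stratum, each reducing to the realizable case; the extra generators (i), (ii) that are not subsumed by (iii) in the non-realizable case are precisely what forces those non-realizable strata into the monomial ideal, so the quotient scheme is again cut out by the image of the nodal ideal. The decorated and weakly-marked variants follow formally: decorations $\bA$ are locally constant and do not affect log structures, while for weak markings one simply drops generator set (iii) on both the statement and its proof, which only enlarges the fiber but leaves the idealized-\'etale conclusion intact by the same local computation.

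\medskip\noindent\textbf{Main obstacle.} The hard part will be the non-realizable case: making precise the stratification of $\fM(\cX/B,\tau)$ by realizable dominating types and checking that the canonical idealized structure restricts correctly to each stratum and that idealized logarithmic \'etaleness glues across strata. In the realizable case the result is essentially a repackaging of Proposition~\ref{prop:I when tau is realizable} plus the identification of the basic monoid with the tropical moduli cone, but in general one must argue that the monomial ideal $\alpha_W(\cI^\tau\cup\cK)$ carves out exactly the union of realizable strata — i.e.\ that the scheme-theoretic picture matches the tropical picture of Proposition~\ref{Prop: puncturing tropical interpretation} — and that no further idealized structure is needed. This is exactly the content flagged in Example~\ref{ex:non equi dim} (reducibility of moduli, differing dimensions) and must be handled by a careful monoid-ideal computation rather than a slick argument.
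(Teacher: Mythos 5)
Your opening reduction (factoring through $\breve\fM_B(G,\bg)\arr \fM_B(G,\bg)=\Log_{\bM(G,\bg)\times B}$ and using Proposition~\ref{prop:puncurve-moduli}) matches the paper's first step exactly. After that, however, your argument rests on a claim that is not available at this point and whose justification is precisely the missing content: you assert that the fiber of $\fM(\cX/B,\tau)$ over a chart of $\breve\fM_B(G,\bg)$ ``is the Artin cone $\cA_{Q_\tau^\vee}$'' (or a union of such), i.e.\ that the morphism is \'etale-locally of the toric form $\cA_{Q,I}\arr\cA_{\NN^{s+r},J}\times\cA_{Q_B}$. In the paper this local model is Remark~\ref{Rem: local structure of fM(cX/B,tau)}, which is \emph{deduced from} Theorem~\ref{thm:idealized-etale} via Proposition~\ref{prop:etale charts}; it cannot be the starting point of the proof without an independent argument. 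The construction of $Z_1$ in the boundedness proof only produces \emph{a} family over such a toric base mapping to the moduli stack; showing that this map is \'etale onto a neighbourhood — equivalently, that a punctured map to $\cX$ with fixed domain deforms uniquely along any idealized strict square-zero extension — is exactly the deformation-theoretic input your proposal omits. The paper supplies it directly: since $\cX\to B$ is log \'etale, the infinitesimal lifting property for (idealized) log \'etale morphisms gives existence and uniqueness of the lift $f_T:C^\circ_T\to\cX$, with no toric local model and no appeal to realizability. Relatedly, your proposed numerical criterion for idealized log \'etaleness of $\cA_{P,K}\to\cA_{Q,J}$ (ideal generation plus isomorphism of groups over $\QQ$) is not what is needed: by Lemma~\ref{lem:idealized etale} \emph{any} morphism of monoids carrying $J$ into $K$ induces an idealized log \'etale map of idealized Artin cones, so the matching of ideals you propose to verify is neither necessary for that step nor the actual difficulty.

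The second, smaller gap is that even granting existence and uniqueness of the lift, one must check that the lifted map is still $\tau$-marked — conditions (1)--(3) of Definition~\ref{Def: marking by type} — and this is where the canonical idealized structure on $\fM(\cX/B,\tau)$ genuinely enters (the vanishing $\alpha_T(\cI^\tau)=0$ forces $f_T$ to keep landing in the prescribed strata $\cX_{\bsigma(x)}$, checked separately at vertices, legs and edges). Your proposal never addresses preservation of the marking under deformation; instead you route everything through a stratification by realizable types dominating $\tau$, which the paper's argument does not need (that stratification is a consequence, recorded in Remark~\ref{Rem: stratified structure of fM(cX,tau)}, not an ingredient). The case distinction realizable/non-realizable that you identify as the ``main obstacle'' therefore dissolves in the correct argument, but the price is that you must carry out the lifting-criterion verification, which your write-up does not contain.
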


\begin{proof}
\textsc{Step~1. Lifting to the stack of punctured curves.} 
We first note that the morphism in question is in fact idealized. Indeed, the
generators of the nodal log-ideal (Definition~\ref{Def: nodal ideal sheaf}) on
$\bM(G,\bg)\times B$ are pulled back to the nodal generator $\rho_E$
of Definition~\ref{Def: Itau},(ii) for $E\in E(G)$. The morphism then factors
over the idealized logarithmically \'etale morphism
\[
\breve\fM_B(G,\bg)\arr \fM_B(G,\bg)=\Log_{\bM(G,\bg)\times B}
\]
from Proposition~\ref{prop:puncurve-moduli},(2). Moreover, by
\cite[Thm.\,4.6,(iii)]{LogStack}, the morphism
\[
\Log_{\bM(G,\bg)\times B}\arr \bM(G,\bg)\times B
\]
is also logarithmically \'etale. It thus suffices to prove the statement with
$\bM(G,\bg)\times B$ replaced by the stack $\breve\fM_B(G,\bg)$ of
$(G,\bg)$-marked punctured curves. Note that the morphism $\fM(\cX/B,\tau)\arr
\breve\fM_B(G,\bg)$ is strict, but not in general idealized strict: {the
{nodal log-}ideal of {$\breve\fM_B(G,\bg)$} from
{Definition~\ref{Def: nodal ideal sheaf}} involves {only the
nodes of the domain curves}, whereas the {$\tau$-marking} ideal of
$\fM(\cX/B,\tau)$ from Definition~\ref{Def: Itau}, in particular part (i),
{also} records target data}.\\[1ex]
\textsc{Step~2. Lifting to the prestable map.}
According to the definition of idealized log \'etale, it is sufficient to
consider a diagram of solid arrows in the category of idealized log
spaces
\begin{equation}
\label{diag:universal-stack-idealize-etale}
\vcenter{\xymatrix{
T_0 \ar[rr]^{g_0} \ar[d] && \fM(\cX/B,\tau)\ar[d] \\
T \ar[rr]_g \ar@{-->}[urr] && \breve\fM_B(G,\bg)
}}
\end{equation}
where $T_0\hookrightarrow T$ is an idealized strict closed embedding defined by
a square-zero ideal. Denote by $\cK_{T_0}$ and $\cK_T$ the log-ideals of $T_0$
and $T$ respectively. We wish to show that there is a unique dashed arrow making
the above diagram commutative.

Denote by $f_{T_0}:C_{T_0}^{\circ}\rightarrow\cX$ the punctured map over $T_0$
corresponding to the morphism $g_0$, and by $C_{T_0}^{\circ}\hookrightarrow
C_T^\circ$ the extension given by $g$. Write also $\pi_{T_0}:C_{T_0}^{\circ}
\rightarrow T_0$, $\pi_T:C_T^{\circ}\rightarrow T$. Thus the lifting problem
\eqref{diag:universal-stack-idealize-etale} reduces to the following: 
\[
\xymatrix{
C^{\circ}_{T_0} \ar[rr]^{f_{T_0}} \ar[d] && \cX \ar[d] \\
C^{\circ}_{T} \ar[rr] \ar@{-->}[rru]^{f_T} && B
 }
\]
Since $\cX \to B$ is log \'etale, by the infinitesimal lifting property of log
\'etale morphisms in the category of idealized log schemes 
\cite[{p.399}]{Ogus},
such $f_T$ exists and is unique.
\smallskip

{It remains} to check that $f_T$ is also a $\tau$-marked curve. Item~(2)
of Definition~\ref{Def: marking by type} is automatic as $T_0$ and $T$ have the
same geometric points. {As a preparation for establishing~(1) and (3),
we first check the vanishing of the $\tau$-marking ideal.}\\[1ex]
\textsc{Step~3. The marking ideal vanishes.}
Fix a geometric point $\ol t$ of $T_0$. Let $I^{\tau}_0\subseteq \cM_{T_0,\ol
t}$ be the stalk of the log-ideal $g_0^{\bullet}\cI^{\tau}_{\fM(\cX/B,\tau)}$ at
$\ol t$, and write $\overline{I}_0^{\tau}\subseteq \overline{\cM}_{T_0,\ol t}$
for its image. As $\overline{\cM}_{T_0,\ol t}=\overline{\cM}_{T,\ol t}$, we also
obtain an ideal $I^{\tau}\subseteq \cM_{T,\ol t}$ as the inverse image of
$\overline{I}_0^{\tau}$ under the map $\cM_{T,\ol t}\rightarrow
\overline\cM_{T,\ol t}$. As $g_0$ is idealized, necessarily $I^{\tau}_0
\subseteq \cK_{T_0,\ol t}$. Since $T_0\rightarrow T$ is {idealized
strict}, we thus have $I^{\tau}\subseteq \cK_{T,\ol t}$ and hence
$\alpha_T(I^{\tau})=0$. {This finishes Step~3.}
\smallskip

Now let $x \in V(G)\cup E(G)\cup L(G)$, and let $Z\subseteq C_T$ be the
corresponding closed subscheme. To verify condition (1) of Definition~\ref{Def:
marking by type}, we need to show that $f_T|_{Z}$ factors through
$\cX_{\bsigma(x)}$. Let $\ol w= g_0(\ol t)$, with corresponding type of
tropical curve $\tau_{\ol w}$, equipped with a contraction morphism
$\phi:\tau_{\ol w}\rightarrow \tau$. {We now check the
needed factorization for each kind of $x$ in the following steps.}
\\[1ex]
\textsc{Step~4. The marking lifts at a vertex.}
First consider the case that $x$ is a vertex. In this case $Z$ is a sub-curve of
$\ul{C}_T$, flat over $\ul T$. Let $U\subseteq Z$ be the open subset of
non-special points; it is then sufficient to show that $\ul{f}_T|_U$ factors
through the closed substack $\cX_{\bsigma(x)}$. So let $\ol u$ be a geometric
point of $U$ lying over $\ol t$, contained in an irreducible component of
$Z_{\ol t}$ indexed by a vertex $v\in V(G_{\tau_{\ol w}})$. Note then that
$\phi(v)=x$. It is enough to show that $f_T^\sharp:\cO_{\cX,\ul f_T(\ol u)}
\rightarrow \cO_{C_T,\ol u}$ takes the stalk $\cJ_{\ul f_T(\ol u)}$ of the ideal
$\cJ$ of $\cX_{\bsigma(x)}$ in $\cX$ to $0$. Using the notation of
Definition~\ref{Def: Itau}, we have $\overline\cM_{\cX,\ul f_T(\ol u)}=P_v$ and
a generization map $\chi_v:P_v\rightarrow P_{\phi(v)}$. If $p \in P_v$, write
$s_p\in \cM_{\cX,\ul f_T(\ol u)}$ for a lift of $p$. {We next observe
that since $B$ is a log point or is log smooth over $\Spec\kk$ and $X$ is
simple, the ideal} $\cJ_{\ul f_T(\ol u)}$ is generated by the set
$\{\alpha_{\cX}(s_p)\,|\, p\in P_v\setminus \chi_v^{-1}(0)\}$. {Indeed,
this is the idealized smoothness statement of the strata in
Proposition~\ref{Prop: idealized log smooth}, applied on a smooth chart of
$\cX$, together with the stalkwise characterization \eqref{Eqn: ocK stalkwise}
of the log ideal $\cK$ in the proof of that proposition. Note that due to
simplicity, the only face map is $\chi_v^t:\bsigma(x)\arr P_\RR^\vee$ in the
present case, and hence $\ocK_{\ul f_T(\ol
u)}=P_v\setminus\chi_v^{-1}(0)$.}

{Now} by
Definition~\ref{Def: Itau},(i) and strictness of $\pi_T$ at $\ol u$,
{for each $p\in P_v$} there exists $s'_p\in I^{\tau}\subseteq \cM_{T,\ol
t}$ with $f_T^{\flat}(s_p)= h \cdot \pi_T^{\flat}(s'_p)$ for some $h\in
\cO_{C_T,\ol u}^{\times}$. Thus
\[
f_T^\sharp(\alpha_{\cX}(s_p))=\alpha_{C_T}(f_T^{\flat}(s_p))
=\alpha_{C_T}(h \cdot \pi_T^{\flat}(s'_p))=
h\cdot \pi_T^\sharp(\alpha_T(s'_p))= 0.
\]
This shows that $\ul{f}_T|_{U}$ factors through $\cX_{\bsigma(x)}$.
\\[1ex]
\textsc{Step~5. The marking lifts at a leg.}
Second consider the case that $x=L\in L(G)$. In this case $Z$ is the image of a
section of $\pi_T$, {with $\ul{Z}\cong\ul{T}$}. Let $\ol u$ be the unique
geometric point of $\ul Z$ over $\ol t$. We now have a generization map
$\chi_L:P_L= \overline{\cM}_{\cX,\ul f_T(\ol u)}\rightarrow P_{\phi(L)}$.
Following the same notation as in the previous paragraph, it is then sufficient
to show that for each $p\in P_L\setminus\chi_L^{-1}(0)$, we have
$0=\alpha_{C_T}(f_T^\flat(s_p))|_{{Z}}\in \cO_{{Z},\ol u}$. As in
the previous paragraph, this is forced by the generators of the puncturing ideal
in Definition~\ref{Def: Itau},(i) in case $u_L(p)=0$. If $u_L(p)>0$, then
$\alpha_{C_T}(f_T^{\flat}(s_p))$ contains a positive power of the defining
equation of {$Z$ as a subscheme of $C_T$}, and hence vanishes along
{$Z$}. If $u_L(p)<0$, then we achieve vanishing by
Definition~\ref{def:puncturing},(2). Thus we obtain the desired vanishing.
\\[1ex]
\textsc{Step~6. The marking lifts at an edge.}
The third case is $x=E\in E(G)$. The argument is similar to the second
case, and we leave the details to the reader. This verifies that
$f_T$ satisfies condition (1) of $\tau$-marked curve.
\\[1ex]
\textsc{Step~7. Base marking, decoration and weak marking.}
Finally, condition (3) holds. {Indeed, the generators in
Definition~\ref{Def: Itau},(iii) guarantee the desired
vanishing.}

This completes the proof for markings by $\tau$. The proof for $\tau$ replaced
by $\btau$ is identical. The weakly marked case is obtained by the same proof
omitting (iii) in Definition~\ref{Def: Itau}.
\end{proof}

{
\begin{remark}
\label{Rem: idealized smoothness in non-simple case}
The proof in the weakly marked case uses simplicity only when arguing that the
ideal defining $\cX_{\bsigma(x)}$ locally is generated by expressions
$\alpha_\cX(s_p)$, $p\in P_v\setminus\chi_v^{-1}(0)$ for the unique generization
map $\chi_v: P_v\to P_{\phi(v)}$, $P_v=\ocM_{\ul f_T(\ol u)}$. In general
there is still always a log ideal $K\subseteq \ocM_{\ul f_T(\ol u)}$ with this
property, as we saw in the proof of Proposition~\ref{Prop: idealized log
smooth}. This larger log ideal can be accounted for by modifying
Definition~\ref{Def: Itau},(i) accordingly. In the marked case, we also need to
refine $Q_{\tau\tau_{\ol w}}$ in Definition~\ref{Def: Itau},(iii) to the version
stated in \eqref{Eqn: I_tautau'} in \S\ref{sss: basic monoid with monodromy}.

Thus we expect the statement of Theorem~\ref{thm:idealized-etale} to hold true
in the non-simple case with these adjustments. Details are left to the
interested reader.
\end{remark}}

\begin{remark}[Local structure of stacks of prestable maps]
\label{Rem: local structure of fM(cX/B,tau)}
Theorem~\ref{thm:idealized-etale} gives the following local description of
$\fM(\cX/B,\tau)$. Let $(C^\circ/W,\bp,f)$ be a basic stable punctured map over
a log point $W=\Spec(Q\arr \kappa)$ over $B$ marked by $\tau$. Denote by $s$ the
number of edges of the graph $G$ given by $\tau=(G,\bg,\bsigma,\bar\bu)$ and
assume that $\ul C$ has $s+r$ nodes. Thus $r$ nodes of $\ul C$ can be smoothed
while keeping a marking by $(G,\bg)$. 

The underlying object $(\ul C/\ul W,\bp)$, viewed as a pre-stable curve with its
basic log structure, is a point $\Spec \kappa \to \bM(G,\bg)\times B$.

By the deformation theory of nodal curves,
there exists a strict smooth neighborhood of this point
\'etale locally isomorphic to
\begin{equation}
\label{Eqn: smooth model for bM(G,bg)}
\AA^r\times U\times B.
\end{equation}
Here $\AA^r$ is endowed with the idealized log structure obtained by restricting
the toric log structure of $\AA^{s+r}$ to an intersection of $s$ coordinate
hyperplanes, and corresponds to deforming the $r$ smoothable nodes; $U$ is
smooth with trivial log structure corresponding to equisingular deformations of
$\ul C$; and the \'etale local isomorphism is a product of an \'etale local
isomorphism of $\AA^r\times U$ with an open substack of $\bM(G,\bg)$ and
$\id_B$. 

Note that the image of $(C^\circ/W,\bp)$ in $\bM(G,\bg)$ is defined by the
underlying marked nodal curve $(\ul C/\ul W,\bp)$ endowed with its basic log
structure of marked nodal curves.

{
Consider the point $W \to \fM(\cX/B,\tau)$ corresponding to the object
$(C^\circ/W,\bp,f)$. Pulling back the neighborhood \eqref{Eqn: smooth model for
bM(G,bg)} along $\fM(\cX/B,\tau)\arr \bM(G,\bg)\times B$ gives a smooth
neighborhood $V$ of $W \to \fM(X/B,\tau)$ equipped with a
morphism $\phi:V\rightarrow \AA^r\times U\times B$. We may now apply
Proposition~\ref{prop:etale charts} to describe this neighborhood explicitly
\'etale locally, as follows.
We use the notation $\cA_P$ and $\cA_{P,I}$
defined in \eqref{eq:artin charts}, for $P$ a monoid and 
$I\subseteq P$ a monoid ideal.

The log-ideal $\cI^{\tau}_{\fM(\cX/B,\tau)}\cup\cK_{\fM(\cX/B,\tau)}$
induces a monoid ideal $I\subseteq Q$, as constructed in Definition~\ref{Def:
Itau},} {with associated idealized Artin fan {$\cA_{Q,I}$}.}
Let $Q_B$ be the stalk of $\overline{\cM}_B$ at the image point of the
composition $W\rightarrow \fM(\cX/B,\tau)\rightarrow B$. We may first replace
$B$ with an \'etale neighborhood of this image point and so assume given a map
$Q_B\rightarrow \overline\cM_B$, or equivalently a strict morphism $B\rightarrow
\cA_{Q_B}$. Then by Proposition~\ref{prop:etale charts}, possibly after passing
to an \'etale neighborhood of $V$, there is a diagram
\begin{equation}
\label{eq:local structure ogus}
\vcenter{\xymatrix@C=30pt
{
V\ar[rd]^{\theta}\ar@/_/[rdd]_{\phi}\ar@/^/[rrd]^{\psi}&&\\
&V'\ar[d]\ar[r]& \cA_{Q,I}\ar[d]^{\iota}\\
&\AA^r\times U \times B\ar[r]&\cA_{\NN^{s+r},J}\times \cA_{Q_B}
}}
\end{equation}
with the square Cartesian in {the log, fine and fs} categories, 
$\psi$ and both horizontal arrows
strict and {idealized strict}, and $\theta$ \'etale and strict. Further, $\iota$
is induced by the map on stalks of ghost sheaves $\NN^{s+r}\oplus Q_B\rightarrow
Q$ given by the morphism $\fM(\cX/B,\tau)\rightarrow {\bf M}(G,{\bf g})\times
B$. Finally, $J\subseteq \NN^{s+r}$ is the ideal generated by the first $s$
generators of $\NN^{s+r}$, so that the morphism $\AA^r\rightarrow
\cA_{\NN^{s+r},J}$ is strict and {idealized strict}.

In conclusion, we see that $V$ is
\'etale locally isomorphic to
\begin{equation}
\label{Eqn: local models for fM(cX,tau)}
V'\cong U\times ((\AA^r\times B)\times_{\cA_{\NN^{s+r},J}\times\cA_{Q_B}} 
\cA_{Q,I}).
\end{equation}
Thus the local models of $\fM(\cX/B,\tau)$ and their idealized structures 
are explicitly described from the types of tropical punctured maps admitting
a contraction morphism to $\tau$.
\end{remark}

\subsubsection{Dimension formulas}
Example~\ref{ex:non equi dim} exhibits a case where $\fM(\cX/B,\tau)$ is not
pure-dimensional. Before revisiting this example, we give a useful condition
which implies $\fM(\cX/B,\tau)$ is pure-dimensional, of the expected dimension.
{The statement involves a refinement of the notion of realizability of
global types from Definition~\ref{Def: global type},(2) relative to $B$.} 

\begin{definition}
\label{def:realizable over B}
Let $\tau$ be a global type of punctured map to $X$. We say that \emph{$\tau$
is realizable over $B$} if there exists a geometric point
{$\ol w$ of $\fM(\cX/B,\tau)$} such that the corresponding
punctured {map} has global type $\tau$.
\end{definition}

\begin{proposition}
\label{prop:realizable over B}
Suppose the Artin fan $\cA_X$ of $X$ is Zariski
{(Definition~\ref{Def: Zariski Artin fan})}. Then a global type
$\tau=(G,{\bf g},\bsigma,\bar{\bf u})$ is realisable over $B$ if and only if the
following conditions hold:
\begin{enumerate}
\item
$\tau$ is realizable, hence there is a universal family $h:\Gamma=\Gamma(G,\ell)
\rightarrow \Sigma(X)$ of type $\tau$, parameterized by
$\omega_{\tau}:=Q_{\tau,\RR}^{\vee}$, where $Q_{\tau}$ is the basic monoid for
tropical maps of type $\tau$.
\item
The universal family of tropical maps of type $\tau$ is defined
over $\Sigma(B)$, i.e., there is a map $\omega_{\tau}\rightarrow \Sigma(B)$
making the diagram
\[
\xymatrix@C=30pt
{
\Gamma\ar[r]^h\ar[d]&\Sigma(X)\ar[d]\\
\omega_{\tau}\ar[r]&\Sigma(B)
}
\]
commute.
\item
Let $\sigma\in \Sigma(B)$ be the minimal cone containing the
image of $\omega_{\tau}$. Then there exists a point $b\in B_{\sigma}$
such that $\sigma=\Hom(\overline\cM_{B,b}, \RR_{\ge 0})$.
\end{enumerate}
\end{proposition}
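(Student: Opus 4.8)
The plan is to prove both implications, with the forward direction being essentially a matter of unwinding the definitions and the backward direction requiring an actual construction of a geometric point of $\fM(\cX/B,\tau)$.

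First I would treat the direction "realizable over $B$ $\Rightarrow$ (1)--(3)". Suppose $\bar w\to\fM(\cX/B,\tau)$ is a geometric point whose associated punctured curve $(C^\circ/W,\bp,f)$ has global type exactly $\tau$ (not a strict degeneration of it). By Lemma~\ref{Lem: unique type for global type}, having associated global type $\tau$ and being an honest tropical map forces $\tau$ to lift to a unique type of tropical punctured maps, so $\tau$ is realizable; this gives (1), and Proposition~\ref{Prop: Basic monoid} identifies $Q_\tau$ with $\ocM_{W,\bar w}$ and $\omega_\tau=Q_{\tau,\RR}^\vee$ with $\Sigma(W)$. Functoriality of tropicalization applied to $W\to B$ (the structure map of the moduli problem) gives the map $\omega_\tau=\Sigma(W)\to\Sigma(B)$ making the tropical square commute, which is (2). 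For (3), let $\sigma\in\Sigma(B)$ be the minimal cone containing the image of $\omega_\tau$; this image is the cone $\Hom(\ocM_{B,b'},\RR_{\ge0})$ where $b'$ is the image in $B$ of the closed point of $W$ (here I use that $W$ is a log point and the map $W\to B$ is strict onto its image stratum in the relevant chart, cf.\ \S\,\ref{sss: tropical punctured maps}), and taking $b=b'$ gives a point of $B_\sigma$ with $\sigma=\Hom(\ocM_{B,b},\RR_{\ge0})$.

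The harder direction is "(1)--(3) $\Rightarrow$ realizable over $B$", and this is where I expect the main obstacle. Given (1)--(3), I would build the desired geometric point as follows. Using (3), pick $b\in B_\sigma$ with $\ocM_{B,b}^\vee\otimes\RR=\sigma$; then $\Spec(\ocM_{B,b}\to\kappa(b))$ is a log point mapping to $B$ whose tropicalization is exactly the cone $\sigma$. Using (2), the map $\omega_\tau\to\Sigma(B)$ has image landing in $\sigma$, so the log point $W_0:=\Spec(Q_\tau\to\kappa)$, equipped with the induced map $W_0\to B$ (which is well-defined at the level of ghost sheaves by (2) and can be promoted to an actual log morphism since $B$ is, \'etale-locally around $b$, strict over $\cA_{Q_B}$ with $Q_B=\ocM_{B,b}$, so choosing a splitting lifts the ghost-sheaf map), has tropicalization $\omega_\tau$. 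Now I would produce a punctured map over $W_0$ realizing $\tau$: by realizability (1) there is a universal tropical punctured map $h:\Gamma(G,\ell)\to\Sigma(X)$ over $\omega_\tau$, and I want to lift $h$ to an actual basic pre-stable punctured map $f:C^\circ/W_0\to X$. This is the punctured analogue of the statement that a tropical map over the basic cone lifts to an algebraic stable log map, and the key point is constructing the line-bundle data and torsors as in Example~\ref{ex:intuition} and the proof of Proposition~\ref{universalproperty}; since we are mapping to the Artin fan $\cX$, where the obstruction group $H^1(C,f^*\Theta_{\cX/B})$ typically vanishes (as the relevant deformations are combinatorial), the lift exists. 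Basicness is automatic by construction of $Q_\tau$, and the resulting punctured map has global type exactly $\tau$ because $h$ has type $\tau$ and we did not degenerate. The main obstacle is thus precisely this lifting step: one must check that no further idealized obstruction (from the puncturing log-ideal $\cK$ or the $\tau$-marking ideal $\cI^\tau$, cf.\ Theorem~\ref{thm:idealized-etale} and Remark~\ref{Rem: local structure of fM(cX/B,tau)}) prevents the tropical solution from being realized over the \emph{specific} base $W_0$ we constructed --- but this is exactly guaranteed by Proposition~\ref{prop:I when tau is realizable}, which says that in the realizable case $\cI^\tau+\cK$ is generated by the basic-monoid generators, i.e.\ by $\chi_{\tau\tau}^{-1}(Q_\tau\setminus\{0\})=Q_\tau\setminus\{0\}$ since $\chi_{\tau\tau}=\id$ here, so the idealized structure on $W_0$ is the standard one and imposes no obstruction to the lift existing at the point corresponding to $\tau$ itself.

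Finally I would record that the geometric point $W_0\to\fM(\cX/B,\tau)$ so constructed has, by design, associated punctured curve of global type $\tau$, completing the verification of Definition~\ref{def:realizable over B}. I would also remark that the local models \eqref{Eqn: local models for fM(cX,tau)} furnish an alternative, more hands-on route to the backward implication: conditions (1)--(3) guarantee that the open torus orbit $\cA_{Q_\tau}$ (the interior of $\omega_\tau$) survives after imposing the monoid ideal $I$ and after base change over $\cA_{Q_B}$ to the stratum of $B$ picked out by (3), so the corresponding stratum of $\fM(\cX/B,\tau)$ is non-empty, and any $\kappa$-point of that open stratum is a point of global type $\tau$.
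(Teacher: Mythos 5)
Your overall strategy coincides with the paper's: the forward direction is a matter of unwinding definitions, and the backward direction is carried out by building the log point $W=\Spec(Q_\tau\to\kappa)$, using (3) to map it to $B$, using (2) to make that map compatible with the tropical data, and then lifting the universal tropical map of type $\tau$ to an honest punctured map to the Artin fan. The paper additionally takes care to first build the underlying curve $\ul C$ with dual graph $G$ and pull back its basic log structure along $\NN^{|E(G)|}\to Q_\tau$, but this is the same construction you are describing.

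The one place where your argument is off-target is the central lifting step. You first say you want to lift $h$ to a punctured map to $X$ (which is not what Definition~\ref{def:realizable over B} asks for, and would require controlling genuine obstructions), and then justify the lift to $\cX$ by appealing to vanishing of $H^1(C,f^*\Theta_{\cX/B})$ and to torsor constructions as in Example~\ref{ex:intuition}. This is not the right mechanism: a map to an Artin fan is not produced by deformation theory but is constructed directly from its tropicalization, via the correspondence \cite[Prop.~2.10]{decomposition} between morphisms to a Zariski Artin fan and maps of cone complexes to $\Sigma(X)$. That correspondence is exactly what the paper invokes, it is where the hypothesis that $\cA_X$ is Zariski enters, and your write-up never uses that hypothesis anywhere — a sign that the key step has not actually been justified. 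Once the map $C^\circ\to\cA_X$ is obtained this way (after prestabilizing), condition (2) guarantees that the two compositions to $\cA_B$ agree, producing the map to $\cX=\cA_X\times_{\cA_B}B$; your subsequent appeal to Proposition~\ref{prop:I when tau is realizable} to rule out idealized obstructions is then unnecessary, since the point of $\fM(\cX/B,\tau)$ has been exhibited directly.
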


\begin{proof}
That conditions (1)--(3) are necessary is clear. Conversely, suppose (1)--(3)
hold. Let $\ul C/\Spec\kk$ be a pre-stable curve with dual intersection graph
$G$. Pull-back the basic log structure on $\ul C/\Spec\kk$ by the canonical
morphism $\NN^{|E(G)|}\arr Q_\tau$ from the nodal parameters to the basic monoid
for $\tau$ to define a log smooth curve $C/W$ over the log point
$W=\Spec(Q_\tau\arr\kk)$. We may then construct a morphism $W\rightarrow B$ with
image a point $b\in B_{\sigma}$ given by item (3) in the statement of the
proposition. Note we may take $b$ to be a closed point, so that $b=\Spec\kk$. At
the logarithmic level, this morphism can be taken so its induced tropicalization
is the given map $\omega_{\tau} \rightarrow \sigma$.

Next apply the correspondence \cite[Prop.\,2.10]{decomposition} (it is here we
need the hypothesis that $\cA_X$ is Zariski) between morphisms from a
logarithmic space to an Artin fan and their tropicalizations to first construct
a {saturated puncturing} $\widetilde C^\circ\arr C$ and then a
logarithmic map $\widetilde C^\circ\arr \cA_X$ with tropicalization of type
$\tau$. Prestabilizing then leads to a basic pre-stable punctured map
$(C^\circ/W,\bp,f)$ to $\cA_X$ of type $\tau$. {Note that $C^\circ$ is not
necessarily saturated.} On the other hand, we have a composed morphism
$C^{\circ}\rightarrow W\rightarrow B$, with $W\rightarrow B$ constructed in the
previous paragraph. The compositions $C^{\circ}\rightarrow \cA_X \rightarrow
\cA_B$ and $C^{\circ} \rightarrow B\rightarrow \cA_B$ agree by item (2) of the
proposition, and hence we obtain a punctured map $C^{\circ}\rightarrow
\cX=\cA_X\times_{\cA_B} B$ defined over $B$ with the necessary properties.
\end{proof}

\begin{proposition}
\label{Prop: pure-dimensional fM(cX,tau)}
Let $\tau=(G,\bg,\bsigma,\bar\bu)$ be a global type (Definition~\ref{Def: global
type}) and assume {$X$ is simple and} $B$ is either log smooth over
$\Spec\kk$ or $B=\Spec\kk^{\dagger}$, the standard log point. Assume further
$\tau$ is realizable over $B$. Then $\fM(\cX/B,\tau)$ is non-empty, reduced and
pure-dimensional. If $B$ is log smooth over $\Spec\kk$, then 
\[
\dim \fM(\cX/B,\tau)=3|\bg|-3+|L(G)|-\rk Q_\tau^\gp + \dim B,
\]
while if $B=\Spec\kk^{\dagger}$, then
\[
\dim \fM(\cX/B,\tau)=3|\bg|-3+|L(G)|-\rk Q_\tau^\gp + 1.
\]
\end{proposition}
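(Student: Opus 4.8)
The plan is to deduce the whole statement from the local structure of $\fM(\cX/B,\tau)$ near a geometric point. By Theorem~\ref{thm:idealized-etale} the forgetful morphism to $\bM(G,\bg)\times B$ is idealized logarithmically étale for the canonical idealized structure; in the present situation ($\tau$ realizable over $B$, hence realizable) Proposition~\ref{prop:I when tau is realizable} pins down that idealized structure, and combined with the explicit étale charts of Remark~\ref{Rem: local structure of fM(cX/B,tau)} this should give everything. Non-emptiness needs no work: a geometric point of $\fM(\cX/B,\tau)$ whose punctured curve has global type $\tau$ is exactly what Definition~\ref{def:realizable over B} of realizability over $B$ provides.

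First I would fix a geometric point $\bar w\to\fM(\cX/B,\tau)$, with $Q:=Q_{\tau_{\bar w}}$ the basic monoid of the type $\tau_{\bar w}$ at $\bar w$ and $\chi\colon Q\to Q_\tau$ the localization homomorphism attached to the marking contraction $\tau_{\bar w}\to\tau$ --- legitimate because, $\tau$ being realizable, $Q_{\tau\tau_{\bar w}}=Q_\tau$ by Lemma~\ref{Lem: unique type for global type} and Proposition~\ref{prop:I when tau is realizable}. The crucial observation is that, again by Proposition~\ref{prop:I when tau is realizable}, the monoid ideal $I\subseteq Q$ cutting out the canonical idealized structure equals $\chi^{-1}(Q_\tau\setminus\{0\})$; since $\chi$ is a surjective localization at the face $F:=\ker\chi$, this ideal is just the complement $Q\setminus F$. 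Hence $\kk[Q]/(I)=\kk[F]$ is a normal domain of Krull dimension $\rk F^\gp=\rk Q^\gp-\rk Q_\tau^\gp$, so $\cA_{Q,I}$ is reduced, irreducible, and of dimension $-\rk Q_\tau^\gp$, independently of $\bar w$. This is precisely the feature that fails in the non-realizable Example~\ref{ex:non equi dim}, where $I$ is a non-prime, non-equidimensional monoid ideal.

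Substituting this into the chart of Remark~\ref{Rem: local structure of fM(cX/B,tau)},
\[
V'\;\cong\;U\times\big((\AA^r\times B)\times_{\cA_{\NN^{s+r},J}\times\cA_{Q_B}}\cA_{Q,I}\big),
\]
with $U$ smooth, $\ul B$ reduced (toroidal, resp. a point) in characteristic zero, $s=|E(G)|$, $J$ generated by the first $s$ coordinates of $\NN^{s+r}$, and $r$ the number of nodes at $\bar w$ smoothable within the $(G,\bg)$-marking, I would check Tor-independence of the fibre product: $\AA^r\to\cA_{\NN^{s+r},J}$ is flat, and for $B$ log smooth $B\to\cA_{Q_B}$ is flat, while for $B=\Spec\kk^\dagger$ both $\cA_{Q,I}\to\cA_{Q_B}=\cA_\NN$ and $B\to\cA_\NN$ factor through the idealized chart $\cA_{\NN,\NN_{>0}}=\cB\GG_m$ (the pulled-back generator of $\ocM_B$ lies in $I$, by condition~(3) of realizability over $B$). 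Thus $V'$ is reduced, hence so is $\fM(\cX/B,\tau)$, and since $\dim\cA_{Q,I}$ is constant in $\bar w$ it is pure-dimensional. For the dimension, $\dim U=3|\bg|-3+|L(G)|-(s+r)$, $\dim\AA^r=r$, $\dim\cA_{\NN^{s+r},J}=-s$, $\dim\cA_{Q_B}=0$ and $\dim\cA_{Q,I}=-\rk Q_\tau^\gp$ give
\[
\dim\fM(\cX/B,\tau)=\dim U+\big(r+\dim B+s-\rk Q_\tau^\gp\big)=3|\bg|-3+|L(G)|-\rk Q_\tau^\gp+\dim B .
\]
For $B=\Spec\kk^\dagger$ the same count applies with $\cA_{Q_B}$ replaced by $\cB\GG_m$ of dimension $-1$ and $\dim B=0$, yielding the constant $+1$ in place of $+\dim B$.

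The main obstacle is not a deep point but the careful assembly of this local picture: establishing the flatness/Tor-independence that makes the fibre product behave dimensionally, checking that $\rk Q_\tau^\gp$ --- hence the local dimension --- is genuinely independent of $\bar w$, and getting the additive constant right in the $B=\Spec\kk^\dagger$ case, where the failure of $\Spec\kk^\dagger$ to be log smooth over $\Spec\kk$ is exactly what forces the idealized refinement $\cA_{\NN,\NN_{>0}}$ and accounts for the difference between ``$\dim B$'' and ``$1$''.
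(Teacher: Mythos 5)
Your proposal is correct and follows the paper's own proof essentially verbatim: both arguments rest on Proposition~\ref{prop:I when tau is realizable} to identify the canonical idealized structure with the prime monoid ideal $\chi_{\tau\tau_{\bar w}}^{-1}(Q_\tau\setminus\{0\})$ (complement of a face, so $\cA_{Q,I}$ is reduced of dimension $-\rk Q_\tau^\gp$), then read off reducedness and the dimension from the chart \eqref{eq:local structure ogus}, with the same replacement of $\cA_{Q_B}=\cA_{\NN}$ by $\cA_{\NN,\NN\setminus\{0\}}=\cB\GG_m$ in the standard-log-point case. The only remarks worth making are that for reducedness of the fibre product the relevant property of $\AA^r\times U\times B\to\cA_{\NN^{s+r},J}\times\cA_{Q_B}$ is smoothness rather than mere flatness/Tor-independence (it does hold here), and that your dimension count at an arbitrary geometric point, giving a locally constant dimension, is if anything slightly more complete on pure-dimensionality than the paper's computation, which is carried out only at a point of type exactly $\tau$.
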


\begin{proof}
By Proposition~\ref{prop:I when tau is realizable}, as $\tau$ is a realizable
type, the $\tau$-marked ideal at a point $\ol w'$ of $\fM(\cX/B,\tau)$ takes
the form $\chi_{\tau\tau_{\ol w'}}^{-1}(Q_{\tau}\setminus\{0\})$. Thus, in the
description of a smooth neighborhood $V$ of $\ol w'$ as given in
\eqref{eq:local structure ogus}, $\cA_{Q,I}$ is reduced, and if $B$ is log
smooth over $\Spec\kk$, the bottom horizontal arrow is smooth, and hence $V'$ is
also reduced as the square is Cartesian. This shows that $\fM(\cX/B,\tau)$ is
reduced in this case.

If on the other hand $B=\Spec\kk^{\dagger}$, we may take $Q_B=\NN$ in
\eqref{eq:local structure ogus}. Since $\fM(\cX/B,\tau)$ is defined over $B$,
the induced morphism of stalks of ghost sheaves $\NN\rightarrow Q_{\tau}$ is
local and hence $\NN\setminus \{0\}$ maps into $Q_{\tau}\setminus \{0\}$, and
thus more generally $\NN\rightarrow Q_{\tau_{\ol w'}}$ maps $\NN\setminus
\{0\}$ into $\chi_{\tau\tau_{\ol w'}}^{-1}(Q_{\tau} \setminus\{0\})$ by
compatibility of these maps with generization. Hence we may replace $\cA_{Q_B}$
with the closed substack $\cA_{\NN,\NN\setminus \{0\}}$ in \eqref{eq:local
structure ogus} without affecting this diagram in any other way. In particular,
the bottom horizontal arrow is now still smooth. So $V'$ is again reduced.

{ Let $\ol w$ be a point as in Definition~\ref{def:realizable
over B}.} We may now calculate dimensions by looking at the description of
\eqref{eq:local structure ogus} for a neighborhood of $\ol w$ {in
$\fM(\cX/B,\tau)$}. Since the corresponding curve $C^{\circ}/\ol w$ now has no
smoothable nodes, we may take $r=0$ and $s=|E(G)|$ in \eqref{eq:local structure
ogus}. Further, since $I=Q_{\tau}\setminus \{0\}$, necessarily $\dim
\cA_{Q_{\tau},I}=-\rank Q^{\gp}_{\tau}$. Thus we may calculate, with the cases
being for $B$ log smooth and $B=\Spec\kk^{\dagger}$ respectively,
\begin{align*}
\dim \fM(\cX/B)-\dim \bM(G,{\bf g}) \times B = {} & \dim V' - 
\dim U \times B\\
= {} & \begin{cases} \dim \cA_{Q_{\tau},I} - \dim \cA_{\NN^s,J}\times\cA_{Q_B} \\
\dim \cA_{Q_{\tau},I}-\dim \cA_{\NN^s,J}\times \cA_{\NN,\NN\setminus \{0\} }
\end{cases}\\
= {} & 
\begin{cases}-\rank Q^{\gp}_{\tau}-(-s)\\
-\rank Q^{\gp}_{\tau} - (-s-1)
\end{cases}
\end{align*}
As $\dim \bM(G,{\bf g})= 3|{\bf g}|-3+|L(G)|-|E(G)|$, and $s=|E(G)|$,
we then obtain the desired dimension formulas in the two cases.
\end{proof}

\begin{remark}
\label{Rem: stratified structure of fM(cX,tau)}
(\emph{Stratified structure of $\fM(\cX/B,\tau)$.}) If $\tau'\arr\tau$ is a
morphism of global types (Definition~\ref{Def: global type}), a marking by
$\tau'$ induces a marking by $\tau$ by composition of the marking morphism with
$\tau'\arr\tau$. The same arguments as for ordinary logarithmic maps
\cite[Prop.\,2.34]{decomposition} shows that the corresponding morphism of stacks
\[
j_{\tau\tau'}:\fM(\cX/B,\tau')\arr \fM(\cX/B,\tau)
\]
is finite and unramified. If $\tau'$ is realizable over $B$, then
Proposition~\ref{Prop: pure-dimensional fM(cX,tau)}, { under
the assumptions on $B$ stated there,} further shows that $\im(j_{\tau\tau'})$
defines a pure-dimensional substack of $\fM(\cX/B,\tau)$. Conversely, if there
is no $\tau''$ which is realizable over $B$ mapping to $\tau'$ then
$\fM(\cX,\tau')=\emptyset$. Thus the images of $j_{\tau\tau'}$ for morphisms of
global types $\tau'\arr \tau$ with $\tau'$ realizable over $B$ define a
stratification of $\fM(\cX/B,\tau)$ into pure-dimensional strata.

In particular, the closure of a maximal stratum is the image of
$\fM(\cX/B,\tau')$ for $\tau'$ a \emph{minimal} global type realizable over $B$
dominating $\tau$. Minimality here means that the morphism $\tau'\arr\tau$ does
not factor over any other global type realizable over $B$.

Note, however, that $\fM(\cX/B,\tau')$ is not in general irreducible even for
realizable $\tau'$, due to saturation phenomena already present in ordinary
stable logarithmic maps. In the logarithmic enhancement question for transverse
stable logarithmic maps of \cite[Thm.\,4.13]{decomposition}, this reducibility is
reflected in various choices of roots of unity.
\end{remark}

\begin{example}(Example~\ref{ex:non equi dim} revisited.)
\label{Expl: non equi dim revisited}
Let $\tau$ be the global type with $G$ having just one vertex of genus $0$, no
edges, and four legs, all image cones equal to $0\in\Sigma(X)=\{0,\RR_{\ge 0}\}$
and global contact orders $-1,-1,2,2$. This global type is not realizable
because there can be no positive length legs for the two punctures, but there
are several minimal realizable global types marked by $\tau$. Here are two of
them. The first, $\tau_1$, has the same $(G,\bg)$ as $\tau$, but all image cones
are $\RR_{\ge 0}$. In the notation of Example~\ref{ex:non equi dim}, the
tropical punctured map realizing this type has $\rho>0$ and $\ell_1=\ell_2=0$.
The other minimal realizable type, $\tau_2$, has $G$ with three vertices
$v_1,v_2,v_3$ with $\bsigma(v_1)=\{0\}$, $\bsigma(v_2)=\bsigma(v_3)=\RR_{\ge0}$
and two edges, connecting $v_1$ to $v_2$ and $v_3$, respectively, and one
positive and one negative leg attached to each of $v_2$ and $v_3$. This global
type is realizable by tropical punctured maps with $\rho=0$ and
$\ell_1,\ell_2>0$. {Note that by Proposition~\ref{Prop: pure-dimensional
fM(cX,tau)}, $\dim\fM(\cX/B,\tau_1)=0$ but $\dim\fM(\cX/B,\tau_2)=-1$, showing
non-pure-dimensionality of $\fM(\cX/B,\tau)$.}

\begin{figure}[htb]
\input{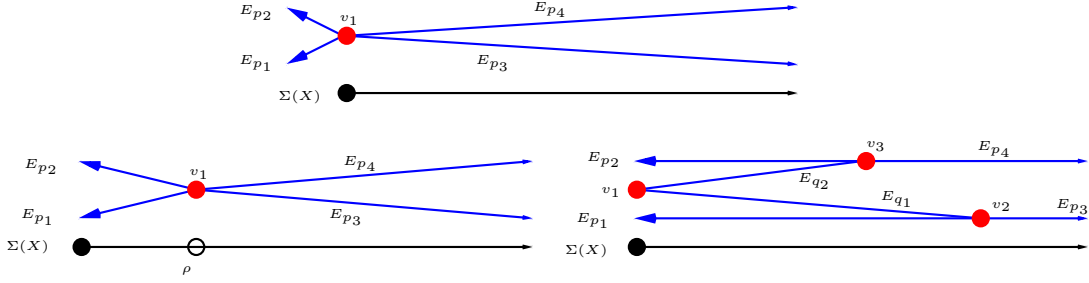}
\caption{The top combinatorial map is not tropically realizable since
$E_{p_1},E_{p_2}$ have nowhere to stretch. The first realizable type has no
nodes, with $\ell_1=\ell_2 = 0$, but with $v_1$ positioned at $\rho>0$. The
second has $v_1$ positioned at $\rho=0$ but then $\ell_1,\ell_2>0$.}
\label{fig:idealized-realizable}
\end{figure}
\end{example}

\subsubsection{Comparing marked and weakly marked stacks}
We end this section by showing that the marked and weakly marked moduli spaces
have the same reduction.

\begin{proposition}
\label{Prop: weakly marked versus marked}
Let $\tau=(G,\bg,\bsigma,\bu)$ be a global type of punctured maps {and
assume $X$ is simple}. Then the canonical morphism
\[
\fM(\cX/B,\tau)\arr \fM'(\cX/B,\tau)
\]
is a closed embedding defined by a nilpotent ideal. Analogous statements hold
for moduli spaces of punctured maps to $X/B$ and for decorated global types.
\end{proposition}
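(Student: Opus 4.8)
The plan is to compare the two moduli stacks via their respective $\tau$-marking ideals. Recall from Definition~\ref{Def: Itau} and Remark~\ref{Rem: on marking ideals} that $\fM(\cX/B,\tau)$ is cut out inside $\fM'(\cX/B,\tau)$ by the extra generators (iii), namely the preimage $\chi_{\tau\tau_{\bar w}}^{-1}(Q_{\tau\tau_{\bar w}}\setminus\{0\})$ of the localization homomorphism; this is exactly the content of the passage from weakly marked to marked in Step~4 (``Marked maps'') of the proof of Theorem~\ref{Thm: scrM and fM are algebraic}. Thus $\fM(\cX/B,\tau)\arr\fM'(\cX/B,\tau)$ is a strict closed embedding, defined locally by the monomial ideal $\alpha_{W}(\cJ)$ where $\cJ\subseteq\cM_W$ is the log-ideal generated by the set (iii). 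It remains only to show this monomial ideal is nilpotent, equivalently that the closed embedding is a bijection on geometric points and an isomorphism on reductions.

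First I would reduce to geometric points: since both stacks are of finite type over $B$ by Theorem~\ref{Thm: scrM and fM are algebraic}, it suffices to show that for every algebraically closed field $\kappa$ over $B$ and every $\kappa$-point of $\fM'(\cX/B,\tau)$, that point already lies in $\fM(\cX/B,\tau)$, i.e.\ that a weakly $\tau$-marked basic stable punctured map over a log point $W=\Spec(Q\arr\kappa)$ automatically satisfies condition (3) of Definition~\ref{Def: marking by type}. Over such a log point the structure map $\cM_W\arr\cO_W$ kills everything in $\ocM_{W,\bar w}\setminus\{0\}=Q\setminus\{0\}$, so condition (3) becomes the purely combinatorial statement $\chi_{\tau\tau_{\bar w}}^{-1}(Q_{\tau\tau_{\bar w}}\setminus\{0\})\subseteq Q\setminus\{0\}$, i.e.\ $0\notin\chi_{\tau\tau_{\bar w}}^{-1}(Q_{\tau\tau_{\bar w}}\setminus\{0\})$. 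But $\chi_{\tau\tau_{\bar w}}$ is a monoid homomorphism, so $\chi_{\tau\tau_{\bar w}}(0)=0$, and hence $0$ is never in the preimage of the nonzero elements. This shows the generators in (iii) never contribute $0\in Q$, so the weak marking over a log point is automatically a marking, giving the bijection on $\kappa$-points. Combined with the closed embedding, this forces the defining ideal to be nilpotent.

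Finally I would note that the same argument applies verbatim to the stacks $\scrM(X/B,\tau)\arr\scrM'(X/B,\tau)$ of punctured maps to $X/B$ — the difference between markings to $X$ and to $\cX$ plays no role, only the domain log structure $\cM_W$ and the classifying homomorphism $\cM_W\arr\ocM_W$ enter — and to decorated global types $\btau=(\tau,\bA)$, since adding a curve class decoration changes nothing about the ideal structure (it only further restricts the underlying maps, which is an open-and-closed condition by local constancy of curve classes in flat families). I do not expect a serious obstacle here: the only subtle point is making sure the defining ideal is finitely generated and hence that ``nilpotent generators'' really gives a nilpotent ideal, which follows from coherence of the $\tau$-marking ideal (established right after Definition~\ref{Def: Itau}) together with finite type of the stacks. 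The proof is therefore essentially a one-line monoid observation ($\chi(0)=0$) packaged with the finite-type and coherence bookkeeping.
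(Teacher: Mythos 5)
Your proof is correct, but it takes a genuinely different route from the paper's. You argue pointwise: at a geometric point $W=\Spec(Q\arr\kappa)$ the log structure axiom $\alpha^{-1}(\cO_W^\times)=\cO_W^\times$ over a field forces $\alpha_W(m)=0$ for every $m\in\cM_{W,\bar w}$ with $\bar m\neq 0$, so condition (3) of Definition~\ref{Def: marking by type} is vacuous over log points; surjectivity of the closed embedding on geometric points plus local Noetherianity then gives nilpotency of the defining ideal. The paper instead proves the purely monoid-theoretic statement that the generators (iii) of the $\tau$-marking ideal lie in the radical of the ideal generated by (i) and (ii): using that $\prod_v P_v\times\prod_E\NN\arr Q_{\tau_{\bar w}}$ is surjective up to saturation, any $q$ with $\chi_{\tau\tau_{\bar w}}(q)\neq 0$ has a multiple $kq$ lifting to a tuple whose image in $\prod_{v}P_{\phi(v)}\times\prod_{E\in E(G)}\NN$ is nonzero, so $kq$ is caught either by a target-stratum generator or by a nodal generator. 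The paper's version is sharper in that it is a chart-level radical containment of monoid ideals, compatible with the explicit idealized local models $\cA_{Q,I}$ of Remark~\ref{Rem: local structure of fM(cX/B,tau)} (it tells you exactly which part of the already-imposed weak-marking ideal absorbs the new generators), whereas your argument is shorter, bypasses the basic-monoid combinatorics entirely, and suffices for the statement as literally formulated. One small caveat: your reduction should cite ``locally of finite type'' (hence locally Noetherian, given the standing hypotheses on $B$) rather than ``finite type'', but this does not affect the conclusion that an ideal contained in the nilradical of a locally Noetherian stack is locally nilpotent.
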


\begin{proof}
By the idealized description in Theorem~\ref{thm:idealized-etale} of the moduli
spaces in question, the statement amounts to showing that the $\tau$-marked
ideal from Definition~\ref{Def: Itau} is contained in the radical of the weakly
$\tau$-marked ideal defined in {Remark}~\ref{Rem: on marking ideals}. 

Let $(C/W,\bp,f)$ be a punctured map weakly marked by $\tau$ and {$\ol
w$ of $\ul W$} a geometric point. We adopt the notation from
Definition~\ref{Def: Itau} and in particular write $\phi:\tau_{\ol w}\arr \tau$
for the contraction morphism given by the marking and
\[
\chi_{\tau\tau_{\ol w}}: Q_{\tau_{\ol w}}\arr {Q_{\tau\tau_{\ol w}}}
\]
for the localization morphism of basic monoids. We have to show that for each
$q\in Q_{\tau_{\ol w}}$ with $\chi_{\tau\tau_{\ol w}}(q)\neq 0$ a
multiple $kq$ lies in the monoid ideal generated by the elements listed
in Definition~\ref{Def: Itau},(i) and (ii). The description of the dual basic
monoids in Proposition~\ref{Prop: Basic monoid} provides the following
commutative diagram with horizontal arrows surjective up to saturation,
with as usual $P_v$ denoting the monoid dual to the cone $\bsigma(v)$:
\[
\setlength{\fboxrule}{0pt}
\xymatrix{
\displaystyle\prod_{v\in V(G_{\ol w})} P_v\times
\prod_{E\in E(G_{\ol w})}\NN
\ar[r]\ar[d]& {Q_{\tau_{\ol w}}}
\ar@<-1ex>[d]^{\chi_{\tau\tau_{\ol w}}}\\
\displaystyle\prod_{v\in V(G)} P_v\times\prod_{E\in E(G)}\NN
\ar[r]& Q_{\tau\tau_{\ol w}}
}
\]
{  The left vertical homomorphism is as follows:
\[
\big( (p_v)_{v\in V(G_{\ol w})},(\ell_E)_{E\in E(G_{\ol w})}\big)
\longmapsto \Big(\big(\textstyle\sum_{\phi(v')=v}\chi_{v'}(p_{v'})\big)_v,
(\ell_{\phi^{-1}(E)})_E\Big).
\]
}
As the top arrow is surjective up to saturation, there exists $k\ge0$ such that
$kq\in Q_{\tau_{\ol w}}$ lifts to an element $(p_v,\ell_E)$ in the left upper
corner. Since $\chi_{\tau\tau_{\ol w}}(q)\neq 0$, the image of this lift in the
lower left corner is non-zero. We conclude that there exists (1)~$v\in
V(G_{\ol w})$ with $\chi_v(p_v)\neq0$ or (2)~$E\in E(G_{\ol w})\setminus
E_\phi$ with $\ell_E\neq0$. In the first case $kq$ lies in the ideal generated
by $\varphi_v(P_v\setminus\chi_v^{-1}(0))$, part of Definition~\ref{Def:
Itau},(i), while in the second case $kq$ lies in the ideal generated by the
nodal generator $q_E$ from Definition~\ref{Def: Itau},(ii).
\end{proof}


\section{The perfect obstruction theory}
\label{sec:obstruction}

Throughout this section, we fix a log smooth morphism
$X \to B$ {of fs logarithmic schemes
fulfilling the assumptions stated at the beginning of \S\ref{sec:stack}} and
$n\in\NN$. Crucial for the following discussion is the factorization of $X\to B$
over the relative Artin fan $\cX\arr B$.

Denote by $\scrM_n(X/B)$ {(resp.\ $\fM_n(\cX/B)$) the stack of marked or weakly
marked punctured maps to $X\to B$ (resp.\ $\cX\to B$),} with $n$ the
number of punctured or nodal sections, {fixing and} suppressing all other decorations
{in the notation}. In
\S\S\ref{ss:Obstruction theories for pairs} and~\ref{ss: Obstruction theories
with point conditions}, we construct two perfect relative obstruction theories,
in the sense of \cite[Def.\,4.4]{Behrend-Fantechi}, one for
$\scrM_n(X/B)\rightarrow \fM_n(\cX/B)$ and one for a related morphism
$\scrM_n(X/B)\rightarrow \fM^{\ev}_n(\cX/B)$; the latter space incorporates data
of maps to $X$ at a set of special points on the domain curve, see
\eqref{eq:fMev def}. Working over $\fM_n^\ev(\cX/B)$ is crucial for
understanding gluing at a virtual level in \S\ref{ss:gluing at virtual level}.

We will avail ourselves {of} the dualizing complex of various Gorenstein
morphisms $\pi$. To avoid adjusting for shifts of dimension in the formulas, we
denote by $\omega_\pi$ the relative dualizing complex, usually denoted
$\omega_\pi^\bullet$, of a relatively Gorenstein morphism $\pi$, that is, the
complex with the invertible relative dualizing sheaf {defined in
\cite[Ex.\,III.9.7]{Hartshorne} (see also \cite[p.157]{Conrad})} shifted to the
left by the relative dimension.

\subsection{Obstruction theories for logarithmic maps from pairs}
\label{ss:Obstruction theories for pairs}

All cases of interest fit into the following general setup. {For this
subsection we {do not enforce} the assumptions on $B$ from the Conventions,
\S\ref{Sec:convention}.}

\subsubsection{Source family} Let $S$ be a log
stack over $B$ and assume we are given a proper and representable morphism of
fine log stacks
\[
Y\larr S,
\]
with underlying map of ordinary stacks $\ul Y\to \ul S$ flat and relatively
Gorenstein. The fibers of this morphism serve as domains for a space of
logarithmic maps. 

In the application, $Y$ is either the universal curve over $S=\fM_n(\cX/B)$
{or $\fM^\ev_n(\cX/B)$}, or a union of sections of the universal curve
with induced log structure. 

\subsubsection{Target family} As a target, we take a composition of morphisms
of fine log stacks
\[
V\larr W\larr B,
\]
with $V\arr W$ log smooth. In applications this will be the sequence\footnote{In
this case, $V\to W$ is strict and we could indeed work with ordinary cotangent
complexes throughout, but for possible other applications we do not make this
assumption.} $X \to \cX \to B$. We assume further given a $B$-morphism $Y\to W$
defining a commutative square
\[
\xymatrixcolsep{3pc}
\xymatrix{
Y\ar[r]\ar[d]&W\ar[d]\\
S\ar[r] &B;}
\]
In our applications this is the universal family of maps to the Artin fan,
either prestable maps of curves, or the corresponding maps of the union of
sections, as the case may be.

\subsubsection{Moduli of lifted maps}
Let $M$ be an open algebraic substack of the following algebraic stack
over $S$. An object over an affine $S$-scheme $T$, considered as a log
scheme by pulling back the log structure from S, consists of a commutative
diagram
\begin{equation}
\label{Eqn: objects of M}
\vcenter{\xymatrixcolsep{3pc}
\xymatrix{
Y_T\ar[rd]\ar[rr]\ar[d]&&V\ar[d]\\
T\ar[rd]&Y\ar[r]\ar[d]&W\ar[d]\\
&S\ar[r]&B}}
\end{equation} 
where the square formed by $Y_T,T,S$ and $Y$ is cartesian. Thus we are
interested in lifting the map $Y\to W$ to $V$ fiberwise relative to
$S$. We endow $M$ with the log structure making the morphism $M\to S$ strict.
The pull-back of $Y$ to $M$ defines the universal domain $\pi:Y_M\to M$. We have
the following $2$-commutative diagram of stacks
\begin{equation}
\label{Eqn: universal object of M}
\vcenter{\xymatrixcolsep{3pc}
\xymatrix{
Y_M\ar[rd]\ar[rr]^f\ar[d]_\pi&&V\ar[d]\\
M\ar[rd]&Y\ar[r]\ar[d]&W\ar[d]\\
&S\ar[r]&B}}
\end{equation}

In the main application, with $Y \to S$ the family of prestable curves, $M$ is
an open substack {of} the stack of punctured maps of interest;
{thus} our deformation theory fixes both the domain of the punctured map
to $X$ and the map to the relative Artin fan $\cX$. In the secondary application,
with $Y \to S$ the family of {sections} with logarithmic structures, the
stack $M$ parametrizes {liftings} of the sections from $\cX$ to $X$.

\subsubsection{An obstruction theory}
Functoriality of log cotangent complexes \cite[1.1(iv)]{LogCot} yields
the morphism
\begin{equation}
\label{Eqn: functorial morphism of LL}
f^*\Omega_{V/W}=Lf^*\LL_{V/W}\larr \LL_{Y_M/Y} =\pi^* \LL_{M/S}. 
\end{equation}
The equality on the left holds by \cite[1.1\,(iii)]{LogCot} since $V\to W$ is
log smooth, while the equality on the right follows since $\LL_{M/S}=\LL_{\ul
M/\ul S}$ and $\LL_{Y_M/Y}=\LL_{\ul Y_M/{\ul Y}}$ by strictness of $M\to S$
\cite[1.1(ii)]{LogCot} and then using compatibility of the ordinary cotangent
complexes with flat pull-back by $\ul \pi$.

Since $\ul Y\larr \ul S$ is relatively Gorenstein by assumption, so is $\ul
Y_M\larr\ul M$ and we have a natural isomorphism of exact functors $\pi^!=
\pi^*\otimes\omega_\pi$. Thus \eqref{Eqn: functorial morphism of LL} is
equivalent to a morphism $f^*\Omega_{V/W}\otimes \omega_\pi\to \pi^!\LL_{M/S}$,
which by adjunction is equivalent to a morphism
\begin{equation}
\label{Eqn: obstruction theory Phi}
\Phi:\EE \larr \LL_{M/S}
\end{equation}
with $\EE= R\pi_*(f^*\Omega_{V/W}\otimes\omega_\pi)$. 

\subsubsection{Functoriality} 
We will show in Proposition~\ref{Prop: Obstruction theory for morphisms} that
$\Phi$ is a perfect obstruction theory for $M$ over $S$. A most transparent
proof that $\Phi$ is a perfect obstruction theory for $M$ over $S$ relies on the
fact that the construction of $\Phi$ is functorial. For lack of reference we
provide a proof for this well-known property in the following lemma. If $T\to M$
is any map, denote by
\[
\Phi_T: \EE_T\to \LL_{T/S}
\]
the morphism in \eqref{Eqn: obstruction theory Phi} constructed from \eqref{Eqn:
objects of M} instead of \eqref{Eqn: universal object of M}.

\begin{lemma}
\label{Lem: Functoriality of Phi}
The construction of $\Phi$ in \eqref{Eqn: obstruction theory Phi} is functorial
in the following sense: Let $\ul T\to \ul M$ be a morphism of stacks. Denoting
$T\to M$ the associated strict morphism of log stacks, we obtain the commutative
diagram
\[
\xymatrixcolsep{3pc}
\xymatrix{
Y_T\ar@/^2pc/[rrr]^{f_T}\ar[r]_{\tilde h}\ar[d]_{\pi_T}&
Y_M\ar[rd]\ar[rr]^f\ar[d]_\pi&&V\ar[d]\\
T\ar[r]^h&M\ar[rd]&Y\ar[r]\ar[d]&W\ar[d]\\
&&S\ar[r]&B}
\]
with the two squares of domains {(i.e., the left-most square
and the parallelogram)} cartesian. Then we have a commutative square
\[
\xymatrixcolsep{3pc}
\xymatrix{
Lh^*\EE\ar[r]^{Lh^*\Phi}\ar[d]_\beta&Lh^*\LL_{M/S}\ar[d]\\
\EE_T\ar[r]^{\Phi_T}&\LL_{T/S},
}
\]
with left-hand vertical arrow a natural isomorphism and the right-hand vertical
arrow defined by functoriality of cotangent complexes.
\end{lemma}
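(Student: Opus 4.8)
The plan is to unwind the construction of $\Phi$ step by step and observe that each step is compatible with pulling back along $h$. The key point is that, starting from the base-change square, all the functorial morphisms, adjunctions, and dualizing-sheaf isomorphisms used to build $\Phi$ commute with the appropriate pull-back functors.

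First I would note that the morphism $\tilde h: Y_T\to Y_M$ is strict (being a base change of a strict morphism along an arbitrary morphism), and $\ul Y_T = \ul T\times_{\ul M}\ul Y_M$ with $\pi_T$ the base change of $\pi$. Hence there is a base-change isomorphism for the dualizing complexes, $\tilde h^*\omega_\pi \simeq \omega_{\pi_T}$, since $\ul\pi$ is flat and relatively Gorenstein (so the relative dualizing complex is an invertible sheaf placed in a fixed degree, and such sheaves are stable under base change). Similarly, since the outer square of domains in the displayed diagram is cartesian and $\ul\pi$ is flat and proper (so cohomology and base change applies in derived form), there is a natural isomorphism $Lh^*R\pi_* \simeq R\pi_{T*}L\tilde h^*$ of functors. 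Combining these with $f_T = f\circ\tilde h$, we get $Lh^*\EE = Lh^* R\pi_*(f^*\Omega_{V/W}\otimes\omega_\pi)\simeq R\pi_{T*}\big(f_T^*\Omega_{V/W}\otimes\omega_{\pi_T}\big)=\EE_T$; this is the natural isomorphism $\beta$.

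Next I would verify the commuting square. The morphism $\Phi$ is, by adjunction, the morphism $f^*\Omega_{V/W}\otimes\omega_\pi \to \pi^!\LL_{M/S}$ arising from \eqref{Eqn: functorial morphism of LL}, which is itself the functoriality morphism $Lf^*\LL_{V/W}\to \LL_{Y_M/Y}$ composed with the identification $\LL_{Y_M/Y}=\pi^*\LL_{M/S}$. The morphism $\Phi_T$ is the analogous construction for $T$. So the required compatibility reduces to three elementary functorialities: (a) functoriality of log cotangent complexes \cite[1.1(iv)]{LogCot} applied to the two nested base-change squares, giving the commuting square relating $Lf^*\LL_{V/W}\to\LL_{Y_M/Y}$ and $Lf_T^*\LL_{V/W}\to\LL_{Y_T/Y}$ after applying $L\tilde h^*$; (b) the compatibility $L\tilde h^*\pi^*\LL_{M/S}=\pi_T^*Lh^*\LL_{M/S}$ and its further map to $\pi_T^*\LL_{T/S}$ via the functoriality morphism $Lh^*\LL_{M/S}\to\LL_{T/S}$, which again uses \cite[1.1(ii),(iv)]{LogCot} plus strictness; and (c) the naturality of the adjunction $(\tilde h^*,\tilde h_*)$ and of $R\pi_*\dashv \pi^!$ under the base-change isomorphisms from the previous paragraph. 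Chasing these, the square in the statement commutes.

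The main obstacle I anticipate is purely bookkeeping: making sure the two-step nature of the base change (first $Y_M\to Y$, then the outer square to $Y_T$) is handled consistently, and that the identification $\LL_{Y_M/Y}=\pi^*\LL_{M/S}$ (via strictness of $M\to S$ and flat pull-back along $\ul\pi$) is pulled back correctly — in particular that $L\tilde h^*\pi^*\LL_{M/S}$ is canonically $\pi_T^*Lh^*\LL_{M/S}$, which holds since both are computed from the cartesian square of underlying stacks. There is no deep input here beyond \cite[1.1]{LogCot}, flat base change for $R\pi_*$, and base change for the relative dualizing sheaf of a flat Gorenstein morphism; the work is in arranging the diagram so that each small square is one of these known compatibilities. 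I would write it up by introducing names for the two or three intermediate objects, drawing the enlarged diagram once, and citing the relevant functoriality for each face.
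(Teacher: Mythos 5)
Your proposal is correct and follows essentially the same route as the paper: identify $\beta$ via derived base change for $R\pi_*$ (the paper cites \cite[Lem.~0A1K]{stacks-project}, using that $f^*\Omega_{V/W}\otimes\omega_\pi$ is $\pi$-flat — your appeal to flatness and properness of $\ul\pi$ alone is slightly loose for a non-flat $h$, but harmless here), then chase naturality of the functoriality maps of cotangent complexes and of the adjunction counit. The only presentational difference is that the paper makes your step (c) explicit by decomposing $R\pi_*\pi^!\to 1$ into the projection formula followed by the trace morphism and citing their compatibility with base change.
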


\begin{proof}
Naturality of the base change map \cite[Rem.\,07A7]{stacks-project} applied to
$f^*\Omega_{V/W}\otimes \omega_\pi\to \LL_{Y_M/Y}\otimes\omega_\pi$ together
with $f\circ\tilde h=f_T$ and $\tilde h^*\omega_\pi=\omega_{\pi_T}$
{\cite[Thm.\,3.6.1]{Conrad}}, leads to the commutative square
\begin{equation}
\label{Eqn: CartDiagFunctCotang}
\begin{CD}
{\makebox[\width][r]{$Lh^*\EE=$}}Lh^* R\pi_*(f^*\Omega_{V/W}\otimes\omega_\pi)@>>>Lh^* R\pi_*(\LL_{Y_M/Y}\otimes\omega_\pi)\\
@V{\beta}VV@VV{b}V\\
{\makebox[\width][r]{$\EE_T=$}}
R{\pi_T}_*(f_T^*\Omega_{V/W}\otimes\omega_{\pi_T})@>>>
R{\pi_T}_*(L\tilde h^*\LL_{Y_M/Y}\otimes\omega_{\pi_T}).
\end{CD}
\end{equation}
Now $\LL_{Y_M/Y}\simeq \pi^*\LL_{M/S}$, as remarked after \eqref{Eqn: functorial
morphism of LL}, and hence the adjunction counit $R\pi_*\pi^!\to {\id}$
applied in the construction of $\Phi$ in \eqref{Eqn: obstruction theory Phi} is
given by the projection formula followed by the trace morphism,
\[
R\pi_*(\pi^*\LL_{M/S}\otimes\omega_\pi)\stackrel{\simeq}{\larr}
\LL_{M/S}\otimes R\pi_*(\omega_\pi)
\stackrel{\operatorname{Tr}_{\omega_\pi}}{\larr} \LL_{M/S}.
\]
Thus the upper horizontal {map} of \eqref{Eqn: CartDiagFunctCotang} composed
with $Lh^*$ of this adjunction counit isomorphism yields $Lh^*\Phi$.

Similarly, extending the lower horizontal arrow by the map induced by
functoriality of cotangent complexes,
\[
L\tilde h^*\LL_{Y_M/Y}\larr \LL_{Y_T/Y}=\pi_T^*\LL_{T/S},
\]
composed with the adjunction counit morphism
$R{\pi_T}_*(\pi_T^*\LL_{T/S}\otimes\omega_{\pi_T}) \rightarrow \LL_{T/S}$ for
$\pi_T$ retrieves the definition of $\Phi_T$.

Moreover, by compatibility of both the projection formula
\cite[Lem.\,0B6B]{stacks-project} and the trace morphism
\cite[Lem.\,0E6C]{stacks-project} with base change, the following diagram
continuing \eqref{Eqn: CartDiagFunctCotang} on the right is commutative:
\[
\xymatrix{
Lh^* R\pi_*(\pi^*\LL_{M/S}\otimes\omega_\pi)\ar[r]^{\simeq}\ar[d]_{b} &
Lh^*\LL_{M/S}\otimes Lh^* R\pi_*\omega_\pi\ar[d]\ar[rd]^{\tr}\\
R{\pi_T}_*(\pi_T^* Lh^*\LL_{M/S}\otimes\omega_{\pi_T})\ar[r]^{\simeq}\ar[d] &
Lh^*\LL_{M/S}\otimes R{\pi_T}_*\omega_{\pi_T}\ar[r]\ar[d] &
Lh^*\LL_{M/S}\ar[d]\\
R{\pi_T}_*(\pi_T^*\LL_{T/S}\otimes\omega_{\pi_T}) \ar[r]^{\simeq} &
\LL_{T/S}\otimes R{\pi_T}_*\omega_{\pi_T}\ar[r] &
\LL_{T/S}.
}
\]
The three left horizontal isomorphisms are defined by projection formulas, the
diagonal and the two horizontal morphisms on the right induced by trace
homomorphisms, the two upper vertical arrows defined by base change, and the
three lower vertical arrows defined by functoriality of cotangent complexes.
For the identification of {the upper left vertical arrow} with the right
vertical arrow {labelled $b$} in \eqref{Eqn: CartDiagFunctCotang} note
that
\[
L\tilde h^*\LL_{Y_M/Y}\simeq L\tilde h^* \pi^* \LL_{M/S}
\simeq \pi_T^* Lh^* \LL_{M/S}.
\]
This establishes the claimed commutative diagram.

It remains to show that $\beta$ is a natural isomorphism. This follows from the
general base change statement \cite[Lem.\,0A1K]{stacks-project} applied to
$\pi:Y_M\to M$, with $f^*\Omega_{V/W}$ for the object in $D_{\rm
QCoh}(\cO_{Y_M})$ and with $\omega_\pi$ as complex of $\pi$-flat quasi-coherent
sheaves. 
\end{proof}

\begin{proposition}[$\Phi$ is a perfect obstruction theory]
\label{Prop: Obstruction theory for morphisms}
The morphism $\Phi:\EE\to
\LL_{M/S}$ constructed in \eqref{Eqn: obstruction theory Phi} is an obstruction
theory for $M\to S$ in the sense of \cite[Def.\,4.4]{Behrend-Fantechi}.
\end{proposition}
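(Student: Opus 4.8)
The plan is to reduce the statement to the standard criterion for an obstruction theory, namely that $\Phi$ induces an isomorphism on $h^0$ and a surjection on $h^{-1}$ (\cite[Thm.~4.5]{Behrend-Fantechi}), and to verify this criterion pointwise on square-zero extensions using the functoriality established in Lemma~\ref{Lem: Functoriality of Phi}. First I would recall that by Lemma~\ref{Lem: Functoriality of Phi} the formation of $\Phi$ commutes with base change along strict morphisms $T\to M$, and that $\EE=R\pi_*(f^*\Omega_{V/W}\otimes\omega_\pi)$ is perfect of amplitude $[-1,0]$ because $\pi$ is a proper, flat, relatively Gorenstein curve-like morphism (fibers of dimension $\le 1$) and $f^*\Omega_{V/W}$ is locally free; hence only $R^0\pi_*$ and $R^1\pi_*$ contribute, and Serre duality on the fibers identifies the complex with $\big(R\pi_*(f^*\Theta_{V/W})\big)^\vee$ when the fibers are Gorenstein curves. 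So the content is entirely in the deformation-theoretic check.

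Next I would run the infinitesimal criterion. Fix an affine $S$-scheme $T$ with a point $t:T\to M$ and a square-zero extension $T\hookrightarrow \bar T$ over $S$ with ideal sheaf $J$; one must show that the obstruction class $\omega(t)\in \ext^1(Lt^*\EE, J)$, obtained by pairing $\Phi$ with the canonical class in $\ext^1(\LL_{T/S},J)$, vanishes if and only if $t$ extends to $\bar T$, and that when it vanishes the extensions form a torsor under $\Hom(Lt^*\EE,J)$. By the base-change compatibility of $\Phi$, this reduces to analyzing the lifting problem for the diagram \eqref{Eqn: objects of M}: extending $t$ over $\bar T$ amounts to extending the map $f_T:Y_T\to V$ over $Y_{\bar T}\to W$ (the extension of $Y_T$ and of $Y_{\bar T}\to W$ being automatic and unique since $Y\to S$ is flat and $Y\to W$ is already given on $Y$). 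Since $V\to W$ is log smooth, the log cotangent complex $\LL_{V/W}=\Omega_{V/W}$ governs this lifting problem: the obstruction lives in $\H^1\big(Y_T, f_T^*\Omega_{V/W}^\vee\otimes \pi_T^*J\big)=\ext^1_{Y_T}(f_T^*\Omega_{V/W}, \pi_T^*J)$ and, when it vanishes, the lifts form a torsor under $\H^0\big(Y_T, f_T^*\Omega_{V/W}^\vee\otimes\pi_T^*J\big)$. Applying $R\pi_{T*}$ and adjunction $(\pi_T^*,\pi_{T*})$ together with relative duality $\pi_T^!=\pi_T^*\otimes\omega_{\pi_T}$ identifies these groups with $\ext^1(Lt^*\EE,J)$ and $\Hom(Lt^*\EE,J)$ respectively, and a diagram chase — which is exactly the content encoded in the two commuting squares of Lemma~\ref{Lem: Functoriality of Phi} — shows that the obstruction class produced this way coincides with $\omega(t)$ paired through $\Phi$. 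This is the standard argument identifying the ``geometric'' obstruction space with the one predicted by $\Phi$.

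The one point requiring genuine care — and the main obstacle — is the passage from the log cotangent complex to the ordinary one on the source side: the identification $\LL_{Y_M/Y}\simeq\pi^*\LL_{M/S}\simeq\pi^*\LL_{\ul M/\ul S}$ uses strictness of $M\to S$ and of $Y_M\to Y$, so the deformations being counted by $\LL_{M/S}$ are genuinely deformations of the schematic data $t:\ul T\to \ul M$, with the log structure pulled back rigidly from $S$. Thus one must be careful that the log smoothness of $V\to W$ is used only on the target side, where $f_T^*\Omega_{V/W}$ is the correct coherent sheaf governing lifts of $f_T$ as a morphism of \emph{log} stacks over $W$, while on the base $M$ one works with ordinary cotangent complexes throughout. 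Making this bookkeeping precise — keeping track of which arrows are strict, which square-zero extensions are taken in the log versus the underlying category, and verifying that the pullback $\tilde h^*\omega_\pi=\omega_{\pi_T}$ and the projection-formula/trace compatibilities of Lemma~\ref{Lem: Functoriality of Phi} indeed assemble into the claimed identification of obstruction classes — is the delicate part; the rest is the now-routine argument that a morphism of cotangent-complex type inducing the correct infinitesimal lifting behaviour is an obstruction theory in the sense of \cite[Def.~4.4]{Behrend-Fantechi}. Finally, perfection of amplitude $[-1,0]$ follows from the structure of $\EE$ noted above, completing the verification that $\Phi$ is a \emph{perfect} relative obstruction theory.
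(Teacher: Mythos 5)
Your proposal is correct and follows essentially the same route as the paper: both verify the Behrend--Fantechi infinitesimal criterion by using the functoriality of $\Phi$ under base change $T\to M$ to identify $\Phi^*\omega(h)$, via adjunction and relative duality for $\pi_T$, with the geometric obstruction to extending $f_T:Y_T\to V$ over $Y_{\bar T}$ relative to $W$, which exists exactly when $T\to M$ extends. (Your closing remarks on perfection of $\EE$ are not needed for the statement and implicitly assume one-dimensional fibers, which the general setup of this section does not; the paper defers perfection to the specific curve case.)
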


\begin{proof}
We check the obstruction-theoretic criterion
\cite[Thm.\,4.5.3]{Behrend-Fantechi}, applied in the setting relative to $S$,
similarly to ordinary logarithmic maps carried out in
\cite[Prop.\,5.1]{LogGW}.

Assume given a morphism $h:T\to M$, a square-zero extension $T\to \ol T$ with
ideal sheaf $\cJ$ and a morphism $\ol T\to S$, with log structures turning all
three morphisms strict. This situation leads to the following commutative
diagram:
\[
\xymatrix@-1.2pc{
&Y_T\ar@/^2pc/[rrrr]^{f_T}\ar[rr]^(.6){\tilde h}\ar[ld]\ar[dd]^(.3){\pi_T}&&
Y_M\ar[rr]_f\ar[ld]\ar[dd]^(.3)\pi&&V\ar[ld]\\
Y_{\ol T}\ar[rr]\ar[dd]&&Y\ar[dd]\ar[rr]&&W\ar[dd]\\
&T\ar[rr]^(.7)h\ar[ld]&& M\ar[ld]\\
\ol T\ar[rr]&&S\ar[rr]&&B.}
\]
All sides of the cube on the left are cartesian, but not in general the bottom
and top faces.

The obstruction class $\omega(h)\in \ext^1(Lh^*\LL_{\ul
M/\ul S},\cJ)$ for extending $h$ to {an $S$-morphism} $\ol T\to M$ is the composition
\[
Lh^*\LL_{\ul M/\ul S}\larr \LL_{\ul T/\ol{\ul T}}\larr
\tau_{\ge -1} \LL_{\ul T/\ul{\ol T}}=\cJ[1]
\]
with the first arrow defined by functoriality of cotangent complexes, see
\cite[Prop.\,2.2.4]{Illusie} with $X_0=T$, $X=\ol T$, $Y_0=Y=M$ and $Z_0=Z=S$.
Because $T \to \ol T$ and $M \to S$ are strict we can replace the ordinary
cotangent complex with the log cotangent complex in this construction
\cite[1.1(ii)]{LogCot}. 

Now $\Phi^*\omega(h)$ is the composition of this morphism with $Lh^*\Phi:
Lh^*\EE\to Lh^* \LL_{M/S}$. By functoriality of our obstruction theory
(Lemma~\ref{Lem: Functoriality of Phi}), this composition also has the
factorization
\[
\EE_T= R{\pi_T}_* (f_T^*\Omega_{V/W}\otimes\omega_{\pi_T})
\stackrel{\Phi_T}{\larr} \LL_{T/S}\larr \tau_{\ge -1}\LL_{T/\ol T}=\cJ[1],
\]
which by adjunction is equivalent to the composition
\[
f_T^*\Omega_{V/W}\otimes\omega_{\pi_T}\larr
\LL_{Y_T/Y}\otimes\omega_{\pi_T}\larr \tau_{\ge -1}\pi_T^!\LL_{T/\ol T}
=\pi_T^*\cJ[1]\otimes{\omega_{\pi_T}.}
\]
Up to tensoring with $\omega_{\pi_T}$ this is the obstruction class for
extending $f_T: Y_T\to V$ to $Y_{\ol T}$, as a morphism over $W$. By our
assumption on the objects of $M$, this extension exists if and only if $T\to M$
extends to $\ol T$. This shows the part of the criterion concerning the
obstruction.

A similar argument shows that once $\omega(h)=0$, the space of extensions form a
torsor under $\ext^0(Lh^*\LL_{\ul M/\ul S},\cJ)$, showing the second part of the
criterion.
\end{proof}
\medskip

\subsubsection{The dualizing complex of the embedding of markings}
After this recapitulation of obstruction theories for logarithmic maps with
proper and relatively Gorenstein domains, we are now in position to bring in
point conditions. Abstractly we consider a composition of proper, representable
morphisms of fine log stacks
\begin{equation}\label{Eqn: Morphism of domains}
Z\stackrel{\iota}{\larr} Y\larr S,
\end{equation}
with maps of {algebraic stacks} underlying $Z\to S$ and $Y\to S$ flat
and relatively Gorenstein as before. Note that while $\iota$ may not be flat and
hence cannot be considered relatively Gorenstein following the usual convention,
one can still define a relative dualizing sheaf
\begin{equation}
\label{Eqn: dualizing sheaf}
\omega_\iota= \omega_{Z/S}\otimes\iota^*\omega_{Y/S}^\vee
\end{equation}
fulfilling relative duality, hence defining a right-adjoint functor $\iota^!$ to
$R\iota_*$. This works as in the case of smooth morphisms discussed e.g.~in
\cite[\S3.4]{HuybrechtsFM}.

\subsubsection{Obstruction for markings}
\label{sss: obstructions for markings}
We now have another algebraic stack $N$, an open substack of the stack over $S$
with objects given by diagrams as in \eqref{Eqn: objects of M} with $Y$ replaced
by $Z$. We assume the open substack $N$ is chosen large enough so that
composition with $\iota: Z\to Y$ defines a morphism of stacks
\begin{equation}\label{Fig: Restriction square}
\varepsilon: M\larr N.
\end{equation}
As in \eqref{Eqn: obstruction theory Phi} we now obtain two obstruction
theories, one for $M\to S$, the other for $N\to S$,
\begin{equation}\label{Eqn: Two obstruction theories}
\Phi: \EE \larr \LL_{M/S},\qquad
\Psi: \FF\larr \LL_{N/S}.
\end{equation}
In our application, $Y\to S$ is some universal curve and $Z\to Y$ a strict
closed embedding with morphism to $S$ scheme-theoretically \'etale. In this
case, $\Psi$ is simply the obstruction theory for a number of points in $V/W$,
i.e., a trivial obstruction theory in the sense that there are no
obstructions. In particular, \'etale locally $\FF$ can be taken as the direct
sum of the pull-back of $\Omega_{V/W}$ by scheme-theoretic maps from $\ul N$ to
$\ul V$.

\begin{proposition}[Compatibility of obstruction theories]
\label{Prop: Compatibility of obstruction theories}
The two obstruction theories $\Phi$ and $\Psi$ in \eqref{Eqn: Two obstruction
theories} fit into a commutative square
\[
\begin{CD}
L\varepsilon^*\FF@>L\varepsilon^*\Psi>> L\varepsilon^*\LL_{N/S}\\
@VVV@VVV\\
\EE@>\Phi>> \LL_{M/S},
\end{CD}
\]
with the right-hand vertical morphism given by functoriality of the cotangent
complex.
\end{proposition}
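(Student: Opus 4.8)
The plan is to reduce the statement to the functoriality of the construction $\Phi$ already established in Lemma~\ref{Lem: Functoriality of Phi}, applied to the morphism $\varepsilon:M\to N$, combined with a base-change identification of the restriction of $\EE$ to a complex built from $\omega_\iota$. First I would observe that both $\EE$ and $\FF$ are constructed by exactly the same recipe \eqref{Eqn: obstruction theory Phi}, the only difference being that $\EE$ uses the domain $\pi:Y_M\to M$ while $\FF$ uses the domain $\rho:Z_N\to N$; and that $\varepsilon$ is by construction the morphism classifying the composite $Z_M=Z\times_S M\hookrightarrow Y_M\xrightarrow{f} V$. Thus pulling back the universal diagram \eqref{Eqn: universal object of M} for $N$ along $\varepsilon$ produces precisely the universal diagram for $M$ but with $Y_M$ replaced by $Z_M=Z\times_Y Y_M$, and $f_{Z_M}=f\circ\iota_M$ where $\iota_M:Z_M\to Y_M$ is the pull-back of $\iota$.

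The key step is then to run the functoriality argument of Lemma~\ref{Lem: Functoriality of Phi} verbatim with $h=\varepsilon$, $T=M$, and with the roles of "$M$'' and "$T$'' there played by $N$ and $M$ here, \emph{except} that the relevant domains are $Z_N\to N$ and its pull-back $Z_M\to M$ rather than $Y_N\to N$ and $Y_M\to M$. Concretely, naturality of the base-change map \cite[Rem.~07A7]{stacks-project} applied to $f_{Z_N}^*\Omega_{V/W}\otimes\omega_\rho\to \LL_{Z_N/Z}\otimes\omega_\rho$, together with $\iota_M^*\omega_\rho=\omega_{\rho_M}$ and $f_{Z_N}\circ(\text{pull-back})=f_{Z_M}$, gives a commutative square as in \eqref{Eqn: CartDiagFunctCotang}; then the projection formula and trace-morphism compatibilities with base change \cite[Lem.~0B6B, 0E6C]{stacks-project} assemble, as in the big diagram at the end of that proof, into the commutative square
\[
\xymatrix{
L\varepsilon^*\FF\ar[r]^{L\varepsilon^*\Psi}\ar[d]^{\beta}_{\simeq}&L\varepsilon^*\LL_{N/S}\ar[d]\\
\FF_M\ar[r]^{\Psi_M}&\LL_{M/S},
}
\]
where $\FF_M:=R{\rho_M}_*(f_{Z_M}^*\Omega_{V/W}\otimes\omega_{\rho_M})$, $\Psi_M$ is the obstruction-theory morphism for this pulled-back family of marking data, and $\beta$ is the natural base-change isomorphism coming from \cite[Lem.~0A1K]{stacks-project}. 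The right-hand vertical arrow is the functoriality map $L\varepsilon^*\LL_{N/S}\to\LL_{M/S}$, exactly as in the statement.

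It remains to identify the left-hand column $L\varepsilon^*\FF\xrightarrow{\beta}\FF_M\to\EE$ with the composite appearing in the proposition. The morphism $\FF_M\to\EE$ is obtained from the factorization $Z_M\xrightarrow{\iota_M} Y_M\xrightarrow{f} V$: adjunction for $\iota_M$ with dualizing sheaf $\omega_{\iota_M}$ \eqref{Eqn: dualizing sheaf} together with the composite $\pi_M=\pi\circ\iota_M$ (note $\omega_{\rho_M}=\omega_{\iota_M}\otimes\iota_M^*\omega_\pi$) gives $R{\rho_M}_*(f_{Z_M}^*\Omega_{V/W}\otimes\omega_{\rho_M})=R\pi_*R{\iota_M}_*(\iota_M^*(f^*\Omega_{V/W}\otimes\omega_\pi)\otimes\omega_{\iota_M})$, and the counit $R{\iota_M}_*\iota_M^!\to 1$ (i.e. the relative-duality trace for $\iota_M$) maps this to $R\pi_*(f^*\Omega_{V/W}\otimes\omega_\pi)=\EE$; this is the natural restriction-of-markings map. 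Chasing through these canonical isomorphisms — which is the only genuinely mechanical part — yields that the outer square is the one asserted.

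The main obstacle I expect is purely bookkeeping: keeping straight the several base-change squares (the two squares of domains for $\varepsilon$, the square for $\iota_M$, and the pull-back of $Z\to Y$) and verifying that all the coherence isomorphisms — projection formula, trace, and the $\iota^!$ vs.\ $\omega_\iota\otimes\iota^*(-)$ identification — are compatible with one another, so that the composite $L\varepsilon^*\FF\to\EE$ really is the geometrically obvious "restrict to the markings'' map and not some twist of it. No new idea beyond Lemma~\ref{Lem: Functoriality of Phi} and standard Grothendieck-duality base-change compatibilities is needed; the content is that all the functors in sight commute with the base change $\varepsilon$, which is exactly what \cite[Lem.~0A1K, 0B6B, 0E6C]{stacks-project} provide.
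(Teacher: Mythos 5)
Your proposal is correct and follows essentially the same route as the paper: compute $L\varepsilon^*\Psi$ via the functoriality/base-change argument of Lemma~\ref{Lem: Functoriality of Phi} (applied with $N$ in the role of the moduli stack and $\varepsilon:M\to N$ as the test morphism), then use $h=f\circ\iota_M$, the identity $\omega_{p_M}=\iota_M^*\omega_\pi\otimes\omega_{\iota_M}$ and the adjunction counit $R{\iota_M}_*\iota_M^!\to\id$ to identify the resulting map with $\Phi$ precomposed with the restriction map $L\varepsilon^*\FF\to\EE$. The only differences are presentational — the paper carries out the final "chase'' explicitly via the commutative diagram obtained by applying the $\iota_M$-counit to \eqref{Eqn: auxiliary morphism} and then $R\pi_*$, and your $\iota_M^*\omega_\rho$ should read the pull-back along $Z_M\to Z_N$ rather than along $\iota_M$ — neither of which affects the substance.
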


\begin{proof}
Consider the following commutative diagram with the left four squares
cartesian.\\[2ex]
\[
\xymatrix{
Z\ar[d]_\iota&
Z_N\ar[d]\ar[l]\ar@/^2pc/[rrr]^g\ar@/_1.5pc/[dd]_(.3)p&
Z_M\ar[l]^{\tilde\varepsilon}\ar[d]^{\iota_M}
\ar[rr]^{h}\ar@/_1.5pc/[dd]_(.3){p_M}
\ar[rrd]|\hole
&&V\ar[d]\\
Y\ar[d]&
Y_N\ar[d]\ar[l]&
Y_M\ar[l]\ar[d]^\pi\ar[rr]\ar[rru]_(.75)f&&
W\ar[d]\\
S&
N\ar[l]&
M\ar[l]^\varepsilon\ar[rr]&&
B
}
\]
\\[1ex]
The left column is the given morphism~\eqref{Eqn: Morphism of domains} of
domains, the lower horizontal row contains the restriction morphism
$\varepsilon$ from~\eqref{Fig: Restriction square} and the morphisms to $B$ and
$S$, while $f: Y_M\to V$ and $g:Z_N\to V$ are the respective universal morphisms
defined on the universal domains $Y_M\to M$ and $Z_N\to N$.

{
The obstruction theory $\Phi$ in~\eqref{Eqn: Two obstruction theories} was
defined by applying $R\pi_*(\,\cdot\,\otimes\omega_\pi)$ to $f^*\Omega_{V/W}\to
\LL_{Y_M/Y}= \pi^*\LL_{M/S}$ followed by the adjunction counit
$R\pi_*\pi^!\arr\id$, using $\pi^!=\pi^*\otimes\omega_\pi$. For $\Psi$ one
analogously takes $Rp_*(\,\cdot\,\otimes\omega_p)$ of $g^*\Omega_{V/W}\to
\LL_{Z_N/Z}= p^*\LL_{N/S}$ followed by $Rp_*p^!\arr\id$. By functoriality of
obstruction theories (Lemma~\ref{Lem: Functoriality of Phi}), the pull-back
$L\varepsilon^*\Psi$ is similarly obtained by $R {p_M}_*(\,\cdot\,\otimes
\omega_{p_M})$ of
\begin{equation}
\label{Eqn: morphism to LL_{N/S}}
h^*\Omega_{V/W} \larr L\tilde\varepsilon^*\LL_{Z_N/Z}
= L\tilde\varepsilon^* p^*\LL_{N/S}
= p_M^*L\varepsilon^* \LL_{N/S},
\end{equation}
followed by $R{p_M}_* p_M^!\to {\id}$.

From $h=f\circ\iota_M= g\circ\tilde\varepsilon$ we can extend \eqref{Eqn:
morphism to LL_{N/S}} to the commutative diagram
\[
\xymatrix{
\tilde\varepsilon^*g^*\Omega_{V/W}\ar[r]\ar@{=}[d]&
L\tilde\varepsilon^*\LL_{Z_N/W}
\ar[d]\ar[r]&L\tilde\varepsilon^*\LL_{Z_N/Z}=
p_M^*L\varepsilon^*\LL_{N/S}\ar[d]\\
h^*\Omega_{V/W}\ar@{=}[d]\ar[r]&\LL_{Z_M/W\ar[r]\ar[d]}&\LL_{Z_M/Z}=
p_M^*\LL_{M/S}\ar[d]^{\simeq}\\
\iota_M^*f^*\Omega_{V/W}\ar[r]&L\iota_M^*\LL_{Y_M/W}\ar[r]&
L\iota_M^*\LL_{Y_M/Y}=p_M^*\LL_{M/S}
}
\]
The last row in this diagram is $L\iota_M^*$ of the morphism
$f^*\Omega_{V/W}\arr \pi^*\LL_{M/S}$ that gives rise to the obstruction theory
$\Phi$ for $M$. The essential part of this diagram is the square
\begin{equation}
\label{Eqn: Square connecting Phi and Psi}
\vcenter{\xymatrix{
h^*\Omega_{V/W}\ar@{=}[d]\ar[r]&p_M^*L\varepsilon^*\LL_{N/S}\ar[d]\\
\iota_M^*f^*\Omega_{V/W}\ar[r]&p_M^*\LL_{M/S}
}}
\end{equation}
Next observe that $\omega_{p_M}=\iota_M^*\omega_\pi\otimes\omega_{\iota_M}$, $h=f\circ\iota_M$, and $\iota_M^!=\iota_M^*\otimes\omega_{\iota_M}$ show that
\[
R{p_M}_*(h^*\Omega_{V/W}\otimes\omega_{p_M})=
R\pi_*R{\iota_M}_*\iota_M^!(f^*\Omega_{V/W}\otimes\omega_\pi).
\]
Thus $R{p_M}_*(\,\cdot\,\otimes\omega_{p_M})$ applied to \eqref{Eqn: Square
connecting Phi and Psi} yields the upper left square of the following
commutative diagram:
\begin{equation}
\label{Eqn: Big square for Phi versus Psi}
\vcenter{\xymatrix{
L\varepsilon^*\FF= R{p_M}_*(h^*\Omega_{V/W}\otimes\omega_{p_M})
\ar@{=}[d]\ar[r]&
R{p_M}_*p_M^!L\varepsilon^*\LL_{N/S}\ar[d]^a\ar[r]&
L\varepsilon^*\LL_{N/S}\ar[d]\\
R\pi_*R{\iota_M}_*\iota_M^!(f^*\Omega_{V/W}\otimes\omega_\pi)
\ar[d]\ar[r]_(.6)b&
R{p_M}_*p_M^!\LL_{M/S}\ar[d]\ar[r]& \LL_{M/S}\ar[d]\\
\EE=R\pi_*(f^*\Omega_{V/W}\otimes\omega_\pi)\ar[r]&
R\pi_*\pi^!\LL_{M/S}\ar[r]& \LL_{M/S}.
}}
\end{equation}
The upper right square is from functoriality of adjunction
$R{p_M}_*p_M^!\arr\id$ applied to the arrow marked $a$, the lower left one
similarly from $R{\iota_M}_*\iota_M^!\arr\id$ applied to the arrow marked $b$. The lower right square is from the natural isomorphism 
of the adjunction counit $R{p_M}_*p_M^!\arr\id$ with the composition
\[
R\pi_* R{\iota_M}_*\iota_M^!\pi^!\arr R\pi_* \pi^!\arr \id,
\]
see \cite[Prop.\,VII.3.4,(b)]{Hartshorne},\cite[Lem.\,3.4.3]{Conrad}.

The outer square of \eqref{Eqn: Big square for Phi versus Psi} provides the
claimed commutative diagram.
}
\end{proof}

\subsection{Obstruction theories for punctured maps with point conditions}
\label{ss: Obstruction theories with point conditions}

We are now in position to define obstruction theories for moduli spaces of
punctured maps with prescribed point conditions. Recall the log smooth morphism
$X\to B$ and its factorization over the relative Artin fan $\cX\to B$ from the
beginning of this section. We want to work relative to a stack $S$ of stable
punctured maps to $\cX/B$. Adopting the notation used elsewhere in the paper, we
now write $\fM$ instead of $S$ for the algebraic stack of domains together with
the tuple of points at which to impose point conditions. For example, $\fM$
could be $\fM(\cX/B,\tau)$ from Definition~\ref{Def: stacks of decorated puncted
maps}. Then $Y\to S=\fM$ is the universal curve, $Z\to Y$ the strict closed
embedding of a union of sections, one for each point condition to be imposed,
assumed ordered, and we have a universal diagram
\[
\begin{CD}
Y@>>> \cX\\
@VVV@VVV\\
\fM@>>> B.
\end{CD}
\]
As our target we now take the composition
\[
X\larr \cX\larr B.
\]
Note that $\cX\to B$ is log \'etale and $X\larr\cX$ is strict and log smooth.
Hence $\ul X\to\ul \cX$ is smooth as a morphism of stacks and we have a
sequence of canonical isomorphisms
\[
\LL_{X/B}=\Omega_{X/B}=\Omega_{X/\cX}=\Omega_{\ul X/\ul\cX}= \LL_{\ul X/\ul\cX}.
\]
For easier reference later on we also write $\scrM$ instead of $M$ for the
algebraic stack of punctured maps to $X$ to be considered.

For the moduli space $N$ of point conditions we take the space of factorizations
of the composition $Z\to Y\to\cX$ via $X\to\cX$. Note that since
$X\to\cX$ is strict, it is enough to provide the lift for $\ul X\to \ul\cX$,
that is, ignoring the log structure. Thinking of these factorizations as
providing evaluation maps $\fM\to \ul X$ at the marked points given by
the sections $Z$ of $Y\to S$, we denote the stack of such factorizations by
$\fM^\ev$. This stack is algebraic by the fiber product description
\begin{equation}
\label{eq:fMev def}
\fM^\ev= \fM\times_{\ul\cX\times_{\ul B}\ldots\times_{\ul B}\ul\cX}
(\ul X\times_{\ul B}\ldots\times_{\ul B} \ul X).
\end{equation}
Here the map $\fM\to \ul\cX\times_{\ul B}\ldots\times_{\ul B}\ul\cX$ is defined
by composing the sections $\fM\to\ul\fM\to \ul Z$ with the composition $\ul Z\to
\ul Y\to \ul\cX$ in the given order of the sections.

With this notation, the composition $M\to N\to S$ considered in the
proof of Proposition~\ref{Prop: Compatibility of obstruction theories} reads
\begin{equation}
\label{Eqn: scrM->fM^ev->fM}
\scrM\stackrel{\varepsilon}{\larr} \fM^\ev\larr\fM. 
\end{equation}
In \S\ref{ss:Obstruction theories for pairs} we recalled
the construction of obstruction theories for $\scrM/\fM$ and for $\fM^\ev/\fM$,
which in the situation at hand are perfect of amplitude contained in $[-1,0]$,
and showed their compatibility (Proposition~\ref{Prop: Compatibility of
obstruction theories}). As in \cite[Constr.\,3.13]{Mano}, this situation
provides perfect obstruction theories for $\scrM/\fM^\ev$ by completing the
compatibility diagram in Proposition~\ref{Prop: Compatibility of obstruction
theories} to a morphism of distinguished triangles:\\[1ex]
\begin{equation}
\label{Eqn: Cone construction obstruction theory}
\vcenter{\xymatrix{
L\varepsilon^*\FF \ar[r]\ar[d]& \EE \ar[r]\ar[d]& \GG \ar[r]\ar@{-->}[d]&
L\varepsilon^*\FF[1]\ar[d] \\
L\varepsilon^*\LL_{\fM^\ev/\fM} \ar[r] &\LL_{\scrM/\fM} \ar[r]&
\LL_{\scrM/\fM^\ev} \ar[r]& L\varepsilon^*\LL_{\fM^\ev/\fM}[1]}}
\end{equation}

{
\begin{remark}
\label{Rem: non-uniqueness of obstruction theories}
Note that while the isomorphism class of $\GG$ is unique, the dashed arrow is
not, so this recipe potentially provides several different obstruction theories
for $\scrM/\fM^\ev$. On the other hand, any two dashed arrows differ by an
element of the image of
\[
\Hom(\GG,\LL_{\scrM/\fM})\arr \Hom(\GG,\LL_{\scrM/\fM^\ev}).
\]
Thus the space of obstruction theories $\GG\arr \LL_{\scrM/\fM^\ev}$ constructed
as dashed arrow in \eqref{Eqn: Cone construction obstruction theory} is
parametrized by an affine space. This shows that the virtual classes constructed
from any two such obstruction theories agree.\footnote{We learnt this argument
from Tom Graber.}
\end{remark}}

For the sake of being explicit and for later use we now work out $\GG$. For
simplicity of notation write ${\pi}:C\to \scrM$ for the pull-back
$Y_\scrM$ of the universal curve $Y\to\fM$ to $\scrM$, and, in disagreement with
our usual conventions, write $\iota:Z\to C$ for the strict closed substack of
special points rather than $Z_\scrM$. We assume that $Z=Z'\amalg Z''$ with $Z'$
disjoint from the critical locus of $\ul C\to \ul\scrM$ and $Z''$ the images of
a set of nodal sections, as reviewed in Definition~\ref{Def: nodal sections}
below. Denote by $\kappa:\tilde C\to C$ the partial normalization of $\tilde C$
along the nodal sections exhibiting $\ul C$ as the fibered sum
\[
\ul C= \ul Z''\amalg_{\tilde{\ul Z}''}\tilde{\ul C}
\]
with $\tilde Z''=\kappa^{-1}(Z'')\arr Z''$ the two-fold unbranched cover induced
by $\kappa$. Write $\tilde\pi= \pi\circ\kappa: \tilde C\to \scrM$, $\tilde
f=f\circ\kappa: \tilde C\to X$ and $\tilde Z= \kappa^{-1}(Z)$, with the log
structures making $\tilde C\arr C$ and $\tilde Z\arr \tilde C$
strict.\footnote{The log structures on $\tilde C$ and $\tilde Z$ are irrelevant
for the following discussion and are merely chosen for the sake of uncluttering
the notation.}

For simplicity of the following statement \emph{we now assume the two-fold
covering $\tilde Z''\arr Z''$ is trivial}, that is, that there is an isomorphism
\[
\tilde Z''\simeq Z''\amalg Z''
\]
over $Z''$. This is sufficient for all aplications we
can currently think of. The general case can be treated by going over to an
orientation covering or by twisting with an orientation sheaf.

\begin{proposition}
\label{Prop: Point condition virtual bundle}
For the tangent-obstruction bundle in \eqref{Eqn: Cone construction
obstruction theory} it holds
\[
\GG\simeq R\pi_*\big(f^*\Omega_{X/B}\otimes
\kappa_*(\omega_{\tilde \pi}(\tilde Z))\big)
\simeq R\tilde\pi_*\big(\tilde f^*\Omega_{X/B}\otimes
\omega_{\tilde \pi}(\tilde Z)\big)
\simeq (R\tilde\pi_* \tilde f^*\Theta_{X/B}(-\tilde Z))^{\vee}.
\]
Moreover, $\GG$ is perfect of amplitude $[-1,0]$.
\end{proposition}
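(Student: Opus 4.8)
The starting point is the cone triangle \eqref{Eqn: Cone construction obstruction theory}, which defines $\GG$ as the cone of $L\varepsilon^*\FF\to\EE$ shifted appropriately. Unravelling the two obstruction theories: $\EE=R\pi_*\big(f^*\Omega_{\cX/B}\otimes\omega_\pi\big)$ but since $X\to\cX$ is strict and log smooth we have $\Omega_{X/\cX}=\Omega_{X/B}$ (as noted in the text, $\LL_{X/B}=\Omega_{X/B}=\Omega_{X/\cX}$), so $\EE\simeq R\pi_*\big(f^*\Omega_{X/B}\otimes\omega_\pi\big)$; and $\FF$, by the discussion after \eqref{Eqn: Two obstruction theories} and the fact that $Z\to Y$ has relatively \'etale (scheme-theoretically) morphism to $S$, is the trivial obstruction theory for the markings, so $L\varepsilon^*\FF\simeq Rp_*\big(h^*\Omega_{X/B}\otimes\omega_p\big)$ where $p:Z_\scrM\to\scrM$. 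The first step is therefore to identify the map $L\varepsilon^*\FF\to\EE$ concretely as $R\pi_*$ applied to the restriction-to-$Z$ morphism $f^*\Omega_{X/B}\otimes\omega_\pi\to \iota_*\iota^*\big(f^*\Omega_{X/B}\otimes\omega_\pi\big)$ tensored and adjusted by the dualizing sheaves; this is exactly the content of Proposition~\ref{Prop: Compatibility of obstruction theories} specialized to the present setting.

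Granting that, the cone of $\omega_\pi\to\iota_*(\iota^!\omega_\pi)$ — equivalently the cone of $\omega_\pi\to\iota_*\omega_p$ up to the twist — is, by the standard local-to-global/residue exact sequence for a Cartier divisor (the markings $Z\subset C$ are an effective Cartier divisor relative to $\scrM$, being a disjoint union of sections of a smooth-at-those-points family), canonically $\omega_\pi(Z)$. Concretely, one uses the short exact sequence $0\to\omega_\pi\to\omega_\pi(Z)\to\iota_*\omega_Z\to 0$ (adjunction formula $\omega_Z=\omega_\pi(Z)|_Z$) and the identification $\omega_Z=\omega_p\otimes\iota^*\omega_\pi$, which is precisely $\omega_\iota$ as in \eqref{Eqn: dualizing sheaf}. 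Thus the cone of $L\varepsilon^*\FF\to\EE$ is $R\pi_*\big(f^*\Omega_{X/B}\otimes\omega_\pi(Z)\big)$. This gives the first isomorphism in the statement once one rewrites $\omega_\pi(Z)$ in terms of the partial normalization: $\kappa_*\omega_{\tilde\pi}(\tilde Z)\simeq\omega_\pi(Z)$, which follows because $\kappa$ is finite, $\omega_{\tilde\pi}=\kappa^!\omega_\pi$ up to the conductor, and the two branches at each nodal section contribute exactly the poles that the normalization separates — this is the usual comparison of dualizing sheaves under partial normalization of nodes, using here the assumed triviality of the double cover $\tilde Z''\to Z''$ so that no orientation twist intervenes.

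The second isomorphism $R\pi_*(f^*\Omega_{X/B}\otimes\kappa_*(\omega_{\tilde\pi}(\tilde Z)))\simeq R\tilde\pi_*(\tilde f^*\Omega_{X/B}\otimes\omega_{\tilde\pi}(\tilde Z))$ is the projection formula for the finite morphism $\kappa$ together with $\tilde f=f\circ\kappa$ and $\tilde\pi=\pi\circ\kappa$ (so $R\pi_*\kappa_*=R\tilde\pi_*$). The third, Serre-duality isomorphism $R\tilde\pi_*(\tilde f^*\Omega_{X/B}\otimes\omega_{\tilde\pi}(\tilde Z))\simeq (R\tilde\pi_*\tilde f^*\Theta_{X/B}(-\tilde Z))^\vee$ is relative Grothendieck–Serre duality for the relatively Gorenstein (indeed, relatively nodal-curve) morphism $\tilde\pi$, applied to the locally free sheaf $\tilde f^*\Theta_{X/B}(-\tilde Z)$ with dual $\tilde f^*\Omega_{X/B}(\tilde Z)$; note $\Theta_{X/B}$ makes sense since $X\to B$ is log smooth, but as $\cM_X$ is Zariski and we are on the strict locus away from boundary issues after the base change, or more simply since only the underlying sheaf $\Omega_{\ul X/\ul B}$-dual enters, this is the ordinary dual sheaf. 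Finally, perfectness of amplitude $[-1,0]$: $\tilde\pi$ has relative dimension $1$ with $1$-dimensional fibers, $\tilde f^*\Theta_{X/B}(-\tilde Z)$ is locally free, so $R\tilde\pi_*$ of it is perfect concentrated in degrees $[0,1]$; dualizing lands in $[-1,0]$.

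\textbf{Main obstacle.} The routine part is the duality and projection-formula bookkeeping; the genuine subtlety is the comparison $\kappa_*\omega_{\tilde\pi}(\tilde Z)\simeq\omega_\pi(Z)$ across the partial normalization at the nodal sections, i.e.\ checking that the poles allowed along $\tilde Z''$ on the two branches match the behaviour of $\omega_\pi$ at the node and that, under the triviality assumption on $\tilde Z''\to Z''$, this is a canonical (not merely local) isomorphism compatible with the cone map coming from \eqref{Eqn: Cone construction obstruction theory}. I would handle this by working \'etale-locally at a nodal section, writing the node as $\Spec R[x,y]/(xy-t)$, computing both $\omega_\pi$ and $\kappa_*\omega_{\tilde\pi}$ explicitly, and then observing the global gluing is forced by the triviality of the double cover; the general (non-trivial cover) case, which the statement explicitly sets aside, would require the orientation-sheaf twist alluded to in the text.
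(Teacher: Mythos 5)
Your identification of the second isomorphism (projection formula for the finite morphism $\kappa$, together with $R\pi_*\kappa_*=R\tilde\pi_*$), of the third (relative duality for the Gorenstein morphism $\tilde\pi$), and of the amplitude statement all match the paper, as does your treatment of the marked-point part $Z'$ of $Z$ via the residue sequence. The genuine gap is in the nodal part $Z''$. You assert that ``$Z\subset C$ is an effective Cartier divisor, being a disjoint union of sections of a smooth-at-those-points family,'' and you build the first isomorphism on the sequence $0\to\omega_\pi\to\omega_\pi(Z)\to\iota_*\omega_Z\to0$ followed by an identification $\kappa_*\omega_{\tilde\pi}(\tilde Z)\simeq\omega_\pi(Z)$. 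But $Z''$ is by definition the image of nodal sections, so it lies in the critical locus of $\ul C\to\ul\scrM$ (\'etale-locally it is $V(x,y)\subset \operatorname{Spec}\cO_{\scrM}[x,y]/(xy)$); it is therefore not Cartier, $\omega_\pi(Z)$ is not defined along it, and the adjunction/residue sequence for a Cartier divisor does not apply there. Worse, the asserted isomorphism $\kappa_*\omega_{\tilde\pi}(\tilde Z)\simeq\omega_\pi(Z)$ cannot hold near $Z''$: the left-hand side is the pushforward of a line bundle along the partial normalization, which is two-to-one over $Z''$, so it needs two generators at each node and is not invertible, whereas any Cartier twist of $\omega_\pi$ is.

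The paper never forms $\omega_\pi(Z)$. It instead establishes directly the exact sequence of complexes concentrated in degree $-1$, namely $0\to\omega_\pi\to\kappa_*(\omega_{\tilde\pi}(\tilde Z))\to\iota_*\cO_Z[1]\to0$: over $Z'$ the surjection is the residue along the Cartier divisor $Z'$, exactly as in your proposal; over $Z''$ the injection is pullback of the local generator $z^{-1}dz=-w^{-1}dw$ of the log dualizing sheaf, which acquires simple poles on the two branches of $\tilde C$, and the surjection takes the difference of the residues at the two preimages of the node --- this is where the assumed triviality of $\tilde Z''\to Z''$ enters, to order the branches globally. Applying $R\pi_*\big(f^*\Omega_{X/B}\otimes(-)\big)$ and using $\omega_{p}\simeq\cO_Z$ (since $Z\to\scrM$ is \'etale) then exhibits $\GG$ as the middle term and gives the amplitude claim. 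Your ``main obstacle'' paragraph does point at the right local computation, but to make the argument work you must restructure it around this three-term sequence rather than around the nonexistent line bundle $\omega_\pi(Z)$.
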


\begin{proof}
The second isomorphism follows by the projection formula, the third isomorphism
by relative duality.

For the first isomorphism we first claim there exists the following exact
sequence of complexes, all concentrated in degree $-1$:
\begin{equation}
\label{Eqn: Exact sequence dualizing complexes}
0\larr \omega_\pi \larr \kappa_*\big(\omega_{\tilde\pi}(\tilde Z)\big)
\larr \iota_*\cO_Z[1]\larr 0.
\end{equation}
On the complement of the nodal locus $Z''$, this sequence is defined by
\[
0\larr \omega_\pi \larr \omega_\pi(Z')\larr \omega_\pi\otimes_{\cO_C}
\iota_*\cO_{Z'}(Z')\larr 0
\]
by means of the canonical isomorphism
\[
\omega_\pi\otimes_{\cO_C} \iota_*\cO_{Z'}(Z')=
\iota_*(\iota^*\omega_\pi\otimes_{\cO_{Z'}}
\omega_\iota) \simeq \iota_*\cO_{Z'}[1]
\]
coming from the definition of $\omega_\iota{=
\iota^*\omega_\pi^\vee\simeq \cO_{Z'}(Z')}$ in \eqref{Eqn: dualizing sheaf}.
Explicitly, the homomorphism $\omega_\pi(Z')\to \iota^*\cO_{Z'}[1]$ takes the
residue along $Z'$.

Near the nodal locus, \eqref{Eqn: Exact sequence dualizing complexes} is defined
by
\[
0\larr \omega_\pi \stackrel{\kappa^*}{\larr}
\kappa_*\big(\omega_{\tilde\pi}(\tilde Z)\big)\larr \iota_*\cO_{Z''}[1]\larr 0.
\]
To obtain this sequence, recall that \'etale locally
$\omega_\pi=\Omega_{C/\scrM}[1]$ with $\Omega_{C/\scrM}$ the sheaf of relative
logarithmic differentials for $C/\scrM$, while
$\omega_{\tilde\pi}=\Omega_{\ul{\tilde C}/\ul \scrM}[1]$ with
$\Omega_{\ul{\tilde C}/\ul \scrM}$ the sheaf of relative ordinary differentials
for $\ul{\tilde C}/\ul \scrM$. In fiberwise coordinates $z,w$ for the two
branches of $C$ along $Z''$ on an \'etale neighborhood, $\Omega_{C/\scrM}$ is
locally generated by $z^{-1}dz= -w^{-1}dw$, hence pulls back to ordinary
differentials with simple poles along $\kappa^{-1}(Z'')\subseteq \tilde Z$. The
map to $\cO_Z$ takes the difference of the residues of such rational
differential forms on $\tilde C$ along the two preimages of the nodal locus.
Note that this map depends on an order of the two branches along each connected
component of $Z''$, hence relies on the assumption $\tilde Z''= Z''\amalg Z''$.
This establishes sequence~\eqref{Eqn: Exact sequence dualizing complexes}.

{Next note that $\omega_{p_{\!\scrM}}\simeq\cO_Z$ since $p_\scrM: Z\to
\scrM$ is \'etale. Using the projection formula we can thus rewrite
\[
L\varepsilon^*\FF= R{p_{\!\scrM}}_*(h^*\Omega_{X/B}\otimes\omega_{p_{\!\scrM}})
= R\pi_*\iota_*\iota^*f^*\Omega_{X/B}
=R\pi_*(f^*\Omega_{X/B}\otimes\iota_*\cO_Z). 
\]
Finally, apply $R\pi_*$ to \eqref{Eqn: Exact sequence dualizing complexes}
tensored with $f^*\Omega_{X/B}$ to produce the upper triangle of \eqref{Eqn:
Cone construction obstruction theory} with the claimed middle term
$\GG=R\pi_*\big(f^*\Omega_{X/B}\otimes\kappa_* (\omega_{\tilde \pi}(\tilde
Z))\big)$: }
\begin{equation}
\label{Eqn: EE-GG-FF sequence}
\begin{CD}
\EE@. \GG@. L\varepsilon^*\FF[1]\\
@| @| @|\\
R\pi_*(f^*\Omega_{X/B}\otimes\omega_\pi)
@>>> R\pi_*\big(f^*\Omega_{X/B}\otimes\kappa_*
(\omega_{\tilde \pi}(\tilde Z))\big)
@>>> {p_\scrM}_* (h^*\Omega_{X/B})[1]
\end{CD}
\end{equation}
Taking cohomologies, this diagram also shows the statement about the amplitude
of $\GG$.
\end{proof}
\bigskip

\subsection{Punctured Gromov-Witten invariants}
\label{ss: punctured GW invts}

Using properness of $\scrM(X/B,\beta)$ over $B$ (Corollary~\ref{Cor: scrM/B is
proper}) and the obstruction theory, we can now define punctured Gromov-Witten
invariants. {To be} explicit, we assume the ground field $\kk$ to be a
subfield of $\CC$ and take $H_2(X)$ to be singular homology of the base change
to $\CC$. Since $\fM(\cX/B,\beta)$ is typically non-equidimensional due to the
puncturing ideal, the general definition demands a stratum-by-stratum treatment.
Sometimes one can show independence of certain choices, e.g.\ in the setting of
\cite{GSAssoc}, but presently our understanding of the intersection theory of
$\fM(\cX/B)$ and in logarithmic geometry is too limited to make general
statements. Some steps in this direction have been taken in
\cite{Barrott,Yixian}.

Let $X\arr B$ be projective and log smooth, with Zariski logarithmic structure
on $X$. Let $\btau=(G,\bg,\bsigma,\bar\bu,\bA)$ be a decorated global type
(Definition~\ref{Def: global type}). Denote by $g$ the total genus and
$k=|L(G)|$. We assume $\ocM_X^\gp\otimes_\ZZ\QQ$ to be generated by global
sections to apply Corollary~\ref{Cor: scrM/B is proper}, or otherwise
$\scrM(X/B,\btau)\arr B$ to be proper. Denote by
{$Z_L=X_{\bsigma(L)}\subseteq X$ the evaluation
stratum for $L\in L(G)$}.

{Considering} for simplicity evaluations at all punctures {rather
than at a subset of punctures}, we then have an evaluation map
\[
\textstyle
\ul\ev: \ul{\scrM}({X/B},\btau)\arr \prod_{L\in L(G)} \ul Z_L,
\]
and, by \S\ref{ss: Obstruction theories with point conditions} and notably
\eqref{Eqn: Cone construction obstruction theory}, a perfect relative
obstruction theory $\GG$ for
\[
\varepsilon: \scrM({X/B},\btau)\arr \fM^\ev(\cX/B,\btau).
\]
The relative virtual dimension is given by the Riemann-Roch formula applied to
the {virtual} bundle in Proposition~\ref{Prop: Point condition virtual
bundle} as
\begin{equation}
\label{Eqn: relative virtual dimension}
d(g,k,A,n)= c_1(\Theta_{X/B})\cdot A+ n\cdot(1-g-k).
\end{equation}
Here $A=|\bA|$ and $g=|\bg|$ are the total {curve} class and total genus
of $\btau$, $k=|L(G)|$ the number of point conditions imposed and $n=\dim X-\dim
B$ the relative dimension of $X$ over $B$. Denote by $\varepsilon_\GG^!$ the
associated virtual pull-back from \cite{Mano}, an operational Chow-class for
$\varepsilon$. 

\begin{definition}
\label{Def: Punctured GW correspondence}
The \emph{punctured Gromov-Witten correspondence} defined by the global
decorated type $\btau$ is the homomorphism
\[
\textstyle
(\ul\ev\times p)_*\varepsilon_\GG^!: A_*\big( \fM^\ev(\cX/B,\btau)\big) \arr
A_{*+d(g,k,A,n)}\big(\prod_L \ul Z_L\times \scrM_{g,k}\big)
\]
of rational Chow groups.
\end{definition}

{Here $\prod_L$ denotes the cartesian product of spaces over $B$.}
As usual, pairing with cohomology classes in $\prod_L \ul Z_L\times \scrM_{g,k}$
and taking degrees then produces Gromov-Witten invariants. Note also that
Proposition~\ref{Prop: pure-dimensional fM(cX,tau)} defines pure-dimensional
cycles {in $\fM^\ev(\cX/B,\btau)$} as the images of the fundamental
classes of $\fM(\cX/B,\btau')$ for $\btau'\to\btau$ a contraction morphism from
a realizable global type.


\section{Splitting and gluing}
\label{sec:split-glue}

As discussed in the introduction, one crucial motivation for the introduction of
the notion of punctured maps is the desire to treat logarithmic Gromov-Witten
invariants by splitting the domain curves along nodal sections, in situations
where such sections occur uniformly in the moduli space.

After briefly formalizing this splitting operation, we present the second series
of main results of this paper, the reverse procedure of gluing a pair of
punctured sections, followed by its treatment in punctured Gromov-Witten theory.
We end this section with an application to the degeneration situation of
\cite{decomposition}.

{Throughout this section, $X \to B$ denotes a morphism of fs logarithmic
schemes fulfilling the assumptions stated at the beginning of
\S\ref{sec:stack}.}

\subsection{Splitting punctured maps}
\label{ss:splitting-general}

We first discuss the operation of splitting of punctured curves along nodal
sections.

\begin{definition}
\label{Def: nodal sections}
A \emph{nodal section} of a family of nodal curves $\ul\pi:\ul C\arr\ul W$ is a
section $s:\ul W\arr \ul C$ of $\ul\pi$ that \'etale locally in $\ul W$ factors
over the closed embedding defined by the ideal $(x,y)$ in the domain of an
\'etale map
\[
\bSpec \cO_W[x,y]/(xy)\arr \ul C.
\]
The \emph{partial normalization of $\ul C/\ul W$ along $s$} is the map
\begin{equation}
\label{Eqn: partial normalization at nodal section}
\ul \kappa:\tilde {\ul C}\arr \ul C
\end{equation}
that \'etale locally is given by base change from the normalization of the plane
nodal curve $\Spec\kk[x,y]/(xy)$. We say $s$ is \emph{of splitting type} if the
two-fold unbranched cover $\ul\kappa^{-1}\big(\im(s)\big)\arr \im(s)$ is
trivial.
 
A nodal section of a punctured curve $(C^\circ/W,\bp)$ or punctured map
$(C^\circ/W,\bp,f)$ is a nodal section of the underlying curve
$\ul C/\ul W$.
\end{definition}

Note that a nodal section $s$ of a nodal curve $\ul C/\ul W$ with partial
normalization $\ul\kappa:\tilde{\ul C}\arr \ul C$ and nodal locus $\ul Z=\im(s)$
exhibits $\ul C$ as the fibered sum
\begin{equation}
\label{Eqn: fibred sum node splitting}
\ul Z\amalg_{\ul\kappa^{-1}(\ul Z)}\tilde{\ul C}
\stackrel{\simeq}{\longrightarrow} \ul C.
\end{equation}

A punctured curve can be split along a nodal section of splitting type:

\begin{proposition}
\label{prop:splitting-curve}
Let $\ul\kappa:\tilde{\ul C}\arr \ul C$ be the partial normalization of a
punctured curve $(\pi:C^\circ\arr W,\bp)$ defined by the splitting at a nodal
section $s$ of splitting type. Let $p_1,p_2: \ul W\arr \tilde{\ul C}$ be two
sections of $\ul\kappa^{-1}(\im(s)\big)\arr \im(s)$ with disjoint images.

Then
\[
(\tilde C^\circ,\tilde\bp)=
\big(\tilde\pi: (\tilde{\ul C},\ul\kappa^*\cM_{C^\circ})
\stackrel{\kappa}{\to} C^\circ\to W,\{\hat\bp,p_1,p_2\}\big)
\]
with $\hat\bp:\ul W\to\ul{\tilde C}$ the unique set of sections with
$\bp=\ul\kappa\circ\hat\bp$, is a (possibly disconnected) punctured curve.
\end{proposition}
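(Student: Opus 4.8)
The statement is essentially that normalizing a punctured curve along a splitting-type nodal section, then relabeling the two preimages of the nodal locus as new punctured sections, yields again a punctured curve in the sense of Definition~\ref{def:punctured-curves}. The plan is to verify the two conditions in that definition for the data $(\tilde C^\circ,\tilde\bp)$: first that $\tilde C\to W$ (with $\tilde C=(\tilde{\ul C},\ul\kappa^*\cM_C)$, where $\cM_C$ is the unpunctured log structure) is a logarithmic curve with the stated sections, and second that $\tilde C^\circ\to\tilde C$ is a puncturing along the divisor cut out by $\tilde\bp$.

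First I would treat the underlying curve. The map $\ul\kappa:\tilde{\ul C}\to\ul C$ is, \'etale locally on $\ul W$, base change of the normalization of $\Spec\kk[x,y]/(xy)$, so $\tilde{\ul C}\to\ul W$ is again a family of prestable curves; the sections $\hat\bp$ (lifts of the old punctures, which by disjointness from $\im(s)$ lift uniquely and isomorphically) together with $p_1,p_2$ form a disjoint collection of sections, and connectedness of fibers may genuinely fail, hence the parenthetical ``possibly disconnected.'' For the log structure $\cM_{\tilde C}:=\ul\kappa^*\cM_C$ one checks it is the log structure of a logarithmic curve in the sense of \cite{FKato}: properness, integrality and log smoothness of $\tilde\pi$ follow from the corresponding properties of $\pi$ together with the local description of normalization of a node (this is the standard fact that normalizing a node of a log smooth curve, i.e.\ deforming $S_{e}$ to $\NN$, stays log smooth, with the nodal log structure at $q$ restricting to the marked-point log structure $\ul{\NN}$ on each branch). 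Condition (3) of the logarithmic-curve definition --- that away from the critical locus $\ocM_{\tilde C}\cong \tilde\pi^*\ocM_W\oplus\bigoplus_i p_{i*}\NN_W$ --- holds because on the non-critical locus of $\ul C$ nothing changes, while along the two new sections $p_1,p_2$ the stalk $\ocM_{C,q}=S_{e_q}$ pulls back under $\kappa$ on each branch to $\ocM_{W,\bar w}\oplus\NN$, exactly the marked-point shape, using that $s$ is of splitting type so the two branches are globally separated.

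Next I would address the puncturing. Write $\cP$ (resp.\ $\tilde\cP$) for the divisorial log structure on $\ul C$ (resp.\ $\tilde{\ul C}$) associated to $\bp$ (resp.\ $\tilde\bp$). Away from the nodal locus $\im(s)$, $\ul\kappa$ is an isomorphism and $\ul\kappa^*\cM_{C^\circ}$ is the old puncturing, so there is nothing to check there. Along $p_1$ and $p_2$, I need to exhibit $\ul\kappa^*\cM_{C^\circ}$ as a fine subsheaf of $\cM_{\tilde C}\oplus_{\cO^\times}\tilde\cP^{\gp}$ containing $\cM_{\tilde C}$, with structure map landing in $\cO_{\tilde C}$ and satisfying condition~(2) of Definition~\ref{def:puncturing}. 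But along the old node $q$ the curve $C^\circ$ carried its \emph{unpunctured} log structure $\cM_{C^\circ}|_q=\cM_{C,q}$ (the puncturing is trivial at nodes, it only modifies things at $\bp$), hence $\ul\kappa^*\cM_{C^\circ}$ near $p_1,p_2$ is just $\cM_{\tilde C}$ itself --- which is automatically a (trivial) puncturing along $\tilde\cP$, so conditions (1) and (2) are vacuous there. Thus the only substantive content is at the old punctures $\hat\bp$, where $\ul\kappa$ is a local isomorphism and the puncturing condition is inherited verbatim from $(C^\circ/W,\bp)$. Assembling: $\ul\kappa^*\cM_{C^\circ}$ is fine (pull-back of a fine sheaf along $\ul\kappa$, which is finite), contains $\cM_{\tilde C}$, sits inside $\cM_{\tilde C}\oplus_{\cO^\times}\tilde\cP^{\gp}$ (at old punctures by construction, elsewhere because $\tilde\cP$ is trivial there and $\cM_{C^\circ}=\cM_C$), and $\alpha$ extends since it does so on each of the two open pieces whose union is $\tilde{\ul C}$ and they agree on the overlap.

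The main obstacle I expect is bookkeeping at the node: one must be careful that the log structure being pulled back along $\ul\kappa$ is the \emph{unpunctured} $\cM_C$ near $q$ (equivalently, that the puncturing $C^\circ$ is trivial at nodes), so that the two new sections $p_1,p_2$ genuinely receive marked-point --- not punctured --- log structure, and that ``splitting type'' is exactly what is needed for the two-fold cover to split and hence for $p_1,p_2$ to be honest disjoint sections with the standard local shape $\ocM_{W}\oplus\NN$. Once that local picture is pinned down, the verification of Definition~\ref{def:punctured-curves} is a routine gluing of two \'etale-local descriptions, and I would present it as such rather than spelling out stalk computations.
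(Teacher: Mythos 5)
There is a genuine gap, and it sits exactly at the point you flagged as ``the main obstacle'': your resolution of the bookkeeping at the node is backwards. You claim that since the puncturing $C^\circ\to C$ is trivial near the node $q$, the pullback $\ul\kappa^*\cM_{C^\circ}$ near $p_1,p_2$ is ``just $\cM_{\tilde C}$ itself --- which is automatically a (trivial) puncturing,'' with $\ocM$ on each branch of the standard marked-point shape $\ocM_{W,\bar w}\oplus\NN$. This is false. Pulling back a log structure does not change its ghost sheaf: the stalk of $\overline{\ul\kappa^*\cM_{C^\circ}}$ at $\bar p\in\im(p_1)$ is still the full nodal monoid $\ocM_{C,\bar q}$, generated over $\ocM_{W,\bar w}$ by $\bar s_x$ and $\bar s_y$ subject to $\bar s_x+\bar s_y=\rho_q$. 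On the branch $x=0$, where $s_y$ generates the divisorial structure $\cP$ of $p_1$, the element $s_x=s_\rho s_y^{-1}$ survives in $\ul\kappa^*\cM_{C^\circ}$ and has image $(\rho_q,-1)$ in $\ocM_W^{\gp}\oplus\overline\cP^{\gp}$. So the new sections receive a \emph{non-trivially} punctured log structure (this is the entire point of the construction --- compare the $s_w=z^{-1}s_t$ example in the introduction), and the underlying log curve $\tilde C$ in Definition~\ref{def:punctured-curves} is the one with $\cM_{\tilde C}=\tilde\pi^*\cM_W\oplus_{\cO^\times}\cP$ at $p_1$, not $(\tilde{\ul C},\ul\kappa^*\cM_C)$.

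Consequently the substantive verification, which your argument declares vacuous, is precisely what the paper's proof carries out: one must show $\tilde\pi^*\cM_W\oplus_{\cO^\times}\cP\subseteq\kappa^*\cM_{C^\circ}\subset\tilde\pi^*\cM_W\oplus_{\cO^\times}\cP^{\gp}$ near $\bar p$ (using that $\cM_{C^\circ}^{\gp}$ is generated at $\bar q$ by $\pi^*\cM_W$ and $s_y$, with $s_x=s_\rho s_y^{-1}$), and then check condition~(2) of Definition~\ref{def:puncturing}: any local section outside $\tilde\pi^*\cM_W\oplus_{\cO^\times}\cP$ has the form $s_x^as_y^bs_W$ with $a>0$, and $\alpha(s_x)=x$ vanishes on the branch $x=0$, so $\alpha$ kills it. Your reduction to neighbourhoods of $\im(p_1)\cup\im(p_2)$ and your treatment of the old punctures and the underlying prestable curve are fine and agree with the paper; it is only this central stalk computation that is missing, and it cannot be waved away as ``routine gluing'' because without it the statement being proved would be the wrong one (a marked, rather than punctured, curve).
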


\begin{proof}
Since $\kappa:\tilde C^\circ\arr C^\circ$ is an isomorphism away from
$\im(p_1)\cup\im(p_2)$, it suffices to consider a neighborhood of a geometric
point $\ol p\arr \tilde {\ul C}$ of one of $\im(p_i)$, say $i=1$. Denote by
$\ol q=\ul\kappa\circ\ol p$ the corresponding geometric point of $\ul C$, thus
a geometric point of the image of the nodal section. By the structure of log
smooth curves, $\cM_{C^{\circ},\ol q}$ is generated by
$(\pi^*\cM_{W})_{\ol q}$, $s_x$ and $s_y$, where $s_x,s_y\in
\cM_{C^{\circ},q}$ are induced by the coordinates $x,y$ in Definition~\ref{Def:
nodal sections}. These are subject to the relation $s_x s_y=s_{\rho}$ for some
$s_{\rho}\in (\pi^*\cM_W)_{\ol q}$. Hence $(\pi^*\cM_W)_{\ol q}$ and $s_y$
locally generate $\cM_{C^{\circ}}^{\gp}$ as a group, with
$s_x=s_{\rho}s_y^{-1}$. Pulling back to $\tilde{\ul{C}}$, along the branch
$x=0$, hence with $y=0$ giving $\im(p_1)$, we see that
$(\kappa^*\cM_{C^{\circ}})^{\gp}$ is locally generated by
$(\tilde\pi^*\cM_W)_{\ol p}$ and $\kappa^\flat s_y$. Further,
$\kappa^\flat s_y$ is also a section of $\cP$, the divisorial log
structure given by $p_1$, and the image of $\kappa^\flat s_y$ in
$\overline{\cP}$ generates $\overline{\cP}$ as a monoid. Thus locally near $\ol
p$, 
\[
\tilde\pi^*\cM_W\oplus_{\cO^{\times}_{\widetilde C}}
\cP \subseteq \kappa^*\cM_{C^{\circ}} \subset
\tilde\pi^*\cM_W\oplus_{\cO^{\times}_{\widetilde C}}
\cP^{\gp}.
\]
Further, any local section of $\kappa^*\cM_{C^{\circ}}$ not contained in
$\tilde\pi^*\cM_W\oplus_{\cO^{\times}_{\widetilde C}} \cP$ can be written
in the form $s_x^as_y^bs_W$ with $a>0$, $b\ge 0$ and $s_W$ a local section of
$\tilde\pi^*\cM_W$. Since $\alpha(s_x)=0$ when $x=0$, we see that $\alpha$
applied to any such element is zero. Thus $(\tilde{C^{\circ}}/W,\tilde\bp)$ is a
punctured curve near $\ol p$.
\end{proof}

For the application to moduli spaces of punctured maps we formalize the
splitting procedure as an operation on graphs, hence on (global) types of
punctured maps.

\begin{definition}
\label{Def: graph splitting}
Let $G$ be a connected graph and $\bE\subseteq E(G)$ a subset of edges.
Replacing each $E\in \bE$ by a pair of legs $L_E, L'_E$ leads to a graph
$\widehat G$ with
\[
V(\widehat G)= V(G),\ E(\widehat G)=E(G)\setminus \bE,\
L(\widehat G)= L(G)\cup \{L_E,L'_E\}_{E\in\bE}.
\]
We call the collection of connected subgraphs $G_1,\ldots,G_r$ of $\widehat G$
\emph{the graphs obtained from $G$ by splitting along $\bE$}.

There is an obvious induced notion of splitting of a genus-decorated graph
$(G,\bg)$, of a (global) type $\tau$, or of a (global) decorated type $\btau$ of
a punctured map along a subset of edges of the corresponding graphs.
\end{definition}

\begin{proposition}
\label{prop:splitting-map}
{Let $X\arr B$ be a morphism of fs logarithmic schemes over $\kk$
fulfilling the assumptions stated at the beginning of \S\ref{sec:stack}.}
Let $\tau_1,\ldots,\tau_r$ be obtained from splitting a global type
$\tau=(G,\bg,\bsigma,\bar\bu)$ of a punctured map to $X/B$ along a subset of
edges $\bE\subseteq E(G)$. Then the splitting morphism from
Proposition~\ref{prop:splitting-curve} followed by pre-stabilization
(Proposition~\ref{prop:pre-stable}) defines morphisms of stacks
\[
\textstyle
\fM(\cX/B,\tau)\arr \prod_i \fM(\cX/B,\tau_i),\quad
\scrM({X/B},\tau)\arr \prod_i \scrM({X/B},\tau_i),
\]
with the products understood as fiber products over $B$.

Analogous results hold for decorated types and for moduli spaces of weakly
marked punctured maps.
\end{proposition}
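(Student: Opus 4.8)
The plan is to construct the two morphisms on the level of families, check they are well-defined (i.e.\ land in the right moduli stacks), and verify functoriality in the base. Fix an fs log scheme $W$ over $B$ and a $\tau$-marked basic stable (resp.\ pre-stable, resp.\ weakly marked) punctured map $(C^\circ/W,\bp,f)$ to $\cX/B$ (resp.\ $X/B$). The $(G,\bg)$-marking realizes each edge $E\in\bE$ as a nodal section $s_E:\ul W\to\ul C$. A first point to settle is that $s_E$ is of splitting type in the sense of Definition~\ref{Def: nodal sections}: this is automatic after \'etale localization on $\ul W$, but globally it uses that the two branches at a node indexed by an edge of $\bE$ are globally distinguished data — in the weakly marked setting this is built into the marking since the edge $E$ carries an orientation through the contact order $\bar\bu(E)$, and one chooses the ordering of $L_E,L'_E$ accordingly. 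Thus by Proposition~\ref{prop:splitting-curve}, simultaneously normalizing along all $s_E$, $E\in\bE$, yields a possibly disconnected punctured curve $(\tilde C^\circ/W,\tilde\bp)$ whose connected components are indexed exactly by $V(G_1),\dots,V(G_r)$, with the new punctures indexed by the legs $L_E,L'_E$.

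\textbf{Key steps.} First I would produce the punctured \emph{map}: the composition $\tilde f=f\circ\kappa:\tilde C^\circ\to\cX$ (resp.\ $\to X$) is a punctured map whose restriction to the component indexed by $G_i$ is a punctured map with the combinatorics prescribed by $\tau_i$. One checks the type: the image cones $\bsigma$, the contact orders (now including $\bar\bu(L_E)=\bar\bu(E)$ and $\bar\bu(L'_E)=-\bar\bu(E)$ with the sign conventions of \S\,\ref{sss: tropical punctured maps}), the genera and curve classes all match those recorded in $\tau_i$ (resp.\ $\btau_i$), because normalization at a node does not change any of this data locally. Second, $\tilde f$ need not be pre-stable even if $f$ is — the puncturing submonoid $\kappa^*\cM_{C^\circ}$ at the new punctures $p_1,p_2$ may strictly contain the one generated by $\kappa^*\cM_W$ and $\tilde f^\flat(\tilde f^*\cM_\cX)$; so I apply pre-stabilization (Proposition~\ref{prop:pre-stable}) componentwise to replace $(\tilde C^\circ/W,\tilde\bp,\tilde f)$ by its associated pre-stable punctured map, using Corollary~\ref{cor:pre-stable} to identify the new ghost sheaf. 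Third, basicness (in the $\scrM$ and $\fM$ cases where markings are by decorated types): the basic monoid of each $\tau_i$-marked piece is computed by Proposition~\ref{Prop: Basic monoid}, and there is a canonical map from the base $W$ to the product of these $W_{i,\basic}$ induced by the universal property; equivalently, the disjoint family over $W$ factors through the fiber product $\prod_i\scrM(\cX/B,\tau_i)$ over $B$ by the universal property (Proposition~\ref{universalproperty}), which also handles the passage from an arbitrary marked map to a basic one and gives uniqueness of the induced morphism. Stability on each piece is preserved after pre-stabilization because contracted components remain contracted and the marking-by-type condition is inherited; if a component becomes unstable after splitting one stabilizes by contracting, which is harmless since $\scrM(\cX/B,\tau_i)$ is defined with $\tau_i$-markings already recording which vertices are contracted. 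Finally, functoriality: all constructions — partial normalization, pullback of log structures (Proposition~\ref{prop:pullback-puncturing}), pre-stabilization, passage to the basic object — commute with base change along a strict morphism $T\to W$ by the results already cited (Proposition~\ref{prop:pull-back-ideal}, Corollary~\ref{Cor: pre-stability-pullback}, uniqueness in Proposition~\ref{universalproperty}), so the assignment is a morphism of stacks; the decorated and weakly marked variants are verified identically, only omitting the basicness or the $\tau$-marking condition~(3) as appropriate.

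\textbf{Main obstacle.} The step I expect to be the crux is checking that pre-stabilization of the split family is compatible with the passage to basic objects and with base change \emph{uniformly in the moduli problem}, i.e.\ that the resulting morphism really exists on the level of stacks and not merely on geometric points. The subtlety is that pre-stabilization of a punctured map is an operation whose output (the submonoid $\cM_{\widetilde C^\circ}$) can jump in families, and one must know that over the stack $\scrM(\cX/B,\tau)$ the locus where the split-then-prestabilized family has a fixed type is open and that the universal such family exists — this is exactly the content already packaged in Theorem~\ref{Thm: scrM and fM are algebraic} together with Proposition~\ref{prop:pre-stable-open} and Proposition~\ref{basic-open}, so the real work is bookkeeping: assembling these into the statement that $(C^\circ/W,\bp,f)\mapsto$ (pre-stabilized split family) is functorial and lands in the product stack. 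A secondary technical point is the global triviality of the double cover $\tilde Z''\to Z''$ at each split node; as in the discussion preceding Proposition~\ref{Prop: Point condition virtual bundle}, one either arranges this via the orientation data carried by the edge's contact order or, in the rare monodromy cases, passes to an orientation cover — I would remark that for the purposes of this proposition the splitting-type hypothesis is exactly what makes the two new legs globally well-defined and ordered.
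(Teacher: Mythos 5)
Your proposal is correct and follows the same route as the paper, whose entire proof is the one-line observation that the statement is immediate from Proposition~\ref{prop:splitting-curve} and Proposition~\ref{prop:pre-stable}; everything you spell out (typing of the split pieces, pre-stabilization, passage to the basic object via Proposition~\ref{universalproperty}, compatibility with base change via Propositions~\ref{prop:pullback-puncturing} and~\ref{Cor: pre-stability-pullback}) is exactly the bookkeeping the authors leave implicit. The only inaccuracy is your aside that one may need to ``stabilize by contracting'' an unstable component: this situation never arises, since splitting replaces each node by one puncture on each branch, so the set of special points on every irreducible component is unchanged and stability is automatically preserved — and contracting a component would in any case alter the $(G,\bg)$-marking and hence not be permitted.
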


\begin{proof}
The statement is immediate from Proposition~\ref{prop:splitting-curve} and
Proposition~\ref{prop:pre-stable}.
\end{proof}

{
\begin{example}
\begin{figure}
\input{splitting.pspdftex}
\caption{Tropical splitting}
\label{Fig: Tropical splitting}
\end{figure}
As an illustration of the splitting procedure consider the degeneration of
$\PP^1\times\PP^1$ to two copies of $\PP^2$ constructed as follows. Take the
polyhedral decomposition $\mathscr P$ of $\RR^2$ with two vertices at $(0,0)$,
$(1,1)$ and four maximal cells given by the dashed part of Figure~\ref{Fig:
Tropical splitting}. Embed $\RR^2$ as affine hyperplane $\RR^2\times\{1\}$ in
$\RR^3$ and take the closures of the cones {over cells} of $\mathscr P$
to define a fan $\Sigma$ in $\RR^3$ with support $|\Sigma|=
\RR^2\times\RR_{\ge0}$. The corresponding toric threefold $X$ comes with a flat
morphism
\[
\pi:X\arr\AA^1
\]
induced by the projection $|\Sigma|\arr \RR_{\ge0}$ to the last coordinate. It
is not hard to show that $\pi^{-1}(\AA^1\setminus\{0\})=
(\PP^1\times\PP^1)\times(\AA^1\setminus\{0\})$, a trivial family, and
$\pi^{-1}(0)=\PP^2\amalg_{\PP^1}\PP^2$, a gluing of two copies of $\PP^2$ along
a pair of toric divisors.

Figure~\ref{Fig: Tropical splitting} on the left shows the tropicalization of a
family of curves of bidegree~$(1,1)$ giving a type $\tau$. The figure shows the
intersection with the affine hyperplane $\RR^2\times\{1\}$. Splitting along the
edge $E$ yields the two types $\tau_1,\tau_2$ whose general members are depicted
on the right. Note also that the leg in $\tau_2$ obtained from splitting $\tau$
at $E$ extends to the boundary of the cell, while this is not true for $\tau_1$.
This illustrates the necessity of pre-stabilization in the splitting procedure.

The opposite process of shrinking legs to an edge of a tropical domain curve
appears in gluing, see Remark~\ref{Rem: ghost sheaf and
tropical description of gluing}.
\end{example}}


\subsection{Gluing punctured maps to \texorpdfstring{$\cX/B$}{cX/B}}
\label{ss:Gluing punctured maps to cX}
\subsubsection{Notation for splitting edges}
In this section we work in categories of spaces over $\ul B$ or $B$.
In particular, products are to be understood as fiber products over
$\ul B$ or $B$, as appropriate.

\sloppy
Let $\tau=(G,\bg,\bsigma,\bar\bu)$ be a global type of punctured tropical maps
and $\tau_i=(G_i,\bg_i,\bsigma_i,\bar\bu_i)$, $i=1,\ldots,r$, the global types
obtained by splitting $\tau$ at a subset $\bE\subseteq E(G)$ of edges
(Definition~\ref{Def: graph splitting}). We choose an orientation on each edge
$E\in \bE$ and refer to the two legs obtained by splitting {the edge $E$
with vertices $v,v'$} by the corresponding half-edges $(E,v)$, $(E,v')$, with
$E$ oriented from $v$ to $v'$.\footnote{We use this notation as it is easy to
parse, but note that $(E,v)$ is ambiguous if $E$ is a loop. It will always be
clear from the context how to fix this ambiguity with a heavier notation.}
Denote by $\bL\subseteq\bigcup_i L(G_i)$ the subset of all legs obtained from
splitting edges, and by $i(v)\in\{1,\ldots,r\}$ for $v\in V(G)$ the index $i$
with $v\in V(G_i)$.

\fussy
\subsubsection{The stack $\tfM'(\cX/B,\tau)$ and its evaluation morphism}
\label{Sec:stacks of maps with sections}
Evaluation at the nodal sections for $\bE$ defines the morphism
\[
\textstyle
\ul\ev_\bE: \ul{\fM'}(\cX/B,\tau)\arr \prod_{E\in\bE} \ul\cX.
\]
\sloppy
For each $E\in\bE$ denote by $\fM'_E(\cX/B,\tau)$ the image of the nodal section
$s_E:\ul{\fM'}(\cX/B,\tau)\arr \ul{\fC'}^\circ(\cX/B,\tau)$ with the restriction
of the log structure on the universal domain $\fC'^\circ(\cX/B,\tau)$.
Denote further by $\tfM'(\cX/B,\tau)$ the fs fiber product
\begin{equation}
\label{Eqn: tilde log structure}
\tfM'(\cX/B,\tau) = \fM'_{E_1}(\cX/B,\tau)\times^\fs_{\fM'(\cX/B,\tau)} \cdots
\times^\fs_{\fM'(\cX/B,\tau)} \fM'_{E_r}(\cX/B,\tau),
\end{equation}
\fussy
where $E_1,\ldots,E_r\in E(G)$ are the edges in $\bE$.\footnote{{Note that we
have suppressed the dependence of the stack on $\bE$ from the notation.}} With
this enlarged log structure, the pull-back $\tfC'^\circ(\cX/B,\tau)\arr
\tfM'(\cX/B,\tau)$ of the universal domain has sections $\tilde s_E$, $E\in
\bE$, in the category of log stacks. Moreover, $\ul\ev_\bE$ lifts to a
logarithmic evaluation morphism
\begin{equation}
\label{Eqn: ev_bE}
\textstyle
\ev_\bE: \tfM'(\cX/B,\tau) \arr \prod_{E\in \bE} \cX,
\end{equation}
with $E$-component equal to $\tilde f\circ \tilde s_E$ for $\tilde f:
\tfC'^\circ(\cX/B,\tau)\arr \cX$ the universal punctured morphism.

\subsubsection{The stacks $\tfM'(\cX/B,\tau_i)$, evaluation and splitting
morphisms}
Similarly, for each of the global types $\tau_i=(G_i,\bg_i,\bsigma_i,\bar\bu_i)$
obtained by splitting and $L\in L(G_i)$, denote by $\fM'_L(\cX/B,\tau_i)$ the
image of the punctured section $s_L:\ul\fM'(\cX/B,\tau_i)\arr
\ul{\fC'}^\circ(\cX/B,{\tau_i})$ defined by $L$, again endowed with the
pull-back of the log structure on ${\fC'}^\circ(\cX/B,{\tau_i})$. With
$L_1,\ldots,L_s$ the legs of $G_i$ {obtained from splitting,} define
the stack
\[
\widetilde\fM'(\cX/B,\tau_i)=
\big(\fM'_{L_1}(\cX/B,\tau_i)\times^\mathrm{f}_{\fM'(\cX/B,\tau_i)}
\cdots \times^\mathrm{f}_{\fM'(\cX/B,\tau_i)} \fM'_{L_s}(\cX/B,\tau_i)\big)^\sat,
\]
where $\sat$ denotes saturation{, bearing in mind that the log
structures on the stacks $\fM'_{L_j}(\cX/B,\tau_i)$ are not saturated.}

This stack differs from $\fM'(\cX/B,\tau_i)$ by adding the pull-back of the log
structure of each puncture {obtained from splitting}, so that the
pull-back $\widetilde {\fC'}^\circ(\cX/B,{\tau_i}) \arr
\widetilde\fM'(\cX/B,{\tau_i})$ of the universal curve now has
punctured sections in the category of log stacks. We define the evaluation
morphism
\begin{equation}
\label{Eqn: ev_bL}
\textstyle
\ev_\bL: \prod_{i=1}^r\tfM'(\cX/B,\tau_i) \arr \prod_{E\in\bE} \cX\times \cX,
\end{equation}
by taking as $E$-component the evaluation at the corresponding two sections
$s_{E,v},s_{E,v'}$, observing the chosen orientation of $E$.

{
\begin{lemma}
\label{Lem: enhanced splitting-map}
The splitting morphism $\fM(\cX/B,\tau)\arr \prod_i \fM(\cX/B,\tau_i)$ in
Proposition~\ref{prop:splitting-map} lifts to a morphism
\begin{equation}
\label{Eqn: splitting morphism}
\textstyle
\tfM(\cX/B,\tau)\arr \prod_{i=1}^r \tfM(\cX/B,\tau_i).
\end{equation}
Analogous statements hold for weak markings and for the moduli spaces of stable
maps to $X$ rather than $\cX$.
\end{lemma}

\begin{proof}
\sloppy
We only treat the case of marked moduli spaces of punctured maps to $\cX$, the
other cases being completely analogous.

It suffices to produce a morphism
\[
\fM_{E}(\cX/B,\tau) \arr \fM_{L}(\cX/B,\tau_i)
\]
lifting $\fM(\cX/B,\tau)\arr \fM(\cX/B,\tau_i)$
whenever $L=(E,v)\in L(G_i)$ is one of the two legs obtained from splitting $E$.
Indeed, this then provides a morphism of fibered products, which lifts to the
saturation by functoriality of saturation.

To construct this lifting let $\fC^\circ \to \fM:= \fM(\cX/B,\tau)$ be the
universal curve, and $\tilde\fC^\circ\arr\fC^\circ$ the splitting of all nodes
labelled by an element of $\bE$, strict as a morphism of log stacks. The graph
$G_i$ given by $\tau_i$ selects a connected component $\tilde\fC_i^\circ\subset
\tilde\fC^\circ$, and the nodal section $s_E$ lifts to a punctured section
$\tilde s_i: \ul\fM\arr \ul{\tilde\fC_i^\circ}$. Let similarly $\fC_L^\circ \to
\fM_i:= \fM(\cX/B,\tau_i)$ and $s_L: \ul \fM_i\arr \ul\fC_L$ the corresponding
universal curve and punctured section over $\fM_i$. Then $ \fM_{E}(\cX/B,\tau) =
\ul\fM \times_{\ul{\tilde\fC_i^\circ}} \tilde\fC_i^\circ$ since
$\tilde\fC_i^\circ\arr \fC^\circ$ is strict, and similarly
$\fM_{L}(\cX/B,\tau_i) = \ul\fM_i \times_{\ul \fC_L^\circ}\fC_L^\circ$. Now there
is a canonical morphism $\tilde\fC_i^\circ \to \fC_L^\circ$ lifting
$\fM(\cX/B,\tau)\arr \fM(\cX/B,\tau_i)$ --- the prestabilization morphism as a
punctured map. Pulling back we obtain the desired morphism $\fM_{E}(\cX/B,\tau)
\to \fM_{L}(\cX/B,\tau_i)$.
\end{proof}
}

{We next show that} enlarging the log structures for the punctures may
change the structure of the underlying stacks, but only by nilpotents in the
structure sheaf.

For $\bS\subseteq E(G)\cup L(G)$ we unify the notation, denoting by
$\tfM'(\cX/B,\tau)\arr \fM'(\cX/B,\tau)$ the corresponding fiber product over
both nodal and punctured sections. In this generality we have:

\begin{proposition}
\label{Prop: tilde log structures don't change reductions}
Let $\tau=(G,\bg,\bsigma,\bar\bu)$ be a global type of punctured maps,
$\bS\subseteq E(G)\cup L(G)$ and $\tfM'(\cX/B,\tau)$ the corresponding
stack of weakly $\tau$-marked punctured maps to $\cX/B$ with sections. Then the
canonical map
\[
\tfM'(\cX/B,\tau)\arr \fM'(\cX/B,\tau)
\]
induces an isomorphism on the reductions {of their underlying
stacks}. If moreover $\bS\subseteq E(G)$, the canonical map is an isomorphism
{on underlying stacks}.

Analogous results hold for the marked and decorated versions.
\end{proposition}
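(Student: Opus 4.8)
The statement compares two log structures on the same underlying stack, so the plan is to reduce everything to a local computation of ghost sheaves at a geometric point and check that passing to the enlarged log structure — the fs (resp.\ fine) fiber product over the nodal and punctured sections — only changes the structure sheaf by nilpotents, and not at all when only nodal sections are involved. The key observation is that for each section $s$ in $\bS$, the stack $\fM'_s(\cX/B,\tau)$ is the image of $s$ in the universal curve equipped with the \emph{pull-back} log structure $s^*\cM_{\fC'^\circ}$; since $s$ is strict onto its image over the base, the map $\fM'_s(\cX/B,\tau)\arr \fM'(\cX/B,\tau)$ is an isomorphism on underlying stacks, with the only difference being the log structure. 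Thus $\tfM'(\cX/B,\tau)$ and $\fM'(\cX/B,\tau)$ have the same underlying stack \emph{before} saturation/integralization; the content is to analyze what integralizing and saturating the fiber product does.

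First I would fix a geometric point $\bar w\arr \ul\fM'(\cX/B,\tau)$ and, using a chart, describe $\ocM_{\fM',\bar w}=Q$ (the basic monoid of the $\tau_{\bar w}$-type as in Proposition~\ref{Prop: Basic monoid}) together with, for each section $s$ in $\bS$ corresponding to an edge or leg $x$, the monoid $\ocM_{\fC'^\circ,s(\bar w)}$ and the classifying map $\ocM_{\fM',\bar w}\arr \ocM_{\fC'^\circ,s(\bar w)}$. For a nodal section $s_E$ with $E\in E(G)$, the relevant stalk on the curve is $S_{e_E}\subset\NN^2$ (the node monoid), the pull-back log structure along the section is $\ocM_{\fM',\bar w}\oplus_{\NN} S_{e_E}$, and forming the fiber product over $\fM'$ in the fine and then saturated category recovers exactly $\ocM_{\fM',\bar w}=Q$ back again — because the node relation $s_x s_y = s_\rho$ means the extra generators $s_x,s_y$ already live in $Q^{\gp}$ up to the edge-length element $\rho_E\in Q$, and saturating introduces no new elements in the characteristic monoid. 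This gives the second assertion: when $\bS\subseteq E(G)$ the canonical map is an isomorphism, on the nose. For a punctured section $s_L$ with $L\in L(G)$, the pull-back log structure has ghost monoid $Q^\circ_L\subset Q\oplus\ZZ$; integralizing and saturating the fiber product $Q\oplus_{Q} Q^\circ_L$ over $Q$ can enlarge the characteristic monoid — but only by elements whose images under the structure map $\alpha$ are nilpotent, by the vanishing statements Definition~\ref{def:puncturing},(2) and Proposition~\ref{prop:puncturing-ideal-vanish} on the puncturing ideal. This is the step where I expect the real work: carefully tracking which new monoid elements appear upon saturation and verifying their structure-sheaf images lie in the nilradical.

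Concretely, for the last point I would argue as follows: any section of the enlarged characteristic monoid not already in $\ocM_{\fM',\bar w}$ is, by the explicit description of $Q^\circ_L\subset Q\oplus\ZZ$, a combination involving a generator $p$ with $u_L(p)<0$; its first-coordinate projection is a generator of the puncturing log-ideal $\ocK_W$ (Definition~\ref{Def: Puncturing log ideal}), and $\alpha_W(\cK_W)=0$ by Proposition~\ref{prop:puncturing-ideal-vanish}. On the level of stacks this says precisely that the closed substack defined by $\cK$ — which already contains $\tfM'(\cX/B,\tau)$ as a closed subscheme by construction — agrees with $\fM'(\cX/B,\tau)$ up to this square-zero (more precisely, nilpotent) structure, so the two stacks have the same reduction. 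I would then note that the fine versus fs distinction in the definitions of $\tfM'(\cX/B,\tau_i)$ versus $\tfM'(\cX/B,\tau)$ does not affect this argument, since passing from fine to fs only adds torsion-free saturation data to the characteristic monoid, again changing $\cO$ only by nilpotents. Finally, the marked and decorated versions follow verbatim: the curve-class decoration $\bA$ is locally constant and plays no role, and the marking ideal (Definition~\ref{Def: Itau}) is handled by the same vanishing arguments since its generators also map to $0$ in $\cO$ by Lemma~\ref{Prop: log ideal for type}. The main obstacle, as flagged, is the bookkeeping in the punctured case — making the "only nilpotents are added" claim precise requires combining the monoid-theoretic description of $Q^\circ_L$ with the three distinct vanishing results, and organizing this uniformly over all $\bar w$ using coherence of the relevant log-ideals (Lemma~\ref{curve-ideal-characteristic}).
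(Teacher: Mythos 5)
Your overall strategy is the one the paper uses: reduce to a single element of $\bS$ at a time and do a local computation comparing the (ghost) monoid of $\fM'$ with that of the enlarged log structure, checking that integralization and saturation change $\cO$ at most by nilpotents. But both of your local arguments have problems. In the edge case, the fs fiber product does \emph{not} "recover exactly $Q$": the characteristic monoid of $\tfM'$ at $\bar w$ genuinely enlarges to $Q\oplus_\NN\NN^2$ (with $1\mapsto(1,1)$), and neither $s_x$ nor $s_y$ lies in $Q^{\gp}$ — only their product does — so your stated reason is false. The correct reason the underlying stack is unchanged is that the morphism $\NN\to\NN^2$, $1\mapsto(1,1)$, is integral and saturated, hence the pushout $Q\oplus_\NN\NN^2$ is already fine and saturated (Ogus I.4.8.5, I.4.6.3); no integralization or saturation step occurs, so the fs fiber product coincides with the ordinary one and only the log structure changes.

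In the leg case there is a genuine missing step. Writing $W=\Spec(Q^\circ\to A)$ for the section with its pulled-back punctured log structure and $Q'$ for the saturation of $Q^\circ$, the saturated stack has ring $A'=A\otimes_{\kk[Q^\circ]}\kk[Q']$. Your nilpotence argument (elements $m\in Q'\setminus Q^\circ$ lie in $Q\oplus\ZZ_{<0}$, so $z^m$ is nilpotent by Definition~\ref{def:puncturing},(2) applied to a positive multiple) only shows that $A_\red\to A'_\red$ is surjective, i.e.\ that $\tfM'_\red$ is a \emph{closed substack} of $\fM'_\red$. To conclude it is an isomorphism you must also know that the saturation morphism is surjective on underlying schemes, which the paper gets from \cite[Prop.~III.2.1.5]{Ogus}; without this, $A\to A'$ could a priori have a kernel not contained in the nilradical. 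Separately, your detour through the puncturing log-ideal $\cK_W$ and Proposition~\ref{prop:puncturing-ideal-vanish} is off target: that ideal already vanishes on $\fM'$ itself by construction, so it says nothing about what saturation of $Q^\circ$ adds; the relevant vanishing is Definition~\ref{def:puncturing},(2) applied directly to the new saturation elements, as above.
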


\begin{proof}
Going inductively, it suffices to treat the case that $\bS$ has only one
element. The case $\bS=\{E\}$ is an edge leads to the problem of going over from
a monoid $Q$ to the saturation of a monoid of the form $Q\oplus_\NN \NN^2$ with
$1\in\NN $ mapping to $(1,1)\in\NN^2$. Since the morphism $\NN\arr\NN^2$ is
saturated and integral, $Q\oplus_\NN \NN^2$ is saturated and integral as well by
\cite[Prop.\,I.4.8.5, Prop.\,I.4.6.3]{Ogus}. In particular, the fs fiber
product in \eqref{Eqn: tilde log structure} agrees with the ordinary fiber
product, and only changes the log structure.

For $\bS=\{L\}$ a leg, we need to take the saturation of the strict subspace
given by a punctured section. {Let $(C^\circ/W,\bp,f)$ be
a punctured map to $\cX/B$ with $W=\Spec(Q\arr A)$. Let $\ul W\arr \Spec\kk[Q^\circ]$ be a chart for the log structure induced by the punctured section corresponding to the leg $L$, with $Q^\circ\subset Q\oplus \ZZ$.} Then necessarily
the induced map $Q^{\circ} \arr A$ takes $Q^{\circ} \setminus (Q \oplus {0})$ to
zero.

The saturation of $\Spec(Q^\circ\arr A)$ equals $W' = \Spec (Q'\arr A')$ with
$Q'$ the saturation of $Q^{\circ}$ and $A' = A\otimes_{k[Q^{\circ}]} k[Q']$.
Necessarily, if $m\in Q' \setminus Q^{\circ}$ then $m\in Q \oplus \ZZ_{<0}$, and
so {its image} $z^m\in A'$ is nilpotent {(following the notation of \S\ref{Sec:convention})}. It is then immediate that $A \arr A'_{\red}$ is
surjective. This map factors through $A_{\red} \arr A'_\red$, so the latter is
surjective. Thus $W'_\red$ is a closed subscheme of $W_\red$. On the other hand,
by \cite[Prop.\,III.2.1.5]{Ogus} saturation is always a surjective
morphism, and hence $W'_\red \arr W_\red$ is an isomorphism.
\end{proof}

By Proposition~\ref{Prop: tilde log structures don't change reductions} the Chow
theories of the moduli stacks of punctured maps do not change by enlarging the
log structures. We can thus freely use the enlarged log structures in discussing
gluing.

We are now in position to state the central technical gluing result. It explains
how a $\tau$-marked punctured map is equivalent to giving a collection of
$\tau_i$-marked punctured maps obeying a logarithmic matching condition.

\begin{theorem}
\label{Thm: Gluing theorem}
{Let $X\arr B$ be a morphism of fs logarithmic schemes over $\kk$
fulfilling the assumptions stated at the beginning of
\S\ref{sec:stack}{, and assume $X$ is simple}.} Let
$\tau_1,\ldots,\tau_r$ be the global types of punctured maps
(Definition~\ref{Def: global type}) obtained by splitting a global type
$\tau=(G,\bg,\bsigma,\bar\bu)$ along a subset of edges $\bE$. Then the
commutative diagram
\[
\xymatrix{
\tfM'(\cX/B,\tau) \ar[r]^(.43){\delta_\fM}\ar[d]_{\ev_\bE} &
\prod_{i=1}^r \tfM'(\cX/B,\tau_i)\ar[d]^{\ev_\bL}\\
\prod_{E\in \bE} \cX\ar[r]^(.43)\Delta& \prod_{E\in \bE} \cX\times \cX 
}
\]
with $\Delta$ the product of diagonal embeddings and the other arrows defined in
\eqref{Eqn: ev_bE}, \eqref{Eqn: ev_bL}, and \eqref{Eqn: splitting morphism}, is
cartesian in the category of fs log stacks. We remind the reader that all
products in this square are taken over $B$.

An analogous statement holds for $\tau$ replaced by a decorated global type
$\btau=(\tau,\bA)$.
\end{theorem}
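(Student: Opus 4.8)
The plan is to verify the universal property of the fs fiber product directly: for a test fs log scheme (or strict test stack) $T\to B$, we must identify the set of $T$-points of $\tfM'(\cX/B,\btau)$ with the set of data consisting of a point of $\prod_i\tfM'(\cX/B,\btau_i)(T)$ together with a point of $\prod_{E\in\bE}\cX(T)$ whose image under $\Delta$ agrees with the image under $\ev_\bL$, all compatibly. So first I would unwind both sides. A $T$-point of the right-hand corner is a collection of weakly $\btau_i$-marked punctured maps $(C_i^\circ/T,\bp_i,f_i)$ to $\cX$ with logarithmic sections at all legs (including the split legs $L_E,L_E'$), subject to the matching condition that for each $E\in\bE$ the evaluation of $f_{i(v)}$ at the section $s_{E,v}$ and the evaluation of $f_{i(v')}$ at $s_{E,v'}$ agree, \emph{as morphisms of fs log schemes} $T\to\cX$, not merely on underlying schemes.

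\textbf{Key steps.} Step 1: underlying schemes. Since $\ul\cX\to\ul B$ is log \'etale but the diagram is claimed cartesian only over fs log stacks, I would not try to show the underlying diagram is cartesian; instead I note that gluing the curves $\ul C_i$ along the matched pairs of sections, using the fibered sum description \eqref{Eqn: fibred sum node splitting} applied in reverse to $\ul Z_E\amalg_{\ul Z_E\amalg\ul Z_E}\coprod_i\ul{\tilde C_i}$, produces a nodal curve $\ul C/\ul T$ with a nodal section $s_E$ for each $E$, recovering the graph $G$ from the $G_i$. The agreement of the two underlying evaluation maps gives a well-defined underlying morphism $\ul f:\ul C\to\ul\cX$. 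Step 2: the log structure on the glued curve. Here I would reverse Proposition~\ref{prop:splitting-curve}: given $\ul\kappa:\coprod\ul{\tilde C_i}\to\ul C$, I must produce $\cM_{C^\circ}$ on $\ul C$ with $\ul\kappa^*\cM_{C^\circ}$ the given puncturing on the normalization. Away from the nodal sections this is just the given data; at a nodal section $s_E$, the two punctured structures on the two branches, carrying contact orders $u_{L_E}$ and $u_{L_E'}=-u_{L_E}$ (opposite by the balancing convention and the orientation of $E$), must be spliced into the standard log-smooth-curve structure $s_xs_y=s_\rho$. This is where the fs fiber product, rather than the ordinary one, is essential: the smoothing parameter $\rho_E$ of the recreated node is the new monoid generator produced by the fs pushout $Q_{i(v)}\oplus_{?}Q_{i(v')}$ along the matched sections, and the equation $u_{L_E}+u_{L_E'}=0$ in the relevant lattice is exactly what makes the two punctured log structures glue to a genuine (unpunctured, in that direction) log smooth curve structure with a node. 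Step 3: the marking. I would check that the glued punctured map is weakly $\btau$-marked, i.e., conditions (1) and (2) of Definition~\ref{Def: marking by type}: condition (1) (factoring through strata) is local on $\ul C$ and follows from the corresponding conditions for the $f_i$ together with $\bsigma(E)$ being the common face hit by both half-edges; condition (2) (the contraction morphism of global types at each geometric point) follows by assembling the contraction morphisms for the $\tau_i$ and re-inserting the contracted edges $E\in\bE$, using monodromy-freeness of $\bar u_E$ to guarantee that the local contact order $u_E$ at the recreated node is the \emph{unique} lift of $\bar u_E$ compatible with both branches — this is precisely the role of the monodromy-free hypothesis, and it is what makes the assignment well-defined rather than a torsor under a monodromy group. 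Step 4: inverse construction and bijectivity. Conversely, splitting followed by pre-stabilization (Proposition~\ref{prop:splitting-map}) sends a $\tfM'(\cX/B,\btau)$-point to such a matched collection, and the two constructions are mutually inverse by the uniqueness in Proposition~\ref{prop:pre-stable} (pre-stability is automatic for the split pieces since it was so for $C^\circ$, by Corollary~\ref{Cor: pre-stability-pullback}) and by the uniqueness of the log structure in Step 2. Step 5: the decorated version. For $\btau=(\tau,\bA)$ one distributes the curve class $\bA$ over the vertices; since the vertex sets are unchanged by splitting, $\bA$ is literally the same data on both sides, and the argument goes through verbatim.

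\textbf{Main obstacle.} The delicate point — and the one I would spend the most care on — is Step 2, reconstructing $\cM_{C^\circ}$ at the recreated nodes and checking that the fs (as opposed to fine, or ordinary) fiber product on the two sides matches up. Concretely: the left-hand side $\tfM'(\cX/B,\tau)$ is defined in \eqref{Eqn: tilde log structure} by an \emph{fs} fiber product over the images of nodal sections, whereas the split pieces $\tfM'(\cX/B,\tau_i)$ are defined by a \emph{fine-then-saturated} fiber product over punctured sections; one must show that gluing a pair of punctured sections $s_{E,v},s_{E,v'}$ with opposite contact orders and then taking the fs pushout of monoids reproduces exactly the nodal section log structure, i.e., that the relevant pushout $Q^\circ_{i(v)}\oplus^{\mathrm{fs}}Q^\circ_{i(v')}$ (over the base and subject to $\ev$-matching) equals the basic monoid $Q$ for $\tau$ localized appropriately. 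This is essentially a monoid-theoretic computation, carried out stalk-by-stalk at geometric points using the explicit description of $S_{e_q}$ from \S\ref{sss: tropical punctured maps} and the fact (used already in the proof of Proposition~\ref{Prop: tilde log structures don't change reductions}) that $\NN\to\NN^2$, $1\mapsto(1,1)$, is saturated and integral; the saturation on the left exactly undoes the puncturing on the two branches. Once this local identification of monoids is in hand, and once one observes that the diagonal $\Delta$ is strict (so that fs-base-change along it is unproblematic), the cartesian-ness over fs log stacks follows formally from the universal properties.
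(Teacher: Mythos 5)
Your overall strategy — verify the universal property directly by gluing the split data back together, then check markings and mutual inverseness — is the paper's strategy, and your Steps 1, 4 and 5 match it. But two substantive points in the middle are off.

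First, Step 2 misidentifies where the smoothing parameter of the recreated node comes from. It is not ``a new monoid generator produced by the fs pushout $Q^\circ_{i(v)}\oplus^{\mathrm{fs}}Q^\circ_{i(v')}$'' of base monoids, and the basic monoid of $\tau$ plays no role in the cartesianity argument: the test object carries an arbitrary fs log structure, not a basic one, and its log structure is not modified by the gluing. The whole point of passing to the enlarged stacks $\tfM'$ is that the punctured sections become \emph{log} sections, and the paper's Lemma~\ref{Lem: Local construction of nodes} \emph{defines} the smoothing parameter inside the already-given $\cM_W$ by $\sigma_q=s_1^\flat(\sigma_x)\cdot s_2^\flat(\sigma_y)$; the log structure at the node is then $(\pi^*\cM_W)\oplus_\NN\NN^2$, a pushout over $\NN$, not a pushout of the two punctured monoids over anything. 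Relatedly, your claim that $\ul\kappa^*\cM_{C^\circ}$ equals ``the given puncturing'' on the normalization is false in general: the lemma shows one typically must \emph{enlarge} the puncturings $\hat U_i^\circ\supset U_i^\circ$ for the comparison maps $\iota_i$ to exist (pre-stabilization undoes this upon splitting), and accounting for this is part of why the two constructions are mutually inverse.

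Second, and more seriously, you never construct the log morphism $f:C^\circ\to\cX$ on the glued curve. Gluing the underlying map and gluing the curve's log structure does not yet produce $f^\flat$ at the recreated node. The paper does this in two further steps: it glues the tropicalizations $\Sigma(f_i)$ to a map of cone complexes $\Sigma(C^\circ)\to\Sigma(\cX)$ — this is where the identity $u_{E,v}=-u_{E,v'}$, hence monodromy-freeness of $\bar u_E$, and the \emph{logarithmic} (not merely schematic) matching of evaluations are actually used, via the formula $(h,\lambda)\mapsto V_1(h)+\lambda\cdot u_{E,v}$ whose image must be checked to lie in the correct cone — and then invokes the correspondence between maps to Zariski Artin fans and their tropicalizations, \cite[Prop.~2.10]{decomposition}, to promote the glued tropical map to $f:C^\circ\to\cA_X$ and hence to $\cX=B\times_{\cA_B}\cA_X$. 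Your proposal uses monodromy-freeness only to verify the marking condition, but the map must exist before it can be marked; without this construction the argument does not close.
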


\begin{remark}
We note that it is important that we use the weakly marked moduli spaces
here. Indeed, there exist simple examples of (strongly) marked
punctured maps which may be glued to {obtain} a punctured map which is
only weakly marked. This arises as saturation issues in 
the above fiber product description may introduce nilpotents.
{For
an explicit example, see \cite[Ex.\,4.5]{G22}. We also note that this
is essentially the same saturation issue as in Remark~\ref{rem:weak vs strong},
and the examples are closely related.}
\end{remark}

The proof of the theorem, given further below, is based on the following gluing
result for punctures with a section.

\begin{lemma}
\label{Lem: Local construction of nodes}
Let $W$ be an fs log scheme and $U_i^\circ$ a puncturing along $\{0\}\times W$
of strict open neighborhoods $U_1,U_2\subseteq \AA^1\times W$ of $\{0\}\times
W$, $i=1,2$. {Here $\AA^1$ is endowed with its toric log structure.}
Furthermore let $s_i: W\arr U^\circ_i$ be sections with schematic image
$\{0\}\times\ul W$ of the composition $U_i^\circ\arr U_i\arr W$ of the
puncturing map and the projection.

Then there exists an enlarged puncturing $\hat U^\circ_i \to U^\circ_i \to U_i$
through which the sections $s_i$ factor, and a unique log smooth curve $\pi:
U\to W$ with maps
\[
\iota_1: \hat U_1^\circ\arr U,\qquad \iota_2: \hat U_2^\circ\arr U
\]
over $W$ inducing an isomorphism of underlying schemes $\ul
U_1\amalg_{\{0\}\times \ul W} \ul U_2\simeq \ul U$, strict away from
$\{0\}\times \ul W$, and such that $\iota_1\circ \hat s_1=\iota_2\circ \hat
s_2$, with $\hat s_i$ the lifts of $s_i$.
\end{lemma}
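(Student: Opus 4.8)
The statement is local on $W$, so I would work in a fixed strict étale (or Zariski) neighborhood where $W=\Spec(Q\to A)$ has a chart and both punctured neighborhoods $U_i^\circ$ admit charts extending that of $W$. The first step is to understand the puncturing data: each $U_i^\circ$ is, near $\{0\}\times\ul W$, governed by a fine submonoid $Q_i^\circ\subset Q\oplus\ZZ$ containing $Q\oplus\NN$, where the $\ZZ$-factor records the order of vanishing along the section $s_i$. The generators of $Q_i^\circ$ with negative $\ZZ$-component are precisely the ones forced to map to $0$ under the structure map, by Definition~\ref{def:puncturing},(2). I would write the contact orders: the fact that the glued curve is to exist along a node forces the ``outgoing directions'' of the two punctures to be negatives of each other in the relevant lattice, which is exactly the monodromy-free matching already implicit in the setup (the lemma is the local building block, so I need only produce $U$, not impose extra conditions).

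\textbf{Key steps.} (1) \emph{Construct the enlarged punctured structures $\hat U_i^\circ$.} The sections $s_i$ factor through $U_i^\circ\to U_i\to W$ set-theoretically with image $\{0\}\times\ul W$, but to get a section in the log category one must enlarge the puncturing to include the pullback of the divisorial log structure of $\{0\}\times W$ on each branch --- this is the same device as in Proposition~\ref{prop:splitting-curve}, run in reverse. Concretely $\hat Q_i^\circ$ is generated by $Q_i^\circ$ together with the element $(0,\dots,0,1)$ in the $\ZZ$-slot corresponding to the coordinate function on $\AA^1$; one checks this is still a puncturing (the structure map of the new generator is the coordinate, which vanishes along $\{0\}\times W$, so Definition~\ref{def:puncturing} holds). (2) \emph{Build the log smooth curve $U$.} On underlying schemes set $\ul U=\ul U_1\amalg_{\{0\}\times\ul W}\ul U_2$, a nodal curve over $\ul W$ by the standard local model $\Spec\cO_W[x,y]/(xy)$ glued to the two $\AA^1$-charts. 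For the log structure: near the node, declare $\cM_U$ to be generated over $\pi^*\cM_W$ by two sections $s_x,s_y$ with $s_xs_y=s_\rho$, where $s_\rho\in\pi^*\cM_W$ is determined by the requirement that restricting to branch $i$ recovers $\hat\cM_{U_i^\circ}$. The element $\rho\in Q$ is read off from the puncturing data: if $\hat Q_1^\circ$ contains $(q,-n_1)$ and matching forces the corresponding element on branch $2$, then $\rho$ is the unique element of $Q$ making $s_x s_y$ land in $\pi^*\cM_W$ compatibly with both branches --- this uses the monodromy-freeness to see such $\rho$ exists and is unique. Away from the node $U$ is strictly isomorphic to $U_i$. (3) \emph{Construct $\iota_i$ and verify $\iota_1\circ\hat s_1=\iota_2\circ\hat s_2$.} The maps $\iota_i:\hat U_i^\circ\to U$ are the evident ones on underlying schemes together with the inclusion $\hat\cM_{U_i^\circ}\hookrightarrow\kappa_i^*\cM_U$ dual to the partial normalization; the identity $\iota_1\circ\hat s_1=\iota_2\circ\hat s_2$ holds because both sides are the strict inclusion of $(\{0\}\times\ul W, (\text{node log structure}))$ into $U$, which is symmetric in the two branches. (4) \emph{Uniqueness.} Any log smooth curve $U'$ with the stated properties has underlying scheme the pushout, hence $\ul U'=\ul U$; its log structure near the node is determined up to isomorphism by $F.\,$Kato's classification of log smooth curves by the single parameter $s_\rho\in\pi^*\cM_W$, and $s_\rho$ is forced by requiring $\iota_i$ to restrict to the given $\hat\cM_{U_i^\circ}$. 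So $U$ is unique up to unique isomorphism.

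\textbf{Main obstacle.} The delicate point is step (1)--(2) together: showing that after enlarging the puncturings to make $s_i$ log sections, the two branches can be glued into a single \emph{log smooth} (as opposed to merely log, or idealized log) curve, with the smoothing parameter $\rho\in Q$ coherently determined. This is where the hypothesis that $s_i$ is a section (so the puncture sits at a smooth point of each branch, contact order data living in $Q\oplus\ZZ$ with the $\ZZ$ intrinsic to that branch) and the monodromy-free matching are both essential --- without them one would get either an obstruction to defining $s_\rho$ or a nontrivial automorphism ambiguity. Once $\rho$ is pinned down, verifying that the resulting $\cM_U$ is a genuine log structure with $\alpha_U(s_x)=\alpha_U(s_y)=0$ at the node and that $\iota_i$ are log morphisms is the kind of stalk-by-stalk check already rehearsed in Proposition~\ref{prop:splitting-curve}, carried out in the opposite direction, and I would cite that proof for the mechanics rather than repeat it.
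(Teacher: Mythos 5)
Your overall skeleton (glue the underlying schemes, equip the node with a log smooth structure determined by a single smoothing parameter in $\pi^*\cM_W$ in the style of F.~Kato/Mochizuki, define the $\iota_i$, check the section identity, deduce uniqueness) matches the paper's proof. But the central step is missing: you never actually identify the smoothing parameter. You say $\rho$ ``is read off from the puncturing data'' and invoke monodromy-freeness, but monodromy-freeness is not a hypothesis of this local lemma, and the puncturing monoids $Q_i^\circ\subset Q\oplus\ZZ$ alone do not determine the smoothing parameter --- they only see the ghost-sheaf level. The extra datum that pins it down is the pair of log sections $s_i$ themselves, and the precise formula is $\sigma_q=s_1^\flat(\sigma_x)\cdot s_2^\flat(\sigma_y)$, where $\sigma_x\in\cM_{U_1^\circ}$ and $\sigma_y\in\cM_{U_2^\circ}$ are induced by the coordinate on $\AA^1$ on each branch. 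This formula is forced by Mochizuki's uniqueness of logarithmic coordinate lifts at a node (once one normalizes $\tilde\sigma_x\tilde\sigma_y\in\pi^\flat\cM_W$) together with the requirement $\iota_1\circ\hat s_1=\iota_2\circ\hat s_2$; conversely, taking it as the \emph{definition} of $\sigma_q$ is exactly what makes that identity hold. Your step (3) asserts the identity ``by symmetry,'' which begs the question: for any other choice of $\sigma_q$ the two composites would disagree.

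You also misidentify why the puncturings must be enlarged. The given $s_i:W\arr U_i^\circ$ are already log sections --- that is the hypothesis --- and the generator of the divisorial log structure along $\{0\}\times W$ is already contained in any puncturing by Definition~\ref{def:puncturing}, so adjoining it changes nothing. The enlargement is needed because $\iota_1^\flat$ must send the lift of the \emph{other} branch's coordinate to $\sigma_x^{-1}\pi^\flat(\sigma_q)$, which need not lie in $\cM_{U_1^\circ}$; one checks that this element maps to $0$ under the structure morphism (since $\alpha(\sigma_q)=xy=0$ on the node), so enlarging the puncturing by it is legitimate, and the lift $\hat s_1$ then exists because $(s_1^\flat)^{\gp}\bigl(\sigma_x^{-1}\pi^\flat(\sigma_q)\bigr)=s_2^\flat(\sigma_y)\in\cM_W$. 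Without these two points the construction does not close up, so the proposal has a genuine gap rather than an alternative route.
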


\begin{remark}
\label{Rem: Uniqueness of puncture enlargement for gluing}
The lifting of $s_i$ to $\hat U_i^\circ$ is unique. The enlarged puncturing
$\hat U_i^\circ$ is not unique, but may be chosen uniquely if we require that
$\hat U_i^\circ \to U_i^\circ\times U$ is prestable. We obtain a pushout diagram
up to unique punctured enlargement:
\[
\xymatrix{ &U_1^\circ & \hat U_1^\circ \ar@{.>}[l] \ar@{.>}[dr]^{\iota_1}\\ 
W \ar[ur]^{s_1}\ar@{.>}[urr]_{\hat s_1}
\ar[dr]_{s_2}\ar@{.>}[drr]^{\hat s_2} &&&U\\  
&U_2^\circ & \hat U_2^\circ \ar@{.>}[l] \ar@{.>}[ur]_{\iota_2}}
\]
\end{remark}
\begin{proof}[Proof of Lemma~\ref{Lem: Local construction of nodes}.]
The statement is about the unique definition of the log structure on $\ul U$
near the nodal locus $\{0\}\times \ul W\subset \ul U$. Since this is a local
question we can restrict attention to a neighborhood of a geometric point
{$\ol q=(0,\ol w)$ of $\{0\}\times \ul W$}. By the definition of
puncturing, the linear coordinate of $\AA^1$ defines elements
$\sigma_x\in\cM_{U^\circ_1,\ol q}$, $\sigma_y\in \cM_{U^\circ_2,\ol q}$.

Now assume that $U=(\ul U,\cM_U)\arr W$ is a log smooth curve with the required
properties {for some $\hat U_i^\circ$. Since $\hat U_i^\circ$, $U_i^\circ$ are both puncturings of $U_i$ we may identify $\ocM_{\hat U_i^\circ}^\gp= \ocM_{U_i^\circ}^\gp=\ocM_{ U_i}^\gp$.} Then
\[
\ol \iota_i^\flat:\ocM_{U,\ol q}^\gp\arr{ \ocM_{\hat U_i^\circ,\ol q}^\gp=}
\ocM_{U_i^\circ,\ol q}^\gp=
\ocM_{W,\ol w}^\gp \oplus\ZZ
\]
is an isomorphism with $\ocM_{W,\ol w}\oplus\NN \subseteq
\ol \iota_i^\flat(\ocM_{U,\ol q})$. Thus there exist $\tilde \sigma_x,\tilde
\sigma_y\in \cM_{U,\ol q}$ with
\[
\sigma_x= \iota_1^\flat(\tilde \sigma_x),\qquad
\sigma_y= \iota_2^\flat(\tilde \sigma_y).
\]
An important property of log smooth structures at nodes is that logarithmic
lifts of given local coordinates at the two branches of the node become unique
if one requires their product to lie in $\pi^\flat(\cM_{W,\ol w})$
\cite[\S3.8]{Mochizuki}. With this condition imposed on $\tilde \sigma_x$,
$\tilde \sigma_y$, we now obtain a unique element $\sigma_q\in \cM_{W,\ol w}$
with
\begin{equation}
\label{Eqn: nodal relation}
\tilde \sigma_x\cdot \tilde \sigma_y = \pi^\flat(\sigma_q).
\end{equation}
Under the assumption of the existence of { factorizations $\hat s_1,\hat s_2$ of the} sections $s_1,s_2$, we can compute
$\sigma_q$ from $\sigma_x$ and $\sigma_y$ as follows: With $\iota_1\circ
{\hat s_1}=\iota_2\circ {\hat s_2}$ we obtain
\[
\sigma_q= (\iota_1\circ {\hat s_1})^\flat\big(\pi^\flat(\sigma_q)\big)= (\iota_1\circ
{\hat s_1})^\flat(\tilde \sigma_x)\cdot (\iota_2\circ {\hat s_2})^\flat(\tilde \sigma_y)=
s_1^\flat(\sigma_x)\cdot s_2^\flat(\sigma_y).
\]
Note also that $\cM_{U,\ol q}$ is generated by $(\pi^*\cM_W)_{\ol
q}$ and $\tilde\sigma_x, \tilde\sigma_y$, with single relation \eqref{Eqn:
nodal relation}.

Conversely, we can define the structure of a log smooth curve at $\ol
q\arr \ul U$ with the requested properties simply by defining
\begin{equation}
\label{Eqn: Defining nodal relation}
\sigma_q= s_1^\flat(\sigma_x)\cdot s_2^\flat(\sigma_y),
\end{equation}
and
\[
\cM_{U,\ol q}:= (\pi^*\cM_W)_{\ol q}\oplus_\NN \NN^2,
\]
with the generator $1\in\NN$ {in the fibered sum} mapping to
$\pi^\flat(\sigma_q)\in (\pi^*\cM_W)_q$ and to $(1,1)\in\NN^2$, respectively.
The structure morphism
\[
\cM_{U,\ol q}\arr \cO_{U,\ol q}
\]
is defined by the structure morphism of $W$ on the first summand, {and}
by mapping $(a,b)\in\NN^2$ to $x^a y^b$ when writing ${\ul U\subseteq
W\times_\ZZ \Spec \ZZ[x,y]/(xy)}$. Since the projection
$U_i^\circ\setminus(\{0\}\times \ul W) \to W$ is strict, this log structure near
$\ol q$ patches uniquely to the given log structure on
$U_i^\circ\setminus(\{0\}\times \ul W)$ to define the desired log smooth curve
$U\arr W$.

The morphisms $\iota_i:\widehat U^\circ_i\arr U$ are then given by
\begin{equation}
\label{Eqn: U^circ_i->U}
\begin{aligned}
(\iota_1^\flat)_{\ol q}:\cM_{U,\ol q}
&\arr \cM_{U^\circ_1,\ol q}^\gp,\quad&(1,0)&
\longmapsto \sigma_x,&\quad (0,1)&
\longmapsto \sigma_x^{-1} \pi^\flat(\sigma_q),\\
(\iota_2^\flat)_{\ol q}:\cM_{U,\ol q}&
\arr \cM_{U^\circ_2,\ol q}^\gp,&(1,0)&
\longmapsto \sigma_y^{-1} \pi^\flat(\sigma_q),& (0,1)&
\longmapsto \sigma_y.
\end{aligned}
\end{equation}
These definitions are forced upon us by the structure homomorphisms on
$U^\circ_i$ and by the defining relation \eqref{Eqn: Defining nodal relation}
for $\cM_{U,\ol q}$. If $\sigma_x^{-1} \pi^\flat(\sigma_q)\not\in
\cM_{U^\circ_1,q}$, we may have to enlarge the puncturing of $U^\circ_1$ for
this map to define $\hat U_1^\circ\arr U$, and similarly for $\hat U^\circ_2$;
if we choose the enlargement to be generated by $\sigma_x^{-1}
\pi^\flat(\sigma_q)$ it is uniquely defined. Note that by \eqref{Eqn: Defining
nodal relation}, the image of $\sigma_q$ under the structure morphism is $xy=0$,
and hence this enlargement of puncturing is possible. Note also that $s_1$
factors uniquely over this extension of puncturing since by \eqref{Eqn:
Defining nodal relation},
\[
(s_1^\flat)^\gp \big(\sigma_x^{-1}\pi^\flat(\sigma_q)\big)=
s_2^\flat(\sigma_y),
\]
and similarly for $s_2$. Finally, to check the equality $\iota_1\circ s_1=
\iota_2\circ s_2$ we compute
\[
(s_1^\flat\circ \iota_1^\flat)(1,0)= s_1^\flat(\sigma_x)
= s_2^\flat(\sigma_y)^{-1}\sigma_q
= s_2^\flat(\sigma_y^{-1}\pi^\flat(\sigma_q)) =
(s_2^\flat\circ\iota_2^\flat)(1,0),
\]
and similarly for $(1,0)$ replaced by $(0,1)$. This shows the claimed properties
for $U\to W$ and $\iota_1,\iota_2$. Uniqueness follows from the discussion at
the beginning of the proof.
\end{proof}

\begin{remark}
\label{Rem: ghost sheaf and tropical description of gluing}
It is worthwhile to understand the gluing construction of a pair of punctured
points to a node on the level of ghost sheaves and in terms of the dual tropical
picture. The relevant monoids are
\[
Q=\ocM_{W,\ol w},\quad Q_i=\ocM_{U^\circ_i,\ol q}\subset Q\oplus\ZZ,
\]
and their duals
\[
\omega=\Hom(Q,\RR_{\ge0}),\quad
\tau_i=\Hom(Q_i,\RR_{\ge0})\subset\omega\times\RR_{\ge0}.
\]
We choose the embedding $Q_i\subset Q\oplus\ZZ$ such that $\bar\pi^\flat$
identifies $Q$ with $Q\oplus\{0\}$, while the puncturing log structure is
generated by $(0,1)\in Q\oplus\ZZ$. The sections $s_i$ define left-inverses
\[
\bar s_i^\flat: Q_i\arr Q
\]
to $\bar\pi^\flat$. Now the point of the
gluing construction is that there are exactly two automorphisms $\phi$ of
$Q^\gp\oplus\ZZ$ making the following
diagram of monoids commutative:
\[
\xymatrix@C=0pt@R=15pt{
Q\ar[rr]^\id\ar[d]_{\bar\pi^\flat}&&Q\ar[d]^{\bar\pi^\flat}\\
Q_1\ar[d]&& Q_2\ar[d]\\
Q^\gp\oplus\ZZ\ar[rr]^\phi\ar[rd]_{\bar s_1^\flat} &&
Q^\gp\oplus\ZZ\ar[ld]^{\bar s_2^\flat}\\
& Q^\gp
}
\]
Indeed, by commutativity of the square, $\phi(m,0)=(m,0)$ for all $m\in Q^\gp$.
Define $\rho_i=\bar s_i^\flat(0,1)$, $i=1,2$, and $\rho_q$ by
$\phi(0,1)=(\rho_q,d)$. Then $d=\pm1$ since $\phi(0,1)$ together with
$Q^\gp\oplus\{0\}$ generates $Q^\gp\oplus\ZZ$. This sign determines the two
possibilities. Commutativity of the triangle now shows
\[
\rho_1=\bar s_1^\flat(0,1)= \bar s_2^\flat(\rho_q,\pm1)= \rho_q \pm \rho_2.
\]
The situation obtained by splitting a node into two punctures produces the
negative sign. With this choice we obtain an isomorphism of the submonoid
\begin{figure}
\begin{center}
\input{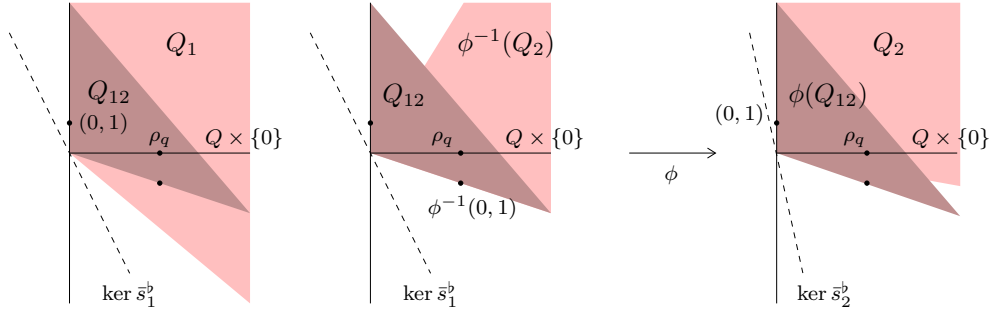}
\caption{\small The monoids $Q_1,Q_2, Q_{12}\subset Q\oplus\ZZ$ and their
comparison under $\phi: Q\oplus \ZZ\arr Q\oplus\ZZ$. {The hatched area
depicts $Q_{12}$, the solid shading $Q_1$, $Q_2$ or $\phi^{-1}(Q_2)$.} Note that
$\phi(Q_{12})=Q_{12}$ because both cones are spanned by $Q\times\{0\}$, $(0,1)$
and $\rho_q-(0,1)$. In the sketched situation, the puncturing for $U^\circ_2$
has to be enlarged, the one for $U^\circ_1$ does not.}
\label{Fig: Q12}
\end{center}
\end{figure}
$Q_{12}\subset Q^\gp\oplus\ZZ$ generated by $Q\oplus\NN$ and
$\phi^{-1}(Q\oplus\NN)$ with $Q\oplus_\NN \NN^2$, with $1\in\NN$ mapping to
$\rho_q$ and $(1,1)$, respectively. The defining equation $\rho_q=\rho_1+\rho_2$
retrieves \eqref{Eqn: Defining nodal relation} in the proof of Lemma~\ref{Lem:
Local construction of nodes} on the level of ghost sheaves. The change of
puncturing of $U^\circ_1$ becomes necessary if $Q_{12} \not\subset Q_1$, and
similarly if $\phi(Q_{12})\not\subset Q_2$ for $U^\circ_2$. Figure~\ref{Fig:
Q12} provides an illustration.

For the tropical interpretation, illustrated in Figure~\ref{Fig: tropical
gluing}, we have two factorizations
\[
\omega\stackrel{\Sigma(s_i)}{\longrightarrow} \tau_i
\stackrel{\Sigma(\pi)}{\longrightarrow} \omega,
\]
of $\id_\omega$. Here the second map is the projection to the first component
when writing $\tau_i\subseteq \omega\times\RR_{\ge0}$.
\begin{figure}
\begin{center}
\input{tau12.rev.pspdftex}
\caption{\small The dual tropical picture of Figure~\ref{Fig: Q12}. {The
hatched area covers $\omega_E$, the solid shading $\tau_1$, $\tau_2$ and
$\phi^t(\tau_2)$. The dashed line indicates the image of $\omega$ under
$\Sigma(s_1)$ or $\Sigma(s_2)$.}}
\label{Fig: tropical gluing}
\end{center}
\end{figure}
Thus $\Sigma(s_i)(h)=(h,\ell_i(h))$ for some {piecewise linear map}
\[
\ell_i:\omega\arr \RR_{\ge 0}.
\]
Thinking of $h$ as parametrizing a punctured tropical curve, $\ell_i(h)$
specifies a point on the puncturing interval or ray emanating from the unique
vertex $v_i$. The tropical glued curve then produces the metric graph with two
vertices $v_1,v_2$ by joining the two intervals at the specified points, hence
producing an edge $E$ of length $\ell_1(h)+\ell_2(h)$. The tropical glued curve
over $\omega$ thus has edge function $\ell:\omega\arr \RR_{\ge0}$ simply defined
by
\begin{equation}
\label{Eqn: length functions for gluing of nodes}
\ell=\ell_1+\ell_2.
\end{equation}
The process of producing the glued cone $\omega_E\subset\omega\times\RR_{\ge0}$
over $\omega$ is dual to the statement
$Q_{12}=(Q\oplus\NN)+\phi^{-1}(Q\oplus\NN)$:
\[
\omega_E=\Hom(Q_{12},\RR_{\ge0})= (\omega\times\RR_{\ge0})
\cap \phi^t(\omega\times\RR_{\ge0}).
\]
The change of puncturing is necessary if $\ell(h)$ is smaller than
either of the length functions obtained by tropicalizing the puncturing, or if
either one of $Q_{12}\cap Q_1$, $\phi(Q_{12})\cap Q_2$ is not saturated.
\end{remark}

We now turn to the proof of the gluing theorem for punctured maps to $\cX/B$.

\begin{proof}[Proof of Theorem~\ref{Thm: Gluing theorem}]
Write $\tau_i=(G_i,\bg_i,\sigma_i,\bar \bu_i)$. We check the universal
property of cartesian diagrams. 
\smallskip

\noindent
\textsc{Step~1: An object of the fibered product.}
Consider an fs log scheme $W$ with two morphisms
\begin{equation}
\label{Eqn: cartesianity check setup}
\textstyle
W\arr \prod_{E\in \bE} \cX,\qquad
W\arr \prod_{i=1}^r\tfM'(\cX/B,\tau_i)
\end{equation}
together with an isomorphism of the compositions to $\prod_E \cX\times \cX$.
Spelled out this means that (1)~for each $i=1,\ldots,r$ we have given a
{weakly} $\tau_i$-marked, pre-stable punctured map
\[
(\pi_i:C_i^\circ\to W,\bp_i,f_i:C_i^\circ\to \cX) 
\]
over $W$ and for each leg $(E,v)\in L(G_i)$ a section $s_{E,v}: W\to C_i^\circ$
with image the puncture labelled by the leg in $G_i$ generated by $E$; and (2)
the sections fullfill the logarithmic matching property
\begin{equation}
\label{Eqn: matching maps W->cX}
f_{i(v)}\circ s_{E,v} = f_{i(v')}\circ s_{E,v'},
\end{equation}
for each edge $E\in\mathbf{E}$ with adjacent vertices $v,v'$. Write $p_{E,v}$
for the strict closed subspace of $C^\circ_i$ defined by $(E,v)\in L(G_i)$.
\smallskip

\noindent
\textsc{Step~2. The glued curve.} 
{
Denote by $\ul C$ the family of nodal curves over $\ul W$ obtained by gluing
$\coprod_i \ul C_i$ schematically along pairs of punctures. Let $E\in\bE$ be an
edge with vertices $v,v'$, and $q_E$ the nodal section of $\ul C\arr\ul W$ given
by the image of the pair of punctures $p_{E,v}$, $p_{E,v'}$. Applying
Lemma~\ref{Lem: Local construction of nodes} \'etale locally near the image of
$q_E$ provides a local extension of the log structure defined by the $C_i^\circ$
away from $q_E$ to a log smooth curve over $W$.
} Thus there is a punctured curve
\[
(\pi: C^\circ\arr W,\bp)
\]
with underlying scheme $\ul C$ that replaces each pair of punctures $p_{E,v}$,
$p_{E,v'}$ in $\coprod_i C^\circ_i$, for an edge $E\in \bE$ with vertices
$v,v'$, by a node $q_E$. The lemma also provides a morphism of punctured curves
$\hat C^\circ_i\arr C^\circ_i$ with unique liftings $\hat s_{E,v}$ of each
section $s_{E,v}$ to $\hat C^\circ_{i(v)}$, and morphisms
\[
\iota_i: \hat C_i^\circ\arr C^\circ
\]
with $\iota_{i(v)}\circ \hat s_{E,v}= \iota_{i(v')}\circ \hat s_{E,v'}$, {and $\hat C_i^\circ$ equal to $C_i^\circ$ possibly up to  enlargement of the puncturing}. For
each edge $E\in \bE$ we can thus define the nodal section
\[
s_E:=\iota_{i(v)}\circ \hat s_{E,v}= \iota_{i(v')}\circ \hat s_{E,v'}:
W\arr C^\circ.
\]
\smallskip

\noindent
\textsc{Step~3. Gluing the tropical map.}  
Denote by $(\hat C^\circ_i,\hat\bp_i,\hat f_i)$ with $\hat f_i= f_i\circ (\hat
C_i^\circ\to C_i^\circ)$ the punctured stable map with the enlarged punctured
structure. It follows from the tropical description of the gluing construction
in Remark~\ref{Rem: ghost sheaf and tropical description of gluing} that the
tropicalizations
\[
\Sigma(\hat f_i):\Sigma(\hat C^\circ_i)\arr \Sigma(\cX)
\]
of $\hat f_i$ glue to a map of {generalized} cone complexes
\[
\Sigma(C^\circ)\arr \Sigma(\cX)
\]
which commutes with the map to $\Sigma(B)$. In fact, restricting to a geometric
point {$\ol w$ of $\ul W$} and adopting the notation from
Remark~\ref{Rem: ghost sheaf and tropical description of gluing}, at an edge
$E\in \bE$ with vertices $v_1,v_2$, the cone $\omega_E\subseteq
\omega\times\RR_{\ge0}$ of $\Sigma(C^\circ_{\ol w})$ is defined by the length
function $\ell_E=\ell_1+\ell_2$. Denote further $\sigma=(\ocM_{\cX,\ol
y}^\vee)_\RR$ for
\[
\ol y=  f_{i(v_1)}( s_{E,v_1}(\ol w)) =  f_{i(v_2)}(s_{E,v_2}(\ol w)). 
\]
Assuming $E$ oriented from $v_1$ to $v_2$, the contact orders obtained from
splitting $\tau$ at $E$ are related by
\[
u_{E,v_1}=u_E= -u_{E,v_2}\in\sigma^\gp_\ZZ.
\]
Now the map $\omega_E\arr \sigma\in \Sigma(\cX)$ can be defined by
\begin{equation}
\label{Eqn: map from omega_E}
\omega_E\ni (h,\lambda)\longmapsto V_1(h)+\lambda\cdot u_{E,v_1}
=V_2(h)+(\ell_E(h)-\lambda)\cdot u_{E,v_2},
\end{equation}
where $V_\mu:\omega\arr \sigma$ is the map for the vertex $v_\mu$ given by
$\Sigma(f_{i(v_\mu)})$, $\mu=1,2$. The image of this map lies in $\sigma$ since
the line segment $\{h\}\times[0,\ell_1(h)]$ is contained in $\tau_1\subseteq
\omega \times\RR_{\ge0}$ and $V_1(h)+\lambda\cdot u_{E,v_1}
=\Sigma(f_{i(v_1)})(h,\lambda)$, and similarly for the line segment $\{h\}\times
[\ell_1(h),\ell(h)]$ and $\Sigma(f_{i(v_2)})$. The equality in \eqref{Eqn: map
from omega_E} holds because
\[
V_1(h)+\ell_1(h)\cdot u_{E,v_1}= \Sigma(f_{i(v_1)}\circ s_{E,v_1})(h)
= \Sigma(f_{i(v_2)}\circ s_{E,v_2})(h) =V_2(h)+\ell_2(h)\cdot u_{E,v_2}.
\]
Note this last argument uses the assumption that $\bar u_E$ is
monodromy-free to assure that $u_{E,v_1}=-u_{E,v_2}$. This finishes the
construction of the map $\Sigma(C^\circ)\arr \Sigma(\cX)$.
\smallskip

\noindent
\textsc{Step~4. Gluing the punctured map.} 
In view of \cite[Prop.\,2.10]{decomposition}\footnote{{While
\cite[Prop.\,2.10]{decomposition} assumes a more restricted context, the proof
only uses that the Artin fan of the codomain is Zariski {(Definition~\ref{Def:
Zariski Artin fan})}. This is true here by simplicity of $X$ and our standing
assumptions on $B$.}}, we thus obtain a morphism $C^\circ\arr \cA_X$ over
$\cA_B$. By the same token, the composition $C^\circ\arr\cA_X\arr \cA_B$ agrees
with $C^\circ\arr B\arr\cA_B$. We thus obtain an induced morphism
\[
f: C^\circ\arr \cX= B\times_{\cA_B}\cA_X,
\]
commuting with the maps to $B$. By functoriality of this construction and the
tropical description of the gluing process, it holds $f\circ\iota_i= \hat f_i$
for all $i$.

The data $(C^\circ\arr W, f,\bp)$ and the collection of nodal sections $s_E$ now
define the desired morphism
\[
W\arr \tfM'(\cX/B,\tau).
\]
Indeed, splitting the domain $C^\circ$ at the nodes for edges $E\in\bE$ and
pre-stabilizing obviously retrieves the collection of pre-stable maps
$(C^\circ_i\to W,\bp_i,f_i)$ with compatible evaluation maps to $\cX$ and
sections $s_{E,v}$ that we started with. This finishes the existence part in
checking cartesianity.

Uniqueness follows from the uniqueness statement in Lemma~\ref{Lem: Local
construction of nodes}.
\end{proof}

\subsubsection{{Relative and absolute maps}}
We end this section by remarking that in many situations, working with all fiber
products over $B$ may be burdensome, as each product in the diagram of Theorem
\ref{Thm: Gluing theorem} is over $B$. In the standard degeneration situation
considered in \S\ref{sec:degeneration gluing} {below}, we might be
working over a standard log point $b_0$, and saturation issues even over $b_0$
can complicate the fiber product. Thus the following is generally useful.

\begin{proposition}
\label{Prop: absolute versus relative frakM}
Let $B$ be an affine log scheme equipped with a global chart $P\rightarrow
\cM_B$ inducing an isomorphism $P\cong \Gamma(B, \overline{\cM}_B)$. Let $\tau$
be a {global type of punctured} tropical map for $X/B$ {(Definition~\ref{Def: global type},(1))}, with underlying
graph connected.\footnote{Connectedness is generally assumed in this paper,
although usually not necessary, but here the result is not true without it.}
Then there are isomorphisms $\fM(\cX/B,\tau)\cong \fM(\cX/\Spec\kk, \tau)$ and
$\scrM(X/B,\tau)\cong \scrM(X/\Spec\kk,\tau)$.
\end{proposition}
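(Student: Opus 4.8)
The plan is to show that the moduli stacks are insensitive to whether we work over $B$ or over $\Spec\kk$ because the chart $P\to\cM_B$ inducing $P\cong\Gamma(B,\ocM_B)$ together with connectedness of the graph forces any punctured map to $\cX$ over an fs log scheme $W$ to carry a \emph{canonical} factorization $W\to B$. Concretely, a punctured map to $\cX=B\times_{\cA_B}\cA_X$ over $W$ is the same datum as a punctured map to $\cA_X$ together with a compatible map $W\to B$ agreeing with the composite $W\to\cA_X\to\cA_B$; so the content is that this map $W\to B$ is uniquely determined by the punctured map to $\cA_X$ (equivalently $\cX$), hence carries no moduli. First I would reduce to the level of ghost sheaves and charts: since $\cA_X\to\cA_B$ and $B\to\cA_B$ are the relevant morphisms, and $\cA_B=\cA_P$ with $P=\Gamma(B,\ocM_B)$, a map $W\to B$ over $\cA_B$ amounts to a map of log structures $\cM_B\to\cM_W$ lifting the given $\cA_B=\cA_P$-structure, i.e.\ lifting the chart $P\to\ocM_W$ to $P\to\cM_W$. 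The point is that this lift already exists and is unique once we have the punctured map to $\cX$, because a $\tau$-marked (pre-)stable punctured map over $W$ comes with a map $W\to B$ as part of its data, and I must check that on the stack of maps to $\cX$ (as opposed to $\cA_X$) this map is not extra data but is reconstructed.

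The key steps, in order, are as follows. (1) Recall that by construction $\cX=B\times_{\cA_B}\cA_X$, so for any $W$ a morphism $W\to\cX$ is a pair consisting of $g:W\to\cA_X$ and $h:W\to B$ with $\cA_X$-image of $g$ equal to $\cA_B$-image of $h$; a family of punctured maps to $\cX/B$ over $W$ is a punctured curve $C^\circ/W$ with $f:C^\circ\to\cX$ over $B$. (2) Observe that $\fM(\cX/\Spec\kk,\tau)$, by the same definition, parametrizes punctured maps $f:C^\circ\to\cX$ over $\Spec\kk$ — i.e.\ with no compatibility with a map to $B$ imposed, only to $\cA_B=\Spec\kk\times_{\cA_{\Spec\kk}}\cA_{?}$... here one must be slightly careful: the correct reading is that $\fM(\cX/\Spec\kk,\tau)$ still has $\cX$ as its target, but the base morphism is only required to land in $\Spec\kk$, which is automatic. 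So the forgetful morphism $\fM(\cX/B,\tau)\to\fM(\cX/\Spec\kk,\tau)$ exists and I want it to be an isomorphism. (3) Construct the inverse: given a punctured map $f:C^\circ/W\to\cX$ in the source-free sense, the composite $\ul C^\circ\to\ul\cX\to\ul B$ together with the logarithmic data should produce $C^\circ\to B$; but $\ul\cX\to\ul B$ is not an isomorphism, so this needs the type $\tau$ and connectedness. The actual mechanism: $\cX\to B$ is log étale, and a map $C^\circ\to\cX$ has a tropicalization $\Sigma(C^\circ)\to\Sigma(\cX)$ which, composed with $\Sigma(\cX)\to\Sigma(B)$, gives a map of cone complexes; since the graph of $\tau$ is connected, $\Sigma(C^\circ)$ is connected, its vertex cones all equal $\omega=\ocM_{W,\bar w}^\vee$ compatibly, and the resulting map $\Sigma(C^\circ)\to\Sigma(B)$ descends to a single map $\omega\to\Sigma(B)$, equivalently $P\to\ocM_{W,\bar w}$. (4) Invoke \cite[Prop.~2.10]{decomposition}, as used in the proof of Proposition above on realizability over $B$ and in Step 4 of the proof of Theorem~\ref{Thm: Gluing theorem}: the correspondence between morphisms from a logarithmic space to an Artin fan and their tropicalizations lets one promote the map $\omega\to\Sigma(B)$ (compatible with everything) to an honest morphism of log schemes $W\to B$, using the global chart $P\to\cM_B$ from the hypothesis. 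Connectedness is what makes the tropical map to $\Sigma(B)$ factor through $\Spec$ of a single cone and hence through $W$ rather than merely through the universal curve. (5) Check that the two constructions are mutually inverse and functorial, that they respect $\tau$-markings (the marking conditions in Definition~\ref{Def: marking by type} are about the map to $\cX$ and the curve, not about the base), stability/pre-stability, and basicness (the basic monoid of Proposition~\ref{Prop: Basic monoid} depends only on the type $\tau$ and the map to $\Sigma(X)$, not on $B$). The same argument, run with $X$ in place of $\cX$, gives $\scrM(X/B,\tau)\cong\scrM(X/\Spec\kk,\tau)$, since $X\to\cX$ is strict.

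I expect the main obstacle to be step (3)–(4): making precise that the tropical map $\Sigma(C^\circ)\to\Sigma(B)$ genuinely descends to a \emph{single} map from the base cone $\omega$ and not just a compatible collection of maps from the vertex cones, and that connectedness of $G$ is exactly what is needed — a disconnected $G$ could a priori map different components of the tropical curve to $\Sigma(B)$ via different, uncoordinated maps, and then there is no single $W\to B$. One should spell out that for a punctured map over a log point $\Spec(Q\to\kappa)$, every vertex cone of $\Gamma$ is a copy of $\omega=Q^\vee_\RR$ via $\Sigma(\pi)$, the map $\Sigma(\pi)$ being the structure map of the family of tropical curves (Proposition~\ref{Prop: pre-stable tropical punctured maps} and the discussion of \eqref{eqn: tropical curve}), so composing $\Sigma(f):\Gamma\to\Sigma(\cX)$ with $\Sigma(\cX)\to\Sigma(B)$ and restricting to any vertex cone gives the same map $\omega\to\Sigma(B)$ for all vertices — here using that $X\to B$, hence $\cX\to B$, is a morphism \emph{over} $B$ so the composite $\Gamma\to\Sigma(B)$ factors through $\Sigma(\pi):\Gamma\to\omega$. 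This gives the required map $\omega\to\Sigma(B)$ canonically, and then \cite[Prop.~2.10]{decomposition} (which requires $\cA_X$, and now also $\cA_B$, to be Zariski — satisfied since $B$ is assumed to have a global chart, and implicitly $X/B$ log smooth Zariski as elsewhere in the section) does the rest. A secondary, purely bookkeeping obstacle is verifying that the enlarged/saturation subtleties flagged in the statement's surrounding text do not interfere: but those concern the $\tfM'$ variants with sections, whereas here we are comparing the plain $\fM(\cX/B,\tau)$ and $\fM(\cX/\Spec\kk,\tau)$, so no saturation procedure is involved and the comparison is a clean equivalence of fibered categories.
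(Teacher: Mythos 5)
Your overall strategy — construct an inverse to the forgetful morphism by showing that any punctured map $f:C^\circ/W\to\cX$ determines a unique $h:W\to B$ — is the paper's strategy, and your identification of where connectedness enters is broadly right. But the mechanism you give for the key step is circular, and two constructions are missing.

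The circularity is in your steps (3)--(4): you justify the claim that the composite $\Gamma\to\Sigma(\cX)\to\Sigma(B)$ factors through $\Sigma(\pi):\Gamma\to\omega$ "using that $\cX\to B$ is a morphism over $B$." An object of $\fM(\cX/\Spec\kk,\tau)$ carries no map $W\to B$; that factorization is precisely what must be proved, and for a fixed vertex $v$ the restriction of the composite to the vertex cone is $\omega\to\bsigma(v)\to\Sigma(B)$, which a priori differs from vertex to vertex. The paper's actual argument is that the composite $g\circ f:C^\circ\to B$ is a punctured map whose \emph{underlying schematic map is constant on fibers} (see below) and which contracts all legs (since $\tau$ is a type over $B$); by the balancing condition (Proposition~\ref{prop:balancing}) its fiberwise tropicalization is balanced with no leg contributions, hence constant. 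That is the substantive input, not the fact that the target sits over $B$. Separately, you never construct the underlying morphism $\ul h:\ul W\to\ul B$: the paper gets it from $\Gamma(W,\cO_W)\cong\Gamma(W,\pi_*\cO_C)$ (properness, flatness, and \emph{connected} fibers — the other essential use of connectedness) together with affineness of $B$, by declaring $\ul h^\sharp=(g\circ f)^\sharp$ on global functions. Finally, your appeal to \cite[Prop.~2.10]{decomposition} cannot produce $W\to B$, since $B$ is an honest log scheme rather than an Artin fan; the paper instead lifts $\bar h^\flat$ to $h^\flat:\Gamma(B,\cM_B)\to\Gamma(W,\cM_W)$ by hand, using once more that units on $C^\circ$ are pulled back from $W$ (properness and connectedness again) to show the lift exists and is unique.
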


\begin{proof}
We show the first isomorphism, the second being similar.
There is a canonical forgetful morphism $\fM(\cX/B,\tau)\rightarrow
\fM(\cX/\Spec\kk,\tau)$, and we need to show it is an isomorphism.
For this purpose, it is enough to demonstrate that given a
punctured map $f:C^{\circ}/W\rightarrow \cX$, there is a 
unique morphism $h: W \to B$ which fits into a
commutative diagram
\[
\xymatrix@C=20pt
{
C^{\circ}\ar[d]_{\pi}\ar[r]^f&\cX\ar[d]^g\\
W\ar[r]_h&B
}
\]
First, to define the underlying $\ul h: \ul W \to \ul B$ it is sufficient to
define $\ul h^{\#}:\Gamma(B,\cO_B)\rightarrow \Gamma(W,\cO_W)\cong \Gamma(W,
\pi_*\cO_C)$, the latter isomorphism from the fact that $\pi$ is flat, proper
with connected {and reduced} fibers and \cite[Lem.\,0E0S]{stacks-project}.
We take this map to coincide with $(g\circ f)^{\#}:\Gamma(B,\cO_B)\rightarrow
\Gamma(C,\cO_C)$.

We next enhance $\ul h$ to a log morphism, first by describing the map at the
level of ghost sheaves, or equivalently, at the tropical level. Fix $\ol w$
{a geometric point of $\ul W$}, and let $\tau'=(G',\bg',\bsigma', \mathbf{u}')$
be the type of $C_{\ol w}\rightarrow \cX$, so that in particular there is a
contraction morphism $\tau'\rightarrow \tau$. Since $\tau'$ and $\tau$ have the
same set of legs with the same contact orders, the fact that $\tau$ is defined
over $B$ implies that the composed map $\Sigma(g\circ f):\Sigma(C_{\ol
w})\rightarrow \Sigma(B)=P^{\vee}_{\RR}$ contracts all legs. However, $g\circ f$
is a punctured map with underlying schematic map constant, and thus by
Proposition~\ref{prop:balancing}, the restriction of $\Sigma(g\circ f)$ to any
fiber of $\Sigma(\pi)$ is a balanced tropical map. {Since all legs
are contracted, the image of this tropical map is compact. Hence, there
must be a hyperplane $H$ in the vector space $P^*_{\RR}$ containing the image
of a vertex of this map and with the entire image contained in a half-space
bounded by $H$. By  balancing, this is impossible unless
the tropical map is constant.} Hence the desired diagram exists at the
tropical level. {This shows that the map $P=\Gamma(B,\ocM_B)\arr
\Gamma(C^\circ_{\ol w}, \ocM_{C^\circ_{\ol w}})$ factors uniquely over
$\ocM_{W,\ol w}$.} In particular, we obtain a map $\bar
h^{\flat}:\Gamma(B,\overline\cM_B)\rightarrow \Gamma(W,\overline \cM_W)$. 

Finally, there is a unique lifting of $\bar h^{\flat}$ to
$h^{\flat}:\Gamma(B,\cM_B)\rightarrow \Gamma(W,\cM_W)$. Indeed, let $s\in
\Gamma(B,\cM_B)$ be a section which maps to $\bar s \in
\Gamma(B,\overline{\cM}_B)$. Then because the desired diagram exists at the
level of ghost sheaves, $(\bar f^{\flat}\circ\bar g^{\flat})(\ol
s)=\bar\pi^{\flat}(\ol t)$ for some $\ol t\in \Gamma(W,\overline{\cM}_W)$.
Thus \'etale locally on $W$, we may choose a lift $t\in \cM_W$ of $\ol t$, and
write $(f^{\flat}\circ g^{\flat})(s)=\psi\cdot \pi^{\flat}(t)$ for some $\psi\in
{\Gamma(\cO_C^{\times})}$. However, again by properness of $\pi$ and 
{connectivity and reducedness of the fibres of $\pi$,}
$\psi = \pi^{\#}(\psi')$ for some invertible function $\psi'$ on
$W$, and we may define $h^{\flat}(s)=\psi'\cdot t$. Because this choice of
$h^{\flat}(s)$ is determined uniquely by $(g\circ f)^{\flat}$, this local
description patches to give a section $h^{\flat}(s) \in\Gamma(W,\cM_W)$, making
the diagram commute.

We have thus defined a functor $\fM(\cX/\Spec\kk,\tau) \rightarrow
\fM(\cX/B,\tau)$ at the level of objects. By the uniqueness of the construction
of the morphism $W\rightarrow B$ given $f:C^{\circ}/W\rightarrow \cX$ above, a
morphism in the category $\fM(\cX/\Spec\kk,\tau)$ defines a morphism in the
category $\fM(\cX/B,\tau)$, hence completely defining the functor. This defines
the desired morphism $\fM(\cX/\Spec\kk,\tau)\rightarrow \fM(\cX/B,\tau)$ which
is inverse to the forgetful morphism $\fM(\cX/B,\tau) \rightarrow
\fM(\cX/\Spec\kk,\tau)$.
\end{proof}


\subsection{Evaluation stacks and gluing at the virtual level}
\label{ss:gluing at virtual level}

While Theorem~\ref{Thm: Gluing theorem} transparently describes the process of
gluing a collection of punctured maps at pairs of punctures with matching
contact orders, it lacks two crucial properties needed for applications in
punctured Gromov-Witten theory. First, since the diagonal map
$\Delta:\cX\arr\cX\times\cX$ is not proper except in trivial cases and neither
is the splitting map $\delta_\fM$, it is impossible to push forward cycles via
$\delta_\fM$ for the purpose of splitting computations according to the
splitting of $\tau$ along the chosen set of edges $\bE\subseteq E(G)$. And
second, the obvious commutative square lifting the splitting map $\delta_\fM$ to
a map $\scrM'(X/B,\tau)\arr \prod_i\scrM'(X/B,\tau_i)$ is far from being
cartesian even on the underlying stacks of (pre-) stable maps since it imposes
matching at the nodes only on $\cX$ rather than on $X$. (We remind the
reader that the products such as $\prod_i$ are all over the base log scheme $B$
in this discussion.) Hence this approach has no hope to be compatible with the
virtual formalism.

Both problems are solved by enriching the stacks $\fM(\cX/B,\tau)$ and
$\fM(\cX/B,\tau_i)$ of punctured maps to the relative Artin stack $\cX/B$, and
their various cousins $\fM'(\cX/B,\tau)$, $\tfM'(\cX/B,\tau)$ etc., by
providing a lift of the underlying evaluations to $X$. Note that such enriched
stacks of maps to $\cX$ have already been considered at the beginning of
\S\ref{ss: Obstruction theories with point conditions} in the context of
obstruction theories with imposed point conditions. 

For this discussion we mostly work with the stacks $\fM(\cX/B,\tau)$ of marked
maps (Definition~\ref{Def: stacks of decorated puncted maps}), except in the
analogue Corollary~\ref{Cor: Gluing theorem for evaluation stacks} of
Theorem~\ref{Thm: Gluing theorem}, which requires stacks $\tfM'(\cX/B,\tau)$
with weak markings and sections (\S\ref{Sec:stacks of maps with sections}).
All other results also hold in the weakly marked and decorated contexts.

{We continue to assume that $X\arr B$ is a morphism of fs logarithmic
schemes over $\kk$ fulfilling the assumptions stated at the beginning of
\S\ref{sec:stack}.}

\begin{definition}
\label{Def: evaluation stack}
Let $\tau=(G,\bg,\bsigma,\bar\bu)$ be a global type of punctured maps to $X$ and
$\bS\subseteq E(G)\cup L(G)$ a subset of edges and legs. The \emph{evaluation
stack} of $\fM(\cX/B,\tau)$ with respect to $\bS$ is the fiber product
\[
\textstyle
\fM^\ev(\cX/B,\tau)= \fM(\cX/B,\tau)\times_{\prod_{S\in\bS} \ul\cX}
\prod_{S\in\bS} \ul X
\]
of $\prod_{S\in\bS} \ul X\arr \prod_{S\in \bS} \ul\cX$ with the evaluation map
\[
\textstyle
\ul\ev_{\bS}: \fM(\cX/B,\tau)  \arr \prod_{S\in\bS} \ul \cX,\quad
(C^\circ/W,\bp,f)\longmapsto (\ul f\circ s_S)_{S\in\bS},
\]
evaluating at the punctured and nodal sections $s_S:\ul W\arr \ul C^\circ$ for
$S\in \bS$.

Analogous definitions apply in the weakly marked and decorated contexts
as in Definition~\ref{Def: stacks of decorated puncted maps}, or for the
stacks $\widetilde\fM'(\cX/B,\tau)$ of \S\ref{Sec:stacks of maps with
sections}.
\end{definition}

Note that $\fM^{\ev}(\cX/B,\tau)$ of course depends on the logarithmic
scheme $X$, but we suppress this in the notation as $\cX$ always denotes its
relative Artin fan. We also suppress $\bS$ in the notation of the evaluation
stacks and rather specify this subset whenever not clear from the context.

As indicated in the definition, we endow $\fM^\ev(\cX/B,\tau)$ with the log
structure making the projection to $\fM(\cX/B,\tau)$ strict, to obtain the
sequence of morphisms of log stacks
\[
\scrM(X/B,\tau)\stackrel{\varepsilon}{\longrightarrow}
\fM^\ev(\cX/B,\tau)\arr\fM(\cX/B,\tau)
\]
as in \eqref{Eqn: scrM->fM^ev->fM}. Recall that the obstruction theory for this
sequence of morphisms has been worked out in \S\ref{ss: Obstruction theories
with point conditions}. It was noted that, as the morphisms are strict, this
coincides with the obstruction theory for the underlying stacks. We further saw
that the obstruction theory of $\ul{\scrM}(X/B,\tau)$ over
$\ul{\fM}(\cX/B,\tau)$ is the composition of an obstruction theory for
$\varepsilon$ with the trivial obstruction theory in pure degree 0 of the
smooth morphism $\ul{\fM^\ev}(\cX/B,\tau) \arr\ul{\fM}(\cX/B,\tau)$ of relative
dimension $(\dim X-\dim B)\cdot|\bS|$.

We now adopt the setup of \S\ref{ss:Gluing punctured maps to cX} and split
$\tau$ at a subset $\bE\subseteq E(G)$ of edges with $\bE\subseteq\bS$ to obtain
global types $\tau_i=(G_i,\bg_i,\bsigma_i,\bar\bu_i)$. For the following
corollary of Theorem~\ref{Thm: Gluing theorem} for evaluation stacks, we write
$\widetilde\fM^\ev(\cX/B,\tau)$ for the evaluation stack of
$\widetilde\fM(\cX/B,\tau)$ {with evaluations at all nodes specified by
$\bE$}, thus {by Proposition~\ref{Prop: tilde log structures don't change reductions}} having the same underlying stack as $\fM^\ev(\cX/B,\tau)$, but with
the enlarged log structure admitting a logarithmic evaluation map analogous to
\eqref{Eqn: ev_bE}. Similarly, we obtain evaluation stack analogues of the
evaluation morphism for the $\tau_i$ \eqref{Eqn: ev_bL}, still denoted
$\ev_\bL$, and the splitting morphism \eqref{Eqn: splitting morphism}, now
denoted $\delta^\ev$.

\begin{corollary}
\label{Cor: Gluing theorem for evaluation stacks}
In the situation of Theorem~\ref{Thm: Gluing theorem}, the commutative diagram
\[
\xymatrix{
\tfM'^\ev(\cX/B,\tau) \ar[r]^(.45){\delta^\ev}\ar[d]_{\ev_\bE} &
\prod_{i=1}^r \tfM'^\ev(\cX/B,\tau_i)\ar[d]^{\ev_\bL}\\
\prod_{E\in \bE} X\ar[r]^(.45)\Delta& \prod_{E\in \bE} X\times X 
}
\]
with arrows defined by the above adaptations to the evaluation stacks for
$\bS\subseteq E(G)\cup L(G)$ with $\bE\subseteq \bS$, is cartesian in the
category of fs log stacks.

In particular, the splitting morphism $\delta^\ev$ is finite and representable.
\end{corollary}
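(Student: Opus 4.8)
The plan is to deduce Corollary~\ref{Cor: Gluing theorem for evaluation stacks} from Theorem~\ref{Thm: Gluing theorem} by a base-change argument along the forgetful maps from evaluation stacks to the underlying stacks of punctured maps to $\cX/B$. First I would record the defining fs fiber product diagram for each evaluation stack: by Definition~\ref{Def: evaluation stack}, $\tfM'^\ev(\cX/B,\tau)$ is the fiber product of $\tfM'(\cX/B,\tau)$ with $\prod_{S\in\bS}\ul X$ over $\prod_{S\in\bS}\ul\cX$ via $\ul\ev_\bS$, and similarly each $\tfM'^\ev(\cX/B,\tau_i)$ is obtained from $\tfM'(\cX/B,\tau_i)$ by fibering over the legs of $G_i$. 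Since $\bE\subseteq\bS$, the evaluation at the nodal sections in $\bE$ factors through $\ul\ev_\bS$, and upon splitting, the two half-edge legs $(E,v),(E,v')$ produced from each $E\in\bE$ inherit the evaluation data; the remaining sections indexed by $\bS\setminus\bE$ are carried along unchanged on both sides. The key point is then that the square in the corollary is obtained from the square in Theorem~\ref{Thm: Gluing theorem} by taking the fs fiber product of the entire diagram with $\prod_{S\in\bS}\ul X$ over $\prod_{S\in\bS}\ul\cX$, while simultaneously replacing the bottom row $\prod_{E\in\bE}\cX \xrightarrow{\Delta}\prod_{E\in\bE}\cX\times\cX$ by $\prod_{E\in\bE}X\xrightarrow{\Delta}\prod_{E\in\bE}X\times X$.

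\textbf{Key steps.} (1) Observe that $X\arr\cX$ is strict and (schematically) smooth, so for each $E\in\bE$ the square
\[
\xymatrix{
X\ar[r]^(.4)\Delta\ar[d]& X\times_B X\ar[d]\\
\cX\ar[r]^(.4)\Delta& \cX\times_B \cX
}
\]
is cartesian in fs log stacks; indeed it is cartesian on underlying stacks because $\ul X\to\ul\cX$ is strict and the diagonal of a strict morphism is the base change of the target diagonal, and strictness of all maps involved means the fs fiber product agrees with the underlying one. (2) Form the big commutative cube whose back face is the cartesian square of Theorem~\ref{Thm: Gluing theorem}, whose front face is the square of the corollary, and whose four side faces express $\tfM'^\ev(\cX/B,\tau)$, $\prod_i\tfM'^\ev(\cX/B,\tau_i)$, $\prod_E X$, and $\prod_E X\times X$ as fs fiber products over the corresponding $\cX$-objects. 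By Definition~\ref{Def: evaluation stack} and the identification of sections before and after splitting, the two "vertical" side faces (for the source stacks with and without the $X$-lift) and the two side faces replacing $\cX$ by $X$ in the bottom row are all cartesian in fs log stacks — the first two by definition of the evaluation stack, the latter two by step (1) applied factor-by-factor over $\bE$, together with the trivial observation that the sections indexed by $\bS\setminus\bE$ contribute the same fiber product factor on both source stacks. (3) Apply the standard two-out-of-three / pasting lemma for cartesian squares, valid in any category with the relevant fiber products and in particular in fs log stacks: since the back face of the cube is cartesian and all four side faces are cartesian, the front face is cartesian. This is exactly the assertion of the corollary.

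\textbf{Finiteness and representability.} For the last sentence, note that the bottom horizontal arrow $\Delta:\prod_{E\in\bE}X\arr\prod_{E\in\bE}X\times_B X$ is a closed immersion on underlying stacks, since $X\arr B$ is separated (it is log smooth between noetherian fs log schemes with $X$ assumed separated in the running hypotheses of \S\,\ref{sec:stack}); in particular $\Delta$ is finite and representable. Finiteness and representability are stable under fs base change on underlying stacks — here one uses that the forgetful functor to underlying stacks preserves the cartesian square, which follows because $\ev_\bE$ and $\ev_\bL$ are strict (their targets carry the strict-pullback log structure and the enlarged log structures on the $\tfM'$'s only affect the log data, cf.\ Proposition~\ref{Prop: tilde log structures don't change reductions} and the construction in \S\,\ref{Sec:stacks of maps with sections}). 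Hence $\delta^\ev$, being the base change of $\Delta$, is finite and representable.

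\textbf{Expected main obstacle.} The routine-looking part — the diagram chase assembling the cube — is genuinely the only substantive point, and the care needed is precisely in matching up the index sets of sections across the splitting operation so that the side faces really are cartesian; in particular one must check that the legs $(E,v),(E,v')$ created from $E\in\bE$ carry exactly the evaluation data needed for $\ev_\bL$, and that no spurious extra matching condition is introduced or lost when passing between the $\cX$-level square of Theorem~\ref{Thm: Gluing theorem} and its $X$-enriched version. A secondary subtlety worth a sentence is the interaction of fs saturation with the fiber products: since $X\to\cX$ and all the evaluation maps in sight are strict, taking fs fiber products commutes with the forgetful functor to underlying stacks along these legs of the cube, so the fs-cartesianity of the back face (the content of Theorem~\ref{Thm: Gluing theorem}) transports without the saturation pathologies that would otherwise obstruct such an argument. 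I expect no difficulty beyond organizing this bookkeeping cleanly.
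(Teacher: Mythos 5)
Your overall strategy coincides with the paper's proof: one pastes a cube whose back face is the cartesian square of Theorem~\ref{Thm: Gluing theorem} and whose relevant side faces are the fiber-product squares defining the evaluation stacks. But two of your supporting claims are false. First, Step~(1) is wrong: the square with horizontal arrows the diagonals $X\arr X\times_B X$ and $\cX\arr \cX\times_B\cX$ and vertical arrows induced by $X\arr\cX$ is \emph{not} cartesian. The fiber product is $\cX\times_{\cX\times_B\cX}(X\times_B X)\cong X\times_\cX X$, which equals $X$ only if $X\arr\cX$ is a monomorphism; here it is strict, smooth and surjective of positive relative dimension. (The true statement behind your slogan is that $X\times_\cX X\arr X\times_B X$ is the base change of $\Delta_{\cX/B}$, not that $\Delta_{X/B}$ is.) Fortunately this bottom face is not needed: the pasting lemma only requires the back face plus the two faces containing the vertical evaluation arrows, namely the squares exhibiting $\tfM'^\ev(\cX/B,\tau)$ and $\prod_i\tfM'^\ev(\cX/B,\tau_i)$ as base changes along $\prod_E X\arr\prod_E\cX$ and $\prod_E X\times X\arr\prod_E\cX\times\cX$ of the $\cX$-level moduli stacks; these are cartesian in the fs category because $X\arr\cX$ is strict, so the schematic fiber products of Definition~\ref{Def: evaluation stack} agree with the fs ones. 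Deleting Step~(1) and invoking only these three cartesian faces repairs this half of the argument.

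Second, your justification of finiteness and representability rests on a claim the paper explicitly denies: the evaluation morphisms $\ev_\bE$ and $\ev_\bL$ are \emph{not} strict --- the whole point of passing to the enlarged log structures on $\tfM'$ is to obtain a genuinely logarithmic, non-strict evaluation --- and the square of the corollary is typically \emph{not} cartesian on underlying stacks (see the emphasis following Theorem~D in the introduction). So $\delta^\ev$ is not the schematic base change of $\ul\Delta$, and "the forgetful functor preserves the cartesian square" fails. The correct argument, which is the one the paper gives, is that the fs fiber product is obtained from the fiber product in the category of all log stacks (whose underlying stack \emph{is} the ordinary fiber product, hence a base change of the closed immersion $\ul\Delta$) by integralization followed by saturation, both of which are finite operations; the composite is therefore still finite and representable.
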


\begin{proof}
The stated commutative square is the front face of the commutative box
\[
\xymatrix@R=.5em{
\tfM'^\ev(\cX/B,\tau) \ar[rrr]^(.45){\delta^\ev}\ar[ddd]_{\ev_\bE}
\ar@{.>}[rd]&&&
\prod_{i=1}^r \tfM'^\ev(\cX/B,\tau_i)\ar[ddd]^{\ev_\bL}\ar@{.>}[dl]\\
&\tfM'(\cX/B,\tau) \ar@{.>}[r]\ar@{.>}[d]&
\prod_{i=1}^r \tfM'(\cX/B,\tau_i)\ar@{.>}[d]\\
&\prod_{E\in \bE} \cX\ar@{.>}[r]& \prod_{E\in \bE} \cX\times \cX \\
\prod_{E\in \bE} X\ar[rrr]^(.45)\Delta\ar@{.>}[ur]&&&
\prod_{E\in \bE} X\times X \ar@{.>}[ul]
}
\] 
with back face the cartesian square from Theorem~\ref{Thm: Gluing theorem} and
the sides cartesian squares defining the evaluation stacks. Hence the stated
diagram is cartesian.

The claimed properties of the splitting morphism $\delta^\ev$ follow since an fs
fiber product is the saturation and integralization of the ordinary fiber
product.
\end{proof}

\begin{remark}
For systematic reasons we work in the category of log schemes over $B$ in this
section, and thus all products in the statement of Corollary~\ref{Cor: Gluing
theorem for evaluation stacks} are fiber products over $B$. For explicit
computations this leads to fibered sums of lattices, which sometimes require an
extra treatment of multiplicities due to saturation issues. This additional step
can be avoided by observing that the statement of Corollary~\ref{Cor: Gluing
theorem for evaluation stacks} holds unchanged when interpreting the products as
absolute products rather than as products over $B$, but still with $\cX$ the
relative Artin fan of $X/B$.

This statement is not a formal consequence of general properties of fiber
products, but is due to the connectedness of the graph $G$ given by
$\tau${, as in the argument in the proof of Proposition~\ref{Prop:
absolute versus relative frakM}}. To explain this let $\prod_B$ denote the
relative fiber product and $\prod$ the absolute one. To check the universal
property of the commutative square in Corollary~\ref{Cor: Gluing theorem for
evaluation stacks} with absolute products, let be given a morphism $W\to
\prod_i\tfM'^\ev(\cX/B,\tau_i)$, $i=1,\ldots,r$, such that the composition with
$\ev_{\bL}$ factors over $\Delta$. For each leg $L=(E,v)\in L(G_i)$ we obtain an
evaluation map $f_i\circ p_L: W\to \cX$, and by composing with $\cX\to B$ a map
$b_L: W\to B$. This map is independent of the choice of $L\in L(G_i)$ since the
$i$-th component of $W\to \prod_i\tfM'^\ev(\cX/B,\tau_i)$ defines a punctured
map over $B$, but a priori may vary with $i$. Now the factorization of $\ev_L$
over $\Delta$ implies that if the $i$-th and $j$-th vertex of $G$ are connnected
by an edge then the maps $W\to B$ obtained for $i$ and $j$ coincide. Since $G$
is connected we conclude that all these maps agree. Hence the map $W\to
\prod_i\tfM'^\ev(\cX/B,\tau_i)$ factors over $(\prod_B)_i
\tfM'^\ev(\cX/B,\tau_i)$, and in turn the composition with $\ev_\bL$ factors
over $(\prod_B)_E X$. We are then in position to apply Corollary~\ref{Cor:
Gluing theorem for evaluation stacks} in the stated form to obtain the unique
lift to $\tfM'^\ev(\cX/B,\tau)$.
\end{remark}

By the corollary, we obtain a proper push-forward homomorphism in Chow theory
for algebraic stacks, as defined by Kresch \cite{Kresch}, for the evaluation
stacks:
\begin{equation}
\label{Eqn: Proper push-forward of Chow classes in evaluation stacks}
\textstyle
\delta^\ev_*: A_*\big(\ul{\fM^\ev}(\cX/B,\tau)\big)\arr
A_*\big(\prod_i\ul{\fM^\ev}(\cX/B,\tau_i)\big),\quad
\alpha\longmapsto \delta_*(\alpha).
\end{equation}
Note that we can work with markings or weak markings here because the
corresponding stacks have the same reductions (Proposition~\ref{Prop: weakly
marked versus marked}).

It remains to relate $\delta^\ev$ with the splitting morphism for moduli spaces
of punctured maps to $X$ rather than $\cX$ and to show compatibility with the
obstruction theory. Note that these results use the unenhanced, basic log
structures on the moduli stacks.

\begin{proposition}
\label{Prop: Gluing via evaluation spaces}
{Let $X\arr B$ be a morphism of fs logarithmic schemes over $\kk$
fulfilling the assumptions stated at the beginning of \S\ref{sec:stack} Let
$\tau_1,\ldots,\tau_r$ be the global types of punctured maps
(Definition~\ref{Def: global type}) obtained by splitting a global type
$\tau=(G,\bg,\bsigma,\bar\bu)$ along a subset of edges $\bE$.
Then} there is a cartesian diagram 
\begin{equation}
\label{diag:evaluation-gluing}
\vcenter{\xymatrix@C=30pt
{
\scrM(X/B,\tau) \ar[r]^(.45){\delta}\ar[d]_{\varepsilon} &
\prod_{i=1}^r \scrM(X/B,\tau_i)\ar[d]^{\hat\varepsilon=\prod_i\varepsilon_i}\\
\fM^\ev(\cX/B,\tau) \ar[r]^(.45){\delta^\ev} &
\prod_{i=1}^r \fM^\ev(\cX/B,\tau_i)
}}
\end{equation}
with horizontal arrows the splitting maps from
Proposition~\ref{prop:splitting-map}, finite and representable by
Corollary~\ref{Cor: Gluing theorem for evaluation stacks}, and the vertical
arrows the canonical strict morphisms. Here we assume the set of edges and legs
$\bS\subseteq E(G)\cup L(G)$ used in the definition of the evaluation stacks
(Definition~\ref{Def: evaluation stack}) contains the set $\bE\subseteq E(G)$ of
splitting edges. 

Analogous statements hold for decorated and for weakly marked versions of the
moduli stacks (Definition~\ref{Def: stacks of decorated puncted maps}).
\end{proposition}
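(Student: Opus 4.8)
\textbf{Proof plan for Proposition~\ref{Prop: Gluing via evaluation spaces}.}
The plan is to deduce the cartesian square \eqref{diag:evaluation-gluing} from the evaluation-stack gluing result in Corollary~\ref{Cor: Gluing theorem for evaluation stacks} by the standard device of ``cancelling'' the maps to $\cX$ against the maps to $X$. First I would set up the two commutative squares that compare $\scrM(X/B,\tau)$ with its unenhanced counterpart $\fM^\ev(\cX/B,\tau)$ and likewise for each $\tau_i$. Concretely, a $\tau$-marked basic stable punctured map to $X$ over $W$ consists of a $\tau$-marked basic pre-stable punctured map $(C^\circ/W,\bp,f)$ to $\cX$ (plus stability, which is a condition on the underlying ordinary map and is compatible with splitting) together with a factorization of each evaluation $\ul\ev_S$, $S\in\bS$, through $\ul X\to\ul\cX$. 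In other words, $\scrM(X/B,\tau)$ is the fiber product of $\fM(\cX/B,\tau)$ with $\prod_{S\in\bS}\ul X$ over $\prod_{S\in\bS}\ul\cX$, \emph{intersected with} the stability locus, which is open and pulled back from $\scrM(\ul X/\ul B)$; and $\varepsilon$ is, by construction, the strict morphism forgetting only the part of the data that records the lifts to $X$ at the sections not in $\bS$ --- but since $\bS$ already contains all the sections at which we impose lifts in defining $\fM^\ev$, $\varepsilon$ is exactly the inclusion of the stable locus into $\fM^\ev(\cX/B,\tau)$ crossed with nothing. The upshot I want to record first is: $\scrM(X/B,\tau)$ is an open (stability) substack of $\fM^\ev(\cX/B,\tau)$ for the given $\bS$, and likewise $\scrM(X/B,\tau_i)$ is the corresponding stable open substack of $\fM^\ev(\cX/B,\tau_i)$.

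With that identification in hand, the square \eqref{diag:evaluation-gluing} is obtained from $\delta^\ev$ by restricting to these open substacks on both top corners: the top horizontal arrow $\delta$ of Proposition~\ref{prop:splitting-map} is simply the restriction of $\delta^\ev$ from Corollary~\ref{Cor: Gluing theorem for evaluation stacks}, and the vertical arrows are the open immersions just described (composed, on the right, with the identification of a product of opens with the open in the product). So the remaining point is purely one of compatibility of open loci under $\delta^\ev$: I must check that
\[
\delta^\ev{}^{-1}\Bigl(\prod_{i=1}^r \scrM(X/B,\tau_i)\Bigr)
= \scrM(X/B,\tau)
\]
inside $\fM^\ev(\cX/B,\tau)$, i.e.\ that a glued punctured map is stable if and only if each of its split pieces is stable. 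One inclusion is the observation that splitting a stable punctured map and pre-stabilizing yields stable pieces (this is implicit in Proposition~\ref{prop:splitting-map}); the reverse inclusion is the genuinely geometric statement, namely that if each $(C^\circ_i/W,\bp_i,f_i)$ is stable then the glued curve $(C^\circ/W,\bp,f)$ from Theorem~\ref{Thm: Gluing theorem} is stable. This is a pointwise statement on underlying ordinary stable maps: gluing two stable maps at a pair of points to form a nodal map preserves the finiteness of the automorphism group of the domain, because an automorphism of the glued pre-stable curve fixing the map permutes the components and hence restricts to automorphisms of each piece, which are finite; one only has to note that the extra node imposes, if anything, more rigidity, not less. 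Since stability is an open condition on both sides and pulls back from $\scrM(\ul X/\ul B)$, and since the schematic splitting map on underlying ordinary stable maps is well-defined, equality of the open loci follows. Once the loci match, cartesianity of \eqref{diag:evaluation-gluing} is immediate: the base change of the cartesian square of Corollary~\ref{Cor: Gluing theorem for evaluation stacks} along the open immersion $\prod_i\scrM(X/B,\tau_i)\hookrightarrow\prod_i\fM^\ev(\cX/B,\tau_i)$ is again cartesian, and its upper-left corner is precisely $\delta^\ev{}^{-1}(\prod_i\scrM(X/B,\tau_i)) = \scrM(X/B,\tau)$. The finiteness and representability of $\delta$ are inherited from those of $\delta^\ev$ in Corollary~\ref{Cor: Gluing theorem for evaluation stacks} by base change along an open immersion.

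The one technical wrinkle I would be careful about --- and the step I expect to be the main obstacle --- is the bookkeeping of log structures: Corollary~\ref{Cor: Gluing theorem for evaluation stacks} is stated for the stacks $\tfM'^\ev(\cX/B,\tau)$ carrying the \emph{enlarged} log structures and \emph{weak} markings, whereas Proposition~\ref{Prop: Gluing via evaluation spaces} is about $\scrM(X/B,\tau)$ and $\fM^\ev(\cX/B,\tau)$ with basic log structures and honest markings. The comparison is handled by Proposition~\ref{Prop: tilde log structures don't change reductions} (the enlarged and basic log structures have identical reductions, and agree outright over split edges) together with Proposition~\ref{Prop: weakly marked versus marked} (marked and weakly marked stacks have the same reduction), so the issue is not cartesianity --- which is a statement about functors of points and is unaffected by replacing a stack by a stack with the same reduction and a finer log structure only at the punctures --- but rather being precise about which moduli functor each corner represents. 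I would therefore spend the bulk of the write-up carefully stating that the square \eqref{diag:evaluation-gluing} with basic log structures is obtained from the square of Corollary~\ref{Cor: Gluing theorem for evaluation stacks} by (i) restricting to stable loci as above, (ii) forgetting the enlargement of log structure at punctures via the canonical strict maps, and (iii) invoking that these operations preserve cartesianity (for (ii), because over split edges the enlargement is trivial by Proposition~\ref{Prop: tilde log structures don't change reductions}, and at legs the enlargement is a saturation that does not affect the fiber-product description --- cf.\ the remark following Corollary~\ref{Cor: Gluing theorem for evaluation stacks}). The decorated and weakly marked variants follow verbatim, since curve classes are locally constant and split additively, and since in all these cases the only change is in which contraction/marking condition carves out the relevant substacks, all of which are compatible with the splitting operation.
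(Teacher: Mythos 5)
There is a genuine gap, and it sits at the very first step: your claim that $\scrM(X/B,\tau)$ is an open (stability) substack of $\fM^\ev(\cX/B,\tau)$ is false. The evaluation stack is defined (Definition~\ref{Def: evaluation stack}) as the fiber product $\fM(\cX/B,\tau)\times_{\prod_{S\in\bS}\ul\cX}\prod_{S\in\bS}\ul X$: a point of it is a punctured map to the Artin fan $\cX$ together with, for each section $S\in\bS$, a lift to $\ul X$ of the single evaluation point in $\ul\cX$. It does \emph{not} record a lift of the whole map $C^\circ\to\cX$ to $X$. A point of $\scrM(X/B,\tau)$, by contrast, is a full punctured map to $X$. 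The morphism $\varepsilon$ therefore forgets the map to $X$ away from the finitely many sections in $\bS$, and it carries the nontrivial perfect obstruction theory $\GG\simeq\big(R\pi_*f^*\Theta_{X/B}(-Z)\big)^\vee$ of \S\,\ref{ss: Obstruction theories with point conditions} --- it cannot be an open immersion (if it were, the entire virtual-class formalism of \S\,\ref{sec:obstruction} would be vacuous). Consequently your reduction of the Proposition to ``matching stable open loci under $\delta^\ev$'' does not get off the ground, and the base change of the square of Corollary~\ref{Cor: Gluing theorem for evaluation stacks} along an open immersion proves a different (and for the applications useless) statement.

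The correct argument is a direct check of the universal property by unwinding the moduli functors. A $W$-point of the fiber product consists of a glued punctured map $(C^\circ/W,\bp,f_\cX)$ to $\cX$ with evaluation lifts at $\bS$ (from $\fM^\ev(\cX/B,\tau)$), together with punctured maps $f_i:C_i^\circ\to X$ on the split pieces (from $\prod_i\scrM(X/B,\tau_i)$), compatible in $\prod_i\fM^\ev(\cX/B,\tau_i)$. The evaluation lifts at the split edges supply exactly the schematic matching in $\ul X$ (not merely in $\ul\cX$) needed to glue the underlying maps $\ul f_i$ to an ordinary stable map $\ul f:\ul C\to\ul X$; since $X\to\cX$ is strict, the pair $(\ul f,f_\cX)$ is equivalent to a log morphism $f:C^\circ\to X$; and markings by $\tau$ correspond to markings by the $\tau_i$ under splitting. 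This identifies the fiber product with $\scrM(X/B,\tau)$. Note also that Corollary~\ref{Cor: Gluing theorem for evaluation stacks} enters only to give finiteness and representability of the horizontal arrows, not the cartesianity itself, so the log-structure bookkeeping (enlarged versus basic, weak versus strong markings) that occupies most of your write-up is not where the content of this Proposition lies.
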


\begin{proof}
We argue by spelling out the definitions of the various stacks. Indeed, a
pair of morphisms from an fs log scheme $W$ to $\prod_{i=1}^r \scrM(X/B,\tau_i)$
and to $\fM^\ev(\cX/B,\tau)$ together with an isomorphism of their images in
$\prod_{i=1}^r \fM^\ev(\cX/B,\tau_i)$ is equivalent to (1)~an ordinary stable
map $(\ul C/\ul W,\ul p,\ul f)$ to $\ul X$ marked by the genus-decorated graph
$(G,\bg)$ given by $\tau$, and (2)~a punctured map
$(C^\circ/W,\bp,f_{\cX})$ to $\cX$ producing the morphism $W\arr \prod_{i=1}^r
\fM^\ev(\cX/B,\tau_i)$ by splitting at the nodes labelled by $\bE\subseteq
E(G)$. Note that (1) is obtained by the schematic matching condition
at the paired marked points provided by the evaluation stacks. Since $X\arr\cX$
is strict, $\ul f$ and $f_\cX$ together are the same as a log morphism
$f:C^\circ\arr X$. Moreover, a marking by $\tau$ is equivalent to markings by
$\tau_i$ of the punctured maps $(C^\circ_i/W,\bp_i,f_i)$ obtained by splitting.
The correspondence is also easily seen to be functorial. Thus the fiber
categories over $W$ of the cartesian product and of $\scrM(X/B,\tau)$ are
equivalent.
\end{proof}

\subsubsection{Notation for obstruction theories}
{To bring in the perfect obstruction theories discussed in
\S\ref{sec:obstruction}, we now in addition {to $\tau,\tau_i,\bE,\bS$ as
in Proposition~\ref{Prop: Gluing via evaluation spaces}} assume $X\arr B$ to be
log smooth.} To analyze the obstruction theories in
\eqref{diag:evaluation-gluing}, we introduce the following short-hand
notations:\footnote{For the sake of being specific we work with the marked
versions here. Analogous results also hold for the weakly marked cases.}
\begin{equation}
\label{Eqn: short-hand notations gluing stacks}
\begin{aligned}
\hspace*{5ex}\scrM_\gl&:=\scrM(X/B,\tau),&
\scrM_i&:=\scrM(X/B,\tau_i),&
\scrM_\spl&:=\textstyle\prod_{i=1}^r \scrM_i \hspace*{5ex}\\
\fM_\gl&:=\fM(\cX/B,\tau),&
\fM_i&:=\fM(\cX/B,\tau_i),&
\fM_\spl&:=\textstyle\prod_{i=1}^r \fM_i\\
\fM_\gl^\ev&:=\fM^\ev(\cX/B,\tau),&
\fM_i^\ev&:=\fM^\ev(\cX/B,\tau_i),&
\fM_\spl^\ev&:=\textstyle\prod_{i=1}^r \fM_i^\ev
\end{aligned}
\end{equation}
Denote further by $\ol C^\circ_i\to \scrM_i$ and by $C^\circ\to\scrM_\gl$ the
universal curves over $\scrM_i$ and $\scrM_\gl$, respectively, by $C^\circ_i\to
\scrM_\spl$ the pull-back of $\ol C^\circ_i$ under the projection from the
product $\scrM_\spl\arr \scrM_i$, and write $\pi_\spl: C_\spl^\circ=\coprod_i
C^\circ_i\to \scrM_\spl$. We also have universal morphisms $f:C^\circ\to X$,
$f_\spl: C_\spl^\circ\to X$, and the subspaces of special points to be
considered $\iota:Z\to C^\circ$, $\iota_\spl:Z_\spl\to C_\spl^\circ$ with
projections $p=\pi\circ\iota$ and $p_\spl=\pi_\spl\circ\iota_\spl$ to
$\scrM_\gl$ and $\scrM_\spl$, respectively. Here $Z$ is the union of the images
of the punctured and nodal sections labelled by $\bS\subseteq E(G)\cup L(G)$,
while $Z_\spl$ is the union of punctured and nodal sections given by
$\bS_i\subseteq E(G_i)\cup L(G_i)$, $i=1,\ldots,r$, obtained from $\bS$ by
splitting, both endowed with the induced log structures making $\iota$,
$\iota_\spl$ strict. 

\subsubsection{{The fundamental diagram}}
We consider the following commutative diagram:
\begin{equation}
\label{Eqn: Obstruction theory gluing diagram}
\vcenter{\xymatrix@-0.5pc{
\tilde C^\circ\ar[d]^\kappa\ar@/_1.5pc/_{\tilde \pi}[dd]
\ar[r]\ar@/^2.5pc/[rrr]^{\tilde f}
&C_\spl^\circ=\coprod_i C^\circ_i
\ar[dd]^(.73){\pi_\spl}|(.45)\hole
\ar[rr]^{f_\spl} &&X\\
C^\circ\ar[d]^\pi\ar@/_1pc/[rrru]_(.7)f\\
\scrM_\gl\ar[r]^(.32)\delta\ar[d]_{\varepsilon}&\scrM_\spl
=\prod_i\scrM_i\ar[d]^{\hat\varepsilon=\prod_i\varepsilon_i}\\
\fM_\gl^\ev\ar[r]^(.35){\delta^\ev}&\fM_\spl^\ev=\prod_i\fM^\ev_i.
}}
\end{equation}
The lower square is the cartesian square from Proposition~\ref{Prop: Gluing via
evaluation spaces} with strict vertical arrows. 

The strict map $\kappa:\tilde C^\circ\arr C^\circ$ is the map induced by
splitting the nodal sections of $C^\circ\arr \scrM_\gl$ given by $\bE\subseteq
\bS$ according to Proposition~\ref{prop:splitting-map}. The underlying morphism
$\ul\kappa$ of ordinary stacks is therefore the corresponding partial
normalization from Definition~\ref{Def: nodal sections}. 

The upper square thus identifies the pull-back of $C^\circ_\spl$ with the
pre-stabilization of $\tilde C^\circ$ (Definition~\ref{def:pre-stability}).
{This part of the diagram} is a pull-back of nodal curves, cartesian
only in the {category of stacks, because of the pre-stabilization}.

The morphism $\tilde f$ is as defined by the diagram. There is also the closed
substack $\tilde Z=\kappa^{-1}(Z)\to \tilde C^\circ$ of special points on
$\tilde C^\circ$ with projection $\tilde p:\tilde Z\to \scrM_\gl$, endowed with
the log structure making $\tilde Z\arr \tilde C^\circ$ strict.

\subsubsection{An obstruction theory for $\varepsilon$ and $\hat\varepsilon$}
\label{Ref: The gl and spl obstruction theories}
The discussion in \S\ref{ss: Obstruction theories with point conditions}
provides obstruction theories $\GG\arr \LL_{\scrM_\gl/\fM_\gl^\ev}$ for
$\varepsilon: \scrM_\gl\arr \fM_\gl^\ev$ and $\GG_\spl\to
\LL_{\scrM_\spl/\fM_\spl^{\ev}}$ for $\hat\varepsilon: \scrM_\spl\arr
\fM^\ev_\spl$ with
\begin{equation}
\label{Eqn: obstruction theory for widehat spaces}
\GG= R\tilde\pi_*\big(\tilde f^*\Omega_{X/B}\otimes
\omega_{\tilde\pi}(\tilde Z)\big),\quad
\GG_\spl= R{\pi_\spl}_*\big(f_\spl^*\Omega_{X/B}\otimes
\omega_{\pi_\spl}(Z_\spl)\big).
\end{equation}
Recall that this obstruction theory is obtained by taking the cone of a morphism
of perfect obstruction theories provided by Proposition~\ref{Prop: Compatibility
of obstruction theories}:
\[
\begin{CD}
L\hat\varepsilon^*\FF_\spl@>>> \EE_\spl\\
@V{L\hat\varepsilon^*\Psi}VV@VV{\Phi}V\\
L\hat\varepsilon^*\LL_{\fM_\spl^\ev/\fM_\spl}@>>> \LL_{\scrM_\spl/\fM_\spl}
\end{CD}
\]
\subsubsection{A justice of obstructions}\!\!\!\footnote{Our Babel of
coauthors proposes this collective noun for a system of compatible
obstructions.} 
We now have four deform\-ation/ob\-struction situations with corresponding
perfect obstruction theories. Given $T\to\scrM_\gl$ a morphism from an affine
scheme and $f_T:C^\circ_T \to X$, $h_T:Z_T\to X$, $\tilde f_T: \tilde
C^\circ_T\to X$, $\tilde h_T:\tilde Z_T\to X$ the respective base-changes to $T$
of the universal morphisms from the universal curve and universal sections,
pulled back to $\scrM_\gl$ in the last two instances, these are as follows. All
deformation situations are relative $\fM_\gl$, with the last two pulled
back from a deformation situation relative $\fM_\spl$.

\begin{enumerate}%
[leftmargin=0ex,labelwidth=14ex,align=left,itemindent=14ex,labelsep=0ex]
\item[($\scrM_\gl/\fM_\gl$)]
Deforming $f_T: C^\circ_T\to X$:
\[
\makebox[50ex][l]{$\EE= R\pi_*(f^*\Omega_{X/B}\otimes\omega_\pi)\,
\larr\,\LL_{\scrM_\gl/\fM_\gl}.$}
\]
\item[($\fM_\gl^\ev/\fM_\gl$)]
Deforming $h_T: Z_T\to X$:
\[
\makebox[50ex][l]{$L\varepsilon^*\FF= p_*(h^*\Omega_{X/B})\,\larr\,
L\varepsilon^*\LL_{\fM_\gl^\ev/\fM_\gl}.$}
\]
\item[($\scrM_\spl/\fM_\spl$)]
Deforming $\tilde f_T: \tilde C^\circ_T\to X$:
\[
\makebox[50ex][l]{$L\delta^*\EE_\spl= R\tilde\pi_*(\tilde f^*\Omega_{X/B}\otimes\omega_{\tilde\pi})\, \larr\,
L\delta^*\LL_{\scrM_\spl/\fM_\spl}.$}
\]
\item[($\fM_\spl^\ev/\fM_\spl$)]
Deforming $\tilde h_T: \tilde Z_T\to X$:
\[
\makebox[50ex][l]{$L\delta^* L\hat\varepsilon^* \FF_\spl=
\tilde p_*(\tilde h^*\Omega_{X/B})\,\larr\,
L\delta^*L\hat\varepsilon^*\LL_{\fM_\spl^\ev/\fM_\spl}.$}
\]
\end{enumerate}
\smallskip

\begin{lemma}
\label{Lem: GG on tilde C is GG on C}
There is a morphism of distinguished triangles
\[
\begin{CD}
L\delta^* L{\hat\varepsilon}^* \FF_\spl @>>> L\delta^*\EE_\spl@>>>\GG
@>>> L\delta^* L{\hat\varepsilon}^* \FF_\spl[1]\\
@VVV@VVV@|@VVV\\
L\varepsilon^*\FF @>>>\EE@>>>\GG @>>>L\varepsilon^*\FF[1] \\
\end{CD}
\]
with $\GG= L\delta^*\GG_\spl= R\tilde\pi_*(\tilde
f^*\Omega_{X/B}\otimes\omega_{\tilde\pi}(\tilde Z))$.
\end{lemma}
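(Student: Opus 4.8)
The strategy is to identify all four deformation/obstruction situations listed above and to realize $\GG$ as a simultaneous cone of two morphisms of obstruction theories, one relative to the splitting on the curve side and one relative to the evaluation stack side, then check that these two cones agree. First I would compute $L\delta^*\GG_\spl$. Since $\delta:\scrM_\gl\to\scrM_\spl$ is strict and the upper square of \eqref{Eqn: Obstruction theory gluing diagram} identifies the pull-back of $C^\circ_\spl$ along $\delta$ with the pre-stabilization of $\tilde C^\circ$, the pull-back of the pair $(\pi_\spl,f_\spl)$ along $\delta$ is $(\tilde\pi,\tilde f)$ after pre-stabilization, and correspondingly $\kappa^{-1}(Z)=\tilde Z$ matches $Z_\spl$ pulled back. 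Hence by flat base change (the base change statement \cite[Lem.~0A1K]{stacks-project}) applied to $\tilde\pi$, together with the fact that $\tilde f^*\Omega_{X/B}$ pulls back correctly and $\omega_{\pi_\spl}(Z_\spl)$ pulls back to $\omega_{\tilde\pi}(\tilde Z)$, we obtain a natural isomorphism
\[
L\delta^*\GG_\spl=L\delta^* R{\pi_\spl}_*\big(f_\spl^*\Omega_{X/B}\otimes\omega_{\pi_\spl}(Z_\spl)\big)\;\simeq\;R\tilde\pi_*\big(\tilde f^*\Omega_{X/B}\otimes\omega_{\tilde\pi}(\tilde Z)\big)=\GG,
\]
which is exactly the formula from Proposition~\ref{Prop: Point condition virtual bundle} applied to the split side, base-changed to $\scrM_\gl$.

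Next I would produce the two distinguished triangles. The bottom triangle is literally \eqref{Eqn: Cone construction obstruction theory}, the cone construction defining $\GG$ for $\varepsilon:\scrM_\gl\to\fM_\gl^\ev$, using the obstruction theories $\EE= R\pi_*(f^*\Omega_{X/B}\otimes\omega_\pi)$ and $L\varepsilon^*\FF=p_*(h^*\Omega_{X/B})$ and the compatibility square of Proposition~\ref{Prop: Compatibility of obstruction theories}. The top triangle is obtained by applying $L\delta^*$ to the analogous cone triangle for $\hat\varepsilon:\scrM_\spl\to\fM_\spl^\ev$ built from $\EE_\spl$ and $\FF_\spl$; here I use functoriality of the cone construction $\GG_\spl\to\LL_{\scrM_\spl/\fM_\spl^\ev}$, which was arranged via the \v{C}ech dg enhancement discussed just before Proposition~\ref{Prop: Point condition virtual bundle}, so that $L\delta^*$ of the triangle is again a triangle of obstruction theories and the middle term is $L\delta^*\GG_\spl\simeq\GG$ by the computation above.

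The vertical maps connecting the two triangles are the natural comparison morphisms: on the $\EE$-term, the base-change isomorphism $L\delta^*\EE_\spl=R\tilde\pi_*(\tilde f^*\Omega_{X/B}\otimes\omega_{\tilde\pi})\to R\pi_*(f^*\Omega_{X/B}\otimes\omega_\pi)=\EE$ coming from $\kappa_*$ applied to the inclusion $\omega_{\tilde\pi}\hookrightarrow\kappa_*\omega_{\tilde\pi}$ and the identification $R\pi_*R\kappa_*=R\tilde\pi_*$ --- concretely this is induced by the exact sequence \eqref{Eqn: Exact sequence dualizing complexes} restricted to the nodal part (the pieces corresponding to $\bE\subseteq\bS$); and on the $\FF$-term, the map $L\delta^*L\hat\varepsilon^*\FF_\spl=\tilde p_*(\tilde h^*\Omega_{X/B})\to p_*(h^*\Omega_{X/B})=L\varepsilon^*\FF$ induced by $\kappa|_{\tilde Z}:\tilde Z\to Z$, which is the two-fold cover at the nodal sections in $\bE$ and an isomorphism elsewhere. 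The middle vertical map is the identity on $\GG\simeq L\delta^*\GG_\spl$. To conclude I must check that the square formed by the two left vertical maps and the two horizontal maps $L\delta^* L\hat\varepsilon^*\FF_\spl\to L\delta^*\EE_\spl$ and $L\varepsilon^*\FF\to\EE$ commutes compatibly with the cone, and that the induced map on cones is the identity; this follows from the compatibility of the two cone constructions with $L\delta^*$ and from the fact that both obstruction theories factor through the truncated cotangent complex, where the comparison is given by explicit local models. \textbf{The main obstacle} I expect is precisely the bookkeeping of the exact triangle \eqref{Eqn: Exact sequence dualizing complexes} in the presence of both nodal ($\bE$) and punctured ($\bS\setminus\bE$) sections: one must carefully separate the nodal contributions, which genuinely change under $\kappa$ (a node becomes two punctures, $\omega_\pi$ acquires a pole, residues differ by an orientation of the two branches), from the point conditions in $\bS\setminus\bE$, which are pulled back unchanged; and to match the residue/difference-of-residues maps on the two sides so that the natural comparison of the $\EE$-terms is genuinely compatible with the boundary maps of the two triangles, relying on the splitting-type hypothesis $\tilde Z''\simeq Z''\amalg Z''$ to fix the orientations consistently.
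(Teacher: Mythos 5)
Your proposal is correct in substance and identifies all the essential ingredients — the base-change identification $\GG\simeq L\delta^*\GG_\spl$, the relation $\omega_{\tilde\pi}=\kappa^*\omega_\pi(-\tilde Z'')$, and the role of the residue/restriction maps at the split nodes — but it is organized differently from the paper. You build the two triangles abstractly as cones (the bottom one from \eqref{Eqn: Cone construction obstruction theory}, the top one as $L\delta^*$ of its split analogue) and then must separately check that the natural comparison maps on the $\FF$- and $\EE$-terms commute with the triangle maps and induce the identity on the cones; you rightly flag this as the main obstacle, but you discharge it only by appeal to functoriality of the dg-enhanced cone construction, which still leaves real unwinding to do. The paper short-circuits exactly this step: it writes down a single commutative diagram of short exact sequences of sheaves on $\tilde C$, with top row $0\to\cO_{\tilde C}(-\tilde Z'')\to\cO_{\tilde C}(\tilde Z')\to\cO_{\tilde Z}(\tilde Z')\to0$ and bottom row $0\to\cO_{\tilde C}\to\cO_{\tilde C}(\tilde Z')\to\cO_{\tilde Z'}(\tilde Z')\to0$, tensors with $\kappa^*\omega_\pi\otimes\tilde f^*\Omega_{X/B}$, and applies $R\tilde\pi_*$; since the middle terms of the two rows are literally the same sheaf with identity vertical map, the morphism of distinguished triangles with $\id_\GG$ in the middle comes out automatically, with the terms identified as in Proposition~\ref{Prop: Point condition virtual bundle}. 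Your route buys conceptual clarity (it makes transparent why $\GG$ is a pullback from the split side, which is what Theorem~\ref{Thm: Virtual gluing via evaluation spaces2} needs), while the paper's buys a one-line verification of the commutativity you struggle with; if you pursue your version, the cleanest fix is to observe that your abstract comparison maps are themselves realized by that same diagram of sheaf sequences, reducing your approach to the paper's. One small correction: the map $L\delta^*\EE_\spl\to\EE$ is not a base-change isomorphism and is not induced by an inclusion $\omega_{\tilde\pi}\hookrightarrow\kappa_*\omega_{\tilde\pi}$ (these sheaves live on different spaces); it is induced by $\kappa_*\omega_{\tilde\pi}=\omega_\pi\otimes\kappa_*\cO_{\tilde C}(-\tilde Z'')\to\omega_\pi$, i.e.\ by the left vertical arrow $\cO_{\tilde C}(-\tilde Z'')\to\cO_{\tilde C}$ of the paper's diagram, and it is an isomorphism only away from the split nodes.
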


\begin{proof}
{
The lower row in the claimed diagram was produced in~\eqref{Eqn: EE-GG-FF
sequence} in the proof of Proposition~\ref{Prop: Point condition virtual bundle}
by applying $R\pi_*$ to \eqref{Eqn: Exact sequence dualizing complexes} tensored
with $f^*\Omega_{X/B}$. We claim that \eqref{Eqn: Exact sequence dualizing complexes} appears as the lower row in the following commutative diagram with exact rows:
\begin{equation}
\label{Eqn: Diagram to connect spl and gl obstruction bundles}
\vcenter{\xymatrix{
0\ar[r]&\kappa_*\omega_{\tilde\pi}\ar[r]\ar[d]&
\kappa_*(\omega_{\tilde\pi}(\tilde Z))\ar[r]\ar@{=}[d]&
\kappa_*\tilde\iota_*\cO_{\tilde Z}[1]\ar[r]\ar[d]&0\\
0\ar[r]&\omega_\pi\ar[r]&\kappa_*(\omega_{\tilde\pi}(\tilde Z))\ar[r]&
\iota_*\cO_Z[1]\ar[r]&0.
}}
\end{equation}
Away from the nodal locus $Z''\subset Z$ the upper and lower rows are identical,
and this identification defines the diagram there. \'Etale locally near a node,
the arrow $\kappa_*(\omega_{\tilde\pi}(\tilde Z))\arr \iota_*\cO_Z[1]$ takes the
difference of the residues of a differential with at most simple poles along the
two components of $\tilde Z''\simeq Z''\amalg Z''$ defined by the two branches at the
node. This map factors as $\kappa_*$ of the residue map
\[
\rho\colon \omega_{\tilde\pi}(\tilde Z'') \arr
\tilde\iota_*\cO_{\tilde Z''}[1]
\]
and the $[1]$-twist of the difference map
\[
\kappa_*\tilde\iota_*\cO_{\tilde Z''}=
\iota_*\cO_{Z''}\oplus\iota_*\cO_{Z''}\arr \iota_*\cO_{Z''},\quad
(a,b)\longmapsto a-b.
\]
The kernel of $\kappa_*\rho$ selects differentials without poles, that is,
$\kappa_*\omega_{\tilde\pi}$. This extends the construction of
Diagram~\eqref{Eqn: Diagram to connect spl and gl obstruction bundles} over the
nodal locus.

To produce the morphism of triangles in the statement it remains to show that
tensoring the upper row of \eqref{Eqn: Diagram to connect spl and gl obstruction
bundles} with $f^*\Omega_{X/B}$ and applying $R\pi_*$ leads to the upper row of
the claimed diagram. From Proposition~\ref{Prop: Point condition virtual bundle}
we already know that the middle term leads to $\GG$:
\[
\GG\stackrel{\eqref{Eqn: obstruction theory for widehat spaces}}{=}
R\tilde\pi_*\big(\tilde f^*\Omega_{X/B}\otimes
\omega_{\tilde \pi}(\tilde Z)\big)
=R\pi_*\big(f^*\Omega_{X/B}\otimes
\kappa_*(\omega_{\tilde \pi}(\tilde Z))\big).
\]
The other two terms are readily obtained by the projection formula for $\kappa$ using $\tilde\pi=\pi\circ\kappa$, $\tilde f=f\circ\kappa$, $\tilde h=\tilde f\circ\tilde \iota$, $\tilde p=\tilde\pi\circ\tilde\iota$:
\begin{align*}
L\delta^*\EE_\spl&=R\tilde\pi_*(\tilde f^*\Omega_{X/B}\otimes\omega_{\tilde\pi})
=R\pi_*(f^*\Omega_{X/B}\otimes\kappa_*\omega_{\tilde\pi})\\
L\delta^* L\hat\varepsilon^* \FF_\spl&= \tilde p_*(\tilde h^*\Omega_{X/B})
= R\tilde\pi_*(\tilde f^*\Omega_{X/B}\otimes \tilde\iota_*\cO_{\tilde Z})
= R\pi_*(f^*\Omega_{X/B}\otimes \kappa_*\tilde\iota_*\cO_{\tilde Z}).
\end{align*}}
\end{proof}

\begin{theorem}
\label{Thm: Virtual gluing via evaluation spaces2}
{Let $X\arr B$ be a log smooth morphism of fs logarithmic schemes
over $\kk$ fulfilling the assumptions stated at the beginning of
\S\ref{sec:stack}, {and $\tau,\tau_i,\bE,\bS$ as in
Proposition~\ref{Prop: Gluing via evaluation spaces}}. Then} with the notation
of \eqref{Eqn: short-hand notations gluing stacks}, we have
\begin{enumerate}
\item
The obstruction theory $\GG\arr \LL_{\scrM_\gl/\fM_\gl^\ev}$
for
\[
\scrM_\gl=\scrM_\gl(X/B,\tau)\arr\fM_\gl^\ev=\fM^\ev(\cX/B,\tau)
\]
coincides with the pull-back of {one of the obstruction theories
$\GG_\spl\rightarrow \LL_{\scrM_\spl/\fM_\spl^{\ev}}$ (Remark~\ref{Rem:
non-uniqueness of obstruction theories})} for
\[
\textstyle
\scrM_\spl=\prod_i \scrM(X/B,\tau_i)\arr \fM_\spl^\ev=
\prod_i \fM^\ev(\cX/B,\tau_i)
\]
described in {\S\ref{Ref: The gl and spl obstruction theories}}.
\item
If $\hat\varepsilon^!$ and $\varepsilon^!$ denote Manolache's virtual
pull-back defined using the two given obstruction theories for the vertical
arrows in Diagram~\eqref{diag:evaluation-gluing}, then for $\alpha\in
A_*\left(\fM^\ev(\cX/B,\tau)\right)$, we have the identity
\[
\hat\varepsilon^!\delta^\ev_*(\alpha)=
\delta_*\varepsilon^!(\alpha)\,.
\]
\end{enumerate}
\end{theorem}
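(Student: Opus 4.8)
\textbf{Proof plan for Theorem~\ref{Thm: Virtual gluing via evaluation spaces2}.}
Part~(1) is the heart of the matter, and the plan is to deduce it directly from Lemma~\ref{Lem: GG on tilde C is GG on C} together with the functoriality of the cone construction of obstruction theories recorded after \eqref{Eqn: Cone construction obstruction theory}. First I would note that by Proposition~\ref{Prop: Gluing via evaluation spaces} the square \eqref{diag:evaluation-gluing} is cartesian with strict vertical arrows, so $\LL_{\scrM_\gl/\fM_\gl^\ev}$ and $\LL_{\scrM_\spl/\fM_\spl^\ev}$ agree with their schematic counterparts and the pull-back $L\delta^*\LL_{\scrM_\spl/\fM_\spl^\ev}$ admits a functoriality map to $\LL_{\scrM_\gl/\fM_\gl^\ev}$. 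The obstruction theory $\GG_\spl\arr \LL_{\scrM_\spl/\fM_\spl^\ev}$ was produced in \S\,\ref{ss: Obstruction theories with point conditions} as the cone of the compatibility square from Proposition~\ref{Prop: Compatibility of obstruction theories} for the pair $(\scrM_\spl/\fM_\spl,\fM_\spl^\ev/\fM_\spl)$; pulling that entire square back along $\delta$ and using Lemma~\ref{Lem: Functoriality of Phi} (functoriality of $\Phi$) and the analogous functoriality of $\Psi$, together with Lemma~\ref{Lem: GG on tilde C is GG on C} which identifies $L\delta^*\EE_\spl$, $L\delta^* L\hat\varepsilon^*\FF_\spl$, and their cone $L\delta^*\GG_\spl=\GG$ with the corresponding objects $\EE$, $L\varepsilon^*\FF$, $\GG$ for the glued situation, I would conclude that the cone triangle pulls back to the cone triangle defining $\GG\arr\LL_{\scrM_\gl/\fM_\gl^\ev}$. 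The only subtlety is matching the maps to the truncated cotangent complexes compatibly, which is why the construction of $\GG\arr\LL_{\scrM/\fM^\ev}$ was set up via a \v{C}ech dg enhancement; I would invoke that functoriality explicitly here. This step is where the bookkeeping is heaviest and is the main obstacle: one must check that all four functoriality morphisms (for $\Phi$, $\Psi$, the two cotangent complexes) fit into a single commuting prism so that taking cones commutes with $L\delta^*$.

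For Part~(2), with (1) in hand the identity $\hat\varepsilon^!\delta^\ev_*(\alpha)=\delta_*\varepsilon^!(\alpha)$ follows from the functoriality of Manolache's virtual pull-back, specifically \cite[Thm.~4.1(3)]{Mano} (compatibility of virtual pull-back with proper push-forward in a cartesian square). The plan is to apply that result to the cartesian square \eqref{diag:evaluation-gluing}: its horizontal arrows $\delta$, $\delta^\ev$ are proper — indeed finite and representable by Corollary~\ref{Cor: Gluing theorem for evaluation stacks}, via Corollary~\ref{Cor: Gluing theorem for evaluation stacks} applied to the front face of the commutative box there — so $\delta^\ev_*$ and $\delta_*$ are defined on Chow groups; the vertical arrows $\varepsilon$, $\hat\varepsilon$ are strict with the compatible perfect obstruction theories $\GG$, $\GG_\spl$; and by Part~(1), $\GG$ is the $\delta$-pull-back of $\GG_\spl$. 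Manolache's theorem then gives exactly $\hat\varepsilon^!\circ\delta^\ev_* = \delta_*\circ\varepsilon^!$ as operations $A_*(\fM_\gl^\ev)\to A_*(\scrM_\spl)$, which evaluated on $\alpha$ is the claimed formula. I would close by remarking that the statement holds verbatim in the weakly marked and decorated settings, since Proposition~\ref{Prop: Gluing via evaluation spaces} and Corollary~\ref{Cor: Gluing theorem for evaluation stacks} hold there and the obstruction-theoretic constructions are insensitive to the marking type up to the nilpotent thickenings of Proposition~\ref{Prop: weakly marked versus marked}, which do not affect Chow groups.
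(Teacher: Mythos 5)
Your proposal is correct and follows essentially the same route as the paper: part (1) is proved by assembling the morphism of triangles from Lemma~\ref{Lem: GG on tilde C is GG on C}, the compatibility squares of Proposition~\ref{Prop: Compatibility of obstruction theories}, and the functoriality morphisms of cotangent complexes into one commuting prism (using that $L\delta^*\LL_{\scrM_\spl/\fM_\spl^\ev}\to\LL_{\scrM_\gl/\fM_\gl^\ev}$ is an isomorphism because \eqref{diag:evaluation-gluing} is cartesian with strict vertical arrows), and then invoking functoriality of the cone construction in the \v{C}ech dg enhancement. Part (2) is, as you say, exactly Manolache's push-pull formula applied to the cartesian square with $\delta^\ev$ finite and representable, hence projective.
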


\begin{proof}
{
(1) The morphism between the obstruction theories in question appear as the joint middle square in the following diagram of two adjacent cubes:
\[
\xymatrix@-1.2pc{
L\delta^*\EE_\spl\ar[rr]\ar[dd]\ar[rd]&& \GG\ar@{=}[dd]\ar@{-->}[rd]\ar[rr]
&&L\delta^* L\hat\varepsilon^* \FF_\spl[1]\ar[dd]\ar[rd]\\
&L\delta^*\LL_{\scrM_\spl/\fM_\spl}\ar[rr]\ar[dd]
&& L\delta^*\LL_{\scrM_\spl/\fM_\spl^\ev}\ar[dd]\ar[rr]
&&L\delta^*L\hat\varepsilon^* \LL_{\fM_\spl^\ev/\fM_\spl}[1]\ar[dd]\\
\EE\ar[rr]\ar[rd]&& \GG\ar@{-->}[rd]\ar[rr]&&L\varepsilon^*\FF[1]\ar[rd]&&\\
&\LL_{\scrM_\gl/\fM_\gl}\ar[rr]&& \LL_{\scrM_\gl/\fM^\ev_\gl}\ar[rr]
&&L\varepsilon^* \LL_{\fM^\ev_\gl/\fM_\gl}[1]
}
\]
{ The back face is the morphism of triangles from Lemma~\ref{Lem: GG
on tilde C is GG on C}.} The bottom face is commutative by the construction of
the obstruction theory with point conditions $\GG\arr
\LL_{\scrM_\gl/\fM_\gl^\ev}$ in \eqref{Eqn: Cone construction obstruction
theory} based on Proposition~\ref{Prop: Compatibility of obstruction theories}.
Similarly, the top face is commutative as the pull-back by $\delta$ of the
corresponding diagram for ${\GG_\spl}\arr
\LL_{\scrM_\spl/\fM_\spl^\ev}$. The front face of the diagram is the morphism of
distinguished triangles of cotangent complexes for the compositions
$\scrM_\gl\to \fM_\gl^\ev\to \fM_\gl$ and $\scrM_\spl\to
\fM_\spl^\ev\to\fM_\spl$, and hence is commutative as well.

{ For commutativity of the left face we argue in two steps.
First apply functoriality of obstruction theories, Lemma~\ref{Lem: Functoriality
of Phi}, to compare the pulled-back obstruction theory $(\scrM_\spl/\fM_\spl)$
for $f_\spl$ with the obstruction theory for $\tilde f$, both relative
$\fM_\spl$, to obtain the commutative square
\begin{equation}
\label{Eqn: EE_spl versus tilde EE}
\begin{CD}
L\delta^*\EE_\spl@>>>L\delta^*\LL_{\scrM_\spl/\fM_\spl}\\
@VVV@VVV\\
\tilde\EE@>>>\LL_{\scrM_\gl/\fM_\gl}.
\end{CD}
\end{equation}
Here $\tilde\EE=R\tilde\pi_*(\tilde f^*\Omega_{X/B}\otimes\omega_{\tilde\pi})$,
and we replaced the lower right-hand corner $\LL_{\scrM_\gl/\fM_\spl}$ by
$\LL_{\scrM_\gl/\fM_\gl}$ using functoriality of the cotangent complex. Note
also that the proof of Lemma~\ref{Lem: Functoriality of Phi} did not use the
general assumption in \S\ref{ss:Obstruction theories for pairs} that $M$ is an
open substack of the stack of diagrams described in \eqref{Eqn: objects of M},
so does apply to the non-universal family over $\scrM_\gl$ given by $\tilde f$.

We are then in the situation of \S\ref{sss: obstructions for markings} with $Y\arr S$ the universal curve over $\fM_\gl$, $Z$ the partial normalization of this curve, and $M=N=\scrM_\gl$. Thus Proposition~\ref{Prop: Compatibility of obstruction theories} provides the commutative square
\begin{equation}
\label{Eqn: tilde EE versus EE}
\begin{CD}
\tilde\EE@>>>\LL_{\scrM_\gl/\fM_\gl}\\
@VVV@VVV\\
\EE@>>>\LL_{\scrM_\gl/\fM_\gl}.
\end{CD}
\end{equation}
Again, this result did not use universality of the family of maps over
$\scrM_\gl$ given by $\tilde f$. Composing the two squares \eqref{Eqn: EE_spl
versus tilde EE} and \eqref{Eqn: tilde EE versus EE} proves commutativity of the
left face of our big diagram of adjacent cubes.

An analogous argument for the nodal locus $Z$ and its {pull-back $\tilde
Z\subset \tilde C^\circ$ instead of $C^\circ$ and $\tilde C^\circ$} also shows
commutativity of the right face.}

Thus the whole diagram is commutative
except possibly the middle, separating square that describes the morphism of
interest from the pull-back of the obstruction theory for
$(\scrM_\spl/\fM_\spl^\ev)$ to $(\scrM_\gl/\fM_\gl^\ev)$.

However, chasing the diagram, we see that the two morphisms from $\GG$ to the
front right corner $L\varepsilon^* \LL_{\fM_\gl^\ev/\fM_\gl}[1]$, one via the
top dashed arrow, the other via the bottom dashed arrow, agree. Their difference
factors over a homomorphism
\[
\GG\arr \LL_{\scrM_\gl/\fM_\gl}.
\]
The set of such homomorphisms acts transitively on the set of dashed arrows on
the bottom face defining the obstruction theory for $(\scrM_\gl/\fM_\gl^\ev)$,
as discussed in Remark~\ref{Rem: non-uniqueness of obstruction theories}. Thus
there is a choice of dashed bottom arrow making the separating middle square of
the diagram commutative, as claimed.
}

(2) This follows from the morphism $\delta^\ev$ being finite and representable,
hence projective, and the push-pull formula of
\cite[Thm.\,4.1,(iii)]{Mano}.
\end{proof}

\subsubsection{Gluing by the numbers}
We now achieve a numerical gluing formula for Gromov-Witten invariants for
classes in $\fM^\ev(\cX/B,\tau)$ whose push-forward to $\prod
\fM^\ev(\cX/B,\tau_i)$ decomposes as a sum of products of classes. This is for
example the case for point classes in $\fM^\ev(\cX/B,\tau)$, or if all gluing
strata are toric
\cite{Yixian}.

\begin{corollary}
\label{Cor: decomposition into product}
In the situation of Theorem~\ref{Thm: Virtual gluing via evaluation spaces2} let
$\alpha\in A_*\big(\fM(\cX/B,\tau)\big)$ and assume that there exists
$\alpha_{i,\mu}\in A_*\big(\fM(\cX/B,\tau_i)\big)$, $i=1,\ldots,r$,
$\mu=1,\ldots,m$, with
\[
\delta^\ev_*(\alpha)=\sum_{\mu=1}^m \alpha_{1,\mu}\times
\cdots\times \alpha_{r,\mu}\,.
\]
Then writing $\varepsilon_i: \scrM(X/B,\tau_i)\arr \fM^\ev(\cX/B,\tau_i)$ for
the canonical map, the following equality of associated virtual classes holds in
$A_*\big(\prod_i\scrM(X/B,\tau_i)\big)$:
\[
\delta_*\varepsilon^!(\alpha)=\sum_{\mu=1}^m \varepsilon_1^!(\alpha_{1,\mu})
\times\cdots\times\varepsilon_r^!(\alpha_{r,\mu}).
\]
\end{corollary}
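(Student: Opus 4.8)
The plan is to combine the virtual pull-back compatibility in Theorem~\ref{Thm: Virtual gluing via evaluation spaces2}(2) with the multiplicativity of virtual pull-back across fiber products. Starting from the hypothesis $\delta^\ev_*(\alpha)=\sum_{\mu=1}^m \alpha_{1,\mu}\times\cdots\times \alpha_{r,\mu}$, I would first apply Theorem~\ref{Thm: Virtual gluing via evaluation spaces2}(2) to rewrite the left-hand side of the desired identity:
\[
\delta_*\varepsilon^!(\alpha)=\hat\varepsilon^!\delta^\ev_*(\alpha)
=\hat\varepsilon^!\Big(\sum_{\mu=1}^m \alpha_{1,\mu}\times\cdots\times\alpha_{r,\mu}\Big)
=\sum_{\mu=1}^m \hat\varepsilon^!\big(\alpha_{1,\mu}\times\cdots\times\alpha_{r,\mu}\big),
\]
using linearity of virtual pull-back. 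So the whole statement reduces to the single identity
\[
\hat\varepsilon^!\big(\alpha_{1,\mu}\times\cdots\times\alpha_{r,\mu}\big)
=\varepsilon_1^!(\alpha_{1,\mu})\times\cdots\times\varepsilon_r^!(\alpha_{r,\mu})
\]
for each fixed $\mu$, i.e.\ compatibility of virtual pull-back with external products.

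The second step is to justify this last identity. Recall $\hat\varepsilon=\prod_i\varepsilon_i$ is literally the fiber product over $B$ of the maps $\varepsilon_i:\scrM(X/B,\tau_i)\arr\fM^\ev(\cX/B,\tau_i)$, and by Theorem~\ref{Thm: Virtual gluing via evaluation spaces2}(1) the obstruction theory $\GG_\spl$ defining $\hat\varepsilon^!$ is $\bigoplus_i \mathrm{pr}_i^*\GG_i$ where $\GG_i$ is the obstruction theory for $\varepsilon_i$ (this is exactly the content of \eqref{Eqn: obstruction theory for widehat spaces}, since $\pi_\spl$ is the disjoint union of the universal curves and $f_\spl^*\Omega_{X/B}\otimes\omega_{\pi_\spl}(Z_\spl)$ restricts on each component to the corresponding sheaf for $\tau_i$). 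For a product of fibered obstruction theories over a common base, Manolache's virtual pull-back is multiplicative: $\hat\varepsilon^!(\beta_1\times\cdots\times\beta_r)=\varepsilon_1^!(\beta_1)\times\cdots\times\varepsilon_r^!(\beta_r)$. I would cite \cite[Thm.~4.1]{Mano} for the relevant functoriality/base-change properties; the product case follows by induction on $r$ from the two-factor case, which in turn is the statement that for a cartesian square the virtual pull-back along $\varepsilon_1\times\id$ agrees with $\varepsilon_1^!$ applied fiberwise (the base-change property \cite[Thm.~4.1,(ii)]{Mano}), composed with the analogous statement for $\id\times\varepsilon_2$.

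I expect the main obstacle to be pinning down the multiplicativity of $\hat\varepsilon^!$ over external products with full rigor, since this requires checking that the fibered product obstruction theory $\GG_\spl$ genuinely decomposes as the external direct sum of the $\GG_i$ compatibly with the cone construction of \eqref{Eqn: Cone construction obstruction theory} — but this was already arranged in the proof of Theorem~\ref{Thm: Virtual gluing via evaluation spaces2}(1), where the obstruction theory on the product is identified with $L\delta^*\GG_\spl$ and $\GG_\spl=R{\pi_\spl}_*(f_\spl^*\Omega_{X/B}\otimes\omega_{\pi_\spl}(Z_\spl))$ visibly splits over the connected components of $C_\spl^\circ$. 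Given that, the external-product compatibility of virtual pull-back is a formal consequence of the base-change and functoriality axioms for $(-)^!$, and the remaining argument is the short bookkeeping displayed above. I would therefore present the proof as: (i) invoke Theorem~\ref{Thm: Virtual gluing via evaluation spaces2}(2) and linearity to reduce to the external-product identity; (ii) invoke Theorem~\ref{Thm: Virtual gluing via evaluation spaces2}(1) to identify $\GG_\spl$ with $\bigoplus_i\mathrm{pr}_i^*\GG_i$; (iii) conclude by multiplicativity of Manolache virtual pull-back along a product of maps over $B$, citing \cite{Mano}.
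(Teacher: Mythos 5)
Your proposal is correct and follows essentially the same route as the paper: the paper's proof consists precisely of invoking Theorem~\ref{Thm: Virtual gluing via evaluation spaces2} together with the observation that $\hat\varepsilon^!(\alpha_{1,\mu}\times\cdots\times \alpha_{r,\mu})=\varepsilon_1^!(\alpha_{1,\mu})\times\cdots\times\varepsilon_r^!(\alpha_{r,\mu})$. You in fact supply more justification for this multiplicativity (via the splitting of $\GG_\spl$ over the components of $C_\spl^\circ$ and the base-change axioms of \cite{Mano}) than the paper, which simply asserts it.
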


\begin{proof}
The claimed formula follows readily from
Theorem~\ref{Thm: Virtual gluing via evaluation spaces2},(2) by observing that
\[
\hat\varepsilon^!(\alpha_{1,\mu}\times\cdots\times \alpha_{r,\mu})
=\varepsilon_1^!(\alpha_{1,\mu})\times\cdots\times
\varepsilon_r^!(\alpha_{r,\mu})\,.
\qedhere
\]
\end{proof}
\medskip

\subsubsection{Compatibility with contractions of types}
We end this section by noting that the relative obstruction theories are also
compatible with contraction morphisms relating different global types
(Definition~\ref{Def: global type},(1)).

\begin{proposition}
\label{Prop: obstruction theories are compatible with contraction morphisms}
Let {$X\arr B$ be as in Theorem~\ref{Thm: Virtual gluing via evaluation
spaces2} and assume $\tau'\arr\tau$ is} a contraction morphism of global types.
Then the commutative diagram
\[
\xymatrix
{
\scrM(X/B,\tau')\ar[r]\ar[d]_{\varepsilon'}&\scrM(X/B,\tau)
\ar[d]^{\varepsilon}\\
{\fM(\cX/B,\tau')}\ar[r]& \fM(\cX/B,\tau)
}
\]
is cartesian, and the relative obstruction theory for $\varepsilon$ pulls back
to the relative obstruction theory for $\varepsilon'$. Taking curve
classes into consideration, if $\btau=(\tau,{\bf A})$, the commutative diagram 
\begin{equation}
\label{eq:curve class diagram}
\vcenter{\xymatrix
{
\coprod_{\btau'=(\tau',{\bf A}')}\scrM(X/B,\btau')
\ar[r]\ar[d]_{\varepsilon'}&\scrM(X/B,\btau)\ar[d]^{\varepsilon}\\
\fM(\cX/B,\tau')\ar[r]& \fM(\cX/B,\tau)
}}
\end{equation}
is cartesian, and the same statement on relative obstruction theories holds.
Here, the disjoint union is over all decorations $\btau'$ of $\tau'$
such that the contraction morphism $\tau'\rightarrow\tau$ induces a
contraction morphism $\btau'\rightarrow\btau$.

Analogous results hold for weakly marked versions of the stacks (Definition
\ref{Def: stacks of decorated puncted maps}), and for evaluation stacks on the
bottom (Definition~\ref{Def: evaluation stack}).
\end{proposition}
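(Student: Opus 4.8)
The statement has two parts: first that the square is cartesian, and second that the relative obstruction theories are compatible under pullback. For the cartesian property, the approach is to spell out the moduli problems. A morphism from an fs log scheme $W$ to the fiber product $\fM(\cX/B,\tau)\times_{\fM(\cX/B,\tau)}\scrM(X/B,\tau)$ — or rather to $\scrM(X/B,\tau)\times_{\fM(\cX/B,\tau)}\fM(\cX/B,\tau')$ — is the data of a $\tau'$-marked basic stable punctured map $(C^\circ/W,\bp,f)$ to $X$ together with a $\tau$-marking of the image in $\fM(\cX/B,\tau)$. The key observation is that a $\tau'$-marking of the underlying punctured map to $\cX$ automatically induces a $\tau$-marking by composing the marking contraction morphism at each geometric point with $\tau'\to\tau$ (exactly as in Remark~\ref{Rem: stratified structure of fM(cX,tau)}), and conversely the extra datum of a $\tau'$-marking refining a given $\tau$-marking on a fixed underlying map to $\cX$ is precisely what distinguishes $\fM(\cX/B,\tau')$ over $\fM(\cX/B,\tau)$. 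Since $X\to\cX$ is strict, nothing changes at the level of the map to $X$ itself. This identifies the fiber categories over each $W$, functorially, so the square is cartesian. For the curve-class diagram \eqref{eq:curve class diagram} one argues identically, observing that the class datum $\bA'$ on the $\tau'$-marked map is locally constant in families and that compatibility of $\btau'\to\btau$ with $\tau'\to\tau$ is exactly the condition for the class to descend; the disjoint union appears because several decorations $\bA'$ can contract to the same $\bA$.

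\textbf{Compatibility of obstruction theories.} Once the square is known cartesian with strict vertical arrows, the obstruction theory claim reduces to base-change functoriality of the construction in \S\,\ref{ss: Obstruction theories with point conditions}. Write $C^\circ\to\scrM(X/B,\tau)$ and $C'^\circ\to\scrM(X/B,\tau')$ for the universal curves, and note that $C'^\circ$ is the pullback of $C^\circ$ along $\scrM(X/B,\tau')\to\scrM(X/B,\tau)$ as families of nodal curves (the markings and punctures only affect log structures, not the underlying curves or the relevant dualizing sheaves). The relative obstruction theory for $\varepsilon$ is $\GG\simeq(R\tilde\pi_*\tilde f^*\Theta_{X/B}(-\tilde Z))^\vee$ from Proposition~\ref{Prop: Point condition virtual bundle}, and its formation commutes with the base change $\scrM(X/B,\tau')\to\scrM(X/B,\tau)$ by the general base-change statement \cite[Lem.~0A1K]{stacks-project} applied to the proper flat $\tilde\pi$, together with compatibility of the morphism $\Phi$ with pullback as established in Lemma~\ref{Lem: Functoriality of Phi} and the corresponding compatibility of the cone construction \eqref{Eqn: Cone construction obstruction theory}, which we use freely via its \v{C}ech-dg enhancement. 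Thus $L(\cdot)^*\GG\simeq\GG'$ compatibly with the maps to $L(\cdot)^*\LL_{\scrM(X/B,\tau)/\fM^\ev(\cX/B,\tau)}\to\LL_{\scrM(X/B,\tau')/\fM^\ev(\cX/B,\tau')}$, which is the assertion.

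\textbf{Variants.} The weakly marked case is identical: Definition~\ref{Def: marking by type}(1)--(2) is what governs both the descent of markings and the moduli identification, and dropping condition (3) changes nothing in the argument. For evaluation stacks one appends the fiber product \eqref{eq:fMev def} over $\prod\ul X$ to each corner; since this fiber product and the splitting/contraction operations on the evaluation sections are compatible, the enlarged square remains cartesian, and the obstruction theories $\GG\to\LL_{\scrM/\fM^\ev}$ pull back correctly by the same base-change reasoning (the only new point is that the evaluation sections are unaffected by contracting non-leg edges, since contraction morphisms never contract legs, as recorded after \eqref{Eqn: contraction morphism tropical type}).

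\textbf{Main obstacle.} The conceptually routine-looking but technically delicate point is the identification of $C'^\circ$ as a genuine pullback of $C^\circ$ in a way that makes $\tilde f'^*\Theta_{X/B}(-\tilde Z')$ the pullback of $\tilde f^*\Theta_{X/B}(-\tilde Z)$; one must check that the subspace of special points $Z'$ and the partial normalization $\tilde C'^\circ$ are pulled back from their counterparts over $\scrM(X/B,\tau)$ — which holds because a contraction $\tau'\to\tau$ induces a bijection on legs and identifies the non-contracted edges, so the set of sections labelled by $\bS$ is carried along correctly — and then that the dualizing complexes match, which is automatic since $\omega_{\tilde\pi}$ depends only on the underlying schematic morphism. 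Granting this, invoking base change for $R\tilde\pi_*$ and functoriality of the cone construction finishes the proof.
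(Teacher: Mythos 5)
Your proposal is correct and follows essentially the same route as the paper's own (much terser) proof: the cartesian property is read off from the definition of (decorated) markings, and the obstruction-theory compatibility is deduced from Lemma~\ref{Lem: Functoriality of Phi} together with the cone construction of \S\,\ref{ss: Obstruction theories with point conditions}. Your write-up simply fills in the details that the paper leaves implicit.
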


\begin{proof}
That the diagrams are Cartesian follows from the definition of markings and
decorated markings of punctured maps (Definition~\ref{Def: stacks of decorated
puncted maps}). 

The statement about obstruction theories then follows from the functoriality
statement Lemma~\ref{Lem: Functoriality of Phi} and the construction in
\S\ref{ss: Obstruction theories with point conditions} of the relative
obstruction theory for $\scrM(X/B,\tau) \rightarrow \fM(\cX/B,\tau)$.
\end{proof}

\begin{remark}
\label{Rem: History of gluing}
The formalism for gluing presented here was found after many futile attempts
leading to practically useless gluing procedures. With hindsight compatibility
with the virtual formalism provides the strongest guiding principle that rules
out many alternative approaches. From this point of view one discovers the
imperative that one work with obstruction theories relative to a class of
unobstructed base stacks that induce the gluing. 

A first attempt would work with
moduli stacks $\fM(\cX/B)$ of punctured maps to the relative Artin fan $\cX\to
B$. This approach does indeed work, but it is often problematic for practical
applications because the gluing map $\fM(\cX/B,\btau)\to
\prod_i\fM(\cX/B,\btau_i)$ is neither representable nor proper, hence does not
allow pushing forward of cycles.

The key insight is to use evaluation stacks to add just enough information to
get rid of the stacky nature of the gluing in $\fM(\cX/B)$, thus leading to a
finite and representable splitting map $\delta^\ev$. In addition, $\delta^\ev$
fits into the expected gluing diagram stated in Corollary~\ref{Cor: Gluing
theorem for evaluation stacks} thus providing a practical path to explicit
computations.
\end{remark}

\subsection{Gluing in the degeneration setup}
\label{sec:degeneration gluing}

We now apply our gluing theorems to the degeneration situation previously
studied in \cite{decomposition}. In this case $B$ is a smooth affine curve over
$\Spec\kk$ with log structure trivial except at a marked point $b_0\in B$,
{and $\cA_X$ is assumed Zariski.} Base change to $b_0$ produces a log
smooth space $X_0$ over the standard log point $\Spec(\NN\to\kk)$. Let
$\beta=(g,\bar\bu,A)$ be a class of punctured maps to $X$. Note that
$\Sigma(X_0)=\Sigma(X)$, so we can view $\beta$ also as a class of punctured
maps to $X_0$. The fiber of the tropicalization $\Sigma(X_0)\arr
\Sigma(b_0)=\RR_{\ge0}$ of the projection $X_0\arr b_0$ over $1\in\RR_{\ge0}$
defines a polyhedral complex ${\Delta(X_0)=}\Delta(X)$. Restricting to
this fiber turns our cone complexes into the polyhedral complexes of traditional
tropical geometry.

The main result of \cite{decomposition} gives the following decomposition of the
virtual fundamental class of $\scrM(X_0/b_0,\beta)$ in terms of rigid tropical
maps to $\Delta(X_0)$. We emphasize that this result uses the marked
rather than weakly marked versions of the moduli stacks.

\begin{theorem}
Let $\beta$ be a class of stable logarithmic maps to $X_0/b_0$. Then
we have the following equality of Chow classes on
$\scrM(X_0/b_0,\beta)$:
\[
\big[\scrM(X_0/b_0,\beta)\big]^\virt
= \sum_{\btau=(\tau,\bA)} \frac{m_\tau}{|\Aut(\btau)|}
{j_{\btau}}_*\big[\scrM(X_0/b_0,\btau)\big]^\virt.
\]
The sum runs over representatives of isomorphism classes of realizable global
types $\btau$ of punctured maps to $X_0$ over $b_0$ of total class
$(g,\bar\bu,A)$ and with basic monoid $Q_\tau\simeq \NN$. The multiplicity
$m_\tau$ is the index of the image of the homomorphism $\NN\to Q_\tau$ given by
{ the map $\scrM(X_0/b_0,\beta)\arr b_0$}. The morphism $j_\btau: \scrM(X_0/b_0,\btau) \arr \scrM(X_0/b_0,\beta)$ is
induced by the contraction morphism $\tau\arr \beta$. {Finally,
$\Aut(\btau)$ denotes the group of automorphisms of the decorated type
$\btau$, i.e., automorphisms of the underlying graph $G$ preserving
$\bg,\bsigma,\bu$ and $\bA$}.
\end{theorem}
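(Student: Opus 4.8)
The plan is to reduce this statement to the decomposition formula of \cite{decomposition} for ordinary stable logarithmic maps by observing that both sides of the claimed identity only involve \emph{punctured} maps to $X_0$ whose underlying tropical maps are rigid, i.e.\ have basic monoid $\NN$, and that all the ingredients used in the proof of \cite[decomposition]{decomposition} --- the decomposition of $\fM(\cX_0/b_0,\beta)$ into strata, the rigidity analysis, multiplicities $m_\tau$, automorphisms --- carry over verbatim once one replaces the moduli of stable logarithmic maps by the moduli of stable punctured maps. Concretely, I would first recall that by Theorem~\ref{thm:idealized-etale} the morphism $\fM(\cX_0/b_0,\beta)\arr \bM\times b_0$ is idealized logarithmically \'etale, with the stratified structure of $\fM(\cX_0/b_0,\beta)$ described in Remark~\ref{Rem: stratified structure of fM(cX,tau)}: its reduction is stratified by the images of $\fM(\cX_0/b_0,\btau')$ for realizable decorated global types $\btau'$ dominating $\beta$. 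The key dimension input is Proposition~\ref{Prop: pure-dimensional fM(cX,tau)}, which over the standard log point $b_0$ gives $\dim\fM(\cX_0/b_0,\tau) = 3|\bg|-3+|L(G)|-\rk Q_\tau^\gp + 1$; the maximal-dimensional strata are precisely those with $\rk Q_\tau^\gp=1$, i.e.\ $Q_\tau\simeq\NN$ --- these are the rigid tropical maps appearing in the sum.

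Next I would set up the virtual pushforward argument exactly as in \cite{decomposition}. Working over $\fM(\cX_0/b_0,\beta)$ with its canonical idealized structure, and over the realizable global types $\btau=(\tau,\bA)$ with $Q_\tau\simeq\NN$ dominating $\beta$, one has a closed substack (image of $j_\btau$) which is pure-dimensional of the expected dimension by Proposition~\ref{Prop: pure-dimensional fM(cX,tau)}. The fundamental class $[\fM(\cX_0/b_0,\beta)]$, which is used to build $[\scrM(X_0/b_0,\beta)]^\virt$ by virtual pull-back along $\varepsilon$ using the perfect obstruction theory $\GG$ of Theorem~B (Proposition~\ref{Prop: Point condition virtual bundle}), decomposes as a sum $\sum_{\btau}\tfrac{m_\tau}{|\Aut(\btau)|}{j_\btau}_*[\fM(\cX_0/b_0,\btau)]$ of the fundamental classes of these maximal strata, each with multiplicity $m_\tau$ coming from the index of $\NN\to Q_\tau$ induced by $\pi:C^\circ\arr W$ composed with $W\arr b_0$, and with $|\Aut(\btau)|$ accounting for overcounting. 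This is the punctured analogue of \cite[Thm.~(decomposition-of-[fM])]{decomposition}, and its proof is the same: it is a statement purely about the idealized log stack $\fM(\cX_0/b_0,\beta)$ and the combinatorics of contraction morphisms of global types, which by Remark~\ref{Rem: stratified structure of fM(cX,tau)} works identically for punctured maps. Then applying virtual pull-back $\varepsilon_\GG^!$ (which is compatible with proper pushforward by ${j_\btau}_*$, these being induced by the finite unramified morphisms $j_{\tau\tau'}$ of Remark~\ref{Rem: stratified structure of fM(cX,tau)}, hence projective) and using the compatibility of obstruction theories with contraction morphisms from Proposition~\ref{Prop: obstruction theories are compatible with contraction morphisms} --- specifically the cartesian diagram \eqref{eq:curve class diagram} and the fact that $\varepsilon$-obstruction theory pulls back to the $\varepsilon'$-obstruction theory --- yields the stated identity on $\scrM(X_0/b_0,\beta)$.

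The main obstacle --- and the reason this deserves a careful argument rather than a one-line citation --- is verifying that the \emph{decomposition of the fundamental class} $[\fM(\cX_0/b_0,\beta)]$ into rigid strata, with the correct multiplicities $m_\tau/|\Aut(\btau)|$, genuinely goes through in the punctured setting despite the new phenomenon that $\fM(\cX_0/b_0,\beta)$ is typically \emph{reducible and non-equidimensional} (Example~\ref{ex:non equi dim}). In the unpunctured case $\fM(\cX_0/b_0,\beta)$ is pure-dimensional (log smooth over $b_0$), so "decomposition into top-dimensional components" is unambiguous; here one must argue that the puncturing log ideal only removes lower-dimensional strata --- precisely those where some puncturing leg is forced to have length zero, by Proposition~\ref{Prop: puncturing tropical interpretation} --- so that the top-dimensional components are still exactly the closures of $\fM(\cX_0/b_0,\btau)$ for $\btau$ realizable over $b_0$ with $Q_\tau\simeq\NN$, each appearing with the expected multiplicity. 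Concretely one analyzes the local model \eqref{Eqn: local models for fM(cX,tau)}: $\cA_{\NN,\emptyset}$ at a rigid type contributes a reduced $1$-dimensional (over $b_0$) chart, and the index $m_\tau$ arises as in the unpunctured case from the map $\cA_{Q_\tau}\arr \cA_{\NN}$ dual to $\NN\to Q_\tau$. Once this local analysis is in hand, the rest of the argument is a formal repackaging of \cite[\S\S\,5--6]{decomposition}, and I would present it as such, citing the relevant steps of that paper and indicating only the points where punctures require the substitutions above.
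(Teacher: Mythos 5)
Your proposal is essentially correct in strategy, but note first that the paper does not prove this statement at all: it is imported verbatim from \cite{decomposition}, whose main theorem it is, and the text introducing it says exactly that. What you have written is a reconstruction of that external proof, and it does coincide with the strategy the present paper uses for the genuinely punctured generalization proved immediately afterwards (Theorem~\ref{Thm: stratified version of decomposition paper}): first decompose the fundamental class of the Artin-fan moduli space into its rigid strata with multiplicities $m_\tau/|\Aut(\btau)|$ using the local toric models of Remark~\ref{Rem: local structure of fM(cX/B,tau)}, then transfer this to virtual classes via the cartesian diagram of Proposition~\ref{Prop: obstruction theories are compatible with contraction morphisms} and Manolache's push--pull. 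Two remarks. First, the "main obstacle" you isolate --- non-equidimensionality of $\fM(\cX_0/b_0,\beta)$ caused by the puncturing log ideal --- does not actually arise for the statement as given: $\beta$ is a class of \emph{stable logarithmic maps}, so all contact orders are non-negative, all punctures are trivial marked points by pre-stability, and $\fM(\cX_0/b_0,\beta)$ is log smooth over $b_0$, hence reduced and pure-dimensional; your careful discussion of Proposition~\ref{Prop: puncturing tropical interpretation} is needed only for the punctured analogue, where the paper handles it instead via realizability over $B$ and flatness of $\fM(\cX/B,\btau)\arr B$. Second, routing the obstruction theory through the evaluation stack $\fM^\ev$ and Theorem~B is unnecessary here; the decomposition uses the obstruction theory relative to $\fM(\cX_0/b_0,\beta)$ itself, with no point conditions. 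Neither point is an error, only an over-complication.
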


\subsubsection{{Degenerate types}}
Theorem~\ref{Thm: stratified version of decomposition paper} below is an
analogous result in the punctured case, which also provides a stratified version
in the case without punctures. Before stating this result we need some
preparations concerning types in degeneration situations. Since one works with
log spaces over $b_0$ and $\Sigma(b_0)=\RR_{\ge 0}$, all tropical objects come
with a map to {$\RR_{\ge 0}$}. We denote all these maps by
$p$ in the following. Assuming $X_0\arr b_0$ is the fiber over the unique marked
point $b_0\arr B$ in a log smooth curve $B$ over the trivial log point, the
tropicalization of a punctured map over the generic point $\eta\in \ul B$ maps
to $0\in\Sigma(B)=\Sigma(b_0)$ under $p$. Degenerations of families of punctured
maps over $\eta$ to $b_0$ then provide a contraction morphism of the associated
types (Definition~\ref{Def: global type},(1)). This motivates the following
definition.

\begin{definition}
\label{Def: types in degenerations}
Let $\tau=(G,\bg,\bsigma,\bu)$ be a realizable global type (Definition~\ref{Def:
global type},(2)) of punctured maps to $X_0/b_0$ (Definition~\ref{def:realizable
over B}) and $Q_\tau$ the associated basic monoid.
\begin{enumerate}
\item
We call $\tau$ \emph{generic} if {$(Q_\tau^\vee)_\RR$} and
$\bsigma(x)$ for each $x\in V(G)\cup E(G)\cup L(G)$ map to
$\{0\}\subset{\RR_{\ge0}}$ under $p$.
\item
A \emph{degeneration} of a realizable global type $\tau$ is a contraction
morphism $\tau'\arr\tau$ between realizable global types with $p:
Q_{\tau'}^\vee\arr \NN$ non-constant. The \emph{codimension} of $\tau'\arr\tau$
is defined as $\rk Q_{\tau'}^\gp-\rk Q_\tau^\gp$. In the case of codimension one
we define the \emph{multiplicity} $m_{\tau'}$ as the index of
$p^\gp(Q_{\tau'}^*)$ in $\ZZ$. Finally, $\Aut(\tau'/\tau)$ denotes the group of
automorphisms of $\tau'$ commuting with $\tau'\arr\tau$.
\end{enumerate}
Analogous notions are used in the decorated case (Definition~\ref{Def: stacks of
decorated puncted maps}).
\end{definition}

\subsubsection{{Degenerate types decompose}}
Let now $\btau=(G,\bg,\bsigma,\bu,\bA)$ be a generic realizable decorated global
type for $X/B$. By the assumption $p(\bsigma(x))=0$ we can view $\btau$ also as
a decorated global type for $X_b/b$ for $b\neq b_0$. The analogue to the main
results of \cite{decomposition} is:

\begin{theorem}
\label{Thm: stratified version of decomposition paper}
In the above situation{, additionally assuming $X$ is simple,} the
following holds.
\begin{enumerate}
\item For any point $j_b:\{b\}\hookrightarrow B$, one has
$j_b^![\scrM(X/B,\btau)]^{\virt}=[\scrM(X_b/b,\btau)]^{\virt}$.
\item
\begin{equation}
\label{Eqn: decomposition formula punctured}
\big[\scrM(X_0/b_0,\btau)\big]^\virt
= \sum_{\btau'=(\tau',\bA')} \frac{m_{\tau'}}{|\Aut(\btau'/\btau)|}
{j_{\btau'}}_*\big[\scrM(X_0/b_0,\btau')\big]^\virt
\end{equation}
The sum runs over representatives of isomorphism classes of degenerations
$\btau'=(\tau',\bA')\arr\btau$ of realizable global types
of punctured maps to $X/B$ of codimension one, with $m_{\tau'}$ its multiplicity.
\end{enumerate}
\end{theorem}

\begin{proof}
By Proposition~\ref{prop:realizable over B}, $\btau$ can be viewed both as a
type realizable over $B$ and as a type realizable over $b\in B$ for $b\not=b_0$.
Thus $\fM(\cX/B,\btau)$ is non-empty and
$\fM(\cX_b/b,\btau)=\fM(\cX/B,\btau)\times_B b$ is non-empty for $b \in
B\setminus\{b_0\}$. {By Proposition~\ref{Prop: pure-dimensional
fM(cX,tau)}, $\fM(\cX/B,\btau)$ is pure-dimensional. Further, by the same
proposition, every irreducible component of $\fM(\cX/B,\btau)$ contains a point
whose corresponding punctured map has tropical type $\btau$, as all other strata
are of lower dimension. By genericity of the type $\btau$, the stratum of
$\fM(\cX/B,\btau)$ of points with type $\btau$ maps to the open stratum of $B$.
Thus the restriction of $\fM(\cX/B,\btau)\rightarrow B$ to each irreducible
component is dominant. {There are no embedded components by the local
description in Remark~\ref{Rem: local structure of fM(cX/B,tau)}.}} We conclude
that the structure map $\fM(\cX/B,\btau)\rightarrow B$ is flat.

(1) then follows immediately from general properties of virtual pull-backs.

For (2), as in the proof of \cite[Thm.\,3.11]{decomposition},
we begin by showing the corresponding decomposition {as Chow classes}
\begin{equation}
\label{eq:decomposition rel}
[\fM(\cX_0/b_0,\tau)]=\sum_{\tau'\rightarrow\tau}
\frac{m_{\tau'}}{|\Aut(\tau'/\tau)|}\iota_{\tau'*}[\fM(\cX_0/b_0,\tau')].
\end{equation}
Here $\tau$ is the underlying global type of $\btau$, and $\tau'\rightarrow\tau$
runs over all contraction morphisms as in the statement of the theorem (without
the decoration). Finally, $\iota_{\tau'}:\fM(\cX_0/b_0,\tau')\rightarrow
\fM(\cX_0/b_0,\tau)$ is the natural morphism. However, using the smooth local
description of $\fM(\cX/B,\tau)$ given in Remark \ref{Rem: local structure of
fM(cX/B,tau)} and the fact that $|\Aut(\tau'/\tau)|$ is the degree of the finite
map $\iota_{\tau'}$ onto its image, we easily obtain the result using standard
toric geometry. We leave the details to the reader.

We now make use of the diagram \eqref{eq:curve class diagram} for a given 
choice of contraction $\tau'\rightarrow \tau$, and we see by
the push-pull result of \cite[Thm.\,4.1]{Mano} that
\begin{align}
\label{eq:M sum}
\begin{split}
\varepsilon^!\iota_{\tau'*}[\fM(\cX_0/b_0,\tau')]= {} & 
\sum_{\btau'=(\tau',{\bf A}')} j_{\tau'*}(\varepsilon')^![\fM(\cX_0/b_0,
\btau')]\\
= {} & 
\sum_{\btau'=(\tau',{\bf A}')} j_{\tau'*}[\scrM(X_0/b_0, \btau')]^{\virt},
\end{split}
\end{align}
where the sum is over all choices of decorations $\btau'$ of $\tau'$ giving a
contraction morphism $\btau'\rightarrow \btau$ compatible with
$\tau'\rightarrow\tau$. On the other hand, $\Aut(\tau'/\tau)$ acts on the set of
all such decorations, with the orbit of a decoration $\btau'$ having stabilizer
$\Aut(\btau'/\btau)$. Thus we may rewrite the {last summation
of \eqref{eq:M sum}} as
\[
\sum_{\btau'=(\tau',{\bf A}')}j_{\tau'*}[\scrM(X_0/b_0, \btau')]^{\virt}
\frac{|\Aut(\tau'/\tau)|}{|\Aut(\btau'/\btau)|},
\]
where now the sum is over a set of representatives of isomorphism classes of
type $\btau'$ with a contraction morphism $\btau'\rightarrow \btau$. Combining
this with the relation \eqref{eq:decomposition rel} then gives the desired
result.
\end{proof}

\sloppy
\subsubsection{Splitting and factoring decomposed degenerate types}
As a corollary of Theorem~\ref{Thm: Virtual gluing via evaluation spaces2} we
now obtain a formula for the computation of each summand
$\big[\scrM(X_0/b_0,\btau')\big]^\virt$ {in \eqref{Eqn: decomposition
formula punctured}} in terms of punctured Gromov-Witten theory of the strata.
For the statement note that if $\tau_v$ is a global type with only one vertex,
with associated stratum $\sigma\in\Sigma(X)$, then a $\tau_v$-marked punctured
map $(C^\circ/W,\bp,f)$ to $X$ has a factorization
\[
f:C^\circ\stackrel{f_\sigma}{\longrightarrow} X_\sigma\arr X,
\]
\fussy
where the stratum $X_\sigma$ is now endowed with the log structure making the
embedding $X_\sigma\arr X$ strict. The composition with this strict closed
embedding in fact induces an isomorphism
\[
\scrM(X_0/b_0,\tau_v)\stackrel{\simeq}{\longrightarrow}
\scrM(X_\sigma/b_0,\tau_v).
\]
Similarly, we obtain $\fM(\cX_0/b_0,\tau_v)\simeq \fM(\cX_\sigma/b_0,\tau_v)$
and $\fM^\ev(\cX_0/b_0,\tau_v)\simeq \fM^\ev(\cX_\sigma/b_0,\tau_v)$. Note also
that $X_\sigma\arr \cX_\sigma$ is strict and smooth despite $\cX_\sigma$ being
only idealized log smooth over $b_0$ {(see Proposition~\ref{Prop:
idealized log smooth})}. Thus the obstruction theory developed in \S\ref{ss:
Obstruction theories with point conditions} still applies with target
$X_\sigma\arr \cX_\sigma\arr b_0$ and yields the same result as with $X_0\arr
\cX_0\arr b_0$. Theorem~\ref{Thm: Virtual gluing via evaluation spaces2} applied
to our degeneration situation can therefore be stated as follows.

\begin{corollary}
\label{Cor: gluing formula for degenerations}
Let $(G,\bg,\bsigma,\bu,\bA)$ be a decorated type of punctured maps with basic
monoid $Q_\tau\simeq\NN$ and $\btau=(G,\bg,\bsigma,\bar\bu,\bA)$ the associated
decorated global type. Denote by $\btau_v$, $v\in V(G)$, the decorated global
types obtained by splitting $\btau$ at all edges, that is, for $\bE=E(G)$. Then
the diagram
\[
\xymatrix{
\scrM(X_0/b_0,\btau) \ar[r]^(.38){\delta}\ar[d]_{\varepsilon} &
\prod_{v\in V(G)} \scrM(X_{\bsigma(v)}/b_0,\btau_v)\ar[d]^{\hat\varepsilon
=\prod_{v\in V(G)}\varepsilon_v}\\
\fM^\ev(\cX_0/b_0,\btau) \ar[r]^(.38){\delta^\ev} &
\prod_{v\in V(G)} \fM^\ev(\cX_\bsigma(v)/b_0,\btau_v)
}
\]
with horizontal arrows the splitting maps from
Proposition~\ref{prop:splitting-map} finite and representable, is
cartesian, and it holds
\[
\delta_*\big[\scrM(X_0/b_0,\btau)\big]^\virt =
\hat\varepsilon^!\delta^\ev_*\big[\fM^\ev(\cX_0/b_0,\btau)\big].
\]
\end{corollary}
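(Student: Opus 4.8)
The corollary is essentially the specialization of Theorem~\ref{Thm: Virtual gluing via evaluation spaces2} to the degeneration situation over $b_0$, with $\bE = E(G)$ and with the identification of the one-vertex moduli stacks with their stratum counterparts inserted. My plan is therefore a three-step assembly: first establish the cartesian diagram, second recall the obstruction-theoretic identity from Theorem~\ref{Thm: Virtual gluing via evaluation spaces2}, and third handle the subtlety that $Q_\tau \simeq \NN$ forces the base $\fM^\ev(\cX_0/b_0,\btau)$ to carry an unambiguous fundamental class.

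\textbf{Step 1: the cartesian diagram.} First I would invoke Proposition~\ref{Prop: Gluing via evaluation spaces} directly with the splitting set $\bE = E(G)$ and with $\bS = E(G)$ the set of all edges (so $\bS \supseteq \bE$ trivially); this yields the cartesian square
\[
\xymatrix{
\scrM(X_0/b_0,\btau) \ar[r]^(.4){\delta}\ar[d]_{\varepsilon} &
\prod_{v} \scrM(X_0/b_0,\btau_v)\ar[d]^{\hat\varepsilon}\\
\fM^\ev(\cX_0/b_0,\btau) \ar[r]^(.4){\delta^\ev} &
\prod_{v} \fM^\ev(\cX_0/b_0,\btau_v)
}
\]
with $\delta$, $\delta^\ev$ finite and representable by Corollary~\ref{Cor: Gluing theorem for evaluation stacks}. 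Then I would replace, for each $v \in V(G)$, the factor $\scrM(X_0/b_0,\btau_v)$ by $\scrM(X_{\bsigma(v)}/b_0,\btau_v)$, and likewise $\fM^\ev(\cX_0/b_0,\btau_v)$ by $\fM^\ev(\cX_{\bsigma(v)}/b_0,\btau_v)$, using the isomorphisms stated in the paragraph preceding the corollary. These isomorphisms come from the fact that $\tau_v$ has only one vertex with image cone $\bsigma(v)$, so every $\tau_v$-marked punctured map factors through the strict closed stratum $X_{\bsigma(v)} \arr X_0$; the factorization is canonical, hence an equivalence of stacks, and is compatible with the evaluation and obstruction structures because $X_{\bsigma(v)} \arr \cX_{\bsigma(v)}$ is again strict and smooth. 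Substituting these identifications into the square above is a relabelling and preserves cartesianity.

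\textbf{Step 2 and the main obstacle.} The identity $\delta_*[\scrM(X_0/b_0,\btau)]^\virt = \hat\varepsilon^! \delta^\ev_*[\fM^\ev(\cX_0/b_0,\btau)]$ is then Theorem~\ref{Thm: Virtual gluing via evaluation spaces2}(2) applied with $\alpha = [\fM^\ev(\cX_0/b_0,\btau)]$, once one knows that $\varepsilon^!(\alpha) = [\scrM(X_0/b_0,\btau)]^\virt$. This last equality is where care is needed: in general $\fM^\ev(\cX/B,\btau)$ — equivalently $\fM(\cX/B,\btau)$ — is not pure-dimensional, so $[\fM^\ev(\cX_0/b_0,\btau)]$ is not an intrinsically defined cycle and the virtual pull-back $\varepsilon^!$ need not produce the virtual class. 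The hypothesis $Q_\tau \simeq \NN$ is precisely what rescues this: together with $\btau$ being realizable it implies (by Proposition~\ref{Prop: pure-dimensional fM(cX,tau)}, in the form applicable over the standard log point $b_0$) that $\fM(\cX_0/b_0,\btau)$ is reduced and pure-dimensional, so that $[\fM^\ev(\cX_0/b_0,\btau)]$ is its honest fundamental class and $\varepsilon^!$ applied to it is by definition $[\scrM(X_0/b_0,\btau)]^\virt$ via the obstruction theory $\GG$ of Proposition~\ref{Prop: Point condition virtual bundle} and Manolache's virtual pull-back \cite{Mano}. I expect verifying this pure-dimensionality and the consequent well-definedness of $\varepsilon^!(\alpha)$ — checking that $\btau$ being realizable over $b_0$ in the sense of Definition~\ref{def:realizable over B} follows from $\btau$ being realizable with $Q_\tau \simeq \NN$, so that Proposition~\ref{Prop: pure-dimensional fM(cX,tau)} genuinely applies — to be the only non-formal point; once it is in hand, Theorem~\ref{Thm: Virtual gluing via evaluation spaces2}(2) gives the displayed formula verbatim, and the finiteness and representability of the horizontal maps in the corollary's diagram are inherited from Step~1.

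\textbf{Step 3: assembly.} Finally I would note that the curve-class decorations cause no trouble: splitting a decorated global type at all edges produces decorated one-vertex types $\btau_v$ (Definition~\ref{Def: graph splitting}), and Proposition~\ref{Prop: Gluing via evaluation spaces} and Corollary~\ref{Cor: Gluing theorem for evaluation stacks} are both stated to hold in the decorated context, so no additional argument is needed beyond tracking the decorations through Steps~1 and~2.
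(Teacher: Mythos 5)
Your proposal is correct and follows essentially the same route as the paper, which presents this corollary as an immediate application of Theorem~\ref{Thm: Virtual gluing via evaluation spaces2} combined with the stratum identifications $\scrM(X_0/b_0,\btau_v)\simeq \scrM(X_{\bsigma(v)}/b_0,\btau_v)$ discussed in the paragraph preceding the statement. Your added attention to why $[\fM^\ev(\cX_0/b_0,\btau)]$ is an honest fundamental class (realizability over $b_0$ plus $Q_\tau\simeq\NN$ giving reducedness and pure-dimensionality via Proposition~\ref{Prop: pure-dimensional fM(cX,tau)}) and why $\varepsilon^!$ of it is the virtual class is a correct and worthwhile elaboration of a point the paper leaves implicit.
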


As in Corollary~\ref{Cor: decomposition into product}, a numerical formula in
terms of punctured Gromov-Witten invariants of the strata $X_\sigma$ of $X$ can
be derived assuming $\delta^\ev_*\big[\fM^\ev(\cX_0/b_0,\btau)\big]$ decomposes
into a sum of products. This is the case for example if all gluing strata
$X_{\bsigma(E)}$, $E\in E(G)$, are toric, as proved in \cite{Yixian} based on
Corollary~\ref{Cor: Gluing theorem for evaluation stacks}.


\begin{appendix}

\section{Contact orders}
\label{sec:contact}
Here we give a somewhat more sophisticated universal view on contact orders.
This was the point of view we originally planned to give, but
for most current applications, the simpler approach exposited
in \S\ref{subsec: global contact} suffices. Nevertheless, that approach
obscures some of the subtleties of contact orders, and at times
it may be worth having this more precise point of view.

For a target $X$ with fs log structure, consider the following \'etale
sheaves over $\ul{X}$:
\[
\ocM_X^{\vee} = \cHom(\ocM_{X}, \NN) \ \ \mbox{and} \ \ \ocM^*_X
= \cHom(\ocM_X,\ZZ) \cong  \cHom(\ocM^{\gp}_X,\ZZ).
\]

\begin{definition}
A \emph{family of contact orders of $X$} consists of a strict morphism $Z \to X$
and a section ${\bf u}\in \Gamma(Z,\ocM^*_Z)$ satisfying the following
condition. Let $u: \cM_Z \to \ocM_Z \stackrel{\bf u}\to \ZZ$ be the composite
homomorphism associated to ${\bf u}$. Then the map $\alpha: \cM_Z \to \cO_Z$
sends $u^{-1}(\ZZ \setminus \{0\})$ to $0$. 

We call the ideal $\cI_{\bf u} \subset \cM_Z$ generated by ${u}^{-1}(\ZZ
\setminus \{0\})$ the \emph{contact log-ideal} associated to ${\bf u}$, and
denote by $\ocI_{\bf u}$ the corresponding \emph{contact ideal} in $\ocM_Z$.
These are coherent sheaves of ideals.

The family of contact orders is said to be \emph{connected} if $Z$ is connected. 
\end{definition}

For simplicity, we will refer to ${\bf u}$ as the contact order when there is no
confusion about the strict morphism $Z \to X$. Given a family of contact orders
${\bf u}\in \Gamma(Z,\ocM^*_Z)$ of $X$, the \emph{pull-back} of ${\bf u}$ along a
strict morphism $Z' \to Z$ defines a family of contact orders ${\bf u}'\in
\Gamma(Z',\ocM^*_{Z'})$. 

\begin{example}
\label{example:contact}
To motivate this definition, consider a punctured map $f: C^{\circ} \to X$ over
$W$, and a punctured section $p \in \bp$. Take $\ul{Z}:=\ul{W}$, and give
$\ul{Z}$ the log structure given by pull-back of $\cM_X$ via $\ul{f}\circ p$, so
that $Z\rightarrow X$ is strict. Let ${\bf u}$ be the following composition 
\begin{equation}\label{equ:pun-map-family-contact}
\ocM_Z \stackrel{\bar{f}^{\flat}}{\longrightarrow} p^*\ocM_{C^{\circ}}
\longrightarrow\ocM_W\oplus\ZZ \longrightarrow \ZZ,
\end{equation}
where the middle arrow is the inclusion and the last arrow is the
projection to the second factor. 

We claim that ${\bf u}$ defines a family of contact orders of $X$. Indeed, let
$\delta \in \cM_Z$ and represent $f^\flat(\delta) =
(e_\delta,\sigma^{u_p(\delta)})$, where $\sigma$ is the element of $\cM_C$
corresponding to a local defining equation of the section $p$.

If $u_p(\delta)>0$ then
\[
\alpha_Z(\delta) = p^*\alpha_C(f^\flat(\delta))
= p^*\alpha_{C}(e_\delta)\cdot p^*\alpha_{C}(\sigma^{u_{p}(\delta)}) = 0
\]
since $p^*\alpha_C(\sigma)= 0$. 

If $u_p(\delta)<0$ then $f^\flat(\delta) \notin \cM_C$ and hence, by
Definition~\ref{def:puncturing} (2) we have $\alpha_Z(\delta) =0$.
\end{example}

The goal now is to define a universal family of contact orders for
the Artin fan $\cA_X$ of $X$.
 
\subsection{Family of contact orders of Artin cones.}
\label{subsubsec:families of contact orders}
Let {$\big(Z \to X, {\bf u}\in\Gamma( Z,\ocM^*_Z)\big)$} be a family of
contact orders of $X$. For any strict morphism $X \to Y$, ${\bf u}$ is naturally
a family of contact orders of $Y$ via the composition $Z \to X \to Y$.
Conversely, we can pull-back a contact order on $Y$ to $X$ by base-change:
{if we denote by $\cZ_X$ and $\cZ_Y$ the sheaves over $\Sch$ of families
of contact orders on $X$ and $Y$, respectively, and if we denote by $\cZ_X\to X$
the map forgetting the section ${\bf u}$, then $\cZ_X = \cZ_Y \times_YX$.} Thus
we may parameterize contact orders of the Artin fan $\cA_{X}$ instead of $X$:
{pulling back such parametrization gives a parametrization of contact
orders on $X$. This is the approach taken here, which is achieved in Proposition
\ref{prop:universal contact} and Definition~\ref{Def:connected-contact-order}.}
We first study the local case.

Consider a toric monoid $P$ with $\sigma=\Hom(P,\RR_{\ge 0})$,
$N_{\sigma}=\Hom(P,\ZZ){=P^*}$.
This gives the toric variety $\AA_\sigma
= \spec(P \stackrel{\alpha}{\rightarrow} \kk[P])$, torus
$T_\sigma:=\spec(\kk[P^{\gp}])$ and Artin cone 
\begin{equation}\label{equ:artin-cone}
\cA_{\sigma} = [\AA_\sigma/T_\sigma].
\end{equation}
Choose an integral vector $u \in N_\sigma$, which we view as $u\in \Hom(P,\ZZ)$.
Let $I_u$ be the ideal of $P$ generated by $u^{-1}(\ZZ \setminus \{0\})$. This
generates a $T_\sigma$-invariant ideal in $\kk[P]$, defining an invariant closed
subscheme $Z_{u,\sigma} \subseteq \AA_{\sigma}$ with quotient a closed
substack $\cZ_{u,\sigma} \subseteq \cA_{\sigma}$. We proceed to
construct a family of contact orders parametrized by $\cZ_{u,\sigma}$. 

For each face $\tau \prec \sigma$ (where $\prec$ denotes an inclusion of
faces) consider the prime ideal $\cK_{\tau \prec \sigma} = P \setminus
\tau^\perp$. It defines a toric stratum $Z_{\tau\prec\sigma}:=
V(\alpha(\cK_{\tau \prec \sigma})) \subseteq \AA_\sigma$ where the duals of the
stalks of $\ocM_{\cZ_{\tau\prec\sigma}}$ are identified with the faces
of $\sigma$ containing $\tau$. Note that the torus $T_\sigma$ acts on
$Z_{\tau\prec\sigma}$. Denote by $\cZ_{\tau\prec\sigma}:= [Z_{\tau\prec\sigma} /
\spec(\kk[P^{\gp}])] \subseteq \cA_{\sigma}$. 

\begin{lemma}
\label{lem:contact-strata}
We have $(\cZ_{u,\sigma})_{\red} = \bigcup_{\tau^\gp\ni u}\cZ_{\tau\prec\sigma}
\subseteq \cA_{\sigma}.$
\end{lemma}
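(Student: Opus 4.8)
The plan is to unravel the definitions on both sides and reduce to a purely combinatorial identity about monoid ideals. On the scheme level, $Z_{u,\sigma}\subseteq\AA_\sigma$ is cut out by the ideal $(\alpha(I_u))\subseteq\kk[P]$, where $I_u\subseteq P$ is the monoid ideal generated by $u^{-1}(\ZZ\setminus\{0\})$. Since everything is $T_\sigma$-equivariant and taking the quotient stack $[-/T_\sigma]$ preserves closed substacks and commutes with taking reductions (reducedness can be checked after a smooth cover, here $\AA_\sigma\to\cA_\sigma$), it suffices to prove the corresponding statement downstairs:
\[
V\big(\alpha(I_u)\big)_{\red}=\bigcup_{\tau\prec\sigma,\ u\in\tau^\gp} Z_{\tau\prec\sigma}\subseteq\AA_\sigma.
\]
So the first step is this reduction to a toric computation on $\AA_\sigma=\Spec\kk[P]$.

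Next I would recall the standard dictionary for monomial ideals in $\kk[P]$: for a monoid ideal $J\subseteq P$, the reduced subscheme $V(\alpha(J))_{\red}$ is the union of the toric strata $Z_{\tau\prec\sigma}=V(\alpha(P\setminus\tau^\perp))$ over those faces $\tau\prec\sigma$ with $J\cap\tau^\perp=\emptyset$, i.e. such that $\tau^\perp\cap P\subseteq P\setminus J$, equivalently $\tau\not\subseteq$ the support of $J$. Thus the key step is to identify precisely which faces $\tau$ satisfy $I_u\cap(\tau^\perp\cap P)=\emptyset$. I claim this happens exactly when $u\in\tau^\gp$. For one direction: if $u\in\tau^\gp$ then $u$ vanishes on the face $\tau^\perp\cap N_\sigma^\vee$... more precisely $u$ restricted to the submonoid $\tau^\perp\cap P$ is zero, since $\tau^\perp\cap P$ spans the sublattice on which $u$, being in $\tau^\gp\subseteq(\tau^\perp)^\perp$, vanishes; hence no element of $\tau^\perp\cap P$ lies in $u^{-1}(\ZZ\setminus\{0\})$, and since $I_u$ is the ideal \emph{generated} by that set, one checks $I_u\cap(\tau^\perp\cap P)=\emptyset$ (an element of $I_u$ has the form $p+m$ with $u(m)\neq0$, so $u(p+m)\neq 0$ is false in general — here one uses that $u\geq 0$ is not assumed, so I must argue more carefully; the correct statement is that $u$ vanishes on the \emph{group} $\tau^\gp\cap P^\gp$, and an element of $\tau^\perp\cap P$ lies in that group, while $p+m\in I_u$ with $u(m)\neq 0$ forces $u(p+m)=u(p)+u(m)$; if $p+m\in\tau^\perp\cap P$ then $u(p+m)=0$, so $u(p)=-u(m)\neq 0$, meaning $p\notin\tau^\gp$, but $p$ is a face-generator in $\tau^\perp$... this needs the observation that $\tau^\perp\cap P$ is a \emph{face} of $P$, hence saturated in the appropriate sense). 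For the converse: if $u\notin\tau^\gp$, pick a generator; since $u$ is nonzero on $\tau^\gp\cap P^\gp$ and $\tau^\perp\cap P$ generates $\tau^\gp\cap P^\gp$ as a group, there is an element $p\in\tau^\perp\cap P$ with $u(p)\neq 0$, i.e. $p\in I_u\cap\tau^\perp$, so $\tau$ does not contribute.

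The main obstacle I anticipate is the bookkeeping in the first direction: showing $I_u\cap(\tau^\perp\cap P)=\emptyset$ when $u\in\tau^\gp$. The subtlety is that $u$ need not be nonnegative on $P$, so one cannot argue naively with additivity of positive quantities. The clean way is to use that $F:=\tau^\perp\cap P$ is a face of $P$ (standard for toric monoids), hence $p+m\in F$ with $p,m\in P$ implies both $p\in F$ and $m\in F$; combined with $u|_F\equiv 0$ (which follows from $u\in\tau^\gp$ and $F\subseteq\tau^\gp$, as $F$ lies in the perpendicular of $\tau^\perp\supseteq\tau$... I mean $F=\tau^\perp\cap P\subseteq(\tau^{\perp\perp})=\tau^\gp\otimes\RR$, integrally $F\subseteq\tau^\gp$), this gives: any $q\in I_u$ is of the form $q=p+m$ with $u(m)\neq 0$; if $q\in F$ then $m\in F$ so $u(m)=0$, contradiction. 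Hence $I_u\cap F=\emptyset$, as needed. Once these two directions are in hand, the displayed equality on $\AA_\sigma$ follows from the monomial-ideal dictionary, and pulling back through the quotient stack presentation finishes the proof. I would present the argument in roughly this order: reduce to $\AA_\sigma$, recall the monomial ideal dictionary, prove the two containments via the face property of $\tau^\perp\cap P$, then reassemble.

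\begin{remark}
In writing this out I would phrase the face property cleanly: for a toric monoid $P$ and a face $\tau\prec\sigma=P^\vee_\RR$, the set $\tau^\perp\cap P$ is a face of $P$ and equals $P\cap(\text{the linear span of }\tau^\perp)$; the stratum $Z_{\tau\prec\sigma}$ is $\Spec\kk[\tau^\perp\cap P]$ sitting inside $\Spec\kk[P]$ via the quotient $P\to\tau^\perp\cap P$ (killing $P\setminus\tau^\perp$). This is the precise form in which the result is invoked in Construction~\ref{Constr: evaluation stratum}.
\end{remark}
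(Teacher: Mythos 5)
Your proposal is correct and follows essentially the same route as the paper's proof: reduce to the toric chart $\Spec\kk[P]$, use the dictionary between monomial ideals and unions of toric strata, and observe that $Z_{\tau\prec\sigma}\subseteq V(I_u)$ precisely when $u$ vanishes on the face $\tau^\perp\cap P$, which happens iff $u\in\tau^\gp$. The only thing to clean up is the dual-space bookkeeping in your first containment: $F=\tau^\perp\cap P$ does not lie in $\tau^\gp$ (these sit in dual lattices), and the correct one-line justification is simply that $u\in\tau^\gp\subseteq(\tau^\perp)^\perp$ pairs to zero with every element of $\tau^\perp$, hence with every element of $F$ --- indeed the paper avoids your face-property detour entirely by checking vanishing only on the generators $p\in u^{-1}(\ZZ\setminus\{0\})$ of $I_u$, which suffices.
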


\begin{proof}
The ideal $\sqrt{I_u}$ defines some union of strata and we identify those strata
$\cZ_{\tau\prec \sigma}$ on which it vanishes. If $u \notin \tau^\gp$ there is
an element $p \in \tau^\perp \cap P$ such that $u(p) \neq 0$. Therefore $p\in
I_u$ but the monomial $z^p$ does not vanish at the generic point of
$\cZ_{\tau\prec \sigma}$. Hence $(\cZ_{u,\sigma})_{\red}$ is contained in the
given union of strata. Conversely, if $u \in \tau^\gp$, and if $p\in
u^{-1}(\ZZ\setminus \{0\})$, then $p\notin \tau^\perp\cap P$, hence $z^p$
vanishes along $\cZ_{\tau\prec \sigma}$. Thus $\cZ_{\tau\prec\sigma}$ is
contained in $(\cZ_{u,\sigma})_{\red}$, proving the result.
\end{proof}

{Since $\ocM_{(\cZ_{u,\sigma})_\red}^*$ is the pull-back of $\ocM_{\cZ_{u,\sigma}}^*$ under the reduction $(\cZ_{u,\sigma})_\red\to \cZ_{u,\sigma}$, and reduction induces an isomorphism of \'etale sites, we have}
\[
\Gamma(\cZ_{u,\sigma}, \ocM_{\cZ_{u,\sigma}}^*)
= \Gamma((\cZ_{u,\sigma})_\red, \ocM_{(\cZ_{u,\sigma})_\red}^*).
\]
We define an element $\bfu_{u,\sigma}$ of this group by defining it on stalks in
a manner compatible with generization. For a point $z$ in the dense stratum of
$\cZ_{\tau\prec \sigma}$, with $F_{\tau}=P\cap \tau^{\perp}$, we have
$\ocM_{\cZ_{u,\sigma},z} = (P+F_{\tau}^{\gp}) / F_\tau^{\gp}$. Thus the
condition $u \in \tau^\gp$ guarantees that $u: P \to \ZZ$ descends to $u:
\ocM_{\cZ_{u,\sigma},z} \to \ZZ$. Being induced by the same element $u$, this is
compatible with generization. Note that the scheme $\cZ_{u,\sigma}$ was defined
in such a way so that $\alpha_{\cZ_{u,\sigma}}(\cI_{\bfu_{u,\sigma}})=0$, so
that $\cZ_{u,\sigma}$ acquires the structure of an idealized log stack. 

Thus $u$ defines a family of contact orders of $\cA_{\sigma}$
\begin{equation}\label{equ:cone-contact-component} {\bf u}_{u, \sigma}
\in\Gamma( \cZ_{u, \sigma} , \ocM^*_{\cZ_{u,\sigma}}). \end{equation} It is
connected since the most degenerate stratum $\cZ_{\sigma \prec \sigma}$ is
contained in the closure of $\cZ_{\tau \prec \sigma}$ for each face
$\tau$. 

\begin{lemma}\label{lem:local-contact-component}
For any connected family of contact orders ${\bf u}\in \Gamma( Z, \ocM^*_Z)$ of
$\cA_{\sigma}$, there exists a unique $u \in N_\sigma$ such that $\psi:Z \to
\cA_{\sigma}$ factors uniquely through $\cZ_{u, \sigma}$, and ${\bf
u}_{u,\sigma}$ pulls back to ${\bf u}$.
\end{lemma}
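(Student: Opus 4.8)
\textbf{Proof plan for Lemma~\ref{lem:local-contact-component}.}
The plan is to reduce the statement to a purely combinatorial fact about the map on ghost sheaves, then bootstrap from the stalk at a single point to the whole connected family. First I would pick a point $z\in Z$ lying over the most degenerate stratum accessible — more precisely, by connectedness of $Z$, there is a point $z$ whose image in $\cA_\sigma$ lies in a stratum $\cZ_{\tau\prec\sigma}$, and after passing to a generization within $Z$ (which does not change whether a factorization exists, since the log-ideal condition is closed under generization) I may assume $z$ maps to the dense point of some $\cZ_{\tau\prec\sigma}$. At such a point $\ocM_{Z,\bar z}=\ocM_{\cZ_\sigma,\psi(\bar z)}=(P+F_\tau^\gp)/F_\tau^\gp$, and the section $\mathbf u$ gives a homomorphism $\ocM_{Z,\bar z}\to\ZZ$. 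Composing with the surjection $P\to (P+F_\tau^\gp)/F_\tau^\gp$ and extending $\ZZ$-linearly produces an element $u\in P^*=N_\sigma$, and one checks $u\in\tau^\gp$ precisely because $u$ must vanish on $F_\tau^\gp$. This $u$ is the unique candidate.

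Next I would verify that with this $u$, the contact-order condition $\alpha_Z(u^{-1}(\ZZ\setminus\{0\}))=0$ forces $\psi:Z\to\cA_\sigma$ to factor through $\cZ_{u,\sigma}$. This is essentially the computation already recorded in Lemma~\ref{lem:contact-strata} run in reverse: the ideal $(I_u)\subseteq\kk[P]$ pulls back, via $\psi^\sharp$, to the ideal $\alpha_Z(\cI_{\mathbf u})=0$, so $\psi$ kills the defining ideal of $Z_{u,\sigma}$ and hence factors through $\cZ_{u,\sigma}$; since $\cZ_{u,\sigma}\hookrightarrow\cA_\sigma$ is a closed immersion the factorization is unique. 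Then $\mathbf u$ and the pull-back $\psi^*\mathbf u_{u,\sigma}$ are two sections of $\ocM^*_Z$ which, by construction, agree at the stalk over $\bar z$; since both are induced by the \emph{same} integral vector $u\in P^*$ acting through the various localization quotients of $P$, they agree at every stalk, hence globally — this uses that $\ocM_Z$ is a sheaf of quotients of (pull-backs of) $P$ and that generization maps are the evident ones. Finally, I would note that uniqueness of $u$ follows because the map $N_\sigma\to\Gamma(\cZ_{u,\sigma},\ocM^*_{\cZ_{u,\sigma}})$ at the stalk over the \emph{dense} stratum $\cZ_{\sigma\prec\sigma}$ (where $\ocM^*=P^*=N_\sigma$) is the identity; since the most degenerate stratum lies in the closure of every $\cZ_{\tau\prec\sigma}$, connectedness of $Z$ together with compatibility with generization pins down $u$.

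The main obstacle I anticipate is the passage from a single stalk to the whole connected family: a priori $Z$ might meet several strata $\cZ_{\tau\prec\sigma}$ with different faces $\tau$, and one must check that the locally-defined integral vectors glue to one global $u\in N_\sigma$. The resolution is that $Z$ connected and $\mathbf u$ compatible with generization means any two points of $Z$ are linked by a chain of specializations/generizations within $Z$; along each such step the relevant $u$ is transported by a face inclusion $N_{\tau}\hookrightarrow N_{\tau'}$ dual to a localization $P\to P'$, and the compatibility condition on $\mathbf u$ forces the transported vectors to match — so all the local vectors are the image of one vector read off at (a generization to) the dense stratum. Once this gluing is in hand, the factorization and the pull-back identity are formal, and uniqueness is immediate from the dense-stratum computation. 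A minor point to handle with care is that $\ocM^*_Z$ and the constant sheaf $\ZZ$ are constructible, which is what legitimizes working on $(\cZ_{u,\sigma})_{\mathrm{red}}$ and reading sections off strata; this is already invoked in the construction of $\mathbf u_{u,\sigma}$ above and needs only to be cited.
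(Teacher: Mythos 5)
Your proposal is correct and follows essentially the same route as the paper: read off $u\in N_\sigma$ by composing the chart with $\mathbf u$, then use $\alpha_Z(\cI_{\mathbf u})=0$ to see that $\psi$ kills the ideal defining $\cZ_{u,\sigma}$ and hence factors through it. The paper avoids your stalk-by-stalk gluing step entirely by using the \emph{global} chart $P\to\ocM_{\cA_\sigma}$, which pulls back to a global chart on $Z$ by strictness, so that $\mathbf u$ immediately yields a section of the constant sheaf $\underline{N_\sigma}$ and connectedness pins down a single $u$.
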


\begin{proof}
The global chart $P \to \ocM_{\cA_{\sigma}}$ over $\cA_{\sigma}$ pulls back to a
global chart $P \to \ocM_{Z}$ over $Z$. The composition $ P \longrightarrow
\ocM_{Z} \stackrel{{\bf u}}{\longrightarrow} \ZZ $ defines an integral vector $u
\in N_\sigma$. Consider the sheaf of monoid ideals $\cJ_u\subset
\cM_{\cA_{\sigma}}$ generated by $I_u$. By definition, the contact log-ideal
$\cI_{\bfu}$ is generated by $\psi^{-1}\cJ_u$. Since
$\alpha_Z(\cI_\bfu) = 0$ and since $\cJ_u$ defines
$\cZ_{u,\sigma}\subseteq \cA_\sigma$, we have the factorization $Z \to
\cZ_{u, \sigma}$ of $\psi$, with ${\bf u}$ the pull-back of ${\bf u}_{u ,
\sigma}$.
\end{proof}

We can now assemble all the $\cZ_{u,\sigma}$ by defining
\[
\cZ_{\sigma} = \coprod_{u\in N_{\sigma}} \cZ_{u,\sigma},
\]
and write $\psi_{\sigma}:\cZ_{\sigma}\rightarrow \cA_{\sigma}$ for the morphism
which restricts to the closed embedding $\cZ_{u,\sigma}\hookrightarrow
\cA_{\sigma}$ on each connected component $\cZ_{u,\sigma}$ of $\cZ_{\sigma}$.
Then the ${\bf u}_{u,\sigma}$ yield a section ${\bf u}_{\sigma} \in
\Gamma(\cZ_{\sigma}, \overline{\cM}^*_{\cZ_{\sigma}})$, giving the universal
family, over $\cZ_{\sigma},$ of contact orders of $\cA_{\sigma}$. This follows
immediately from Lemma~\ref{lem:local-contact-component} by restricting to
connected components.

\begin{proposition}
\label{prop:local-contact}
{Assume $Z$ is locally connected.}
For any family of contact orders ${\bf u}\in \Gamma( Z, \ocM^*_Z)$ of 
$\cA_{\sigma}$, $\psi:Z \to \cA_{\sigma}$ factors uniquely through 
$\cZ_{\sigma}$, and ${\bf u}_{\sigma}$ pulls back to ${\bf u}$.
\end{proposition}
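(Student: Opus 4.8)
The plan is to reduce the global statement to the local one already established in Lemma~\ref{lem:local-contact-component}. Given a family of contact orders $\mathbf{u}\in\Gamma(Z,\ocM_Z^*)$ of $\cA_\sigma$, the underlying morphism $\psi:Z\to\cA_\sigma$ together with $\mathbf{u}$ restricts, over any connected component $Z_0\subseteq Z$, to a \emph{connected} family of contact orders. By Lemma~\ref{lem:local-contact-component} there is a unique $u\in N_\sigma$ such that $\psi|_{Z_0}$ factors uniquely through the closed substack $\cZ_{u,\sigma}\hookrightarrow\cA_\sigma$, and such that $\mathbf{u}_{u,\sigma}$ pulls back to $\mathbf{u}|_{Z_0}$. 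Since $\cZ_\sigma=\coprod_{u\in N_\sigma}\cZ_{u,\sigma}$ with $\psi_\sigma$ restricting to the given closed embedding on each component, assembling these factorizations over the connected components of $Z$ yields a morphism $Z\to\cZ_\sigma$ over $\cA_\sigma$, and $\mathbf{u}_\sigma$ (which restricts to $\mathbf{u}_{u,\sigma}$ on $\cZ_{u,\sigma}$) pulls back to $\mathbf{u}$.

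First I would spell out that a family of contact orders $(Z\to\cA_\sigma,\mathbf{u})$ decomposes as a disjoint union of its restrictions to the connected components $\{Z_0\}$ of $Z$: the strictness of $Z\to\cA_\sigma$, the section $\mathbf{u}\in\Gamma(Z,\ocM_Z^*)$, and the vanishing condition $\alpha_Z(u^{-1}(\ZZ\setminus\{0\}))=0$ are all preserved under restriction to open-and-closed subschemes, and conversely a morphism out of $Z$ is the same as a compatible collection of morphisms out of each $Z_0$. Then I would invoke Lemma~\ref{lem:local-contact-component} on each $Z_0$ to get the unique $u(Z_0)\in N_\sigma$ and the unique factorization $Z_0\to\cZ_{u(Z_0),\sigma}$. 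Composing with the open immersion $\cZ_{u(Z_0),\sigma}\hookrightarrow\cZ_\sigma$ and taking the disjoint union over all components produces the desired $Z\to\cZ_\sigma$.

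For uniqueness, suppose $g:Z\to\cZ_\sigma$ is any morphism over $\cA_\sigma$ with $g^*\mathbf{u}_\sigma=\mathbf{u}$. Since $\cZ_\sigma=\coprod_u\cZ_{u,\sigma}$ is a disjoint union and each $Z_0$ is connected, $g(Z_0)$ lies in a single component $\cZ_{u_0,\sigma}$, so $g|_{Z_0}$ is a factorization of $\psi|_{Z_0}$ through some $\cZ_{u_0,\sigma}$ pulling $\mathbf{u}_{u_0,\sigma}$ back to $\mathbf{u}|_{Z_0}$. The uniqueness clause of Lemma~\ref{lem:local-contact-component} — both the uniqueness of $u_0$ and the uniqueness of the factorization through $\cZ_{u_0,\sigma}$ — then forces $g|_{Z_0}$ to coincide with the factorization constructed above. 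As this holds for every component, $g$ is unique. Likewise the equality $g^*\mathbf{u}_\sigma=\mathbf{u}$ checked componentwise gives the pull-back assertion globally.

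I do not expect any serious obstacle here; the statement is essentially a bookkeeping consequence of the connected case, and the only point requiring a word of care is that $\cZ_\sigma$ was \emph{defined} as the disjoint union $\coprod_{u\in N_\sigma}\cZ_{u,\sigma}$ precisely so that this componentwise gluing is unobstructed — in particular one must note that $N_\sigma$ is a set so the coproduct makes sense and that the components $\cZ_{u,\sigma}$ are genuinely open and closed in $\cZ_\sigma$, which is immediate from the definition of a coproduct of stacks. The mild subtlety, that different $Z_0$ may map to the same or to different $\cZ_{u,\sigma}$, is harmless since we only ever need to understand $g$ one connected component at a time.
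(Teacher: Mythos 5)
Your proposal is correct and is exactly the argument the paper intends: the text preceding the proposition states that it "follows immediately from Lemma~\ref{lem:local-contact-component} by restricting to connected components," which is precisely your decomposition of $Z$ into connected components, application of the connected case to each, and reassembly using that $\cZ_\sigma$ is the disjoint union $\coprod_{u\in N_\sigma}\cZ_{u,\sigma}$. Your treatment of uniqueness componentwise is the same bookkeeping the paper leaves implicit.
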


\begin{corollary}
\label{cor:natural-inclusion}
If $\tau$ is a face of $\sigma$, viewing $\cA_{\tau}$ naturally as an open
substack of $\cA_{\sigma}$ we then have $\cZ_{\tau}\cong
\psi_{\sigma}^{-1}(\cA_{\tau})$, and the section ${\bf u}_{\sigma}\in
\Gamma(\cZ_{\sigma}, \overline{\cM}_{\cZ_{\sigma}}^*)$ pulls back to the section
${\bf u}_{\tau}\in \Gamma(\cZ_{\tau},\overline{\cM}_{\cZ_{\tau}}^*)$.
\end{corollary}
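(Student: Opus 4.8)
The statement is a compatibility assertion between the local constructions $\cZ_{\sigma}\to\cA_{\sigma}$ and $\cZ_{\tau}\to\cA_{\tau}$ under the open inclusion $\cA_{\tau}\hookrightarrow\cA_{\sigma}$ coming from the face $\tau\prec\sigma$. The plan is to verify it componentwise, using Proposition~\ref{prop:local-contact} (or the pullback version of Lemma~\ref{lem:local-contact-component}) to reduce the identification $\cZ_{\tau}\cong\psi_{\sigma}^{-1}(\cA_{\tau})$ to a purely combinatorial statement about monoid ideals $I_u\subset P$ and localization.

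First I would recall that $\cA_{\tau}\subseteq\cA_{\sigma}$ is the open substack corresponding to the localization $P\to P_{F}$, where $F=P\cap\tau^{\perp}$ is the face of $P$ dual to $\tau$, and that $N_{\tau}=\Hom(P_F,\ZZ)$ sits inside $N_{\sigma}=\Hom(P,\ZZ)$ as $\{u : u|_{F}=0\}=F^{\perp}\cap N_{\sigma}$, equivalently $\tau^{\gp}\cap N_{\sigma}$. Now decompose $\cZ_{\sigma}=\coprod_{u\in N_{\sigma}}\cZ_{u,\sigma}$ and compute $\psi_{\sigma}^{-1}(\cA_{\tau})$ one component at a time. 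For a fixed $u\in N_{\sigma}$, the component $\cZ_{u,\sigma}$ is (up to reduction, by Lemma~\ref{lem:contact-strata}) the union of toric strata $\cZ_{\tau'\prec\sigma}$ with $u\in\tau'^{\gp}$; its intersection with the open substack $\cA_{\tau}$ is non-empty precisely when the dense stratum $\cZ_{\tau\prec\sigma}$ of $\cA_\tau$ is among them, i.e.\ exactly when $u\in\tau^{\gp}$, i.e.\ $u\in N_{\tau}$. Thus $\psi_{\sigma}^{-1}(\cA_{\tau})=\coprod_{u\in N_{\tau}}\big(\cZ_{u,\sigma}\cap\cA_{\tau}\big)$, and it remains to identify $\cZ_{u,\sigma}\cap\cA_{\tau}$ with $\cZ_{u,\tau}$ for $u\in N_{\tau}$. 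This is the statement that the ideal $I_u\subset P$ generated by $u^{-1}(\ZZ\setminus\{0\})$ localizes, at the face $F$, to the ideal of $P_F$ generated by $u^{-1}(\ZZ\setminus\{0\})$; since $u|_{F}=0$, no generator of $I_u$ lies in $F$, so $I_u$ does not meet $F$, and the localization $I_u P_F$ is a proper ideal computed by exactly the same generators — this is the monoid-ideal computation already performed in Step~2 of Construction~\ref{Constr: evaluation stratum} (the identity $I_{\bar u}+L=\chi_{yx}^{-1}(I'_{\bar u})$ there is literally this fact).

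Having matched the underlying stacks, the compatibility of the sections ${\bf u}_{\sigma}$ and ${\bf u}_{\tau}$ is then automatic: both are defined on stalks by the \emph{same} homomorphism $u:P\to\ZZ$ descending through the relevant quotient of $P$ (resp.\ $P_F$), and restriction along $\cZ_{\tau}\hookrightarrow\cZ_{\sigma}$ is compatible with the charts $P\to\ocM_{\cZ_\sigma}$, $P_F\to\ocM_{\cZ_\tau}$. Alternatively, and more cleanly, I would deduce the section statement formally: ${\bf u}_{\tau}$ is by Proposition~\ref{prop:local-contact} the \emph{unique} family of contact orders on $\cZ_{\tau}$ inducing the universal property for $\cA_{\tau}$, and the pullback of ${\bf u}_{\sigma}$ along $\cZ_{\tau}\hookrightarrow\cZ_{\sigma}\to\cA_{\sigma}$, which factors through $\cA_{\tau}$, is such a family; uniqueness forces them to agree.

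\textbf{Main obstacle.} The only real content is the monoid-theoretic lemma that $I_u$ commutes with localization at a face on which $u$ vanishes, together with the bookkeeping that identifies $N_{\tau}$ inside $N_{\sigma}$ as exactly the $u$ for which $\cZ_{u,\sigma}$ survives the open restriction; neither is difficult, but care is needed to phrase everything at the level of reduced stacks versus scheme structures, since $\cZ_{u,\sigma}$ is generally non-reduced and one wants the identification $\cZ_{u,\sigma}\cap\cA_{\tau}=\cZ_{u,\tau}$ as \emph{idealized} log stacks, not just topologically — this follows from the localization identity for $I_u$ holding on the nose (not just up to radical), which is the point to check with some attention.
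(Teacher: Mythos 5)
Your argument is correct, but it takes a different route from the paper. The paper disposes of the corollary in one sentence: the restriction of $\mathbf{u}_{\sigma}$ to $\psi_{\sigma}^{-1}(\cA_{\tau})$ is a family of contact orders of $\cA_{\tau}$ and $\mathbf{u}_{\tau}$ composed with $\cA_{\tau}\hookrightarrow\cA_{\sigma}$ is a family of contact orders of $\cA_{\sigma}$, so the universal property of Proposition~\ref{prop:local-contact} (applied in both directions, with its uniqueness clause) produces mutually inverse maps and the compatibility of sections for free. You instead verify the identification explicitly, component by component: you locate $N_{\tau}=F^{\perp}\cap N_{\sigma}$ inside $N_{\sigma}$, observe via Lemma~\ref{lem:contact-strata} that $\cZ_{u,\sigma}\cap\cA_{\tau}=\emptyset$ unless $u\in N_{\tau}$, and for $u\in N_{\tau}$ reduce $\cZ_{u,\sigma}\cap\cA_{\tau}=\cZ_{u,\tau}$ to the identity $I_u\cdot P_F=I_u^{P_F}$, which holds on the nose because $u$ vanishes on the face $F$ (if $u(p'-f)\neq 0$ with $p'\in P$, $f\in F$, then already $u(p')\neq 0$). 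Both arguments are sound. The formal argument is shorter and is what the paper intends; your computation buys a concrete description of which components of $\cZ_{\sigma}$ survive restriction to $\cA_{\tau}$ and shows the identification holds at the level of the (generally non-reduced) idealized structures without invoking the universal property — information that is useful elsewhere (e.g.\ for Proposition~\ref{prop:irr-comp-contact}) but not logically required here. You also correctly isolate the one point needing care, namely that the localization identity for $I_u$ is an equality of ideals and not merely of radicals.
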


\begin{proof}
The statement is immediate from the universal property stated in
Proposition~\ref{prop:local-contact}.
\end{proof}

\subsection{Family of contact orders of Zariski Artin fans}
\label{sec:contact zariski}
{
We now consider the case of an Artin fan $\cA_X$. Recall that $\cA_{X}$ has an
\'etale cover by Artin cones. It was constructed in \cite[Prop.~3.1.1]{ACMW}
as a colimit of Artin cones $\cA_{\sigma}$, viewed as sheaves over $\Log$. 

\begin{definition}
\label{Def: Zariski Artin fan}
We say that the Artin fan $\cA_X$ is \emph{Zariski} if it admits a Zariski cover
by Artin cones.
\end{definition}

{A sufficient condition for $\cA_X$ to be Zariski is that $X$
is simple, because then $\cA_X$ is the Artin fan associated to the ordinary cone
complex $\Sigma(X)$ \cite[Thm.\,6.11]{CCUW}. Proposition~\ref{Prop: Log strata
in log smooth case} shows that $X$ is simple provided $X$ has Zariski log
structure and is log smooth over a simple $B$. The case $B$ a trivial log point
has previously been treated in \cite[Lem.\,2.6]{decomposition}.}

Fix a Zariski Artin fan $\cA_X$. Let $\cZ$ be the colimit of the $\cZ_{\sigma}$
viewed as sheaves over $\cA_X$. Note that $\cZ$ is obtained by gluing together
the local model $\cZ_{\sigma}$ for each Zariski open $\cA_{\sigma}
\subseteq \cA_X$ via the canonical identification given by
Corollary~\ref{cor:natural-inclusion}.\footnote{It should be possible to carry this
process out for more general Artin fans.}}

The following proposition classifies contact orders on $\cA_X$ by
globalizing Proposition~\ref{prop:local-contact}.

\begin{proposition}
\label{prop:universal contact}
There is a section ${\bf u}_{\cX}\in \Gamma(\cZ,\overline\cM_{\cZ}^*)$ making
$\cZ$ into a family of contact orders for $\cA_X$. This family of contact orders
is universal in the sense that for any family of contact orders ${\bf
u}\in\Gamma(Z,\overline\cM_Z^*)$ of $\cA_X$, $\psi:Z\rightarrow \cA_X$, there is
a unique factorization of $\psi$ through $\cZ\rightarrow\cA_X$ such that ${\bf
u}$ is the pull-back of ${\bf u}_{\cX}$.
\end{proposition}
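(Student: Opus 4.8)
The statement to be proven is Proposition~\ref{prop:universal contact}: the existence of a universal family of contact orders $\bfu_\cX\in\Gamma(\cZ,\ocM^*_\cZ)$ on the Zariski Artin fan $\cA_X$. The strategy is pure gluing: everything has already been done Zariski-locally in Proposition~\ref{prop:local-contact} and Corollary~\ref{cor:natural-inclusion}, so the task is to check that the local data patch and that the local universal properties assemble into a global one.

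\textbf{Step 1: Construction of $\bfu_\cX$.} Since $\cA_X$ is Zariski, it is covered by Artin cones $\cA_\sigma$ (Definition~\ref{Def: Zariski Artin fan}), and by construction $\cZ=\colim\cZ_\sigma$ is glued from the local models $\cZ_\sigma\to\cA_\sigma$. For each such $\cA_\sigma$ we have the section $\bfu_\sigma\in\Gamma(\cZ_\sigma,\ocM^*_{\cZ_\sigma})$ from \S\ref{subsubsec:families of contact orders}. First I would verify that these glue: given two cones $\sigma,\sigma'$ in the cover, their intersection in $\cA_X$ is covered by Artin cones $\cA_\tau$ with $\tau$ a common face of $\sigma$ and $\sigma'$; by Corollary~\ref{cor:natural-inclusion} applied to both $\tau\prec\sigma$ and $\tau\prec\sigma'$, the restrictions of $\bfu_\sigma$ and $\bfu_{\sigma'}$ to $\psi^{-1}(\cA_\tau)\cong\cZ_\tau$ both equal $\bfu_\tau$. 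Hence the $\bfu_\sigma$ agree on overlaps and, since $\ocM^*_\cZ$ is a sheaf, glue to a global section $\bfu_\cX\in\Gamma(\cZ,\ocM^*_\cZ)$. That $(\cZ\to\cA_X,\bfu_\cX)$ is a family of contact orders — i.e.\ that $\cZ\to\cA_X$ is strict and $\alpha_\cZ$ kills $u^{-1}(\ZZ\setminus\{0\})$ — is a local condition, checked on each $\cZ_\sigma$ where it holds by the construction of the idealized structure on $\cZ_{u,\sigma}$.

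\textbf{Step 2: Universality.} Let $\bfu\in\Gamma(Z,\ocM^*_Z)$ be a family of contact orders of $\cA_X$ with structure map $\psi:Z\to\cA_X$. Choose the Zariski cover $\{\cA_\sigma\}$ of $\cA_X$ and set $Z_\sigma:=\psi^{-1}(\cA_\sigma)$, an open cover of $Z$; the restriction $\bfu|_{Z_\sigma}$ is a family of contact orders of $\cA_\sigma$ (pull-back along a strict open immersion). By Proposition~\ref{prop:local-contact} there is a unique factorization $Z_\sigma\to\cZ_\sigma\subset\cZ$ over $\cA_\sigma$ through which $\bfu|_{Z_\sigma}$ is pulled back from $\bfu_\sigma$. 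On an overlap $Z_\sigma\cap Z_{\sigma'}$, both factorizations are factorizations of the same morphism $Z_\sigma\cap Z_{\sigma'}\to\cA_X$ landing in $\cA_\tau$ for the relevant common faces $\tau$; uniqueness in Proposition~\ref{prop:local-contact} (applied over $\cA_\tau$, using Corollary~\ref{cor:natural-inclusion} to identify $\cZ_\tau=\psi_\sigma^{-1}(\cA_\tau)=\psi_{\sigma'}^{-1}(\cA_{\sigma'})\cap\cZ_{\sigma'}$) forces the two to coincide. Hence the local factorizations glue to a unique global morphism $Z\to\cZ$ over $\cA_X$ pulling $\bfu_\cX$ back to $\bfu$.

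\textbf{Main obstacle.} There is no deep obstacle; the content is entirely in Steps~1 and~2 already being available locally. The one point requiring genuine care is the bookkeeping of intersections: overlaps $\cA_\sigma\cap\cA_{\sigma'}$ in a Zariski Artin fan need not themselves be a single Artin cone, only a union of the $\cA_\tau$ for $\tau$ ranging over common faces, so the gluing and uniqueness arguments must be run over this finer cover rather than over a naive "$\cA_{\sigma\cap\sigma'}$". Once one consistently works with the cover by $\cA_\tau$'s and invokes Corollary~\ref{cor:natural-inclusion} to match the $\bfu_\tau$ compatibly, both the existence of $\bfu_\cX$ and the uniqueness of the classifying morphism follow by the usual sheaf-theoretic patching, and the proposition is proved.
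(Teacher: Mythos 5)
Your proposal is correct and follows essentially the same route as the paper: glue the local sections $\bfu_\sigma$ using Corollary~\ref{cor:natural-inclusion}, then deduce global universality from the local statement of Proposition~\ref{prop:local-contact} by existence and uniqueness on each $\cA_\sigma$ of the cover. The paper's own proof is just a terser version of your Steps 1 and 2; your extra care with overlaps being unions of face cones $\cA_\tau$ rather than single Artin cones is a correct reading of the point the paper leaves implicit.
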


\begin{proof} 
If $\cA_{\sigma}\rightarrow \cA_X$ is a Zariski open set, then by 
the construction of $\cZ$,
\[
\cZ \times_{\cA_X} \cA_{\sigma}=\cZ_{\sigma}.
\]
By Corollary~\ref{cor:natural-inclusion}, the sections ${\bf u}_{\sigma}$ glue
to give a section ${\bf u}_{\cX}\in \Gamma(\cZ,\overline{\cM}_{\cZ}^*)$,
yielding a family of contact orders in $\cA_X$.

Consider a family of contact orders $Z\rightarrow \cA_X$, ${\bf u}$. To show the
desired factorization, it suffices to prove the existence and uniqueness locally
on each Zariski open subset $\cA_{\sigma} \to \cA_{X}$, which follows from
Proposition~\ref{prop:local-contact}. 
\end{proof}

\begin{definition}\label{Def:connected-contact-order}
A \emph{connected contact order} for $X$ is a choice of connected component
of $\cZ$. 
\end{definition}

We end this discussion with a couple of properties of the space
$\cZ$ of contact orders.

\begin{proposition}\label{prop:irr-comp-contact}
Suppose that the Artin fan $\cA_X$ of $X$ is Zariski.
There is a one-to-one correspondence between irreducible components
of $\cZ$ and pairs $(u, \sigma)$ where $\sigma \in \Sigma(X)$ is a 
minimal cone such that $u \in \sigma^{\gp}$.
\end{proposition}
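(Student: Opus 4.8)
The plan is to reduce the global statement to the local toric computation already carried out in Lemma~\ref{lem:contact-strata} and to assemble the pieces using the explicit description of $\cZ$ as the colimit of the $\cZ_\sigma$ over $\cA_X$. First I would recall that $\cZ = \coprod$ of connected components, and that each connected component, by Definition~\ref{Def:connected-contact-order}, is a closed substack of $\cZ$; hence its irreducible components are among the irreducible components of $\cZ$. Since $\cZ$ is glued from the $\cZ_\sigma = \coprod_{u\in N_\sigma}\cZ_{u,\sigma}$ along the open inclusions $\cZ_\tau \cong \psi_\sigma^{-1}(\cA_\tau)$ of Corollary~\ref{cor:natural-inclusion}, it suffices to identify the irreducible components of each $\cZ_{u,\sigma}$ and then track, under generization/specialization inside $\Sigma(X)$, which of these pieces glue together into a single irreducible component of $\cZ$.

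Next I would run the local analysis. By Lemma~\ref{lem:contact-strata}, $(\cZ_{u,\sigma})_{\red} = \bigcup_{\tau^\gp\ni u}\cZ_{\tau\prec\sigma}$, a union of torus quotients of toric strata. Among the faces $\tau\prec\sigma$ with $u\in\tau^\gp$ there is a unique minimal one, namely $\tau_{\min}(u)$, the intersection of all faces of $\sigma$ whose linear span contains $u$ — one must check this intersection is again a face with $u$ in its span, which follows because $u\in\tau_1^\gp\cap\tau_2^\gp$ forces $u$ to lie in $(\tau_1\cap\tau_2)^\gp$ for faces of a cone. Then $\cZ_{\tau_{\min}(u)\prec\sigma}$ is the unique stratum in the union not contained in the closure of any other stratum of the union, so it is the unique irreducible component of $\cZ_{u,\sigma}$. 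Thus locally, irreducible components of $\cZ$ correspond to pairs $(u,\sigma')$ with $\sigma'$ a cone of $\Sigma(X)$ and $u\in(\sigma')^\gp$ with $\sigma'$ minimal for $u$ among faces of some ambient $\sigma$.

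Then comes the gluing step, which I expect to be the main obstacle: one must verify that the local minimal pair $(u,\tau_{\min}(u))$ attached to a component of $\cZ_{u,\sigma}$ does not depend on the ambient cone $\sigma$ (only on the cone $\tau_{\min}(u)$ and the vector $u$), and that two such local components glue in $\cZ$ precisely when they carry the same minimal pair. For independence of $\sigma$: if $\tau\prec\sigma_1$ and $\tau\prec\sigma_2$ with $\sigma_1,\sigma_2$ sharing the face $\tau$, then by Corollary~\ref{cor:natural-inclusion} the pieces $\cZ_{u,\sigma_i}$ restrict over $\cA_\tau$ to the same $\cZ_{u,\tau}$, whose unique irreducible component is $\cZ_{\tau\prec\tau} = \psi_\tau^{-1}(\cA_\tau$-closed point$)$ — so the generic point of the component only sees $(u,\tau)$. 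Conversely, $\tau_{\min}(u)$ is intrinsic: it is the smallest cone of $\Sigma(X)$ through which the dense stratum of the component maps, i.e. $\sigma_z$ in the notation of \S\ref{sss: tropical punctured maps}, and $u\in\sigma_z^\gp$ is the value of ${\bf u}_{\cX}$ there. This makes the assignment (component of $\cZ$) $\mapsto$ $(u,\sigma)$ well-defined and independent of all choices, and injectivity is then immediate since the component is the closure of the locus where ${\bf u}_{\cX}$ equals $u$ and $\sigma_z = \sigma$. Surjectivity: given a minimal cone $\sigma\in\Sigma(X)$ and $u\in\sigma^\gp$, choose any chart $\cA_\sigma\hookrightarrow\cA_X$; then $u\in N_\sigma$ gives the component $\cZ_{u,\sigma}$, whose unique irreducible component maps to exactly $(u,\sigma)$ because $\sigma$ is minimal for $u$ by hypothesis, so $\tau_{\min}(u)=\sigma$. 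Finally I would remark that $\sigma$ minimal for $u$ means precisely $u$ does not lie in $\rho^\gp$ for any proper face $\rho\prec\sigma$, which is the condition as stated. This completes the correspondence.
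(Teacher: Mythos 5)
Your overall route — reduce to a single chart $\cA_\sigma$ via the Zariski open cover and then read off the components from Lemma~\ref{lem:contact-strata} — is exactly the paper's (very short) proof. But your local analysis contains a genuine error: it is \emph{not} true that among the faces $\tau\prec\sigma$ with $u\in\tau^{\gp}$ there is a unique minimal one, and the supporting claim that $u\in\tau_1^{\gp}\cap\tau_2^{\gp}$ forces $u\in(\tau_1\cap\tau_2)^{\gp}$ is false for faces of a cone. Concretely, let $\sigma\subseteq\RR^3$ be the cone over a square, generated by $(1,0,1),(0,1,1),(-1,0,1),(0,-1,1)$, and let $u=(1,-1,0)$. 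The two opposite facets $\tau_1=\mathrm{cone}((1,0,1),(0,1,1))$ and $\tau_2=\mathrm{cone}((-1,0,1),(0,-1,1))$ satisfy $u\in\tau_1^{\gp}\cap\tau_2^{\gp}$ (indeed $u=(1,0,1)-(0,1,1)=(0,-1,1)-(-1,0,1)$), yet $\tau_1\cap\tau_2=\{0\}$, so $(\tau_1\cap\tau_2)^{\gp}=0\not\ni u$. Here $\cZ_{u,\sigma}$ has two irreducible components, $\cZ_{\tau_1\prec\sigma}$ and $\cZ_{\tau_2\prec\sigma}$, not one. Your intermediate conclusion that each $\cZ_{u,\sigma}$ is irreducible would in fact contradict the proposition you are proving: the bijection assigns to this single $u$ the two distinct pairs $(u,\tau_1)$ and $(u,\tau_2)$, and the "pairs" formulation of the statement exists precisely to account for this multiplicity.

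The fix is straightforward and restores the paper's argument: by Lemma~\ref{lem:contact-strata} the irreducible components of $\cZ_{u,\sigma}$ are the $\cZ_{\tau\prec\sigma}$ for $\tau$ ranging over \emph{all} faces of $\sigma$ that are minimal with $u\in\tau^{\gp}$ (a stratum $\cZ_{\tau\prec\sigma}$ is contained in $\cZ_{\tau'\prec\sigma}$ exactly when $\tau'\prec\tau$, so the maximal closed strata in the union are those indexed by minimal faces). Each such component determines, and is determined by, the pair $(u,\tau)$; your observation that $\tau$ is recovered intrinsically as the cone attached to the dense stratum of the component, with $u$ the value of $\mathbf{u}_{\cX}$ there, is the right way to see that the gluing over overlaps $\cA_\tau\subseteq\cA_\sigma$ is consistent, and the rest of your gluing discussion goes through once the local count is corrected.
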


\begin{proof} 
Since $\cZ_\sigma \subseteq \cZ$ is Zariski open, an irreducible
component of $\cZ$ is the closure of an irreducible component of some
$\cZ_\sigma$, so we may assume $\cA_X = \cA_{\sigma}$. Then the statement
follows from the description of $\cZ_{u,\sigma}$ in
Lemma~\ref{lem:contact-strata}.
\end{proof}

\begin{remark}
\label{rem:mobius}
Note that if $u\in N_{\sigma}$ with $u\in \sigma$ or $-u\in \sigma$, then
$\cZ_{u,\sigma}$ is irreducible and reduced. In fact, topologically
$\cZ_{u,\sigma}$ is the closure of the stratum $\cZ_{\tau\prec\sigma}$ where
$\tau \subseteq \sigma$ is the minimal face containing $u$. Further,
the ideal generated by $u^{-1}(\ZZ\setminus\{0\})$ is precisely $P\setminus
F_{\tau}$, so that $\cZ_{u,\sigma}$ is reduced. In the case that $u\in\sigma$,
it is the contact orders associated to ordinary marked points, as developed in
\cite{Chen},\cite{AC},\cite{LogGW}.

For a simple non-reduced example let $P=\sigma_\ZZ^\vee$ be the
submonoid of $\NN^2$ generated by $(e,0),(0,e),(1,1)$ and $u:P\to\ZZ$ given by
$u(a,b)=a-b$. Then $I_u$ is generated by $(e,0),(0,e)$, and $\kk[P]/I_u\simeq
\kk[t]/(t^e)$ is non-reduced for $e>1$.

Thus the situation for more general contact orders associated to
punctures may be more complex than that for marked points.
\end{remark}

\begin{example}
Even in the Zariski case, there may be monodromy which creates a difference
between the point of view taken on contact orders in this appendix
and that taken in \S\ref{subsec: global contact}.
See Example~\ref{ex:mobius} for a simple
example with monodromy. There, taking $u=(0,1,0)$ as a tangent vector
to any of the top-dimensional cones of $\Sigma(X)$, the corresponding connected
contact order is a double cover of a one-dimensional closed subscheme of $X$.
Explicitly, $X$ contains $\ell$ strata isomorphic to $\PP^1$, forming
a cycle, i.e., a nodal elliptic curve. Then $u$ induces a family of
contact orders $Z\rightarrow X$ which is a double cover of this elliptic
curve. This curve has $2\ell$ irreducible components, in one-to-one 
correspondence with the set of pairs of the form
$(u,\sigma)$ and $(-u,\sigma)$ for $\sigma$ running over two-dimensional
cones of $\Sigma(X)$ tangent to $u$.

We may also obtain the irreducible components of $Z$ in the formalism
of \S\ref{subsec: global contact} by taking $\sigma$ to be a two-dimensional
cone of $\Sigma(X)$ for which $u$ is a tangent vector. This induces
a contact order $\bar u\in \fC_{\sigma}(X)$ distinct from the element of
$\fC_{\sigma}(X)$ induced by $-u$. In this case $Z_{\bar u}$ is just
the stratum of $X$ corresponding to $\sigma$.

{ Example~\ref{Expl: Infinite monodromy} provides an $X$ with $\cA_X$ Zariski where a similar monodromy
produces connected contact orders with an infinite number of
{irreducible} components. In this case one sees connected
components of moduli spaces of punctured maps with an infinity of irreducible
components.}
\end{example}

By the discussion in Remark~\ref{rem:mobius} above, additional hypotheses are
usually needed to obtain good control of moduli spaces of punctured maps. Here
is a simple criterion that often suffices in practice.

\begin{proposition}
\label{prop:finite-refinements}
Suppose $\ocM_X^{\gp}\otimes_{\ZZ}\QQ$ is generated by its global sections. {Assume further $X$ quasi-compact, with locally connected logarithmic strata}. Then every connected component of contact
orders of $\cA_X$ has finitely many irreducible components.
\end{proposition}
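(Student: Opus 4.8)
The idea is to reduce the statement to a finiteness assertion about lattice points, using the classification of irreducible components of $\cZ$ in Proposition~\ref{prop:irr-comp-contact} together with the combinatorial consequence of global generation recorded in Proposition~\ref{Eqn: Sigma -> RR^r}. First I would recall that, since $\cZ\arr\cA_X$ is locally modelled on the $\cZ_\sigma=\coprod_{u\in N_\sigma}\cZ_{u,\sigma}$ for $\sigma\in\Sigma(X)$, a connected component $\cZ_0\subseteq \cZ$ is covered by the Zariski-open pieces $\cZ_0\cap\cZ_\sigma$, each of which is a disjoint union of the $\cZ_{u,\sigma}$ meeting $\cZ_0$. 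By Proposition~\ref{prop:irr-comp-contact}, the irreducible components of $\cZ$ (hence of $\cZ_0$) correspond bijectively to pairs $(u,\sigma)$ with $\sigma\in\Sigma(X)$ minimal such that $u\in\sigma^\gp$. So the task becomes: bound the number of such pairs $(u,\sigma)$ that can occur among the components of a single connected $\cZ_0$.

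The key mechanism is that within a connected component, any two irreducible components are joined by a chain of irreducible components sharing a stratum, and passing between adjacent components is governed by the generization maps and the face inclusions of $\Sigma(X)$, i.e.\ by the inclusions $N_{\sigma'}\arr N_\sigma$ dual to $\sigma'\prec\sigma$ (cf.\ Corollary~\ref{cor:natural-inclusion}). Thus all the tangent vectors $u$ appearing on the components of $\cZ_0$ map to one and the same element $\bar u$ in the colimit set $\fC_{\sigma_0}(X)$ (Definition~\ref{Def: global contact order}) for an appropriate $\sigma_0$; this is exactly the content of the gluing identifications, and it is here that connectedness of $\cZ_0$ is used. Now invoke the remark following Proposition~\ref{Eqn: Sigma -> RR^r}: global generation of $\ocM_X^\gp\otimes_\ZZ\QQ$ provides a map $|\Sigma(X)|\arr\RR^r$ injective on each cone, hence gives a well-defined point $v\in\ZZ^r$ depending only on $\bar u$, with the property that for each $\sigma'\in\Sigma_{\sigma_0}(X)$ there is \emph{at most one} $u\in N_{\sigma'}$ with image $\bar u$. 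In particular every global contact order is monodromy-free. Therefore the set of $u$'s occurring on components of $\cZ_0$ is in bijection with a subset of the set of cones $\sigma'$ involved, each contributing at most one $u$.

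It then remains to bound the number of cones $\sigma'$ that can arise. Here I would use that $X$ is quasi-compact: then $\Sigma(X)$ is a finite cone complex (the Artin fan $\cA_X$, being Zariski by the hypothesis on global generation together with the quasi-compactness, has a finite cover by Artin cones, so $\Sigma(X)$ has finitely many cones). Since each component $\cZ_{u,\sigma}$ of $\cZ_0$ is indexed by a pair $(u,\sigma)$ with $\sigma$ among these finitely many cones and $u$ determined by $\bar u$ and $\sigma$, there are only finitely many such pairs. Hence $\cZ_0$ has finitely many irreducible components, as claimed. The main obstacle, and the step deserving the most care, is making rigorous the claim that all the tangent vectors appearing on a single connected component $\cZ_0$ push forward to the same element of the relevant colimit set $\fC_{\sigma_0}(X)$: one must track carefully how the gluing of the $\cZ_\sigma$ along faces (Corollary~\ref{cor:natural-inclusion}), combined with the stratum structure of each $\cZ_{u,\sigma}$ from Lemma~\ref{lem:contact-strata}, forces compatibility of the $u$'s along any connecting chain of strata; the monodromy-freeness from global generation is then what converts "compatible up to the colimit" into "literally at most one choice per cone."
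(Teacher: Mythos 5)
Your proposal is correct and follows essentially the same route as the paper: irreducible components are indexed by pairs $(u,\sigma)$ via Proposition~\ref{prop:irr-comp-contact}, quasi-compactness bounds the cones, and global generation forces at most one $u$ per cone on a connected component. The paper obtains this last point in one line, bypassing your chain-of-components argument through $\fC_{\sigma_0}(X)$: it composes the restriction map $V=\Gamma(\ul X,\ocM_X^\gp\otimes_\ZZ\RR)\arr \ocM_{\cZ}^\gp\otimes_\ZZ\RR$ with the section $\mathbf u$ on the connected component to get a single $v\in V^*$, and surjectivity of $V\arr P^\gp\otimes\RR$ then pins down $u$ on every component at once.
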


\begin{proof}
Let $V= \Gamma(\ul{X}, \ocM_X^{\gp}\otimes_{\ZZ}\RR)$, so that the induced map
$|\Sigma(X)|\rightarrow V^*$ is injective on each $\sigma \in\Sigma(X)$ as in
Proposition~\ref{Eqn: Sigma -> RR^r}. Suppose ${\bf u} \in \Gamma(\cZ,
\ocM^*_\cZ)$ is a connected component of contact orders of $\cA_X$. Denote the
composition $ V \stackrel{}{\longrightarrow}{ \ocM_{\cZ}^{\gp}\otimes_{\ZZ}\RR
\stackrel{{\bf u}}{\longrightarrow} \RR}$ by $v\in V^*$. For each irreducible
component of $\cZ$, its corresponding vector $u$ as in
Proposition~\ref{prop:irr-comp-contact} is then uniquely determined by $v$. By
Proposition~\ref{prop:irr-comp-contact} again $\cZ$ has finitely many
irreducible components, as $\Sigma(X)$ has finitely many cones by
quasi-compactness.
\end{proof}

\subsection{Connection with the global contact orders of \S\ref{subsec:
global contact}}
We continue to work with a Zariski $X$. For simplicity in this
discussion, let us also assume that $X$ is log smooth over $\Spec\kk$, so in
particular associated to any $\sigma\in \Sigma(X)$ is a closed stratum
$X_{\sigma} \subseteq X$ such that the dual cone of the stalk of
$\overline{\cM}_X$ at the generic point of $X_{\sigma}$ is $\sigma$.

In this case, $\cA_X$ is Zariski, and $\cA_{X_{\sigma}}$, the Artin fan of
$X_{\sigma}$, is also Zariski. Let $\cZ^{\sigma}\rightarrow \cA_{X_{\sigma}}$ be
the universal family of contact orders for $\cA_{X_{\sigma}}$. Further, write
$\cX_{\sigma}$ for the reduced closed stratum of $\cA_{X_{\sigma}}$
corresponding to $\sigma$. In this situation, we have:

\begin{proposition}
There is a one-to-one correspondence between $\fC_{\sigma}(X)$ and
the set of connected components of $\cZ^{\sigma}\times_{\cA_{X_{\sigma}}} 
\cX_\sigma$.
\end{proposition}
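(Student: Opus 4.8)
The goal is to match the combinatorial set $\fC_\sigma(X) = \colim^{\Sets}_{\sigma\to\sigma'} N_{\sigma'}$ with the set $\pi_0\big(\cZ^\sigma\times_{\cA_{X_\sigma}}\cX_\sigma\big)$. The strategy is to run both descriptions through the explicit local (toric) models and identify them. First I would unwind the right-hand side. By Proposition~\ref{prop:irr-comp-contact}, the irreducible components of $\cZ^\sigma$ are indexed by pairs $(u,\sigma')$ with $\sigma'\in\Sigma(X_\sigma)$ minimal with $u\in (\sigma')^\gp$; but $\Sigma(X_\sigma)$ is precisely the star $\Sigma_\sigma(X)$ (the cones of $\Sigma(X)$ containing $\sigma$ as a face, now with $\sigma$ as vertex), so these components are indexed by the $N_{\sigma'}$ with $\sigma'\in\Sigma_\sigma(X)$. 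Restricting to $\cX_\sigma$ via $\times_{\cA_{X_\sigma}}\cX_\sigma$ throws away exactly those components of $\cZ^\sigma$ whose image misses the closed stratum $\cX_\sigma$, i.e.\ it picks out those $\cZ_{u,\sigma'}$ that contain the deepest stratum $\cZ_{\sigma\prec\sigma'}$ in their closure --- by Lemma~\ref{lem:contact-strata} this is automatic since $\sigma\subseteq\sigma'$ always forces $\cZ_{\sigma\prec\sigma'}\subseteq(\cZ_{u,\sigma'})_\red$. So after base change every component survives, but its intersection with $\cX_\sigma$ may become disconnected or, conversely, components that were distinct in $\cZ^\sigma$ may merge along $\cX_\sigma$.

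The heart of the argument is therefore a \emph{connectedness / gluing} analysis: two pairs $(u,\sigma')$ and $(u'',\sigma'')$ lie in the same connected component of $\cZ^\sigma\times_{\cA_{X_\sigma}}\cX_\sigma$ if and only if they can be joined by a chain of face relations over which the contact vectors are compatible --- precisely the relation generating the colimit $\colim^{\Sets}_{\sigma\to\sigma'}N_{\sigma'}$. Concretely, I would show: (i) for a fixed $\sigma'\supseteq\sigma$, the intersection $\cZ_{u,\sigma'}\times_{\cA_{X_{\sigma'}}}\cX_\sigma$ is \emph{nonempty} (it contains the image of the deepest stratum) and each of its connected components meets the open stratum $\cZ_{\tau\prec\sigma'}$ for $\tau$ minimal with $u\in\tau^\gp$, hence in the local Zariski chart $\cA_{\sigma'}$ this intersection is connected --- this uses Lemma~\ref{lem:contact-strata} together with the fact that $\cZ_{\sigma\prec\sigma'}$ lies in the closure of every $\cZ_{\tau\prec\sigma'}$; and (ii) when we pass between overlapping charts $\cA_{\sigma'}$ and $\cA_{\sigma''}$, Corollary~\ref{cor:natural-inclusion} (applied to $X_\sigma$) identifies $\cZ^\sigma\cap\cA_{\sigma'\cap\sigma''}$ compatibly, so the gluing of local charts corresponds exactly to identifying $u\in N_{\sigma'}$ with its image in $N_{\sigma'\cap\sigma''}$ and then with the corresponding $u''\in N_{\sigma''}$ when such exists. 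Matching these identifications with the arrows in the diagram $\fN_\sigma$ of Definition~\ref{Def: global contact order} gives a bijection between $\pi_0$ of the base-changed space and the colimit.

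The cleanest way to organize the bijection is: define a map $\fC_\sigma(X)\to\pi_0\big(\cZ^\sigma\times_{\cA_{X_\sigma}}\cX_\sigma\big)$ by sending $\bar u = \iota_{\sigma\sigma'}(u)$ to the connected component containing the image of $\cZ_{u,\sigma'}$; check well-definedness using that if $\bar u = \iota_{\sigma\sigma''}(u'')$ too, then the two components coincide after base change by the chart-gluing of step (ii); check surjectivity since every component of the base-changed space, being open in some $\cZ^\sigma\cap\cA_{\sigma'} = \cZ_{\sigma'}$ restricted to $\cX_\sigma$, comes from some $\cZ_{u,\sigma'}$; and check injectivity by observing that a path in $\cZ^\sigma\times_{\cA_{X_\sigma}}\cX_\sigma$ connecting the images of $(u,\sigma')$ and $(u'',\sigma'')$ can be subdivided (using quasi-compactness of the path, or working one chart at a time) into steps lying in single charts, each of which witnesses one of the generating relations of the colimit.

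\textbf{Main obstacle.} The subtle point --- and the one I expect to absorb most of the work --- is step (i): establishing that $\cZ_{u,\sigma'}\times_{\cA_{X_{\sigma'}}}\cX_\sigma$ is connected (not just nonempty), since after restriction to the closed stratum one is intersecting a union of toric strata $\bigcup_{\tau^\gp\ni u}\cZ_{\tau\prec\sigma'}$ with $\cX_\sigma$, and a priori this could break into pieces. The resolution is that every such $\cZ_{\tau\prec\sigma'}$ (for $\tau$ ranging over faces of $\sigma'$ containing $\tau_{\min}$, the minimal face with $u\in\tau^\gp$) has $\cZ_{\sigma'\prec\sigma'}$ in its closure, and $\cZ_{\sigma'\prec\sigma'}$ maps into $\cX_\sigma$ (as $\sigma\subseteq\sigma'$); so all pieces share the common point $\cZ_{\sigma'\prec\sigma'}$ after base change, forcing connectedness. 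One must also be a little careful that the face $\tau_{\min}$ containing $u$ actually does contain $\sigma$ when the pair $(u,\sigma')$ is to contribute to $\fC_\sigma(X)$ at all --- this is guaranteed precisely because we are looking at $N_{\sigma'}$ for $\sigma'\in\Sigma_\sigma(X)$ and the structure map in the colimit records $u$ as an element over the \emph{star} of $\sigma$, so $u$ is a tangent vector at a point of $X_\sigma$ and $\tau_{\min}\supseteq\sigma$ automatically. Modulo this bookkeeping the rest is a routine, if notationally heavy, comparison of two colimit presentations.
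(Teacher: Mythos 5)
Your proposal is correct and follows essentially the same route as the paper's proof: cover $\cA_{X_\sigma}$ by the Zariski charts $\cA_{\sigma'}$ for $\sigma\subseteq\sigma'$, identify the connected components chart-by-chart with $N_{\sigma'}$ (the connectedness of $\cZ_{u,\sigma'}\times_{\cA_{X_\sigma}}\cX_\sigma$ via the common deepest stratum being exactly the point the paper also relies on), and then match the chart-overlap identifications coming from Corollary~\ref{cor:natural-inclusion} and Proposition~\ref{prop:local-contact} with the generating relations of the colimit $\fC_\sigma(X)$. The paper merely packages your ``well-defined, surjective, injective'' checks as the statement that two equivalence relations on $\coprod_{\sigma\subseteq\sigma'}N_{\sigma'}$ are generated by the same set of relations; this is a presentational, not a mathematical, difference.
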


\begin{proof}
By the construction of the colimit of sets, $\fC_{\sigma}(X)$ is the
quotient of $\coprod_{\sigma\prec\sigma'\in\Sigma(X)} N_{\sigma'}$ by the
equivalence relation $\sim$ generated by the following set of relations.
Whenever given inclusions of faces $\sigma\prec\sigma'\prec\sigma''$ in
$\Sigma(X)$, one obtains an induced map $\iota_{\sigma'\sigma''}:
N_{\sigma'}\rightarrow N_{\sigma''}$. Then for $x\in N_{\sigma'}$, we have
$x\sim \iota_{\sigma'\sigma''}(x)$.

On the other hand, we may cover $\cA_{X_{\sigma}}$ with Zariski open sets
$\cA_{\sigma'}$ with $\sigma'$ running over $\sigma'\in\Sigma(X)$ with
$\sigma\prec\sigma'$. Note that by the construction of the universal contact
order of $\cA_{\sigma'}$, there is a one-to-one correspondence between
$N_{\sigma'}$ and the set of connected components of $\cZ_{\sigma'}=
\cZ^{\sigma}\times_{\cA_{X_{\sigma}}}\cA_{\sigma'}$, with $u\in N_{\sigma'}$
corresponding to $\cZ_{u,\sigma'}$. Note the same is then true of the set of
connected components of $\cZ_{\sigma'}\times_{\cA_{X_{\sigma}}} \cX_{\sigma}$,
with $u\in N_{\sigma'}$ corresponding to $\cZ_{u,\sigma'}
\times_{\cA_{X_{\sigma}}} \cX_{\sigma}$.

Define another equivalence relation $\approx$ on
$\coprod_{\sigma\prec \sigma'} N_{\sigma'}$ as follows. Suppose $u' \in
N_{\sigma'}, u''\in N_{\sigma''}$. Then $u'\approx u''$ if
$\cZ_{u',\sigma'}\times_{\cA_{X_{\sigma}}} \cX_{\sigma}$ and
$\cZ_{u'',\sigma''}\times_{\cA_{X_{\sigma}}} \cX_{\sigma}$ are open substacks of
the same connected component of $\cZ^{\sigma}\times_{\cA_{X_{\sigma}}}
\cX_{\sigma}$. The statement follows once we show that the two
equivalence relations $\sim$ and $\approx$ are equal.

Note that $\cA_{\sigma'}\cap \cA_{\sigma''}\cap\cX_{\sigma}$ may be covered with
sets $\cA_{\tau}\cap\cX_{\sigma}$ with $\tau\in\Sigma(X)$ running over those
$\tau$ with $\sigma\prec\tau\prec \sigma' \cap \sigma''$. This makes it
clear that $\approx$ is also generated by the following relations.
Suppose given $\sigma\prec\sigma'\prec\sigma''$ with $u'\in
N_{\sigma'}$, $u''\in N_{\sigma''}$. Then because of the inclusion
$\cZ_{\sigma'}\subseteq \cZ_{\sigma''}$ of Corollary~\ref{cor:natural-inclusion},
$\cZ_{\sigma',u'}\times_{\cA_{X_{\sigma}}}\cX_{\sigma}$ and $\cZ_{\sigma'',u''}
\times_{\cA_{X_{\sigma}}}\cX_{\sigma}$ may be both viewed as open substacks of
$\cZ^{\sigma}\times_{\cA_{X_{\sigma}}} (\cX_{\sigma}\times_{\cA_{X_{\sigma}}}
\cA_{\sigma''})$. If these two open substacks are not disjoint, then $u'\approx
u''$. However, it follows from Proposition~\ref{prop:local-contact} that this is
the case precisely when $\iota_{\sigma'\sigma''}(u')=u''$. Thus $\sim$ and
$\approx$ are the same equivalence relation, since they are generated by the
same set of relations.
\end{proof}


\section{Charts for morphisms of log stacks}
\label{section:charts}
We discuss here properties of charts of morphisms of algebraic log
stacks, due to a lack of a good reference. There are many standard
results involving existence and properties of charts for
morphisms between fs log schemes \'etale locally, e.g., \cite[II.2]{Ogus},
as well as local descriptions of log smooth or \'etale morphisms,
e.g., \cite[IV.3.3]{Ogus}. However, to apply these results to
morphisms of log stacks, one would need to pass to smooth neighborhoods,
which destroys any discussion of the more delicate condition of being
log \'etale. Thus it is far more convenient to think of charts as
being given by maps to toric stacks rather than toric varieties.
The results of this appendix are used in \S\ref{ss:idealized smoothness}
to describe local models for our moduli spaces of punctured maps to
Artin fans, but are also used extensively elsewhere, e.g., in \cite{GSAssoc}.

Here we fix a ground field $\kk$ of characteristic~$0$, as usual,
and all schemes and stacks are defined over $\Spec\kk$.
We define, given $P$ a fine monoid and $K\subseteq P$ a monoid ideal, 
\begin{equation}
\label{eq:artin charts}
\cA_P:=[\Spec\kk[P]/\Spec\kk[P^{\gp}]],\quad\quad\quad
\cA_{P,K}=[(\Spec \kk[P]/K)/\Spec\kk[P^{\gp}]].
\end{equation}
Here both stacks carry a canonical log structure coming from
$P$, and the second stack carries a canonical idealized log structure
induced by the monoid ideal $K$, see \cite[III.1.3]{Ogus}.

\begin{remark}
\label{rem:APghost}
If $P$ is a fine monoid, define $\overline{P} =P/P^{\times}$. Then
$\cA_{P}\cong \cA_{\overline P}$.
\end{remark}

\begin{proposition}
\label{prop:charts}
Let $f:X\rightarrow Y$ be a morphism of (idealized) fs log stacks over
$\Spec\kk$, with coherent sheaves of ideals $\cK_X$ and $\cK_Y$ in the idealized
case. Let $\ol x$ {be a geometric point of $\ul X$, $\ol y= f(\ol x)$,}
$P=\overline{\cM}_{X,\ol x}$, $Q=\overline{\cM}_{Y,\ol y}$,
($K=\overline{\cK}_{X,\ol x}$, $J=\overline{\cK}_{Y,\ol y}$ in the idealized
case). Then in the two cases, there are strict \'etale neighborhoods $X'$ and
$Y'$ of $\ol x$ and $\ol y$ respectively and commutative diagrams
\[
\xymatrix@C=15pt
{
X'\ar[d]\ar[r]&\cA_P\ar[d]\\
Y'\ar[r]&\cA_Q
}
\quad\quad\quad\quad
\xymatrix@C=15pt
{
X'\ar[d]\ar[r]&\cA_{P,K}\ar[d]\\
Y'\ar[r]&\cA_{Q,J}
}
\]
with horizontal arrows ({idealized}) strict. If further
$Y$ is already equipped with a strict morphism $Y\rightarrow \cA_Q$,
we may take $Y=Y'$.
\end{proposition}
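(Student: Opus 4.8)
The strategy is to reduce the statement for log stacks to the corresponding statement for log schemes by passing to a smooth (hence strict) presentation, and then to descend the resulting charts. First I would choose a smooth surjection $V\to X$ from a scheme, and likewise $U\to Y$, such that the geometric point $\bar x$ lifts to a geometric point of $V$ and $\bar y$ lifts to one of $U$; after shrinking we may arrange a commutative square $V\to U$, $X\to Y$ with $V\to X$, $U\to Y$ strict smooth. Since $\overline{\cM}_{X,\bar x}=P$ and $\overline{\cM}_{Y,\bar y}=Q$ and these sheaves are pulled back along strict morphisms, the same monoids arise at the chosen points of $V$ and $U$. Now the classical theory of charts for morphisms of fs log schemes (e.g.\ \cite[II,\S2]{Ogus}) applies: after replacing $V$ and $U$ by \'etale neighbourhoods of the chosen points, there exist charts $V\to\Spec\kk[P]$ and $U\to\Spec\kk[Q]$, fitting into a commutative square over $\Spec\kk[Q]\to\Spec\kk[P]$, with both horizontal arrows strict. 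Composing with the quotient maps $\Spec\kk[P]\to\cA_P$ and $\Spec\kk[Q]\to\cA_Q$ gives a commutative square with strict horizontal arrows, but now from a scheme rather than from $X$.

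The key point is then descent. The morphism $V\to\cA_P$ obtained above is $\Spec\kk[P^{\gp}]$-equivariant up to the action that the smooth cover carries, but more cleanly: one should observe that the composite $V\to X\to\cA_P$ one wants to produce is characterized, \'etale locally, by the strict morphism structure, i.e.\ by a chart $P\to\Gamma(V,\cM_V)$ compatible with the given log structure. The torsor-theoretic description of $\cA_P$ — a morphism to $\cA_P$ is the same as a log structure with a $P$-chart up to the inherent $\Spec\kk[P^{\gp}]$-ambiguity — means that the chart $P\to\Gamma(V,\cM_V)$ descends to a morphism $X'\to\cA_P$ on a strict \'etale neighbourhood $X'$ of $\bar x$, because the chart data is insensitive to the choice of lift: any two lifts differ by a section of $\cO^{\times}$ valued in $P^{\gp}$, which is precisely the $\cA_P$-equivalence. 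Concretely, I would note that $\cM_X\to\overline{\cM}_X$ is an $\cO_X^{\times}$-torsor for each section of $\overline{\cM}_X$, and the datum of a strict morphism to $\cA_P$ is exactly a lift of $P\to\overline{\cM}_{X,\bar x}$ to $\cM_X$ on a neighbourhood; such a lift exists \'etale locally since $\cO^{\times}$-torsors are \'etale locally trivial. The same reasoning produces $Y'\to\cA_Q$, and the compatibility square commutes after possibly further shrinking since it commutes on the scheme-level covers and the relevant stacks satisfy descent.

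For the idealized case, everything goes through verbatim with $\cA_{P,K}$ in place of $\cA_P$: the ideal sheaf $\cK_X$ has stalk $K$ at $\bar x$, it is coherent, hence generated near $\bar x$ by the image of $K$ under the chart, and so the strict morphism $X'\to\cA_P$ automatically factors through the closed substack $\cA_{P,K}$ defined by $K$; the factorization is idealized strict precisely because $\overline{\cK}_{X,\bar x}=K$. The last sentence of the proposition — that one may take $Y=Y'$ when $Y$ already comes with a strict morphism to $\cA_Q$ — is then immediate: the construction of $X'\to\cA_P$ only required the chart $P\to\cM_X$ near $\bar x$ together with the fact that $f^{\flat}$ is compatible with a chart on the target, and the given $Y\to\cA_Q$ supplies exactly that target chart globally, so no shrinking of $Y$ is needed and the square $X'\to\cA_P$, $Y\to\cA_Q$ commutes by construction.

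\textbf{Main obstacle.} The genuinely delicate point is the descent of the chart morphism from the smooth cover $V$ down to a strict \'etale neighbourhood $X'$ of $\bar x$ in $X$ \emph{without} losing idealized strictness and \emph{without} passing to a smooth (non-strict, non-\'etale) neighbourhood — that would ruin any later discussion of log \'etaleness, which is the whole motivation stated in the appendix. I expect to spend most of the effort verifying that the $\Spec\kk[P^{\gp}]$-ambiguity in ``a morphism to $\cA_P$'' matches exactly the $\cO^{\times}$-ambiguity in ``a lift of a chart of $\overline{\cM}_X$ to $\cM_X$'', so that the locally defined morphisms to $\cA_P$ glue; this is a standard-but-fiddly torsor computation, and one must be careful that the cocycle matching the two ambiguities is the right one. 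Once that identification is in hand, the rest is bookkeeping with strict \'etale neighbourhoods and coherence of ideal sheaves.
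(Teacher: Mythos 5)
Your proposal is correct in substance, but it takes a genuinely different route from the paper. You reconstruct the chart by hand: pass to compatible smooth scheme presentations $V\to X$, $U\to Y$, invoke the classical chart theorem for morphisms of fs log schemes to get a square over $\Spec\kk[Q]\to\Spec\kk[P]$, and then descend to $X'\to\cA_P$, $Y'\to\cA_Q$ via the identification of the $\Spec\kk[P^{\gp}]$-ambiguity in a morphism to $\cA_P$ with the $\cO^{\times}$-torsor of local lifts of a chart of $\ocM$ to $\cM$. The paper short-circuits all of this by quoting Olsson's result (\cite[Cor.~5.25]{LogStack}) that $\Log_{\kk}$, and relatively $\Log_{Y'}$, admit strict \'etale covers by Artin cones: one sets $Y'=Y\times_{\Log_\kk}\cA_Q$ and $X'=(X\times_Y Y')\times_{\Log_{Y'}}(\cA_P\times_{\cA_Q}Y')$, so the square $2$-commutes by construction, and the normalization $P=\ocM_{X,\bar x}$, $Q=\ocM_{Y,\bar y}$ is arranged by localizing $\cA_Q$ to $\cA_{Q'}$ and using $\cA_{Q'}\cong\cA_{Q'/(Q')^{\times}}$ rather than by choosing exact charts on the presentations. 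The trade-off is exactly the one you anticipate in your ``main obstacle'' paragraph: your descent argument must additionally supply the $2$-isomorphism making the square of stack morphisms commute (agreement on $V$ is not enough; the descent data for the two composites $X'\to\cA_Q$ must be matched on $V\times_{X'}V$), whereas Olsson's theorem packages precisely this torsor bookkeeping once and for all. Your approach is more self-contained; the paper's is shorter and hands you $X'$ \'etale over $Y'\times_{\cA_Q}\cA_P$ in the form needed for Proposition~\ref{prop:etale charts}. The idealized case and the final claim about a pre-given $Y\to\cA_Q$ are handled the same way in both arguments.
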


\begin{proof}
If $Y$ is equipped with a strict morphism $Y\rightarrow\cA_Q$, we take $Y=Y'$.
Otherwise, there is a tautological morphism $Y\rightarrow \Log_{\kk}$. By
\cite[Cor.\,5.25]{LogStack}, $\Log_{\kk}$ has a strict \'etale cover by stacks
of the form $\cA_Q$ for various monoids $Q$. Thus we may choose a strict \'etale
morphism $\cA_Q\rightarrow\Log_{\kk}$ whose image contains the image of $\ol
y$, and take $Y'$ to be the \'etale neighborhood $Y\times_{\Log_\kk} \cA_Q$ of
$\ol y$. If the image of $\ol y$ in $\cA_Q$ is not the deepest stratum of
$\cA_Q$, we may replace $\cA_Q$ with a Zariski open subset of the form
$\cA_{Q'}$ where $Q'$ is a localization of $Q$ along some face and such that
$\ol y$ maps to the deepest stratum of $\cA_{Q'}$. By Remark \ref{rem:APghost},
$\cA_{Q'} \cong \cA_{Q'/(Q')^{\times}}$, so we may assume that $Q=\overline
{\cM}_{Y,\ol y}$.

Let $X''=X\times_Y Y'$ be the corresponding \'etale neighborhood of $\ol x$.
Similarly, we have a tautological strict morphism $X''\rightarrow \Log_{Y'}$.
Now $\Log_{Y'}$ can be covered by strict \'etale morphisms of the form
$\cA_P\times_{\cA_Q} Y'\rightarrow\Log_{Y'}$ for various $P$ such that the
projection to $\cA_P$ is strict, again by \cite[Cor.\,5.25]{LogStack}. Here we
range over fs monoids $P$ and morphisms $\theta:Q\rightarrow P$. Take
$X'=X''\times_{\Log_{Y'}} (\cA_P\times_{\cA_Q} Y')$ for a suitable choice of $P$
and $\theta:Q\rightarrow P$ so that $X'$ is an \'etale neighborhood of $\ol
x$. The projection of this stack to $\cA_P$ then yields the desired strict
morphism $X'\rightarrow \cA_P$ making the diagram commutative. As before, we can
pass to a Zariski open substack of $\cA_P$ to be able to assume that
$P=\overline{\cM}_{X,\ol x}$.

In the idealized case, the morphisms $X'\rightarrow\cA_P$, $Y'\rightarrow
\cA_Q$ factor through $\cA_{P,K}$ and $\cA_{Q,J}$ respectively, and
the factored morphisms are {idealized strict}.
\end{proof}

\begin{lemma}
\label{lem:idealized etale}
Let $\theta:Q\rightarrow P$ be a morphism of fs monoids, $J\subseteq Q$,
$K\subseteq P$ with $\theta(J)\subseteq K$. Then the induced morphism
$\theta:\cA_{P,K}\rightarrow\cA_{Q,J}$ is idealized log \'etale.
\end{lemma}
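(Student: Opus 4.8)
.

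The plan is to verify the infinitesimal lifting criterion for idealized logarithmically \'etale morphisms (\cite[\S\,IV.3.1]{Ogus}) directly, reducing the idealized statement to the corresponding non-idealized fact about Artin cones. Thus I first record that $\cA_P\to\cA_Q$ is (non-idealized) logarithmically \'etale for an \emph{arbitrary} morphism $\theta:Q\to P$ of fine monoids, and then upgrade this to the idealized setting by a formal argument.

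For the first step I would use the standard fact that each Artin cone $\cA_P=[\Spec\kk[P]/\Spec\kk[P^\gp]]$ is logarithmically \'etale over $\Spec\kk$: the toric scheme $\Spec\kk[P]$ is log smooth over $\kk$ with $\Omega^{\log}$ free of rank $\rk P^\gp$, which equals the dimension of the torus $\Spec\kk[P^\gp]$ one divides by, so the torsor sequence for $\Spec\kk[P]\to\cA_P$ identifies the pullback of $\LL_{\cA_P/\kk}$ with the cone of the canonical isomorphism $\cO\otimes_\ZZ P^\gp\to\cO\otimes_\ZZ(P^\gp)^\vee$, hence $\LL_{\cA_P/\kk}=0$ (cf.\ the construction of Artin fans in \cite{ACMW}). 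Applying the transitivity triangle for the log cotangent complex (\cite{LogCot}) to $\cA_P\to\cA_Q\to\Spec\kk$ then gives $\LL_{\cA_P/\cA_Q}=0$, so $\cA_P\to\cA_Q$ is log \'etale; in particular, for every strict square-zero extension $\iota:T_0\hookrightarrow T$ of fs log schemes and every commutative square with $T_0\to\cA_P$ lying over $T\to\cA_Q$ and restricting $T_0\to\cA_{P,K}$, there is a unique log lift $c:T\to\cA_P$.

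It then remains to show that when $\iota:T_0\hookrightarrow T$ is moreover \emph{idealized} strict and $a:T_0\to\cA_{P,K}$, $b:T\to\cA_{Q,J}$ are morphisms of idealized log schemes with $\theta(J)\subseteq K$, the unique log lift $c$ is automatically an idealized log morphism into $\cA_{P,K}$; uniqueness of the idealized lift is then inherited from uniqueness of $c$. Strictness of $\iota$ gives $\ocM_{T_0}=\ocM_T$ and $\overline{c}^{\flat}=\overline{a}^{\flat}$ on ghost sheaves, and idealized strictness gives $\overline{\cK}_{T_0}=\overline{\cK}_T$. Since $a$ is idealized, $\overline{a}^{\flat}$ carries the idealized ghost ideal $\overline{K}$ of $\cA_{P,K}$ into $\overline{\cK}_{T_0}$, hence $\overline{c}^{\flat}(\overline{K})\subseteq\overline{\cK}_T$. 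Because a log-ideal is the preimage of its $\cO^\times$-quotient and every idealized log structure satisfies $\cK_T\subseteq\alpha_T^{-1}(0)$, the sections $c^{\flat}(m)$ for $m$ a chart element of $K$ lie in $\cK_T$ and so are annihilated by $\alpha_T$; this simultaneously shows that $c$ factors through the monomial closed substack $\cA_{P,K}\hookrightarrow\cA_P$ and that $c$ carries the idealized structure of $\cA_{P,K}$ into $\cK_T$. Finite presentation being clear, this proves that $\cA_{P,K}\to\cA_{Q,J}$ is idealized log \'etale.

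The one point that needs care is the first step: that $\cA_P\to\cA_Q$ is log \'etale with \emph{no} hypothesis on $\theta$ (one might expect a condition on $\coker\theta^\gp$, but this is irrelevant precisely because the non-\'etale directions of $\Spec\kk[P]\to\Spec\kk[Q]$ are the torus directions that become trivial after passing to the quotient stacks). Once this is in place the idealized refinement is formal, since idealized-strictness of the test extension means nothing about ghost sheaves or ideals is deformed, leaving the log lift no choice but to be idealized; I would, however, double-check the bookkeeping of idealized strictness against the precise definitions in \cite[\S\,IV.3.1]{Ogus} and the fact recalled in \S\,\ref{ss:log-ideal} that a coherent log-ideal is determined by its $\cO^\times$-quotient.
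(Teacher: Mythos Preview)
Your proof is correct and follows essentially the same strategy as the paper's: reduce to the non-idealized statement that $\cA_P\to\cA_Q$ is log \'etale, obtain the unique log lift $c:T\to\cA_P$, and then use idealized strictness of $T_0\hookrightarrow T$ to identify $\overline{\cK}_{T_0}=\overline{\cK}_T$ and force $c$ to factor through $\cA_{P,K}$ as an idealized morphism. The only real difference is how the first step is justified: the paper simply cites \cite[Cor.~5.23]{LogStack} for $\cA_P\to\cA_Q$ being log \'etale, whereas you re-derive it by arguing that each $\cA_P$ is log \'etale over $\Spec\kk$ via the vanishing of the log cotangent complex and then applying transitivity. Both are fine; the citation is shorter, your argument is more self-contained.

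One small slip: in your torsor sequence the two terms should both be $\cO\otimes_\ZZ P^{\gp}$ (log differentials on $\Spec\kk[P]$ and relative differentials for the $\Spec\kk[P^{\gp}]$-torsor), with the map the identity --- there is no dual $(P^{\gp})^\vee$ appearing. This does not affect the argument.
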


\begin{proof}
The morphism $\theta$ is clearly locally of finite presentation, so
we need only verify the formal lifting criterion. Suppose given a diagram
\[
\xymatrix@C=30pt
{
T_0\ar[r]^{g_0}\ar[d]_i & \cA_{P,K}\ar[d]\ar[d]^{\theta} 
\ar[r]&\cA_P\ar[d]^{\theta}\\
T\ar[r]_g\ar@{-->}[ru]_{g'}&\cA_{Q,J}\ar[r]&\cA_Q
}
\]
Here $i$ is a strict and {idealized strict} closed immersion with ideal
sheaf having square zero, and the right-hand horizontal arrows are strict, but
not {idealized strict}, closed immersions, with the right-hand square
commutative. We wish to show there is a unique $g'$ making the diagram commute.

By \cite[Cor.\,5.23]{LogStack}, $\cA_P\rightarrow\cA_Q$ is log \'etale. Thus
forgetting the idealized structure on $T$, we obtain a unique morphism
$h:T\rightarrow\cA_P$ in the above diagram making everything commute. Let
$\cK_{T_0}$, $\cK_T$ be the coherent sheaves of monoid ideals for $T_0$ and $T$
respectively giving the idealized structure. Note $\cK_{T_0}$ is the pull-back
of a sheaf of ideals $\overline{\cK}_{T_0} \subseteq \overline{\cM}_{T_0}$ under
the projection $\cM_{T_0} \rightarrow\overline{\cM}_{T_0}$, and similarly for
$T$. By strictness, $\overline{\cK}_{T_0}=\overline{\cK}_T$. Since $\bar
g_0^{\flat}(K)\subseteq \overline{\cK}_{T_0}$, as $g_0$ is an idealized
morphism, we have by commutativity that $\bar h^{\flat}(K) \subseteq
\overline{\cK}_T$. Hence $\alpha_T$ vanishes on any lift to $\cM_T$ of an
element $\bar h^{\flat}(k)$ for $k\in K$. It then follows that $h$ factors
through the closed immersion $\cA_{P,K}\rightarrow \cA_P$, and this
factorization yields the unique lifting $g'$.
\end{proof}

\begin{proposition}
\label{prop:etale charts}
With the hypotheses of Proposition~\ref{prop:charts}, suppose in addition that
$f$ is log smooth (resp. log \'etale, idealized log smooth, idealized log
\'etale). Then in the non-idealized case, the induced morphism $X'\rightarrow
Y'\times_{\cA_Q} \cA_P$ is smooth (resp.\ \'etale) and in the idealized case,
the induced morphism $X'\rightarrow Y' \times_{\cA_{Q,J}} \cA_{P,K}$ is smooth
(resp.\ \'etale).
\end{proposition}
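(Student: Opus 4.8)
\textbf{Proof plan for Proposition~\ref{prop:etale charts}.}
The plan is to reduce the statement to the analogous property of the cotangent complex (or, in the log-smooth case, the sheaf of logarithmic differentials) and then to the infinitesimal lifting criterion. First I would observe that by Proposition~\ref{prop:charts} we already have the commutative square
\[
\xymatrix@C=15pt{X'\ar[d]\ar[r]&\cA_P\ar[d]\\ Y'\ar[r]&\cA_Q}
\]
with horizontal arrows strict (and idealized strict in the idealized case), so the morphism $X'\to Y'\times_{\cA_Q}\cA_P$ is well-defined, and it is automatically strict: both $X'$ and $Y'\times_{\cA_Q}\cA_P$ carry the pull-back of the log structure from $\cA_P$, and the morphism respects these by construction. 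Since strictness reduces log-smoothness to ordinary smoothness, it then suffices to check smoothness (resp.\ \'etaleness) of the underlying morphism of algebraic stacks.

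The key computation is that $\cA_P\to\cA_Q$ is log \'etale whenever $P$ is any fs monoid receiving a map $Q\to P$ — this is \cite[Cor.~5.23]{LogStack} as already invoked in the proof of Lemma~\ref{lem:idealized etale} — and in the idealized case $\cA_{P,K}\to\cA_{Q,J}$ is idealized log \'etale by Lemma~\ref{lem:idealized etale} itself. Consequently $Y'\times_{\cA_Q}\cA_P\to Y'$ is log \'etale (resp.\ $Y'\times_{\cA_{Q,J}}\cA_{P,K}\to Y'$ is idealized log \'etale), being a base change of a log \'etale (resp.\ idealized log \'etale) morphism. Now I would factor $f|_{X'}: X'\to Y'$ as
\[
X'\longrightarrow Y'\times_{\cA_Q}\cA_P\longrightarrow Y'
\]
(resp.\ through $Y'\times_{\cA_{Q,J}}\cA_{P,K}$), where the second arrow is log \'etale (resp.\ idealized log \'etale) and the composite is log smooth (resp.\ log \'etale, idealized log smooth, idealized log \'etale) by hypothesis. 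The standard cancellation property for the relevant class of morphisms in the (idealized) log category — namely, that if $g\circ h$ is log smooth and $g$ is log \'etale then $h$ is log smooth, and likewise with "smooth" replaced by "\'etale" throughout, and likewise in the idealized setting — then forces the first arrow $X'\to Y'\times_{\cA_Q}\cA_P$ to be log smooth (resp.\ log \'etale), and by its strictness, ordinarily smooth (resp.\ \'etale). This cancellation is most cleanly verified via the infinitesimal lifting criterion: given a strict (and idealized strict) square-zero extension $T_0\hookrightarrow T$ over $Y'\times_{\cA_Q}\cA_P$ together with $T_0\to X'$, the log-\'etale second arrow provides no obstruction to viewing the lifting problem relative to $Y'$, where log-smoothness of the composite supplies the lift (uniquely, in the \'etale cases), and one checks the lift is automatically a morphism over $Y'\times_{\cA_Q}\cA_P$ because both maps to that stack agree after composing to $Y'$ and to $\cA_P$ and the fiber product is strict.

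The one point requiring a little care — and the main obstacle — is the idealized case: I must make sure that the square from Proposition~\ref{prop:charts} is not merely strict but \emph{idealized} strict, i.e., that the monoid ideal $K=\ocK_{X,\bar x}$ really is the pull-back of $J=\ocK_{Y,\bar y}$ along $\theta\colon Q\to P$ when $f$ is idealized log smooth or \'etale, \emph{up to} possibly shrinking and adjusting $K$; in general one only knows $\theta(J)\subseteq K$, so the factorization runs through $Y'\times_{\cA_{Q,J}}\cA_{P,K}$ rather than $Y'\times_{\cA_{Q,J}}\cA_{P,J\cdot P}$, and one needs that $Y'\times_{\cA_{Q,J}}\cA_{P,K}\to Y'$ is still idealized log \'etale. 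But this follows from Lemma~\ref{lem:idealized etale} applied with the inclusion $\theta(J)\subseteq K$ exactly as stated, composed with base change, so no genuine difficulty arises — it is simply a matter of threading the ideals through the diagram correctly. After that, the lifting-criterion cancellation argument goes through verbatim in the idealized category, using the infinitesimal lifting property of idealized log \'etale morphisms recorded in \cite[p.~399f]{Ogus}.
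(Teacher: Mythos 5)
Your proof is correct, and for the idealized case it is essentially the paper's argument: the paper also verifies the formal lifting criterion directly, producing the lift from idealized log smoothness of $X'\to Y'$ and then forcing it to lie over $Y'\times_{\cA_{Q,J}}\cA_{P,K}$ by the uniqueness of lifts against the idealized log \'etale projection of Lemma~\ref{lem:idealized etale} --- exactly your cancellation step. Where you genuinely diverge is the non-idealized case: the paper does not use cancellation there but instead invokes Olsson's characterization \cite[Thm.~4.6]{LogStack} that $f$ is log smooth (\'etale) iff $\ul X\to\Log_Y$ is smooth (\'etale), and then observes that $X'\to Y'\times_{\cA_Q}\cA_P$ is literally a base change of $\ul X''\to\Log_{Y'}$ along the strict \'etale chart $\cA_P\times_{\cA_Q}Y'\to\Log_{Y'}$, so smoothness is inherited with no lifting argument at all. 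Your route is more uniform (the same cancellation works in both settings and does not need the $\Log$ stack), at the cost of having to run the infinitesimal criterion by hand; the paper's route is shorter in the non-idealized case precisely because the $\Log_Y$ machinery exists there, and it resorts to the hands-on argument only in the idealized case where no such reference is available.

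Two small points to tighten. First, smoothness is formal smoothness \emph{plus} local finite presentation; your lifting-criterion argument only delivers the formal part, so you should add (as the paper does via \cite[Lem.~4.8]{LogStack}) that $X'\to Y'$ is locally of finite presentation and hence so is $X'\to Y'\times_{\cA_Q}\cA_P$ by cancellation. Second, in your justification that the lift lands over the fiber product, the agreement of the two maps $T\to\cA_P$ is not automatic --- it is exactly what formal unramifiedness of the log \'etale arrow $\cA_P\to\cA_Q$ (equivalently of $Y'\times_{\cA_Q}\cA_P\to Y'$) supplies, given that the two maps agree on $T_0$ and after composing to $Y'$; as phrased, that clause presupposes what it should be deducing.
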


\begin{proof}
In the non-idealized case, \cite[Thm.\,4.6]{LogStack}, shows that $X\rightarrow
Y$ is log smooth (\'etale) if and only if the tautological morphism
$\ul{X}\rightarrow \Log_Y$ is smooth (\'etale). It follows immediately by
base-change from the construction of the proof of Proposition~\ref{prop:charts}
that, if $f$ is log smooth (\'etale), the morphism $X''\rightarrow Y'$ is log
smooth (\'etale) and hence $\ul{X}''\rightarrow \Log_{Y'}$ is smooth (\'etale).
Thus by another base-change, we see that the projection $X'\rightarrow
Y'\times_{\cA_Q}\cA_P$ is smooth (\'etale).

The idealized case requires a little bit more work because the analogous
statement for idealized log smooth (\'etale) morphisms does not seem to appear
in the literature. First, by \cite[Lem.\,4.8]{LogStack}, $X''\rightarrow
\Log_{Y'}$ is locally of finite presentation, as $X\rightarrow Y$ is locally of
finite presentation, being idealized log \'etale, and thus $X'\rightarrow
Y'\times_{\cA_Q} \cA_P$ is locally of finite presentation. However, as in the
proof of Proposition~\ref{prop:charts}, $X'\rightarrow Y'\times_{\cA_Q} \cA_P$
factors through the closed immersion $Y'\times_{\cA_{Q,J}}
\cA_{P,K}\hookrightarrow Y'\times_{\cA_Q} \cA_P$, and thus $X'\rightarrow
Y'\times_{\cA_{Q,J}} \cA_{P,K}$ is also locally of finite presentation. So we
just need to show the formal lifting criterion, i.e., given a diagram
\begin{equation}
\label{diag:first lifting}
\vcenter{\xymatrix@C=30pt
{
\ul{T}_0\ar[r]^{\ul{g}_0} \ar[d]_{\ul{i}} & \ul{X}'\ar[d]_{\ul{f}'}\\
\ul{T}\ar[r]_>>>>{\ul{g}}\ar@{-->}[ru] & \ul{Y'\times_{\cA_{Q,J}} \cA_{P,K}}
}}
\end{equation}
where $i$ is a closed immersion with ideal of square zero, there is, \'etale
locally on $\ul{T}_0$, a dotted line as indicated, unique in the \'etale case.
Give $\ul{T_0}$ and $\ul{T}$ the idealized log structure making all arrows in
the above square strict and {idealized strict}. Via composition of $g$ with the
projection to $Y'$, we obtain a diagram
\[
\xymatrix@C=30pt
{
T_0\ar[r]^{g_0}\ar[d]_i & X'\ar[d]\\
T\ar[r]_{g''}\ar@{-->}_{g'}[ru] & Y'
}
\]
Formal idealized log smoothness (idealized log \'etaleness) then implies,
\'etale locally, a (unique) lift $g'$. It is then sufficient to show that
$f'\circ g'$ coincides with $g$ in \eqref{diag:first lifting}. However, by Lemma
\ref{lem:idealized etale}, the projection $Y'\times_{\cA_{Q,J}} \cA_{P,K}
\rightarrow Y'$ is idealized log \'etale, and hence by uniqueness in the formal
lifting criterion for idealized log \'etale morphisms, $f'\circ g'=g$.
\end{proof}


{
\section{Functorial tropicalization and the category of points}
\label{App: Functorial tropicalization}}

Various definitions of tropicalization in logarithmic geometry are available in
the literature \cite{Kato,LogGW,ACP,Ulirsch,decomposition,CCUW}. The
purpose of this appendix is to spell out the construction of tropicalization as
a functor from the category of fine log algebraic stacks to the category of
generalized cone complexes generalizing \cite[Prop.\,6.3]{Ulirsch} to cases with
monodromy, and closer in spirit to \cite[App.\,B]{LogGW}. This refines the
discussion in \cite[\S2.1]{decomposition}.

We adopt the definition from \cite[II.1]{KKMS}, \cite[\S2]{Payne},
\cite[\S2.2]{ACP} of a \emph{generalized cone complex} $\Sigma$ as a topological
space $|\Sigma|$ together with a \emph{presentation} given by a homeomorphism
with the colimit in the category of topological spaces of a diagram in $\Cones$
with all arrows face morphisms. Here we use the topology induced by embedding a
cone $\sigma$ into its vector space $N_\sigma\otimes_\ZZ\RR$. For any cone
$\sigma$ in a presentation we always include all face embeddings
$\tau\to\sigma$ in the diagram. The \emph{strata} of $|\Sigma|$ are the images
of the interiors of cones from the presentation. We consider generalized cone
complexes up to equivalence generated by adding more cones to a presentation. A
morphism of cone complexes $\Sigma\arr\Sigma'$ is given by a continuous map
$|\Sigma|\arr|\Sigma'|$ that locally lifts to a morphism of diagrams of
presentations. Unlike the cited references, we do not impose any finiteness
conditions since we want to admit situations with infinitely many strata.
\medskip

\subsection{Tropicalization of fine log schemes}

We begin by recalling the definition of the category of geometric points
$\Pt(X)$ of a scheme $X$ with arrows defined by specialization, following
\cite[VIII.7]{SGA4}, see also \cite[Sect.\,0GJ2]{stacks-project}. An object in
$\Pt(X)$ is a morphism $\ol x: \Spec \kappa\to X$ with $\kappa=\kappa(\ol x)$ an
algebraically closed field. Given $\ol x$ we have the associated local scheme
$X(\ol x)=\Spec\cO_{X,\ol x}$. A \emph{specialization} arrow $\ol x\to \ol y$ is
an $X$-morphism $\Spec \kappa(\ol x)\to X(\ol y)$ or, equivalently by
\cite[VIII.7, Prop.\,7.4]{SGA4}, an $X$-morphism $X(\ol x)\arr X(\ol y)$.

Composition with a morphism $f:X\to Y$ defines a functor
\[
f_*:\Pt(X)\arr \Pt(Y)
\]
compatible with composition, so $\Pt$ is a functor from the category of
schemes to the category of categories $\Cat$.

For each \'etale sheaf of sets $\cF$ on $X$, a specialization arrow $\ol x\to\ol
y$ in $\Pt(X)$ induces a generization map\footnote{We prefer ``generization
map'' over the common ``specialization map'' in this context since the map goes
from the stalk at the more special point to the stalk at the more generic
point.}
\begin{equation}
\label{Eqn: Generization hom}
\cF_{\ol y}\to \cF_{\ol x}.
\end{equation}
This assignment is compatible with morphisms of sheaves. Thus if $\Sh(X_\et)$
denotes the \'etale topos of $X$, we obtain a functor
\begin{equation}
\label{Eqn: Stalks functor}
\Stalks: \Sh(X_\et)\arr \Func(\Pt(X)\op,\Cat)
\end{equation}
associating to an \'etale sheaf its functor of stalks, a diagram in
$\Cat$ indexed by $\Pt(X)\op$. We emphasize that the generization homomorphism
\eqref{Eqn: Generization hom} does not only depend on $\ol x,\ol y$, but on the
choice of specialization arrow $\ol x\arr\ol y$.
\medskip

\begin{example}
Let $C$ be the nodal cubic. If $\ol\eta$ denotes a geometric generic point and
$\ol x$ a geometric point over the node, there are two different $C$-morphisms
\[
\Spec\kappa(\ol \eta)\arr X(\ol x)
\]
that reflect the specialization along the two branches of $C$ at $\ol x$. This
statement can most easily be seen by going over to the usual two-fold \'etale
cover $\pi:\tilde C\arr C$, and observing that each of the two lifts $\tilde{\ol
x}\in\Pt(\tilde C)$ of $\ol x$ has generization homomorphisms to both lifts of
$\ol\eta$.
\end{example}

Charts for the log structure define a locally finite stratification of $\ul X$
with a \emph{stratum} a maximal connected locally closed subset
$Z\subseteq\big|\ul X\big|$ with $\ocM_X|_Z$ locally constant. Denote by
$\operatorname{Strata}(X)$ the set of strata of $X$. For each $Z\in\Strata(X)$
choose a geometric point $\ol x=\ol x_Z$ of $Z$ and define
\begin{equation}
\label{Eqn: cone sigma_Z}
\sigma_Z=\Hom\big(\ocM_{X,\ol x},\RR_{\ge0}\big)\in\Cones.
\end{equation}
Different choices of $\ol x$ lead to isomorphic $\sigma_Z$, but the isomorphism
is only unique up to the monodromy action of the \'etale fundamental group
$\pi_1(Z,\ol x)$ of the stratum on $\ocM_{X,\ol x}$. More precisely, since the
automorphism group of a fine monoid is finite, 
arguing with
\cite[Lem.\,0DV5]{stacks-project} shows the following. There exists a finite
connected \'etale Galois cover
\[
f:\tilde Z\arr Z
\]
with $f^{-1}\ocM_X$ a constant sheaf. Lifting $\ol x$ to $\tilde Z$ yields an
isomorphism
\begin{equation}
\label{Eqn: trivialize ocM}
\Gamma(\tilde Z,f^{-1}\ocM_X)\stackrel{\simeq}{\longrightarrow} \ocM_{X,\ol x}
\end{equation}
by restriction. Now by definition, $\pi_1(Z,\ol x)$ acts on $f$, and the induced
action on $\Gamma(\tilde Z,f^{-1}\ocM_X)$ by pull-back corresponds to
the action of $\pi_1(Z,\ol x)$ on $\ocM_{X,\ol x}$ via \eqref{Eqn: trivialize
ocM}. The minimal choice of $f$ with $f^{-1}\ocM_X$ a constant sheaf
has connected $\tilde Z$ and is a Galois cover. Moreover, in the minimal case,
the action of $\pi_1(Z,\ol x)$ on $\ocM_{X,\ol x}$ factors over a faithful
action of the Galois group $\Aut(\tilde Z/Z)$.

For each stratum $Z$ with chosen geometric point $\ol x=\ol x_Z$ denote by
\begin{equation}
\label{Eqn: G_Z}
G_Z\subseteq \Aut(\ocM_{X,\ol x})
\end{equation}
the image of the monodromy action of $\pi_1(Z,\ol x)$ on $\ocM_{X,\ol
x}$. By the previous discussion, $G_Z\simeq\Aut(\tilde Z/Z)$ for any
minimal connected Galois cover $f:\tilde Z\to Z$ with
$f^{-1}\ocM_X$ a constant sheaf.

Now if $W\in\Strata(X)$ is another stratum, and $\ol w$ is a geometric point of
$W\cap\cl(Z)$, there exists a geometric point $\ol\eta$ of $Z$ and a
specialization arrow $\chi:\ol\eta\arr\ol w$
\cite[Sect.\,0BUP]{stacks-project}, hence a generization homomorphism
$\ocM_{X,\ol w}\arr \ocM_{X,\ol\eta}$. Since $\ocM_X$ is locally constant on the
strata there are also isomorphisms
\begin{equation}
\label{Eqn: choice of reference isos}
\ocM_{X,\ol w}\stackrel{\simeq}{\longrightarrow} \ocM_{\ol x_W},\quad
\ocM_{X,\ol\eta}\stackrel{\simeq}{\longrightarrow} \ocM_{\ol x_Z},
\end{equation}
for $\ol x_Z,\ol x_W$ the chosen reference points for the two strata. These
isomorphisms are unique up to composing with elements of $G_W$ and $G_Z$,
respectively. We call any morphism
\begin{equation}
\label{Eqn: sigma_Z -> sigma_W}
\iota:\sigma_Z\arr \sigma_W
\end{equation}
obtained by applying $\Hom(\,\cdot\,,\RR_{\ge0})$ to any of the compositions
\[
\ocM_{\ol x_W}\stackrel{\simeq}{\longrightarrow} \ocM_{X,\ol w}
\stackrel{\chi}{\longrightarrow}
\ocM_{X,\ol\eta}\stackrel{\simeq}{\longrightarrow} \ocM_{\ol x_Z}
\]
a \emph{specialization morphism} or \emph{specialization arrow}. Note that
$\iota$ also depends on the choice of $\ol w$, and hence the actions of $G_Z$
and $G_W$ on the set of specialization arrows may not be transitive. For $Z=W$
the set of specialization arrows equals $G_Z=G_W$.

If $f:X\arr Y$ is a morphism of fine log schemes, $Z\in\Strata(X)$ and $f(\ol
x_Z)$ a geometric point of $Z'\in\Strata(Y)$, then $\ol f^\flat:
f^{-1}\ocM_Y\arr \ocM_X$ together with a choice of isomorphism $\ocM_{Y,f(\ol
x_Z)}\simeq \ocM_{Y,\ol x_{Z'}}$ in \eqref{Eqn: choice of reference isos}
defines a morphism
\begin{equation}
\label{Eqn: Trop(f) on cones}
\varphi:\sigma_Z=
\Hom(\ocM_{X,\ol x_Z},\RR_{\ge0})\arr \Hom(\ocM_{Y,f(\ol x_Z)},\RR_{\ge0})
\stackrel{\simeq}{\longrightarrow} \Hom(\ocM_{Y,\ol x_{Z'}},\RR_{\ge0})
= \sigma_{Z'}
\end{equation}
between $\sigma_Z$ and $\sigma_{Z'}$ in $\Cones$. Note these are not in general
face morphisms. The set of all such arrows is compatible with specialization in
the sense that if $\iota:\sigma_Z\arr\sigma_W$ is a specialization morphism
\eqref{Eqn: sigma_Z -> sigma_W} in $X$ then there exists a specialization
morphism $\iota':\sigma_{Z'}\arr \sigma_{W'}$ in $Y$ and morphisms
$\varphi:\sigma_Z\arr\sigma_{Z'}$, $\psi:\sigma_W\arr\sigma_{W'}$ as in
\eqref{Eqn: Trop(f) on cones} making the following diagram commute:
\begin{equation}
\label{Eqn: Morphisms of specialization arrows}
\vcenter{\xymatrix@C=30pt{
\sigma_Z\ar[r]^{\iota}\ar[d]^{\varphi}&\sigma_W\ar[d]^{\psi}\\
\sigma_{Z'}\ar[r]^{\iota'}&\sigma_{W'}.
}}
\end{equation}

We are then in position to define the tropicalization of $X$ as a generalized
cone complex.

\begin{definition}
\label{Def: Tropicalization}
Let $X=(\ul X,\cM_X)$ be a fine log scheme. The \emph{tropicalization}
$\Sigma(X)$ of $X$ is the generalized cone complex defined by the diagram
in $\Cones$ with one object $\sigma_Z$ from \eqref{Eqn: cone
sigma_Z} for each stratum $Z\subset X$ and face morphisms $\sigma_Z\arr\sigma_W$
the set of specialization morphisms from \eqref{Eqn: sigma_Z -> sigma_W}.

A morphism $f:X\arr Y$ of fine log schemes induces the morphism
\[
\Sigma(f): \Sigma(X)\arr\Sigma(Y)
\]
defined by all arrows $\varphi:\sigma_Z\to\sigma_{Z'}$ as in \eqref{Eqn: Trop(f)
on cones}.
\end{definition}

Note that diagrams of specialization arrows as in \eqref{Eqn: Morphisms of
specialization arrows} show that the map of topological spaces
$|\Sigma(X)|\to|\Sigma(Y)|$ is well-defined and continuous, and that it lifts
locally to a morphism of presentations. Thus $\Sigma(f)$ indeed is a morphism of
generalized cone complexes.

We need to check that our definition of tropicalization does not depend on the
choices of a geometric point $\ol x_Z$ for each stratum $Z$ of $X$. 

\begin{lemma}
\label{Lem: Trop is independent of choices}
The definition of tropicalization in Definition~\ref{Def: Tropicalization} is
independent of choices.
\end{lemma}

\begin{proof}
Let $Z$ be a logarithmic stratum of $X$ and $\ol x'_Z$ another choice of
geometric point. Since $\ocM_X|_Z$ is locally constant there exists an
isomorphism
\[
\varphi:\sigma_Z= \Hom(\ocM_{X,\ol x _Z},\RR_{\ge0})
\stackrel{\simeq}{\longrightarrow}
\sigma'_Z=\Hom(\ocM_{X,\ol x' _Z},\RR_{\ge0})
\]
that is unique up to the action of $\pi_1(Z)$ on $\sigma_Z$. Replacing
$\sigma_Z$ by $\sigma'_Z$ and all arrows involving $\sigma_Z$ by composition
with $\varphi$ or $\varphi^{-1}$ as appropriate, gives an alternative
presentation of $|\Sigma(X)|$ as a colimit of a diagram in $\Cones$. By
construction, both diagrams are locally isomorphic, and hence they
lead to the same generalized cone complex. This argument is local to each
geometric point, thus also applies to any two different sets of choices of
geometric points.
\end{proof}

We finally check functoriality of this notion of tropicalization.

\begin{proposition}
\label{Prop: Functoriality of tropicalization of log schemes}
If $f:X\to Y$ and $g:Y\to Z$ are morphisms of fine log schemes then
$\Sigma(g\circ f)=\Sigma(g)\circ\Sigma(f)$.
\end{proposition}

\begin{proof}
Given a specialization morphism $\iota:Z\arr W$ of strata of $X$ there exist two
commutative diagrams of the form \eqref{Eqn: Morphisms of specialization arrows}
with horizontal arrows specialization morphisms $\iota':Z'\arr W'$ and
$\iota'':Z''\arr W''$ of strata in $Y$ and $Z$, respectively. The two small
commutative squares now define the local liftings of $\Sigma(f)$ and $\Sigma(g)$
to presentations, while their composition defines the lifting of $\Sigma(g\circ
f)$. The result is now obvious.
\end{proof}

\begin{remark}
A canonical and obviously functorial definition of $\Sigma(X)$ runs as follows.
The composition of the functor $\Stalks$ in \eqref{Eqn: Stalks functor} with
$\Hom(.,\RR_{\ge0})$ defines a diagram
\begin{equation}
\label{Eqn: Brutal tropicalization}
\Pt(X)\op\arr \Cones
\end{equation}
with all morphisms face inclusions. The reasoning in the proof of
Lemma~\ref{Lem: Trop is independent of choices} shows that the associated
generalized cone complex is canonically isomorphic to $\Sigma(X)$. We preferred
to base our definition on the more explicit treatment with one cone for each
stratum.
\end{remark}

\begin{remark}
One might think that a slightly refined definition could also give a functorial
notion of tropicalization as a diagram of cones associated to strata. This is,
however, not the case. The problem appears already with locally constant sheaves
in the \'etale topology, which can not be described by groupoids of sets
obtained from the associated representations of the \'etale
fundamental group. The \'etale fundamental group of a scheme $X$ depends on the
choice of a geometric point and is otherwise only defined up to non-unique
isomorphism. Thus a functorial definition would have to involve at least a
skeleton of $\Pt(X)$, and hence completely loses the combinatorial flavor of
tropicalization.
\end{remark}

\subsection{Tropicalization of fine log algebraic stacks}
Now let $X$ be a fine log algebraic stack, with $\cM_X$ and $\ocM_X$ sheaves in
the lisse-\'etale topology. To define the tropicalization $\Sigma(X)$ let
\[
h:U\arr X
\]
be a strict smooth surjection from a log scheme. Then $U\times_X U$ is a scheme
that comes with two projections to $U$. Tropicalizing defines a double arrow of
generalized cone complexes
\begin{equation}
\label{Eqn: double arrow of tropicalizations}
\Sigma(U\times_X U)\rightrightarrows \Sigma(U).
\end{equation}
For a geometric point $\ol x$ of $U\times_X U$, composition
with the two projections defines two geometric points $\ol x_1,\ol x_2$ of $U$.
Since both projections $U\times_X U\to U$ are strict, we have two isomorphisms
\begin{equation}
\label{Eqn: double arrow of stalks of ocM}
\ocM_{U,\ol x_i}\arr\ocM_{U\times_X U,\ol x},\quad i=1,2.
\end{equation}
These isomorphisms induce an equivalence relation on $\Strata(U)$, and provide
isomorphisms between stalks of $\ocM_U$ at pairs of geometric points in
equivalent strata. The quotient $\Strata(U)/\sim$ can easily be seen to be
independent of the choice of smooth cover $U\to X$, and in fact defines the set
$\Strata(X)$ of strata of the log algebraic stack $X$.

To define the tropicalization $\Sigma(X)$, we add $\Hom(.,\RR_{\ge0})$ of the
isomorphisms in~\eqref{Eqn: double arrow of stalks of ocM} to the set of arrows
in the diagram defining $\Sigma(U)$.

\begin{definition}
\label{Def: Tropicalization of log algebraic stack}
The tropicalization $\Sigma(X)$ of the fine log algebraic stack $X$ is the
generalized cone complex defined by the diagram of $\Sigma(U)$ with the
added isomorphisms induced by the tropicalization of~\eqref{Eqn:
double arrow of stalks of ocM}.
\end{definition}

Restricting the diagram defining $\Sigma(X)$ to one cone for each stratum of $X$
gives an alternative presentation with index category $\Strata(X)$.

We need to check independence of our definition of $\Sigma(X)$ from choices.

\begin{lemma}
The definition of $\Sigma(X)$ is independent of the choice of strict smooth
cover $U\arr X$.
\end{lemma}

\begin{proof}
It suffices to consider the composition of $U\arr X$ with a strict smooth
surjection $V\arr U$. We obtain the following commutative diagram of strict
smooth surjections of log schemes:
\begin{equation}
\label{Eqn: V times_X V versus U times_X U}
\vcenter{\xymatrix{
V\times_X V\ar[r]\ar@<-.5ex>[d]\ar@<.5ex>[d]&
U\times_X U\ar@<-.5ex>[d]\ar@<.5ex>[d]\\
V\ar[r]&U
}}
\end{equation}
Now all arrows are surjective on geometric points. Since smooth maps are open,
all arrows are also surjective on the set of generizations. Thus each cone and
arrow of $\Sigma(V)$ maps isomorphically to a cone or arrow of $\Sigma(U)$, and
each cone or arrow of $\Sigma(U)$ arises as an image. Moreover, if two cones
$\sigma_1,\sigma_2$ in $ \Sigma(U)$ belong to the same stratum in $X$, that is,
are isomorphic images of a cone $\sigma$ in $\Sigma(U\times_X U)$ appearing from
a geometric point in $U\times_X U$, then lifting this geometric point to
$V\times_X V$ provides a cone $\tilde\sigma$ in $\Sigma(V\times_X V)$ mapping to
cones $\tilde\sigma_1,\tilde\sigma_2$ in $\Sigma(V)$. The tropicalization
of~\eqref{Eqn: V times_X V versus U times_X U} now shows that the diagram of
cones
\[
\xymatrix{
\tilde\sigma_1\ar[d]&\tilde\sigma\ar[r]\ar[d]\ar[l]&\tilde\sigma_2\ar[d]\\
\sigma_1&\sigma\ar[r]\ar[l]&\sigma_2
}
\]
commutes up to composing the lower horizontal arrows with isomorphisms in
$\Sigma(U)$.

Taken together we see that the diagram defining $\Sigma(X)$ from $V\arr X$ just
adds a number of isomorphic cones to the diagram defining $\Sigma(X)$ from
$U\arr X$. Thus the corresponding generalized cone complexes are equivalent.
\end{proof}

The proof of functoriality of this notion of tropicalization now follows by
local lifting to a presentation as in Proposition~\ref{Prop: Functoriality of
tropicalization of log schemes}. We omit the details.

\begin{proposition}
If $f:X\to Y$ and $g:Y\to Z$ are morphisms of fine log algebraic stacks then
$\Sigma(g\circ f)=\Sigma(g)\circ\Sigma(f)$.
\end{proposition}

\subsection{Tropicalization in the log smooth case}
We end this section with some facts on logarithmic strata and tropicalization in
the Zariski log smooth case.

\begin{lemma}
\label{Lem: Log smooth implies irreducible log strata}
Let $f:X\arr B$ be a log smooth morphism of fine log schemes. Assume that $B$ is
locally Noetherian with geometrically unibranch logarithmic strata. Then the
logarithmic strata of $X$ are irreducible and geometrically unibranch.
\end{lemma}

\begin{proof}
First note that $X$ is locally Noetherian since $B$ is locally Noetherian and
$f$ is locally of finite presentation by the definition of log smoothness. Thus
a locally irreducible connected subset of $|X|$ is irreducible. It thus suffices
to show the stronger statement that each logarithmic stratum $Z$ of $X$ is
geometrically unibranch.

Let $z\in|Z|$ and $Z_B\subseteq B$ the logarithmic stratum containing $\ul
f(z)$. Being geometrically unibranch is a local property that is stable under
\'etale morphisms. By \cite[Thm.\,IV.3.3.1]{Ogus} we may thus replace $X$ and
$B$ by \'etale neighborhoods of $z$ and $\ul f(z)$ to obtain a commutative
diagram
\[
\xymatrix{
X\ar[r]^(.3)g\ar[rd]_f& B\times_{A_Q} A_P \ar[r]^(.7)k\ar[d]&
A_P\ar[d]^{A_\theta}\\
&B\ar[r]^h& A_Q
}
\]
with $A_P=\Spec\ZZ[P]$, $A_Q=\Spec\ZZ[Q]$, $A_\theta$ the morphism induced by a
homomorphism $\theta:Q\arr P$ of fine monoids, all horizontal arrrows strict,
the square cartesian, $g$ \'etale, and $h$ a neat chart at $z$. Thus
$Z_B=(h^{-1}(O))_\red$, where $O\subseteq A_Q$ is the closed torus orbit defined
by the monoid ideal $Q\setminus\{0\}$.

Since $k\circ g$ is strict, the composition $Z\arr B\times_{A_Q} A_P\arr A_P$
factors over the inclusion of a logarithmic stratum $Z_P\subseteq A_P$. Now
toric morphisms respect the decomposition into logarithmic strata. Thus
$\ul{A_\theta}(Z_P)$ is contained in a logarithmic stratum of $A_Q$. But $\ul
h(\ul f(z))\in \ul{A_\theta}(Z_P)$, so this latter stratum is the closed stratum
$O\subseteq A_Q$.

This shows that $g(Z)$ is contained in
\[
Z_B\times_{A_Q} A_P= Z_B\times_{A_Q} Z_P= Z_B\times_{O} Z_P=Z_B\times_\ZZ Z_P.
\]
Since $Z_B\times_\ZZ Z_P$ has constant ghost sheaf $\ocM$ it follows that $Z=
g^{-1}(Z_B\times_\ZZ Z_P)$, and hence $Z$ is \'etale over $Z_B\times_\ZZ Z_P$.
Here we are using that the preimage of a reduced subscheme under an \'etale
morphism remains reduced \cite[Prop.\,0250]{stacks-project}. Finally,
$Z_B\times_\ZZ Z_P$ is geometrically unibranch by the assumption on the strata
of $B$. This shows that $Z$ is geometrically unibranch at $z$.
\end{proof}

\begin{proposition}
\label{Prop: Log strata in log smooth case}
Let $f:X\arr B$ be a log smooth morphism of fine log schemes with $B$ locally
Noetherian and with geometrically unibranch logarithmic strata. Assume that $B$
is simple, that is, $\Sigma(B)$ is a cone complex rather than a generalized cone
complex, and that the log structure of $X$ is defined in the Zariski topology.
Then $X$ is simple as well, and the logarithmic strata of $X$ are irreducible
and geometrically unibranch.
\end{proposition}

\begin{proof}
Lemma~\ref{Lem: Log smooth implies irreducible log strata} shows the statement
on the log strata of $X$. Thus each logarithmic stratum $Z$ has a unique generic
point $\eta_Z$. It is then obvious that there is an arrow $\sigma_Z\arr
\sigma_W$ if and only if $\eta_W\in\cl(\eta_Z)$. Moreover, since $\cM_X$ is a
sheaf on the Zariski site, $\sigma_Z\arr \sigma_W$ must then be the dual of the
generization homomorphism $\cM_{X,\eta_W}\arr \cM_{X,\eta_Z}$. Thus there is at
most one such arrow, and hence $\Sigma(X)$ is a cone complex.
\end{proof}

\end{appendix}


\end{document}